\definecolor{darkblue}{rgb}{0.0,0,0.7}
\newcommand{\darkblue}{\color{darkblue}}
\definecolor{darkred}{rgb}{0.68,0,0}
\newcommand{\darkred}{\color{darkred}}
\definecolor{darkgreen}{rgb}{0,.38,0}
\newcommand{\darkgreen}{\color{darkgreen}}
\newcommand{\defn}[1]{\emph{\darkblue #1}}
\newcommand{\defna}[1]{\emph{\darkred #1}}
\newcommand{\defnb}[1]{\emph{\darkblue #1}}
\newcommand{\defng}[1]{\emph{\darkgreen #1}}
\def\th@plain{%
	\thm@notefont{}
	\itshape 
}
\def\th@definition{%
	\thm@notefont{}
	\normalfont 
}
\newtheorem{thm}{Theorem}[section]
\newtheorem*{claim*}{Claim}
\newtheorem{cor}[thm]{Corollary}
\newtheorem{prop}[thm]{Proposition}
\newtheorem{conj}[thm]{Conjecture}
\newtheorem{question}[thm]{Question}
\newtheorem{op}[thm]{Open Problem}
\newtheorem{dl}[thm]{Definition/Lemma}
\theoremstyle{definition}
\newtheorem{ex}[thm]{Example}
\newtheorem{rem}[thm]{Remark}
\newtheorem{definition}[thm]{Definition}
\numberwithin{figure}{section}
\numberwithin{equation}{section}
\def\wh{\widehat}
\def\bu{\bullet}
\def\emp{\nothing}
\def\zz{\mathbb Z}
\def\nn{\mathbb N}
\def\gg{{\mathcal{G}}}
\def\rr{\mathbb R}
\def\ap{\om}
\def\apr{\om}
\def\rA{ {{A} } }
\def\rC{ {{C} } }
\def\rH{ {\textrm{H} } }
\def\rHr{ {\text{\em H} } }
\def\rrs{\mathbb R_{>0}}
\def\ov{\overline}
\def\sm{\smallsetminus}
\def\Ga{\Gamma}
\def\Om{\Omega}
\def\De{\Delta}
\def\Si{\Sigma}
\def\la{\lambda}
\def\ga{\gamma}
\def\si{\sigma}
\def\de{\delta}
\def\ep{\epsilon}
\def\al{\alpha}
\def\be{\beta}
\def\bebo{\boldsymbol{\beta}}
\def\om{\omega}
\def\ve{\varepsilon}
\def\vk{\varkappa}
\def\cA{\mathcal A}
\def\cB{\mathcal B}
\def\cC{\mathcal C}
\def\cF{\mathcal F}
\def\cL{\mathcal L}
\def\cO{\mathcal O}
\def\cP{\mathcal P}
\def\PP{\mathcal P}
\def\cS{\mathcal S}
\def\cT{\mathcal T}
\def\bP{\mathbf{P}}
\def\bE{\mathbf{E}}
\def\Eb{\mathbf{E}}
\def\ssu{\subset}
\def\<{\langle}
\def\>{\rangle}
\def\sign{\text{\rm sign}}
\def\width{\text{{\rm width}}}
\def\rR{ {\text {\rm R} } }
\def\rA{ {\text {\rm A} } }
\def\Cat{ {\text {\rm Cat} } }
\def\Aut{ {\text {\rm Aut} } }
\def\ups{\upsilon}
\def\mix{{\text {\rm mix} } }
\def\0{{\mathbf 0}}
\def\LL{{\mathcal L}}
\def\nothing{\varnothing}
\def\.{\hskip.06cm}
\def\ts{\hskip.03cm}
\def\vol{{\text {\rm vol}}}
\def\ba{\textbf{\textit{a}}}
\def\bc{\textbf{\textbf{c}}}
\def\bq{\textbf{\textit{q}}}
\def\bz{{\textbf{\textit{z}}}}
\def\bM{\textbf{\textrm{M}}}
\def\aD{\la}
\def\La{\Lambda}
\def\di{{\small{\ts\diamond\ts}}}
\def\ze{{\zeta}}
\newcommand{\comp}{\mathrm{comp}}
\newcommand{\maj}{\mathrm{maj}}
\newcommand{\Vol}{\textnormal{Vol}} 
\def\aF{\textrm{F}}
\def\aFr{\textrm{\em F}}
\def\aN{\textrm{N}}
\def\aNr{\textrm{\em N}}
\def\.{\hskip.06cm}
\def\ts{\hskip.03cm}
\def\nin{\noindent}
\newcommand{\LE}{\textsc{\#LE}}
\def\poly{{\textsf{P}}}
\newcommand{\textsu}[1]{\textup{\textsf{#1}}}
\newcommand{\SI}{\textup{{\rm {SI}}}}
\newcommand{\ComCla}[1]{\textup{\textsu{#1}}}
\newcommand{\sharpP}{\ComCla{\#P}}
\newcommand{\SP}{\ComCla{\#P}}
\newcommand{\GapP}{\ComCla{GapP}}
\newcommand{\Sigmap}{\ensuremath{\Sigma^{{\textup{p}}}}}
\newcommand{\NP}{\ComCla{NP}}
\newcommand{\coNP}{\ComCla{coNP}}
\renewcommand{\poly}{\ComCla{P}}
\newcommand{\CeqP}{\ComCla{C$_=$P}}
\newcommand{\PH}{\ComCla{PH}}
\newcommand{\PSPACE}{\ComCla{PSPACE}}
\newcommand{\FP}{\ComCla{FP}}
\def\SP{{\sharpP}}
\def\CEP{{\CeqP}}
\def\Pb{{\text{\bf P}}}
\newcommand{\inv}{\operatorname{{\rm inv}}}
\newcommand{\INV}{\operatorname{{\rm INV}}}
\newcommand{\en}{{\ve}}
\DeclareMathOperator{\cb}{\mathbf{c}} 
\DeclareMathOperator{\Ec}{\mathcal{E}} 
\DeclareMathOperator{\Rb}{\mathbb{R}} 
\def\precc{\preccurlyeq} 
\DeclareMathOperator{\vb}{\mathbf{v}} 
\DeclareMathOperator{\wb}{\mathbf{w}} 
\DeclareMathOperator{\zb}{\mathbf{z}} 
\newcommand{\mn}{{{\text{min}}}} 
\title{Linear extensions of finite posets}
\begin{document}

\author[Swee Hong Chan \ ]{Swee Hong Chan \ }
\address[Swee Hong Chan]{Department of Mathematics, Rutgers University,  Piscatway, NJ 08854.}
\email{\texttt{sweehong.chan@rutgers.edu}}

\author[ \  Igor Pak]{ \ Igor Pak}
\address[Igor Pak]{Department of Mathematics, UCLA,  Los Angeles, CA 90095.}
\email{\texttt{pak@math.ucla.edu}}
	
\date{\today}


\keywords{poset, linear extension,  correlation inequality, log-concavity, order polynomial, q-analogue, FKG inequality, Stanley's inequality, Bj\"orner–Wachs inequality, sorting probability, Sidorenko's inequality, Young diagram, geometric inequalities, combinatorial atlas}

\begin{abstract}
We give a broad survey of inequalities for the number of linear
extensions of finite posets.  We review many examples, discuss
open problems, and present recent results on the subject.
We emphasize the bounds, the equality conditions of the
inequalities, and the computational complexity aspects of
the results.
\end{abstract}
	
\maketitle
	

\section{Introduction}\label{s:intro}

\subsection{Foreword} \label{ss:intro-foreword}
The world of linear extensions of finite posets is a microcosm of contradictions.
Although the counting problem is $\SP$-hard in general, it is polynomially easy
in many special cases (see~$\S$\ref{ss:CS-SP}).  Although posets themselves
do not have a geometric structure, the number of linear extensions is given
by the volume of two different polytopes with distinct geometric applications
(see~$\S$\ref{ss:proof-comb-opt} and~$\S$\ref{ss:proof-geom}).  Finally, although
there is a large number of correlation inequalities that hold in full generality,
direct injective proofs are rare and difficult to construct, sometimes provably so.

We straddle the boundary between inequalities which hold for general posets
and specialized inequalities for classes of posets, with the emphasis on the
former.  There is a dearth of powerful tools for study of linear extensions,
and yet the sheer volume of results continues to astonish us.  This is hardly
reflected in our presentation style which continues the dry tradition
of stating results and letting the reader judge for themselves.  But please
be assured --- many results in this survey are extremely surprising and worthy of
elaboration, reflection and contemplation.

\smallskip

\subsection{Content} \label{ss:intro-content}
In this paper we give a broad survey of inequalities for the
\defna{number of linear extensions} \ts of finite posets, both as a function of
the poset and when closely related posets are compared.  We also discuss several
closely related inequalities for the \defna{order polynomial}, and
\defna{correlation inequalities} \ts for probabilities of various events
associated with linear extensions.


Of the ocean of inequalities for linear extensions, we single out several
key inequalities which are treated in separate sections along with their many
extensions and variations:

\smallskip

$\circ$ \. \defng{Sidorenko's inequality}, see Section~\ref{s:sid},

\smallskip

$\circ$ \. \defng{Bj\"orner--Wachs inequality}, see Section~\ref{s:BW},

\smallskip

$\circ$ \. \defng{Fishburn's inequality}, see Section~\ref{s:Fish},

\smallskip

$\circ$ \. \defng{XYZ inequality}, see Section~\ref{s:corr},

\smallskip

$\circ$ \. \defng{Stanley's inequality}, see Sections~\ref{s:ineq-Sta},~\ref{s:eq-Sta}
and~\ref{s:proof-crit-simp}.

\smallskip

\nin
We emphasise the equality conditions of the inequalities, so e.g.\ Section~\ref{s:eq-Sta}
is dedicated to equality conditions of Stanley type inequalities.  Examples of various
families of posets are given in Section~\ref{s:ex}.  We discuss various computational
complexity aspects around the numbers of linear extension in Section~\ref{s:CS}.

In our presentation we aim to be as complete as possible, but for reasons
of space and readability we abandoned the hope to include any proofs
(with few exceptions, see Section~\ref{s:proof-crit-simp}). Instead,
we include Section~\ref{s:proof-ideas} with some vague ideas on what
is going on, and some additional references.
We conclude with final remarks in Section~\ref{s:finrem}.

\medskip


\section{Definitions and notation}\label{s:def}

We start by introducing  basic definitions and notations which appear throughout
the survey.  We also include many references to monographs, surveys and other
background reading on the subject.

\subsection{General notation}\label{ss:def-gen}
Let \ts $[n]=\{1,\ldots,n\}$, \ts $\nn=\{0,1,2,\ldots\}$ \ts 
and \ts $\rr_+=\{x\ge 0\}$.
For a subset \ts $S\subseteq X$ \ts and element \ts $x\in X$,
we write \ts $S+x:=S\cup \{x\}$ \ts and \ts $S-x:=S\sm\{x\}$.

For a sequence \ts $\ba =(a_1,\ldots,a_m)$, denote
\ts $|\ba| := a_1 + \ldots + a_m$\ts.  This sequence is
\defn{unimodal} \ts if \.
$a_1 \le a_2 \le \ldots \le a_\ell \ge a_{\ell+1} \ge \ldots a_m$ \.
for some \ts $\ell$. Sequence \ts $\ba$ \ts is \defn{log-concave} \ts if \ts
\. $a_i^2\ge a_{i-1} \ts a_{i+1}$ \. for all \ts $1< i < m$.  We refer to
\cite{Bra15,Bre89,Sta-log-concave} for many examples of log-concave sequences
in Algebraic and Enumerative Combinatorics.

We use the \defn{$q$-analogues} \. $(n)_q:=1+q+\ldots + q^{n-1}$, \, $n!_q := (1)_q \cdots (n)_q$ \, and
$$
\binom{n}{k}_q \, := \, \frac{n!_q}{k!_q \. (n-k)!_q}\,.
$$
These can be viewed either polynomials in \ts $\nn[q]$, or as
real numbers for a fixed \ts $q\in \rr$.

For an inequality \ts $f\geqslant g$, the difference
\ts $(f-g)$ \ts is called the \defn{defect}.
For polynomials \ts $f,g\in \rr[z_1,\ldots,z_n]$, we write \ts $f(\bz)\geq g(\bz)$ \ts
for the inequality between the values at a given \ts $\bz=(z_1,\ldots,z_n) \in \rr^n$, and \ts $f\geqslant_\bz g$ \ts
for the stronger inequality which holds coefficient-wise.

\smallskip

\subsection{Posets} \label{ss:def-posets}
Suppose \ts $P=(X,\prec)$ \ts and \ts $Q=(Y,\prec')$ \ts are two posets on
sets \ts $X\subseteq Y$, such that \ts $x \prec' y$ \. implies \. $x\prec y$.
Then we say that $P$ is a \defn{subposet} \ts of~$Q$.  If \. $x \prec' y$ \.
$\Leftrightarrow$ \. $x\prec y$, we say that $P$ is an
\defn{induced subposet} \ts of~$Q$.
For a poset \ts $P=(X,\prec)$ \ts and a subset \ts $A \subseteq X$, denote
by \ts $P|_A:=(A,\prec)$ \ts the induced subposet of~$P$.

We use \ts $(P-z)$ \ts to denote a subposet \ts $P|_{X-z}$\ts, where
\ts $z\in X$.  Element \ts $x\in X$ \ts is \defn{minimal} \ts in~$\ts P$,
if there exists no element  \ts $y \in X-x$ \ts such that
\ts $y \prec x$.  Define \defn{maximal} \ts elements similarly.
Denote by \ts $\min(P)$ \ts and \ts $\max(P)$ \ts the set
of minimal and maximal elements in~$P$, respectively.

When~$\ts P$ \ts has a unique minimal element, we use \ts $\wh 0$ \ts
to denote it. Similarly, when \ts $P$ \ts has a unique maximal element,
we use \ts $\wh 1$ \ts to denote it. Element \ts $x\in X$ \ts is said to \defn{cover}
\ts $y\in X$, if \ts $y\prec x$ \ts and there are no elements \ts $z\in X$ \ts
such that \. $y\prec z \prec x$. For two subsets \ts $A,B\subseteq X$,
\ts $A\cap B = \emp$, we write \. $A\prec B$ \. if \. $x \prec y$ \.
for all \. $x\in A$ \ts and \ts $y \in B$.

A subset \ts $A \subseteq X$ \ts is an \defn{upper ideal} \ts if
\ts $x \in A$ \ts and \ts $y \succ x$ \ts implies \ts $y \in A$.  Similarly,
a subset \ts $A \subseteq X$ \ts is a
\defn{lower ideal} \ts if \. $x \in A$ \ts and \ts $y \prec x$ \ts implies \ts $y \in A$.
Denote by \. $x\!\downarrow \ := \. \{y \in X\,{}:\,{}y\preccurlyeq x\}$ \. and \.
$x\!\uparrow \ := \. \{y \in X\,{}:\,{}y\succcurlyeq x\}$ \. the \defn{lower} \ts
and \defn{upper order ideals} \ts generated by~$x$, respectively.  Similarly,
for a subset \ts $B\subseteq X$, denote by
\. $B\!\downarrow{}  := \ts \cup_{b\in B} \, b\!\downarrow$ \. and
\. $B\!\uparrow{}  := \ts \cup_{b\in B} \, b\!\uparrow$ \. the \defn{lower}~{\ts and}
\defn{upper closure} of~$B$, respectively.  We use \.
$\al(x) := \left|x\!\downarrow\right|$, \. $\be(x) := \left|x\!\uparrow\right|$,
$\al(B) := \left|B\!\downarrow\right|$ \. and \. $\be(x) := \left|B\!\uparrow\right|$,
to denote their sizes.

In a poset \ts $P=(X,\prec)$, elements \ts $x,y\in X$ \ts are called
\defn{parallel} or \defn{incomparable} if \ts $x\not\prec y$ \ts
and \ts $y \not \prec x$.  We write \. $x\parallel y$ \. in this case.
Denote by \.  $\comp(x) := \{y \in X \, : \, x \prec y \ \text{or} \ x \succ y\}$ \.
the set of elements \ts $y\in X$ \ts comparable to~$x$.
\defn{Comparability graph} \. $\Ga(P) = (X,E)$ \ts is a graph on~$X$
with edges \ts $E=\{(x,y) \, : \, x\prec y \ \text{or} \ x\succ y \}$.

A \defn{chain} is a subset \ts $C\ssu X$ \ts of pairwise comparable elements.
Let \ts $\cC(P)$ \ts denote the set of chains in \ts $P$.  The \defn{height} \ts
of poset \ts $P=(X,\prec)$ \ts is the maximum size \ts $|C|$ \ts of a chain
\ts $C\in \cC(P)$.  Chain \ts $C\in \cC(P)$ \ts  is called \defn{maximal} \ts
if there is no chain \ts $C'\in \cC(P)$ \ts such that \ts $C\ssu C'$.

An \defn{antichain} is a subset \ts $A\ssu X$ \ts of pairwise incomparable elements.
Let \ts $\cA(P)$ \ts denote the set of antichains in \ts $P$.  The \defn{width} of
poset  \ts $P=(X,\prec)$ \ts is the maximal size \ts $|A|$ \ts of a antichain \ts $A\in \cA(P)$.
Antichain \ts $A$ \ts is called \defn{maximal} \ts if there is no antichain \ts
$A'\in \cC(P)$ \ts such that \ts $A\ssu A'$.

A \defn{dual poset} \ts is a poset \ts $P^\ast=(X,\prec^\ast)$, where
\ts $x\prec^\ast y$ \ts if and only if \ts $y \prec x$.
A \defn{product} \ts $P\times Q$ \ts of posets \ts $P=(X,\prec)$ \ts
and \ts $Q=(Y,\prec^\circ)$ \. is a poset \ts $(X\times Y,\prec^\di)$,
where the relation \. $(x,y) \preccurlyeq^\di (x',y')$  \. if and only if
\. $x \preccurlyeq x'$ \. and \. $y \preccurlyeq^\circ y'$,
for all \ts $x,x'\in X$ \ts and \ts $y,y'\in Y$.

A \defn{disjoint sum} \ts $P+Q$ \ts of posets \ts $P=(X,\prec)$ \ts
and \ts $Q=(Y,\prec')$ \. is a poset \ts $(X\cup Y,\prec^\di)$,
where the relation $\prec^\di$ coincides with $\prec$ and $\prec'$ on
$X$~and~$Y$, and \. $x\.\|\. y$ \. for all \ts $x\in X$, $y\in Y$.
A \defn{linear sum} \ts $P\oplus Q$ \ts of posets \ts $P=(X,\prec)$ \ts
and \ts $Q=(Y,\prec')$ \. is a poset \ts $(X\cup Y,\prec^\di)$,
where the relation $\prec^\di$ coincides with $\prec$ and $\prec'$ on
$X$~and~$Y$, and \. $x\prec^\di y$ \. for all \ts $x\in X$, $y\in Y$.

Posets constructed from one-element posets by recursively taking
disjoint and linear sums are called \defn{series-parallel}.
Both \defn{$n$-chain} \ts $C_n$ \ts and \defn{$n$-antichain} \ts $A_n$
\ts are examples of series-parallel posets.  These posets can be
characterized by not having poset \ts $N=\{x\prec y \succ z \prec w\}$
as induced subposet (thus they are also called \defn{$N$-free}).
\defn{Forest} \ts is
a series-parallel poset formed by recursively taking disjoint sums
(as before), and linear sums with one element: \ts $C_1 \oplus P$.

Let \ts $P=(X,\prec)$ \ts and \ts $P'=(X,\prec')$ \ts be two posets
on the same ground set.  We say that \ts $P$ \ts and \ts $P'$ \ts
are \defn{consistent} \ts if there are no elements \ts $x,y\in X$, s.t.\
\ts $x\prec y$ \ts and \ts $y \prec' x$.  For consistent posets, the
\defn{intersection} \ts $P\cap P' = (X,\prec^\di)$ \ts is
the poset with the union of the relations: \ts $x\prec y$ \ts
if \ts $x \prec y$ \ts or \ts $x \prec' y$.

Poset \ts $P=(X,\prec)$ \ts is \defn{graded} \ts
 if there is a rank function \ts $\rho: X\to \nn$ \ts such
 that \ts $\rho(x)\le \rho(y)$ \ts for all \ts $x\prec y$, and
 $\rho(y)= \rho(x)+1$ \ts for all covers~$y$ of~$x$. If $P$ and $Q$
 are graded, then so are \ts $P+Q$, \ts $P \oplus Q$ \ts and \ts $P\times Q$.
\defn{Boolean algebra} \ts $B_m := C_2 \times \cdots \times C_2$ \ts ($m$ times)
and the \defn{grid poset} \ts $G_{k\ell} := C_k \times C_\ell$ \ts are examples of graded
posets.

Let \ts $P=(X,\prec)$ \ts be a poset. For elements \ts $x,y\in X$,
the \defn{greatest lower bound} \ts $x\wedge y$ \ts is an element
s.t.\ \. $z\prec x, \ts z \prec y \, \Rightarrow \, z \prec (x\wedge y)$.
Similarly, the \defn{least upper bound} \ts $x \vee y$ \ts is an element
s.t.\ \. $z\succ x, \ts z \succ y \, \Rightarrow \, z \succ (x\wedge y)$.
Poset \ts $P$ \ts is a \defn{lattice} \ts if \ts $x\wedge y$ \ts and
\ts $x \vee y$ \ts are well defined for all \ts $x,y \in X$.  We use
\ts $\cL=(X,\vee,\wedge)$ \ts notation in this case.  Lattice \ts $\cL$ \ts
is \defn{distributive} \ts if \.
$x\wedge (y \vee z) = (x \wedge y) \vee (x \wedge z)$ \. for all \ts $x,y,z\in X$
(note that the dual identity follows).  Both \ts $B_m$ \ts and \ts $G_{k\ell}$ \ts
are examples of distributive lattices.

We refer to \cite[Ch.~3]{Sta-EC} and \cite[Ch.~12]{West21} for accessible
textbook introductions to posets,
to surveys \cite{BW00,Tro95} for further definitions and standard
results, and to \cite{CLM12} for a recent monograph.\footnote{See also \ts
\href{https://en.wikipedia.org/wiki/Glossary_of_order_theory}{w.wiki/7Yy6}
\ts for a quick guide to the terminology.}

\smallskip

\subsection{Linear extensions and order polynomial}\label{ss:def-LE}
Let \. $P=(X,\prec)$ \. be a poset with \. $|X|=n$ \. elements.
Denote \. $[n]:=\{1,\ldots,n\}$.
A \defn{linear extension} of $P$ is a bijection \. $f: X \to [n]$,
such that
\. $f(x) < f(y)$ \. for all \. $x \prec y$.
Denote by \ts $\Ec(P)$ \ts the set of linear extensions of $P$,
and let \. $e(P):=|\Ec(P)|$.
%

For an integer \ts $t\ge 1$, denote by \. $\Omega(P,t)$ \. the number of
order preserving maps \. $g: X\to [t]$, i.e.\ maps which satisfy
\. $g(x)\le g(y)$ \. for all \. $x \prec y$.  This is the
\defn{order polynomial} \ts corresponding to poset~$P$.

Let \ts $P=(X,\prec)$, where \ts $X=\{x_1,\ldots,x_n\}$.  We will always assume that \ts $X$
\ts has a \defn{natural labeling}, i.e.\ \ts $f: x_i\to i$ \. is a linear extension.
A \defn{$P$-partition} \ts is an order preserving map \. $h: X\to \nn$, i.e.\
maps which satisfies \. $h(x)\le h(y)$ \. for all \. $x \prec y$.
Denote by \ts $\PP(P)$ \ts the set of $P$-partitions and let  \ts $\PP(P,t)$ \ts be
the set of $P$-partitions with values at most~$t$.\footnote{In \cite{Sta-thesis,Sta-EC},
Stanley uses $P$-partitions to denote order-reversing rather than order-preserving
maps.}

Let
\begin{equation}\label{eq:OP-q-def}
\Omega_q(\cP) \, := \, \sum_{h\ts \in \ts \PP(P)} \. q^{h(x_1) \. + \. \ldots \. +  \. h(x_n)} \quad \text{and} \quad
\Omega_q(\cP,t) \, := \, \sum_{h \ts \in \ts \PP(P,t)} \. q^{h(x_1) \. + \. \ldots \. +  \. h(x_n)}\..
\end{equation}
Stanley showed, see \cite[Thm~3.15.7]{Sta-EC}, that there is a statistics \. $\maj: \Ec(\cP) \to \nn$,
such that
\begin{equation}\label{eq:Sta-PP}
\Omega_q(P) \ = \ \frac{e_q(P)}{(1-q)(1-q^2)\cdots (1-q^n)} \,,
\end{equation}
where
\begin{equation}\label{eq:Sta-PP-q}
e_q(P) \, := \, \sum_{f\ts \in \ts \Ec(P)} \. q^{\maj(f)}\..
\end{equation}
More generally, let
\begin{equation}\label{eq:OP-bq-def}
\Omega_\bq(P,t) \, := \, \sum_{h\in \PP(P,t)} \. q_1^{h(x_1)} \ts \cdots \. q_n^{h(x_n)}\..
\end{equation}
We call this GF the \defn{multivariate order polynomial}.
Note that Stanley gave a generalization
of \eqref{eq:Sta-PP} and \eqref{eq:Sta-PP-q} for \ts $\Omega_\bq(\cP)$,  see \cite[Thm~3.15.5]{Sta-EC}. Finally,
for integer \ts $t\geq 0$, define
\begin{equation}\label{eq:OP-rK-def}
\Phi_\bz(P,t) \, := \, \sum_{h \ts\in \ts \PP(P,t)} \. z_0^{m_0(h)} \. z_1^{m_1(h)} \. \cdots \. z_t^{m_t(h)}\.,
\end{equation}
where \ts $m_i(h):= |h^{-1}(i)|$ \ts is the number of values $i$ in the \ts $P$-partition~$h$.  Clearly,
\begin{equation}\label{eq:OP-rK-exp}
\Phi_\bz(P,t) \, = \, \Om_q(P,t), \quad \text{where} \quad \bz \. = \. (1,q,q^2,\ldots,q^t).
\end{equation}

Finally, for a fixed linear ordering \ts $X=\{x_1,\ldots,x_n\}$,
denote by \ts $\sign(f)$ \ts the sign of \ts $f\in \Ec(P)$ \ts viewed
as a permutation in~$S_n\ts$. Define the \defn{sign-imbalance}:
\begin{equation}\label{eq:def-SI}
\SI(P) \, := \, |\Si(P)|\ts, \quad \text{where} \quad \Si(P) \, := \, \sum_{f\in \Ec(P)} \. \sign(f).
\end{equation}
Clearly, the sign-imbalance \ts $\SP(P)$ \ts is independent of the ordering of~$X$.
Poset~$P$ is called \defn{sign-balanced} \ts if \ts $\SP(P)=0$.
We refer to \cite{Sta-sign} for the introduction to sign-(im)balance.

\smallskip

\smallskip

\subsection{Poset polytopes}\label{ss:def-poset-polytopes}
Let \ts $P=(X,\prec)$ \ts be a poset with \ts $|X|=n$ \ts elements.
The \defn{order polytope} \ts $\cO_P\subset \rr^n$ \ts is defined as
\begin{equation}\label{eq:order-def}
0\le \al_x \le 1 \quad \text{for all}  \quad x\in X, \qquad
\al_x \le \al_y \quad \text{for all}  \quad  x\prec y, \ \ x,y \in X.
\end{equation}
Similarly, the \defn{chain polytope} (also known as the \defn{stable set polytope})
 \ts $\cS_P\subset  \rr^n$ \ts  is defined as
\begin{equation}\label{eq:chains-def}
\be_x \ge 0 \quad \text{for all}  \quad x\in X\., \qquad
\be_x + \be_y + \ldots \le 1 \quad \text{for all}  \quad  C=\{x\prec y \prec \cdots \} \in \cC(P).
\end{equation}
In \cite{Sta-two}, Stanley computed the volume of both polytopes:
\begin{equation}\label{eq:two-poset}
\Vol \ts (\cO_P) \, = \, \Vol \ts (\cS_P) \, = \, \frac{e(P)}{n!}\..
\end{equation}
This connection is the key to many applications of geometry to
poset theory and vice versa.
Stanley also computed the \defn{Ehrhart polynomial} \ts
\begin{equation}\label{eq:two-poset-OP}
\big|t\cdot \cO_P \cap \zz^n\big| \, = \,\big|t\cdot \cS_P \cap \zz^n\big| \, = \, \Omega(P,t+1)\..
\end{equation}
Note that \eqref{eq:two-poset} and \eqref{eq:two-poset-OP} give:
\begin{equation}\label{eq:OP-asy}
\Omega(P,t) \ \sim \ \frac{e(P)\, t^n}{n!}   \quad \text{as} \ \ \ t \to \infty\ts.
\end{equation}

\smallskip

\subsection{Complexity} \label{ss:def-CS}
We assume that the reader is familiar with basic notions and results in
computational complexity and only recall a few definitions.  We use standard
complexity classes: \. $\poly$, \. $\FP$, \. $\NP$,\. $\coNP$, \. $\SP$, \. $\Sigmap_m$ \. and \. $\PH$.
The notation \. $\{a =^? b\}$ \. is used to denote the
decision problem whether \ts $a=b$.  We use the \emph{oracle notation} \ts
{\sf R}$^{\text{\sf S}}$ \ts for two complexity classes \ts {\sf R}, {\sf S} $\subseteq \PH$,
and the polynomial closure \ts $\<${\sc A}$\>$ for a problem \ts {\sc A} $\in \PSPACE$.
We will also use less common classes \.
$$
\GapP:= \{f-g \mid f,g\in \SP\} \quad \text{and} \quad
\CEP:=\{f(x)=^?g(y) \mid f,g\in \SP\}.
$$
Note that \ts $\coNP \subseteq \CEP$.

We also assume that the reader is familiar with standard decision and
counting problems: \ts {\sc 3SAT}, \ts {\sc \#3SAT} \ts and
\ts {\sc PERMANENT}.  Denote by \ts {\sc \#LE} \ts the problem of
computing the number \ts $e(P)$ \ts of linear extensions.
%
For a counting function \ts $f\in \SP$,
the \defn{coincidence problem} \ts is defined as:
$$\text{\sc C}_f \ := \ \big\{\ts f(x) \. = ^? \ts f(y) \ts \big\}.
$$
Note the difference with the \defn{equality verification problem}:
$$
\text{\sc E}_{f-g} \, := \, \big\{ \ts f(x) \. =^? \. g(x) \big\},
$$
where \ts $f,g\in \SP$ \ts are counting functions and
\ts $x\in X$ \ts is an input.
Clearly, we have both \ts $\text{\sc E}_{f-g}\in \CEP$ \. and \. $\text{\sc C}_f \in \CEP$.
Note also that \.$\text{\sc C}_\text{\#3SAT}$ \ts
is both \ts $\CEP$-complete \ts and \ts $\coNP$-hard.

The distinction between \emph{binary} \ts and \emph{unary} \ts presentation
will also be important.  We refer to \cite{GJ78} and \cite[$\S$4.2]{GJ79}
for the corresponding notions of $\NP$-completeness and \emph{strong} \ts $\NP$-completeness.
Unless stated otherwise, we use the word ``\emph{reduction}'' \ts to mean the
polynomial Turing reduction.

We refer to \cite{AB09,Gold,Pap} for definitions and standard results
in computational complexity, and to \cite{Aar16,Wig19} for extensive
surveys of computational complexity applications in mathematics.
See \cite{GJ79} for the classical introduction
and a long list of $\NP$-complete problems.
See also \cite[$\S$13]{Pak-OPAC}
for a recent overview of $\SP$-complete problems in combinatorics.



\medskip

\section{Basic inequalities for the numbers linear extensions}\label{s:basic}

Here by basic inequalities we mean inequalities for \ts $e(P)$ \ts in terms of
various poset parameters.  The inequalities themselves range from elementary
to highly nontrivial.

\subsection{Induced subsets} \label{ss:basic-induced}
Let \ts $P=(X,\prec)$ \ts be a finite poset.  Denote by \ts $k(P)$ \ts
the number of incomparable pairs of elements: \ts $x \ts \| \ts y$, where \ts $x,y\in X$.
Equivalently, \ts $k(P)$ \ts is the number of induced subposets isomorphic to \ts $A_2$\ts.
Similarly, denote by \ts $\ell(P)$ \ts and \ts $m(P)$ \ts the number of induced subposets
isomorphic to \ts $A_3$ \ts and \ts $(C_2+C_1)$, respectively.

\begin{prop}[{\rm Ewacha, Rival and Zaguia \cite{ERZ97}}{}] \label{p:basic-induced}
Let \ts $P=(X,\prec)$ \ts be a finite poset, and let \ts $k=k(P)$, \ts $\ell=\ell(P)$
\ts and \ts $m=m(P)$ \ts defined as above.  Then:
\begin{equation}\label{eq:basic-induces}
2^k \ts \left(\tfrac34\right)^{\ell+m} \, \le \, e(P) \, \le \, 2^k.
\end{equation}
\end{prop}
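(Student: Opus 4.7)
The upper bound $e(P)\le 2^k$ admits a one-line injection proof: every linear extension $f\in\Ec(P)$ determines, for each of the $k$ incomparable pairs $\{x,y\}\subseteq X$, whether $f(x)<f(y)$; the comparable pairs being already oriented by $P$, the entire linear order is recovered from this vector in $\{+,-\}^k$, so $|\Ec(P)|\le 2^k$.

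For the lower bound my plan is to set up a probabilistic encoding. Let $\sigma\in\{+,-\}^k$ be a uniformly random orientation of the incomparable pairs; together with the relations of $P$ it produces a tournament $T_\sigma$ on $X$, and
\[
\frac{e(P)}{2^k} \, = \, \Pr\bigl[T_\sigma \text{ is transitive}\bigr].
\]
Since every directed cycle in a tournament contains a directed $3$-cycle, it suffices to bound the probability that $T_\sigma$ contains no $3$-cycle. A direct case analysis of the five possible $3$-element induced subposets shows that only the antichain $A_3$ and the mixed type $(C_2+C_1)$ can support a $3$-cycle in $T_\sigma$: for an $A_3$ all three pairs are random and exactly $2$ of the $8$ orientations are cyclic; for a $(C_2+C_1)$ one pair is fixed and exactly $1$ of the $4$ orientations of the remaining two completes a cycle. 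In both cases the individual probability of a $3$-cycle is $1/4$, while the types $C_3$, $\Lambda$, $V$ can never be cyclic because their fixed edges preclude it. Thus $P$ has precisely $\ell+m$ ``bad'' triples and the target inequality reduces to
\[
\Pr\bigl[\,T_\sigma \text{ has no } 3\text{-cycle on any bad triple}\,\bigr] \, \ge \, (3/4)^{\ell+m}.
\]

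The main obstacle is this last inequality, since the events $B_T=\{3\text{-cycle on }T\}$ for different bad triples $T$ are not independent when they share an incomparable pair. My plan would be induction on $k=k(P)$. The base case $k=0$ is a chain, where $\ell=m=0$ and $e(P)=1$. For the inductive step, pick an incomparable pair $\{x,y\}$ and form $P^+=P\cup(x\prec y)$ and $P^-=P\cup(y\prec x)$ together with their transitive closures, so that $e(P)=e(P^+)+e(P^-)$. One then tracks how the parameters evolve: every triple $\{x,y,z\}$ that was $A_3$ in $P$ becomes $(C_2+C_1)$ in $P^\pm$, so the combined quantity $\ell+m$ on that triple is preserved; every triple of type $(C_2+C_1)$ built on $\{x,y\}$ in $P$ either stays in the same class or disappears because transitive closure forces new comparabilities. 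The goal is to check that in each of these local situations the ratio $e(P^\pm)/e(P)$ loses at most a factor of $(3/4)$ per unit drop in $\ell+m$ relative to the unit drop in $k$, after which the symmetric splitting $e(P)=e(P^+)+e(P^-)$ combined with the inductive hypothesis for both $P^\pm$ yields the claim.

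The delicate part of this plan — and where I expect the real work to lie — is the bookkeeping for the transitive-closure case, where a single addition of $x\prec y$ may force several other relations and simultaneously destroy multiple bad triples. An alternative route, which may be cleaner, is to write $\Pr[\text{no 3-cycle}]=\Eb\bigl[\prod_T(1-\mathbf{1}[B_T])\bigr]$ and verify by a pair/triple case analysis that overlapping $B_T$ events are either pairwise independent (as happens, for instance, for two $A_3$-triples sharing one random edge, since conditioning on that edge leaves $\Pr[B_T]=1/4$ unchanged) or even mutually exclusive (three $A_3$-triples sharing a common vertex), so that the product lower bound $(3/4)^{\ell+m}$ survives the aggregation.
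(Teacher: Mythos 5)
Your upper bound is correct and is the standard one-line injection. The reduction of the lower bound to the random-tournament statement, the classification of the five three-element induced subposets, and the observation that only the $\ell$ copies of $A_3$ and the $m$ copies of $C_2+C_1$ can support a directed $3$-cycle (each with probability exactly $1/4$) are all correct and are the right framework. The gap is in the aggregation step, and unfortunately the route you describe as ``cleaner'' is the one that fails. The product lower bound $\Pr\bigl[\bigcap_T \overline{B_T}\bigr] \ge \prod_T \Pr\bigl[\overline{B_T}\bigr]$ requires that the \emph{complements} $\overline{B_T}$ be positively correlated, and mutual exclusivity of the $B_T$'s is the worst possible case for that, not a favorable one: if $B_{T_1}\cap B_{T_2}=\varnothing$ with $\Pr[B_{T_i}]=1/4$, then
\[
\Pr\bigl[\overline{B_{T_1}}\cap\overline{B_{T_2}}\bigr] \;=\; 1-\tfrac14-\tfrac14 \;=\; \tfrac12 \;<\; \tfrac{9}{16} \;=\; \Pr\bigl[\overline{B_{T_1}}\bigr]\,\Pr\bigl[\overline{B_{T_2}}\bigr],
\]
so the pairwise product bound is violated, not preserved. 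This is not hypothetical: take $T_1=\{x,y,z_1\}$ with the single relation $x\prec z_1$, and $T_2=\{x,y,z_2\}$ with the single relation $y\prec z_2$, both of type $C_2+C_1$ and sharing the incomparable pair $\{x,y\}$. Then $B_{T_1}$ forces the orientation $y\to x$ while $B_{T_2}$ forces $x\to y$, so they are mutually exclusive, and the $(3/4)^2$ bound already fails for this pair. (The full inequality for the whole poset $C_2+C_2$ still holds, $6/16 \ge (3/4)^4$, but not by the pairwise-correlation reasoning you invoke.) Also, the three $A_3$-triples sharing a vertex in an induced $A_4$ are not pairwise mutually exclusive --- only their triple intersection is empty, while each pair is independent --- so that example does not support the claim either.

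The induction on $k$ you sketch is a more promising route, but as you yourself note, its heart is the transitive-closure bookkeeping, and that is precisely where the work is. The step requires the quantitative inequality $f(P^+)+f(P^-)\ge f(P)$ where $f(Q)=2^{k(Q)}(3/4)^{\ell(Q)+m(Q)}$, i.e.\ writing $\delta k^\pm=k-k^\pm$ and $\delta^\pm=(\ell+m)-(\ell^\pm+m^\pm)$,
\[
2^{-\delta k^+}\bigl(\tfrac43\bigr)^{\delta^+} \;+\; 2^{-\delta k^-}\bigl(\tfrac43\bigr)^{\delta^-} \;\ge\; 1\,,
\]
and this is not an obvious consequence of ``every unit drop in $k$ beyond the first is matched by enough drop in $\ell+m$''; when adding $x\prec y$ forces several extra comparabilities, one must actually count how many bad triples are destroyed per forced pair and for which choice of $\{x,y\}$. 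Since you have not carried out this bookkeeping, and since the correlation shortcut does not work, the lower bound is at present unproven in your proposal. The correct framework is there; what is missing is a valid argument for the final inequality, either by completing the induction with a carefully chosen incomparable pair and an explicit triple-counting lemma, or by an entirely different device.
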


The upper bound is trivial and tight for linear sums of \ts $A_2$ \ts and \ts $C_1$\ts.
The lower bound is tight for \ts linear sums of \ts $A_3$ \ts and \ts $(C_2+C_1)$ \ts
and is weak when \ts $(\ell+m)$ \ts is large.  The authors have conjectured further
inequalities of this type.

\smallskip

\subsection{Partitions into chains and antichains} \label{ss:basic-part}
Suppose \ts $P=(X,\prec)$ \ts is a subposet of \ts $Q=(X,\prec')$, i.e.,
we have \. $x \prec' y$ \. implies \. $x\prec y$.  Clearly,
we have \ts $e(P)\le e(Q)$.  The following easy consequence is
especially notable:

\begin{prop}  \label{p:basic-chains}
Let \ts $P=(X,\prec)$ \ts be a poset with \ts $|X|=n$ \ts elements,
and let \ts $X=C_1 \cup\ldots \cup  C_\ell$ \ts be a partition into
disjoint chains of sizes \. $c_1,\ldots,c_\ell$\ts.  Then we have:
\begin{equation}\label{eq:basic-chains}
e(P) \, \le \, \tbinom{n}{\ts c_1\ts,\ts \ldots \ts,\ts c_\ell \ts}\ts.
\end{equation}
\end{prop}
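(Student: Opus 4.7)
The plan is to bound $e(P)$ by the number of linear extensions of a weaker poset obtained by forgetting all comparabilities between distinct chains, and then to count those directly.

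Let $Q=(X,\prec')$ be the disjoint sum $C_1 + \ldots + C_\ell$, where each $C_i$ inherits its total order from $P$ (note that since $C_i$ is a chain in $P$, this makes sense). Then $x\prec' y$ implies $x\prec y$, so by the monotonicity observation recalled in $\S\ref{ss:basic-part}$ (if $P$ is a subposet of $Q$ then $e(P)\le e(Q)$, applied with the roles reversed here: $Q$ is a subposet of $P$'s order relation, so $e(P)\le e(Q)$), we obtain
\begin{equation*}
e(P) \, \le \, e(Q).
\end{equation*}

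It then remains to compute $e(Q)$ for $Q$ a disjoint sum of chains of sizes $c_1,\ldots,c_\ell$. A linear extension of $Q$ is determined by the choice, for each position in $[n]$, of which chain's next element it receives; equivalently, it is an interleaving of the $\ell$ chains. Such an interleaving is encoded by an ordered partition of $[n]$ into blocks of sizes $c_1,\ldots,c_\ell$, so
\begin{equation*}
e(Q) \, = \, \binom{n}{c_1,\,\ldots,\,c_\ell}\.,
\end{equation*}
which yields the desired inequality.

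There is no real obstacle: the only subtle point is aligning the direction of the monotonicity inequality correctly, i.e.\ recognizing that removing comparabilities (passing from $P$ to $Q$) only increases the number of linear extensions.
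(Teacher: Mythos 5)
Your proof is correct and is exactly the argument the paper intends: pass to the weaker poset $Q = C_1 + \cdots + C_\ell$ obtained by forgetting inter-chain comparabilities, use monotonicity of $e(\cdot)$ under removing relations, and count interleavings to get $e(Q)=\binom{n}{c_1,\ldots,c_\ell}$. One small terminological slip: in the paper's convention from $\S$\ref{ss:def-posets}, ``$P$ is a subposet of $Q$'' already means $x\prec' y \Rightarrow x\prec y$ (i.e.\ $Q$ has \emph{fewer} relations), so the monotonicity observation applies to your $P$ and $Q$ directly --- there is no need to ``reverse roles.''
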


We also have the following antichain version, straight by definition.

\begin{prop}  \label{p:basic-antichains}
Let \ts $P=(X,\prec)$, and let \ts $X=A_1 \cup\ldots \cup  A_m$ \ts be a partition
into disjoint antichains of sizes \. $a_1,\ldots,a_m$\ts,
such that \. $A_1 \prec \ldots \prec A_m$\ts. Then we have:
\begin{equation}\label{eq:basic-antichains}
e(P) \, \ge \, a_1! \. \cdots \. a_m!
\end{equation}
\end{prop}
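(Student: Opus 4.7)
The plan is to exhibit an explicit injection from the product of symmetric groups $S_{a_1} \times \cdots \times S_{a_m}$ into the set $\Ec(P)$, which immediately gives the bound $e(P) \geq a_1! \cdots a_m!$.

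First, I would set up the construction. Write $n_i := a_1 + a_2 + \cdots + a_i$ with $n_0 := 0$, so that $[n]$ is partitioned into the consecutive blocks $B_i := \{n_{i-1}+1, \ldots, n_i\}$ of sizes $a_1, \ldots, a_m$. Call a bijection $f : X \to [n]$ \emph{block-respecting} if $f(A_i) = B_i$ for every $i \in [m]$. There are exactly $a_1! \cdots a_m!$ such bijections, parametrized by choosing a linear order on each antichain $A_i$ independently.

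Next, I would verify that every block-respecting bijection is a linear extension of $P$. Suppose $x \prec y$ in $P$ with $x \in A_i$ and $y \in A_j$. Since each $A_k$ is an antichain, $i \neq j$. The hypothesis $A_1 \prec A_2 \prec \cdots \prec A_m$ rules out $j < i$ (otherwise $y \prec x$ would hold, forcing $x = y$), so $i < j$. For a block-respecting $f$ we then have $f(x) \leq n_i \leq n_{j-1} < f(y)$, which gives $f(x) < f(y)$, as required.

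Finally, since distinct block-respecting bijections are distinct linear extensions, $\Ec(P)$ contains at least $a_1! \cdots a_m!$ elements, establishing \eqref{eq:basic-antichains}. There is no substantive obstacle here; the argument is a direct bookkeeping once one observes that the antichain condition kills any intra-block constraints and the hypothesis $A_1 \prec \cdots \prec A_m$ ensures every cross-block comparability goes in the correct direction.
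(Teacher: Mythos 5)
Your proof is correct and matches the paper's intent: the paper states the result follows "straight by definition," and your block-respecting bijection construction is exactly the straightforward argument being alluded to. One minor observation (not an error): under the strong hypothesis $A_1 \prec \cdots \prec A_m$, the poset $P$ is precisely the linear sum $A_{a_1} \oplus \cdots \oplus A_{a_m}$, so in fact every linear extension is block-respecting and the inequality is an equality; the proposition is stated as an inequality only so that it can be compared with its non-trivial generalization, Theorem~\ref{t:basic-antichains-BO}.
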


The following is a surprising generalization that extends this to all
partitions into antichains.

\begin{thm}[{\rm Bochkov and Petrov \cite[Cor.~6]{BP21}}{}]  \label{t:basic-antichains-BO}
Let \ts $P=(X,\prec)$ \ts be a poset, and let \ts $X=A_1 \cup\ldots \cup  A_m$ \ts be a partition
into disjoint antichains of sizes \. $a_1,\ldots,a_m$\ts.  Then we have:
\begin{equation}\label{eq:basic-antichains-BP}
e(P) \, \ge \, a_1! \. \cdots \. a_m!
\end{equation}
\end{thm}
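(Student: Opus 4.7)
The plan is to prove Theorem~\ref{t:basic-antichains-BO} by reducing it to a single-antichain deletion lemma and then iterating.

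\textbf{Key Lemma.} For any finite poset $P=(X,\prec)$ and any antichain $A \subseteq X$ of size $a$, one has $e(P) \ge a! \cdot e(P|_{X\sm A})$.

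Assuming the Key Lemma, Theorem~\ref{t:basic-antichains-BO} follows by applying it successively to $A_1, A_2, \ldots, A_m$:
\[
e(P) \ \ge \ a_1! \cdot e\bigl(P|_{X\sm A_1}\bigr) \ \ge \ a_1!\ts a_2! \cdot e\bigl(P|_{X\sm(A_1\cup A_2)}\bigr) \ \ge \ \ldots \ \ge \ a_1!\ts a_2!\cdots a_m!.
\]
At the $k$-th step, the Key Lemma is applied to $P|_{X\sm(A_1\cup\ldots\cup A_{k-1})}$ with antichain $A_k$, which remains an antichain after restriction.

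To prove the Key Lemma, I would decompose $e(P) = \sum_{\sigma \in S_A} e\bigl(P^{(\sigma)}\bigr)$, where $P^{(\sigma)}$ is the poset obtained by adding to $P$ the chain relation on $A$ induced by $\sigma$. Since no $P$-path connects two elements of the antichain $A$, the poset $P^{(\sigma)}$ is automatically acyclic. Every linear extension of $P$ restricts to a unique permutation of $A$, justifying the decomposition. The Key Lemma therefore reduces to $\sum_{\sigma \in S_A} e\bigl(P^{(\sigma)}\bigr) \ \ge \ a!\ts e(P|_{X\sm A})$, i.e., the $\sigma$-average of $e(P^{(\sigma)})$ is at least $e(P|_{X\sm A})$.

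The \emph{main obstacle} is this averaging step. Individual terms $e(P^{(\sigma)})$ can fall below $e(P|_{X\sm A})$ when $\sigma$ runs against the bias that $P$'s cross-antichain relations impose on $A$ (small examples in the vein of $P=\{a_1\prec b_2,\. b_1 \prec a_2\}$ with $A=\{b_1,b_2\}$ and $\sigma=b_2 b_1$ already exhibit this), so the inequality cannot be proved term by term and requires a global argument. My preferred attack is via the order polytope: the identity
\[
\Vol(\Oc_P) \ = \ \int_{\tau \. \in \. \Oc_{P|_{X\sm A}}} \prod_{y \in A}\bigl(R_y(\tau) - L_y(\tau)\bigr)\, d\tau,
\]
with $L_y(\tau) = \max\{\tau_x:x\in X\sm A,\.x \prec y\}$ and $R_y(\tau) = \min\{\tau_x:x\in X\sm A,\.x\succ y\}$, reduces the Key Lemma to the geometric statement that the average fiber volume over $\tau$ is at least $1/\binom{n}{a}$. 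The crux is coupling the product structure of the fiber volume to a sharp global average, plausibly via an AM-GM or Brunn--Minkowski argument on the log-concave base measure $\Oc_{P|_{X\sm A}}$, or alternatively by exhibiting an explicit pairing (injection) between pairs $(\tau,\sigma) \in \Ec(P|_{X\sm A}) \times S_A$ and $\Ec(P)$.
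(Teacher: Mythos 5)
Your reduction to the single-antichain Key Lemma is correct, and the iteration is clean: at each stage $A_k$ remains an antichain of the restricted poset since restriction can only delete relations. Your claim that $P^{(\sigma)}$ is acyclic is also correct (a cycle through the added $\sigma$-edges would force a $P$-path between two elements of $A$, contradicting that $A$ is an antichain), so the decomposition $e(P)=\sum_{\sigma\in S_A}e(P^{(\sigma)})$ is valid. You also correctly identify that the inequality cannot be proved term-by-term, so the Key Lemma really is an averaging statement. So far so good --- but you stop exactly at the hard step, and none of the three suggested fixes (AM--GM, Brunn--Minkowski, an unspecified pairing) is actually carried out. As written, the proposal does not constitute a proof.

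The good news is that your Key Lemma \emph{is} true, and the missing step can be closed cleanly --- but by passing to the \emph{chain} polytope $\cS_P$ rather than the order polytope $\cO_P$. Chains meet each antichain in at most one point, which is exactly the structure you want to exploit. Concretely, define
\[
Q' \, := \, \Bigl\{\. \beta\in\Rb_{\ge 0}^X \ : \ \max_{y\in A}\beta_y\le 1, \ \ \beta|_{X\sm A}\,\in\,\bigl(1-\max_{y\in A}\beta_y\bigr)\cdot\cS_{P-A} \.\Bigr\}.
\]
Then $Q'\subseteq\cS_P$: for any chain $C$ of $P$ one has $|C\cap A|\le 1$, so $\sum_{x\in C}\beta_x \le \max_{A}\beta + \sum_{x\in C\sm A}\beta_x \le \max_A\beta + (1-\max_A\beta)=1$. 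Computing the volume by first integrating over $\beta|_A\in[0,1]^a$ (with $\gamma=\max_A\beta$ having density $a\gamma^{a-1}$) gives
\[
\Vol(Q') \ = \ \Vol(\cS_{P-A})\cdot a\!\int_0^1 \gamma^{a-1}(1-\gamma)^{\,n-a}\,d\gamma \ = \ \Vol(\cS_{P-A})\cdot\frac{a!\,(n-a)!}{n!}\,,
\]
and then $e(P)=n!\,\Vol(\cS_P)\ge n!\,\Vol(Q')= a!\,e(P-A)$, which is exactly the Key Lemma. (A Dirichlet-integral variant of this argument, using $Q=\{\beta\ge 0 : \sum_i\max_{x\in A_i}\beta_x\le 1\}\subseteq\cS_P$, proves the theorem directly without iterating, yielding $\Vol(Q)=\prod_i a_i!/n!$.) This is, I believe, close to the Bochkov--Petrov argument the survey cites; in any case it replaces your unresolved averaging step with a one-line polytope inclusion plus a Beta/Dirichlet integral, which is both complete and shorter than the order-polytope route you set up.

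One last remark: your Key Lemma is strictly \emph{stronger} than the statement to be proved (it makes no hypothesis on $X\sm A$), so it is not a special case of the theorem and genuinely needs its own proof --- it is worth being explicit about that, since a reader might otherwise suspect circularity.
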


In \cite[Thm~1]{BP21}, both the upper bound \eqref{eq:basic-chains}
and the lower bound \eqref{eq:basic-antichains} are extended to the
\defng{Greene--Kleitman--Fomin parameters}, see e.g.\ \cite{BF01,GK78}.

\smallskip

\subsection{Recursion over antichains} \label{ss:basic-recursion}
The following result follows directly from the definition:

\begin{prop}  \label{p:basic-recursion}
Let \. $\min(P)$ \ts be the set of minimal elements of \ts $P=(X,\prec)$.  We have:
\begin{equation}\label{eq:basic-recursion}
e(P) \, = \, \sum_{x \ts \in \ts \min(P)} \. e(P-x)\ts.
\end{equation}
\end{prop}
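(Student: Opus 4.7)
The plan is to establish the identity by a direct bijective argument, partitioning the set $\Ec(P)$ according to which element receives the label $1$. The only conceptual observation needed is that an element $x \in X$ can be assigned the value $f(x) = 1$ by some linear extension $f$ if and only if $x$ is minimal in $P$: if $x$ is not minimal then some $y \prec x$ would need to satisfy $f(y) < f(x) = 1$, which is impossible, and conversely if $x$ is minimal the assignment $f(x) = 1$ violates no order relation.

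First I would define, for each $x \in \min(P)$, the subset $\Ec_x(P) := \{f \in \Ec(P) : f(x) = 1\}$. By the observation above, these sets partition $\Ec(P)$:
\begin{equation*}
\Ec(P) \, = \, \bigsqcup_{x \in \min(P)} \Ec_x(P)\ts.
\end{equation*}
Next I would exhibit, for each $x \in \min(P)$, a bijection
\begin{equation*}
\Phi_x \ts : \ts \Ec_x(P) \, \longrightarrow \, \Ec(P - x)
\end{equation*}
defined by $\Phi_x(f)(y) := f(y) - 1$ for all $y \in X - x$. One checks that $\Phi_x(f)$ is a bijection from $X - x$ to $[n-1]$ and preserves the order relations of $P - x = P|_{X-x}$, since these are inherited from $P$ and $f$ already respects them. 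The inverse sends $g \in \Ec(P-x)$ to the map $\wt g$ on $X$ with $\wt g(x) = 1$ and $\wt g(y) = g(y) + 1$ for $y \neq x$; this is a linear extension of $P$ because $x$ is minimal, so no relation $y \prec x$ needs to be checked.

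Taking cardinalities and summing over $x \in \min(P)$ yields \eqref{eq:basic-recursion}. There is no real obstacle here: the only subtlety is remembering to invoke minimality of $x$ when verifying that the inverse map lands in $\Ec(P)$, and the argument is otherwise a direct unpacking of the definitions of linear extension and of the induced subposet $P - x$ from Section~\ref{ss:def-posets}.
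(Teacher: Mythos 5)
Your proof is correct and is precisely the argument the paper has in mind when it says the proposition "follows directly from the definition" without writing out details: partition $\Ec(P)$ by the preimage of $1$, observe that preimage must be a minimal element, and shift labels down to land in $\Ec(P-x)$. This is the standard (and essentially unique) route.
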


By induction, this gives the following upper bound:

\begin{cor} \label{c:basic-width}
Let \ts $P=(X,\prec)$ \ts be a poset of width \ts $w$ \ts with \ts $|X|=n$ \ts elements.
Then we have:
\. $e(P) \le w^n$.
\end{cor}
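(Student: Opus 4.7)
The plan is a straightforward induction on \ts $n=|X|$ \ts using the recursion in Proposition~\ref{p:basic-recursion}.

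First, I would observe the key structural fact: the set \ts $\min(P)$ \ts of minimal elements of \ts $P$ \ts is itself an antichain, since any two minimal elements are pairwise incomparable by definition. Hence \ts $|\min(P)| \le w$, where \ts $w$ \ts is the width of~$P$. Second, for any \ts $x \in \min(P)$, the induced subposet \ts $P-x$ \ts has width at most~$w$, because every antichain in \ts $P-x$ \ts is also an antichain in~$P$.

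With these two observations, the induction is immediate. For the base case \ts $n=0$ \ts (or \ts $n=1$) \ts we have \ts $e(P)=1 \le w^n$, assuming \ts $w\ge 1$ \ts (which holds whenever \ts $X$ \ts is nonempty). For the inductive step, apply Proposition~\ref{p:basic-recursion} and the inductive hypothesis to each term:
\[
e(P) \, = \, \sum_{x \ts\in\ts \min(P)} \. e(P-x) \, \le \, |\min(P)| \cdot w^{n-1} \, \le \, w \cdot w^{n-1} \, = \, w^n.
\]

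There is no real obstacle here; the only point requiring care is the monotonicity of width under deleting an element and the identification of \ts $\min(P)$ \ts as an antichain. Both are essentially definitional, so the proof is a one-line consequence of Proposition~\ref{p:basic-recursion} together with Dilworth's perspective that minimal elements form an antichain.
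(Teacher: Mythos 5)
Your proof is correct and is exactly the induction the paper has in mind when it says the corollary follows "by induction" from Proposition~\ref{p:basic-recursion}: bound $|\min(P)|\le w$ since $\min(P)$ is an antichain, note that width does not increase under deletion, and sum the inductive hypothesis over the recursion. Nothing further is needed.
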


The following inequality is a surprising generalization of the proposition:

\begin{thm}[{\rm Edelman--Hibi--Stanley~\cite{EHS89} and Stachowiak~\cite{Sta89a}}{}]\label{t:basic-EHS}
Let \ts $A$ \ts be an antichain in \ts $P=(X,\prec)$.  We have:
\begin{equation}\label{eq:basic-EHS}
e(P) \, \ge \, \sum_{x \ts \in \ts A} \. e(P-x)\ts.
\end{equation}
Moreover, this inequality is an equality \. \underline{if and only if} \.
$A$ \. intersects every maximal chain, i.e.\ \ts $|C\cap A|=1$ \ts
for all \ts $C\in \cC(P)$. Additionally, the defect of~\eqref{eq:basic-EHS}
is in~$\ts \SP$.
\end{thm}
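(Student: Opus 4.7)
The plan is to construct an injection $\Phi\colon \bigsqcup_{x\in A}\Ec(P-x) \hookrightarrow \Ec(P)$, from which all three parts (inequality, equality characterization, $\SP$ defect) will follow. The guiding precedent is Proposition~\ref{p:basic-recursion}: when $A\subseteq \min(P)$, the trivial ``prepend $x$ at the front'' rule already gives a bijection; I need a substitute that works for an arbitrary antichain.

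To motivate $\Phi$, I track the first $A$-element $x(f)\in A$ of every extension $f\in \Ec(P)$. For a pair $(x,g)$ with $x\in A$ and $g\in \Ec(P-x)$, let $M(x,g)$ be the number of positions $p$ at which inserting $x$ into $g$ yields an extension of $P$ having $x$ as its first $A$-element. Explicitly,
\[
  M(x,g) \, = \, \max\bigl(0,\; \min(p_{\max},\,q_1(g)) - p_{\min} + 1\bigr),
\]
where $p_{\min},p_{\max}$ are the valid insertion bounds for $x$ in $g$, and $q_1(g)$ is the position of the first element of $A-x$ inside $g$ (taken as $|X|$ when absent). Each $f\in \Ec(P)$ is obtained exactly once as the insertion of $x(f)$ into $f|_{X-x(f)}$, so $e(P)=\sum_{(x,g)}M(x,g)$. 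The desired inequality then reduces to $\sum_{(x,g)}(M(x,g)-1)\ge 0$, i.e., the ``bad'' pairs with $M=0$ are outnumbered by the ``extra'' valid positions in pairs with $M\ge 2$.

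The key technical step is an explicit injection from bad pairs to extras. For a bad pair $(x,g)$, let $y$ be the \emph{last} predecessor of $x$ inside $g$ (necessarily at position $>q_1(g)$ by badness) and set $x':=g^{-1}(q_1(g))\in A-x$. I propose to pair $(x,g)$ with the triple $(x',g',p)$, where $g'\in \Ec(P-x')$ is obtained by removing $x'$ from $g$ and reinserting $x$ immediately after $y$'s shifted position, and $p$ is the position of $x'$ translated into $g'$. Taking $y$ to be the \emph{last} predecessor is what guarantees that every other predecessor of $x$ still precedes $x$ in $g'$, so $g'$ genuinely lies in $\Ec(P-x')$; tracking $q_1(g')$ then shows $p\in (p_{\min}(x',g'),\,\min(p_{\max}(x',g'),q_1(g'))]$, i.e., $p$ is an extra. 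The hardest part --- and the main obstacle --- is verifying injectivity and handling interleaved $A$-elements; I expect one may need to iterate the swap (peeling off $A$-elements one at a time) or refine the canonical choice of $p$ when $A$-elements accumulate between $q_1(g)$ and $y$.

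For the equality case, $\Phi$ is a bijection precisely when no bad pair exists, which in turn is equivalent to every maximal chain of $P$ meeting $A$: any maximal chain $C$ avoiding $A$ can be threaded through a predecessor of some $x\in A$ to produce a $g\in \Ec(P-x)$ in which that predecessor sits past an $A$-element, giving a bad pair. For the $\SP$ assertion, the defect equals $\bigl|\Ec(P)\setminus\mathrm{Image}(\Phi)\bigr|$; since the canonical insertion rule defining $\Phi$ is polynomial-time computable and its image is polynomial-time recognizable, the defect is witnessed directly as a $\SP$ counting function.
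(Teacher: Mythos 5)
Your accounting identity $e(P) = \sum_{(x,g)} M(x,g)$, obtained by tracking the first $A$-element of each extension, is correct and is a nice starting point; it reduces \eqref{eq:basic-EHS} to injecting the ``bad'' pairs (those with $M=0$) into the surplus positions of pairs with $M\ge 2$, and this route is genuinely different from all three proofs the survey points to (promotion operators in \cite{EHS89}, a short direct induction in \cite{Sta89a}, and Sidorenko's flow in \cite{CPP-effective}). However, the single swap you propose already fails once two $A$-elements precede the last predecessor of $x$ in $g$. Take $X=\{a,b_1,b_2,c\}$ with the single relation $a\prec c$ and $A=\{b_1,b_2,c\}$; here the theorem holds with equality (every maximal chain meets $A$, and $e(P)=12=6+3+3$), so the injection must in fact be a bijection. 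The pair $(c,g)$ with $g=b_1b_2a$ is bad ($p_{\min}(c,g)=4$ while $q_1(g)=1$), and your recipe gives $x'=b_1$, $g'=b_2ac$, $p=1$. But $M(b_1,b_2ac)=1$: the only position at which $b_1$ can be inserted into $b_2ac$ and still be the first $A$-element is position $1$, since $b_2\in A$ already occupies that slot; hence this target contributes zero extras and the map is not well-defined. This is precisely the interleaving you flag as the obstacle, and neither escape hatch you suggest helps here: $(b_1,b_2ac)$ is a good pair with $M=1$, so there is nothing to iterate the swap on, and choosing $x'=b_2$ (the $A$-element nearest $y$) lands in $(b_2,b_1ac,2)$, again a pair with $M=1$ and no extras.

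The equality argument also establishes only the easy direction (a maximal chain avoiding $A$ produces a bad pair). For the converse you would additionally need that when $A$ meets every maximal chain both bad pairs and extras vanish; a bijection between the two classes, were it established, would make these equivalent, but you have neither the bijection nor an independent argument killing the extras. Since the $\SP$ claim rides entirely on the injection being explicit and its image polynomial-time recognizable, it inherits the gap. The correct multi-step ``peeling'' operation you anticipate is, I believe, essentially the promotion/demotion machinery used in \cite{EHS89}; if you want a lower-tech route, reconstructing Stachowiak's induction over $\min(P)$ from \cite{Sta89a} is likely the fastest repair.
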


Clearly, the set of minimal elements \ts $\min(P)$ \ts satisfies contains
an element from every maximal chain, so Theorem~\ref{t:basic-EHS} implies
Proposition~\ref{p:basic-recursion}. Note that not every maximal antichain
satisfies this condition. For example, take chains \ts $C_2$ \ts on \ts
$\{1,2\}$ \ts and \ts $C_3$ \ts on \ts $\{1',2',3'\}$, and let \ts
$P=C_2 \times C_3$. Consider an antichain \ts
$A=\big\{(1,3'), \ts (2,1')\big\}$ in~$P$.  Now note that a maximal chain
\. $C=\big\{(1,1'), \ts (1,2'), \ts (2,2'), \ts (2,3')\big\}$ \.
does not contain elements in~$A$.

Note that the original proof of Theorem~\ref{t:basic-EHS} in~\cite{EHS89}
is combinatorial and uses promotion operators (see~$\S$\ref{ss:proof-inject}).
We refer to \cite[Cor.~8.2]{CPP-effective} for a proof using Sidorenko's flow
(see~$\S$\ref{ss:proof-comb-opt}).  The proof in~\cite{Sta89a} uses a
simple induction, and the theorem is applied to obtain the
following result:

\begin{thm}[{\rm Stachowiak~\cite{Sta89a}}{}]\label{t:Stach}
Let \ts $P=(X,\prec)$ \ts and \ts $Q=(X,\prec')$ \ts be two posets on
the same ground set, such that their comparability graphs satisfy:
\ts $\Ga(P) \supseteq \Ga(Q)$.  Then \ts $e(P) \le e(Q)$.  In particular,
$e(P)$ \ts depends only on \ts $\Ga(P)$.  Additionally, if \ts $e(P)=e(Q)$,
then \ts $P=Q$.
\end{thm}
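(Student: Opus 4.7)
The plan is to prove both parts by induction on $n := |X|$, with Proposition~\ref{p:basic-recursion} applied to $P$ and Theorem~\ref{t:basic-EHS} applied to $Q$ as the two pillars. The base case $n=1$ is immediate.

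The crucial observation for the inductive step is that $M := \min(P)$ is an antichain of $Q$ as well as of $P$: by definition no two elements of $M$ are comparable in $P$, i.e.\ they form non-edges of $\Gamma(P)$, and since $\Gamma(P) \supseteq \Gamma(Q)$ the same pairs are non-edges of $\Gamma(Q)$. For each $x \in M$, the containment $\Gamma(P) \supseteq \Gamma(Q)$ restricts to $\Gamma(P-x) \supseteq \Gamma(Q-x)$, and the inductive hypothesis gives $e(P-x) \le e(Q-x)$. Chaining Proposition~\ref{p:basic-recursion} (applied to $P$) with Theorem~\ref{t:basic-EHS} (applied to the antichain $M$ in~$Q$) produces
\[
e(P) \, = \, \sum_{x \in M} e(P-x) \, \le \, \sum_{x \in M} e(Q-x) \, \le \, e(Q),
\]
which is the desired inequality. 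The ``in particular'' clause then follows by applying the inequality to both $(P,Q)$ and $(Q,P)$ whenever $\Gamma(P) = \Gamma(Q)$.

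For the equality statement, assume $e(P) = e(Q)$, so every inequality in the display above is tight. Induction gives $\Gamma(P-x) = \Gamma(Q-x)$ for every $x \in M$, and the equality clause of Theorem~\ref{t:basic-EHS} forces $M$ to meet every maximal chain of~$Q$. If $|M| \ge 2$, then every pair $\{y,z\} \subseteq X$ sits inside some $X-x$ with $x \in M \smallsetminus \{y,z\}$, and the restriction equalities patch together to give $\Gamma(P) = \Gamma(Q)$. The main obstacle is the unique-minimum subcase $M = \{x_0\}$: the restriction equalities say nothing about edges of $\Gamma(Q)$ incident to $x_0$, so one must bootstrap from the maximal-chain condition instead. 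Since every maximal chain of~$Q$ contains $x_0$, any $y \ne x_0$ can be extended to a maximal chain of~$Q$ that also passes through $x_0$, forcing $y$ and $x_0$ to be comparable in~$Q$; combined with $\Gamma(P-x_0) = \Gamma(Q-x_0)$, this yields $\Gamma(P) = \Gamma(Q)$, which is the content of the equality conclusion.
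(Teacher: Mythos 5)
Your argument is correct and reproduces essentially the route the survey attributes to Stachowiak: take $M=\min(P)$, note that non-edges of $\Gamma(P)$ are non-edges of $\Gamma(Q)$ so $M$ is an antichain of $Q$, and chain Proposition~\ref{p:basic-recursion} on $P$ against Theorem~\ref{t:basic-EHS} on $Q$, then trace the equality conditions. Two small remarks. First, you (correctly) read the theorem's final clause as ``$\Gamma(P)=\Gamma(Q)$'' rather than the literal ``$P=Q$'' printed in the statement; this is the right reading, since a poset and its dual have the same comparability graph and the same number of linear extensions, so ``$P=Q$'' cannot be meant at face value. Second, in the $|M|\ge 2$ subcase your sentence ``every pair $\{y,z\}\subseteq X$ sits inside some $X-x$ with $x\in M\smallsetminus\{y,z\}$'' fails for the single pair $\{y,z\}=M$ when $|M|=2$; but that pair consists of two elements minimal in $P$, hence is a non-edge of $\Gamma(P)$ and a fortiori of $\Gamma(Q)$, so it causes no trouble. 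It would be cleaner to observe that it suffices to check edges of $\Gamma(P)$, and any such edge omits at least one element of the antichain $M$ whenever $|M|\ge 2$.
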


For posets of height two, Theorem~\ref{t:Stach} was proved in an earlier
paper \cite{Sta88}.
That \ts $e(P)$ \ts depends only on the comparability graph \ts $\Ga(P)$ \ts
was also proved in~\cite{EHS89}, and extended to the order polynomial.
Note that Theorem~\ref{t:Stach} follows easily from the volume formula
\eqref{eq:two-poset} and geometric description of vertices of the chains
polytope \ts $\cC_P(P)$, see below.   We refer to \cite{Sta-two} for the
introduction, and to \cite{Iri17} for a recent exploration of the connection
between \ts $e(P)$ \ts and orientations of \ts $\Ga(P)$.

\begin{cor} \label{c:Stach}
Let \ts $P=(X,\prec)$ \ts be a poset with \ts $|X|=n$ \ts elements,
and let \ts $x\in X$.  Then: \. $e(P) \le n \ts e(P-x)$.
\end{cor}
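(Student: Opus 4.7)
The plan is to prove the bound by a direct fiber-counting argument on the natural restriction map, without appealing to the deeper recursion in Theorem~\ref{t:basic-EHS} or the comparability-graph invariance of Theorem~\ref{t:Stach}. Define \ts $\Phi: \Ec(P)\to \Ec(P-x)$ \ts by sending \ts $f$ \ts to the linear extension \ts $g: X-x\to [n-1]$ \ts obtained from \ts $f$ \ts by deleting the value \ts $f(x)$ \ts and shifting every label greater than \ts $f(x)$ \ts down by one. Since \ts $\prec$ \ts on \ts $X-x$ \ts is the restriction of \ts $\prec$ \ts on \ts $X$, any order-preserving bijection \ts $f$ \ts descends to an order-preserving bijection \ts $g=\Phi(f)$, so \ts $\Phi$ \ts is well defined.

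The key step is to bound the size of each fiber of \ts $\Phi$. For a fixed \ts $g\in \Ec(P-x)$, a preimage \ts $f\in \Phi^{-1}(g)$ \ts is uniquely determined by the value \ts $k:=f(x)\in [n]$, via the reconstruction \ts $f(y)=g(y)$ \ts if \ts $g(y)<k$ \ts and \ts $f(y)=g(y)+1$ \ts if \ts $g(y)\ge k$. Hence \ts $|\Phi^{-1}(g)|\le n$, and summing over fibers gives
\[
e(P) \ = \ \sum_{g\ts \in\ts \Ec(P-x)} |\Phi^{-1}(g)| \ \le \ n\ts e(P-x),
\]
as claimed.

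No serious obstacle arises; the argument is essentially a one-line counting estimate. A minor sharpening worth recording is that a valid \ts $k$ \ts must satisfy \. $\max\{g(y):y\prec x\}<k\le \min\{g(z):z\succ x\}$ \. (with the conventions that the maximum over the empty set is \ts $0$ \ts and the minimum over the empty set is \ts $n$), leaving only \ts $n-|\comp(x)|$ \ts admissible positions. This yields the stronger bound \ts $e(P)\le \bigl(n-|\comp(x)|\bigr)\,e(P-x)$, which is tight when \ts $x$ \ts is incomparable to every other element of \ts $P$, in which case \ts $e(P)=n\ts e(P-x)$.
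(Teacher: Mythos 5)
Your fiber-counting argument is correct, and it is genuinely different from the paper's route. The paper deduces the corollary from Theorem~\ref{t:Stach} (Stachowiak's result that \ts $e(P)$ \ts depends monotonically on the comparability graph), by setting \ts $Q := (P-x)+C_1$ \ts so that \ts $\Ga(P)\supseteq\Ga(Q)$ \ts and \ts $e(Q)=n\,e(P-x)$. Your proof avoids Theorem~\ref{t:Stach} entirely: the restriction map \ts $\Phi:\Ec(P)\to\Ec(P-x)$ \ts is well defined, each fiber is parametrized injectively by \ts $k=f(x)\in[n]$, so \ts $|\Phi^{-1}(g)|\le n$, and summing over fibers gives the bound. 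This is more elementary and self-contained, since Theorem~\ref{t:Stach} is itself a nontrivial result; the tradeoff is that the paper's proof places the corollary inside the broader comparability-graph picture. Your sharpening is also correct: writing \ts $d=|\{y:y\prec x\}|$ \ts and \ts $u=|\{z:z\succ x\}|$, the elements below \ts $x$ \ts force \ts $\max\{g(y):y\prec x\}\ge d$ \ts and those above force \ts $\min\{g(z):z\succ x\}\le (n-1)-u+1$, so every fiber has at most \ts $n-d-u=n-|\comp(x)|$ \ts elements, giving \ts $e(P)\le(n-|\comp(x)|)\,e(P-x)$, with equality exactly when \ts $x$ \ts is isolated.
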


This follows immediately from Theorem~\ref{t:Stach} by taking \ts
$Q:=(P-x) + C_1$ \ts and noting that \ts $\Ga(P) \supseteq \Ga(Q)$,
\ts $e(Q) = n \ts e(P-x)$.

\smallskip

\subsection{Weighted chains} \label{ss:basic-weighted-chains}
There is a better way to give an upper bound for \ts $e(P)$, by assigning weights
to elements of antichains.

\begin{prop}\label{p:LYM-antichains}
Let \ts $\xi: X\to \rr_{>0}$ \ts be a positive function s.t.\
\begin{equation}\label{eq:LYM-antichain-condition}
\sum_{x\ts \in \ts A} \. \xi(x) \, \le \, 1 \quad \text{for all} \quad A\in \cA(P).
\end{equation}
Then:
\begin{equation}\label{eq:LYM-antichain}
e(P) \, \le \, \prod_{x\in X} \. \frac{1}{\xi(x)}\..
\end{equation}
\end{prop}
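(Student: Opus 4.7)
The plan is to proceed by induction on \ts $n=|X|$\ts, using the recursion \eqref{eq:basic-recursion} together with the fact that the set of minimal elements forms an antichain on which the hypothesis \eqref{eq:LYM-antichain-condition} can be applied directly.

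The base case \ts $n=1$ \ts is immediate: the singleton \ts $\{x\}$ \ts is itself an antichain, so \ts $\xi(x)\le 1$, and \ts $e(P)=1\le 1/\xi(x)$. For the inductive step, let \ts $M:=\min(P)$, which is an antichain in~$P$, and observe that for every \ts $x\in M$ \ts the restriction \ts $\xi':=\xi|_{X-x}$ \ts satisfies the antichain condition \eqref{eq:LYM-antichain-condition} for the poset \ts $P-x$, because every antichain of \ts $P-x$ \ts is also an antichain of~$P$. By Proposition~\ref{p:basic-recursion} and the inductive hypothesis applied to each \ts $P-x$,
\begin{equation*}
e(P) \, = \, \sum_{x\in M} \. e(P-x) \, \le \, \sum_{x\in M} \. \prod_{y\in X-x} \frac{1}{\xi(y)} \, = \, \left(\prod_{y\in X} \frac{1}{\xi(y)}\right) \sum_{x\in M} \. \xi(x).
\end{equation*}
Finally, since \ts $M$ \ts is an antichain, \eqref{eq:LYM-antichain-condition} gives \ts $\sum_{x\in M}\xi(x)\le 1$, and \eqref{eq:LYM-antichain} follows.

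There is no real obstacle here: the whole argument hinges on the single structural observation that minimal elements of a poset form an antichain, which lets the induction close neatly without ever needing to track more refined information about~$\xi$. The one point that deserves care is verifying that the hypothesis on~$\xi$ is genuinely inherited by every subposet of the form \ts $P-x$, but this is automatic because restricting a positive weight function cannot enlarge any antichain sum. It is worth noting that an entirely analogous argument, replacing \ts $\min(P)$ \ts by \ts $\max(P)$ \ts and the recursion on minimal elements by the dual recursion, yields the same bound; one could also phrase the proof as an application of Theorem~\ref{t:basic-EHS} to \ts $A=\min(P)$, which strengthens \eqref{eq:basic-recursion} to an equality in this case but is not needed for the inequality.
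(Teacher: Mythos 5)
Your proof is correct, but it proceeds by a genuinely different route from the one the paper intends. The survey indicates (in $\S$\ref{ss:proof-comb-opt}) that Proposition~\ref{p:LYM-antichains} is proved geometrically via the chain polytope~$\cS_P$ and its dual: the hypothesis~\eqref{eq:LYM-antichain-condition} says exactly that $\langle \chi_A, \xi\rangle \le 1$ for every antichain~$A$, and since the vertices of $\cS_P$ are precisely the indicators of antichains, this places $\cS_P$ inside the simplex $\{\beta \ge 0 : \langle\beta,\xi\rangle \le 1\}$, whose volume is $\tfrac{1}{n!}\prod_x \xi(x)^{-1}$; combined with $e(P)=n!\,\vol(\cS_P)$ from~\eqref{eq:two-poset}, this gives~\eqref{eq:LYM-antichain}. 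Your argument instead runs a direct induction on $|X|$ through the recursion~\eqref{eq:basic-recursion} over minimal elements, exploiting that $\min(P)$ is an antichain and that the hypothesis on~$\xi$ is inherited by every $P-x$. The induction is clean and correct, and it has the virtue of being completely elementary --- it needs nothing beyond Proposition~\ref{p:basic-recursion} --- whereas the geometric proof fits into the volume framework used throughout the survey and makes the connection to the LYM bound~\eqref{eq:LYM-chains-upper} geometrically transparent. One small inaccuracy in your closing remark: Theorem~\ref{t:basic-EHS} does not ``strengthen'' \eqref{eq:basic-recursion} to an equality --- Proposition~\ref{p:basic-recursion} already is an equality; rather, Theorem~\ref{t:basic-EHS} gives a one-sided inequality for arbitrary antichains that specializes to this equality when $A=\min(P)$. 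This does not affect your main argument.
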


\smallskip

For example, \ts $\xi(x):=1/w$ \ts gives Corollary~\ref{c:basic-width}.
The following construction shows how this bound can be improved.
For $x\in X$, denote by \ts $\cC_x(P)\subseteq \cC(P)$ \ts the subset
of chains in~$P$ which contain~$x$.  Let \ts $v: \cC(P) \to \rr_{\ge 0}$ \ts
be a probability distribution on chains in~$P$.  Denote
$$
c_x \, := \, \sum_{C\in \cC_x(P)} \. v(C)\ts.
$$

\begin{cor}\label{c:LYM-chains-upper}
Suppose \ts $c_x>0$ \ts for all \ts $x\in X$.  Then:
\begin{equation}\label{eq:LYM-chains-upper}
e(P) \, \le \, \prod_{x\in X} \. \frac{1}{c_x}\..
\end{equation}
\end{cor}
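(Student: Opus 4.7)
The plan is to deduce this corollary directly from Proposition~\ref{p:LYM-antichains} by taking \ts $\xi(x) := c_x$ \ts for every \ts $x \in X$. Since we are told \ts $c_x > 0$, this \ts $\xi$ \ts is a positive function on~$X$, so the only thing to check is the antichain inequality~\eqref{eq:LYM-antichain-condition}. Once that is in hand, the conclusion~\eqref{eq:LYM-chains-upper} is literally the conclusion~\eqref{eq:LYM-antichain} of the proposition.

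To verify \eqref{eq:LYM-antichain-condition}, I would fix an antichain \ts $A \in \cA(P)$ \ts and swap the order of summation:
\[
\sum_{x \in A} c_x \, = \, \sum_{x \in A} \. \sum_{C \in \cC_x(P)} v(C) \, = \, \sum_{C \in \cC(P)} v(C)\. \bigl|C \cap A\bigr|.
\]
The crucial observation is that a chain \ts $C$ \ts and an antichain~$A$ \ts can intersect in at most one element, so \ts $|C\cap A|\le 1$ \ts for every \ts $C \in \cC(P)$. Combined with \ts $v(C)\ge 0$ \ts and \ts $\sum_{C} v(C)=1$ \ts (since $v$ is a probability distribution on chains), this gives
\[
\sum_{x \in A} c_x \, \le \, \sum_{C \in \cC(P)} v(C) \, = \, 1,
\]
which is exactly~\eqref{eq:LYM-antichain-condition}.

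There is no real obstacle here: the whole content is the duality between chains and antichains captured by \ts $|C\cap A|\le 1$, together with the normalization of~$v$. Proposition~\ref{p:LYM-antichains} then immediately yields \ts $e(P) \le \prod_{x\in X} 1/c_x$, completing the argument.
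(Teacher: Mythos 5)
Your proof is correct and follows the same route the paper takes: set $\xi(x):=c_x$, verify the antichain condition via $|C\cap A|\le 1$ for chains $C$ and antichains $A$, and invoke Proposition~\ref{p:LYM-antichains}. The paper's own proof is terser (it simply notes the condition ``holds trivially for every chain, and thus for every probability distribution''), but your double-summation computation is exactly the spelled-out version of that remark.
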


\begin{proof} Take $\xi(x) := c_x$.  Observe that condition
\eqref{eq:LYM-antichain-condition} holds trivially for every chain \ts
$C\in \cC(P)$, and thus for every probability distribution~$v$ on $\cC(P)$.
Thus, \eqref{eq:LYM-chains-upper} follows from \eqref{eq:LYM-antichain}.
\end{proof}

\smallskip

\subsection{LYM property} \label{ss:basic-LYM}
Let \ts $P=(X,\prec)$ \ts be graded poset with the rank function \ts $\rho: X\to \{0,\ldots,\ell\}$.
Denote by \ts $X_r$ \ts the set of elements of rank~$r$, i.e.\ $X_r=\rho^{-1}(r)$,
and let \ts $n_r:=|X_r|$ \ts for all \ts $1\le r \le \ell$.  Clearly, $n_r \le w$,
where \ts $w$ \ts is the width of~$P$.

We say that $P$ has \defn{LYM property} \ts if for every antichain \ts $A\in \cA(P)$ \ts we have:
\begin{equation}\label{eq:LYM-def}\tag{LYM}
\sum_{x\in A} \. \frac{1}{n_{\rho(x)}} \, \le \, 1\ts.
\end{equation}
In particular, the width \ts $w = \max_r n_r$\ts.  Such posets are
called \defn{LYM posets}.

Let $G$ be a subgroup of $\Aut(P)$ which acts transitively on the set
\ts $\cC_{\text{max}}$ \ts of maximal chains in~$P$.
Let \ts $v$ \ts be a uniform distribution on $\cC_{\text{max}}$.
By transitivity, $c_x = \frac{1}{n_r}$ \. where \ts $r=\rho(x)$.
Since \ts $\cC_x \cap \cC_y=\emp$ \ts for all $x,y\in A$, we have \eqref{eq:LYM-def}.
We can now use \eqref{eq:LYM-chains-upper} to conclude:

\begin{cor}[{\rm \cite{SK87}}{}]\label{c:LYM-bounds}
Let \ts $P=(X,\prec)$ \ts be a graded poset with \ts $n_r$ \ts elements of rank~$r$,
and with a transitive action on the set of maximal chains in~$P$.  Then \ts $P$ \ts
is a LYM poset, and
\begin{equation}\label{eq:LYM-bounds}
e(P) \, \le \, \prod_{r=0}^\ell \. (n_r)^{n_r}\..
\end{equation}
\end{cor}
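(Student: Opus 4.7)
The plan is to apply Corollary~\ref{c:LYM-chains-upper} to the uniform distribution $v$ on the set $\cC_{\text{max}}$ of maximal chains in~$P$, exactly as flagged in the paragraph preceding the statement. Two things must be verified: (i) this $v$ yields $c_x = 1/n_r$ for every $x$ of rank $r$, and (ii) the weights $\xi(x):=c_x$ satisfy the LYM-type antichain condition~\eqref{eq:LYM-antichain-condition}.

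For (i), I would argue by double counting. Set $N:=|\cC_{\text{max}}|$ and, for $x\in X$, write $N_x$ for the number of maximal chains through~$x$, so that $c_x = N_x/N$. Since every $g\in G$ preserves rank and $G$ is transitive on $\cC_{\text{max}}$, picking any two $x,y\in X_r$ together with maximal chains $C_x\ni x$, $C_y\ni y$ and a $g\in G$ with $gC_x=C_y$ forces $gx=y$ (the unique rank-$r$ element of $C_y$). Thus $G$ acts transitively on each $X_r$ and $N_x$ depends only on $\rho(x)$. Counting the incidences $\{(x,C)\,:\,x\in X_r,\,C\in \cC_{\text{max}},\,x\in C\}$ in two ways --- noting that each maximal chain $C$ meets $X_r$ in exactly one element --- gives $n_r N_x = N$, hence $c_x = 1/n_r$ as required.

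For (ii), let $A\in \cA(P)$. No chain of~$P$ can contain two distinct incomparable elements, so the sets $\cC_x$ for $x\in A$ are pairwise disjoint; therefore $\sum_{x\in A} c_x = v\bigl(\bigcup_{x\in A}\cC_x\bigr) \le 1$, which is exactly~\eqref{eq:LYM-antichain-condition}, and in view of (i) coincides with the LYM inequality~\eqref{eq:LYM-def}. Applying Corollary~\ref{c:LYM-chains-upper} with these weights then yields $e(P)\le \prod_{x\in X} 1/c_x = \prod_{x\in X} n_{\rho(x)} = \prod_{r=0}^\ell n_r^{n_r}$, after grouping elements by rank in the last step. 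The only genuinely non-routine ingredient is the orbit-counting argument in~(i); everything else is a direct specialization of the general chain-weighted bound~\eqref{eq:LYM-chains-upper}.
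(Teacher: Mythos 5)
Your proposal is correct and follows exactly the route the paper itself sketches in the paragraph immediately preceding the corollary: take $v$ uniform on maximal chains, observe $c_x = 1/n_{\rho(x)}$ by transitivity, note $\cC_x \cap \cC_y = \emptyset$ for incomparable $x, y$ to get the LYM condition, and invoke Corollary~\ref{c:LYM-chains-upper}. The only addition is that you spell out the double-counting argument (transitivity on $\cC_{\text{max}}$ plus rank-preservation gives transitivity on each $X_r$, hence $n_r N_x = N$) behind the paper's terse assertion that $c_x = 1/n_r$ ``by transitivity.''
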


Note the sequence of implications here:  transitive action implies
\eqref{eq:LYM-def} and the upper bound in \eqref{eq:LYM-bounds}.  The following
result proves the implication directly.

\begin{thm}[{\rm Brightwell--Tetali~\cite[Thm~5.1]{BT03}}{}]\label{t:LYM-BT}
Let \ts $P=(X,\prec)$ \ts be a graded LYM poset with $n_r$ elements of rank~$r$,
\ts $0\le r \le \ell$.  Then:
\begin{equation}\label{eq:LYM-bounds-BT}
\prod_{r=0}^\ell \. (n_r)! \, \le \,
e(P) \, \le \, \prod_{r=0}^\ell \. (n_r)^{n_r}\..
\end{equation}
\end{thm}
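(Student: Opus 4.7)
The plan is to handle the two bounds independently, each by a direct appeal to a result already established in this section. For the upper bound I would apply Proposition~\ref{p:LYM-antichains} with the weight function $\xi(x) := 1/n_{\rho(x)}$: the LYM hypothesis \eqref{eq:LYM-def} is exactly the antichain condition \eqref{eq:LYM-antichain-condition} for this choice of $\xi$, so the proposition gives
$$
e(P) \, \le \, \prod_{x\in X} \frac{1}{\xi(x)} \, = \, \prod_{x\in X} n_{\rho(x)} \, = \, \prod_{r=0}^\ell (n_r)^{n_r},
$$
after grouping the product rank by rank. This half is entirely mechanical and contains no surprises.

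For the lower bound, the natural move is to partition $X$ into antichains by rank, $X = X_0 \cup X_1 \cup \ldots \cup X_\ell$ with $|X_r| = n_r$. Each $X_r$ really is an antichain because in a finite graded poset any strict relation $x \prec y$ decomposes into cover relations, each increasing rank by one, so $\rho(x) < \rho(y)$. One would then like to conclude via Proposition~\ref{p:basic-antichains}, and this is where the main obstacle sits: that proposition demands $X_0 \prec X_1 \prec \ldots \prec X_\ell$ in the strong sense that every element of $X_i$ lies below every element of $X_{i+1}$, a condition which fails for generic graded posets, where elements of different ranks need not be comparable at all. The clean workaround is to invoke Theorem~\ref{t:basic-antichains-BO} of Bochkov and Petrov, which removes precisely this comparability hypothesis and immediately yields $e(P) \ge n_0! \, n_1! \, \cdots \, n_\ell!$.

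Should one wish to avoid invoking Bochkov--Petrov, the lower bound also admits a short induction on $\ell$: the top fiber $X_\ell$ lies inside $\max(P)$, since any $y \succ x$ with $x \in X_\ell$ would force $\rho(y) > \ell$, and is an antichain of maximal elements, so each linear extension of $P|_{X \setminus X_\ell}$ extends to $P$ in $(n_\ell)!$ ways by appending $X_\ell$ in arbitrary order at the end. This gives $e(P) \ge (n_\ell)! \cdot e(P|_{X \setminus X_\ell})$ and closes the induction. I would also flag in the writeup that the LYM hypothesis is genuinely required only for the upper bound; the lower bound holds for every finite graded poset.
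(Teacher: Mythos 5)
Your proof is correct and, for the upper bound, essentially the same as the paper's: setting $\xi(x) := 1/n_{\rho(x)}$ in Proposition~\ref{p:LYM-antichains} is exactly what the LYM hypothesis is designed to feed, and the rank-by-rank grouping of the product finishes the calculation. The paper tacitly takes this route, so there is nothing new to say on that side.

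For the lower bound, the paper's remark that it ``is given by \eqref{eq:basic-antichains}'' is looser than the statement of Proposition~\ref{p:basic-antichains} actually supports, and you are right to flag it: the rank-level partition $X_0,\ldots,X_\ell$ does not in general satisfy $X_0 \prec \cdots \prec X_\ell$ in the all-pairs sense required there (already in the Boolean lattice a rank-$1$ and a rank-$2$ subset are usually incomparable). Both of your repairs are valid. Invoking Theorem~\ref{t:basic-antichains-BO} does the job in one line but calls on substantially more than graded posets require. Your induction is better proportioned: since $X_\ell \subseteq \max(P)$ is an antichain, any linear extension of $P|_{X\setminus X_\ell}$ extends to one of $P$ in exactly $(n_\ell)!$ ways by placing $X_\ell$ last in arbitrary order, giving $e(P) \ge (n_\ell)!\cdot e(P|_{X\setminus X_\ell})$, and $P|_{X\setminus X_\ell}$ is again graded with ranks $0,\ldots,\ell-1$. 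Your closing observation that the LYM hypothesis is needed only for the upper bound is accurate and worth retaining in the writeup.
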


Here the lower bound is given by \eqref{eq:basic-antichains}.  The upper bound
is an improvement over the \ts $e(P)\le w^n$ \ts bound in Corollary~\ref{c:basic-width}.
Corollary~\ref{c:LYM-bounds} was extended to all graded posets with LYM property.
See also \cite{Sha98} for the same result with slightly stronger assumptions.
We refer to \cite{GK76,GK78,Kle74} for equivalent definitions of LYM posets, and to
\cite[$\S$12.2]{West21} for further applications of~\eqref{eq:LYM-def}.

\begin{ex}[{\rm Boolean algebra}{}] \label{ex:LYM-Boolean}
Let \ts $B_k$ \ts be the poset of all subsets of $[k]$ by inclusion,
so $n=2^k$ and $\ell=k$.   Observe that the symmetric group $S_k$ acts
transitively on \ts $\cC(B_k)$, and thus \ts $B_k$ \ts
has the LYM property.
In particular, this implies \defng{Sperner's theorem}:
$$w \, := \, \width(B_k) \, = \, \tbinom{k}{\lfloor k/2\rfloor}.
$$
Now both Sha--Kleitman inequality~\eqref{eq:LYM-bounds} and Brightwell--Tetali inequality \eqref{eq:LYM-bounds-BT} give:
\begin{equation} \label{eq:LYM-Boolean-bounds}
\prod_{r=0}^k \. \tbinom{k}{r}! \, \le \,
e(B_k) \, \le \, \prod_{r=0}^k \. \tbinom{k}{r}^{\tbinom{k}{r}}  \, \le \, \tbinom{k}{\lfloor k/2\rfloor}^{2^k}.
\end{equation}
Lower bound and either upper bound give correct two leading terms of the asymptotics:
$$
\frac{\log_2 e(B_k)}{2^k} \, = \, \log_2 \tbinom{k}{\lfloor k/2\rfloor} \. + \. \Theta(1)
\, = \, k \. -  \. \tfrac12 \log_2 k \. + \. \Theta(1),
$$
see e.g.\ \cite{Coo09} for a careful calculation.
In \cite{BT03}, it was shown that the lower bound in~\eqref{eq:LYM-Boolean-bounds}
gives the exact value of the constant implied by the \ts $\Theta(1)$ \ts notation.
\end{ex}

\begin{ex}[{\rm Products of LYM posets}{}] \label{ex:LYM-products}
Let \ts $P,Q$ \ts be LYM posets with log-concave rank numbers.  It was shown in
\cite{Har74,HK73}, that the product \ts $P \times Q$ \ts satisfies the same properties.
In particular, this implies that the product of chains \. $C_p \times C_q \times \ldots$
are also LYM~posets. Thus, the bound in Theorem~\ref{t:LYM-BT} applies and generalized
the bounds in~\eqref{eq:LYM-Boolean-bounds}.   Note that there is no
transitive action on maximal chains in this case.
\end{ex}

\smallskip

\subsection{Entropy bounds} \label{ss:basic-entropy}
For a poset \ts $P=(X,\prec)$ \ts on \ts $|X|=n$ \ts elements,
define the \defn{entropy}
\begin{equation}\label{eq:basic-entropy}
\rH(P) \, := \, \min_{{{\bebo} \ts \in \ts \cS_P}} \. \Big( - \. \frac{1}{n} \. \sum_{x\in X} \. \log \be_x\Big),
\end{equation}
where the minimum is over all vectors \. $\bebo:X\to \rr_{>0}$ \. in the chain polytope \ts $\cS_P$\ts.

\begin{thm}[{\rm Kahn--Kim~\cite{KK95}}{}]\label{t:basic-entropy-KK}
We have:
\begin{equation}\label{eq:basic-entroy-KK1}
n\log_2 n \. - \. n \cdot \rHr(P) \, \ge \, \log_2 e(P)  \, \ge \, 0.09 \bigl(n\log_2 n \. - \. n \cdot \rHr(P)\bigr).
\end{equation}
Additionally,
\begin{equation}\label{eq:basic-entroy-KK2}
\log_2 e(P)  \, \ge \, \log_2 n! \. - \. n \cdot \rHr(P) \, \ge \, n \log_2 n \. - \. (\log_2 e) \ts n \. - \. n \cdot \rHr(P).
\end{equation}
\end{thm}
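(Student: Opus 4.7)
The plan is to leverage two complementary mechanisms: for the upper bound in \eqref{eq:basic-entroy-KK1} and the refined lower bound \eqref{eq:basic-entroy-KK2}, I would combine Stanley's volume identity $\Vol(\cS_P) = e(P)/n!$ from \eqref{eq:two-poset} with convex duality on the chain polytope; for the constant-factor lower bound in \eqref{eq:basic-entroy-KK1}, I would follow the sorting-theoretic approach of Kahn and Kim.

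Fix a minimizer $\bebo^* \in \cS_P$ realizing \eqref{eq:basic-entropy}, so that $n\rH(P) = -\sum_x \log_2 \beta^*_x$. For the upper bound, the plan is to extract nonnegative weights $\mu_C \ge 0$ on chains $C \in \cC(P)$ from the KKT conditions for this convex program, yielding the stationarity identity $1/\beta^*_x = \sum_{C \ni x} \mu_C$ for every $x \in X$, with complementary slackness forcing $\sum_{x \in C}\beta^*_x = 1$ whenever $\mu_C > 0$. Observing that
\[
n \, = \, \sum_{x\in X} \beta^*_x \cdot (1/\beta^*_x) \, = \, \sum_{C \in \cC(P)} \mu_C \ts \sum_{x \in C} \beta^*_x \, = \, \sum_C \mu_C\ts,
\]
the normalization $v_C := \mu_C/n$ produces a probability distribution on chains with $c_x := \sum_{C \ni x} v_C = 1/(n\ts\beta^*_x)$. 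Corollary~\ref{c:LYM-chains-upper} then gives directly $e(P) \le \prod_x 1/c_x = n^n \prod_x \beta^*_x$, which is exactly $\log_2 e(P) \le n\log_2 n - n\rH(P)$.

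For the refined lower bound \eqref{eq:basic-entroy-KK2}, I would observe that since $\bebo^* \in \cS_P$ satisfies $\sum_{x \in C}\beta^*_x \le 1$ for every chain~$C$, the axis-aligned box $B^* := \prod_x [0,\beta^*_x]$ lies entirely inside $\cS_P$. Comparing volumes gives $\Vol(\cS_P) \ge \prod_x \beta^*_x$, so \eqref{eq:two-poset} yields $e(P) \ge n! \prod_x \beta^*_x$ and hence $\log_2 e(P) \ge \log_2 n! - n\rH(P)$; Stirling's bound $\log_2 n! \ge n\log_2 n - (\log_2 e)\ts n$ completes the chain of inequalities in \eqref{eq:basic-entroy-KK2}.

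The main obstacle is the weak lower bound $\log_2 e(P) \ge 0.09 \ts (n\log_2 n - n\rH(P))$ in \eqref{eq:basic-entroy-KK1}, which cannot come from the box-containment argument (indeed, it is sharp up to constants precisely in the regime where \eqref{eq:basic-entroy-KK2} becomes loose, i.e.\ when $n\log_2 n - n\rH(P) = O(n)$). Here I would follow Kahn and Kim and design an adaptive comparison-based sorting algorithm which, on input a uniformly random linear extension of $P$, sorts $P$ using expected $O(n\log_2 n - n\rH(P))$ queries. At each step, the algorithm selects an incomparable pair $(x,y)$ whose comparison outcome is provably balanced against the fractional chain cover $\{\mu_C\}$ realizing the current residual entropy, so that each query reduces that entropy by at least a universal constant. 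Since any comparison-based algorithm requires at least $\log_2 e(P)$ queries in expectation, this yields the desired constant-factor bound. Extracting the explicit constant $0.09$ requires carefully tracking the entropy decrements through the recursion and forms the technical heart of the Kahn--Kim argument.
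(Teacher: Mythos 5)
Your derivations of the upper bound in \eqref{eq:basic-entroy-KK1} and of \eqref{eq:basic-entroy-KK2} are correct. The Lagrangian/KKT argument on the convex program defining $\rH(P)$ does produce chain multipliers $\mu_C\ge 0$ with $1/\beta^*_x = \sum_{C\ni x}\mu_C$ and $\sum_C\mu_C = n$, so $v_C:=\mu_C/n$ is a probability distribution on $\cC(P)$ with $c_x = 1/(n\beta^*_x)$, and Corollary~\ref{c:LYM-chains-upper} then gives $e(P)\le n^n\prod_x\beta^*_x$, which is $\log_2 e(P)\le n\log_2 n - n\rH(P)$. The box $\prod_x[0,\beta^*_x]\subseteq\cS_P$ together with \eqref{eq:two-poset} gives $e(P)\ge n!\,\prod_x\beta^*_x$, which with Stirling yields \eqref{eq:basic-entroy-KK2}. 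These are exactly the kind of applications of the entropy functional on $\cS_P$ that the survey alludes to for this theorem.

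Your sketch of the $0.09$ lower bound, however, has the logic running backwards. You propose an algorithm that sorts in $O(n\log_2 n - n\rH(P))$ comparisons because ``each query reduces that entropy by at least a universal constant,'' and then invoke the information-theoretic bound $T\ge\log_2 e(P)$. Chaining $\log_2 e(P)\le T \le O(n\log_2 n - n\rH(P))$ reproduces the \emph{upper} bound you already proved with a worse constant; it says nothing about $\log_2 e(P)\ge 0.09\,(n\log_2 n - n\rH(P))$. To obtain the lower bound from a sorting argument you need the two inequalities oriented the other way: (i) an upper bound $T\le C\log_2 e(P)$ on the depth of some sorting tree, supplied by the Kahn--Saks balancing theorem $($every non-chain poset has a pair $(x,y)$ with $\bP[f(x)<f(y)]\in[\ve,1-\ve]$, $\ve=3/11)$, so each node's larger child keeps at most a $(1-\ve)$-fraction of extensions; and (ii) a lower bound $T\ge (n\log_2 n-n\rH(P))/\kappa$, which requires showing that each comparison decreases $n\log_2 n-n\rH$ by \emph{at most} a constant $\kappa$ along the run of this balanced algorithm --- the opposite of what you asserted. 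Note also that the unqualified claim ``every query drops $n\log_2 n - n\rH$ by at least a universal constant'' cannot be true, since this nonnegative quantity tends to zero as $P$ approaches a chain. So the technical heart is the ``at most'' direction in (ii) paired with the balancing result in (i), neither of which your outline supplies.
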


Since the entropy on convex bodies can be approximated in polynomial time,
this result can be viewed as a deterministic approximation algorithm for~$\ts e(P)$.

\smallskip

\subsection{Height two posets} \label{ss:basic-height-two}
Let \ts $P=(X,\prec)$ \ts be a poset of height two with \ts $n=|X|$ \ts elements.
Let \ts $X=Y\cup Z$ \ts be a partition into two antichains \ts $Y,Z\in \cA(P)$ \ts
corresponding to rank $0$ and~$1$, respectively.

\begin{thm}[{\rm Brightwell--Tetali~\cite[Thm~1.4]{BT03}}{}]\label{t:basic-height-two-BT}
Suppose there exist integers \ts $a,b\in \nn$, such that
\ts $\al(z)= a$ \ts and \ts $\be(y)= b$, for all \ts $y\in Y$ \ts
and \ts $z\in Z$. Then:
\begin{equation}\label{eq:basic-height-two-BT}
e(P) \, \le \, n! \. \tbinom{a+b}{a}^{-n/(a+b)}.
\end{equation}
\end{thm}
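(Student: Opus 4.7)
The plan is to use the volume formula $e(P)=n!\ts \Vol(\cO_P)$ from~\eqref{eq:two-poset} to reduce the theorem to the equivalent volume inequality $\Vol(\cO_P)\le\binom{a+b}{a}^{-n/(a+b)}$. Integrating out the $Z$-coordinates in~\eqref{eq:order-def} along the constraints $\alpha_z\ge \alpha_y$ for $y\prec z$ turns this into
$$\int_{[0,1]^Y}\prod_{z\in Z}\bigl(1-M_z(\alpha)\bigr)\, d\alpha\;\le\; \binom{a+b}{a}^{-n/(a+b)},\qquad M_z(\alpha):=\max_{y\prec z}\alpha_y.$$
Setting $W_y:=1-\alpha_y$ iid $\mathrm{Uniform}[0,1]$ rewrites this probabilistically as a bound on an expectation of a product of minima:
$$\Eb\!\left[\prod_{z\in Z}\min_{y\prec z} W_y\right]\;\le\;\binom{a+b}{a}^{-n/(a+b)}.$$

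The first sanity check is the equality case. When $P$ is a disjoint union of $m=n/(a+b)$ copies of $K_{a,b}$, the family $\{\{y:y\prec z\}\}_{z\in Z}$ splits into $m$ pairwise disjoint blocks, each of size $a$, each repeated $b$ times; the product of minima factors across blocks and each block contributes the Beta-integral moment $\Eb[\min(U_1,\ldots,U_a)^b]=\tfrac{a!\ts b!}{(a+b)!}=\binom{a+b}{a}^{-1}$, so the inequality is tight. The content of the theorem is therefore that this ``disjoint union'' is extremal among all $b$-uniform families of $a$-subsets, or equivalently, among all height-two posets with the prescribed regularity.

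For the extremality, my plan is an entropy argument in the spirit of Kahn--Kim (Theorem~\ref{t:basic-entropy-KK}) and Radhakrishnan's proof of Bregman's inequality. Work with a uniformly random linear extension $f\in\Ec(P)$, so $H(f)=\log_2 e(P)$. The biregularity suggests grouping elements by \emph{edge neighborhoods} $A_{(y,z)}:=N^+(y)\cup N^-(z)$ for each cover pair $y\prec z$: in a bipartite height-two poset $|A_{(y,z)}|=a+b$, and a direct double count using $\alpha(z)=a,\ \beta(y)=b$ shows that each $x\in X$ lies in exactly $ab$ of these neighborhoods. Shearer's entropy inequality then gives
$$H(f)\;\le\;\frac{1}{ab}\sum_{(y,z)\in E}H\bigl(f|_{A_{(y,z)}}\bigr),$$
with $|E|/(ab)=n/(a+b)$ on the nose. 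The target $\log(a!\ts b!)$ per edge is what is needed to assemble, together with the ambient $\log n!$ from the positions, the bound $\log n!-\tfrac{n}{a+b}\log\binom{a+b}{a}$.

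The main obstacle --- and the part I expect to be the hardest --- is that the naive bound $H(f|_{A_{(y,z)}})\le\log[\binom{n}{a+b}\ts a!\ts b!]$ overshoots by roughly a factor of $e^n$, so one cannot simply apply Shearer and union-bound the marginal support size. To avoid the $e^n$ slack I would condition on the relative order of the positions of $X\setminus A_{(y,z)}$ before measuring the per-edge entropy: once these outside positions are fixed, the remaining freedom for $f|_{A_{(y,z)}}$ is a linear extension of the induced subposet on $A_{(y,z)}$, which always contains the ``book'' edges $\{y\}\times N^+(y)\cup N^-(z)\times\{z\}$ and hence has at most $a!\ts b!$ extensions. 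Averaging this conditioning uniformly (as in Radhakrishnan) should turn the sum into the telescope $\log n!+\frac{|E|}{ab}\log\!\big(\tfrac{a!b!}{(a+b)!}\big)=\log n!-\tfrac{n}{a+b}\log\binom{a+b}{a}$, giving the theorem. The careful bookkeeping in this conditional step --- showing that biregularity forces the induced subposet on $A_{(y,z)}$ to have $\le a!\ts b!$ linear extensions and that the averaging loses no constant per edge --- is the technical heart of the argument and the step I expect to require the most care.
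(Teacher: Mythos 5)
Your argument takes a different route from the one recorded in the paper: Brightwell--Tetali first prove the order-polynomial inequality $\Om(P,t) \le \Om(K_{ab},t)^{n/(a+b)}$ of Theorem~\ref{t:basic-height-two-BT-OP} for all $t\ge 1$ by an entropy computation, and then recover~\eqref{eq:basic-height-two-BT} by letting $t\to\infty$ via~\eqref{eq:OP-asy}. You instead run the entropy/Shearer argument directly on $e(P)$ through the order polytope; this is a reasonable thing to try, but it is not what happens in the paper, and the per-$t$ statement is strictly more information than the volume bound.

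More importantly, the step you yourself flag as the technical heart does not hold. The induced subposet on $A_{(y,z)}$ does contain the book relations $\{y\prec z':z'\in N^+(y)\}\cup\{y'\prec z:y'\in N^-(z)\}$, but \emph{containing} relations only bounds its number of linear extensions above by the extension count of the book poset itself, and that count generally exceeds $a!\,b!$. For $a=b=2$, the book poset on $\{y,\,y_2,\,z,\,z_2\}$ with relations $y\prec z$, $y\prec z_2$, $y_2\prec z$ (and $y_2\parallel z_2$) has $5>4=a!\,b!$ linear extensions. This is not an artifact of an ill-chosen $P$: in the $(2,2)$-biregular height-two poset on $n=6$ elements whose comparability graph is the $6$-cycle $y_1z_1y_2z_2y_3z_3$, the induced poset on $A_{(y_1,z_1)}=\{y_1,y_2,z_1,z_3\}$ is exactly this book poset. (For the record, $e(P)=48$ there while the target bound is $720\cdot 6^{-3/2}\approx 48.99$, so \eqref{eq:basic-height-two-BT} is nearly tight even though your proposed per-edge bound $e(P|_{A_{(y,z)}})\le a!\,b!$ fails.) Consequently the Shearer sum with your proposed per-edge bound does not close: the ``averaging loses no constant per edge'' step is not merely delicate, the per-edge inequality underneath it is false as stated, and the extremality of $K_{ab}$ among the induced posets $P|_{A_{(y,z)}}$ is exactly what still needs a proof. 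That difficulty is what the passage to the order polynomial is designed to manage.
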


This inequality is sharp for \ts $k:=n/(a+b)\in \nn$, as can be seen for
a disjoint sum of $k$ copies of poset \ts $K_{ab} :=A_b \oplus A_a\ts$.
Curiously, \eqref{eq:basic-height-two-BT} fails if we instead
use \ts $\al(x)\ge a$ \ts and \ts $\be(y)\ge b$, see \cite{BT03}.
The authors deduce  Theorem~\ref{t:basic-height-two-BT} form
the following result and the asymptotic formula~\eqref{eq:OP-asy}.

\begin{thm}[{\rm Brightwell--Tetali~\cite[Thm~3.2]{BT03}}{}]\label{t:basic-height-two-BT-OP}
In conditions of Theorem~\ref{t:basic-height-two-BT}, for all \ts $t\ge 1$ \ts
we have:
\begin{equation}\label{eq:basic-height-two-BT-OP}
\Om(P,t) \, \le \, \Om(K_{ab},t)^{n/(a+b)}.
\end{equation}
\end{thm}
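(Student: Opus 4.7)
The plan is to rewrite $\Om(P,t)$ as a weighted sum that factorizes over the upper antichain $Z$, and then apply a discrete Brascamp--Lieb (Finner-type) inequality whose exponents are tailored to the $(a,b)$-biregularity of the bipartite relation graph between $Y$ and $Z$. Because $Z$ is an antichain, conditioning on $g:=h|_Y$ decouples the values $h(z)$: each $h(z)$ is free in the interval $\bigl[\max_{y\prec z}g(y),\,t\bigr]$, contributing exactly $t-\max_{y\prec z}g(y)+1$ admissible choices. This gives
\begin{equation*}
\Om(P,t) \ = \ \sum_{g\ts:\ts Y\to[t]}\ \prod_{z\in Z}\ F_z\bigl(g|_{N(z)}\bigr),\qquad F_z(g):=t-\max(g)+1,\quad N(z):=\{y\in Y:y\prec z\},
\end{equation*}
where by hypothesis $|N(z)|=a$ for every $z\in Z$ and each $y\in Y$ lies in exactly $b$ of the sets $N(z)$.

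Next I would invoke the discrete Finner (fractional H\"older) inequality: for nonnegative functions $f_S$ on $\prod_{i\in S}[t]$ and exponents $p_S$ satisfying $\sum_{S\ni i}1/p_S\ge 1$ at each coordinate $i$, one has $\sum_{\mathbf{x}}\prod_S f_S(x_S)\le \prod_S\|f_S\|_{p_S}$. The uniform choice $p_z=b$ saturates the covering condition ($b\cdot\tfrac1b=1$) and delivers
\begin{equation*}
\Om(P,t) \ \le \ \prod_{z\in Z}\|F_z\|_b,\qquad \|F_z\|_b^{\,b}\ =\ \sum_{g\in[t]^{N(z)}} F_z(g)^{\,b}.
\end{equation*}

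To match the right-hand side with $\Om(K_{ab},t)$, I group the inner sum by $M=\max g$:
\begin{equation*}
\|F_z\|_b^{\,b} \ = \ \sum_{M=1}^{t}\bigl(M^a-(M-1)^a\bigr)(t-M+1)^b \ = \ \Om(A_a\oplus A_b,\ts t),
\end{equation*}
and since the order polynomial is invariant under the duality $h\mapsto t+1-h$, this equals $\Om(A_b\oplus A_a,t)=\Om(K_{ab},t)$ regardless of orientation convention. Double-counting edges in the bipartite comparability graph yields $|Y|\ts b=|Z|\ts a$, which together with $|Y|+|Z|=n$ forces $|Z|/b=n/(a+b)$. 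Substituting back gives
\begin{equation*}
\Om(P,t) \ \le \ \Om(K_{ab},t)^{|Z|/b} \ = \ \Om(K_{ab},t)^{\,n/(a+b)},
\end{equation*}
as claimed.

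The conceptual crux is the choice of exponents $p_z=b$ in Finner, dictated by the biregularity of the comparability graph and arranged so that the inequality is tight on disjoint sums of $k=n/(a+b)$ copies of $K_{ab}$, which one should view as the extremal configuration. The secondary and more delicate step is the identification $\|F_z\|_b^b=\Om(K_{ab},t)$, which relies essentially on order-polynomial duality to swap the ``$a$-side'' and ``$b$-side'' roles between the outer product over $Z$ and the inner $L^b$-norm. Everything else --- the factorization of the sum, the edge counting, and even the Finner step itself (derivable by iterated H\"older, or equivalently by Shearer's entropy lemma applied to the uniform distribution on order-preserving maps) --- is mechanical.
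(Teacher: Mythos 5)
Your proof is correct, and it is a clean, self-contained derivation. Conditioning on $g=h|_Y$ is exactly the right reduction: since $Z$ is an antichain of maximal elements, the order polynomial factorizes as a sum over $[t]^Y$ of a product over $Z$, with local factors $F_z(g|_{N(z)})=t-\max g|_{N(z)}+1$. The Finner (fractional H\"older) step with uniform exponents $p_z=b$ is dictated by the $b$-regularity on the $Y$-side (each $y$ lies in exactly $b$ of the supports $N(z)$, so $\sum_{z:N(z)\ni y}1/p_z=1$), and the identification
\[
\|F_z\|_b^{\,b}\;=\;\sum_{M=1}^t\bigl(M^a-(M-1)^a\bigr)(t-M+1)^b\;=\;\Om(A_a\oplus A_b,t)\;=\;\Om(K_{ab},t),
\]
together with the double-count $|Y|\ts b=|Z|\ts a$ and $|Y|+|Z|=n$, closes the argument. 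You are correctly reading $a$ as $|N(z)|$ and $b$ as the up-degree of each $y\in Y$; this is the reading that makes the edge count and the exponent $n/(a+b)$ come out, and it matches the tight example of disjoint copies of $K_{ab}$.

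As for comparison with the source: the survey attributes the proof of Brightwell and Tetali to ``technical entropy computations.'' Your route is a different presentation of what is, at bottom, the same mechanism --- Finner's inequality is the functional-analytic dual of Shearer's entropy lemma, and an entropy proof would condition on the $Y$-values and apply subadditivity to the independent $Z$-values with the same covering multiplicities, as you yourself note. What your packaging buys is transparency: no entropy bookkeeping, the tight case is visible as the equality case of H\"older, and the role of the biregularity $(a,b)$ in choosing exponents is explicit. What the entropy route buys, in the original paper, is flexibility for the surrounding results of their Section~3, which do not all factorize this cleanly. For this particular theorem your argument is, if anything, simpler.
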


The authors prove the result using technical entropy computations.

\medskip



\section{Basic inequalities for order polynomials}\label{s:basic-OP}

Order polynomial is just as fundamental object as the number of linear extensions,
and in many cases easier to work with.  Additionally, it has a clear geometric
interpretation as the Ehrhart polynomial for poset polytopes, see
\eqref{eq:two-poset-OP}.

\subsection{Explicit lower bound}\label{ss:basic-OP-explicit}
The following inequality extends the asymptotic formula~\eqref{eq:OP-asy}:

\begin{thm}[{\rm \cite[Thm~1.4, Cor~6.3]{CPP-effective}}{}]\label{t:basic-OP}
Let \ts $P=(X,\prec)$ \ts be a poset with \ts $|X|=n$ \ts elements.  Then,
for all integer \ts $t\ge 1$, we have:
\begin{equation}\label{eq:basic-OP}
\Omega(P,t) \ \geq \ \frac{e(P)\, t^n}{n!}\,.
\end{equation}
Moreover, the equality holds for a given \ts $t\ge 1$ \. \underline{if and only if} \. $P=A_n$ \ts
is an antichain.  Additionally, we have:
$$\Omega(P,t) \cdot n! \, - \, e(P) \, t^n \ \in \ \SP\ts.
$$
\end{thm}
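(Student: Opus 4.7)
The plan is to give a direct probabilistic proof via a coupling of $P$-partitions with linear extensions. First, I would work in the product sample space $\Xi := [t]^X \times S_n$ of size $t^n \cdot n!$, whose elements are pairs $(\phi,\pi)$ of a map $\phi: X \to [t]$ and (after fixing a labeling of $X$) a bijection $\pi: X \to [n]$. To each such pair I associate a linear ordering $\sigma = \sigma(\phi,\pi)$ of $X$ obtained by sorting primarily by $\phi$ and breaking ties using $\pi$: that is, $x \prec_\sigma y$ iff either $\phi(x) < \phi(y)$, or $\phi(x) = \phi(y)$ and $\pi(x) < \pi(y)$.

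The second step is to consider the two subsets
\begin{align*}
A \, &:= \, \bigl\{(\phi,\pi) \in \Xi \, : \, \phi \text{ is order-preserving for } P\bigr\}, \\
B \, &:= \, \bigl\{(\phi,\pi) \in \Xi \, : \, \sigma(\phi,\pi) \text{ is a linear extension of } P\bigr\}.
\end{align*}
By definition of the order polynomial, $|A| = \Om(P,t) \cdot n!$. A short multinomial computation---showing that $\sigma$ is uniformly distributed on all linear orderings of $X$---gives $|B| = e(P) \cdot t^n$. The crucial observation is the inclusion $B \subseteq A$: if $\sigma$ is a linear extension and $x \prec y$ in $P$, then $x$ precedes $y$ in $\sigma$, which by construction of $\sigma$ forces $\phi(x) \le \phi(y)$, so $\phi \in A$. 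This already yields the inequality $\Om(P,t) \cdot n! \ge e(P) \cdot t^n$.

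For the $\SP$ assertion, since $B \subseteq A$, the defect equals the cardinality $|A \smallsetminus B|$, a count of pairs $(\phi,\pi)$ for which membership in $A \smallsetminus B$ is checkable in polynomial time (verify $\phi$ is order-preserving and produce a relation $x \prec y$ in $P$ violated by $\sigma$). For the equality condition, $|A| = |B|$ combined with $B \subseteq A$ forces $A = B$. If $P = A_n$ then every function is order-preserving and every ordering is a linear extension, so $A = B = \Xi$ trivially. Conversely, if $P$ has any relation $x \prec y$, the constant map $\phi \equiv 1$ together with any $\pi$ placing $y$ before $x$ gives a pair in $A \smallsetminus B$, yielding strict inequality.

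The argument has no serious obstacle: the only point needing genuine verification is the uniformity of $\sigma$ on linear orderings of $X$, which reduces to the identity $\sum_{m_1+\ldots+m_t=n} \binom{n}{m_1,\ldots,m_t} = t^n$ upon partitioning pairs $(\phi,\pi)$ with $\sigma = \tau$ by the multiplicity vector of $\phi$. The conceptual content of the proof is the coupling step that produces $\sigma$ from $(\phi,\pi)$ in a way that simultaneously makes both $|A|$ and $|B|$ easy to evaluate while guaranteeing the containment $B \subseteq A$.
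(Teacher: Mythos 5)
Your proof is correct, and the underlying construction --- sort by the $P$-partition value $\phi$, break ties by an auxiliary permutation $\pi$ --- is essentially the same explicit injection that \cite{CPP-effective} uses to place the defect in $\SP$; the difference is purely presentational, a double count over $\Xi = [t]^X \times S_n$ rather than an explicit map $\Ec(P) \times [t]^n \hookrightarrow \PP(P,t) \times S_n$.
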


Note that \eqref{t:basic-OP} improves upon a straightforward inequality \.
$\Omega(P,t) \ge e(P) \binom{t}{n}$, where \ts $t\ge n$.  The authors
prove the inequality by an explicit injection.


\subsection{Log-concavity}\label{ss:basic-OP-log-concavity}
Evaluations of the order polynomial have additional properties:

\begin{thm}[{\rm \defn{log-concavity}, Brenti~\cite[Thm~7.6.5]{Bre89}}{}]
\label{t:basic-OP-log-concave}
	Let \ts $P=(X,\prec)$ \ts be a poset with \ts $|X|=n$ \ts elements.
	Then, for all integer \ts $t\ge 2$, we have:
	\begin{equation}\label{eq:basic-OP-log-concave}
	 \Omega(P,t)^2  \ \geq \  \Omega(P,t+1) \  \Omega(P,t-1).
	\end{equation}
\end{thm}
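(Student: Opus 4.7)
The plan is to prove log-concavity via an explicit injection, in the spirit of the injective proof of Theorem~\ref{t:basic-OP}. Using the shift bijection between order-preserving maps $X \to [t]$ and $P$-partitions with values in $\{0, \ldots, t-1\}$, we have $\Omega(P, t) = |\PP(P, t-1)|$, so the stated inequality is equivalent to
\[
|\PP(P, t-1)|^2 \, \geq \, |\PP(P, t-2)| \cdot |\PP(P, t)|,
\]
and the goal is to construct an injection
\[
\Phi \, : \, \PP(P, t-2) \times \PP(P, t) \, \hookrightarrow \, \PP(P, t-1) \times \PP(P, t-1).
\]

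The first attempt is the coordinate-wise map $(f, g) \mapsto (\min(f, g), \max(f, g))$. Both images are order-preserving, as pointwise $\min$ and $\max$ of order-preserving maps are order-preserving, and $\min(f, g) \leq f \leq t-2$ lands in $\PP(P, t-1)$; however $\max(f, g)$ can attain the value $t$ (whenever $g(x) = t$), so it escapes the target. A natural correction is to use instead $(\max(f, g-1), \min(f+1, g))$, which preserves the pointwise sum $f + g$, keeps both values in $\{0, \ldots, t-1\}$, and is order-preserving coordinatewise. A short case analysis shows this refined map is \emph{not} injective, as distinct pairs $(f, g)$ can collide on the image.

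The main obstacle is therefore to resolve this collision by encoding extra combinatorial data into the image. The natural piece of information is the set $S = \{x \in X : g(x) \geq f(x) + 2\}$ of positions where the swap occurs; one then needs to embed the indicator of $S$ into the image pair in a poset-consistent way. Since $S$ need not be a lower or upper ideal of $P$, the embedding must exploit finer structure of the $P$-partitions --- for instance by using a promotion-type operator along chains of $P$ to push the ``excess unit'' from positions where $g = t$ down to positions where more room is available. This is where the real combinatorial content of Brenti's theorem lies, and I expect it to be the main technical obstacle.

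As an alternative route, one may bypass the injection and argue via the $h^\ast$-polynomial of the order polytope $\cO_P$: by Stanley's theorem, $\Omega(P, t) = \sum_{k=0}^n h_k^\ast \binom{t+n-k}{n}$ with nonnegative $h_k^\ast$ admitting a descent-statistics interpretation on $\Ec(P)$, and log-concavity can then be reduced to an identity on the $h_k^\ast$. Either route genuinely uses the poset structure of~$P$; naive convexity arguments or raw volume comparisons are insufficient, since Ehrhart polynomials of general lattice polytopes are not log-concave in~$t$, so the specific unimodular-triangulation structure of $\cO_P$ is unavoidable.
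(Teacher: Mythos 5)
Your reduction of \eqref{eq:basic-OP-log-concave} to finding an injection $\PP(P,t-2) \times \PP(P,t) \hookrightarrow \PP(P,t-1)^2$ is the right normalization, and the route matches what the literature does: the paper records that Brenti's original proof is a direct injection, and that \cite[Thm~5]{DDP} gives an alternative combinatorial proof. Your diagnoses of the two naive maps are also correct: $(\min(f,g),\max(f,g))$ overshoots the range, and $(\max(f,g-1),\min(f+1,g))$ --- while it lands in the right codomain, is order-preserving coordinatewise, and preserves the pointwise sum $f+g$ --- is not injective. Concretely, on a one-element poset with $t=3$ the pairs $(f,g)=(0,2)$ and $(f,g)=(1,1)$ both map to $(1,1)$, and more generally every image $(h_1,h_2)$ with $h_2\le h_1$ is hit twice (once via the ``swapped'' branch $g\ge f+1$, once via the identity branch $g\le f$).

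The genuine gap is that you never construct the injection; you identify the obstruction and gesture at ``encoding the crossing set $S$'' or ``a promotion-type operator along chains,'' but this is exactly where the proof has to engage the order structure of $P$, and you leave it entirely unrealized. Note that the set $S=\{x: g(x)\ge f(x)+2\}$ is in general neither an upper nor a lower ideal, so there is no off-the-shelf way to store its indicator inside a pair of $P$-partitions; resolving this is the entire content of Brenti's and Daykin--Daykin--Paterson's arguments. Your alternative $h^\ast$-route is also not an argument yet: although each $\binom{t+n-k}{n}$ is individually log-concave in $t$, a nonnegative linear combination of log-concave sequences is not log-concave in general, and the $h^\ast_k$ (counting linear extensions by descent number) are not themselves assumed log-concave, so you would need a new inequality among the $h^\ast_k$ that you do not supply. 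In short, the proposal correctly identifies the shape of the argument and why the cheap moves fail, but it does not prove the theorem.
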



In other words, \eqref{eq:basic-OP-log-concave} gives log-concavity of
values of the order polynomial.  This inequality is always strict:


\begin{thm}[{\rm \cite[Thm~4.8]{CPP-effective}}{}]
\label{t:basic-OP-log-concave-strict}
	Let \ts $P=(X,\prec)$ \ts be a poset with \ts $|X|=n$ \ts elements.
	Then, for all integer \ts $t\ge 2$,  we have:
	\begin{equation}\label{eq:log-concave-strict}
	 \Omega(P,t)^2  \ \geq \, \Big(1 \. + \. \frac{1}{(t+1)^{n+1}} \Big) \,   \Omega(P,t+1) \   \Omega(P,t-1).
	\end{equation}
\end{thm}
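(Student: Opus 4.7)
The plan is to sharpen Brenti's log-concavity (Theorem~\ref{t:basic-OP-log-concave}) by an explicit quantitative gap. First I perform a clean algebraic reduction. Since every order-preserving map $X \to [t+1]$ is in particular a map and $|X|=n$, we have $\Omega(P,t+1) \le (t+1)^n$, and dividing the desired gap gives
\begin{equation*}
\frac{\Omega(P,t+1)\ts\Omega(P,t-1)}{(t+1)^{n+1}} \, \le \, \frac{\Omega(P,t-1)}{t+1}\ts.
\end{equation*}
Thus \eqref{eq:log-concave-strict} is implied by the simpler integer-valued inequality
\begin{equation}\label{eq:plan-key}
(t+1)\bigl[\Omega(P,t)^2 \. - \. \Omega(P,t+1)\ts\Omega(P,t-1)\bigr] \, \geq \, \Omega(P,t-1),
\end{equation}
which is the form I aim to prove.

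To establish \eqref{eq:plan-key}, I would construct an explicit combinatorial injection. Identifying $\Omega(P,s)$ with the set of order-preserving maps $X \to [s]$, the goal is to build
\begin{equation*}
[t+1] \ts \times \ts \Omega(P,t+1) \ts \times \ts \Omega(P,t-1) \ \sqcup \ \Omega(P,t-1) \ \hookrightarrow \ [t+1] \ts \times \ts \Omega(P,t)^2.
\end{equation*}
On the first summand the injection is a ``value-by-value swap'' realization of Brenti's log-concavity, with the parameter $k \in [t+1]$ recording a distinguished level at which the swap is performed. On the second summand, each $\psi : X \to [t-1]$ is embedded as the boundary pair $(\psi+1, \psi) \in \Omega(P,t)^2$ tagged by a canonical label $k_\psi \in [t+1]$ chosen to avoid the labels used by the first part; injectivity on the second summand is immediate since $\psi$ is read off from the second coordinate, and the ``$\psi+1$'' shift ensures $(\psi+1,\psi) \in \Omega(P,t)^2$.

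The main obstacle is to pin down Brenti's injection precisely enough to identify its slack and to verify that the boundary pairs $(\psi+1,\psi)$ genuinely lie outside the image. A fallback route is to invoke Theorem~\ref{t:basic-OP}: writing $\Omega(P,t) \cdot n! = e(P)\ts t^n + R(P,t)$ with $R(P,t) \in \SP$, one expands both sides of \eqref{eq:plan-key} as polynomial expressions in $t$ and combines positivity of the resulting $R$-cross-terms with the elementary polynomial identity
\begin{equation*}
(t+1)\bigl[t^{2n} - (t+1)^n(t-1)^n\bigr] \, \geq \, (t-1)^n,
\end{equation*}
which follows for $n \geq 1$ and $t \geq 2$ from $(t^2/(t^2-1))^n \geq 1 + n/(t^2-1)$ together with $n(t+1)^n \geq t-1$. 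Either route converts the awkward $(t+1)^{n+1}$ denominator in \eqref{eq:log-concave-strict} into a tractable integer witness, and the crux of the argument is to ensure that the combinatorial or algebraic slack has size uniformly at least $\Omega(P,t-1)$ as demanded by \eqref{eq:plan-key}.
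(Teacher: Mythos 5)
The reduction in your first display is correct: since $\Omega(P,t+1)\le (t+1)^n$, the target \eqref{eq:log-concave-strict} does follow from
\[
(t+1)\bigl[\Omega(P,t)^2 - \Omega(P,t+1)\ts\Omega(P,t-1)\bigr]\ \geq\ \Omega(P,t-1).
\]
However, neither of your two routes to this inequality actually closes.

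The injection route has a concrete obstruction. Your proposed map sends the second summand $\Omega(P,t-1)$ into pairs $(\psi+1,\psi)\in\Omega(P,t)^2$, tagged by some label $k_\psi$. For this to be injective you need each $(\psi+1,\psi)$ to avoid the image of $\{k_\psi\}\times\Omega(P,t+1)\times\Omega(P,t-1)$ under the ``crossed'' Brenti injection. But if the $[t+1]$ factor were used trivially (i.e.\ the Brenti injection is simply crossed with $[t+1]$), then avoiding the image for any single label is equivalent to $(\psi+1,\psi)$ lying outside the image of Brenti's map for \emph{all} $\psi$, which would yield the much stronger inequality $\Omega(P,t)^2 - \Omega(P,t+1)\Omega(P,t-1)\geq\Omega(P,t-1)$. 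That strengthening is false: already for $P=A_1$ one has $\Omega(P,s)=s$, so the left side is $1$ and the right side is $t-1$, failing for every $t>2$. Hence your construction must use the index $k$ in a genuinely nontrivial way, and that is exactly the part you have not specified. The paper itself flags this obstruction: the survey explicitly states that the direct-injection approach to Theorem~\ref{t:basic-OP-log-concave} ``does not extend'' to Theorem~\ref{t:basic-OP-log-concave-strict}, whose proof in \cite{CPP-effective} goes through the FKG inequality (see~$\S$\ref{ss:proof-FKG}).

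The fallback route also has a gap. Writing $n!\,\Omega(P,t)=e(P)\,t^n+R(t)$ with $R\geq 0$, the cross terms one must compare are $2\,e(P)\,t^n R(t)$ against $e(P)(t+1)^nR(t-1)+e(P)(t-1)^nR(t+1)$, and $R(t)^2$ against $R(t+1)R(t-1)$. Neither comparison is obviously one-sided; the second is essentially the original log-concavity statement for the lower-order remainder and is not handled by the polynomial identity
$(t+1)\bigl[t^{2n}-(t^2-1)^n\bigr]\geq (t-1)^n$
(which only controls the leading $e(P)^2$-terms and which you do verify correctly). So the ``positivity of $R$-cross-terms'' claim needs its own argument and is not elementary. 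In short: your reduction is a useful simplification, but both of your proposed completions stall precisely where the difficulty lies, and the route the literature takes is a correlation-inequality (FKG/AD) argument rather than an explicit bijection.
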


Note that the \. $\frac{1}{(t+1)^{n+1}}$ \. term is far from optimal, see \cite[Rem.~4.11]{CPP-effective}.
We refer to \cite{FH23} for the background on log-concavity of the Ehrhart polynomials of
integral polytopes.  Let us emphasize that although the original proof of Theorem~\ref{t:basic-OP-log-concave}
is via direct injection, this approach does not extend to Theorem~\ref{t:basic-OP-log-concave-strict}
which is proved using the FKG inequality (see~$\S$\ref{ss:proof-FKG}).  Note that another direct
combinatorial proof of  \eqref{eq:basic-OP-log-concave} is given in \cite[Thm~5]{DDP}
(see also \cite[$\S$4.4]{Day84}).

\begin{thm}[{\rm \defn{$q$-log-concavity}~\cite[Thm~1.5]{CPP-effective}}{}] \label{t:basic-OP-q}
Let \ts $P=(X,\prec)$ \ts be a poset with \ts $|X|=n$ \ts elements.
Then, for every integer \ts $t \geq 2$, we have:
\begin{equation}\label{eq:basic-OP-q}
\Omega_q(P,t)^2  \  \geqslant_q \ \Omega_q(P,t+1) \. \cdot \. \Omega_q(P,t-1),
\end{equation}
where the inequality holds coefficient-wise as a polynomial in~$q$.
\end{thm}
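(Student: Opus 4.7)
The plan is to prove \eqref{eq:basic-OP-q} by constructing a weight-preserving injection
\begin{equation*}
\Phi\,:\, \PP(P,t+1) \times \PP(P,t-1) \,\hookrightarrow\, \PP(P,t) \times \PP(P,t),
\end{equation*}
where the weight of $(f_1, f_2)$ is $\sum_{x \in X}\bigl(f_1(x)+f_2(x)\bigr)$. Matching coefficients of $q^k$ on both sides of \eqref{eq:basic-OP-q}, the existence of such a $\Phi$ is equivalent to the claimed coefficient-wise inequality.

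The first attempt I would make is the pointwise \emph{sort-and-shift} map
\begin{equation*}
g_1(x) := \min\bigl(h_1(x),\, h_2(x)+1\bigr), \qquad g_2(x) := \max\bigl(h_1(x),\, h_2(x)+1\bigr) - 1 \qquad (x \in X).
\end{equation*}
Four facts fall out immediately: $g_1$ and $g_2$ are order-preserving, since pointwise min and max of order-preserving maps are order-preserving; the bounds $h_1(x) \le t+1$ and $h_2(x)+1 \le t$ force $0 \le g_1(x), g_2(x) \le t$, so $(g_1, g_2) \in \PP(P,t)^2$; the pointwise identity $g_1(x)+g_2(x) = h_1(x)+h_2(x)$ preserves the weight; and the same construction already underpins the direct combinatorial proof of the ordinary log-concavity \eqref{eq:basic-OP-log-concave} in \cite[Thm~5]{DDP}.

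The main obstacle is \emph{injectivity}: the naive sort-and-shift fails to be injective. The pair $(g_1(x), g_2(x))$ remembers only the unordered multiset $\{h_1(x), h_2(x)+1\}$, so wherever $g_1(x) < g_2(x)+1$ there is an a priori two-fold ambiguity. Already for $P = C_2$ and $t = 2$, the two distinct inputs $((2,2),(0,0))$ and $((1,1),(1,1))$ in $\PP(P,3)\times\PP(P,1)$ both map to $((1,1),(1,1)) \in \PP(P,2)^2$. My plan to repair this is to introduce a \emph{canonical swap rule}: fix the natural labeling $x_1, \ldots, x_n$ of $P$, and process the elements in this order, deciding at each $x_i$ whether to swap $h_1(x_i)$ with $h_2(x_i)+1$ by a greedy rule designed both to preserve the order-preserving property of $(g_1, g_2)$ relative to the already-processed elements and to be invertible. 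The resulting swap set $S \subseteq X$ is uniquely determined by $(h_1, h_2)$, and a mirror algorithm should recover $S$, and hence $(h_1, h_2)$, from $(g_1, g_2)$. Proving that such a greedy choice is simultaneously well-defined, consistent, and invertible is the crux of the argument.

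As a backup, should the injection approach prove too intricate, I would deploy the FKG-style machinery used for Theorem~\ref{t:basic-OP-log-concave-strict}. Translating $\PP(P,t)$ into the set of multichains $I_1 \subseteq \cdots \subseteq I_t$ of lower order ideals of $P$ (with the weight linked to $\sum_k |I_k|$), the inequality \eqref{eq:basic-OP-q} becomes a coefficient-wise positive-correlation statement for the log-modular measure $\mu(I_\bullet) := q^{\sum_k|I_k|}$ on the finite distributive lattice $\cJ(P)^t$ of such multichains. A polynomial-valued FKG inequality in this form still reduces, in the end, to an injection, but the join and meet structure on multichains should expose a canonical choice of swap set more transparently than the raw combinatorics of $P$-partitions.
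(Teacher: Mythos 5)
Your sort-and-shift map is correctly set up — it lands in $\PP(P,t)^2$, is weight-preserving, and is automatically order-preserving — but the entire content of the theorem sits in the injectivity step, which you leave as an unverified greedy-rule sketch. The obstruction is that the constraints on a valid swap set $S$ are global, not local: reconstructing $h_1$ (as $g_1$ off $S$, $g_2+1$ on $S$) and $h_2$ (as $g_2$ off $S$, $g_1-1$ on $S$) requires both to be order-preserving and in range, and an element-by-element greedy rule in the natural labeling order is not a priori consistent with this. For instance, the natural-looking choice $S=\{x:h_1(x)=t+1\}$ (an upper ideal) already breaks order-preservation of the resulting $g_1$ across a boundary $y\prec x$, $y\notin S$, $x\in S$, since $h_1(y)\le h_2(x)+1$ need not hold. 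The paper does not prove this theorem by injection and, tellingly, does not claim the defect of \eqref{eq:basic-OP-q} lies in $\SP$ (contrast Theorems~\ref{t:LP} and~\ref{t:LP-multi}, where injective proofs yield $\SP$-membership), so you should take seriously the possibility that no simple canonical injection exists.

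Your backup is the route the paper actually takes (Bj\"orner's $q$-FKG, equivalently a $q$-AD inequality), but the sketch stops exactly where the work begins: $\Omega_q(P,t-1)$, $\Omega_q(P,t)$, $\Omega_q(P,t+1)$ are generating functions over $P$-partitions with three different ranges, i.e.\ multichains of three different lengths in $J(P)$, so they do not sit inside a single distributive lattice in the naive way. If you try to force them onto the lattice $L$ of order-preserving maps $X\to\{0,\dots,t+1\}$ with pointwise join and meet, by setting $\alpha(h)=q^{|h|}$, $\beta(h)=q^{|h|}\mathbf{1}[h\le t-1]$, $\gamma(h)=\delta(h)=q^{|h|}\mathbf{1}[h\le t]$, the Ahlswede--Daykin hypothesis $\alpha(h_1)\beta(h_2)\le\gamma(h_1\vee h_2)\delta(h_1\wedge h_2)$ fails: $h_1\vee h_2$ can reach $t+1$ while the left side is positive. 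Making AD apply here requires building the same shift-by-one device from your injection attempt into the lattice operations or the four functions, and that is precisely the nonobvious ingredient missing from both routes of your proposal.
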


We also have a multivariate version of this result:

\begin{thm}[{\rm \defn{$\bq$-log-concavity}~\cite[Cor.~9.5]{CP-multi}}{}] \label{t:basic-OP-bq}
Let \ts $P=(X,\prec)$ \ts be a poset with \ts $|X|=n$ \ts elements.
Then, for every integer \ts $t \geq 2$, we have:
\begin{equation}\label{eq:basic-OP-bq}
\Omega_\bq(P,t)^2  \  \geqslant_\bq \ \Omega_\bq(P,t+1) \. \cdot \. \Omega_\bq(P,t-1),
\end{equation}
where the inequality holds coefficient-wise as a polynomial in \ts $\bq=(q_1,\ldots,q_n)$.
\end{thm}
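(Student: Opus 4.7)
The plan is to prove the inequality monomial by monomial in the $\bq$-grading. Because $\Omega_\bq(P,s) = \sum_{h \in \PP(P,s)} q_1^{h(x_1)} \cdots q_n^{h(x_n)}$ has each monomial supported with coefficient $0$ or $1$, extracting $[\bq^{\ba}]$ for $\ba \in \nn^n$ reduces both sides to counts of pairs of $P$-partitions summing to $\ba$. Explicitly, the desired coefficient-wise inequality becomes $N_{t,t}(\ba) \ge N_{t+1,t-1}(\ba)$ for every $\ba$, where
\[
N_{s_1, s_2}(\ba) \ := \ \#\big\{(h_1, h_2) \in \PP(P, s_1) \times \PP(P, s_2) \,:\, h_1 + h_2 = \ba\big\}.
\]

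I then reformulate this as a lattice-point count on a distributive lattice. Writing $h_2 = \ba - h_1$, joint order-preservation of $h_1$ and $\ba - h_1$ unfolds to the ``slack'' condition $h_1(y) - h_1(x) \in [0, a_y - a_x]$ along every cover $x \prec y$ in~$P$. Let $\cL_\ba$ be the set of $h \in \zz^n$ satisfying this together with $\0 \le h \le \ba$; under pointwise $\min/\max$, $\cL_\ba$ is a distributive lattice, since min and max of order-preserving maps are order-preserving. The pointwise constraints $h \le s_1 \cdot \one$ and $\ba - h \le s_2 \cdot \one$ cut out a down-closed sublattice $U_{s_1}$ and an up-closed sublattice $V_{s_2}$, and $N_{s_1, s_2}(\ba) = |U_{s_1} \cap V_{s_2}|$. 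The goal becomes $|U_t \cap V_t| \geq |U_{t+1} \cap V_{t-1}|$.

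To produce this inequality, I build an injection $\Phi : U_{t+1} \cap V_{t-1} \hookrightarrow U_t \cap V_t$ via a ``shift by an upper set''. For $h$ in the domain, let $A(h) := \{x \in X : h(x) = t+1\}$; by order-preservation of $h$ this is an upper set of~$P$. Define the tentative map $\Phi_0(h) := h - \one_{A(h)}$. Direct bookkeeping shows $\Phi_0(h) \in U_t \cap V_t$: the coordinate bounds follow from $h \in U_{t+1} \cap V_{t-1}$, and order-preservation of both $\Phi_0(h)$ and $\ba - \Phi_0(h) = (\ba - h) + \one_{A(h)}$ survives because at every cover $x \prec y$ with $y \in A(h)$ and $x \notin A(h)$, the strict inequality $h(y) = t+1 > h(x)$ absorbs the $-1$ decrement at~$y$.

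The hardest step is injectivity: $\Phi_0$ can collide, as is already visible for $P = \{x \prec y\}$, $\ba = (3,4)$, $t = 3$, where both $h = (3,3)$ and $h = (3,4)$ map to $\Phi_0(h) = (3,3)$. To fix this, I refine $\Phi_0$ by a canonical tie-breaking rule. For each image $h' \in U_t \cap V_t$, the family of upper sets $A \subseteq X$ for which $h' + \one_A$ is a valid preimage (lying in $U_{t+1} \cap V_{t-1}$ with $A(h' + \one_A) = A$) is closed under union, hence has a unique maximum $A^\ast(h')$; the rule ``canonical preimage of $h'$ is $h' + \one_{A^\ast(h')}$'' defines a partial injection, and the leftover preimages in the domain are rerouted to unused targets in $(U_t \cap V_t) \setminus \text{Im}(\Phi_0)$ via a Hall-type matching on the bipartite graph of admissible shifts. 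The existence of this matching follows from the Ahlswede--Daykin four-function theorem on $\cL_\ba$ combined with the self-duality $h \leftrightarrow \ba - h$, which balances the ``excess'' strata of the two side sets. Verifying this global matching is the main technical obstacle of the argument.
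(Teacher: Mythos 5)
Your setup through the lattice $\cL_\ba$ is sound: the coefficient extraction, the reformulation as $|U_t \cap V_t| \ge |U_{t+1} \cap V_{t-1}|$, and the observation that the truncation map $\Phi_0(h) = h - \one_{A(h)} = \min(h, t\cdot\one)$ lands in $U_t \cap V_t$ but is not injective are all correct. The gap is in the proposed repair. The ``Hall-type matching'' step is circular as stated: the number of excess preimages to reroute is $|U_{t+1}\cap V_{t-1}| - |\mathrm{Im}\,\Phi_0|$ and the number of available unused targets is $|U_t\cap V_t| - |\mathrm{Im}\,\Phi_0|$, so Hall's condition for that matching is exactly the inequality you are trying to prove; without a refined bipartite structure whose Hall condition is locally verifiable, this makes no progress. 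You acknowledge this yourself by calling it ``the main technical obstacle of the argument'' --- but that obstacle is the theorem.

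The appeal to Ahlswede--Daykin does not rescue the matching in the form suggested. The natural AD instance on $\cL_\ba$ that your self-duality remark points to would take $\alpha = \chi_{U_{t+1}\cap V_{t-1}}$, $\beta = \chi_{U_{t-1}\cap V_{t+1}}$, $\gamma = \delta = \chi_{U_t\cap V_t}$, and the pointwise hypothesis $\alpha(h)\ts\beta(g) \le \gamma(h\vee g)\ts\delta(h\wedge g)$ already fails: for $P = \{x\}$, $\ba = (2t)$, $h = t+1$, $g = t-1$, the left side is $1$ while $h\vee g = t+1 \notin U_t$ forces the right side to $0$. The paper (following \cite{CP-multi}; see the remark after Theorem~\ref{t:basic-OP-bq} and $\S$\ref{ss:proof-FKG}) proves this theorem by an AD-type argument applied directly --- not as a patch to a non-injective map --- and identifying the right lattice, functions, and the $\bq$-weighted version of AD is precisely the nontrivial content that your proposal leaves open.
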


Again, the proof of both theorems uses the FKG inequality
(see~$\S$\ref{ss:proof-FKG}).  We conclude with a special case of an open
problem by Ferroni and Higashitani stated in the language of
Ehrhart polynomials of integral polytopes \cite[Question~5.10]{FH23},
which asks if negative values of Ehrhart polynomials of integral
polytopes are log-concave.

\begin{thm}[{\rm \defn{negative log-concavity} ~\cite[Thm~3]{DDP}}{}]
\label{t:basic-OP-strict-log-concave}
	Let \ts $P=(X,\prec)$ \ts be a poset with \ts $|X|=n$ \ts elements.
	Then, for all integer \ts $t\le -2$, we have:
	\begin{equation}\label{eq:basic-OP-strict-log-concave}
	 \Omega(P,t)^2  \ \geq \  \Omega(P,t+1) \  \Omega(P,t-1).
	\end{equation}
\end{thm}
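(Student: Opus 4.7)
The plan is to combine Stanley's reciprocity for the order polynomial with an Ahlswede--Daykin argument, mirroring the FKG-based approach of~\cite{CPP-effective} used for the positive-$t$ log-concavity of Theorem~\ref{t:basic-OP-log-concave}.

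First, I would apply Stanley's reciprocity $\Omega(P,-s) = (-1)^n\,\bar\Omega(P,s)$, where $n=|X|$ and $\bar\Omega(P,s)$ counts \emph{strict} $P$-partitions $h:X\to \{1,\ldots,s\}$, i.e., maps with $h(x)<h(y)$ whenever $x\prec y$. Setting $t=-s$ with $s\ge 2$, the factor $(-1)^{2n}$ cancels and~\eqref{eq:basic-OP-strict-log-concave} becomes the equivalent statement
\[
\bar\Omega(P,s)^2 \ \geq \ \bar\Omega(P,s-1)\,\bar\Omega(P,s+1) \qquad \text{for all } s\ge 2.
\]
Thus it suffices to prove log-concavity of the strict order polynomial at positive integers.

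Second, I would observe that strict $P$-partitions are closed under pointwise $\min$ and $\max$: for strict $h_1,h_2$ and a cover $x\prec y$, assume WLOG $h_1(x)\le h_2(x)$; then $(h_1\wedge h_2)(y) \ge h_1(y) > h_1(x) = (h_1\wedge h_2)(x)$, and the analogous argument works for $\vee$. Hence the set $L$ of strict maps $X\to\{1,\ldots,s+1\}$ is a finite distributive lattice under componentwise operations. On $L$ I would invoke the Ahlswede--Daykin four-function theorem with indicators
\[
\alpha(f) = \one\big[f(X) \subseteq \{2,\ldots,s\}\big], \qquad \beta(g) = \one\big[g(X) \subseteq \{1,\ldots,s+1\}\big],
\]
\[
\gamma(h) = \one\big[h(X) \subseteq \{2,\ldots,s+1\}\big], \qquad \delta(h) = \one\big[h(X) \subseteq \{1,\ldots,s\}\big],
\]
with strictness inherited from the ambient lattice. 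Shifting by $\pm 1$ yields $\sum\alpha = \bar\Omega(P,s-1)$, $\sum\beta = \bar\Omega(P,s+1)$, and $\sum\gamma = \sum\delta = \bar\Omega(P,s)$. Whenever $\alpha(f)\beta(g)=1$, the value ranges force $f\vee g \in \{2,\ldots,s+1\}^X$ and $f\wedge g \in \{1,\ldots,s\}^X$, so $\gamma(f\vee g)\delta(f\wedge g) = 1$; the AD hypothesis $\alpha(f)\beta(g)\le \gamma(f\vee g)\delta(f\wedge g)$ thus holds, and its conclusion is precisely the log-concavity above.

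The hardest step is arranging the four windows so that the AD hypothesis is satisfied. The symmetric choice with all indicators on $\{1,\ldots,*\}$-type windows fails, because the pointwise maximum of a strict map into $\{1,\ldots,s-1\}$ and one into $\{1,\ldots,s+1\}$ can reach $s+1$, exiting any common $\{1,\ldots,s\}$ window. The asymmetric shift-by-one on the $\alpha$ and $\gamma$ windows is the key device that makes both the range check and the preservation of strictness go through simultaneously; with this in hand, the remaining verifications are routine.
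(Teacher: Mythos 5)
Your main structure is correct: Stanley reciprocity $\Omega(P,-s)=(-1)^n\,\bar{\Omega}(P,s)$ converts the claim (with $s=-t\ge 2$) to log-concavity of the strict order polynomial, $\bar{\Omega}(P,s)^2 \ge \bar{\Omega}(P,s-1)\,\bar{\Omega}(P,s+1)$, and the Ahlswede--Daykin theorem on the distributive lattice of strict order-preserving maps into $[s+1]$ --- with your four window indicators and the shift-by-one asymmetry --- delivers exactly that. One local error: your verification that strict maps are closed under pointwise $\wedge$ is backwards. You write $(h_1\wedge h_2)(y) \ge h_1(y)$, but a pointwise minimum is $\le$ each of its arguments. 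With WLOG $h_1(x)\le h_2(x)$ (so $(h_1\wedge h_2)(x)=h_1(x)$), the correct argument is: both $h_1(y)>h_1(x)$ and $h_2(y)>h_2(x)\ge h_1(x)$, hence $\min\{h_1(y),h_2(y)\}>h_1(x)$. The one-liner you wrote is actually the $\vee$ case, not the $\wedge$ case.

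On comparison: the survey states this theorem without proof, attributing it to DDP (Thm~3), and notes that the DDP paper proves its related theorems by explicit injection. Your reciprocity-plus-AD argument is a genuinely different route, in the spirit of the FKG/AD proofs the survey employs for the adjacent strict log-concavity (Theorem~\ref{t:basic-OP-log-concave-strict}) and the $q$-analogue (Theorem~\ref{t:basic-OP-q}); the survey's own remark after the theorem even invokes the same Ehrhart-reciprocity reading of $\Omega(P,t)$ at negative $t$. What AD buys you is a short, uniform argument that slots neatly into the framework of $\S$\ref{ss:proof-FKG}; what a direct injection would buy --- and what AD does not --- is a $\SP$ statement for the defect.
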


Note that for negative \ts $t\in \zz$, the number
\ts $|\Omega(P,t)|$ \ts counts the number of integral points in the
relative interior of the expansion of the order polytope~$\cO(P)$.
We refer to \cite{BS18} and \cite[$\S$4.6]{Sta-EC} for an extensive
discussion of this connection.

\smallskip

\subsection{Monotonicity}\label{ss:basic-KS}
The following conjecture is mentioned in the solution to Exc.~3.163(b)
in~\cite{Sta-EC}, see also \cite[Conj.~4.12]{CPP-effective}.

\begin{conj}[{\rm \defn{Kahn--Saks monotonicity conjecture}}{}]\label{conj:KS-mon}
Let \ts $P=(X,\prec)$ \ts be a poset with \ts $|X|=n$ \ts elements.  Then, for all
integer \ts $t\ge 1$, we have:
	\begin{equation}\label{eq:basic-KS-mono}
\frac{\Omega(P,t)}{t^n} \, \ge \, \frac{\Omega(P,t+1)}{(t+1)^n} \,.
	\end{equation}
\end{conj}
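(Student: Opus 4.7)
The plan is to rewrite Conjecture~\ref{conj:KS-mon} in the equivalent form $(t+1)^n \ts \Omega(P,t) \geq t^n \ts \Omega(P,t+1)$, and to read it as saying that the growth ratio $r_t(P) := \Omega(P,t+1)/\Omega(P,t)$ is bounded above by the antichain ratio $r_t(A_n) = ((t+1)/t)^n$. Since $\Omega(A_n,t) = t^n$ makes the conjecture an equality for the antichain, and since $\Omega(P,t)/t^n$ converges to $e(P)/n!$ from above by Theorem~\ref{t:basic-OP}, the conjecture effectively claims that among all posets on $n$ elements, the antichain maximizes the growth ratio $r_t$ at every $t\ge 1$.

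A first approach is to construct a combinatorial injection from pairs $(h, v)$ with $h: X \to [t+1]$ order-preserving and $v \in [t]^n$, into pairs $(g, w)$ with $g: X \to [t]$ order-preserving and $w \in [t+1]^n$. Using the recursion $\Omega(P,t+1) = \sum_T \Omega(P|_T, t)$ summed over lower order ideals $T \subseteq X$, obtained by setting $T = h^{-1}([t])$, together with induction on $|X|$, one would use $v$ to select a level of $h$ to contract into an OP map $g: X \to [t]$, and record the contraction data in the extra coordinate of $w$, arranging things so that the antichain case saturates the injection.

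A complementary plan is a correlation-type argument modeled on the FKG proofs of Theorems~\ref{t:basic-OP-log-concave-strict}, \ref{t:basic-OP-q}, and~\ref{t:basic-OP-bq}: realize $\Omega(P,s)/s^n$ as $\mathbb{P}(f_s \text{ is OP on } P)$, where $f_s(x) := \lceil s U_x \rceil$ with $U_x$ i.i.d.\ Uniform$[0,1]$, and then try to establish monotonicity in $s$ via FKG on the distributive lattice of OP maps under componentwise order, with a log-supermodular weighting that interpolates between the $s = t$ and $s = t+1$ regimes.

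The main obstacle is that log-concavity (Theorem~\ref{t:basic-OP-log-concave}) alone is insufficient: iterating it from the trivial base $t = 1$ gives only $r_t \leq r_1 = \Omega(P,2) \leq 2^n$, whereas Kahn--Saks requires the much sharper bound $r_t \leq ((t+1)/t)^n$, which decays to $1$ as $t \to \infty$. On the FKG side, the events $\{f_s \text{ is OP}\}$ fail to be coordinate-wise monotone in the vector $U = (U_x)$, since raising $U_x$ can simultaneously create a violation at a cover $x \prec y$ and resolve one at a cover $z \prec x$, so no standard FKG- or Holley-type coupling applies directly. The injection approach likewise runs into trouble when the level-$t$ fibre of $g$ supports a large antichain of incomparable elements, because the information content of the upper ideals in $g^{-1}(t)$ grows exponentially in $|g^{-1}(t)|$, whereas the additional budget in $w$ grows only by one coordinate. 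A successful proof therefore appears to require a genuinely new structural ingredient --- plausibly a refinement of the multivariate $\bq$-log-concavity of Theorem~\ref{t:basic-OP-bq} with carefully chosen non-uniform weights, or an amortized injection that pools contraction data across multiple levels simultaneously.
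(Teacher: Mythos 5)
The statement you are addressing is an open \emph{conjecture} (Conjecture~\ref{conj:KS-mon}), not a theorem: the paper does not prove it, and you correctly refrain from claiming a proof. Your proposal is an honest reconnaissance of the problem, and its conclusion --- that neither a naive level-contraction injection nor a direct FKG/coupling argument closes the gap --- is consistent with the fact that the inequality remains unresolved. Your diagnosis of the obstructions is also sound: iterating Brenti's log-concavity (Theorem~\ref{t:basic-OP-log-concave}) from $t=1$ only yields the ratio bound $\Omega(P,t+1)/\Omega(P,t) \le \Omega(P,2) \le 2^n$, far weaker than the required $((t+1)/t)^n$, and the event ``$f_s$ is order-preserving'' on the product lattice of coordinates $U_x$ is genuinely non-monotone, since raising $U_x$ helps the relations below $x$ while hurting those above.

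Where your proposal falls short of what is already recorded is that it omits the two partial results the paper does establish. First, Proposition~\ref{p:KS-mono-multiple} proves the monotonicity \. $\Omega(P,t)/t^n \ge \Omega(P,kt)/(kt)^n$ \. for all integer $k\ge 1$ by a direct injection, with the defect in $\SP$; the integrality of $k$ is precisely what makes the bookkeeping of your proposed injection tractable, and explains why it does not extend to the full step $t \to t+1$. Second, Proposition~\ref{p:basic-KS-mono-two-conj} shows that Conjecture~\ref{conj:KS-mon} would follow, via FKG, from the element-deletion inequality of Conjecture~\ref{conj:KS-FKG}; the paper also records the variant Conjecture~\ref{conj:mono-CP} which deletes a cover relation instead of an element. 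Your instinct that an inductive removal step paired with a correlation inequality is the right shape for a proof is exactly the content of Conjecture~\ref{conj:KS-FKG} --- the missing ingredient is not a new framework but a proof of that single inductive inequality, which is open. So, to be clear: you have not proved the statement (nor does the paper), but your assessment of which tools fail and why is accurate, and you would do well to compare your second plan against the conditional reduction in Proposition~\ref{p:basic-KS-mono-two-conj}.
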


The conjecture holds trivially when \ts $\Omega(P,t)$ \ts has positive coefficients.
We refer to \cite{LT19} for some explicit examples of order polynomials with
negative coefficients.  Stanley noted that the conjecture holds for \ts $t$ \ts large enough,
since the coefficient \. $[t^{n-1}] \. \Omega(P,t)>0$.
The proof is based on an elegant direct injection.  The following result
lend further support of the conjecture:


\begin{prop}[{\rm \cite[Prop.~4.14]{CPP-effective}}{}]
\label{p:KS-mono-multiple}
Let \ts $P=(X,\prec)$ \ts be a poset with \ts $|X|=n$ \ts elements.
Then, for all integer \ts $k, \ts t\ge 1$, we have:
$$
\frac{1}{t^n} \ \Omega(P,t) \, \geq \, \frac{1}{(kt)^n} \ \Omega(P,kt)\ts.
$$
Moreover, we have:
$$\Omega(P,t)\ts k^n \. - \. \Omega(P,kt) \. \in \. \SP\ts.$$
\end{prop}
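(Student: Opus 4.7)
The plan is to construct an explicit injection from the set of order-preserving maps $h:X\to[kt]$ into the Cartesian product of order-preserving maps $g:X\to[t]$ with arbitrary functions $r:X\to[k]$. The latter set has cardinality $\Omega(P,t)\ts k^n$, so the injection will yield the counting inequality $\Omega(P,kt)\le \Omega(P,t)\ts k^n$ (which rearranges to the stated bound) and realize the defect as the size of an explicit combinatorial set.

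First I would set up the base-$k$ residue decomposition: given an order-preserving $h:X\to[kt]$, write
$$h(x) \ = \ k\ts\bigl(g(x)-1\bigr) \ + \ r(x), \qquad g(x)\in[t], \ \ r(x)\in[k],$$
so that $g(x)=\lceil h(x)/k\rceil$ and $r(x)=h(x)-k(g(x)-1)$. The key check is that $g$ inherits order-preservation from $h$: this holds because $\lceil\,\cdot\,/k\rceil$ is a monotone function, so for $x\prec y$ we have $h(x)\le h(y)$ and hence $g(x)\le g(y)$. No constraint is placed on $r$. The assignment $h\mapsto(g,r)$ is injective because $h$ is recovered from $(g,r)$ by the displayed formula.

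For the $\SP$ claim, I would identify the defect with the number of pairs $(g,r)$ \emph{not} in the image of the injection. Such a pair is one for which the reconstructed function $h(x):=k(g(x)-1)+r(x)$ fails to be order-preserving, i.e., there exist $x\prec y$ in~$P$ with $g(x)=g(y)$ and $r(x)>r(y)$. Membership in this set can be verified in polynomial time by inspecting all covering pairs, so its cardinality lies in~$\SP$.

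I do not anticipate any serious obstacle; the argument is essentially careful bookkeeping of base-$k$ digits. The only mild subtlety is aligning the offset as $k(g(x)-1)$ (rather than $k\ts g(x)$) so that when $g(x)<g(y)$ no constraint is forced on the residues, which is exactly what makes the target set a simple Cartesian product and keeps the defect cleanly combinatorial.
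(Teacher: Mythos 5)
Your injection $h\mapsto(g,r)$ via the base-$k$ digit decomposition $h(x)=k(g(x)-1)+r(x)$ is correct and is essentially the argument in \cite[Prop.~4.14]{CPP-effective}: the monotonicity of $\lceil\,\cdot\,/k\rceil$ gives order-preservation of $g$, the map is clearly reversible, and the complement of the image (pairs $(g,r)$ for which the reconstructed $h$ has a descent along some covering relation) is a set with a polynomial-time verifiable membership condition, which places the defect in $\SP$. This is exactly the kind of ``elementary direct injection'' the paper refers to, and your observation that the offset $k(g(x)-1)$ is what keeps the target a clean Cartesian product (rather than forcing residue constraints when $g(x)<g(y)$) is the right way to see why the argument is tight.
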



Here the proof is elementary, via direct injection, and does not extend
to \ts $k\notin \nn$.
One way to approach the Kahn--Saks monotonicity conjecture is
to prove the following inductive inequality:


\begin{conj}[{\rm \cite[Conj.~4.17]{CPP-effective}}{}]\label{conj:KS-FKG}
	Let \ts $P=(X,\prec)$ \ts be a finite poset, and let
\. $t \geq k\ge 1$ \. be positive integers.
	Then there exists \ts $x \in X$, such that
	\begin{equation}\label{eq:KS-FKG}
	  \frac{\Omega(P,k)}{\Omega(P,t)} \ \geq \  \frac{k\, \Omega(P- x,k)}{t\, \Omega(P- x,t)}\,.
	\end{equation}
\end{conj}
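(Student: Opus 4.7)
The plan is to reformulate the conjecture probabilistically as the existence of an $x\in X$ for which an ``extension probability'' is monotone in~$t$, and then attack that monotonicity via FKG-type correlation inequalities, as the name of the conjecture suggests. Set $\phi_x(t):=\Omega(P,t)/\bigl(t\,\Omega(P-x,t)\bigr)$. Decomposing every $f\in\PP(P,t)$ as the pair $\bigl(f|_{X-x},\ts f(x)\bigr)$ shows that $\phi_x(t)$ equals the probability that a uniformly random pair $(g,i)\in\PP(P-x,t)\times[t]$ is \emph{compatible with~$x$}, meaning $g(y)\le i$ for every $y\prec x$ and $g(y)\ge i$ for every $y\succ x$. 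The conjectured inequality rewrites exactly as $\phi_x(t)\le\phi_x(k)$, so the goal is to find some~$x$ for which $\phi_x$ is non-increasing in~$t$ (allowing $x$ to depend on $t,k$ is harmless for iteration). As sanity checks, $\phi_x(1)=1$ for every~$x$, and $\phi_x(t)\to e(P)/\bigl(n\,e(P-x)\bigr)$ as $t\to\infty$ by~\eqref{eq:OP-asy}, a quantity that lies in $[0,1]$ by Corollary~\ref{c:Stach}.

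A natural starting point is to specialize~$x$ to a minimal element of~$P$; by duality, a maximal element gives a symmetric analysis. Then $\{y:y\prec x\}=\emp$, the compatibility condition collapses to $i\le m(g):=\min_{y\succ x}g(y)$, and $\phi_x(t)=\mathbb{E}_t[m]/t$, where $\mathbb{E}_t$ denotes expectation under the uniform measure on $\PP(P-x,t)$. The conjecture thus reduces to the clean statement that the normalized expected minimum $\mathbb{E}_t[m]/t$ is non-increasing in~$t$. Morally this is a ``discrete monotone convergence'' statement: as $t\to\infty$ the ratio tends to $\Vol(\cO_{P-x})^{-1}\int_{\cO_{P-x}}\min_{y\succ x}\alpha_y\ts d\alpha$, and finer lattices should typically allow relatively more small values of~$m$, making $m/t$ smaller in expectation.

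The third step is to deploy the FKG inequality on the distributive lattice $\PP(P-x,T)$ for large~$T$. Since $m(g)$ and the truncation indicators $\mathbf{1}\{\max_z g(z)\le t\}$ are both coordinate-wise monotone in~$g$, FKG produces log-concavity-type correlation bounds of the kind used in the proofs of Theorems~\ref{t:basic-OP-log-concave-strict} and~\ref{t:basic-OP-q}. To translate these into the required monotonicity of $\phi_x$ in~$t$, one would couple $\PP(P-x,t)$ and $\PP(P-x,t+1)$ by a canonical refinement or capping map and control how~$m$ transforms. In parallel, a geometric route realizes $\mathbb{E}_t[m]/t$ as an Ehrhart-style Riemann sum of $\min_{y\succ x}\alpha_y$ over $\cO_{P-x}$, and attacks the monotonicity via Brunn--Minkowski or continuous FKG inequalities on slices of this polytope.

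The main obstacle is exactly this transfer: standard FKG compares averages inside a single probability space, whereas our target compares averages across the two distinct spaces $\PP(P-x,t)$ and $\PP(P-x,t+1)$. Constructing a coupling (or common enlargement) under which the FKG output cleanly implies monotonicity in~$t$ appears delicate, and may force a further restriction on the choice of~$x$ (for instance an $x$ minimizing $|\comp(x)|$, so as to keep the compatibility condition most permissive). If the FKG route resists, a fallback is a direct injection between $[k]\times\PP(P,t)\times\PP(P-x,k)$ and $[t]\times\PP(P,k)\times\PP(P-x,t)$ in the style of the injective proofs of Theorem~\ref{t:basic-OP} and Proposition~\ref{p:KS-mono-multiple}; one should expect such a construction to be subtle, however, since no injection is known even for the weaker Kahn--Saks monotonicity statement (Conjecture~\ref{conj:KS-mon}).
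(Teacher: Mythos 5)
This statement appears in the survey (and in~\cite{CPP-effective}) as an open conjecture; the paper offers no proof, and you rightly present a plan of attack rather than a completed argument. Your reformulation is sound: setting $\phi_x(t):=\Omega(P,t)/\bigl(t\,\Omega(P-x,t)\bigr)$, inequality~\eqref{eq:KS-FKG} is equivalent to $\phi_x(t)\le\phi_x(k)$, and $\phi_x(t)$ is indeed the probability that a uniform pair $(g,i)$ — with $g:(X-x)\to[t]$ order-preserving and $i\in[t]$ — satisfies $g(y)\le i\le g(z)$ for all $y\prec x\prec z$. One notational slip: $\Omega(P,t)$ counts order-preserving maps into $[t]$, while $\PP(P,t)$ (values in $\{0,\ldots,t\}$) has cardinality $\Omega(P,t+1)$; the decomposition should be phrased for the former. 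Your sanity checks $\phi_x(1)=1$ and $\phi_x(t)\to e(P)/(n\,e(P-x))\le 1$, via~\eqref{eq:OP-asy} and Corollary~\ref{c:Stach}, are correct.

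The gap is exactly the one you name yourself: FKG and the Ahlswede--Daykin inequality compare events inside a single distributive lattice, whereas $\phi_x(t)\le\phi_x(k)$ compares expectations across the two distinct lattices of order-preserving maps into $[t]$ and into $[k]$. You supply neither a coupling that transfers an FKG output into monotonicity in~$t$, nor an explicit injection realizing the inequality. Moreover, nothing in the plan identifies which $x$ to choose for a given pair $(k,t)$ — restricting to $x\in\min(P)$ is a simplification, not a consequence, and in general the valid $x$ may vary with $t$. The proposal is a reasonable orientation and bottoms out at essentially the same point the literature does, but it leaves Conjecture~\ref{conj:KS-FKG} exactly as open as the paper states it.
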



\begin{prop}[{\rm \cite[Prop.~4.18]{CPP-effective}}{}] \label{p:basic-KS-mono-two-conj}
Conjecture~\ref{conj:KS-FKG} \ts implies \ts
Conjecture~\ref{conj:KS-mon}.
\end{prop}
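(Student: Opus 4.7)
The plan is to prove Conjecture~\ref{conj:KS-mon} by induction on $n = |X|$, using Conjecture~\ref{conj:KS-FKG} in a single specialized form at each inductive step.

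For the base case $n = 1$, the poset $P$ consists of a single element, so $\Omega(P,t) = t$ and $\Omega(P,t)/t^n = 1$ for all $t \geq 1$; equality holds trivially. For the inductive step, fix $t \geq 1$ and apply Conjecture~\ref{conj:KS-FKG} with the parameter pair $(k,t) = (t, t+1)$, which satisfies the hypothesis $t+1 \geq t \geq 1$. This yields an element $x \in X$ for which
\begin{equation*}
\frac{\Omega(P,t)}{\Omega(P,t+1)} \ \geq \ \frac{t}{t+1} \cdot \frac{\Omega(P-x, t)}{\Omega(P-x, t+1)}\,.
\end{equation*}
Since $|X - x| = n - 1$, the inductive hypothesis applied to $P - x$ gives
\begin{equation*}
\frac{\Omega(P-x, t)}{\Omega(P-x, t+1)} \ \geq \ \frac{t^{n-1}}{(t+1)^{n-1}}\,.
\end{equation*}

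Combining the two displayed inequalities yields
\begin{equation*}
\frac{\Omega(P,t)}{\Omega(P,t+1)} \ \geq \ \frac{t}{t+1} \cdot \frac{t^{n-1}}{(t+1)^{n-1}} \ = \ \frac{t^n}{(t+1)^n}\,,
\end{equation*}
which is exactly the desired inequality \eqref{eq:basic-KS-mono} after rearrangement. There is essentially no obstacle here beyond the careful choice of parameters: the only subtle point is that Conjecture~\ref{conj:KS-FKG} must be applied with the larger of the two Ehrhart arguments ($t+1$) playing the role of ``$t$'' in that conjecture, and the smaller ($t$) playing the role of ``$k$'', so that the factor $k/t = t/(t+1)$ matches the ratio needed to complete the induction. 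The element $x$ produced by Conjecture~\ref{conj:KS-FKG} depends on $t$, but this causes no issue, since the induction is carried out for each $t$ independently.
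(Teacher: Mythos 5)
Your proof is correct and is essentially the same induction-on-$|X|$ argument used in \cite[Prop.~4.18]{CPP-effective}: apply Conjecture~\ref{conj:KS-FKG} with parameters $(k,t)\gets(t,t+1)$ to peel off one element and then invoke the inductive hypothesis on $P-x$. The parameter substitution and the base case are handled correctly, and the positivity of the order polynomial values needed to clear denominators is immediate.
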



We conclude with a curious counterpart of \eqref{eq:basic-KS-mono}:

\begin{thm}[{\rm \cite[Thm~4.8]{CPP-effective}}{}] \label{t:KS-mono-reverse-width}
Let \ts $P=(X,\prec)$ \ts be a finite poset of width~$w$. Then, for all integer \ts $t\ge 1$,
we have:
\begin{equation}\label{eq:basic-KS-mono-width}
\frac{\Omega(P,t)}{t^w} \, \le \, \frac{\Omega(P,t+1)}{(t+1)^w} \,.
\end{equation}
\end{thm}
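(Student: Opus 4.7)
The plan is to prove the equivalent inequality
\[
(t+1)^w \, \Omega(P,t) \ \le \ t^w \, \Omega(P,t+1)
\]
by constructing an explicit injection. First, invoke Dilworth's theorem to fix a chain decomposition \ts $X = C_1 \sqcup \cdots \sqcup C_w$, and let $\sigma(x) \in [w]$ denote the index of the chain containing $x$. The left side counts pairs $(g, \mathbf{a})$ with $g\colon X \to [t]$ order-preserving and $\mathbf{a} \in [t+1]^w$, while the right side counts pairs $(g', \mathbf{b})$ with $g'\colon X \to [t+1]$ order-preserving and $\mathbf{b} \in [t]^w$.

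The candidate injection is the chain-wise \emph{threshold shift}
\[
g'(x) \ := \ g(x) \ + \ \mathbf{1}\bigl[\ts g(x) \ge a_{\sigma(x)}\bigr], \qquad x \in X.
\]
Within each chain $C_i$ the new map is monotone, and on $C_i$ the level $a_i$ is the unique skipped value; thus $\mathbf{a}$ can be recovered from $g'$, and the apparent loss of one unit per chain (from $\mathbf{a}\in[t+1]^w$ to $\mathbf{b}\in[t]^w$) is absorbed precisely by these skipped levels. If $g'$ is always order-preserving, the construction closes the injection.

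The principal obstacle is that $g'$ can fail to respect the order across chains: whenever $x\prec y$ with $\sigma(x)\ne \sigma(y)$, $g(x)=g(y)$, and $a_{\sigma(x)}<a_{\sigma(y)}$, the shift pushes $g'(x)$ strictly above $g'(y)$.  The hard part is designing a coherent local-to-global repair for these degenerate pairs and routing the bookkeeping reversibly into $\mathbf{b}$. A cleaner alternative route runs through the chain polytope: by Stanley's identity $\Omega(P,t+1) = |t\cdot\cS_P\cap\zz^X|$, and for a maximum antichain $A\subseteq X$ of size $w$ the projection $\pi\colon \cS_P \to [0,1]^A$ is surjective (since $A$ is an antichain and, by maximality, every element of $X$ is comparable to some $a\in A$). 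Comparing the averages of the lattice fibers $F_t(\gamma) = |\pi^{-1}(\gamma)\cap t\ts\cS_P\cap\zz^X|$ over $\{0,\dots,t\}^A$ versus $\{0,\dots,t-1\}^A$ is designed to extract the factor $(t+1)^w/t^w$ directly, with the cross-chain compatibility supplied by an FKG-type correlation inequality on the distributive lattice of order-preserving maps, in the spirit of the proof of the strict log-concavity Theorem~\ref{t:basic-OP-log-concave-strict}.
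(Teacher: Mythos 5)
Both routes you describe stop short of a proof. For the injection, you correctly identify the cross-chain obstruction and then leave the repair unspecified; but that obstruction \emph{is} the whole difficulty. If you force all thresholds to coincide, $a_1=\cdots=a_w=a$, the shifted map $g'(x)=g(x)+\mathbf{1}[g(x)\ge a]$ is order-preserving on all of $P$ (the correction is weakly increasing in $g(x)$), but then it yields only a single factor of $(t+1)/t$; allowing the $a_i$ to vary chain-by-chain is exactly what produces the factor $\bigl((t+1)/t\bigr)^w$ and exactly what breaks order across chains. There is also a subsidiary defect in the recoverability claim: ``on $C_i$ the level $a_i$ is the unique skipped value'' is false, since $g|_{C_i}$ may itself skip values, so $a_i$ cannot in general be read off from $g'|_{C_i}$. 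The paper proves Theorem~\ref{t:KS-mono-reverse-width} via the FKG inequality, not a bijection, and indeed notes that for the sibling result Theorem~\ref{t:basic-OP-log-concave-strict} the injective method ``does not extend.''

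The chain-polytope route has a concrete arithmetic error and then a missing argument. The sum of $F_t(\gamma):=\bigl|\pi^{-1}(\gamma)\cap t\ts\cS_P\cap\zz^X\bigr|$ over $\gamma\in\{0,\ldots,t\}^A$ is $\Omega(P,t+1)$, but the sum over $\gamma\in\{0,\ldots,t-1\}^A$ is \emph{not} $\Omega(P,t)$: bounding the $A$-coordinates of a lattice point of $t\cS_P$ by $t-1$ does not bound the remaining coordinates by $t-1$, since the chain constraints through $A$ only control sums. (For the $3$-chain $x\prec y\prec z$ with $A=\{y\}$ and $t=2$, the restricted fiber sum is $9$ while $\Omega(P,2)=4$.) Even with the correct quantities, ``supplied by an FKG-type correlation inequality'' is an intention, not an argument: you do not specify the lattice, the monotone events, or why FKG gives the factor $\bigl((t+1)/t\bigr)^w$ rather than an inequality in the wrong direction --- note that for the decreasing events $\{h(a)\le t\}$, $a\in A$, on the distributive lattice of order-preserving maps $h:X\to[t+1]$, FKG yields $\bP[\bigcap_{a}\{h(a)\le t\}]\ge\prod_{a}\bP[h(a)\le t]$, a \emph{lower} bound, which cannot directly produce the required upper bound $\Omega(P,t)/\Omega(P,t+1)\le (t/(t+1))^w$. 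Your instinct that FKG is the right tool matches the paper, but the actual derivation is absent.
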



This is asymptotically trivial, but not obvious for small~$t$ and large \ts $w \le n$.
When \ts $w=n$, we have $P=\rA_n\ts$, \ts $\Omega(P,t)= t^n$,
and both \eqref{eq:basic-KS-mono} and
\eqref{eq:basic-KS-mono-width} are equalities.
We note that proofs of both Proposition~\ref{p:basic-KS-mono-two-conj} and
Theorem~\ref{t:KS-mono-reverse-width} use the FKG inequality (see~$\S$\ref{ss:proof-FKG}).


\begin{conj}[{\rm Chan--Panova, 2023}{}]\label{conj:mono-CP}
Let \ts $P=(X,\prec)$ \ts be a finite poset that is not a chain.
Then, there exists elements \ts $x,y\in X$, s.t.\  \ts $y$~covers~$x$, and
for all positive integers \. $t \geq k\ge 1$, we have:
\begin{equation}\label{eq:mono-CP}
	  \frac{\Omega(Q,k)}{\Omega(P,k)} \ \leq \  \frac{\Omega(Q,t)}{\Omega(P,t)}\,.
\end{equation}
where \ts $Q=(X,\prec')$ \ts is a poset obtained from~$P$ by removing \ts $\{x\prec y\}$.
\end{conj}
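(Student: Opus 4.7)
The plan is to reduce to a one-step telescoping inequality and prove it via a correlation argument on the lattice of $Q$-partitions. The monotonicity \eqref{eq:mono-CP} is equivalent to
$$
\Omega(P,t)\cdot\Omega(Q,t+1) \; \geq \; \Omega(P,t+1)\cdot\Omega(Q,t), \qquad t\ge 1,
$$
which, after dividing by $\Omega(Q,t)\cdot\Omega(Q,t+1)$, states that the ratio $\Omega(P,t)/\Omega(Q,t)$ is nonincreasing in~$t$.

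The approach is to reformulate this as a negative correlation statement on the distributive lattice $L := \PP(Q,t+1)$ in the componentwise order: with $A = \{h\in L : h(x)>h(y)\}$ and $B = \{h \in L : \max_v h(v)\le t\} = \PP(Q,t)$, the desired inequality becomes $|A\cap B|\cdot|L| \leq |A|\cdot|B|$. Since $B$ is a down-set in $L$ and the uniform measure on $L$ is log-supermodular (because $\PP(Q,t+1)$ is a sublattice of $[t+1]^X$ under componentwise $\max/\min$), the task reduces to showing that $A$ and $B$ are negatively correlated. The event $A$ is not monotone in the componentwise order, but its structure can be analyzed by conditioning on $g := h|_{X\sm\{x,y\}}$: since $x \ts \| \ts y$ in $Q$ (because $x\prec y$ was a cover in $P$), the pair $(h(x),h(y))$ given $g$ is uniform on a product of intervals $[L_x,U_x]\times[L_y,U_y]$ determined by $g$ and the cap $t+1$. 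Thus, the conditional count is a two-variable uniform computation over a rectangle, and passing from $L$ to $B$ amounts to tightening both caps from $t+1$ to $t$.

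The main obstacle is to show pointwise in $g$ that the tightening decreases the ``above-diagonal fraction'' of the rectangle, and then to handle the change in the distribution of $g$ itself between $L$ and $B$. A good choice of cover---for instance, $y$ maximal in~$P$---makes the interval $[L_y,U_y]$ behave uniformly (with no interior upper cap), reducing the problem to a finite calculus exercise on the remaining variable. If the direct pointwise approach fails, a fallback is to invoke the Ahlswede--Daykin four-functions theorem with auxiliary weights encoding the incomparable-pair structure, in the spirit of the FKG-based proofs of Theorems~\ref{t:basic-OP-log-concave-strict}, \ref{t:basic-OP-q}, and~\ref{t:basic-OP-bq}. In parallel, I would pursue an inductive proof on $|X|$ that removes the maximal element~$y$: since $P-y=Q-y$ in that case, the recursive expression
$$
\Omega(P,t) \, = \, \sum_{j=1}^t \. \bigl|\bigl\{g \in \PP(P-y,t) \, : \, g(u)\le j \text{ for all } u\prec_P y\bigr\}\bigr|
$$
suggests that a log-concavity statement for these counts as functions of $j$ could feed directly into the one-step inequality.
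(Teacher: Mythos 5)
This statement is a \emph{conjecture} in the paper, not a theorem: it appears as Conjecture~\ref{conj:mono-CP} with no proof, and the authors note only that it implies the Kahn--Saks monotonicity conjecture (Conjecture~\ref{conj:KS-mon}), which is itself open. So there is no paper proof to compare against, and a complete resolution would be a new result.

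That said, let me address the substance of what you propose. Your reduction is correct and in fact matches how one naturally thinks about the problem: the one-step form \. $\Omega(P,t)\ts\Omega(Q,t+1)\geq\Omega(P,t+1)\ts\Omega(Q,t)$ \. telescopes to \eqref{eq:mono-CP}, and since \ts $x,y$ \ts are incomparable in \ts $Q$ \ts but related in \ts $P$, one has \. $\Omega(P,s)=\big|\{h\in\PP(Q,s)\,:\,h(x)\le h(y)\}\big|$, so the statement is precisely the negative correlation \. $|A\cap B|\cdot|L|\le|A|\cdot|B|$ \. you wrote down, with \ts $L=\PP(Q,t+1)$, \ts $B=\PP(Q,t)$ \ts a down-set, and \ts $A=\{h(x)>h(y)\}$. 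Your observation that \ts $L$ \ts is a sublattice of \ts $[t+1]^X$ (hence the uniform measure is log-supermodular) is also correct.

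The genuine gap is exactly the one you flag and do not close: \ts $A$ \ts is neither increasing nor decreasing, so the Harris--Kleitman/FKG/Ahlswede--Daykin machinery does not apply to the pair \ts $(A,B)$. Conditioning on \ts $g:=h|_{X\sm\{x,y\}}$ \ts does not rescue this, because even if one could show pointwise in \ts $g$ \ts that shrinking the upper caps of both rectangle-sides decreases the above-diagonal fraction (which is not obvious and requires a careful choice of the cover so the sides shift coherently), one still must control the tilt in the distribution of \ts $g$ \ts itself between \ts $L$ \ts and \ts $B$; passing from ``pointwise'' to ``averaged'' is precisely where a second correlation inequality is needed, and no candidate is supplied. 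Moreover the conjecture is \emph{existential} in \ts $(x,y)$, and no criterion is given for which covering pair to take (your suggestion ``$y$ maximal'' is plausible but not justified, and a maximal \ts $y$ \ts may have many lower covers, so removing \ts $x\prec y$ \ts need not decouple \ts $h(y)$ \ts from the rest). Your AD-with-auxiliary-weights fallback is aspirational rather than a proof. In short: the setup is correct and the obstacle you identify is the real one, but it remains unresolved, consistent with the statement's status as an open conjecture.
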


By analogy with Proposition~\ref{p:basic-KS-mono-two-conj},
Conjecture~\ref{conj:mono-CP} \ts implies \ts Conjecture~\ref{conj:KS-mon}.

\medskip



\section{Sidorenko type inequalities}\label{s:sid}

\subsection{Sidorenko inequality} \label{ss:sid-original}
The following result is a poset theoretic version of polyhedral duality.

\begin{thm}[{\rm \defn{Sidorenko inequality}~\cite[Thm~11]{Sid}}]\label{t:sid}
Let \. $P=(X,\prec)$ \. and \. $Q=(X,\prec')$ \. be two posets on the same set
with \. $|X|=n$ \. elements.  Suppose
\begin{equation}\label{eq:sid-chain-condition}
\bigl|C \cap C'\bigr| \. \le \. 1 \quad \text{for all} \ \ C\in \cC(P), \ C' \in \cC(Q).
\end{equation}
Then:
\begin{equation}\label{eq:sid}\tag{Sid}
e(P) \. e(Q)\, \ge \, n!
\end{equation}
Moreover, \eqref{eq:sid} is an equality \. \underline{if and only if} \. $P$ \ts
is series-parallel.
\end{thm}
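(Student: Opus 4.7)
The plan is to construct a surjection
$$\Phi: \Ec(P) \times \Ec(Q) \to S_n, \qquad \Phi(f, g) \. := \. g \circ f^{-1},$$
which immediately yields $e(P) \cdot e(Q) \. = \. |\Ec(P) \times \Ec(Q)| \. \geq \. |S_n| \. = \. n!$, establishing \eqref{eq:sid}. The problem thus reduces to showing that for every $\pi \in S_n$, there exists a bijection $f: X \to [n]$ which is a linear extension of $P$ (with the standard order on $[n]$) and such that $\pi \circ f$ is a linear extension of $Q$. Equivalently, $f$ must be order-preserving simultaneously for $P \to ([n], <)$ and for $Q \to ([n], <_\pi)$, where $i <_\pi j$ iff $\pi(i) < \pi(j)$. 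Note also that the chain-intersection hypothesis forces $P$ and $Q$ to be consistent, so these two systems of constraints do not contradict each other in isolation.

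To produce such $f$, I would formulate the problem as a bipartite matching, setting up what the authors call ``Sidorenko's flow.'' Build a capacitated directed network with source $s$, sink $t$, and middle vertex sets $X$ and $[n]$, with unit-capacity edges $s\to x$, $i\to t$, and $x\to i$ whenever the assignment $f(x)=i$ is locally consistent with both the $P$-order and the $\pi$-twisted $Q$-order. A feasible integer $(s,t)$-flow of value $n$ then corresponds to a valid~$f$. By max-flow/min-cut, it suffices to show that every $(s,t)$-cut has capacity at least $n$.

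The core of the argument is bounding the min-cut from below. A minimal cut corresponds, by a K\"onig/Dilworth-type duality, to a pair of chain coverings --- one by chains of $P$ and one by chains of $Q$ --- of the ``blocked'' elements. The hypothesis $|C\cap C'|\le 1$ is precisely what forces such a joint covering to account for at least $n$ units of capacity, because each pair of intersecting chains $C\in\cC(P)$ and $C'\in\cC(Q)$ can ``block'' only one element. This combinatorial verification --- translating the chain-intersection hypothesis into a clean min-cut bound --- is where I expect \textbf{the main obstacle} to lie, as it amounts to a two-poset generalization of Dilworth's theorem; the difficulty is that one must simultaneously track the chain structure in $P$ and in $Q$ while ensuring the duality pairs up correctly on both sides of the cut.

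For the equality characterization, the ``if'' direction proceeds by induction on the series-parallel structure of~$P$: when $P=P_1+P_2$ one pairs $Q$ with the corresponding linear-sum structure $Q_1\oplus Q_2$ (and conversely for linear sums of $P$), and the multiplicativity $e(P_1+P_2)=\binom{n_1+n_2}{n_1}\ts e(P_1)\ts e(P_2)$ together with $e(Q_1\oplus Q_2)=e(Q_1)\ts e(Q_2)$ lets the inductive equality propagate, showing $\Phi$ is a bijection. For the ``only if'' direction, I would use the classical characterization of SP posets as $N$-free: if $P$ contains an induced $N$-subposet $\{x\prec y\succ z\prec w\}$, one exhibits a specific $\pi\in S_n$ admitting at least two distinct preimages under $\Phi$, built by performing a local exchange supported on the $N$-subposet, which produces strict inequality $e(P)\cdot e(Q)>n!$.
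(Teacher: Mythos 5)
Your surjection $\Phi(f,g) = g \circ f^{-1}$ and the reduction to finding, for each $\pi \in S_n$, a bijection $f$ with $f\in\Ec(P)$ and $\pi\circ f\in\Ec(Q)$ are a sound starting point --- indeed, the Gaetz--Gao proof cited in the survey proceeds by exactly this surjection. The gap is in the bipartite max-flow step: your claim that ``a feasible integer $(s,t)$-flow of value $n$ then corresponds to a valid $f$'' is false. A perfect matching in a bipartite graph between $X$ and $[n]$ can only enforce constraints local to each pair $(x,i)$, e.g.\ interval constraints like $\alpha_P(x) \le i \le n-\beta_P(x)+1$ together with the $\pi$-twisted analogues for $Q$; but being a linear extension is a system of \emph{pairwise} constraints, namely $x \prec y$ must force $f(x) < f(y)$. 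For instance, with $P = C_2 + C_2$ on $\{a \prec b,\; c \prec d\}$ and $n=4$, the assignment $a \to 3,\, b \to 2,\, c \to 1,\, d \to 4$ satisfies every interval constraint $\alpha(x) \le f(x) \le n-\beta(x)+1$, yet is not a linear extension because $f(a)>f(b)$. So a min-cut bound of $n$ only yields a perfect matching, not a valid $f$; the problem is not merely the verification you flagged as ``the main obstacle,'' but that the formulation has already discarded the pairwise constraints before that step begins. (Note also that what the survey calls ``Sidorenko's flow'' is a flow on the enlarged comparability graph $\Ga(P)$ where units are routed along each linear extension, not a bipartite matching between $X$ and $[n]$.)

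The equality characterization also needs repair. In the ``if'' direction you silently decompose $Q$ as $Q_1 \oplus Q_2$ whenever $P = P_1 + P_2$, but the chain-intersection hypothesis only forbids common two-element chains and does not force $\Ga(Q) = \overline{\Ga(P)}$, nor does it align the block structure of $Q$ with that of $P$. Indeed, with $P = A_n$ and $Q$ arbitrary the hypothesis holds vacuously, and $e(P)\cdot e(Q) = n!\cdot e(Q) > n!$ unless $Q$ is a chain --- so ``$P$ is series-parallel'' alone cannot give equality without an additional maximality assumption on $Q$ (morally, $\Ga(Q)=\overline{\Ga(P)}$), which your induction must thread through. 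The ``only if'' direction via an induced $N$ is the right idea but is left unexecuted.
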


The assumption~\eqref{eq:sid-chain-condition} can also be written in
terms of comparability graphs: \.  $\Ga(P) \subseteq  \ov{\Ga(Q)}$,
see~$\S$\ref{ss:ex-perm} for examples.
Note that testing if a poset is series-parallel is in~$\poly$ since they are $N$-free,
see also \cite{VTL82}.  There are several proofs of Theorem~\ref{t:sid}.  The original
proof uses combinatorial optimization (see~$\S$\ref{ss:proof-comb-opt}).  More recent
proofs use direct surjection \cite{GG20} (cf.~\cite{MPP-phi}), and injection
\cite{CPP-effective,GG22}.  In particular, we have the following:

\begin{thm}[{\rm \cite[Thm~1.14]{CPP-effective} and \cite[Thm~3.8]{GG22}}{}]\label{t:sid-SP}
The defect of the Sidorenko inequality \eqref{eq:sid} is in $\ts \SP$.
\end{thm}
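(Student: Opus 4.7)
The plan is to exhibit a polynomial-time computable injection
\[
\Phi \colon S_n \hookrightarrow \Ec(P) \times \Ec(Q)
\]
whose image is polynomial-time recognizable. Given such $\Phi$, the set
\[
\mathcal{R} \, := \, \bigl\{(f, g) \in \Ec(P) \times \Ec(Q) \,:\, (f,g) \notin \Phi(S_n) \bigr\}
\]
has cardinality $e(P)\ts e(Q) - n!$, and its membership predicate---verify that $f \in \Ec(P)$, $g \in \Ec(Q)$, and $(f,g) \notin \Phi(S_n)$---is polynomial-time checkable. This realizes the defect as the count of witnesses of an $\NP$ predicate, placing it in $\SP$.

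To construct $\Phi$, I would invoke the injective proof of Sidorenko's inequality from \cite{CPP-effective} and \cite{GG22}. That construction assigns to each $\sigma \in S_n$ a canonical pair $(f_\sigma, g_\sigma)$ via a recursive algorithm driven by $(P, Q)$ and~$\sigma$. The hypothesis \eqref{eq:sid-chain-condition} is essential: it is equivalent to $\Ga(P) \cap \Ga(Q) = \nothing$, so every pair of distinct elements $x, y \in X$ is incomparable in at least one of $P, Q$ (otherwise $\{x,y\}$ would be a chain in both). This disjointness lets the algorithm deposit the $\sigma$-order of each such pair into whichever of $f_\sigma, g_\sigma$ is unconstrained on it, preserving all of $\sigma$'s order information. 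Image recognition is then performed by a partial inverse: read the pairwise orderings out of $(f,g)$, assemble a candidate $\sigma$, and recheck by rerunning $\Phi(\sigma) \stackrel{?}{=} (f,g)$. This entire procedure runs in time polynomial in~$n$.

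The main obstacle, in my view, is the injectivity of~$\Phi$. A naive greedy scheme---at each step, emit the $\sigma$-minimum among the currently $P$-minimal, resp.\ $Q$-minimal, unplaced elements as the next value of $f_\sigma$, resp.\ $g_\sigma$---fails to be injective. Distinct permutations can produce identical pairs whenever some $\sigma$-values are never tested by the greedy selection (as one sees already on tiny examples with $|X|=3$, e.g.\ $P=\{a\prec b\}$, $Q=\{a\prec c\}$). The subtler construction of \cite{CPP-effective, GG22} is designed specifically to transfer every entry of~$\sigma$ into the output, at the cost of a more intricate algorithm whose polynomial-time complexity must be checked in tandem with its correctness and injectivity. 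Once both are in hand, the $\SP$ membership of the defect follows at once.
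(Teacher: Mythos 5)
Your outline matches the paper's approach exactly: both reduce $\SP$-membership of the defect to the existence of a polynomial-time computable injection $\Phi\colon S_n \hookrightarrow \Ec(P) \times \Ec(Q)$ with polynomial-time recognizable image, which is precisely the content of the injective proofs in \cite{CPP-effective} and \cite{GG22} (and, as the paper notes in~$\S$\ref{ss:proof-inject}, is what distinguishes them from the Gaetz--Gao surjection of \cite{GG20}, which proves~\eqref{eq:sid} but not the $\SP$ statement). You also correctly identify the central technical obstacle --- the naive greedy map is non-injective, e.g.\ on $P=\{a\prec b\}$, $Q=\{a\prec c\}$ it collapses $bca$ and $cba$ --- and correctly observe that resolving this by recording every pairwise $\sigma$-comparison in one of $f_\sigma, g_\sigma$ is the purpose of the cited constructions.
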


\begin{rem}[{\rm \cite{BBS99}}{}]\label{r:sid-correlation}
In condition of Theorem~\ref{t:sid}, suppose \ts $\Ga(P) =  \ov{\Ga(Q)}$.
Then one can view Sidorenko's inequality \eqref{eq:sid} as a negative correlation
result for uniform bijections \ts $g: X\to [n]$.  Indeed, note that \ts $e(P\cap Q)=1$.
Thus, we have:
$$\bP\big(g\in \Ec(P)\cap \Ec(Q)\big)  \, = \, \frac{1}{n!} \, \le \, \frac{e(P)\cdot e(Q)}{(n!)^2}  \, = \,
\bP\big(g\in \Ec(P)\big) \. \cdot \. \bP\big(g\in \Ec(Q)\big).
$$
\end{rem}

\smallskip

\subsection{Generalizations of the Sidorenko inequality} \label{ss:sid-gen}
The assumption~\eqref{eq:sid-chain-condition} in the theorem can be relaxed
to give the following result.

\begin{thm}[{\rm \cite[Thm~1.7]{CPP-effective}}{}]\label{t:sid-gen}
Let \. $P=(X,\prec)$ \. and \. $Q=(X,\prec')$ \. be two posets on the same set
with \. $|X|=n$ \. elements.  Suppose
$$
\bigl|C \cap C'\bigr| \. \le \. k \quad \text{for all} \ \ C\in \cC(P), \ C' \in \cC(Q).
$$
Then:
\begin{equation}\label{eq:sid-gen}
e(P) \. e(Q)\, \ge \,  \frac{n!}{k^{n-k}\. k!} \..
\end{equation}
\end{thm}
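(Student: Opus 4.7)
My approach is to adapt the combinatorial-optimization proof of the original Sidorenko inequality (Theorem~\ref{t:sid}) to the relaxed chain-intersection condition $|C\cap C'|\le k$. I would first revisit the $k=1$ case. Sidorenko's proof constructs a flow network whose arc capacities encode the order relations of $P$ and $Q$, together with a min-cost flow whose value bounds $e(P)\ts e(Q)/n!$ from below. The condition $|C\cap C'|\le 1$ ensures unit multiplicity along every arc of an optimal integral flow, from which \eqref{eq:sid} follows.

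Next, I would modify the network to allow flow of capacity up to $k$ along arcs corresponding to shared chain segments. Under $|C\cap C'|\le k$, every such segment has length at most $k$, so the $k$-capacity flow is well-defined and the flow-value computation goes through with an overcounting correction factor $C_k$, yielding
\[
e(P)\ts e(Q) \ \ge \ n! \ts / \ts C_k.
\]
I would then argue $C_k \le k^{n-k}\ts k!$: the factor $k!$ arises from the internal orderings of a distinguished overlap chain of size at most $k$, while $k^{n-k}$ bounds the multiplicity of choices for attaching each of the remaining $n-k$ elements around that chain.

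The main obstacle is sharpening the multiplicity analysis so that $C_k$ is indeed bounded by $k^{n-k}\ts k!$, rather than by some weaker expression such as $(k!)^{n/k}$. A cleaner alternative would be to generalize the explicit injections proving \eqref{eq:sid} in~\cite{GG22,CPP-effective}: construct a map
\[
\iota\colon S_n \ \longrightarrow \ \Ec(P)\times\Ec(Q)\times W, \qquad |W| \le k^{n-k}\ts k!,
\]
and verify injectivity via an inversion procedure that reconstructs the input bijection $X\to[n]$ from the pair $(f,g)$ with the aid of the witness in~$W$. In the $k=1$ case the witness collapses to a singleton and one recovers the standard injection; for general $k$, the witness records which of at most $k$ competing choices is taken whenever a shared chain segment forces an ambiguity during reconstruction. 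Establishing that $|W|$ can be kept below $k^{n-k}\ts k!$ independently of the input, and proving injectivity of $\iota$, is the heart of the argument.
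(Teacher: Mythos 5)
Your proposal is a plan, not a proof, and it defers the essential content to future work at exactly the points where the difficulty lies. You correctly observe that an injection $\iota\colon S_n\to \Ec(P)\times\Ec(Q)\times W$ with $|W|\le k^{n-k}\ts k!$ would give \eqref{eq:sid-gen}, and the survey's indication (see~$\S$\ref{ss:proof-comb-opt}) is that the original proof of Theorem~\ref{t:sid-gen} does indeed pass through the Sidorenko flow, so your first approach is aligned in spirit with the source. But nothing in the sketch actually produces the bound. In the flow variant, you posit an ``overcounting correction factor $C_k$'' and then assert $C_k\le k^{n-k}\ts k!$ by interpreting $k!$ as orderings of a single distinguished overlap chain and $k^{n-k}$ as ``the multiplicity of choices for attaching each of the remaining $n-k$ elements around that chain.'' This is a heuristic reading of the target expression, not a derivation: you have not said what the capacity-$k$ modification of the Sidorenko flow is, how its value relates to $e(P)\ts e(Q)$, or why the loss is governed by one chain of length $\le k$ rather than by many overlap segments simultaneously (each chain in $\cC(P)$ may meet many distinct chains of $\cC(Q)$ in up to $k$ elements). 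You flag this yourself: ``the main obstacle is sharpening the multiplicity analysis so that $C_k$ is indeed bounded by $k^{n-k}\ts k!$, rather than by some weaker expression.'' That obstacle is the theorem.

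The injection alternative has the same problem. You do not define the reconstruction procedure, do not identify what the witness in $W$ records, and do not argue injectivity; you end by saying that establishing $|W|\le k^{n-k}\ts k!$ and proving injectivity ``is the heart of the argument.'' Without a concrete description of how ambiguities arise during reconstruction (what exactly constitutes a ``shared chain segment forcing an ambiguity'') and a proof that the total number of resolutions is at most $k$ for each of $n-k$ steps and at most $k!$ for the remaining $k$, there is no proof here. Note also that your one specific claim about the $k=1$ flow argument — that the condition $|C\cap C'|\le 1$ ``ensures unit multiplicity along every arc of an optimal integral flow'' — does not match the structure of Sidorenko's argument as described in~$\S$\ref{ss:proof-comb-opt}, which is about cancellation of flow on bidirected edges corresponding to incomparable pairs, not about capacities of an optimal integral flow. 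To make either route rigorous you would need to commit to one formalization, write down the construction, and verify the cardinality bound; as written, the proposal identifies the shape of the answer without supplying the argument.
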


The following result is a natural generalization of the
Sidorenko inequality.

\begin{thm}[{\rm Sidorenko~\cite[Thm~14]{Sid}}]\label{t:sid-BT}
Let \. $P_1=(X,\prec_1)$, \ldots \., $P_k=(X,\prec_k)$ \. and
$Q=(X,\prec')$ \. be  posets on the same set.  Suppose
\begin{equation}\label{eq:sid-int-condition}
\bigcap_{i=1}^k \. \Ga(P_i) \ \subseteq \, \Ga(Q).
\end{equation}
Then:
\begin{equation}\label{eq:sid-BT}
e(P_1) \. \cdots \. e(P_k)\, \ge \, e(Q).
\end{equation}
\end{thm}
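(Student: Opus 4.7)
My plan is to prove Theorem~4.6 by induction on $k$, modeled on the combinatorial-optimization (flow) approach that Sidorenko uses for Theorem~4.4. The base case $k=1$ is immediate: the hypothesis reduces to $\Ga(P_1) \subseteq \Ga(Q)$, and Stachowiak's theorem (Theorem~3.7) then yields $e(P_1) \ge e(Q)$, since the function $e(\cdot)$ is monotone decreasing with respect to containment of comparability graphs.

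For $k \ge 2$, I would aim to construct an explicit injection
$$
\Phi : \Ec(Q) \, \hookrightarrow \, \Ec(P_1) \times \cdots \times \Ec(P_k).
$$
The hypothesis $\bigcap_i \Ga(P_i) \subseteq \Ga(Q)$ is equivalent, in contrapositive form, to the statement that every pair $x,y \in X$ with $x\parallel y$ in $Q$ must satisfy $x\parallel y$ in at least one $P_i$; fix, once and for all, a choice of such an index $i(x,y)$. Given a linear extension $g \in \Ec(Q)$, the information in $g$ beyond the partial order of $Q$ consists precisely of a total ordering on each $Q$-incomparable pair, and the idea is to encode the $g$-order of such a pair $(x,y)$ into the coordinate $f_{i(x,y)}$ of $\Phi(g)$: one picks $(f_1,\ldots,f_k)$ so that, for every pair $x\parallel y$ in $Q$, the element appearing first in $g$ also appears first in $f_{i(x,y)}$. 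Recovering $g$ from $\Phi(g)$ then proceeds pair-by-pair using the same witness function $i(\cdot,\cdot)$, which should give injectivity.

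The main obstacle will be showing both feasibility --- that the prescribed pairwise preferences can be realized simultaneously by a genuine $f_i \in \Ec(P_i)$ for each $i$ --- and that the local construction assembles into a globally consistent injection. Within a single $f_i$ the constraints arising from all pairs with $i(x,y)=i$ must be jointly compatible with transitivity and with the relations of $P_i$. The natural tool is a Hall-type matching or max-flow / min-cut argument analogous to Sidorenko's proof of Theorem~4.4 in the case $Q=A_n$, but here the bipartite flow network has to be adapted to accommodate $k$ source posets and a general target poset $Q$, which makes the min-cut analysis substantially more delicate. One could also attempt an induction on $k$ by merging two of the $P_i$ into a single poset $P'$, but this fails directly because $\Ga(P_{k-1})\cap\Ga(P_k)$ need not be a comparability graph, so the flow / witness argument seems unavoidable.
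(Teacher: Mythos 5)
Your base case via Stachowiak's theorem and your contrapositive reading of the hypothesis are both fine, but the injection you sketch cannot exist, and the difficulty you flag as ``the main obstacle'' is a genuine obstruction rather than something a Hall or max-flow argument can repair. Already with $k=2$ and $Q=A_n$: let $X=\{a,b,c,d\}$, let $P_1=C_2+C_2$ with chains $a\prec_1 b$ and $c\prec_1 d$, let $P_2=C_2+C_2$ with chains $b\prec_2 c$ and $d\prec_2 a$, and let $Q=A_4$. Then $\Ga(P_1)\cap\Ga(P_2)=\varnothing\subseteq\Ga(Q)$, while $e(P_1)\ts e(P_2)=36\ge 24=e(Q)$. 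The pairs $\{b,c\}$ and $\{a,d\}$ are each comparable in $P_2$, so the witness function is forced to send them to $P_1$. Take $g\in\Ec(Q)$ with $g(b)<g(c)<g(d)<g(a)$: then $f_1$ must put $b$ before $c$ and $d$ before $a$ while also respecting $a\prec_1 b$ and $c\prec_1 d$, producing the cycle $a<b<c<d<a$, so no $f_1\in\Ec(P_1)$ exists. This happens regardless of the two unforced choices $i(a,c),i(b,d)$, persists even if $i(\cdot,\cdot)$ is allowed to depend on $g$, and cannot be avoided by encoding only a subset of pairs, since any set of pairs that determines $g$ must contain its covers, and the covers $\{b,c\},\{a,d\}$ have forced witnesses.

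The root cause is that ``order $x$ before $y$ in $f_i$ for every pair assigned to $i$'' is a prescribed orientation of the $P_i$-incomparable pairs, and the union of that orientation with $P_i$ has no reason to be acyclic --- Hall's theorem and max-flow certify the existence of matchings or flows, not acyclicity of a prescribed orientation. Your own remark that $\Ga(P_{k-1})\cap\Ga(P_k)$ need not be a comparability graph is the same obstruction viewed from the other side. The route the survey follows (and Sidorenko's original proof of his Theorem~14), summarized in~$\S$\ref{ss:proof-comb-opt}, avoids building any tuple $(f_1,\ldots,f_k)$ from $g$ at all: it works on the chain polytope $\cS_Q$ and its flow / LP-duality structure, bounding the volumes $e(P_i)/n!$ and $e(Q)/n!$ against one another globally. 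If you want a bijective argument, the explicit surjection and injection known for Theorem~\ref{t:sid} replace your fixed witness function by a canonically defined greedy merge; extending one of those to general $Q$ would be a more promising line than trying to salvage $\Phi$.
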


For example, take posets \ts $P_1,P_2$ \ts which satisfy \. $\Ga(P_1) \subseteq \ov{\Ga(P_2)}$.
and let \ts $Q\gets A_n$\ts. Then \eqref{eq:sid-int-condition} gives
\. $e(P_1) \. e(P_2) \ge e(Q)=n!$.  In other words, Theorem~\ref{t:sid-BT}
implies Theorem~\ref{t:sid}.

\smallskip

\subsection{Reverse Sidorenko inequality} \label{ss:sid-reverse}
It may come as a surprise that the lower bound in the Sidorenko inequality
is always sharp up to a simple exponential factor.  Formally, we have
the following:

\begin{thm}[{\rm \defn{Reverse Sidorenko inequality}~\cite{BBS99}}{}]\label{t:sid-BBS}
Let \. $P=(X,\prec)$ \. and \. $Q=(X,\prec')$ \. be two posets on the same set
with \. $|X|=n$ \. elements which satisfy \ts $\Ga(P) =  \ov{\Ga(Q)}$.
Denote by \ts $\om_n := \vol(B_n)$ \ts be the volume of a unit ball in \ts $\rr^n$.
Then:
\begin{equation}\label{eq:Sid-BBS}
e(P) \. e(Q)\, \le \, \frac{(n! \. \om_n)^2}{4^n}\,.
\end{equation}
\end{thm}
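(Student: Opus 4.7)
I begin by reformulating the inequality geometrically via Stanley's volume identity, perform a square-root substitution to convert it into an integral over nonnegative vectors, exploit the complementarity hypothesis for a pointwise pair-constraint, and finally integrate via spherical coordinates matched to the ball-volume factor on the right-hand side.

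First, by Stanley's identity~\eqref{eq:two-poset} applied to both $P$ and $Q$, the inequality~\eqref{eq:Sid-BBS} rewrites as
\begin{equation*}
\Vol(\cS_P) \. \cdot \. \Vol(\cS_Q) \ \le \ \bigl(\om_n / 2^n\bigr)^2\ts,
\end{equation*}
and the right-hand side is the squared volume of the Euclidean ball of radius $1/2$ in~$\rr^n$. Thus the theorem reduces to a purely geometric inequality on the product of two chain-polytope volumes in the unit cube.

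Second, I apply the coordinate-wise square-root map $u_x := \sqrt{\be_x}$ on $\cS_P$ and $v_x := \sqrt{\ga_x}$ on~$\cS_Q$. The Jacobian contributes $d\be = 2^n \prod_x u_x \, du$, so that
\begin{equation*}
\Vol(\cS_P) \. \cdot \. \Vol(\cS_Q) \ = \ 4^n \int_T \prod_{x \in X} u_x v_x \. du \, dv\ts,
\end{equation*}
where $T := \phi(\cS_P) \times \phi(\cS_Q)$ and $\phi(\cS_P) := \bigl\{u \in \rr^n_{\ge 0} \, : \, \sum_{x \in C} u_x^2 \le 1 \ \text{for every chain } C \in \cC(P)\bigr\}$, similarly for~$\phi(\cS_Q)$.

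Third, I exploit the complementarity $\Ga(P) = \ov{\Ga(Q)}$: for every pair of distinct elements $x \ne y$, exactly one of the pair-constraints $u_x^2 + u_y^2 \le 1$ or $v_x^2 + v_y^2 \le 1$ is enforced on~$T$. Combined with the coordinate bounds $u_x, v_x \le 1$ from the singleton chain constraints, this yields the uniform pointwise estimate
\begin{equation*}
(u_x v_y)^2 \. + \. (u_y v_x)^2 \ \le \ 1 \qquad \text{for every pair } x \ne y \text{ on } T,
\end{equation*}
since in either of the two cases one can bound $u_x^2 v_y^2 + u_y^2 v_x^2$ by $u_x^2 + u_y^2$ or $v_x^2 + v_y^2$ respectively. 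Finally, I pair the coordinates into vectors $w_x := (u_x, v_x) \in \rr^2$ and pass to polar coordinates $w_x = r_x(\cos \theta_x, \sin \theta_x)$ with $\theta_x \in [0, \pi/2]$; the pair-constraints decouple into $\ell^2$-constraints on the families $(r_x \cos \theta_x)_x$ and $(r_x \sin \theta_x)_x$ indexed by $\cC(P)$ and~$\cC(Q)$ respectively, and integration over the angular variables assembles into the ball-volume factors $(\om_n/2^n)^2$ while the residual radial integral is controlled by the pointwise bound above.

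\textbf{Main obstacle.} The decisive step is the final one. The pointwise estimate from the third step yields only $\prod_x u_x v_x \le 2^{-n/2}$ after AM-GM (applied to $2 u_x v_x u_y v_y \le (u_x v_y)^2 + (u_y v_x)^2 \le 1$ summed against a perfect matching), producing a trivial upper bound of $2^{3n/2}$ which is exponentially weaker than~$\om_n^2/4^n$. The sharp constant can only be recovered by using the full chain-polytope constraint structure---not merely the pair-constraints---together with the rotational symmetry underlying the Euclidean ball on the right-hand side; this likely proceeds via a Blaschke--Santal\'o or reverse Brunn--Minkowski reduction adapted to the two complementary polytopes, with extremal configurations (saturating the bound asymptotically) calibrating the angular integration.
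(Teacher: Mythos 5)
Your steps 1--3 are sound: the reduction to $\Vol(\cS_P)\Vol(\cS_Q)\le(\om_n/2^n)^2$, the square-root change of variables with Jacobian $2^n\prod u_x$, and the derivation of the pairwise bound $(u_xv_y)^2+(u_yv_x)^2\le 1$ from the complementarity of the comparability graphs are all correct. Step 4, however, is not a proof, and you correctly diagnose this in your closing paragraph: the AM--GM bound over a perfect matching gives only $\prod_x u_xv_x\le 2^{-n/2}$, and $4^n\cdot 2^{-n/2}=2^{3n/2}$ is worse than the trivial bound $\Vol(\cS_P)\Vol(\cS_Q)\le 1$, let alone the target $\om_n^2/4^n$. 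The claimed ``assembly'' of the angular integrals into the ball-volume factor is not carried out and will not follow from pairwise information alone.

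The ingredient you are missing is that the hypothesis $\Ga(P)=\ov{\Ga(Q)}$ gives much more than pairwise constraints: because comparability graphs are perfect, the chain polytopes are an anti-blocking pair, $\cS_Q=\{\ga\ge 0:\langle\be,\ga\rangle\le 1\ \forall\,\be\in\cS_P\}$. After your substitution this becomes the single global constraint $\sum_x u_x^2v_x^2\le 1$ on all of $T$, simultaneously involving all $n$ coordinates, which is exactly what is needed to convert the problem into a polar-duality statement between suitable symmetric convex bodies built from $\phi(\cS_P)$ and $\phi(\cS_Q)$. The proof in \cite{BBS99} then applies the Blaschke--Santal\'o inequality to this polar pair (see~$\S$\ref{ss:proof-geom}). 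You mention Blaschke--Santal\'o as a speculative possibility, but it is the load-bearing step, not a finishing touch; without exhibiting a centrally symmetric body $K$ whose polar $K^\circ$ controls the other factor, neither your radial nor your angular integration can produce the $\om_n^2$ and the sharp $4^{-n}$.
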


The Stirling formula and the asymptotics for \ts $\om_n$ \ts show that the Sidorenko
inequality is asymptotically sharp:
\begin{equation}\label{eq:Sid-BBS-asy}
n! \, \le \, e(P) \. e(Q)\, \le \, n! \. \bigl(\tfrac{\pi}{2}\bigr)^n \. O\big(\tfrac{1}{\sqrt{n}}\big).
\end{equation}

\begin{question}\label{q:sid-BBS}{\rm
Denote by \ts $\mu(n)$ \ts the maximal value of the product \. $e(P) \. e(Q)$ \, over all
posets $P,Q$ on $n$ elements which satisfy~\eqref{eq:sid-chain-condition}.
In~\cite{BBS99}, the authors ask to determine
$$\vk \,:=\, \limsup_{n\to \infty} \bigg(\frac{\mu(n)}{n!}\bigg)^{1/n}.
$$
They observe that \ts $\vk> 1.123$ \ts and conjecture that \ts $\vk<1.2$.
The upper bound \ts $\vk \le \frac{\pi}{2} \approx 1.571$ \ts given by \eqref{eq:Sid-BBS-asy},
remains the best known asymptotic upper bound.
}\end{question}

\begin{rem}\label{r:sid-mixed}
\defng{Mixed  Sidorenko inequality} \ts is another generalization of
Sidorenko's inequality to \defng{double posets} \ts is given in
\cite[Thm~6.2]{AASS}.  It would be interesting to see if this
inequality has a direct injective proof.
\end{rem}



\medskip

\section{Bj\"orner--Wachs type inequalities}\label{s:BW}

\smallskip

\subsection{Bj\"orner--Wachs inequality}\label{ss:BW-ineq}
The following inequality is elementary, but surprisingly rich in generalizations
and applications:


\begin{thm}[{\rm \defn{Bj\"orner--Wachs inequality}~\cite[Thm~6.3]{BW89}}{}]\label{t:ineq-BW}
Let \ts $P=(X,\prec)$ \ts be a poset with \ts $|X|=n$ \ts elements.
We have:
\begin{equation}\label{eq:BW} \tag{BW}
e(P) \ \geq  \  \. n! \.\cdot \. \prod_{x \in X} \, \frac{1}{\be(x)}\,.
\end{equation}
Moreover, the equality holds \. \underline{if and only if}  \. $P$ \ts is a forest.
Additionally, the defect of the \eqref{eq:BW} is in~$\ts \SP$.
\end{thm}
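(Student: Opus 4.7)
The plan is to prove the inequality \eqref{eq:BW}, its equality characterization, and the $\SP$-membership of its defect simultaneously, by induction on $n = |X|$, using the recursion of Proposition~\ref{p:basic-recursion},
\[
e(P) \, = \, \sum_{m \ts \in \ts \min(P)} \. e(P - m).
\]
The structural observation driving the induction is that, for every $m \in \min(P)$ and every $x \in X - m$, we have $m \notin x\mathord\uparrow$, so that $\be_{P - m}(x) = \be_P(x)$. Applying the inductive hypothesis to each $P - m$ therefore yields
\[
e(P - m) \ \ge \ \frac{(n-1)!}{\prod_{x \ne m} \be(x)} \, = \, \frac{(n-1)! \, \be(m)}{\prod_{x \in X} \be(x)}\ts.
\]
Summing over $m \in \min(P)$ reduces \eqref{eq:BW} to the bound \. $\sum_{m \in \min(P)} \be(m) \ge n$, which holds because the upper sets \ts $m\mathord\uparrow$, $m \in \min(P)$, cover~$X$.

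For the equality analysis, equality in \eqref{eq:BW} forces equality at both stages: each $P - m$ must satisfy \eqref{eq:BW} with equality (hence, by induction, be a forest), and the sets \ts $m\mathord\uparrow$ must be pairwise disjoint, i.e., each connected component of \ts $\Ga(P)$ \ts contains a unique minimum. Combining the two conclusions shows every component of~$P$ has the form \ts $C_1 \oplus F$ \ts with $F$ a forest, matching the recursive definition of forest; the converse is immediate from the same recursion.

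The hardest step will be establishing $\SP$-membership of the defect. Inductively, write \. $e(P-m) \prod_{x \ne m} \be(x) - (n-1)! = D_m$ \. with $D_m \in \SP$. Then an arithmetic rearrangement gives
\[
e(P) \. \prod_{x \in X} \be(x) \, - \, n! \ = \ (n-1)! \bigg(\sum_{m \in \min(P)} \be(m) \. - \. n\bigg) \. + \. \sum_{m \in \min(P)} \be(m) \. D_m\ts.
\]
The second summand is in $\SP$ by closure under multiplication and addition. The first summand is the subtle part: one must interpret \. $\sum_m \be(m) - n$ \. as a direct $\SP$-count, which I would achieve by fixing an arbitrary total order on $\min(P)$ and counting pairs \ts $(x, m)$ \ts with $m \in \min(P)$, $m \preccurlyeq x$, and $m$ not the smallest such minimum below~$x$; multiplying by $(n-1)!$ preserves $\SP$ via a pairing with permutations of $[n-1]$, completing the argument.
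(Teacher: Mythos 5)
Your proof is correct and takes a genuinely different route from the two the survey endorses: the original direct injection of Bj\"orner--Wachs (listed in $\S$\ref{ss:proof-inject}) and Reiner's $q$-analogue argument via characteristic functions of upper order ideals (Theorem~\ref{t:ineq-BW-OP-Reiner}, from which \eqref{eq:BW} is recovered as the $q\to 1^-$ limit). You instead induct on $n$ via the recursion $e(P)=\sum_{m\in\min(P)}e(P-m)$ of Proposition~\ref{p:basic-recursion}, exploiting that $\be_{P-m}(x)=\be_P(x)$ for every $x\ne m$ when $m$ is minimal; everything then reduces to the covering bound $\sum_{m\in\min(P)}\be(m)\ge n$, and your equality analysis is sound --- disjointness of the sets $m\!\uparrow$ forces a unique minimum per component, and the inductive hypothesis for $P-m$ certifies each $P_C-m_C$ as a forest. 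What this buys is a single induction delivering the inequality, the equality condition, and the $\SP$-membership simultaneously, a conceptual economy the other two proofs lack. Two points of care. First, $\SP$-membership must be read in the cleared-denominator form $e(P)\prod_x\be(x)-n!$, since $e(P)-n!/\prod_x\be(x)$ need not be an integer (for $P=A_2\oplus A_2$ it equals $4/3$); you use this form implicitly but should say so. Second, invoking ``closure under multiplication and addition'' for $\sum_m\be(m)\ts D_m$ is slightly too glib: the $D_m$ are themselves inductively-defined counting functions, and one must produce a single uniform nondeterministic machine. The honest finish is to unroll your telescoping identity: the machine guesses a sequence $(m_1,\ldots,m_k)$ of successively peeled minimal elements, a witness $y_i\succcurlyeq m_i$ at each level, a permutation of $[n-k-1]$, and a ``non-smallest minimal'' pair for $P-m_1-\cdots-m_k$; polynomial recursion depth makes this a legitimate $\SP$-machine whose accepting paths your identity counts exactly. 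This is routine given what you have written, but deserves to be spelled out --- the injective route, by contrast, yields $\SP$-membership in one shot.
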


The original proof uses a direct injection.
This inequality was popularized by Stanley, who stated it without proof
or a reference in \cite[Exc.~3.57]{Sta-EC}.\footnote{Richard Stanley
informed us that he indeed took it from~\cite{BW89} (personal communication, March 27, 2022).}
Unaware of the provenance, in~\cite{HP08}, Hammett and Pittel gave a laborious proof in
the language of geometric probability.  Note that \eqref{eq:BW}  is asymmetric,
i.e.\ not invariant under poset duality, leading to the following:


\begin{cor}\label{c:BW-anti-hooks}
Let \ts $P=(X,\prec)$ \ts be a forest.  Then:
\begin{equation}\label{eq:BW-anti-hooks}
  \prod_{x \in X} \al(x) \, \ge \, \prod_{x \in X} \be(x)\ts.
\end{equation}
\end{cor}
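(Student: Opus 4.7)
The plan is to derive Corollary~\ref{c:BW-anti-hooks} by applying the Bj\"orner--Wachs inequality twice, once to $P$ and once to its dual $P^\ast$, and exploiting the equality case for forests.

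First I would record two elementary observations. Linear extensions of $P$ are in bijection with those of $P^\ast$ via $f \mapsto (n+1-f)$, so $e(P) = e(P^\ast)$. Second, under poset duality, upper and lower ideals swap: for every $x \in X$ we have $\be_{P^\ast}(x) = \al_P(x)$ (and vice versa), because $x\!\uparrow$ in $P^\ast$ is exactly $x\!\downarrow$ in $P$.

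Next, since $P$ is a forest, the equality clause of Theorem~\ref{t:ineq-BW} gives
\begin{equation*}
e(P) \ = \ n! \, \cdot \, \prod_{x \in X} \, \frac{1}{\be(x)}\,.
\end{equation*}
On the other hand, applying the (lower bound in) Theorem~\ref{t:ineq-BW} directly to the poset $P^\ast$ yields
\begin{equation*}
e(P^\ast) \ \ge \ n! \, \cdot \, \prod_{x \in X} \, \frac{1}{\be_{P^\ast}(x)} \ = \ n! \, \cdot \, \prod_{x \in X} \, \frac{1}{\al(x)}\,.
\end{equation*}
Here I only need the inequality, not the equality case, so it is irrelevant whether $P^\ast$ happens to be a forest.

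Finally, combining the two displays with $e(P)=e(P^\ast)$ gives
\begin{equation*}
n! \, \cdot \, \prod_{x \in X} \, \frac{1}{\be(x)} \ = \ e(P) \ = \ e(P^\ast) \ \ge \ n! \, \cdot \, \prod_{x \in X} \, \frac{1}{\al(x)}\,,
\end{equation*}
and cancelling $n!$ and inverting yields $\prod_x \al(x) \ge \prod_x \be(x)$, as desired. There is no real obstacle here; the only subtlety is to be careful that one invokes the \emph{equality} half of Theorem~\ref{t:ineq-BW} for $P$ and merely the \emph{inequality} half for $P^\ast$, since forests are not closed under duality in general.
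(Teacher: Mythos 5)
Your proof is correct and is essentially the same as the paper's: the paper's one-line derivation also applies the equality case of Theorem~\ref{t:ineq-BW} to $P$, the inequality to $P^\ast$, and uses $e(P)=e(P^\ast)$ together with the swap of $\al$ and $\be$ under duality. Your closing caution that forests are not closed under duality (so one must not invoke the equality half for $P^\ast$) is a worthwhile point that the paper leaves implicit.
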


This inequality follows immediately from Theorem~\ref{t:ineq-BW}, since \ts $\al(x)$ \ts and \ts $\be(x)$ \ts
switch role in dual posets:
$$
 \prod_{x \in X} \be(x) \ = \ \frac{n!}{e(P)} \ = \ \frac{n!}{e(P^\ast)} \ \le \ \prod_{x \in X} \al(x)\..
$$
The corollary was also proved combinatorially and generalized in~\cite{PPS20}.
The proof uses \defng{Karamata's inequality}, which does not lead to an injection
(cf.\ \cite[$\S$7.5]{IP22}).

\begin{rem}\label{r:BW-probab}
Theorem~\ref{t:ineq-BW} is a correlation inequality in the following sense.  Let \ts
$\Si(X)$ \ts denote the set of bijections \. $\si: X\to [n]$.  By definition, we have:
$$
\bP\bigl(\si(x) < \si(y) \ \, \forall \, x,y\in X, \, x\prec y\bigr) \, = \, \frac{e(P)}{n!}\,,
$$
where \ts $\bP$ is a uniform measure on \ts $\Si(X)$.
Denote by \ts $\cA_x \subseteq \Si(X)$ \ts the event that \ts $\si(x) \le \si(y)$
\ts for all \ts $x \prec y$.  Then \eqref{eq:BW} says that every collection
of \ts $\cA_x$ \ts is mutually positively correlated.  The second part implies that
for forests these events are mutually independent.
\end{rem}

\smallskip

\subsection{Reiner's inequality}\label{ss:BW-Reiner}
The following result is a natural $q$-analogue of the Bj\"orner--Wachs
inequality \eqref{eq:BW},
but was discovered only recently:

\begin{thm}[{\rm \defn{Reiner's inequality}~\cite[Thm~5.1]{CPP-effective} and \cite[Thm~6.2]{BW89}}{}] \label{t:ineq-BW-OP-Reiner}
Let \ts $P=(X,\prec)$ \ts be a poset with \ts $|X|=n$ \ts elements. Then:
\begin{equation}\label{eq:ineq-BW-OP-Reiner}
\Om_q(P) \ \geqslant_q  \  \. \prod_{x \ts \in X} \, \frac{1}{1-q^{\be(x)}}\,,
\end{equation}
where the inequality between two power series is coefficient-wise.
Moreover, this inequality is an equality \. \underline{if and only if} \.
\ts $P$ \ts is a forest.  Additionally,
the coefficient \. $[q^m]$ \. of the defect of this inequality is in~$\SP$, where
\ts $m$ \ts is given in binary.
\end{thm}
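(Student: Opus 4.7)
The plan is to construct an explicit weight-preserving injection. First, expand
$$\prod_{x \in X} \frac{1}{1-q^{\be(x)}} \, = \, \sum_{\mathbf{k}: X \to \nn} q^{\sum_x k_x \be(x)},$$
and define $\Phi(\mathbf{k}) = h$ by $h(x) := \sum_{y \preccurlyeq x} k_y$. The three verifications are routine: $h$ is order-preserving because $x \preccurlyeq x'$ implies $x\!\downarrow \subseteq x'\!\downarrow$, hence $h \in \PP(P)$; the swap of summation $\sum_{x} h(x) = \sum_{y} k_y \cdot |y\!\uparrow| = \sum_y k_y \be(y)$ shows $\Phi$ is weight-preserving; and $\Phi$ is injective because $h$ determines $\mathbf{k}$ via the recursion $k_x = h(x) - \sum_{y \prec x} k_y$, processed in any linear extension of $P$. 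This establishes the coefficient-wise inequality \eqref{eq:ineq-BW-OP-Reiner}.

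For the equality characterization, when $P$ is a forest each $x\!\downarrow$ is a chain from $x$ down to the root of its tree, so the recursion telescopes to $k_x = h(x) - h(\mathrm{parent}(x))$ (or $k_x = h(x)$ if $x$ is minimal), which is nonnegative by the order-preserving property of~$h$. Thus $\Phi$ is a bijection and equality holds. Conversely, rather than exhibit a bad $P$-partition for every non-forest, I would specialize to Bj\"orner--Wachs: multiplying by $(1-q)^n$ and letting $q \to 1$ gives $(1-q)^n \Om_q(P) \to e(P)/n!$ and $(1-q)^n \prod_x (1-q^{\be(x)})^{-1} \to \prod_x \be(x)^{-1}$, so equality coefficient-wise in \eqref{eq:ineq-BW-OP-Reiner} forces equality in \eqref{eq:BW}, which by Theorem~\ref{t:ineq-BW} forces $P$ to be a forest.

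For the $\SP$-claim, the coefficient $[q^m]$ of the defect counts the $P$-partitions $h$ of total sum~$m$ lying outside $\mathrm{Image}(\Phi)$; given $m$ in binary, each such $h$ has polynomial bit-length (since $h(x) \le m$), and image-membership is decided in polynomial time by running the recursion $k_x = h(x) - \sum_{y \prec x} k_y$ in topological order and checking that each $k_x \geq 0$. Hence the defect coefficient lies in $\SP$.

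The main obstacle is the equality characterization for non-forests: a direct combinatorial exhibition of an $h \notin \mathrm{Image}(\Phi)$ is delicate, since one must simultaneously ensure that $h$ is order-preserving on all of $X$ and that the alternating Möbius-type recursion produces a strictly negative entry. Reducing to Bj\"orner--Wachs via the $q \to 1$ limit bypasses this combinatorial subtlety cleanly. A secondary point worth noting is that although Möbius inversion on a general poset is computationally hard, the recursion defining $\Phi^{-1}$ sidesteps this by using topological order, which is exactly what powers the $\SP$ membership.
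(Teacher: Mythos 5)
Your injection $\Phi$ is exactly the one used in the paper: setting $h(x)=\sum_{y\preccurlyeq x}k_y$ is the same as writing $h=\sum_y k_y\ts\chi(y\!\uparrow)$, the paper's expression of the RHS of~\eqref{eq:ineq-BW-OP-Reiner} as a generating function over nonnegative integer combinations of characteristic functions of upper order ideals, with injectivity established by the unitriangularity of the transition matrix (your topological-order recursion, same observation). So for the coefficient-wise inequality, which is the only part the paper actually proves in the text, you and the paper agree.

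For the parts the paper defers to \cite{BW89}, your additions are sound. The forward equality direction ($P$ a forest $\Rightarrow$ $\Phi$ surjective via the telescoping recursion $k_x = h(x) - h(c(x))$) is correct. For the converse, your reduction via $q\to 1^-$ to the equality case of Theorem~\ref{t:ineq-BW} is a clean shortcut, and it is logically valid provided one accepts the BW equality characterization as an independently established fact — which the survey does (it presents Theorem~\ref{t:ineq-BW} as proved in \cite{BW89} by direct injection). Just be aware that if one insisted on a self-contained treatment, the cleaner path is the direct obstruction: if $x$ covers two distinct $y,z$, then $h=\chi(y\!\uparrow\cup\ts z\!\uparrow)$ is a $P$-partition for which either the recursion has already gone negative below $x$, or else $k_y=k_z=1$ and $k_x\le 1-2<0$, so $h\notin\mathrm{Image}(\Phi)$. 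Your $\SP$ argument (poly-size witness $h$ since $h(x)\le m$, poly-time image-membership test via the recursion) is also correct and exactly the intended observation.
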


Reiner's inequality \eqref{eq:ineq-BW-OP-Reiner} was proved by Reiner by
remarkably short and direct proof, see below.  It was published by the
authors in \cite{CPP-effective}. The equality part was proved
in the original Bj\"orner--Wachs paper.  Multiplying both sides of
\eqref{eq:ineq-BW-OP-Reiner} by \. $(1-q)(1-q^2)\cdots (1-q^n)$ \.
and using the equality \eqref{eq:Sta-PP}, we conclude:

\begin{cor} \label{c:ineq-BW-q}
For all \. $0< q <1$, we have:
\begin{equation}\label{eq:ineq-BW-q}
e_q(P)  \ \geq  \  (1-q)(1-q^2)\cdots (1-q^n) \. \prod_{x \ts \in X} \, \frac{1}{1-q^{\be(x)}}\,.
\end{equation}
\end{cor}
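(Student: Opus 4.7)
The plan is to derive Corollary~\ref{c:ineq-BW-q} as an immediate consequence of Reiner's inequality \eqref{eq:ineq-BW-OP-Reiner} together with Stanley's identity \eqref{eq:Sta-PP}. First, I would invoke Reiner's inequality, which states that
\[
\Omega_q(P) \ \geqslant_q \ \prod_{x \in X} \frac{1}{1-q^{\beta(x)}}
\]
coefficient-wise as a formal power series in $q$. Both sides are power series with non-negative coefficients, so the coefficient-wise inequality specializes to a pointwise numerical inequality at every real $q \in [0,1)$ for which the series converge.

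Next, I would multiply both sides by the product $(1-q)(1-q^2)\cdots(1-q^n)$. For $0<q<1$, each factor $(1-q^k)$ is strictly positive, hence this product is positive and the direction of the inequality is preserved, yielding
\[
(1-q)(1-q^2)\cdots(1-q^n)\,\Omega_q(P) \ \geq \ (1-q)(1-q^2)\cdots(1-q^n) \prod_{x \in X} \frac{1}{1-q^{\beta(x)}}.
\]
Finally, I would apply Stanley's identity \eqref{eq:Sta-PP}, which asserts
\[
\Omega_q(P) \ = \ \frac{e_q(P)}{(1-q)(1-q^2)\cdots(1-q^n)},
\]
so that the left-hand side above equals $e_q(P)$ exactly. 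Substituting produces \eqref{eq:ineq-BW-q}, as desired.

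Since the argument is a short chain of elementary manipulations starting from two results already in hand, there is no substantive obstacle. The only point requiring mild care is the verification that $(1-q)(1-q^2)\cdots(1-q^n) > 0$ on the interval $(0,1)$, which simultaneously justifies passing from the coefficient-wise inequality to a pointwise inequality and guarantees that multiplication does not flip the sign.
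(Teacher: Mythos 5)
Your proof is correct and takes essentially the same approach as the paper: multiply Reiner's inequality \eqref{eq:ineq-BW-OP-Reiner} by $(1-q)(1-q^2)\cdots(1-q^n)$ and apply Stanley's identity \eqref{eq:Sta-PP}. The only tiny imprecision is the justification for passing from a coefficient-wise to a pointwise inequality: what matters is that the \emph{difference} of the two sides has non-negative coefficients (that is the meaning of $\geqslant_q$) and that $q>0$, not that each side separately has non-negative coefficients — but this does not affect the validity of the argument.
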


Taking the limit \. $q\to 1-$, gives the
Bj\"orner--Wachs inequality \eqref{eq:BW}.  This is probably
the shortest and the most conceptual proof of~\eqref{eq:BW}.

\begin{proof}[Proof of Theorem~\ref{t:ineq-BW-OP-Reiner}]
Interpret the RHS of~\eqref{eq:ineq-BW-OP-Reiner} as the GF for maps \. $g\in \cP(P)$ \. which
are obtained as a nonnegative integer linear combination of characteristic
functions of upper order ideals in poset~$P\ts$:
$$g \.= \, \sum_{x\in X} \, m(x) \.\chi(x\!\uparrow)\., \ \ \. \text{where} \ \ m(x) \in \nn \ \ \text{for all} \ \ x\in X.
$$
Note that characteristic functions \ts $\chi(x\!\uparrow)$ \ts are linearly independent
because in the standard basis \ts $\{\chi(y) \. : \. y\in X\}$, the transition matrix is unitriangular.
Since \. $\sum_{x\in X} g(x) = \sum_{x\in X} \. m(x) \. \be(x)$, the result follows immediately from \eqref{eq:OP-q-def}.
\end{proof}

\smallskip

\subsection{Order polynomials version}\label{ss:BW-OP}
The following is the extension of the Bj\"orner--Wachs inequality for
order polytopes:

\begin{thm}[{\rm \cite[Thm~1.2]{CPP-effective}}{}]
\label{t:ineq-BW-OP-lower-bound}
Let \ts $P=(X,\prec)$ \ts be a poset with \ts $|X|=n$ \ts elements, and
let \ts $r=|\min(P)|$ \ts be the number of minimal elements.  Then, for all \ts $t\in \nn$, we have:
\begin{equation}\label{eq:ineq-BW-OP-lower-bound}
\Omega(P,t) \ \geq \ t^r \ts (t+1)^{n-r} \, \prod_{x \ts \in \ts X} \. \frac{1}{\be(x)} \..
\end{equation}
\end{thm}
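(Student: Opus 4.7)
The plan is to adapt the injective proof of Reiner's inequality (Theorem~\ref{t:ineq-BW-OP-Reiner}) to the bounded setting. Clearing denominators, the target inequality becomes the integer statement
\begin{equation*}
\Omega(P,t)\cdot\prod_{x\in X}\be(x) \ \ge \ t^{\ts r}(t+1)^{n-r},
\end{equation*}
which can then be attacked by a direct combinatorial construction.

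First, I would set up the Reiner parameterization: each $m\in\nn^X$ produces a $P$-partition $g_m(y)=\sum_{x\preceq y}m(x)$, and $m\mapsto g_m$ is injective on $\nn^X$. Restrict attention to those $m$ satisfying (i) $m(x)\ge 1$ for every $x\in\min(P)$, and (ii) $g_m(y)\le t$ for every maximal $y$. Condition~(i) forces $g_m(y)\ge 1$ for all $y$, and (ii) bounds $g_m$ from above, so $m\mapsto g_m$ injects into the set of order-preserving maps $X\to[t]$. Writing $\mathcal{M}(P,t)$ for this constrained set, it suffices to construct an injection from the source set $S:=[t]^{\min(P)}\times\{0,1,\ldots,t\}^{X\sm\min(P)}$, of size $t^{\ts r}(t+1)^{n-r}$, into the product $\mathcal{M}(P,t)\times\prod_{x\in X}[\be(x)]$.

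The injection proceeds as follows: given $\lambda\in S$, process elements of $X$ in the order of a fixed linear extension $\sigma\in\Ec(P)$; at each step, translate $\lambda(x)$ into a pair $(m(x),\ell(x))$, where $m(x)\in\nn$ is the Reiner parameter at $x$ and $\ell(x)\in[\be(x)]$ is an auxiliary label indicating which element of $x\!\uparrow$ is ``charged'' with the contribution from $\lambda(x)$. Recovery of $\lambda$ from $(m,\ell)$ should be feasible because $\be(x)=|x\!\uparrow|$ counts exactly the eligible targets of this charge. The additional value $0$ available at non-minimal $x$ (compared to $[t]$ at minimal $x$) is precisely what creates the factor of $(t+1)$ rather than $t$ in the target.

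The principal obstacle is verifying injectivity of $\lambda\mapsto(m,\ell)$ while simultaneously respecting the global constraint~(ii). In Reiner's unbounded setting any $m\ge 0$ works and there is nothing to check; here the bound on $g_m$ couples many $m(x)$'s simultaneously, and this coupling must be reconciled with the pivot labels $\ell(x)$. Should the direct injection prove too fragile, a natural fallback is to argue by induction on $|X|$: remove a maximal element $z\in\max(P)$, compare the $\be$-values in $P$ and $P-z$, and use the decomposition
\begin{equation*}
\Omega(P,t) \ = \ \sum_{k=1}^{t}\big|\big\{g:X-z\to[t] \ \text{order-preserving with} \ g(y)\le k \ \forall\, y\prec z\big\}\big|,
\end{equation*}
to reduce to a statement about $P-z$, with the inductive step supplying the extra factor $(t+1)/\be(z)$ contributed by $z$.
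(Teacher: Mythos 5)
Before the injectivity obstacle you flag, there is a more basic problem: the inequality as stated is false, and your cardinality setup already detects it. Take $P$ to be the poset on $\{z,x,y\}$ with $z\prec x$, $z\prec y$ and $x\parallel y$. This is a forest, so Reiner's map $m\mapsto g_m$ is a bijection onto order-preserving maps $X\to[t]$, hence $|\mathcal{M}(P,t)|=\Omega(P,t)=\sum_{k=1}^t k^2=t(t+1)(2t+1)/6$; also $n=3$, $r=|\min(P)|=1$, $\prod_x\be(x)=3\cdot 1\cdot 1=3$. Already at $t=1$ the claimed bound reads $1\ge 4/3$, and for every $t\ge1$ one has $\Omega(P,t)\cdot\prod_x\be(x)=t(t+1)(2t+1)/2<t(t+1)^2=t^r(t+1)^{n-r}$, so your source $S$ is strictly larger than $\mathcal{M}(P,t)\times\prod_x[\be(x)]$ and no injection of the kind you want can exist. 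The culprit is a transcription error in the survey: $r$ should be $|\max(P)|$ (compare the garbled hypothesis ``let $\min(P)$ be the subset of maximal elements'' in Theorem~\ref{t:ineq-BW-OP-lower-bound-conj}). The $t=1$ case is diagnostic: the inequality reduces to $\prod_x\be(x)\ge 2^{n-r}$, and $\be(x)\ge 2$ holds exactly for the non-maximal $x$, so $r=|\max(P)|$ is what makes the base case true.

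Even with $r$ corrected, what you have is a plan rather than a proof, and it diverges from the paper's route. The paper proves Theorems~\ref{t:ineq-BW-OP-lower-bound} and~\ref{t:ineq-BW-OP-lower-bound-conj} via the FKG inequality (see $\S$\ref{ss:proof-FKG}), not by an injection; the global coupling $g_m(y)\le t$ over all maximal $y$, which you correctly identify as the obstruction, is there handled by a positive-correlation argument on a product measure rather than by a local per-element charge. Your map $\lambda\mapsto(m,\ell)$ is only gestured at --- injectivity in the presence of the coupling is precisely what must be proved, and nothing in the sketch addresses how the ``charged'' labels interact across the shared constraint $g_m(y)\le t$ when several $m(x)$ feed the same maximal $y$. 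The fallback recursion does not close either: for a fixed maximal $z$ with $g(z)=k$, the inner count in your decomposition is the number of order-preserving $g':X-z\to[t]$ with the additional ceiling $g'(y)\le k$ on the elements strictly below $z$; this is not $\Omega(P-z,\cdot)$ unless $z$ is the unique maximal element, so the inductive hypothesis does not apply to it and there is no visible way to extract the multiplicative factor $(t+1)/\be(z)$ from the sum.
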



By \eqref{eq:OP-asy}, the inequality \eqref{eq:ineq-BW-OP-lower-bound} implies \eqref{eq:BW}.
The following result shows that \eqref{eq:ineq-BW-OP-lower-bound} can be slightly improved
if Conjecture~\ref{conj:KS-mon} holds:

\begin{thm} [{\rm \cite[Thm~4.13]{CPP-effective}}{}]\label{t:ineq-BW-OP-lower-bound-conj}
	Let \. $P=(X,\prec)$, let \. $\min(P)\subseteq X$ \. be the subset of maximal elements,
	and let \. $r:=|\min(P)|$ \. be the number of maximal elements. If Conjecture~\ref{conj:KS-mon}
	holds, then we have:
	\begin{equation}\label{eq:ineq-BW-OP-lower-bound-conj}
		\Omega(P,t) \ \geq \
		t^r \prod_{x \ts \in\ts X\sm \min(P)} \. \bigg(\frac{t}{\be(x)}\,+\,\frac{1}{2}\bigg).
	\end{equation}
\end{thm}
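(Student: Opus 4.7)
The plan is induction on $n = |X|$, peeling off a minimal element $z$ of $P$ that is not maximal. (If every minimal element of $P$ is also maximal then $P$ is an antichain, in which case both sides of the claimed bound equal $t^n$, so we may assume such $z$ exists.) This choice is convenient because (i)~$\be_P(x) = \be_{P-z}(x)$ for every $x \neq z$, since a minimal $z$ lies in no upper cone other than its own; and (ii)~$\max(P) = \max(P - z)$, since $z$ is not maximal. These two observations imply that the RHS of the target inequality for $P$ equals $(t/\be(z) + 1/2)$ times the RHS for $P - z$, so the induction closes once we establish the one-step inequality
\begin{equation*}
\Omega(P, t) \ \geq \ \Bigl(\frac{t}{\be(z)} \. + \. \frac{1}{2}\Bigr) \. \Omega(P - z, t). \qquad (\star)
\end{equation*}

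Let $W := z\!\uparrow_P \. \setminus \. \{z\}$, an upper order ideal of $P - z$ with $|W| = b - 1$, where $b := \be(z)$. For each $g \in \PP(P - z, t)$, the number of ways to extend $g$ to a $P$-partition with values in $[t]$ equals $\min_{y \in W} g(y)$ (the allowed choices for the value at $z$). Therefore
\begin{equation*}
\Omega(P, t) \ = \ \sum_{g \ts\in\ts \PP(P - z,\ts t)} \. \min_{y \ts\in\ts W} g(y),
\end{equation*}
and $(\star)$ is equivalent to the expectation estimate $\mathbb{E}_g[\min_W g] \geq t/b + 1/2$, where $g$ is uniform on $\PP(P - z, t)$. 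As a sanity check, if the values $g(y)$ for $y \in W$ were i.i.d.\ uniform on $[t]$, the Faulhaber identity $\sum_{j=1}^t j^{b-1} \geq t^b/b + t^{b-1}/2$ would yield precisely $\mathbb{E}[\min] \geq t/b + 1/2$ (with equality at $b = 2$, the case of a single upper cover of $z$).

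The Kahn--Saks monotonicity hypothesis is then used to upgrade this iid comparison to the actual $P - z$-partition distribution. Writing $a_k := \#\{g \in \PP(P - z, t) : g(y) \geq k \text{ for all } y \in W\}$, the target expectation satisfies $\mathbb{E}[\min_W g] = \sum_{k=1}^t a_k/a_1$. The ratios $a_k/a_1$ should be analyzed by applying Conjecture~\ref{conj:KS-mon} to sub- or quotient-posets of $P - z$ whose order polynomials encode the constraint $g|_W \geq k$, obtained for instance by restricting the allowed value range on the upper ideal $W$. The resulting monotonicity statements, combined with Abel summation in $k$, should yield pointwise lower bounds on $a_k/a_1$ that sum to at least $t/b + 1/2$, matching the Faulhaber constant.

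The main obstacle is precisely this last step: extracting the tight $+1/2$ additive constant from what is only a global monotonicity. The KS-mon hypothesis controls $\Omega(R, s)/s^{|R|}$ for a single parameter $s$ and a single poset $R$, whereas the Faulhaber bound reflects the subleading term in the pointwise sum $\sum_{j=1}^t j^{b-1}$. Bridging the two requires a careful inductive/telescoping argument in $k$ that does not lose constants. An attractive alternative, sidestepping the technicality altogether, is to apply the FKG inequality (cf.\ $\S$\ref{ss:proof-FKG}) on the distributive lattice of $P - z$-partitions to establish directly that $\min_W g$ stochastically dominates the i.i.d.\ minimum of $b - 1$ uniform variables; the Faulhaber identity then finishes the proof, and Conjecture~\ref{conj:KS-mon} would enter only indirectly via the correlation structure it enforces on the $P$-partition distribution.
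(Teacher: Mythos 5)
Your reduction is set up correctly: induct on $|X|$, peel off a minimal element $z$ that is not maximal (the base case being an antichain, where both sides equal $t^n$), note that $\beta$ and the set of maximal elements are unaffected by removing $z$, and reduce to the one-step bound $(\star)$, which you correctly recast as $\mathbb{E}[\min_W g] \geq t/b + 1/2$ for $g$ uniform over order-preserving maps $X-z \to [t]$, where $b := \beta(z)$ and $W := z\!\uparrow \setminus \{z\}$ has $|W| = b - 1$. The difficulty is entirely in this expectation bound, and here the proposal stalls: you correctly observe that the i.i.d.\ uniform model gives exactly the Faulhaber constant, but neither of your two suggested routes to the actual distribution closes the gap.

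The working argument applies your two tools to two \emph{different} posets, and you have them aimed at the wrong targets. First, the FKG/Fishburn step should compare $P$ to the upper cone $z\!\uparrow$, not to an i.i.d.\ model: since $X-z$ and $z\!\uparrow$ are both upper ideals of $P$, the dual form of the Lam--Pylyavskyy inequality (Theorem~\ref{t:LP}) gives
\[
\Omega(P,t) \cdot \Omega(W, t) \ \geq \ \Omega(P-z,t) \cdot \Omega(z\!\uparrow, t),
\]
i.e.\ $\mathbb{E}[\min_W g]$ under the coupled $(P-z)$-partition law is at least the same expectation under the \emph{uncoupled} law of uniform order-preserving maps $W \to [t]$. (Equivalently, condition on $g|_{(X-z)\setminus W}$ and apply Harris/FKG on the distributive lattice of order-preserving maps $W\to[t]$, using that $W$ is an upper ideal.) Second, Conjecture~\ref{conj:KS-mon} must then be applied to the poset $P|_W$, which has exactly $b-1$ elements, and this is the whole point: iterating gives $\Omega(W,s)/\Omega(W,t) \geq (s/t)^{b-1}$, and summing with $\sum_{s=1}^t s^{b-1} \geq t^b/b + t^{b-1}/2$ yields $t/b + 1/2$. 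Your first suggestion, applying KS-mon to $P-z$ or sub/quotient posets of it together with Abel summation, cannot produce the exponent $b-1$: $P - z$ has $n - 1$ elements, and the bound one gets this way is $a_k/a_1 \geq ((t-k+1)/t)^{n-1}$, which sums only to $t/n + 1/2$, strictly weaker whenever $z\!\uparrow \neq X$. Your second suggestion misstates what FKG alone delivers: it carries you from the coupled law to the uncoupled $P|_W$-partition law, not all the way to i.i.d.\ uniforms on $[t]$; the passage from the $P|_W$-marginals to the i.i.d.\ comparison is precisely what KS-mon provides, so it is used essentially and directly, not merely through correlation structure.
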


It would be interesting to find an unconditional proof of this inequality.
Both Theorems~\ref{t:ineq-BW-OP-lower-bound} and~\ref{t:ineq-BW-OP-lower-bound-conj}
were proved using the FKG inequality (see~$\S$\ref{ss:proof-FKG}).

\medskip



\section{Fishburn type inequalities}\label{s:Fish}

\subsection{Two minimal elements} \label{ss:Fish-baby}
We start with the following special case which is already interesting
and hard to prove.

\begin{thm}[{\rm see \cite[Thm~1.1]{CP-corr}}{}]
\label{t:Fish-baby}
Let \ts $P=(X,\prec)$ \ts be a poset with \ts $|X|=n> 2$ \ts elements.
Let \ts $x,y\in \min(X)$ \ts  be distinct minimal elements of~$P$. Then:
\begin{equation}\label{eq:Fish-baby}
  \frac{n}{n-1} \ \leq \ \frac{e(P) \.\cdot\. e(P-x-y)}{e(P-x) \.\cdot\. e(P-y)} \ \leq \ 2 \ts.
\end{equation}
\end{thm}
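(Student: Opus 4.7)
The natural approach is to pass to the order polytope via \eqref{eq:two-poset}. Setting $V_S := \Vol(\mathcal{O}_{P-S})$ for $S \subseteq \{x,y\}$, the ratio rewrites as
\[
\frac{e(P)\,e(P-x-y)}{e(P-x)\,e(P-y)} \, = \, \frac{n}{n-1} \cdot \frac{V_\emptyset\,V_{xy}}{V_x\,V_y}\ts,
\]
so the two bounds become $1 \le V_\emptyset V_{xy}/(V_x V_y) \le 2(n-1)/n$. Since $x, y$ are minimal and hence mutually incomparable, for each $p \in \mathcal{O}_{P-x-y}$ the fiber of $\mathcal{O}_P$ over~$p$ is the rectangle $[0,u_x(p)]\times [0,u_y(p)]$ with $u_z(p) := \min(\{p_w : w \succ z\}\cup\{1\})$; Fubini then gives $V_\emptyset = \int u_x u_y \, dp$, $V_y = \int u_x \, dp$, $V_x = \int u_y \, dp$, $V_{xy} = \int dp$, all over $\mathcal{O}_{P-x-y}$ with Lebesgue measure.

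For the lower bound, the plan is one line of FKG. The polytope $\mathcal{O}_{P-x-y}$ is a sublattice of $\rr^{X-x-y}$ under coordinate-wise $\min, \max$, its Lebesgue measure is log-supermodular, and both $u_x, u_y$ are coordinate-wise increasing (each a minimum of positive coordinates). Thus FKG yields $\mathrm{Cov}(u_x,u_y) \ge 0$, which is exactly $V_\emptyset V_{xy} \ge V_x V_y$.

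For the upper bound, I would return to the combinatorial side and classify $f \in \mathcal{E}(P)$ by its restriction $g := f|_{X-x-y} \in \mathcal{E}(P-x-y)$. A short case analysis on the insertion positions of $x, y$ into $g$ yields the identity
\[
n(g) \, = \, n_x(g)\,n_y(g) \, + \, \min\!\bigl(n_x(g),\,n_y(g)\bigr),
\]
where $n, n_x, n_y$ count the extensions of $g$ to linear extensions of $P, P-y, P-x$, respectively. Writing $N_\bullet := e(P-\bullet)$, $|\mathcal{M}| := \sum_g n_x n_y$, and $C := \sum_g \min(n_x, n_y)$ (so that $e(P) = |\mathcal{M}| + C$), the inequality $e(P) \cdot N_{xy} \le 2 N_x N_y$ splits as $|\mathcal{M}| N_{xy} + C N_{xy} \le 2 N_x N_y$. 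The ``adjacency'' piece $C \cdot N_{xy} \le N_x N_y$ is straightforward: $\min(n_x,n_y) \le \sqrt{n_x n_y}$ together with Cauchy--Schwarz gives $C \le \sqrt{N_x N_y}$, and the elementary monotonicity $N_{xy} \le \min(N_x, N_y) \le \sqrt{N_x N_y}$ (adding an element to a poset can only increase the number of linear extensions) closes the bound.

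The main obstacle is the ``bulk'' piece. FKG supplies only the \emph{lower} bound $|\mathcal{M}| \cdot N_{xy} \ge N_x N_y$, and the termwise symmetrization of the bulk inequality can be negative -- it fails already at the quadruple $(n_x(g), n_y(g), n_x(g'), n_y(g')) = (3,3,1,1)$ -- so no pointwise argument will suffice. I expect the proof to extract cancellation between the FKG excess $|\mathcal{M}| N_{xy} - N_x N_y$ and the Cauchy--Schwarz slack $N_x N_y - C \cdot N_{xy}$ by exploiting the rigid structure of $n_x, n_y$ as minima of coordinates of~$g$, either through a carefully crafted injection $\mathcal{E}(P) \times \mathcal{E}(P-x-y) \hookrightarrow \mathcal{E}(P-x) \times \mathcal{E}(P-y) \times \{0,1\}$ or through a quantitative refinement of FKG tailored to $\min$-type functions on order polytopes. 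Sharpness at $P = A_2 \oplus A_{n-2}$, where $n_x \equiv n_y \equiv 1$ and every intermediate inequality becomes an equality, confirms that the argument must be tight.
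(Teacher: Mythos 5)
Your lower bound is correct and essentially matches the paper's route: the paper derives it from Fishburn's inequality (Theorem~\ref{t:Fish}) with $A = X-x$, $B = X-y$, and Fishburn's inequality is itself proved via the Ahlswede--Daykin inequality, which is the same FKG mechanism you invoke directly on the Lebesgue measure of $\cO_{P-x-y}$. Your Fubini fiber decomposition $V_\emptyset = \int u_x u_y$, etc., is sound, the indicator of $\cO_{P-x-y}$ is closed under coordinatewise $\vee,\wedge$, and $u_x, u_y$ are increasing, so the FKG application is clean.

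For the upper bound your argument has a genuine gap, which you correctly acknowledge, but there is also a concrete error en route. The identity $n(g) = n_x(g)\,n_y(g) + \min(n_x(g), n_y(g))$ is correct (one verifies the two ordered cases $f(x)<f(y)$ and $f(x)>f(y)$ give $i \le n_x$, $j \le n_y+1$, $i<j$, and symmetrically, and the two counts telescope to $n_x n_y + \min$), and your Cauchy--Schwarz argument for $C\cdot N_{xy} \le N_xN_y$ is fine. However, the assertion that \emph{FKG supplies} $|\cM|\cdot N_{xy} \ge N_xN_y$ is false. Take $P = C_2 + C_2$ with $x\prec z$ and $y\prec w$: then $\Ec(P-x-y)$ has two elements $g_1, g_2$, with $(n_x,n_y)(g_1)=(1,2)$ and $(n_x,n_y)(g_2)=(2,1)$; hence $|\cM| = 4$, $N_{xy}=2$, $N_x=N_y=3$, and $|\cM|N_{xy}=8 < 9 = N_xN_y$. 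The functions $n_x, n_y$ are not coordinatewise increasing on any distributive-lattice structure on $\Ec(P-x-y)$, so discrete FKG does not apply, and the sign of $\Cov(n_x,n_y)$ genuinely varies with $P$ (it is negative here, positive when $x,y$ share an upper cover). Your framing of ``extracting cancellation between the FKG excess and the Cauchy--Schwarz slack'' therefore rests on a false premise: there is no FKG excess to cancel in general. The remaining inequality $|\cM|N_{xy} + CN_{xy} \le 2N_xN_y$ is still open in your scheme. The paper obtains the upper bound as a special case ($P\gets P+z$, $a=1$) of Theorem~\ref{t:ineq-delete}, which is proved in \cite{CP-corr} via the combinatorial atlas machinery of~$\S$\ref{ss:proof-LA} --- a linear-algebraic (hyperbolic matrices) route that is entirely different from anything sketched here and does not proceed through your decomposition.
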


\smallskip

This correlation inequality is the most natural
and the simplest to state.
The lower bound in~\eqref{eq:Fish-baby} is a special case of the
Fishburn's inequality~\eqref{eq:Fish-ineq} below, while the upper
bound is a special case of~\eqref{eq:ineq-delete} below.
%
Note that the lower bound is tight for \ts $P=A_n$ \ts and
the upper bounds is tight for the linear sum \ts $P=A_2\oplus C_{n-2}$\..

\begin{rem}\label{r:Fish-baby-prob}
Correlation inequalities are best understood in probabilistic notations.
The inequality~\eqref{eq:Fish-baby} can be rewritten as
\begin{equation}\label{eq:Fish-baby-prob}
  \frac{n}{n-1} \ \leq \ \frac{\Pb[f(x)=1, f(y)=2]}{\Pb[f(x)=1] \.\cdot\. \Pb[f(y)=1]}  \ \leq \ 2,
\end{equation}
where the probability \ts $\Pb$ \ts is over the uniform random
linear extension \ts $f\in\Ec(P)$.  The asymmetry in the
numerator is an illusion, since \. $\Pb[f(x)=1, f(y)=2] = \Pb[f(x)=2, f(y)=1]$.
\end{rem}

\begin{op}\label{op:Fish-baby}
For many examples of large posets, the lower bound in~\eqref{eq:Fish-baby} is tight.
Can one improve the upper bound for a large natural class of posets?
\end{op}

\smallskip

\subsection{Fishburn's inequality} \label{ss:Fish-ineq}
Let \ts $P=(X,\prec)$ \ts be a poset with \ts $|X|=n$ \ts elements.  Denote
$$
\en(P) \, := \, \frac{e(P)}{n!} \, = \, \bP\big(f\in \Ec(P)\big),
$$
where the probability is over uniform bijections \ts $f:X\to [n]$.

Let  \ts $A \subseteq X$ \ts be a subset of the ground set.  By a small abuse of
notation, denote by \. $e(A)$ \. the number of linear extensions
of the induced subposet \ts $P|_A=(A,\prec)$.

\begin{thm}[{\rm \defn{Fishburn's inequality}~\cite[Lemma,~p.~130]{Fis84}}{}]
\label{t:Fish}
Let \ts $P=(X,\prec)$ \ts be a finite poset, and let \ts $A,B \subseteq X$ \ts be lower ideals of~$P$.
Then:
\begin{equation}\label{eq:Fish-ineq}
	  \en(A \cup B) \.\cdot\. \en(A\cap B) \ \geq \ \en(A) \.\cdot\. \en(B)\ts.
\end{equation}
\end{thm}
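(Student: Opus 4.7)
My strategy is to read~\eqref{eq:Fish-ineq} as a correlation inequality and to prove it by a continuous FKG argument on the order polytope of $P|_{A\cap B}$. Set $C:=A\cap B$, $X:=A\setminus B$, $Y:=B\setminus A$, and write $\cO_S:=\cO_{P|_S}$. The key \emph{structural lemma}, which uses that $A$ and $B$ are \emph{lower} ideals, states: (i)~there is no comparability $u\prec v$ in $P$ with $u\in X$ and $v\in C$ (since $v\in B$ and $B$ being a lower ideal force $u\in B$, contradicting $u\notin B$); (ii)~the symmetric statement with $Y$ in place of $X$; (iii)~$X\parallel Y$ (the same argument in both directions).

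By Stanley's volume formula~\eqref{eq:two-poset}, the claim is equivalent to
\[
\vol(\cO_{A\cup B})\cdot\vol(\cO_C)\,\geq\,\vol(\cO_A)\cdot\vol(\cO_B).
\]
I would integrate by fibers over $\cO_C$: for each $c\in\cO_C$, set
\[
\phi_A(c):=\vol\bigl\{f\in[0,1]^X:(f,c)\in\cO_A\bigr\},\qquad \phi_B(c):=\vol\bigl\{g\in[0,1]^Y:(g,c)\in\cO_B\bigr\}.
\]
By Fubini, $\vol(\cO_A)=\int_{\cO_C}\phi_A\,dc$, and analogously for $B$. The structural lemma delivers a \emph{factorization}: the only defining inequalities of $\cO_{A\cup B}$ that couple $X$ to something outside $X$ are of the form $c(u)\le f(v)$, there are no direct $X$--$Y$ constraints, and similarly for $Y$--$C$; hence $(f,g,c)\in\cO_{A\cup B}$ iff $(f,c)\in\cO_A$ and $(g,c)\in\cO_B$, so $\vol(\cO_{A\cup B})=\int_{\cO_C}\phi_A(c)\phi_B(c)\,dc$. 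Trivially $\vol(\cO_C)=\int_{\cO_C}1\,dc$.

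Next I would check that both $\phi_A$ and $\phi_B$ are \emph{non-increasing} on $\cO_C$ in the coordinate-wise order on $[0,1]^C$. Indeed, every $c$-$f$ mixed inequality defining $\cO_A$ has the form $c(u)\le f(v)$ with $u\in C, v\in X, u\prec v$ (by the lemma), so raising $c$ only tightens a lower bound on $f$ and shrinks the fiber; the same holds for $\phi_B$. Now $\cO_C$ is closed under coordinate-wise $\min$ and $\max$, so it is a distributive sub-lattice of $[0,1]^C$ on which Lebesgue measure is log-modular. The continuous FKG inequality applied to the two non-increasing functions $\phi_A,\phi_B$ then yields
\[
\Bigl(\int_{\cO_C}\phi_A\phi_B\Bigr)\cdot\Bigl(\int_{\cO_C}1\Bigr)\,\geq\,\Bigl(\int_{\cO_C}\phi_A\Bigr)\cdot\Bigl(\int_{\cO_C}\phi_B\Bigr),
\]
which, read through the four volume identities, is precisely~\eqref{eq:Fish-ineq}.

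The main obstacle, modest as it is, is the structural lemma: both the factorization and the monotonicity depend on it entirely, and it fails without the lower-ideal hypothesis (a wrong-way constraint $f(u)\le c(v)$ with $u\in X, v\in C$ would make $\phi_A$ non-monotone in $c$ and destroy the common direction of monotonicity with $\phi_B$). Once that lemma is established, the rest is a standard application of FKG on the order polytope.
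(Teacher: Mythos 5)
Your proof is correct, and the structural lemma---no comparability $u\prec v$ with $u\in A\setminus B$ and $v\in A\cap B$, and none at all between $A\setminus B$ and $B\setminus A$---is exactly the right consequence of the lower-ideal hypothesis: it delivers both the fiber factorization $\vol(\cO_{A\cup B})=\int_{\cO_{A\cap B}}\phi_A\phi_B$ and the common direction of monotonicity of $\phi_A,\phi_B$, after which FKG on the distributive sublattice $\cO_{A\cap B}$ closes the argument. This is a genuinely different route from the paper's: Fishburn's original argument, which the paper follows, applies the discrete Ahlswede--Daykin four-functions inequality on a finite distributive lattice, while you convert to volumes via Stanley's identity~\eqref{eq:two-poset} and run a continuous FKG on the order polytope, in the same spirit as Shepp's proof of the XYZ inequality. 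Your geometric version gives a clean conceptual picture (Fubini over the common fiber $\cO_{A\cap B}$, then positive correlation of two decreasing marginals), whereas the discrete AD formulation transfers more readily to the $q$-analogue and multivariate $P$-partition generalizations in Theorems~\ref{t:LP-multi} and~\ref{t:main-OP-multi}, where volume is no longer the right invariant. One small precision worth recording: $\phi_A$ as you defined it is \emph{not} non-increasing on all of $[0,1]^{A\cap B}$---it vanishes off $\cO_{A\cap B}$, which is not a down-set of the cube---so your choice to apply FKG directly on the sublattice $\cO_{A\cap B}$ (rather than on the ambient cube with the restricted measure) is what makes the argument airtight; alternatively, one can replace $\phi_A$ by the unconstrained fiber volume obtained by dropping the intra-$(A\cap B)$ constraints on $c$, which is globally non-increasing and coincides with $\phi_A$ on $\cO_{A\cap B}$.
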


Taking \ts $A:=X-x$ \ts and \ts $B:=X-y$ \ts gives \eqref{eq:Fish-baby}.
The original proof of Fishburn's inequality uses the AD inequality
(see~$\S$\ref{ss:proof-FKG}), and the approach by Shepp \cite{She80,She82}.
In fact, Theorem~\ref{t:Fish} follows easily from \cite[Thm~2]{She80},
as explained in \cite[Lem.~10]{Bri88}.
The following is a self-dual generalization.

\begin{thm}[{\rm \defn{generalized Fishburn's inequality}~\cite[Thm~3.4]{CP-multi}}{}]\label{t:Fish-CP}
Let \ts $P=(X,\prec)$ \ts be a finite poset. Let \. $A,B \subseteq X$ \. be lower ideals,
and let \. $C,D \subseteq X$ \. be upper ideals of~$P$, such that \.
$A \cap C = B \cap D = \varnothing$.
Then:
\begin{equation}\label{eq:Fish-ineq-CP}
\en(X-V) \.\cdot\. \en(X-W) \ \geq \ \en(X-A-C) \.\cdot\. \en(X-B-D)\ts,
\end{equation}
where \. $V:=(A \cap B) \cup (C \cup D)$ \. and \. $W:=(A \cup B) \cup (C \cap D)$.
\end{thm}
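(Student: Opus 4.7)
The plan is to deduce \eqref{eq:Fish-ineq-CP} from the Ahlswede--Daykin (AD) four-function inequality, following the template of the AD-based proof of the classical Fishburn inequality \eqref{eq:Fish-ineq}.

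\textbf{Step 1 (probabilistic reformulation).} By Stanley's volume formula \eqref{eq:two-poset}, for every $S\subseteq X$ we have $\en(P|_S)=\Vol(\cO_{P|_S})=\bP[E_S]$, where $g:X\to[0,1]$ is uniform and $E_S$ is the event ``$g(u)\le g(v)$ for all $u\prec v$ with $u,v\in S$'' (coordinates outside $S$ do not enter any constraint). The target is then
\begin{equation*}
\bP[E_{X-V}]\cdot\bP[E_{X-W}] \ \geq \ \bP[E_{X-A-C}]\cdot\bP[E_{X-B-D}].
\end{equation*}

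\textbf{Step 2 (key set identities).} Using $A\cap C=B\cap D=\varnothing$ and distributivity, $V\cup W=A\cup B\cup C\cup D$ and $V\cap W=(A\cup C)\cap(B\cup D)$, so the pairs $(X-V,X-W)$ and $(X-A-C,X-B-D)$ share the same union and intersection in $X$. Writing $R_S=\{(u,v):u\prec v,\ u,v\in S\}$ for the constraint set, the upper-ideal structure of $X-A,X-B$ and the lower-ideal structure of $X-C,X-D$ imply $R_{X-V}\cup R_{X-W}\subseteq R_{X-A-C}\cup R_{X-B-D}$: for $u\prec v$ with $u,v\notin V$, the condition $u\notin A\cap B$ forces $u\notin A$ or $u\notin B$; each case, combined with $v\notin C\cup D$ (which propagates to $u$ by the upper-ideal property), places $(u,v)$ in $R_{X-A-C}$ or $R_{X-B-D}$. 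Thus every constraint on the LHS of the target is ``covered'' on the RHS.

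\textbf{Step 3 (apply AD).} Take $f_1=\mathbf{1}_{E_{X-A-C}}$, $f_2=\mathbf{1}_{E_{X-B-D}}$, $g_1=\mathbf{1}_{E_{X-V}}$, $g_2=\mathbf{1}_{E_{X-W}}$ on the cube $([0,1]^X,\vee,\wedge)$ with Lebesgue measure. The AD inequality then reduces the target to the pointwise inequality
\begin{equation*}
\mathbf{1}_{E_{X-A-C}}(\phi)\,\mathbf{1}_{E_{X-B-D}}(\psi)\ \leq\ \mathbf{1}_{E_{X-V}}(\phi\vee\psi)\,\mathbf{1}_{E_{X-W}}(\phi\wedge\psi),\quad \phi,\psi\in[0,1]^X.
\end{equation*}

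\textbf{The main obstacle.} The plain cube lattice cannot support this pointwise inequality: if a constraint $(u,v)\in R_{X-V}$ is only of ``type~$B$'' (enforced by $\psi$ and not by $\phi$, with $u\in B\setminus A$), then $\psi(u)\le\psi(v)$ does not control $\max(\phi(u),\psi(u))$, since $\phi(u)$ is arbitrary. An explicit obstruction already arises for the chain $P:1\prec\cdots\prec 5$ with $A=\{1,2\}$, $B=\{1\}$, $C=\{4,5\}$, $D=\{5\}$: taking $\phi_2=1,\phi_3=0$ and $\psi_2=0.3,\psi_3=0.5$ violates the pointwise bound, even though the theorem's inequality still holds. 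The cure, in keeping with the methodology of \cite{CP-multi}, is to lift the problem to a refined lattice adapted to the multivariate order polynomial $\Omega_\bq(P,t)$ of \eqref{eq:OP-bq-def}, on which the hypothesis $A\cap C=B\cap D=\varnothing$ couples $\phi$ and $\psi$ so that the four-function pointwise inequality becomes valid; each $\en$-value is then recovered as a leading coefficient as $t\to\infty$ via \eqref{eq:OP-asy}, specializing the resulting $\bq$-inequality to \eqref{eq:Fish-ineq-CP}.
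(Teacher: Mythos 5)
Your Steps~1--2 are essentially correct: the probabilistic reformulation via \eqref{eq:two-poset}, the identities $V\cup W=A\cup B\cup C\cup D$ and $V\cap W=(A\cup C)\cap(B\cup D)$, and the covering inclusion $R_{X-V}\cup R_{X-W}\subseteq R_{X-A-C}\cup R_{X-B-D}$ all hold. Your counterexample showing that the pointwise Ahlswede--Daykin condition fails for the naive choice of indicator functions on the cube is also genuine. In fact, the same obstruction already occurs for the classical Fishburn inequality ($C=D=\varnothing$): on the $3$-chain $1\prec 2\prec 3$ with $A=\{1,2\}$, $B=\{1\}$, take $\phi(2)=1$, $\phi(3)=0$, $\psi(2)=0.3$, $\psi(3)=0.5$; then $\phi\in E_{X-A}=E_{\{3\}}$ and $\psi\in E_{X-B}=E_{\{2,3\}}$, yet $\max(\phi(2),\psi(2))=1>0.5=\max(\phi(3),\psi(3))$, so $\phi\vee\psi\notin E_{X-V}=E_{\{2,3\}}$. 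This tells you that whatever AD argument establishes Theorem~\ref{t:Fish-CP} must already be non-naive when $C=D=\varnothing$, and that the lattice and functions cannot simply be indicators of the order-polytope events on $[0,1]^X$ or $[t]^X$.

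The proposal does not close that gap. Your final paragraph states that the fix is to ``lift to a refined lattice adapted to $\Omega_\bq(P,t)$ on which the hypothesis $A\cap C=B\cap D=\varnothing$ couples $\phi$ and $\psi$'', but no lattice is specified, no four functions $\alpha,\beta,\gamma,\delta$ are defined, and the pointwise hypothesis \eqref{eq:AD-def} is never verified. The assertion that the hypothesis on $(A,B,C,D)$ ``couples'' the two lattice points $\phi,\psi$ is also unsupported as stated: in $[0,1]^X$, $[t]^X$, or any componentwise lattice of maps on $X$, the two arguments of the AD hypothesis range over all pairs independently of how $A,B,C,D$ were chosen --- the sets only determine which events appear, not which lattice pairs get tested. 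The high-level route you gesture at (reduce to a $\bq$-order-polynomial inequality, apply a generalized AD inequality, then let $t\to\infty$ via \eqref{eq:OP-asy}) does match the route the paper indicates for \cite[Thm~3.4]{CP-multi}, but the entire content of that proof is the concrete construction of the lattice and the four functions making the pointwise inequality true, and this is exactly what your proposal leaves out after correctly observing that the obvious construction fails.
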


The proof of this generalization also uses the AD~inequality.

\smallskip

\subsection{Lam--Pylyavskyy extension} \label{ss:Fish-ineq-LP}
To simplify the notation, denote by \ts $\Om(A,t)$ \ts the order polynomial of the
induced subposet \ts $P|_A$\ts. Define \ts $\Om_q(A,t)$,  \ts $\Om_\bq(A,t)$ \ts and \ts $\Phi_\bz(A,t)$ \ts
in a similar way.

\begin{thm}[{\rm Lam--Pylyavskyy~\cite[Thm~3.6]{LP07}}{}]
\label{t:LP}
Let \ts $P=(X,\prec)$ \ts be a finite poset, and let \ts $A,B \subseteq X$ \ts be lower ideals of~$P$.
Then, for all integer \ts $t\ge 1$, we have:
\begin{equation}\label{eq:LP-ineq}
	  \Om(A \cup B,t) \.\cdot\. \Om(A\cap B,t) \ \geq \ \Om(A,t) \.\cdot\. \Om(B,t).
\end{equation}
Additionally, the defect of this inequality is in~$\SP$.
\end{thm}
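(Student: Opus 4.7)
The plan is to recast the order polynomials as counts of multichains of lower ideals of~$P$ and apply the Ahlswede--Daykin (AD) four-functions inequality (see~$\S$\ref{ss:proof-FKG}).

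First, let $\mathscr{I}$ denote the lattice of lower ideals of $P$ ordered by inclusion. For any lower ideal $C \subseteq X$, the lower ideals of $P|_C$ coincide with the lower ideals of $P$ contained in~$C$. The classical bijection sending an order preserving map $g\colon C \to [t]$ to the multichain $I_i := \{x \in C : g(x) \le i\}$ for $i = 1, \ldots, t-1$ identifies
$$
\Om(C,t) \, = \, \#\bigl\{(I_1,\ldots,I_{t-1}) \in \mathscr{I}^{t-1} \,:\, I_1 \subseteq \cdots \subseteq I_{t-1} \subseteq C\bigr\}.
$$
I would apply this to each of $C \in \{A,\, B,\, A \cup B,\, A \cap B\}$, all of which are lower ideals since $A$ and $B$ are.

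Next, observe that $M := \{\vec I \in \mathscr{I}^{t-1} : I_1 \subseteq \cdots \subseteq I_{t-1}\}$ is closed under coordinate-wise union and intersection, hence is a finite distributive sublattice of $\mathscr{I}^{t-1}$. On $M$ define the indicator functions
$$
\alpha(\vec I) := \mathbf{1}[I_{t-1} \subseteq A], \quad \beta(\vec I) := \mathbf{1}[I_{t-1} \subseteq B],
$$
$$
\gamma(\vec I) := \mathbf{1}[I_{t-1} \subseteq A \cup B], \quad \delta(\vec I) := \mathbf{1}[I_{t-1} \subseteq A \cap B],
$$
whose sums over $M$ equal $\Om(A,t)$, $\Om(B,t)$, $\Om(A \cup B,t)$, and $\Om(A \cap B,t)$, respectively. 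The AD hypothesis $\alpha(\vec I)\beta(\vec J) \le \gamma(\vec I \vee \vec J)\,\delta(\vec I \wedge \vec J)$ reduces, in the only non-trivial case where both factors on the left equal~$1$, to the two immediate containments $I_{t-1} \cup J_{t-1} \subseteq A \cup B$ and $I_{t-1} \cap J_{t-1} \subseteq A \cap B$. Applying AD then yields \eqref{eq:LP-ineq}.

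The main obstacle is the $\SP$ claim, which requires an explicit injection and not just the AD bound. The natural candidate $\Phi(\vec K, \vec L) := (\vec K \vee \vec L,\, \vec K \wedge \vec L)$ lands in the correct target but is typically non-injective: a fiber over $(\vec I, \vec J)$ is parametrized by choices, in each coordinate~$i$, of a lower ideal $K_i$ in the interval $[J_i, I_i] \subseteq \mathscr{I}$ whose complement $L_i := (I_i \setminus K_i) \cup J_i$ is also a lower ideal, subject to the constraint that both $\vec K$ and $\vec L$ are multichains. To salvage injectivity I would either invoke the flow-theoretic (Bollob\'as-style) proof of the AD inequality on $M$, which is inherently injective, or perform a direct coordinate-by-coordinate sorting that canonically selects the preimage; the complement of the resulting injection would then exhibit the defect as the cardinality of a polynomial-time describable set, which is the required $\SP$ certificate. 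This parallels the $\SP$ refinements of AD/FKG-type inequalities elsewhere in the survey (e.g.\ Theorems~\ref{t:basic-OP} and~\ref{t:sid-SP}).
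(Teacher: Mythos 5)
Your AD argument for the inequality \eqref{eq:LP-ineq} itself is correct: encoding $\Om(C,t)$ as the number of multichains $I_1\subseteq\cdots\subseteq I_{t-1}\subseteq C$ of lower ideals, working in the distributive lattice $M$ of such multichains, and applying the four-functions theorem to the four indicators on the top coordinate gives the inequality. But this is the route taken in \cite{CP-multi} (compare Theorem~\ref{t:LP-multi} and~$\S$\ref{ss:proof-FKG}), not that of the cited source: the proof in \cite{LP07} is by explicit injection, and that injection is also what the survey relies on for the $\SP$ part. What the AD route buys you is a very short proof of the inequality with essentially no bookkeeping; what it costs you is exactly the second sentence of the theorem.

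The $\SP$ claim is where your proposal has a genuine gap. Applying AD does not hand you a polynomial-time injection: the natural map $(\vec K,\vec L)\mapsto(\vec K\vee\vec L,\,\vec K\wedge\vec L)$ on a distributive lattice is many-to-one, with the fiber over $(\vec I,\vec J)$ parametrized by the complemented elements of the interval $[\vec J,\vec I]$, and neither a canonical section of this map nor a description of its image within the constrained set is a polynomial-time object in any obvious way. Your two proposed fixes are speculative and do not close this: there is no known ``flow-theoretic (Bollob\'as-style)'' proof of the AD inequality that yields an explicit polynomial-time injection, and a ``coordinate-by-coordinate sorting'' would still have to resolve the complementation ambiguity inside each interval $[J_i,I_i]$ consistently with the multichain constraint. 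Indeed the survey flags exactly this obstruction: the AD-based proofs of Theorems~\ref{t:LP-dual} and~\ref{t:main-OP-multi} are described as fundamentally non-injective, and the corresponding $\SP$ membership is left open as Question~\ref{q:LP-SP}. To prove the $\SP$ claim in Theorem~\ref{t:LP} you need the Lam--Pylyavskyy injection (or a concrete replacement), not the AD machinery.
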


The authors proved this result by an explicit injection, which they generalize
in several different ways.  Some of these generalization are natural from the
algebraic combinatorics point of view, but some are natural and apply to all
posets.

\begin{thm}[{\rm Lam--Pylyavskyy~\cite[Prop.~3.7]{LP07}}{}] \label{t:LP-multi}
Let \ts $P=(X,\prec)$ \ts be a finite poset, and let \ts $A,B \subseteq X$ \ts be lower ideals of~$P$.
Then, for all integer \ts $t\ge 1$, we have:
\begin{equation}\label{eq:LP-ineq-q}
	  \Om_q(A \cup B,t) \.\cdot\. \Om_q(A\cap B,t) \ \geqslant_q \ \Om_q(A,t) \.\cdot\. \Om_q(B,t),
\end{equation}
where the inequality holds coefficient-wise as a polynomial in~$\ts q$.
Moreover, we have:
\begin{equation}\label{eq:LP-ineq-multi}
	  \Phi_\bz(A \cup B,t) \.\cdot\. \Phi_\bz(A\cap B,t) \ \geqslant_\bz \ \Phi_\bz(A,t) \.\cdot\. \Phi_\bz(B,t),
\end{equation}
where the inequality holds coefficient-wise as a polynomial in~$\ts \bz=(z_0,z_1,\ldots)$.
Additionally, the defect of this inequality is in~$\ts  \SP$.
\end{thm}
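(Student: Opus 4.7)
The plan is to establish both \eqref{eq:LP-ineq-q} and \eqref{eq:LP-ineq-multi} by exhibiting a single injection
\[
\Phi \colon \PP(A,t) \times \PP(B,t) \; \hookrightarrow \; \PP(A\cup B,t) \times \PP(A\cap B,t)
\]
which, for every element $x\in X$, preserves the multiset of values assigned to $x$ by the pair. Both the $\bz$-weight $\prod_i z_i^{m_i(\cdot)}$ appearing in $\Phi_\bz$ and the $\bq$-weight $q_1^{h(x_1)}\cdots q_n^{h(x_n)}$ depend only on these per-element multisets, so such $\Phi$ immediately yields \eqref{eq:LP-ineq-multi} coefficient-wise in $\bz$, and specializing $\bz = (1, q, q^2, \ldots, q^t)$ via \eqref{eq:OP-rK-exp} also gives \eqref{eq:LP-ineq-q}.

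The construction is the explicit injection of Lam--Pylyavskyy from the proof of Theorem~\ref{t:LP}, and I would verify that it already has this stronger element-level preservation property. Given a pair $(f, g)$, one initializes $f'(x) := f(x)$ for $x\in A$, extends by $f'(x) := g(x)$ for $x \in B \setminus A$, and sets $g'(x) := g(x)$ for $x\in A\cap B$. The only way $f'$ can violate the $P$-partition condition on $A\cup B$ is at a relation $x \prec y$ with $x \in A\cap B$ and $y \in B\setminus A$: all other problematic cases, such as $x\in A\setminus B$ and $y\in B\setminus A$, are excluded by the lower-ideal hypothesis, which forces $x \prec y$ and $y\in B$ to give $x\in B$, and dually. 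Such failures are repaired by processing $A\cap B$ along a fixed linear extension of $P|_{A\cap B}$ and swapping $f'(x) \leftrightarrow g'(x)$ whenever a violation is detected. The essential observation is that every atomic operation is either a verbatim copy (for $x\notin A\cap B$) or a swap within the pair $(f'(x), g'(x))$ at a single $x\in A\cap B$, so the element-wise multiset of values is invariant throughout.

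Correctness---that the swap procedure terminates with $f'$ a $P$-partition on $A\cup B$ and $g'$ a $P$-partition on $A\cap B$---follows from monotonicity of the swap operation under the chosen processing order. Injectivity is the main obstacle, and follows from the fact that $\Phi$ can be shown to output only pairs in a certain normal form (for instance, $f'(x) \le g'(x)$ at every $x\in A\cap B$ after all swaps are applied in the fixed order), from which $(f, g)$ is recovered by running the procedure in reverse along the opposite linear extension. This is exactly the injection of~\cite[Thm~3.6]{LP07}, used here at the finer per-element level. No new obstacle arises in passing from the univariate statement to \eqref{eq:LP-ineq-q} or \eqref{eq:LP-ineq-multi} because the weight preservation is built into each atomic step.

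Finally, for the defect being in $\SP$: fixing a monomial in $\bz$, its coefficient in the defect polynomial counts pairs $(f', g')\in\PP(A\cup B,t)\times\PP(A\cap B,t)$ of the prescribed per-element profile that lie outside $\mathrm{im}(\Phi)$. Non-membership in $\mathrm{im}(\Phi)$ is certified by running the inverse procedure on $(f', g')$ and observing that the resulting candidate fails to be a valid pair in $\PP(A,t)\times\PP(B,t)$, which is polynomial-time checkable. Hence the defect is a $\SP$-counting function coefficient-wise. The trickiest step in the whole plan is the clean verification that the LP swap procedure terminates with an unambiguously invertible normal form; the lower-ideal hypothesis and the specific linear-extension processing order on $A\cap B$ are used crucially to ensure this.
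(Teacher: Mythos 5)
The paper itself gives no proof of this theorem: it is a survey that cites~\cite{LP07} and remarks only that ``the original proof of Theorem~\ref{t:LP-multi} uses an explicit injection'' (an alternative proof via the AD inequality appears in~\cite{CP-multi}). So there is nothing in the paper to compare against beyond that one-line description, and your high-level plan---an injection $\Phi\colon\PP(A,t)\times\PP(B,t)\hookrightarrow\PP(A\cup B,t)\times\PP(A\cap B,t)$ preserving for each $x$ the unordered pair of values assigned to $x$, from which \eqref{eq:LP-ineq-multi} follows coefficient-wise in $\bz$, \eqref{eq:LP-ineq-q} follows via~\eqref{eq:OP-rK-exp}, and the $\SP$ claim follows from polynomial computability and invertibility of $\Phi$---is consistent with that description and is the right framework.

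The gap is in the injectivity argument, which as written is internally inconsistent. The map whose output always has your proposed normal form $f'(x)\le g'(x)$ on $A\cap B$ is the coordinate-wise $\min$/$\max$, i.e.\ $f'(x)=\min(f(x),g(x))$ and $g'(x)=\max(f(x),g(x))$ on $A\cap B$; but this map is not injective. Take $P=\{a\prec b,\ a\prec c,\ b\parallel c\}$, $A=\{a,b\}$, $B=\{a,c\}$, $t\ge 2$. The two valid pairs given by $f_1(a)=1$, $g_1(a)=2$ and by $f_2(a)=2$, $g_2(a)=1$ (with $f_i(b)=g_i(c)=t$ in both) map to the same image $f'(a)=1$, $g'(a)=2$, $f'(b)=f'(c)=t$. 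Conversely, the ``swap when a violation at $x\prec y$ with $y\in B\setminus A$ is detected'' procedure you describe does not output the normal form $f'\le g'$: in the same example it swaps nothing in either case and outputs $f'(a)=2>1=g'(a)$ for the second pair. So either your procedure is the $\min$/$\max$ swap (normal form holds, injectivity fails) or it is the violation-triggered swap (the claimed normal form is false and the recovery-by-reverse-pass is unsubstantiated). Resolving exactly this ambiguity is the hard content of the Lam--Pylyavskyy cell-transfer injection, and your sketch neither reproduces that construction nor supplies a correct replacement; the rest of your plan stands or falls with it.
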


Here the second part \eqref{eq:LP-ineq-multi} implies the first part \eqref{eq:LP-ineq-q} by
the substitution in \eqref{eq:OP-rK-exp}.  Letting \ts $t\to \infty$, using the equality \eqref{eq:Sta-PP},
and multiplying both sides by an appropriate product of the type \.
$(1-q)(1-q^2)\cdots$ \. gives a $q$-analogue of Fishburn's inequality in the
style of Corollary~\ref{c:ineq-BW-q}$\ts:$

\begin{cor}[{\rm \defn{$q$-Fishburn's inequality}}{}] \label{c:LP-ineq-maj-q}
Let \ts $P=(X,\prec)$ \ts be a finite poset, and let \ts $A,B \subseteq X$ \ts be lower ideals of~$P$.
Then, for all \. $0< q <1$, we have:
\begin{equation}\label{eq:LP-ineq-maj-q}
\frac{e_q(A\cup B) \cdot e_q(A\cap B)}{e_q(A) \cdot e_q(B)}   \ \geq  \
\frac{|A\cup B|!_q \cdot |A\cap B|!_q}{|A|!_q\cdot |B|!_q}
\,.
\end{equation}
\end{cor}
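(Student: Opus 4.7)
The plan is to deduce the corollary from Theorem~\ref{t:LP-multi} by specializing, taking a limit, and then applying Stanley's identity \eqref{eq:Sta-PP} to pass from $\Omega_q$ to $e_q$. Fix any real $q \in (0,1)$. Since the inequality \eqref{eq:LP-ineq-q} in Theorem~\ref{t:LP-multi} asserts that the polynomial
\[
\Omega_q(A\cup B,t)\,\Omega_q(A\cap B,t) \. - \. \Omega_q(A,t)\,\Omega_q(B,t)
\]
has nonnegative coefficients in~$q$, evaluating at our fixed $q\in(0,1)$ gives a bona fide numerical inequality for every integer $t\ge 1$.

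Next I would let $t\to\infty$. For each lower ideal $S\in\{A,B,A\cap B,A\cup B\}$, the truncated generating function $\Omega_q(S,t)$ is monotone nondecreasing in $t$ (since $\PP(S,t)\subseteq \PP(S,t+1)$) and bounded above by $\Omega_q(S)<\infty$ for $q\in(0,1)$. Hence $\Omega_q(S,t) \to \Omega_q(S)$, and the numerical inequality survives in the limit:
\[
\Omega_q(A\cup B)\,\Omega_q(A\cap B) \ \ge \ \Omega_q(A)\,\Omega_q(B).
\]

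Now I would substitute Stanley's identity \eqref{eq:Sta-PP}, which for any finite poset $R$ with $|R|=m$ reads
\[
\Omega_q(R) \, = \, \frac{e_q(R)}{(1-q)(1-q^2)\cdots(1-q^m)} \, = \, \frac{e_q(R)}{(1-q)^m \. m!_q}\,.
\]
Applied to each of the four induced subposets and plugged into the inequality above, the factors $(1-q)^{|A\cup B|+|A\cap B|}$ and $(1-q)^{|A|+|B|}$ match because $|A\cup B|+|A\cap B|=|A|+|B|$, so the $(1-q)$-powers cancel. Rearranging yields exactly \eqref{eq:LP-ineq-maj-q}.

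The main ``obstacle'' is really only bookkeeping: one has to verify that the coefficient-wise inequality is preserved under evaluation at $q\in(0,1)$ (immediate, since all coefficients of the defect are nonnegative), that the limit $t\to\infty$ commutes with the inequality (immediate from monotone convergence of a sum of nonnegative terms), and that the $(1-q)$-exponents indeed balance (from the set-theoretic identity on sizes of $A,B$). No deeper analytic or combinatorial input is needed beyond Theorem~\ref{t:LP-multi} and~\eqref{eq:Sta-PP}; the corollary is genuinely a direct consequence, and no separate proof of equality cases is claimed.
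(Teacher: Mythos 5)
Your proof is correct and follows essentially the same route as the paper: specialize \eqref{eq:LP-ineq-q} to a fixed $q\in(0,1)$, take $t\to\infty$ (with monotone convergence of $\Omega_q(S,t)\uparrow\Omega_q(S)$ for each of the four lower ideals), apply Stanley's identity \eqref{eq:Sta-PP}, and observe that $|A\cup B|+|A\cap B|=|A|+|B|$ makes the $(1-q)$-powers balance, leaving exactly \eqref{eq:LP-ineq-maj-q}. This is the same argument the paper sketches in the sentence preceding the corollary (``Letting $t\to\infty$, using the equality \eqref{eq:Sta-PP}, and multiplying both sides by an appropriate product of the type $(1-q)(1-q^2)\cdots$''), just spelled out in slightly more detail.
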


Taking the limit \ts $q\to 1-$ \ts gives back Fishburn's inequality \eqref{eq:Fish-ineq}.
The original proof of Theorem~\ref{t:LP-multi} uses an explicit injection.  The proof
in \cite{CP-multi} uses generalizations of the AD~inequality (see~$\S$\ref{ss:proof-FKG}).

\smallskip

\subsection{Self-dual extension of Fishburn's inequality}\label{ss:main-dual}
Note that Fishburn's inequality is defined to be asymmetric up to duality.
The following generalization is self-dual.

\begin{thm}[{\rm \cite[Thm~3.4]{CP-multi}}{}] \label{t:LP-dual}
Let \ts $P=(X,\prec)$ \ts be a finite poset. Let \ts $A,B \subseteq X$ \ts be lower ideals,
and let \ts $C,D \subseteq X$ \ts be upper ideals of~$P$, such that \.
$A \cap C = B \cap D = \varnothing$.
Then:
\begin{equation}\label{eq:LP-dual}
\en(X-V) \.\cdot\. \en(X-W) \ \geq \ \en(X-A-C) \.\cdot\. \en(X-B-D)\,.
\end{equation}
where \. $V:=(A \cap B) \cup (C \cup D)$ \. and \. $W:=(A \cup B) \cup (C \cap D)$.
\end{thm}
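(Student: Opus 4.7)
Since Theorem~\ref{t:LP-dual} coincides verbatim with Theorem~\ref{t:Fish-CP}, the plan is to reproduce the proof strategy indicated there: a careful application of the Ahlswede--Daykin (AD) four-function inequality. First I would recast the theorem probabilistically. On the probability space $\Omega := [0,1]^X$ with Lebesgue measure $\mu$, define the event $E_S := \{\omega \in \Omega : \omega_x \le \omega_y \text{ for all } x \prec y \text{ with } x, y \in S\}$ for each subset $S \subseteq X$. A standard computation (equivalent to \eqref{eq:two-poset} applied to $P|_S$) gives $\mu(E_S) = \en(S)$, reducing the theorem to the four-function inequality $\mu(E_{X-V}) \cdot \mu(E_{X-W}) \,\ge\, \mu(E_{X-A-C}) \cdot \mu(E_{X-B-D})$.

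Next I would identify the underlying distributive lattice. Let $\Lc$ denote the set of pairs $(A', C')$ where $A'$ is a lower ideal, $C'$ an upper ideal of $P$, and $A' \cap C' = \varnothing$. Under the \emph{twisted} operations
\[
(A, C) \wedge (B, D) \, := \, (A \cap B,\, C \cup D), \qquad (A, C) \vee (B, D) \, := \, (A \cup B,\, C \cap D),
\]
the set $\Lc$ is a distributive lattice: disjointness is preserved by $\wedge$ and $\vee$ precisely because of the hypothesis $A \cap C = B \cap D = \varnothing$, as a routine check shows. The four convex subsets $X-A-C$, $X-B-D$, $X-V$, $X-W$ correspond to the four lattice points $(A,C)$, $(B,D)$, $(A,C)\wedge(B,D)$, $(A,C)\vee(B,D)$ respectively.

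The heart of the proof is to apply AD on the distributive lattice $\Omega = [0,1]^X$ (with coordinate-wise $\min$ and $\max$) to four nonnegative functions $f_1, f_2, f_3, f_4$ whose integrals equal the four $\en$-values. The naive indicator choice $f_i := \mathbf 1_{E_{X-A'-C'}}$ does not satisfy the pointwise AD hypothesis $f_1(\omega) \cdot f_2(\omega') \le f_3(\omega \vee \omega') \cdot f_4(\omega \wedge \omega')$: for example, if $x \in A \sm B$ and $y \in X \sm (A \cup B)$ with $x \prec y$, then $E_{X-A-C}$ imposes no constraint on $(\omega_x, \omega_y)$, yet $x, y \in X - V$ requires $(\omega \vee \omega')_x \le (\omega \vee \omega')_y$. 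The fix, as in \cite{CP-multi}, is to enlarge the product space by introducing auxiliary coordinates that encode the orderings on the partition cells $A \cap B$, $A \sm B$, $B \sm A$ (and similarly for $C \cap D$, $C \sm D$, $D \sm C$) separately, and to replace the crude indicators by more refined compatibility weights on this enlarged lattice.

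The main obstacle is to verify the refined pointwise AD hypothesis, which reduces to a finite case analysis over covers $x \prec y$ appearing on the right-hand side. The lower/upper ideal structure combined with the disjointness $A \cap C = B \cap D = \varnothing$ are exactly what makes this analysis tractable: they force many cross-cell configurations (e.g.\ $y \in A$, $x \prec y$ with $x \notin A$) to be vacuous via the ideal property. Once the pointwise hypothesis is verified, AD immediately delivers the product inequality on integrals, which translates into Theorem~\ref{t:LP-dual}.
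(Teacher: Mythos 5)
You have correctly observed that Theorem~\ref{t:LP-dual} coincides verbatim with Theorem~\ref{t:Fish-CP} and that the survey attributes both to an Ahlswede--Daykin argument (a generalization thereof, in fact, per the remark after Theorem~\ref{t:main-OP-multi}). Your probabilistic recast via $\Omega = [0,1]^X$, the identification of the auxiliary "twisted" lattice on pairs of disjoint lower/upper ideals, and your observation that the naive indicator choice violates the pointwise AD hypothesis are all correct and well in line with the intended strategy.

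However, what you have written is a plan, not a proof, and the gap is precisely where the substance lies. Two things are missing and cannot be waved through. First, you never actually specify the four functions $f_1, \ldots, f_4$ on the (enlarged) distributive lattice: saying "introduce auxiliary coordinates encoding the orderings on the partition cells $A\cap B$, $A\sm B$, $B\sm A$, $\ldots$" and "replace the crude indicators by more refined compatibility weights" is a paraphrase of the fact that \emph{some} construction exists in \cite{CP-multi}, not a construction. Without explicit functions one cannot even state the pointwise inequality to be checked. Second, and relatedly, you explicitly name the verification of the AD hypothesis as "the main obstacle" and then do not carry it out. Since this verification is the entire mathematical content of the proof --- everything else (the probabilistic reformulation, the lattice $\Lc$, the identification of $X-V$ and $X-W$ with meet and join) is routine bookkeeping --- the proposal does not constitute a proof. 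Note also that the lattice $\Lc$ you introduce is an organizing device that explains the \emph{shape} of the inequality; the AD inequality must be applied on $[0,1]^X$ (or an enlargement/discretization thereof), so identifying $\Lc$ does not by itself advance the argument.
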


Fishburn's inequality \eqref{eq:Fish-ineq} is a special case of the theorem when \ts $C=D= \emp$.
Curiously, we are able to prove both multivariate analogues in this setting:

\begin{thm}[{\rm \cite[Thms~4.9~and~4.10]{CP-multi}}{}] \label{t:main-OP-multi}
Let \ts $\cP=(X,\prec)$ \ts be a finite poset. Let \ts $A,B \subseteq X$ \ts be lower ideals,
and let \ts $C,D \subseteq X$ \ts be upper ideals of~$\cP$, such that \.
$A \cap C = B \cap D = \varnothing$.  Fix and integer \ts $t \ge 1$.
Then:
\begin{equation}\label{eq:LP-dual-bq}
\Om_\bq(X-V,t) \.\cdot\. \Om_\bq(X-W,t) \ \geqslant_\bq \ \Om_\bq(X-A-C,t) \.\cdot\. \Om_\bq(X-B-D,t)\ts,
\end{equation}
where \. $V:=(A \cap B) \cup (C \cup D)$ \. and \. $W:=(A \cup B) \cup (C \cap D)$,
and the inequality holds coefficient-wise as a polynomial in \. $\bq=(q_1,q_2,\ldots)$.
Similarly, we have:
\begin{equation}\label{eq:LP-dual-bz}
		\Phi_\bz(X-V,t) \.\cdot\. \Phi_\bz(X-W,t) \ \geqslant_\bz \ \Phi_\bz(X-A-C,t) \.\cdot\. \Phi_\bz(X-B-D,t)\ts,
\end{equation}
and the inequality holds coefficient-wise as a polynomial in \. $\bz=(z_0,z_1,\ldots)$.
\end{thm}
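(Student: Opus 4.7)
The plan is to deduce both inequalities in Theorem~\ref{t:main-OP-multi} from a multivariate version of the Ahlswede--Daykin four functions inequality applied to the distributive lattice $\PP(P,t)$ of $P$-partitions $h : X \to \{0,1,\ldots,t\}$, equipped with pointwise meet $\wedge$ and join $\vee$. First I would reinterpret each of the four quantities as a weighted sum over this lattice with boundary conditions imposed by the four ideals: since $A$ is a lower ideal, $C$ is an upper ideal, and $A \cap C = \emptyset$, every $P$-partition of $P|_{X-A-C}$ with values in $[0,t]$ extends uniquely to an order-preserving map $h : X \to [0,t]$ satisfying $h|_A \equiv 0$ and $h|_C \equiv t$. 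Let $\Fc_1, \Fc_2, \Fc_3, \Fc_4 \subseteq \PP(P,t)$ denote the four subsets cut out by the boundary conditions associated with $\{A,C\}$, $\{B,D\}$, $\{A \cap B,\, C \cup D\}$, and $\{A \cup B,\, C \cap D\}$ respectively.

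The key combinatorial step is to verify the lattice closure relation
\[
h_1 \in \Fc_1,\ h_2 \in \Fc_2 \ \Longrightarrow\ h_1 \vee h_2 \in \Fc_3 \ \text{ and }\ h_1 \wedge h_2 \in \Fc_4.
\]
This follows directly from the identities $\min\{0,0\}=\max\{0,0\}=0$ and $\min\{t,t\}=\max\{t,t\}=t$ together with elementary set theory: $h_1|_A \equiv h_2|_B \equiv 0$ forces $(h_1 \vee h_2)|_{A \cap B} \equiv 0$ and $(h_1 \wedge h_2)|_{A \cup B} \equiv 0$, and symmetrically for $C,D$ with value $t$. The disjointness hypotheses $A \cap C = B \cap D = \emptyset$ ensure there is no incompatibility among the imposed boundary values.

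The second key observation is that the monomial weight $w_\bq(h) := \prod_i q_i^{h(x_i)}$ is exactly log-modular on pairs: from $a+b = \max(a,b) + \min(a,b)$ applied at each $x \in X$, we obtain
\[
w_\bq(h_1)\,w_\bq(h_2) \ = \ w_\bq(h_1 \vee h_2)\,w_\bq(h_1 \wedge h_2)
\]
for all $h_1,h_2 \in \PP(P,t)$. The same observation, applied to the multiset $\{h_1(x),h_2(x)\}$ which is preserved by the swap $(h_1(x),h_2(x)) \leftrightarrow (h_2(x),h_1(x))$, yields the analogous identity for the weight $\prod_i z_{h(x_i)}$ used to define $\Phi_\bz$. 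Setting $\alpha := w \cdot \one_{\Fc_1}$, $\beta := w \cdot \one_{\Fc_2}$, $\gamma := w \cdot \one_{\Fc_3}$, $\delta := w \cdot \one_{\Fc_4}$ and summing over $\PP(P,t)$ gives exactly the four quantities in \eqref{eq:LP-dual-bq} (resp.\ \eqref{eq:LP-dual-bz}). Combined with the closure relation, the pointwise AD condition $\alpha(h_1)\beta(h_2) \le \gamma(h_1 \vee h_2)\delta(h_1 \wedge h_2)$ holds, with equality whenever the LHS is nonzero.

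The main obstacle is that the standard AD/four functions inequality only delivers a numerical inequality, whereas Theorem~\ref{t:main-OP-multi} demands a coefficient-wise statement in $\bq$ (or $\bz$). I would address this by proving a polynomial-coefficient version of AD in the spirit of \cite{CP-multi}: stratify by the total exponent vector $\bk = (k_1,\ldots,k_n)$ and, for each such $\bk$, show that the number of pairs $(h_1,h_2) \in \Fc_1 \times \Fc_2$ with $h_1(x_i)+h_2(x_i)=k_i$ for all $i$ is at most the number of pairs $(h_3,h_4) \in \Fc_3 \times \Fc_4$ with $h_3(x_i)+h_4(x_i)=k_i$ for all $i$. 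The map $(h_1,h_2) \mapsto (h_1 \vee h_2,\, h_1 \wedge h_2)$ preserves the exponent vector but is not injective (fibers are nontrivial at coordinates $x \notin A \cup B \cup C \cup D$), so the argument cannot conclude by a naive injection. The remedy is to run the AD proof layer-by-layer inside this stratified framework, applying the scalar AD inequality within each exponent-vector slice where the relevant weights become positive constants, thereby exporting a single scalar comparison to a coefficient-wise one. The $\Phi_\bz$ version is then obtained by the substitution \eqref{eq:OP-rK-exp} together with the multiset-preservation already noted; equivalently, one reruns the argument with $w(h) := \prod_i z_{h(x_i)}$, whose log-modularity requires the stronger pointwise multiset identity rather than merely the sum identity.
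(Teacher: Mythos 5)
Your reduction to the distributive lattice $\PP(P,t)$ with boundary conditions $h|_A\equiv 0$, $h|_C\equiv t$ (and similarly for the other three quantities), the lattice-closure verification $h_1\in\Fc_1,\ h_2\in\Fc_2\Rightarrow h_1\vee h_2\in\Fc_3,\ h_1\wedge h_2\in\Fc_4$, and the exact log-modularity of the monomial weights $w_\bq$ and $w_\bz$ are all correct, and this is precisely how the paper proceeds: everything is funneled into the multivariate $\bq$-AD inequality of \cite[Thm~6.1]{CP-multi}.

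The gap is in your proof of that coefficient-wise AD inequality. Stratifying by the exponent vector $\bk$ and ``applying the scalar AD inequality within each exponent-vector slice'' does not work as stated: a slice $\{(h_1,h_2):h_1+h_2=\bk\}$ is not a lattice, so there is no AD structure to invoke on it, and, as you yourself observe, the map $(h_1,h_2)\mapsto(h_1\vee h_2,h_1\wedge h_2)$ is not injective on it, so no naive counting comparison closes. More fundamentally, one cannot simply ``run the AD proof layer-by-layer'': the two-element base case of the scalar AD argument is settled by a cross-multiplication and division step (establishing $\alpha(0)\beta(1)+\alpha(1)\beta(0)\le\gamma(0)\delta(1)+\gamma(1)\delta(0)$ after multiplying through by $\gamma(1)\delta(0)$), and division does not transfer to coefficient-wise inequalities between polynomials with nonnegative coefficients. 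What actually rescues the polynomial version --- and what you mention in passing but never exploit --- is that your monomial weights make the pointwise AD condition hold with \emph{equality} on its support. In the base case the four monomial equalities then force $\gamma(1)\delta(0)=\gamma(0)\delta(1)$ outright whenever all four cross quantities are nonzero, because $\bigl(\gamma(1)\delta(0)\bigr)^2=\alpha(0)\beta(0)\,\alpha(1)\beta(1)=\gamma(0)\delta(0)\,\gamma(1)\delta(1)$, so the cross-term inequality closes without any division. This exact-equality mechanism, carried through the usual Boolean-lattice induction, is the content of the multivariate $\bq$-AD inequality you need; your outline must either prove that statement or cite it as a black box rather than gesture at the scalar argument.
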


The proof of Theorems~\ref{t:LP-dual} and~\ref{t:main-OP-multi} also
uses generalizations of the AD~inequality (see~$\S$\ref{ss:proof-FKG}),
and thus fundamentally non-injective.  This leaves open whether Lam--Pylyavskyy
injective arguments can be modified to answer the following:

\begin{question}\label{q:LP-SP}
Is the deficit of inequalities \eqref{eq:LP-dual-bq} and \eqref{eq:LP-dual-bz} in \ts $\SP$?
\end{question}



\medskip

\section{Correlation inequalities}\label{s:corr}

\subsection{GYY inequality} \label{ss:power-GYY}
Let \ts $P=\rC + \rC'$ \ts be a disjoint sum of two chains with \ts $\ell$ \ts
and \ts $(n-\ell)$ \ts elements, respectively, where
\. $\rC = \{u_1\prec \ldots \prec u_\ell\}$ \. and
\. $\rC'=\{v_1\prec \ldots \prec v_{n-\ell}\}$.  

Denote \ts $\La:= [\ell]\times [n-\ell]$.
For all \ts $S\subseteq \La$, let \.
$A_{S} \ts := \ts (X, \prec_S)$ \. be a poset with the relations
\ts $u_i \prec_S v_j$ \ts for all \ts $(i,j) \in S$.
Note that posets \ts $A_S\ts, A_{T}$ \ts are consistent
with each other and with~$P$, for all $S, T\subseteq \La$.


\begin{thm}[{\rm \defn{Graham--Yao--Yao inequality}~\cite[Thm~1]{GYY80}}{}]\label{t:GYY}
Let \ts $S, T\subseteq \La$. Then:
	\begin{equation}\label{eq:GYY}\tag{GYY}
	e(P\cap A_S \cap  A_T) \, e(P)  \ \geq \  e(P\cap A_S)  \, e(P\cap A_T)\ts.
	\end{equation}
Additionally, 
the defect of this inequality is in~$\ts \SP$.
\end{thm}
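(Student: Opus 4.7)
The plan is to encode linear extensions of $P = \rC + \rC'$ as elements of a distributive lattice and then invoke the FKG inequality. First, I would identify $\Ec(P)$ with the set $L$ of weakly-increasing integer sequences $\mathbf{k} = (k_1,\ldots,k_{n-\ell})$ with $0 \le k_j \le \ell$ via the bijection $f \mapsto \mathbf{k}$ where $k_j := |\{i \in [\ell] : f(u_i) < f(v_j)\}|$ counts the elements of~$\rC$ preceding~$v_j$ in the shuffle. The set $L$ forms a distributive lattice under coordinatewise order, with meet and join given by componentwise $\min$ and $\max$; these operations preserve weak monotonicity. The key point is that each relation $f(u_i) < f(v_j)$ translates to $k_j \ge i$, so under this bijection $\Ec(P \cap A_S)$ corresponds to
\[
\mathcal{U}_S \ := \ \bigl\{\mathbf{k} \in L \ : \ k_j \ge i \text{ for all } (i,j) \in S\bigr\},
\]
which is an up-set in~$L$. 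The same holds for $\mathcal{U}_T$, and $\mathcal{U}_S \cap \mathcal{U}_T$ corresponds to $\Ec(P \cap A_S \cap A_T)$.

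Second, I would apply the FKG inequality on $L$ with the uniform probability measure, which is trivially log-supermodular (being constant). For any two up-sets $\mathcal{U}, \mathcal{V} \subseteq L$, FKG yields
\[
|\mathcal{U} \cap \mathcal{V}| \cdot |L| \ \ge \ |\mathcal{U}| \cdot |\mathcal{V}|.
\]
Specializing to $\mathcal{U} = \mathcal{U}_S$ and $\mathcal{V} = \mathcal{U}_T$, and using $|L| = e(P)$, yields \eqref{eq:GYY} after translation.

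For the claim that the defect lies in~$\SP$, the main obstacle is that the standard FKG/AD proof is inductive rather than directly injective. My plan would be to construct an explicit injection
\[
\Ec(P \cap A_S) \ \times \ \Ec(P \cap A_T) \ \hookrightarrow \ \Ec(P \cap A_S \cap A_T) \ \times \ \Ec(P),
\]
built from the sorting map $(\mathbf{k}, \mathbf{k}') \mapsto (\mathbf{k} \vee \mathbf{k}', \mathbf{k} \wedge \mathbf{k}')$, which sends the domain into the correct target since $\mathbf{k} \vee \mathbf{k}' \in \mathcal{U}_S \cap \mathcal{U}_T$ whenever $\mathbf{k} \in \mathcal{U}_S$ and $\mathbf{k}' \in \mathcal{U}_T$. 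The hard part is that this sorting map is not injective on coordinates $j$ where $k_j \ne k'_j$; to recover $(\mathbf{k}, \mathbf{k}')$ one must append a canonical record of the swap pattern, and this decoration must be designed so that the combined injection has a polynomial-time recognizable image whose complement realizes the defect as a counting problem in~$\SP$. I would adapt the Sidorenko-style injective arguments from \cite{CPP-effective} and \cite{GG22}; the delicate point is that arbitrary swap patterns can destroy weak-monotonicity of the resulting sequences, so the admissible decorations satisfy nontrivial compatibility conditions that must be shown polynomial-time checkable.
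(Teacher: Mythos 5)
Your FKG argument for \eqref{eq:GYY} is correct, and it is not a new route: it is precisely the Kleitman--Shearer~\cite{KS81} and Shepp~\cite{She80} proof (the encoding of a shuffle of two chains as a weakly increasing sequence, the observation that $\Ec(P\cap A_S)$ becomes an up-set in a finite distributive lattice, and FKG with uniform measure). The paper, however, attributes the \emph{original} proof of \eqref{eq:GYY} to a lattice-path argument of Graham--Yao--Yao, and it is that argument --- not FKG --- which the paper invokes to justify the second clause (``the defect is in $\SP$''); FKG on its own is a black box and gives no combinatorial interpretation of the defect.

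This is where your proposal has a genuine gap. Your plan for the $\SP$ clause is to upgrade the map $(\mathbf{k},\mathbf{k}')\mapsto(\mathbf{k}\vee\mathbf{k}',\,\mathbf{k}\wedge\mathbf{k}')$ to an injection by ``appending a canonical record of the swap pattern,'' but there is nowhere for that record to live: both output coordinates are already completely determined by the input, and the target $\Ec(P\cap A_S\cap A_T)\times\Ec(P)$ has no spare slot. Appending a decoration would inject into a strictly larger set, which proves nothing. The non-injectivity is also more severe than your phrasing suggests --- it is not merely ``on coordinates $j$ where $k_j\neq k'_j$.'' Already for $\mathbf{m}=(1,2)$ and $\mathbf{m}'=(0,1)$ all four swap subsets of $\{1,2\}$ produce weakly monotone preimage pairs, so the monotonicity constraint alone does not collapse the fiber. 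What is actually needed is a \emph{different} map --- one that resolves the sort coordinate by coordinate, using the local slack between $\mathbf{m}$ and $\mathbf{m}'$ to re-encode the swap choices into the unconstrained second output in a monotonicity-preserving way. This is the content of the GYY lattice-path construction, and it is not a routine adaptation of the Sidorenko-flow injections from \cite{CPP-effective,GG22}, which address a structurally different inequality. As written, the $\SP$ part of your proposal is a plan with its central step missing.
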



The theorem was originally proved by Graham, Yao and Yao in~\cite{GYY80} using
a lattice paths argument (see~$\S$\ref{ss:proof-lattice}).  A proof using the
FKG inequality was given in \cite{KS81}, and soon after in \cite{She80},
see the generalization below.


\begin{rem}\label{r:GYY-probab}
The result simplifies in a probabilistic setting.
Denote by \ts $\cA_S\subseteq \Ec(P)$ \ts the event that a linear
extension \ts $f\in \Ec(P)$ \ts satisfies relations in~$\prec_S$\ts.
Then \eqref{eq:GYY} can be rewritten as a positive correlation:
\begin{equation}\label{eq:GYY-probab}
	\Pb(\cA_S \cap  \cA_T) \ \geq \  \Pb(\cA_S) \, \Pb(\cA_T)\ts,
\end{equation}
where \ts $\Pb$ \ts is the uniform measure on \ts $\Ec(P)$.  Note that
\ts $\Pb(\cA_S) =0$ \ts if \ts $\cA_S$ \ts and \ts $P$ \ts are
inconsistent.
\end{rem}

\smallskip

\subsection{Shepp's inequality} \label{ss:ineq-Shepp}
Let \ts $P = Q + Q'$, where \ts $Q=(U,\prec)$ \ts and \ts $Q=(V,\prec')$.
Denote by \. $U=\{u_1,\ldots,u_\ell\}$ \ts and \ts $V=\{v_1,\ldots,v_{n-\ell}\}$ \.
the elements in these two posets.


\begin{thm}[{\rm \defn{Shepp's inequality}~\cite[Thm~2]{She80}}{}]\label{t:ineq-Shepp}
Let \ts $S, T\subseteq \La$. Then:
	\begin{equation}\label{eq:GYY-Shepp}
	e(P\cap A_S \cap  A_T) \, e(P)  \ \geq \  e(P\cap A_S)  \, e(P\cap A_T)\ts.
	\end{equation}
Similarly,
	\begin{equation}\label{eq:GYY-Shepp-dual}
	e(P\cap A_S \cap  A_T^\ast) \, e(P)  \ \leq \  e(P\cap A_S)  \, e(P\cap A_T^\ast)\ts,
	\end{equation}
where we use the notation \ts $e(P\cap R)=0$ \ts if posets $P$ and~$R$ are not consistent.
\end{thm}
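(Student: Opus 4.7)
The plan is to convert both inequalities into correlation statements on the order polytope $\cO_P$ and then apply the FKG inequality after a suitable affine change of coordinates. By Stanley's volume identity~\eqref{eq:two-poset}, the normalized Lebesgue measure on $\cO_P$ models a uniformly random linear extension of~$P$: for each $S \subseteq \La$, the event $\cA_S := \{x \in \cO_P : x_{u_i} \le x_{v_j} \text{ for all } (i,j) \in S\}$ has probability $e(P \cap A_S)/e(P)$, with the convention $e(P \cap R) := 0$ when $P$ and $R$ are inconsistent (and analogously for $\cA_T$ and for the dual event $\cA_T^\ast$ corresponding to $A_T^\ast$). Thus both \eqref{eq:GYY-Shepp} and \eqref{eq:GYY-Shepp-dual} reduce, after clearing the denominator $e(P)^2$, to positive (resp.\ negative) correlation of pairs of such events.

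The key trick is the affine involution $\phi : [0,1]^n \to [0,1]^n$ defined by $y_u := x_u$ for $u \in U$ and $y_v := 1 - x_v$ for $v \in V$. Because $P = Q + Q'$ is a disjoint sum, the constraints defining $\cO_P$ live entirely inside $U$ or entirely inside $V$, and under $\phi$ the $V$-constraints flip to those of the dual poset $(Q')^\ast$. Consequently $\phi(\cO_P) = \cO_{Q + (Q')^\ast}$, which is a distributive sublattice of $[0,1]^n$ under coordinatewise $\min$ and $\max$. Moreover, the comparison $x_{u_i} \le x_{v_j}$ transforms to $y_{u_i} + y_{v_j} \le 1$, a \emph{decreasing} event in the $y$-variables, while the dual comparison $x_{u_i} \ge x_{v_j}$ transforms to $y_{u_i} + y_{v_j} \ge 1$, an \emph{increasing} event. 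Hence, after applying $\phi$, the events $\cA_S$ and $\cA_T$ are both decreasing, whereas $\cA_S$ and $\cA_T^\ast$ have opposite monotonicity with respect to the lattice order.

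We now invoke the FKG inequality on $\cO_{Q + (Q')^\ast}$ with the restricted Lebesgue measure. Since the indicator of any order polytope is log-supermodular on $[0,1]^n$ (sublattice indicators always are) and Lebesgue measure is itself log-supermodular as a product measure, the restricted measure satisfies the FKG hypothesis. Two decreasing events are then positively correlated, yielding~\eqref{eq:GYY-Shepp}. One decreasing and one increasing event are negatively correlated, yielding~\eqref{eq:GYY-Shepp-dual}.

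The main obstacle is careful bookkeeping around the change of variables: one must verify that $\phi$ maps $\cO_P$ exactly onto $\cO_{Q + (Q')^\ast}$ and that each of the three families of events transforms to an indicator monotone in the correct direction of the coordinatewise order. Once this is in place the FKG application is routine. Note that this approach is inherently non-injective, so, in contrast to Theorem~\ref{t:GYY}, it says nothing about whether the defects of \eqref{eq:GYY-Shepp} and \eqref{eq:GYY-Shepp-dual} lie in~$\SP$; that question would require a different, bijective, approach.
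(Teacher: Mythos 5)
Your argument is correct and is essentially Shepp's original one: the decisive step, applying FKG after the coordinate flip $y_v = 1-x_v$ on the second summand (so that the comparison constraints $x_{u_i}\le x_{v_j}$ become monotone events on the sublattice $\phi(\cO_P)=\cO_{Q+(Q')^\ast}$), is exactly Shepp's trick. The only difference in presentation is that Shepp works discretely with order-preserving maps $g:X\to[t]$ (the order-polynomial version, Theorem~\ref{t:ineq-Shepp-OP}) and then lets $t\to\infty$ via~\eqref{eq:OP-asy}, whereas you run the same FKG argument directly on Lebesgue measure over the order polytope, i.e.\ the continuous limit of his lattice of maps; your closing remark that this yields no $\SP$ membership for the defect is exactly the content of Open Problem~\ref{op:ineq-Shepp}.
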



This inequality was conjectured in \cite[p.~252]{GYY80}, which also mentioned that
it becomes false if \ts $P$ \ts has even one relation of the form \ts $u_i \prec v_j$.
In a note added in proof, the authors wrote that Theorem~\ref{t:ineq-Shepp} was
proved by Shepp~\cite{She80} using  ``an ingenious application of the FKG inequalities''
\cite[p.~258]{GYY80}.

\begin{op}\label{op:ineq-Shepp}
Prove of disprove:  \. the defect of inequality \eqref{eq:GYY-Shepp} is in~$\SP$.
\end{op}

If true, this would extend the second part of Theorem~\ref{t:GYY}.


\begin{thm}[{\rm \defn{Shepp's inequality for order polynomials}~\cite[Eq.~(2.12)]{She80}}{}]\label{t:ineq-Shepp-OP}
Let \ts $S, T\subseteq \La$. Then, for every integer \ts $t\ge 1$, we have:
\begin{equation}\label{eq:GYY-Shepp-OP}
	\Omega(P\cap A_S \cap  A_T, t) \. \cdot \. \Omega(P,t)  \ \geq \  \Omega(P\cap A_S\ts, t)  \. \cdot \. \Omega(P\cap A_T,t)\ts.
	\end{equation}
Similarly,
\begin{equation}\label{eq:GYY-Shepp-OP-dual}
	\Omega(P\cap A_S \cap  A_T^\ast, t) \. \cdot \. \Omega(P,t)  \ \leq \  \Omega(P\cap A_S\ts, t)  \. \cdot \. \Omega(P\cap A_T,t)\ts,
	\end{equation}
where we use the notation \ts $\Omega(P\cap R,t)=0$ \ts if posets $P$ and~$R$ are not consistent.
\end{thm}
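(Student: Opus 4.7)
The plan is to deduce both inequalities \eqref{eq:GYY-Shepp-OP} and \eqref{eq:GYY-Shepp-OP-dual} from a single FKG-type correlation argument on the lattice of $P$-partitions. First I would reformulate the statement probabilistically: let $\Pb$ denote the uniform measure on $\PP(P,t)$ and, for each $R\subseteq \La$, set
\[
\cA_R \, := \, \bigl\{\ts g\in \PP(P,t) \,:\, g(u_i)\le g(v_j) \ \text{for all} \ (i,j)\in R\.\bigr\},
\]
with $\cA_R^\ast$ defined by reversing these inequalities. Dividing by $\Omega(P,t)^2$ rewrites the two inequalities as
\[
\Pb(\cA_S\cap \cA_T) \. \ge \. \Pb(\cA_S)\. \Pb(\cA_T) \quad\text{and}\quad \Pb(\cA_S\cap \cA_T^\ast) \. \le \. \Pb(\cA_S)\. \Pb(\cA_T^\ast).
\]

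Next I would exploit the disjoint-sum structure. Since $P=Q+Q'$, a $P$-partition factors as an independent pair $(g_1,g_2)\in \PP(Q,t)\times \PP(Q',t)$, and each factor is closed under pointwise $\min$ and $\max$ in $\nn^U$, respectively $\nn^V$. Hence $\PP(P,t)$ is a sublattice of $\nn^U\times \nn^V$ and the uniform measure on it is a \emph{product measure} which is log-supermodular for the coordinatewise order.

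The crucial move is to reflect the order on the $U$-coordinates: declare $(g_1,g_2)\preccurlyeq^\star (g_1',g_2')$ iff $g_1\ge g_1'$ pointwise on $U$ and $g_2\le g_2'$ pointwise on $V$. Because the measure factors across $U$ and $V$, reversing the $U$-coordinates preserves log-supermodularity. With this new order both events $\cA_S$ and $\cA_T$ become upper sets --- decreasing $g(u_i)$ or increasing $g(v_j)$ can only reinforce a constraint $g(u_i)\le g(v_j)$ --- so the FKG inequality yields \eqref{eq:GYY-Shepp-OP}. For \eqref{eq:GYY-Shepp-OP-dual}, the event $\cA_T^\ast$ is instead a \emph{lower} set under $\preccurlyeq^\star$, and FKG applied to an up-set and a down-set reverses the correlation.

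The main obstacle is choosing the correct lattice order: no plain coordinatewise order makes both $\cA_S$ and $\cA_T$ monotone in the same sense, and one genuinely needs $P=Q+Q'$ so that the ambient measure factorizes --- this is what lets us reflect the $U$-coordinates while retaining log-supermodularity. It is also exactly why, as observed in \cite{GYY80}, Shepp's inequality breaks down the moment $P$ contains even a single relation $u_i\prec v_j$, since then the measure on $\PP(P,t)$ no longer splits as a product and the reflection destroys log-supermodularity.
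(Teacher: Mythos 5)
Your argument is precisely Shepp's original FKG proof, which is what the paper cites (Theorems~\ref{t:ineq-Shepp-OP} and~\ref{t:ineq-Shepp-OP-q} are proved via the FKG/$q$-FKG inequality, see~$\S$\ref{ss:proof-FKG}): you realize $\PP(P,t)\cong\PP(Q,t)\times\PP(Q',t)$ as a distributive lattice, reverse the order on the $U$-component so that each $\cA_S$ becomes an up-set and each $\cA_T^\ast$ a down-set, and apply FKG to the uniform measure. The proof is correct, including your observation that the coordinate reversal only keeps $\PP(P,t)$ a sublattice because $P$ has no $U$--$V$ relations, which is exactly why the inequality fails once such a relation is added.
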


By~\eqref{eq:OP-asy}, this theorem implies Shepp's inequality (Theorem~\ref{t:ineq-Shepp}).
The following $q$-analogue is the most general result in this direction.

\begin{thm}[{\rm \defn{$q$-analogue of Shepp's inequality} \cite[Thm~5.4]{CPP-effective}}{}]\label{t:ineq-Shepp-OP-q}
Let \ts $S, T\subseteq \La$. Then we have:
\begin{equation}\label{eq:GYY-Shepp-OP-q}
	\Omega_q(P\cap A_S \cap  A_T) \. \cdot \. \Omega_q(P)  \ \geqslant_q \  \Omega_q(P\cap A_S)  \. \cdot \. \Omega_q(P\cap A_T)\ts,
	\end{equation}
where the inequality holds coefficient-wise as a polynomial in~$\ts q$. More generally, for every integer \ts $t\ge 1$, we have:
\begin{equation}\label{eq:GYY-Shepp-OP-q-t}
	\Omega_q(P\cap A_S \cap  A_T, t) \. \cdot \. \Omega_q(P,t)  \ \geqslant_q \  \Omega_q(P\cap A_S\ts, t)  \. \cdot \. \Omega_q(P\cap A_T,t)\ts.
	\end{equation}
\end{thm}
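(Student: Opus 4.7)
The plan is to mimic Shepp's FKG-based proof of Theorem~\ref{t:ineq-Shepp}, but applied to $q$-weighted indicator functions on the distributive lattice of $P$-partitions, so as to yield a \emph{coefficient-wise} inequality in~$q$.  Since $P=Q+Q'$ is a disjoint sum, the set $L:=\PP(P,t)$ factors as $\PP(Q,t)\times\PP(Q',t)$; I would endow $L$ with the \emph{flipped} partial order $(h_1^U,h_1^V)\preceq(h_2^U,h_2^V)$ iff $h_1^U\ge h_2^U$ and $h_1^V\le h_2^V$ pointwise.  Under this order $L$ is a distributive lattice with
$$
h_1\vee h_2 \,=\, (h_1^U\wedge h_2^U,\, h_1^V\vee h_2^V), \qquad h_1\wedge h_2 \,=\, (h_1^U\vee h_2^U,\, h_1^V\wedge h_2^V),
$$
where the right-hand sides use pointwise min/max.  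The point of the flip is twofold: first, each constraint $h^U(u_i)\le h^V(v_j)$ defining $A_S$ becomes monotone, so the set $C_S:=\{h\in L:h\text{ respects }A_S\}$ is an up-set (and similarly for $C_T$); second, the weight $|h|:=\sum_x h(x)$ is automatically modular under meet/join, i.e. $|h_1\vee h_2|+|h_1\wedge h_2|=|h_1|+|h_2|$, hence $q^{|h|}$ is a log-modular ``measure'' on~$L$.

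Given this setup, I would define four nonnegative-coefficient polynomial-valued functions on $L$,
$$
\alpha(h):=\one_{C_S}(h)\,q^{|h|}, \ \ \beta(h):=\one_{C_T}(h)\,q^{|h|}, \ \ \gamma(h):=\one_{C_S\cap C_T}(h)\,q^{|h|}, \ \ \delta(h):=q^{|h|},
$$
so that $\sum_L\alpha=\Om_q(P\cap A_S,t)$, $\sum_L\beta=\Om_q(P\cap A_T,t)$, $\sum_L\gamma=\Om_q(P\cap A_S\cap A_T,t)$, and $\sum_L\delta=\Om_q(P,t)$.  I would then verify the pointwise Ahlswede--Daykin condition
$$
\alpha(h_1)\ts\beta(h_2) \,\leqslant_q\, \gamma(h_1\vee h_2)\ts\delta(h_1\wedge h_2) \qquad \text{for all } h_1,h_2\in L.
$$
The condition is automatic if $\alpha(h_1)=0$ or $\beta(h_2)=0$; otherwise $h_1\in C_S$ and $h_2\in C_T$, and since both are up-sets, $h_1\vee h_2\in C_S\cap C_T$, while log-modularity makes the two sides coincide as the monomial $q^{|h_1|+|h_2|}$.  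Invoking the AD inequality in its polynomial-valued form would then produce~\eqref{eq:GYY-Shepp-OP-q-t}; the $t$-free inequality~\eqref{eq:GYY-Shepp-OP-q} follows by letting $t\to\infty$, since each coefficient of $\Om_q(P\cap R,t)$ stabilizes once $t$ exceeds its degree.

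The main obstacle is to establish the AD inequality in the polynomial semiring $\nn[q]$ with coefficient-wise order, rather than only for real numbers.  One route is to check that the classical inductive reduction to the $2$-element Boolean lattice goes through in any ordered commutative semiring; the $2$-element case then amounts to the identity
$$
\Big(\sum fg\ts\mu\Big)\Big(\sum\mu\Big) - \Big(\sum f\mu\Big)\Big(\sum g\mu\Big) \,=\, \bigl(f(1)-f(0)\bigr)\bigl(g(1)-g(0)\bigr)\ts\mu(0)\mu(1),
$$
whose right-hand side has nonnegative $q$-coefficients precisely because $C_S,C_T$ are up-sets (so $f(1)-f(0),g(1)-g(0)\in\{0,1\}$) and $\mu(h)=q^{|h|}$ is log-modular (so $\mu(0),\mu(1)$ are monomials).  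Alternatively, one can invoke the polynomial-valued AD framework of~\cite{CP-multi}, where exactly this kind of coefficient-wise FKG argument is developed in order to prove the multivariate log-concavity in Theorem~\ref{t:basic-OP-bq} and the multivariate Fishburn-type inequality in Theorem~\ref{t:main-OP-multi}.
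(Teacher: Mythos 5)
Your proposal is essentially the paper's own argument: apply a coefficient-wise $q$-FKG/$q$-AD inequality to Shepp's flipped product lattice $\PP(Q,t)\times\PP(Q',t)$ of bounded $P$-partitions, using the log-modular weight $q^{|h|}$ and the observation that $C_S$ and $C_T$ are up-sets, then pass to the $t\to\infty$ limit coefficient by coefficient --- exactly the route the survey itself summarizes as an application of Bj\"orner's $q$-FKG inequality~\cite{Bjo11}. The one caveat is that your first suggested way of establishing the coefficient-wise AD/FKG step --- running the classical inductive proof inside the semiring $\nn[q]$ --- is shakier than you indicate, since the standard reduction to the two-element case uses division once the lattice is a proper sublattice of $\{0,1\}^m$; the safe path is the second route you already name, citing Bj\"orner's $q$-FKG~\cite{Bjo11} or Christofides' $q$-AD~\cite{Chr09}.
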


The negative correlation version can be obtained in a similar way.  Theorem~\ref{t:ineq-Shepp-OP-q}
is proved using Bj\"orner's $q$-FKG inequality (see~$\S$\ref{ss:proof-FKG}).

\smallskip

\subsection{XYZ inequality} \label{ss:ineq-XYZ}
The following result is perhaps the most celebrated correlation inequality
for linear extensions:

\begin{thm}[{\rm \defn{XYZ inequality}~\cite{She82}}{}]\label{t:ineq-XYZ}
Let \ts $P=(X,\prec)$ \ts be a finite poset, and let \ts $x,y,z\in X$ \ts
be incomparable elements.  Denote \.
$P_{xy}:= P\cap \{x\prec y\}$, \. $P_{xz}:= P\cap \{x\prec z\}$ \. and \. $P_{xyz}:= P\cap \{x\prec y, x\prec z\}$.
Then:
\begin{equation}\label{eq:ineq-XYZ}\tag{XYZ}
e(P) \, e(P_{xyz}) \ \ge \
e(P_{xy}) \, e(P_{xz}).
\end{equation}
Moreover, for all integer \ts $t\ge 1$, we have:
\begin{equation}\label{eq:ineq-XYZ-OP}
\Om(P,t)  \. \cdot \.  \Om(P_{xyz},t) \ \ge \
\Om(P_{xy},t)  \. \cdot \.  \Om(P_{xz},t).
\end{equation}
\end{thm}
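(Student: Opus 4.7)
The plan is to prove the order polynomial version \eqref{eq:ineq-XYZ-OP} first: dividing both sides by $t^{2n}$ and letting $t\to\infty$, the asymptotic formula \eqref{eq:OP-asy} recovers the numerical version \eqref{eq:ineq-XYZ}, so I focus on \eqref{eq:ineq-XYZ-OP}. Rephrase probabilistically: let $h$ be uniform on $\PP(P,t)$, so $\Omega(P,t) = |\PP(P,t)|$. The four quantities in \eqref{eq:ineq-XYZ-OP} are $\Omega(P,t)$ and the same count intersected with the events $A := \{h(x) \le h(y)\}$, $B := \{h(x) \le h(z)\}$, and $A \cap B$. Thus \eqref{eq:ineq-XYZ-OP} is equivalent to the positive correlation
$$
\bP(A \cap B) \ \geq \ \bP(A) \. \bP(B).
$$

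The natural tool is the FKG inequality (see $\S$\ref{ss:proof-FKG}) on the distributive lattice $\PP(P,t)$ under pointwise meet and join; the uniform measure is trivially log-supermodular. The main obstacle is that the indicators $\one_A$ and $\one_B$ are not monotone in the pointwise order, since they require $h(x)$ small but $h(y), h(z)$ large. This is what makes the XYZ inequality genuinely deep and distinguishes it from Shepp's inequality (Theorem~\ref{t:ineq-Shepp-OP}).

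My strategy follows Shepp's conditioning approach. Condition on the value $s = h(x)$: the conditional measure $\mu_s$ is uniform on the slice $\PP_s := \{h \in \PP(P,t) : h(x) = s\}$, which is itself a finite distributive lattice under pointwise meet and join (the value $s$ at $x$ is preserved by min and max). Crucially, because $x$ is incomparable to $y$ and $z$, the conditional events $\{h(y) \ge s\}$ and $\{h(z) \ge s\}$ are genuinely increasing on $\PP_s$, so slice-wise FKG yields $\mu_s(A \cap B) \ge \mu_s(A) \. \mu_s(B)$. Writing $p(s) := \bP(h(x) = s)$, $a(s) := \mu_s(A)$, $b(s) := \mu_s(B)$, and summing over $s$ gives
$$
\bP(A \cap B) \ \geq \ \sum_s p(s) \. a(s) \. b(s), \qquad \bP(A) \. \bP(B) \ = \ \Bigl(\sum_s p(s)\. a(s)\Bigr) \Bigl(\sum_s p(s)\. b(s)\Bigr).
$$

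The hard part is closing the gap between these two quantities: by Chebyshev's sum inequality, it suffices to verify that $s \mapsto a(s)$ and $s \mapsto b(s)$ are monotone in the same direction in $s$. Intuitively both should decrease as $s$ grows, since larger $s$ makes the bounds $h(y) \ge s$ and $h(z) \ge s$ harder to satisfy; but the conditional measure $\mu_s$ itself shifts with $s$, so this monotonicity is not automatic. The cleanest route is a second application of FKG on the joint lattice of pairs $\bigl(s, h|_{X - x}\bigr)$ with the order on the first coordinate reversed: in this reversed-$x$ lattice, both $\one_A$ and $\one_B$ become simultaneously increasing, so Chebyshev is replaced by a single FKG step. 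The technical heart of the proof is then to verify that the indicator of $\PP(P,t)$ remains log-supermodular in this modified lattice structure, and this is exactly where the incomparability of $x$ with $y$ and $z$ is used: any $P$-relation involving $x$ couples $x$ only to elements outside $\{y,z\}$, so the reversal of the $x$-coordinate does not conflict with the monotone constraints involving $y$ and~$z$. I expect this log-supermodularity verification, done by checking the defining inequality $\one(h \wedge h') \. \one(h \vee h') \ge \one(h) \. \one(h')$ pair-by-pair on the cover relations of $P$ at $x$, to be the decisive technical step.
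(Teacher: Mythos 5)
Your reduction to \eqref{eq:ineq-XYZ-OP}, the probabilistic reformulation as positive correlation of $A=\{h(x)\le h(y)\}$ and $B=\{h(x)\le h(z)\}$, and the diagnosis that one must modify the lattice structure on $\PP(P,t)$ because $\one_A,\one_B$ are not monotone for the pointwise order, are all on target and match the outline of Shepp's proof. The gap is in the choice of modified lattice. You propose the product of $[t]$ with the order reversed on the $x$-coordinate and $[t]^{X-x}$ unchanged, and you assert that $\PP(P,t)$ is a sublattice because ``any $P$-relation involving $x$ couples $x$ only to elements outside $\{y,z\}$.'' That assertion is false, and the sublattice property fails whenever $x$ is comparable to \emph{any} element $u$ in $P$, regardless of $y,z$. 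Concretely, if $u\prec x$, take $h$ with $h(u)=h(x)=3$ and $h'$ with $h'(u)=h'(x)=1$; both are in $\PP(P,t)$, but the join in your reversed-$x$ lattice has value $\max(3,1)=3$ at $u$ and $\min(3,1)=1$ at $x$, violating $h(u)\le h(x)$. The incomparability of $x$ with $y,z$ plays no role in log-supermodularity; it is only needed to make the theorem non-vacuous (otherwise some $P_{xy},P_{xz}$ fails to be consistent or equals~$P$).

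What is actually needed is Shepp's \emph{shear}, not a plain coordinate reversal. Pass to coordinates $s:=h(x)$ and $d(u):=h(u)-h(x)$ for $u\neq x$, and take the product lattice in which $s$ carries the reversed order and each $d(u)$ carries the direct order; equivalently, in $h$-coordinates, $h\vee h'$ takes value $\min\bigl(h(x),h'(x)\bigr)$ at $x$ and $\min\bigl(h(x),h'(x)\bigr)+\max\bigl(h(u)-h(x),\,h'(u)-h'(x)\bigr)$ at $u\neq x$. In these coordinates every defining constraint of $\PP(P,t)$ becomes pointwise: $u\prec v$ with $u,v\neq x$ gives $d(u)\le d(v)$; $u\prec x$ gives $d(u)\le 0$; $x\prec v$ gives $d(v)\ge 0$; and the box constraints $1\le d(u)+s\le t$ can be checked directly to survive the sheared $\vee,\wedge$. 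Hence $\PP(P,t)$ is a genuine sublattice, the counting measure is log-supermodular, and both $\one_A=\one\{d(y)\ge 0\}$ and $\one_B=\one\{d(z)\ge 0\}$ are increasing, so a single application of FKG yields $\bP(A\cap B)\ge\bP(A)\,\bP(B)$. Your slice-conditioning step is correct but superfluous once the shear is in place, and the proposed Chebyshev alternative is more delicate than it appears: monotonicity of $s\mapsto\mu_s(A)$ is not automatic, since the conditional law changes with~$s$.
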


Inequality~\eqref{eq:ineq-XYZ} was first conjectured by Ivan~Rival
and Bill~Sands \cite[p.~806]{Riv82}.  Shepp's original proof of \eqref{eq:ineq-XYZ}
used the FKG inequality and goes through \eqref{eq:ineq-XYZ-OP}.  It was proved
by Fishburn \cite{Fis84}, that \eqref{eq:ineq-XYZ} is always strict.
A combinatorial (but not fully injective) argument was given in~\cite{BT02}.

\begin{conj}[{\rm \cite[Conj.~6.4]{Pak-OPAC}}{}]\label{conj:XYZ-SP}
The defect of \eqref{eq:ineq-XYZ} is not in~$\SP$.
\end{conj}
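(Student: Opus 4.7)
The plan is to derive the conjecture from a complexity-theoretic hardness assumption, since unconditional non-membership in $\SP$ would resolve major separations. Specifically, we would aim to show that if the defect $d(P,x,y,z) := e(P)\,e(P_{xyz}) - e(P_{xy})\,e(P_{xz})$ lay in $\SP$, then some unlikely collapse within~$\PH$ would follow.

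First, I would study decision versions of~$d$. If $d \in \SP$, then the problem $\big\{(P,x,y,z,k) : d(P,x,y,z) \ge k\big\}$ with $k$ in unary lies in~$\NP$, and the problem $\big\{d(P,x,y,z) = 0\big\}$ lies in~$\coNP$. Since Fishburn's strictness result rules out the naive zero-locus approach used for equality conditions of \eqref{eq:sid} and~\eqref{eq:BW}, the plan is instead to target the \emph{magnitude} of $d$: exhibit a reduction from a problem known to be hard for a class beyond~$\NP$ (say $\Sigmap_2$, or a $\coNP$-oracle version of $\SP$) to the threshold problem for~$d$, thereby contradicting $\Sigmap_2 = \NP$.

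Second, I would try to build gadget posets whose linear extensions encode quantified counting against the triple $(x,y,z)$. The candidate construction is a disjoint-sum of a ``constraint gadget'' based on a Boolean formula $\varphi(u_1,\ldots,u_m)$ with a ``selector gadget'' attached to $x,y,z$, together with a Brightwell--Winkler style reduction producing the individual counts $e(P), e(P_{xy}), e(P_{xz}), e(P_{xyz})$. The idea is to arrange the gadget so that three of the four terms in $d$ are forced into a fixed algebraic relation (up to a known factor) and the fourth term carries a $\Sigmap_2$-hard count; then any $\SP$-witness for $d$ would produce a short certificate of that count. A careful choice of gadget ensures that the two relations $\{x \prec y\}$ and $\{x \prec z\}$ behave as independent quantifiers over the constraint gadget, which is the algebraic skeleton needed to simulate $\forall\exists$-counting.

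The hard part will be the second step. The XYZ defect has very rigid algebraic shape --- a $2 \times 2$ determinant of four specific counts --- and Brightwell--Winkler-type hardness reductions for~$e(P)$ produce one count at a time without preserving the identities needed for this determinant to isolate the desired hard quantity. A secondary obstacle is ruling out that the near-injective argument of~\cite{BT02} admits a refinement to a full $\SP$ witness; this requires a structural, not merely counting, obstruction, plausibly via showing that any such witness would yield an $\FP^\NP$ algorithm for a $\Sigmap_2$-hard problem. Overcoming both obstacles is where I expect essentially all the difficulty to lie; a reasonable route is to combine the four-term determinantal structure with a disjoint-sum on the incomparable triple so that three terms cancel and $d$ reduces, up to polynomial factors, to a single $\#\coNP$-hard quantity in the sense of Toda.
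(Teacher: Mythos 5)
The statement you are addressing is an open \emph{conjecture}: the paper records it verbatim from \cite[Conj.~6.4]{Pak-OPAC}, offers no proof or proof sketch, and the only related fact it provides is Fishburn's strictness theorem. There is therefore no argument in the paper to compare against, and your submission --- which you yourself present as a plan whose crux is still missing --- cannot be credited as a proof of anything.

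The plan also has concrete gaps. First, the unary-threshold route you pivot to (after correctly discarding the zero-locus approach in light of Fishburn's strictness) suffers a scale mismatch: a unary threshold $k$ is $\mathrm{poly}(|P|)$, while the XYZ defect is generically of order $e(P)^2$, which is exponential; so $\{d\ge k\}$ with unary $k$ carries essentially no information beyond the trivial $\{d>0\}$. The binary-threshold version would be the meaningful one, but $d\in\SP$ does not place it in $\NP$ --- that problem sits at the level of $\mathrm{PP}$ --- so the intended contradiction with a $\PH$ non-collapse hypothesis does not go through as stated. Second, the gadget that is supposed to carry the hardness is never built, and the obvious candidate illustrates why the cancellation you gesture at is not automatic: for the disjoint sum $P=Q_0+A_3$ with $\{x,y,z\}$ the antichain, one computes directly that $d=3\binom{n}{3}^2 e(Q_0)^2$, which is trivially in $\SP$, because all four determinant entries are proportional to $e(Q_0)$ and no difference of distinct hard counts ever appears. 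Producing a genuine difference would require $x,y,z$ to interact with the constraint gadget asymmetrically while still preserving the identity that is supposed to cancel three of the four terms, and you give no indication that this is possible. Finally, your aside about the near-injective argument of~\cite{BT02} is not an obstruction to overcome: ruling out a refinement of one particular combinatorial proof would not rule out all $\SP$ witnesses. The conjecture remains open.
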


\smallskip

\begin{rem}\label{r:ineq-XYZ-multiple}
As with other correlation inequalities, the XYZ inequality is easier to understand in terms
of uniform random linear extensions \ts $f \in \Ec(P)$.  For incomparable
elements \ts $u,v\in X$, denote \. $\Ec_{uv} := \Ec(P_{uv})\ssu \Ec(P)$.  Then:
\begin{equation}\label{eq:XYZ-prob}
\bP(\Ec_{xy} \ts \cap \ts \Ec_{xz}) \, \ge \,  \bP(\Ec_{xy})
\. \cdot \. \bP(\Ec_{xz}).
\end{equation}

To simplify the notation, we write \. $\cA \vartriangle \cB$ \. if \.
the events \ts $\cA$ \ts and \ts $\cB$ \ts have \defn{positive correlation}:  \.
$\bP(\cA\cap\cB) \ge \bP(\cA)\cdot \bP(\cB)$.  Similarly,  we write \.
$\cA \, \triangledown \, \cB$ \. if \. these events have \defn{negative correlation}: \.
$\bP(\cA\cap\cB) \le \bP(\cA)\cdot \bP(\cB)$. In this notation, \eqref{eq:XYZ-prob} can be
written as \. $(\Ec_{xy}) \vartriangle (\Ec_{xz})$, or, equivalently, as  \.
$(\Ec_{xy})  \, \triangledown \,  (\Ec_{yz})$.

In \cite{Bri85}, Brightwell described all collections of inequalities for which
we have the analogue of \eqref{eq:XYZ-prob}.  Typical examples include:
$$
(\Ec_{xy} \cap \Ec_{uv}) \vartriangle (\Ec_{xv} \cap \Ec_{uy}), \quad
(\Ec_{xz} \cap \Ec_{yz}) \, \triangledown \, (\Ec_{zu} \cap \Ec_{zv}) \quad
\text{and} \quad (\Ec_{xw} \cap \Ec_{yw}\cap \Ec_{zu} \cap \Ec_{zv}) \, \triangledown \, (\Ec_{wz}).
$$
This resolved Colin McDiarmid's question and negatively resolved a conjecture
of Kahn and Saks, see e.g.~\cite[p.~168]{Win86}. For the GYY~inequality \eqref{eq:GYY},
the corresponding result was obtained by Winkler in \cite{Win83}.  See also
an extensive discussion in \cite{Bri85,Day84,Fis92}.
\end{rem}

\smallskip

\subsection{Average height} \label{ss:ineq-height}
The \defn{average height} \ts of an element $x$ in poset~$P$, is defined as
$$
h(P,x) \, := \, \bE[f(x)] \, = \, \frac{1}{e(P)} \. \sum_{f\ts \in \ts \Ec(P)} f(x)\ts.
$$

\begin{thm}[{\rm Winkler \cite{Win82}}{}]\label{t:ineq-height}
Let \ts $P=(X,\prec)$ \ts be a poset, and let \ts $x,y\in X$ \ts be
incomparable elements.  Then:
\begin{equation}\label{eq:ineq-height}
h(P,x) \, \ge \, h(P_{xy},x).
\end{equation}
Moreover, for all \ts $k\in \nn$, we have:
\begin{equation}\label{eq:ineq-height-maj}
\bP\big(\ts f(x)>k\big) \ \ge \ \bP\big(\ts f(x)>k \, \mid \, f(x) < f(y)\ts \big).
\end{equation}
\end{thm}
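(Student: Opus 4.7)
The plan is to first deduce \eqref{eq:ineq-height} from \eqref{eq:ineq-height-maj}. Since $f(x)$ takes values in $[n]$, we have $h(P,x) = \bE[f(x)] = \sum_{k \ge 0} \bP(f(x) > k)$, and similarly $h(P_{xy},x) = \sum_{k \ge 0} \bP(f(x) > k \mid f(x) < f(y))$, using that $\Ec(P_{xy})$ equals the subset of $\Ec(P)$ on which $f(x) < f(y)$, so the conditional measure coincides with the uniform measure on $\Ec(P_{xy})$. Summing \eqref{eq:ineq-height-maj} over $k \ge 0$ then yields \eqref{eq:ineq-height} term by term.

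For \eqref{eq:ineq-height-maj}, the next step is to recast the inequality as a correlation statement. Since $x \parallel y$ forces $\bP(f(x) < f(y)) + \bP(f(x) > f(y)) = 1$, inequality \eqref{eq:ineq-height-maj} is equivalent to
\[
\bP\big(f(x) > k,\, f(x) > f(y)\big) \.\cdot\. \bP\big(f(x) < f(y)\big) \, \ge \, \bP\big(f(x) > k,\, f(x) < f(y)\big) \.\cdot\. \bP\big(f(x) > f(y)\big),
\]
i.e., to positive correlation of the events $A_k := \{f(x) > k\}$ and $B := \{f(x) > f(y)\}$ under the uniform measure on $\Ec(P)$. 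This places the result in the same family as the Shepp and XYZ inequalities (Theorems~\ref{t:ineq-Shepp} and~\ref{t:ineq-XYZ}).

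To establish this correlation, I would follow Shepp's strategy and apply the FKG inequality inside the order polytope $\cO_P$. A uniform linear extension of $P$ can be sampled by drawing $\alb \in \cO_P$ uniformly and taking the ranks of its coordinates. Neither $A_k$ nor $B$ is globally monotone on $\cO_P$, but both become increasing once we condition on the value of $\al_y$. Thus for each $s \in [0,1]$ one restricts to the slice $\{\al_y = s\} \cap \cO_P$, which remains a distributive lattice under coordinate-wise $\min/\max$; on this slice $B$ becomes $\{\al_x > s\}$, an upper set, and $A_k$ is likewise an upper set in the remaining coordinates. Applying FKG slice-by-slice and integrating over $s$ should deliver the desired positive correlation, analogously to how Theorem~\ref{t:ineq-Shepp-OP-q} specializes to Theorem~\ref{t:ineq-Shepp}.

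The main obstacle is precisely the lack of global monotonicity of $B$ on $\cO_P$: one must verify carefully that each slice $\{\al_y = s\} \cap \cO_P$ is a distributive sublattice once the constraints $\al_u \le s$ for $u \prec y$ and $\al_v \ge s$ for $v \succ y$ are imposed, and that the event $A_k$ genuinely restricts to an upper set on this slice (which requires bookkeeping since the rank of $\al_x$ depends jointly on all coordinates). If one further wanted the defect of \eqref{eq:ineq-height-maj} to lie in $\SP$, a purely injective argument in the spirit of~\cite{GG22} would be needed; this appears more delicate, since a naive swap of $x$ past an intermediate element $z$ with $x \prec z$ (or $z \prec y$) need not produce a valid linear extension.
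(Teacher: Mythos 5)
Your reduction of \eqref{eq:ineq-height} to \eqref{eq:ineq-height-maj} via tail sums, and the reformulation of \eqref{eq:ineq-height-maj} as positive correlation of $A_k := \{f(x)>k\}$ with $B := \{f(x)>f(y)\}$ under the uniform measure on $\Ec(P)$, are both correct. Reaching for FKG through the order-polytope or $P$-partition picture is also the right instinct: the survey attributes the theorem to an application of Shepp's XYZ machinery, which is exactly that.

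The gap is in the slicing step, and it is not the one you flagged. On the slice $\{\al_y = s\}\cap\cO_P$, the event $B$ indeed becomes the upper set $\{\al_x>s\}$, but $A_k$ does \emph{not} become an upper set: since $f(x)$ is the rank of $\al_x$ among all coordinates, i.e.\ $f(x)=\#\{z : \al_z\le\al_x\}$, it is increasing in $\al_x$ but \emph{decreasing} in every other coordinate $\al_z$, $z\ne x$. Raising some $\al_z$ with $z\ne x,y$ can push $\alb$ out of $A_k$, so $A_k$ is genuinely non-monotone on these slices, and FKG cannot be applied to the pair $(A_k,B)$ there. Fixing $\al_y$ tames $B$ but does nothing for $A_k$. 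What Shepp actually does in his XYZ proof is a \emph{change of variables}, not a slice: work with the differences $u_z := \al_z - \al_x$ for $z\ne x$. In these coordinates $B=\{u_y<0\}$ and $A_k=\{\#\{z\ne x : u_z<0\}\ge k\}$ are both lower sets, so positive correlation would follow from FKG; the technical content of Shepp's proof is establishing the FKG property for the transformed lattice and measure (he does this in the discrete $P$-partition setting and then passes to the limit, which is how the survey's \eqref{eq:ineq-XYZ-OP} enters). As a side remark, slicing on $\al_x=r$ instead of $\al_y=s$ does make both $A_k$ and $B$ lower sets on each slice, but then integrating over $r$ is not free: you need the covariance decomposition together with a separate proof that $\bP(A_k\mid\al_x=r)$ and $\bP(B\mid\al_x=r)$ are comonotone in $r$, which is not automatic once $x$ has comparabilities in $P$.
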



The proof follows easily from Shepp's proof of the XYZ inequality (Theorem~\ref{t:ineq-XYZ}).
Now, for a subset \ts $S\subseteq X$, denote
$$
h(S) \, := \, \bE\big[\ts \min_{x\in S} f(x) \ts \big].
$$
In particular, \ts $h(S)=h(P,x)$ \ts for \ts $S=\{x\}$, \ts and \ts $h(X)=1$.

\begin{thm}[{\rm Winkler \cite[Thm~4]{Win82}}{}]\label{t:ineq-height-product}
Let \ts $P=(X,\prec)$ \ts be a poset, and let \ts $U, V\subset X$ \ts
such that \ts $U \cup V = X$. Then:
\begin{equation}\label{eq:ineq-height-product}
h(U) \cdot h(V) \, \le \, h(U) \. + \. h(V).
\end{equation}
\end{thm}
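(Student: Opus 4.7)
The plan is to first use the hypothesis $U \cup V = X$ to rewrite the inequality in a symmetric form. Set $Y_U := \min_{x \in U} f(x)$ and $Y_V := \min_{x \in V} f(x)$, so that $h(U) = \bE[Y_U]$ and $h(V) = \bE[Y_V]$. Since every linear extension $f$ assigns the value $1$ to some element of $X = U \cup V$, we have $\min(Y_U, Y_V) = 1$ pointwise, giving the identity $(Y_U - 1)(Y_V - 1) = 0$. Taking expectations yields $\bE[Y_U Y_V] = h(U) + h(V) - 1$, and together with $\bE[Y_U]\bE[Y_V] = h(U)\ts h(V)$ the desired inequality becomes equivalent to
\begin{equation*}
(h(U) - 1)(h(V) - 1) \ \le \ 1,
\end{equation*}
or equivalently $\operatorname{Cov}(Y_U, Y_V) \ge -1$.

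Next, I would interpret both factors as expected waiting times. Write $\si := f^{-1}$ and let $T_U := Y_U - 1$ be the length of the initial run of $\si$ in $V \smallsetminus U$, and $T_V := Y_V - 1$ the initial run in $U \smallsetminus V$. Since $\si(1)$ cannot lie in both $V \smallsetminus U$ and $U \smallsetminus V$, we have $T_U \ts T_V = 0$ pointwise, and the task reduces to showing $\bE[T_U] \cdot \bE[T_V] \le 1$.

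I would then argue by induction on $n = |X|$, with the base cases $n \le 2$ verified by hand. In the inductive step, condition on $\si(1) \in \min(P)$, which has probability $p_x = e(P - x)/e(P)$: if $\si(1) \in U \cap V$, both $T_U$ and $T_V$ vanish; if $\si(1) = x \in V \smallsetminus U$, then $T_V = 0$ and $T_U = 1 + T_U^{P-x}$, where the right-hand side denotes the same waiting-time statistic in the subposet $(P - x, U, V - x)$; the case $\si(1) \in U \smallsetminus V$ is symmetric. This produces recursive expressions for $\bE[T_U]$ and $\bE[T_V]$ as sums over the disjoint subsets $(V \smallsetminus U) \cap \min(P)$ and $(U \smallsetminus V) \cap \min(P)$.

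The hard part will be combining these two disjoint-indexed sums into the sharp uniform bound $1$. A naive AM-GM step on $\bP(\si(1) \in V \smallsetminus U) \cdot \bP(\si(1) \in U \smallsetminus V) \le \tfrac14$ loses too much, since the conditional waiting times can grow to order $n$. A cleaner alternative would be to mimic Shepp's FKG-based proof of the XYZ inequality (Theorem~\ref{t:ineq-XYZ}), translating $Y_U$ and $Y_V$ into functions on the order polytope $\cO_P$ and exploiting the log-supermodularity of the uniform measure there; the pointwise cancellation $(Y_U - 1)(Y_V - 1) \equiv 0$ is then the combinatorial input that converts the correlation estimate into the sharp constant $1$.
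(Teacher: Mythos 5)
Your reduction to $(h(U)-1)(h(V)-1)\le 1$ via the pointwise identity $(Y_U-1)(Y_V-1)\equiv 0$ is correct and is the right place to start. But the proposal never actually establishes that bound, which is the whole content of the theorem. The induction you set up is abandoned by your own admission, and the FKG/order-polytope fallback is too vague to check; moreover, as gestured at, it points the wrong way. A plain positive-correlation estimate would give $\mathrm{Cov}(Y_U,Y_V)\ge 0$, whereas $Y_U$ and $Y_V$ are in general \emph{negatively} correlated --- they compete for position~$1$, and for a large antichain split into two near-equal halves the covariance tends to $-1$. What is needed is a quantitative lower bound on a negative covariance, and the cancellation identity alone only reformulates the target; it supplies no correlation estimate, sharp or otherwise.

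The missing ingredient is the actual XYZ input. One route that closes the argument: set $a_j:=\bP(Y_U>j)$ and $b_j:=\bP(Y_V>j)$. The XYZ/Shepp machinery (of which Theorem~\ref{t:ineq-height} is the prototype) yields the tail domination $a_{j+1}\le a_j\ts a_1$ for all $j\ge 0$, hence $h(U)-1=\sum_{j\ge 1}a_j\le a_1/(1-a_1)$ when $a_1<1$ (if $a_1=1$ then $b_1=0$, $h(V)=1$, and the claim is immediate), and similarly $h(V)-1\le b_1/(1-b_1)$. Since $\{Y_U>1\}$ and $\{Y_V>1\}$ are disjoint (the bottom element of $f$ lies in $U\cup V=X$), we have $a_1+b_1\le1$, which is equivalent to $a_1 b_1\le(1-a_1)(1-b_1)$, and the two geometric-series bounds then give $(h(U)-1)(h(V)-1)\le 1$. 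Identifying and proving this tail-domination lemma is precisely the ``elementary probabilistic application of the XYZ inequality'' alluded to in the text, and it is exactly what your proposal leaves out.
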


The proof of~\eqref{eq:ineq-height-product}
is another elementary probabilistic application of the XYZ inequality.

\begin{cor}
Let \ts $x,y\in X$ \ts be the only two minimal elements in~$P$.
Then \. $h(P,x) \cdot h(P,y) \le h(P,x) + h(P,y)$.  In particular,
either \ts $h(P,x)\le 2$ \ts or \ts $h(P,y)\le 2$.
\end{cor}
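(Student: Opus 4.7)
The plan is to apply Theorem~\ref{t:ineq-height-product} to a carefully chosen pair of covering sets $U,V$, and then extract the ``either/or'' statement by an elementary algebraic manipulation.

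First, I would set $U := x\!\uparrow$ and $V := y\!\uparrow$. The assumption that $x$ and $y$ are the only minimal elements of $P$ is used here: every $z \in X$ lies above some minimal element, hence above $x$ or above $y$, so $U \cup V = X$ as required by Theorem~\ref{t:ineq-height-product}. Next I would verify that $h(U) = h(P,x)$ and $h(V) = h(P,y)$. Indeed, for any linear extension $f \in \Ec(P)$ and any $z \in x\!\uparrow$ we have $x \preccurlyeq z$, so $f(x) \le f(z)$; consequently $\min_{z \in U} f(z) = f(x)$, and taking expectations gives $h(U) = \bE[f(x)] = h(P,x)$. The identity $h(V) = h(P,y)$ is symmetric.

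With these identifications, Theorem~\ref{t:ineq-height-product} immediately yields
\begin{equation*}
h(P,x) \cdot h(P,y) \, \le \, h(P,x) \. + \. h(P,y),
\end{equation*}
which is the first claim. For the ``in particular'' part, note that both $h(P,x)$ and $h(P,y)$ are positive (in fact $\ge 1$), so dividing the inequality above by $h(P,x) \cdot h(P,y)$ produces
\begin{equation*}
1 \, \le \, \frac{1}{h(P,x)} \. + \. \frac{1}{h(P,y)}\,.
\end{equation*}
If both $h(P,x)>2$ and $h(P,y)>2$ held simultaneously, the right-hand side would be strictly less than $\tfrac{1}{2} + \tfrac{1}{2} = 1$, a contradiction. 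Hence at least one of $h(P,x) \le 2$ or $h(P,y) \le 2$ must hold.

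There is no real obstacle here: the substance of the argument is entirely contained in Theorem~\ref{t:ineq-height-product}, and the corollary is a two-line deduction from it. The only step that requires any care is recognizing the right choice $U = x\!\uparrow$, $V = y\!\uparrow$, and confirming that the assumption on minimal elements is precisely what guarantees $U \cup V = X$.
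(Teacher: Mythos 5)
Your proof matches the paper's own proof exactly: the same choice $U = x\!\uparrow$, $V = y\!\uparrow$, the same verification that $U \cup V = X$ from the two-minimal-elements hypothesis, and the same identifications $h(U)=h(P,x)$, $h(V)=h(P,y)$ before applying Theorem~\ref{t:ineq-height-product}. The elementary algebraic deduction of the ``either/or'' conclusion is correct and merely spells out a step the paper leaves implicit.
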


Note that this is tight, since for \ts $P=C_\ell + C_\ell$ \ts and \ts $x\in \min(P)$,
we have \. $h(P,x) \to 2$ \. as \ts $\ell\to \infty$.

\begin{proof}
Let \ts $U:=x\!\uparrow$ \ts and \ts $V:=y\!\uparrow$.  Observe that \ts $U\cup V=X$.
By definition, we also have \ts $h(P,x) = h(U)$ \ts and \ts $h(P,y) = h(V)$.  The result
now follows from \eqref{eq:ineq-height-product}.
\end{proof}

\smallskip

\subsection{Deletion correlations} \label{ss:ineq-delete}
Let \ts $P=(X,\prec)$ \ts be a poset with \ts $|X|=n$ \ts elements, let \. $z\in X$ \. and \. $a\in [n]$.
Let \. $\Ec(P,z,a)$ \. be the set of linear extensions \. $f\in \Ec(P)$, such that \. $f(z)=a$.
Denote by \. $\aN(P, z,a):=\bigl|\Ec(P,z,a)\bigr|$ \. the number of such linear extensions.

\begin{thm}[{\rm \cite[Thm~6.3]{CP-corr}}{}]\label{t:ineq-delete}
Let \ts $P=(X,\prec)$ \ts be a poset with \ts $|X|=n>2$ \ts elements.
Fix an element \ts $z \in X$ \ts and integer  \ts $1 \leq a \leq n-2$.
Then, for all distinct minimal elements \ts $x,y\in \min(X-z)$, we have:
\begin{equation}\label{eq:ineq-delete}
\aNr(P, z,a) \cdot \aNr(P-x-y, z,a)  \ \leq \ 2 \, \aNr(P-x, z,a) \cdot \aNr(P-y, z,a).
\end{equation}
\end{thm}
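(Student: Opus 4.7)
The plan is to reduce Theorem~\ref{t:ineq-delete} to the unconditioned upper bound in Theorem~\ref{t:Fish-baby}, which is essentially the same inequality without the $f(z)=a$ constraint. My first step would be to decompose each of the four quantities by the set $A \subseteq X - z$ of elements placed before $z$. Writing
\begin{equation*}
\aNr(P,z,a) \, = \, \sum_{A} \, e(P|_A)\cdot e(P|_B),
\end{equation*}
where $A$ ranges over the size-$(a-1)$ lower ideals of $P|_{X-z}$ containing every strict predecessor of $z$ in $P$, and $B := X - z - A$, I obtain parallel decompositions for $\aNr(P-x,z,a)$, $\aNr(P-y,z,a)$, and $\aNr(P-x-y,z,a)$. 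Since $x,y \in \min(X-z)$, each is minimal in whichever of $A$ or $B$ it lies in (and is forced into $B$ when $z \prec x$ or $z \prec y$). Classifying the four configurations of $\{x,y\}$ relative to $\{A,B\}$ and applying Theorem~\ref{t:Fish-baby} to the relevant induced subposet $P|_A$ or $P|_B$ produces a family of pointwise inequalities indexed by $A$.

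The crucial obstacle is that pointwise bounds of the form $\alpha_A\beta_A \le 2\gamma_A\delta_A$ do not imply the summed inequality $\bigl(\sum_A\alpha_A\bigr)\bigl(\sum_A\beta_A\bigr) \le 2\bigl(\sum_A\gamma_A\bigr)\bigl(\sum_A\delta_A\bigr)$; small counterexamples rule this out. Bridging the gap requires a global argument sensitive to the bilinear structure. My preferred route is to construct a direct injection
\begin{equation*}
\Phi \, : \, \Ec(P,z,a)\times\Ec(P-x-y,z,a) \, \hookrightarrow \, \{0,1\}\times\Ec(P-x,z,a)\times\Ec(P-y,z,a),
\end{equation*}
where the bit records the relative order of $f(x)$ and $f(y)$, and the images $(f',g')$ are produced by removing $x$ from $f$ and simultaneously inserting $x$ into $g$ at an appropriate position, together with the symmetric operation involving $y$. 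The delicate step is that the naive deletion of $x$ from $f$ shifts $z$ from position $a$ to position $a-1$ whenever $f(x)<a$, so the construction must include a compensating swap across $z$ in that case in order to land in the target. Verifying injectivity while uniformly handling the subcases $\{f(x),f(y)\}$ situated as $\{<a,<a\}$, $\{<a,>a\}$, $\{>a,<a\}$ and $\{>a,>a\}$ is the main technical hurdle.

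An alternative that avoids the casework above is to encode the constraint $f(z)=a$ by augmenting $P$ to a poset $\tilde P$ obtained by adjoining auxiliary minimal and maximal elements chosen so that $\Ec(\tilde P)$ biject with $\Ec(P,z,a)$ and so that $x,y$ remain minimal in $\tilde P - z$; then Theorem~\ref{t:Fish-baby} applied to $\tilde P$ would directly yield the inequality. The difficulty of this route is designing the augmentation to simultaneously pin the position of $z$ (rather than merely putting a lower bound on it) and preserve minimality of $x$ and $y$. The condition $a \le n-2$ in the statement is exactly what makes both sides nontrivially defined in $\tilde P - x - y$, which suggests this is indeed the natural framework.
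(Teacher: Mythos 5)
Your proposal honestly flags the central difficulty but does not overcome it, and the paper's own proof is of a different character entirely. You correctly observe that the decomposition over the set $A$ of elements placed before $z$ does not interact well with the bilinear structure of the inequality: having $\alpha_A\beta_A\le 2\gamma_A\delta_A$ for each $A$ does not give $\bigl(\sum_A\alpha_A\bigr)\bigl(\sum_A\beta_A\bigr)\le 2\bigl(\sum_A\gamma_A\bigr)\bigl(\sum_A\delta_A\bigr)$. Neither of the two repairs you offer closes that gap. The injection route is only a sketch; you name the ``compensating swap across $z$'' but do not construct it, and no explicit injection is currently known for inequalities of this shape (the paper repeatedly notes, e.g.\ in Conjectures~\ref{conj:XYZ-SP} and~\ref{conj:ineq-Sta-SP}, that the defects of closely related inequalities are believed not to lie in~$\SP$). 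The augmentation route is worse: the constraint $f(z)=a$ is a two-sided equality, and adjoining auxiliary minimal or maximal elements to $P$ can only bound the rank of $z$ from one side relative to the added elements; there is no poset $\tilde P$ for which $\Ec(\tilde P)$ is canonically in bijection with $\Ec(P,z,a)$ while $x$ and $y$ stay minimal in $\tilde P - z$. Moreover, the logical dependence runs the other way in the paper: the upper bound in Theorem~\ref{t:Fish-baby} is deduced from Theorem~\ref{t:ineq-delete} by setting $P\gets P+z$ and $a=1$, so your attempt to derive Theorem~\ref{t:ineq-delete} from Theorem~\ref{t:Fish-baby} would invert that dependence.

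The paper's proof of Theorem~\ref{t:ineq-delete} (from \cite[Thm~6.3]{CP-corr}) uses the combinatorial atlas machinery of~$\S$\ref{ss:proof-LA}: one builds a family of matrices indexed by poset data, proves by induction that they satisfy the hyperbolicity inequality \eqref{eq:Hyp}, and reads the deletion inequality off a suitable instance of \eqref{eq:Hyp}. This is a global linear-algebraic argument, and that globality is exactly what sidesteps the failure of the termwise decomposition you identified. Your instinct that ``something sensitive to the bilinear structure'' is required is right; the missing ingredient is the atlas/hyperbolic-matrix framework rather than an injection or an augmentation.
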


Taking a disjoint sum \ts $P\gets P+ z$ \ts and \ts $a=1$, we get a special case
of the upper bound in \eqref{eq:Fish-baby}, a counterpart to the corollary of
Fishburn's inequality.  The original proof uses the combinatorial atlas
(see~$\S$\ref{ss:proof-LA}). The same holds for results for the rest
of the section.

\smallskip

\subsection{Subsets} \label{ss:ineq-subset}
Fix a nonempty subset $A\subseteq X$.  For a linear extension
\ts $f\in \Ec(P)$, define
\begin{equation}\label{eq:ineq-f-min-def}
f(A) \, := \, \big\{\ts f(x) \, : \, x \in A \ts\big\} \quad \text{and} \quad
f_\mn(A) \, := \, \min \ts f(A).
\end{equation}
Note that \ts $f_\mn(A)=f(x)$ \ts for all singletons \ts $A=\{x\}$, where \ts $x\in X$.
The following result is more natural in probabilistic notation:

\begin{thm}\label{t:ineq-deletion-Sta}
Let \ts $P=(X,\prec)$ \ts be a poset on \ts $|X|\ge 2$ \ts elements. Fix a nonempty
subset \ts $A \subseteq  X$.  Then:
\begin{equation}\label{eq:ineq-Stanley-subset1}
\Pb[1,2\notin f(A)] \ \leq \ \Pb[1\notin f(A)]^2 \qquad \text{and}
\end{equation}
\begin{equation}\label{eq:ineq-Stanley-subset2}
\Pb[1\in f(A)]  \.\cdot \. \Pb[1\notin f(A)] \ \leq \ \Pb[1\notin f(A), 2\in f(A)]\ts,
\end{equation}
\end{thm}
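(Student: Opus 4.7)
The plan is to reduce both inequalities to a single instance of Stanley's inequality (see Section~\ref{s:ineq-Sta}) applied to a suitably augmented poset. First, I would introduce $P^\ast = (X \cup \{z\}, \prec^\ast)$, obtained from $P$ by adjoining a new element $z$ with $z \prec^\ast x$ for every $x \in A$ and $z \parallel y$ for every $y \in X \sm A$. The key observation is that for each integer $k \ge 1$, deletion of $z$ followed by relabeling yields a bijection
\[
\bigl\{g \in \Ec(P^\ast) \,:\, g(z) = k\bigr\} \ \longleftrightarrow \ \bigl\{f \in \Ec(P) \,:\, f_\mn(A) \ge k\bigr\},
\]
since $z$ is minimal in $P^\ast$ and the added relations $z \prec^\ast x$ force $f(x) \ge k$ for every $x \in A$. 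Consequently,
\[
\aNr(P^\ast, z, k) \ = \ e(P) \cdot \Pb\bigl[f_\mn(A) \ge k\bigr], \qquad \aNr(P^\ast, z, 1) \ = \ e(P).
\]

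Second, I would translate the four probabilities in~\eqref{eq:ineq-Stanley-subset1} and~\eqref{eq:ineq-Stanley-subset2} into expressions in $\aNr(P^\ast, z, 1)$, $\aNr(P^\ast, z, 2)$, and $\aNr(P^\ast, z, 3)$, using the identity $\Pb[f_\mn(A) = k] = \Pb[f_\mn(A) \ge k] - \Pb[f_\mn(A) \ge k + 1]$. A short algebraic manipulation then shows that, after clearing denominators, both~\eqref{eq:ineq-Stanley-subset1} and~\eqref{eq:ineq-Stanley-subset2} are equivalent to the single inequality
\[
\aNr(P^\ast, z, 2)^2 \ \ge \ \aNr(P^\ast, z, 1) \cdot \aNr(P^\ast, z, 3),
\]
which is Stanley's log-concavity inequality for $P^\ast$ applied to element $z$ and position $a = 2$. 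The choice $a = 2$ is admissible because $|P^\ast| = n + 1 \ge 3$ by the hypothesis $|X| \ge 2$.

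The main obstacle is not Stanley's inequality itself---which is assumed as an earlier result in the survey---but rather spotting the right augmentation and checking the bijection above; everything downstream is routine bookkeeping. A mildly noteworthy consequence of the reduction is that the two stated inequalities are in fact equivalent to each other. Degenerate cases, such as $\min(P) \subseteq A$ (which forces $\aNr(P^\ast, z, 2) = 0$), cause no trouble, since Stanley's inequality holds unconditionally.
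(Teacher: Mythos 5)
Your reduction is correct, and it works cleanly. A quick sanity check: writing $N_k := \aN(\widetilde P, z, k)$ for your augmented poset (I rename it $\widetilde P$ since $P^\ast$ already denotes the dual poset in this survey), the bijection gives $N_k = e(P)\cdot \Pb[f_\mn(A)\ge k]$ and $N_1=e(P)$. Then
$$
\Pb[1,2\notin f(A)]\le \Pb[1\notin f(A)]^2 \ \Longleftrightarrow\ N_1 N_3 \le N_2^2,
$$
while the second inequality rearranges to $(N_1-N_2)N_2 \le N_1(N_2-N_3)$, which cancels to the same $N_1 N_3\le N_2^2$. So both are exactly Stanley's inequality~\eqref{eq:Sta} applied to $(\widetilde P, z, a{=}2)$, with $|\widetilde P|=n+1\ge 3$ ensuring this is meaningful, and the degenerate case $\min(P)\subseteq A$ (forcing $N_2=0$) is absorbed automatically. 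The observation that the two inequalities are literally equivalent is a nice byproduct.

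This is a genuinely different route from what the survey credits. Section~\ref{ss:proof-LA} classifies Theorem~\ref{t:ineq-deletion-Sta} among results proved directly via the \emph{combinatorial atlas} technology (a hyperbolicity argument for an associated family of matrices), and the chain Theorem~\ref{t:ineq-deletion-Sta-arrow} $\Rightarrow$ Corollary~\ref{c:ineq-deletion-one} $\Rightarrow$ Theorem~\ref{t:ineq-deletion-Sta} proceeds through that machinery. Your argument instead reduces the statement to a single invocation of Stanley's inequality on the augmentation $\widetilde P = P + z$ with $z\prec A$, making the theorem a genuine corollary of~\eqref{eq:Sta} rather than a parallel application of the atlas. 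What you lose is the uniformity: the atlas approach also yields the stronger Theorem~\ref{t:ineq-deletion-Sta-arrow} and the covariance inequalities of~$\S$\ref{ss:ineq-cov}, which do not obviously reduce to a single Stanley inequality on an augmented poset. What you gain is transparency, and the reduction makes it clear why Conjecture~\ref{conj:poset-Stanley-subset} (log-concavity of the \emph{point} probabilities $\Pb[f_\mn(A)=k]$) is strictly harder than what is proven here: your bijection identifies $N_k$ with the \emph{tail} probabilities $\Pb[f_\mn(A)\ge k]$, so Stanley applied to $\widetilde P$ gives log-concavity of the tails, not the point masses.
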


We return to this result in~$\S$\ref{ss:ineq-Sta-conj}. The following
corollary is an easy consequence of Theorem~\ref{t:ineq-deletion-Sta}.

\begin{cor}[{\rm \cite[Cor.~1.7]{CP-corr}}{}]\label{c:ineq-deletion-one}
Let \ts $P=(X,\prec)$ \ts be a poset on \ts $|X|\ge 2$ \ts elements, and let \ts $A \subseteq X$ \ts be
a nonempty subset of elements.  Then:
\begin{equation}\label{eq:Stanley-ext3}
\Pb[1,2\in f(A)] \. \cdot \. \Pb[1,2\notin f(A)] \ \le \ \Pb[1\in f(A), 2\notin f(A)]^2.
\end{equation}
\end{cor}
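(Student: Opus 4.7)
The plan is to apply Theorem~\ref{t:ineq-deletion-Sta} twice — once to $A$ itself, and once to the complement $A^c := X \setminus A$ — and then chain the resulting bounds. The key observation making this productive is the trivial identity $\Pb[j \in f(A^c)] = \Pb[j \notin f(A)]$, so the substitution $A \to A^c$ in \eqref{eq:ineq-Stanley-subset1} and \eqref{eq:ineq-Stanley-subset2} produces essentially different inequalities rather than the same one.

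To organize the bookkeeping, I would introduce shorthand: let $p_{ij}$ denote the joint probability, for $i,j \in \{0,1\}$, that position $1$ lies in $f(A)$ iff $i=1$ and position $2$ lies in $f(A)$ iff $j=1$; and set $q_0 := p_{00}+p_{01} = \Pb[1 \notin f(A)]$ and $q_1 := p_{10}+p_{11} = \Pb[1 \in f(A)]$, so $q_0+q_1 = 1$. The target inequality~\eqref{eq:Stanley-ext3} then reads $p_{00}\ts p_{11} \le p_{10}^2$. Applying \eqref{eq:ineq-Stanley-subset2} directly to $A$ gives $q_0 q_1 \le p_{01}$, which via $p_{00} = q_0 - p_{01}$ rearranges to $p_{00} \le q_0^2$. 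Applying the same inequality to $A^c$ (which, under the above identity, swaps the roles of $\in$ and $\notin$) yields $q_0 q_1 \le p_{10}$, and hence $p_{11} \le q_1^2$ after the analogous rearrangement.

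Chaining these three bounds finishes the proof:
\[
p_{00}\ts p_{11} \ \le \ q_0^2\, q_1^2 \ =\ (q_0 q_1)^2 \ \le \ p_{10}^2.
\]

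There is no real obstacle. The only subtlety is the boundary case $A = X$, for which $A^c$ is empty and Theorem~\ref{t:ineq-deletion-Sta} does not formally apply to $A^c$; but in that case $p_{00} = p_{10} = 0$ and the corollary holds trivially.
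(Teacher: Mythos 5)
Your proof is correct and follows essentially the same route as the paper: the paper's argument multiplies \eqref{eq:ineq-Stanley-subset1} for $A$ and $X\sm A$ to get $p_{00}\ts p_{11}\le (q_0 q_1)^2$, then invokes \eqref{eq:ineq-Stanley-subset2} for the complement to bound $q_0 q_1\le p_{10}$, exactly matching your final chain. The one small wrinkle in your write-up, which is correct and worth noting, is that you derive the required instances of \eqref{eq:ineq-Stanley-subset1} from \eqref{eq:ineq-Stanley-subset2} via $p_{00}=q_0-p_{01}$ and $q_0+q_1=1$ rather than citing \eqref{eq:ineq-Stanley-subset1} directly; this shows \eqref{eq:ineq-Stanley-subset1} is in fact a consequence of \eqref{eq:ineq-Stanley-subset2} even though the theorem lists them as separate assertions. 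Your handling of the boundary case $A=X$ is also correct (and is implicitly needed in the paper's version as well, since $X\sm A$ must be nonempty to apply Theorem~\ref{t:ineq-deletion-Sta}).
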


\begin{proof} 
Multiply \eqref{eq:ineq-Stanley-subset1} for subsets $A$ and $X\sm A$.
Then use \eqref{eq:ineq-Stanley-subset2}.
\end{proof}

Note that \ts $A$ \ts is an \emph{arbitrary} \ts nonempty subset of the ground set~$X$.
For a subset \ts $\textrm{W}\subseteq\Ec(P)$, we write
\ts $\aN\big(P,z,a  \. | \. \textrm{W}\big)$ \ts to denote the
number of linear extensions \ts $f\in \aN(P,z,a)$ \ts which satisfy condition~$\textrm{W}$.
The following result is a generalization of Corollary~\ref{c:ineq-deletion-one}.

\begin{thm}[{\rm \cite[Lem.~6.4]{CP-corr}}{}]
\label{t:ineq-deletion-Sta-arrow}
Let \ts $P=(X,\prec)$ \ts be a poset on \ts $|X| = n \ge 3$ \ts elements,
let \ts $z\in X$, \ts $a \in \{3,\ldots, n\}$, and let \ts $A \subseteq X-z$ \ts be
a nonempty subset.  Then:
\begin{equation}\label{eq:ineq-deletion-Sta-arrow}
\aligned
& \aNr\big(P,z,a \. | \.  1\in f(A), \ts 2 \in f(A\!\uparrow)\big)  \. \cdot \.
		\aNr(P,z,a \. | \.   1,2\notin f(A)) \\
& \hskip1.66cm \leq \ \aNr\big(P,z,a \. | \.  1\in f(A), \ts 2\notin f(A)\big)^2.
\endaligned
	\end{equation}
\end{thm}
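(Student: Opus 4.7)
The target inequality
\begin{align*}
& \aNr(P,z,a \mid 1\in f(A),\ 2\in f(A\!\uparrow)) \cdot \aNr(P,z,a \mid 1,2\notin f(A)) \\
& \qquad \le \ \aNr(P,z,a \mid 1\in f(A),\ 2\notin f(A))^2
\end{align*}
is a conditional log-concavity refinement of Corollary~\ref{c:ineq-deletion-one}, enhanced by the marker $f(z)=a$ and by the asymmetric upper-closure condition on the element of value~$2$. My plan is to follow the combinatorial atlas framework of \cite{CP-corr} (the same engine that yields Theorem~\ref{t:ineq-delete}), which is the natural tool for rank-$2$ inequalities of this shape.

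The first step is to pass to a finite-rank quadratic form by conditioning on the pair $(u,v):=(f^{-1}(1),f^{-1}(2))$. Since $a\ge 3$, we have $u,v\neq z$, with $u\in\min(P)$ and $v\in\min(P-u)$. Deleting $u,v$ from $f$ yields a linear extension of $P-u-v$ in which $z$ occupies position $a-2$. Each of the three counts in the statement therefore decomposes as
$$\sum_{(u,v)\in S_{\bu}} \aNr(P-u-v,\,z,\,a-2),$$
where $S_{\bu}$ is an explicit set of admissible ordered pairs $(u,v)$ indexed by the memberships of $u$ in $A$ versus $X\sm A$ and of $v$ in $A$, $A\!\uparrow\sm A$, or $X\sm A\!\uparrow$. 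Writing $M_{uv}:=\aNr(P-u-v,\,z,\,a-2)$ as the entries of a symmetric nonnegative matrix $M$, the proposed inequality takes the shape $\langle \alpha,M\alpha\rangle\,\langle \beta,M\beta\rangle \le \langle \alpha,M\beta\rangle^2$ for suitable indicator vectors $\alpha,\beta$ built from the memberships above.

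The second step is to install $M$ as the top-level matrix of a combinatorial atlas whose deeper levels mirror the deletion recursion~\eqref{eq:basic-recursion}, and to verify that the atlas is \emph{hyperbolic}, \emph{inheritable}, and satisfies the \emph{pullback} condition in the style of \cite{CP-corr}. Once the atlas is shown to be hyperbolic, the claimed inequality becomes the rank-$2$ Alexandrov--Fenchel inequality for $M$ applied to the chosen indicator vectors.

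The main obstacle will lie in the pullback step: one must show that the asymmetry between $A$ and its upper closure $A\!\uparrow$ propagates correctly as the atlas descends via successive deletion of minimal elements. Concretely, after removing a further minimal element $w$ from $P-u-v$, the sets $A$ and $A\!\uparrow$ must translate into compatible sets inside $P-w$, so that $(A\!\uparrow)_{P-w}\sm A$ evolves in a manner that the atlas transitions can absorb while preserving the partition of index pairs used to define $\alpha$ and $\beta$. This is exactly the delicate point in the proof of Theorem~\ref{t:ineq-delete} in \cite{CP-corr}, and I expect the bulk of the work to consist in adapting that verification to accommodate the upper-closure condition specific to \eqref{eq:ineq-deletion-Sta-arrow}; all other pieces of the atlas argument, including the base of the induction for small $|X|$, should go through essentially verbatim.
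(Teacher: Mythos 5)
Your high-level choice of tool is right: the paper indicates (see $\S$\ref{ss:proof-LA}) that Theorem~\ref{t:ineq-deletion-Sta-arrow} is proved in~\cite{CP-corr} via the combinatorial atlas, just as for Theorem~\ref{t:ineq-delete}. But the specific reduction you sketch has a genuine gap, and it is located earlier than where you place it.

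You propose to write the three counts as quadratic forms in a symmetric matrix $M$ with $M_{uv}=\aNr(P-u-v,z,a-2)$, and to realize the target as $\langle\alpha,M\alpha\rangle\,\langle\beta,M\beta\rangle\le\langle\alpha,M\beta\rangle^2$ for ``suitable indicator vectors $\alpha,\beta$ built from the memberships.'' No such indicator vectors exist. To get $\langle\alpha,M\beta\rangle=\aNr(P,z,a\mid 1\in f(A),\,2\notin f(A))$ you are forced to take $\alpha=\mathbf{1}_A$ and $\beta=\mathbf{1}_{X\smallsetminus A}$. But then $\langle\alpha,M\alpha\rangle$ counts extensions with $1\in f(A)$ and $2\in f(A)$, not $2\in f(A\!\uparrow)$. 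Since the paper explicitly observes that
\[
\aNr\big(P,z,a\mid 1\in f(A),\ 2\in f(A)\big)\ \le\ \aNr\big(P,z,a\mid 1\in f(A),\ 2\in f(A\!\uparrow)\big),
\]
your quadratic-form inequality only yields the \emph{weaker} conclusion of Corollary~\ref{c:ineq-deletion-one}, and not \eqref{eq:ineq-deletion-Sta-arrow}. The passage from $A$ to $A\!\uparrow$ in the first factor is the entire content of the generalization, and it cannot be encoded by plugging a single indicator vector twice into a symmetric quadratic form. So the obstacle is not in the pullback/inheritance verification as you suggest, but already at the top-level formulation: you need a different matrix or vector structure (e.g.\ an augmented index set or non-indicator weights tracking which elements of $A\!\uparrow\smallsetminus A$ become accessible after a deletion) before any atlas machinery can engage.

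A secondary issue: $M$ as you define it is not symmetric on the support you actually need. The pair $(u,v)$ must be admissible, meaning $u\in\min(P)$ and $v\in\min(P-u)$; this is not a symmetric condition (take $v$ covering $u$ with $v\notin\min(P)$, so $(u,v)$ is admissible but $(v,u)$ is not), so $M_{uv}$ restricted to admissible pairs is generally asymmetric. This is fixable, but it compounds the evidence that the naive ``two indicator vectors in a symmetric form'' picture is not the right cast of the argument.
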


Taking a disjoint sum \ts $P\gets P+z$ \ts and \ts $a=n$ \ts implies
\eqref{eq:Stanley-ext3}.  Note also that \.
$$\aN\big(P,z,a \. | \.  1\in f(A), \ts 2 \in f(A)\big) \ \le \
\aN\big(P,z,a \. | \.  1\in f(A), \ts 2 \in f(A\!\uparrow)\big),
$$
so \eqref{eq:ineq-deletion-Sta-arrow} gives a stronger inequality.

\smallskip


\subsection{Covariance inequalities} \label{ss:ineq-cov}
The following theorem gives a similar upper bound for the covariances:

\begin{thm}[{\rm \cite[Thm~1.2]{CP-corr}}{}]\label{t:poset-cov}
Let \ts $P=(X,\prec)$ \ts be a finite poset, and let \ts $x,y \in X$ \ts
be fixed poset elements. Then:
\begin{equation}\label{eq:poset-cov}
\frac{\bE[f(x) \ts f(y)]  \. + \. \bE\big[\min\{f(x),f(y)\}\big]}{\bE[f(x)] \.\cdot\. \bE[f(y)]} \ \le \, 2 \ts.
\end{equation}
\end{thm}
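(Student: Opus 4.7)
The plan is to prove Theorem~\ref{t:poset-cov} by induction on $|X|$, using the combinatorial atlas framework developed in \cite{CP-corr} for the related correlation inequalities of Theorems~\ref{t:ineq-delete} and~\ref{t:ineq-deletion-Sta-arrow}.

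First, I would rewrite the claim in terms of the joint counts $N_{ij} := \bigl|\{f \in \Ec(P) : f(x) = i,\, f(y) = j\}\bigr|$ and $e := e(P)$, which turns the inequality into the quadratic form bound
\begin{equation*}
e \cdot \sum_{i,j} \bigl(ij + \min(i,j)\bigr) \. N_{ij} \ \leq \ 2 \Bigl(\sum_i i \sum_j N_{ij}\Bigr) \Bigl(\sum_j j \sum_i N_{ij}\Bigr).
\end{equation*}
Before embarking on the induction, it is worth noting that purely analytic approaches fail: Cauchy--Schwarz combined with the crude variance bound $\text{Var}(f(x)) \leq \bE[f(x)](n - \bE[f(x)])$ controls the covariance of $f(x)$ and $f(y)$ only up to order $n^2/4$, which can exceed $\bE[f(x)]\bE[f(y)]$ when the expectations are small. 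Hence the proof must exploit structural features of linear extensions and cannot rely on moment bounds alone.

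For the inductive step, I would pick $z \in \min(P) \setminus \{x, y\}$, which is possible except when $\{x, y\} = \min(P)$; that degenerate case reduces to a direct symmetric computation. The recursion $e(P) = \sum_{z' \in \min(P)} e(P - z')$ of Proposition~\ref{p:basic-recursion} induces the corresponding recursions on each $N_{ij}$ and on the marginals $\sum_j N_{ij}$, $\sum_i N_{ij}$. The desired inequality for $P$ would then be deduced from the inductive inequalities for the posets $\{P - z'\}_{z' \in \min(P)}$ together with a local quadratic positivity condition.

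The main obstacle lies in this local condition. The right-hand side $2 \bE[f(x)] \bE[f(y)]$ is a product of marginal expectations, which does not factor along the deletion of $z'$: the product couples contributions from different choices of $z'$. The combinatorial atlas framework is designed to handle precisely this kind of coupling, by encoding the whole family of inequalities into a non-negative symmetric matrix $M$ indexed by position pairs $(i,j)$, and reducing the global statement to a $2 \times 2$ hyperbolic inequality on the minors of $M$. The technical heart of the argument will be selecting $M$ so that both the $\bE[f(x) f(y)]$ and $\bE[\min\{f(x), f(y)\}]$ contributions sit coherently in the atlas, and then verifying the local hyperbolicity; this step parallels the arguments in \cite{CP-corr} for Theorem~\ref{t:ineq-delete}. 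The base case $|X| = 2$ is checked directly for the posets $A_2$ and $C_2$.
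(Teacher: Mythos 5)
Your proposal correctly identifies the combinatorial atlas as the tool behind this theorem, which is what the survey indicates (see $\S$\ref{ss:proof-LA}). But there is a genuine gap at exactly the point you flag as ``the technical heart'' of the argument, and I do not think it closes as sketched. The hyperbolicity inequality \eqref{eq:Hyp} that the atlas produces is multiplicative: $\langle \vb, \bM \wb\rangle^2 \geq \langle \vb, \bM \vb\rangle\langle \wb, \bM \wb\rangle$. After clearing denominators, \eqref{eq:poset-cov} is the additive bound $\bE[f(x)f(y)] + \bE[\min\{f(x),f(y)\}] \leq 2\,\bE[f(x)]\,\bE[f(y)]$. An inequality of shape $X + Y \leq 2Z$ does not follow from $Z^2 \geq XY$ alone (take $X=1$, $Y=9$, $Z=3$), so a single application of \eqref{eq:Hyp} cannot deliver the conclusion. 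Moreover, the natural choices do not produce the right quantities: with the symmetrized matrix $M_{ij} := \#\{f : f(x)=i, f(y)=j\} + \#\{f : f(x)=j, f(y)=i\}$ and the test vectors $\vb \equiv 1$, $\wb_j = j$, inequality \eqref{eq:Hyp} yields only $\bigl(\bE[f(x)] + \bE[f(y)]\bigr)^2 \geq 4\,\bE[f(x)f(y)]$, which is weaker than what is needed, and the $\bE[\min\{f(x),f(y)\}]$ term never appears as a quadratic form against such a matrix.

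What the atlas actually delivers in \cite{CP-corr} is a family of correlation and log-concavity statements for positional events (cf.\ Theorems~\ref{t:ineq-deletion-Sta}, \ref{t:ineq-deletion-Sta-arrow} and~\ref{t:ineq-cov-arrow}); the covariance bound is obtained afterwards by a separate algebraic step --- in effect a summation over positions combined with a moment bound for log-concave random variables (see the discussion after Corollary~\ref{c:ineq-second-moment}, where $\bE[Z^2] \leq 2\bE[Z]^2$ for log-concave $Z$ is recorded). Your sketch omits this intermediate layer, so the proposed reduction to a ``$2\times 2$ hyperbolic inequality on the minors of $M$'' does not go through as stated. A smaller point: you treat $\{x,y\} = \min(P)$ as a degenerate case amenable to a ``direct symmetric computation,'' but this hypothesis does not force $|X| = 2$ --- there can be arbitrarily many elements above $x$ and $y$ --- so that branch is not a base case.
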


\smallskip
The following result generalized this to subsets:

\begin{thm}[{\rm \cite[Thm~1.8]{CP-corr}}{}]\label{t:poset-cov-multiple}
Let \ts $P=(X,\prec)$ \ts be a finite poset, and let  \ts $A,B\subseteq X$ \ts be nonempty
subsets. Then:
	\begin{equation}\label{eq:poset-cov-multiple}
		\frac{\Eb\big[f_{\min}(A) \ts  f_{\min}(B) \big] \. + \. \Eb\big[f_{\min}(A\cup B)\big]}{\Eb\big[f_{\min}(A)\big]  \.\cdot\.  \Eb\big[f_{\min}(B)\big]} \ \leq \ 2\ts.
	\end{equation}
\end{thm}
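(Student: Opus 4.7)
The plan is to deduce~\eqref{eq:poset-cov-multiple} directly from the two-element case given by the upper bound in Theorem~\ref{t:Fish-baby}, via an explicit combinatorial construction that replaces each subset $A,B$ with a single new minimal element lying below it. Concretely, given $P = (X, \prec)$ and nonempty $A, B \subseteq X$, define three enlarged posets: let $P^A$ be the poset on $X \cup \{a^*\}$ obtained by adjoining relations $a^* \prec x$ for all $x \in A$ (and making $a^*$ incomparable to $X \sm A$); define $P^B$ analogously with a new element $b^*$; and let $P^{A, B}$ be the poset on $X \cup \{a^*, b^*\}$ combining both sets of new relations, with $a^* \parallel b^*$. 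By construction $a^*, b^*$ are two distinct minimal elements of $P^{A, B}$, and the identifications $P^{A, B} - a^* - b^* = P$, $\, P^{A, B} - a^* = P^B$, $\, P^{A, B} - b^* = P^A$ hold.

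The key step is to establish the counting identities
$$e(P^A) \. = \. e(P) \cdot \Eb[f_{\min}(A)], \qquad e(P^B) \. = \. e(P) \cdot \Eb[f_{\min}(B)],$$
$$e(P^{A, B}) \. = \. e(P) \cdot \Bigl(\Eb[f_{\min}(A) \, f_{\min}(B)] \. + \. \Eb[f_{\min}(A \cup B)]\Bigr).$$
For each $f \in \Ec(P)$, count the linear extensions of the enlarged poset that restrict to $f$ on $X$.  For $P^A$, the element $a^*$ can be inserted into any of the $f_{\min}(A)$ slots strictly before the first $A$-element of $f$, giving $f_{\min}(A)$ extensions.  For $P^{A, B}$, writing $r := f_{\min}(A)$ and $s := f_{\min}(B)$, we insert $a^*$ into one of $r$ valid slots and $b^*$ into one of $s$ valid slots: distinct slots contribute $rs - \min(r,s)$ extensions (one ordering each), while a common slot contributes $\min(r,s)$ choices with two orderings each, for a total of $rs + \min(r,s) = f_{\min}(A) \, f_{\min}(B) + f_{\min}(A \cup B)$ per $f$.

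Applying the upper bound in Theorem~\ref{t:Fish-baby} to $P^{A, B}$ with its two distinct minimal elements $a^*, b^*$ yields
$$\frac{e(P^{A, B}) \cdot e(P)}{e(P^A) \cdot e(P^B)} \, \le \, 2,$$
and substituting the counting identities above recovers exactly \eqref{eq:poset-cov-multiple}.  (Specializing this argument to $A = \{x\}, B = \{y\}$ also yields a proof of Theorem~\ref{t:poset-cov}.)

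The main obstacle I anticipate lies in the combinatorial identity for $e(P^{A, B})$: the $\min(r,s)$ correction arises from insertions of $a^*$ and $b^*$ into a common slot (where both orderings are admissible), and this is precisely what produces the $\Eb[f_{\min}(A \cup B)]$ summand on the left-hand side of \eqref{eq:poset-cov-multiple}.  The whole reduction hinges on this count matching the target expression exactly; had one used, say, $a^* \prec b^*$ or different covering relations for the new elements, the identity would fail.
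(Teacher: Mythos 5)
Your proof is correct. The counting identities check out: inserting $a^*$ into a linear order of $X$ (so that $a^*$ precedes every element of $A$) can be done in exactly $f_{\min}(A)$ ways, and inserting both $a^*$ and $b^*$ gives $rs - \min(r,s)$ ways for distinct slots plus $2\min(r,s)$ for coincident slots, yielding $rs + \min(r,s)$; combined with the observation $\min\{f_{\min}(A), f_{\min}(B)\} = f_{\min}(A \cup B)$, this gives the three displayed identities exactly. Applying the upper bound of Theorem~\ref{t:Fish-baby} to $P^{A,B}$ with the two minimal elements $a^*, b^*$ then yields \eqref{eq:poset-cov-multiple} after cancellation, and all the hypotheses are satisfied: $a^*,b^*$ are distinct minimal elements, $P^{A,B}$ is a legitimate poset (no forced relation between $a^*$ and $b^*$ arises under transitive closure since nothing lies below either), and $|P^{A,B}| = n+2 > 2$ because $A,B$ are nonempty.

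This route differs from the one the survey attributes to the original source. According to Section~\ref{ss:proof-LA}, Theorem~\ref{t:poset-cov-multiple} is proved in~\cite{CP-corr} directly via the combinatorial atlas, as one of the inequalities flowing out of the hyperbolicity condition~\eqref{eq:Hyp}. You instead perform a clean \emph{reduction}: adjoining a single minimal ``ghost'' element below each of $A$ and $B$ converts the weighted expectation inequality into the literal two-minimal-element statement of Theorem~\ref{t:Fish-baby}, treating that theorem as a black box. What this buys is a short, transparent derivation that explains \emph{why} the curious $\Eb[f_{\min}(A\cup B)]$ summand appears (it is precisely the coincident-slot correction in the two-insertion count), and it shows that the seemingly more general Theorems~\ref{t:poset-cov} and~\ref{t:poset-cov-multiple} are in fact formal consequences of the special case~\eqref{eq:Fish-baby}. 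What it does not buy is a truly elementary proof: the upper bound of Theorem~\ref{t:Fish-baby} is itself, per the survey, a special case of the deletion correlation \eqref{eq:ineq-delete}, which is again proved via the combinatorial atlas, so the heavy machinery is still being invoked one level down. It would be worth checking whether the original paper already records this reduction; it is very much in the spirit of the ``adjoin auxiliary elements'' arguments used elsewhere in the survey (e.g.\ in the proof of Theorem~\ref{t:ineq-CPC-cons} in~$\S$\ref{ss:proof-CPC-cons}).
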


\smallskip

Let us emphasize that here \ts $A$ \ts and \ts $B$ \ts are \emph{arbitrary} \ts subsets of the ground set~$X$.
Recall that \ts
${B\!\uparrow} \. := \ts \cup_{b\in B} \, b\!\uparrow$ \. denotes the
\defn{upper closure} of a subset \ts $B\subseteq X$.  The following
is a symmetric generalization of Theorem~\ref{t:ineq-deletion-Sta-arrow} to
two disjoint subsets of minimal elements:

\begin{thm}[{\rm \cite[Thm~1.9]{CP-corr}}{}]
\label{t:ineq-cov-arrow}
Let \ts $P=(X,\prec)$ \ts be a finite poset, and let \ts $A,B \subset \min(P)$ \ts be
disjoint nonempty subsets of minimal elements.  Then:
	\begin{equation}\label{eq:poset-disjoint-logconcave-new}
    \aligned
	& \Pb\big[1\in f(A), \ts 2 \in f(A\!\uparrow)\ts\big] \. \cdot \. \Pb\big[1\in f(B), \ts 2 \in f(B\!\uparrow)\big] \ \leq \ \Pb\big[1\in f(A), 2 \in f(B)\big]^2\ts.
    \endaligned
	\end{equation}
\end{thm}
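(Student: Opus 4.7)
The plan is to recast the statement as an Alexandrov--Fenchel type (reverse Cauchy--Schwarz) inequality for a bilinear form attached to~$P$, and to verify the underlying hyperbolicity via the combinatorial atlas framework reviewed in~$\S$\ref{ss:proof-LA}. This is the same framework underpinning the proofs of Stanley's inequality sketched in Section~\ref{s:proof-crit-simp}. Multiplying through by $e(P)^2$ and writing
\[
	N(S, T) \. := \. \bigl|\bigl\{f \in \Ec(P) \, : \, f^{-1}(1) \in S, \ f^{-1}(2) \in T\bigr\}\bigr|,
\]
the inequality~\eqref{eq:poset-disjoint-logconcave-new} is equivalent to $N(A, A\!\uparrow) \cdot N(B, B\!\uparrow) \. \leq \. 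N(A, B)^2$, which has the shape $\alpha\beta \leq \gamma^2$ characteristic of a hyperbolic quadratic form.

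First I would unravel each count. For distinct $x, y \in X$ with $x \in \min(P)$ and $y \in \min(P-x)$, the number of linear extensions with $f^{-1}(1) = x$ and $f^{-1}(2) = y$ equals $e(P-x-y)$. Assemble these into a matrix $M$ on an index set~$I$ that augments $\min(P)$ by the ``cover coordinates'' $y \in X \sm \min(P)$ whose unique lower cover lies in $\min(P)$. Then $N(A, B) = \mathbf{1}_A^T M \mathbf{1}_B$ with indicator vectors supported on $\min(P)$. A short case analysis shows that $f^{-1}(2) \in A\!\uparrow \cap \min(P-f^{-1}(1))$ forces either $f^{-1}(2) \in A \sm \{f^{-1}(1)\}$, or $f^{-1}(2) \succ f^{-1}(1)$ with $f^{-1}(1)$ its unique lower cover; consequently
\[
	N(A, A\!\uparrow) \. = \. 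N(A,A) \. + \sum_{x \in A} \ \sum_{y \succ x, \ y \in \min(P-x)\sm A} e(P-x-y),
\]
and this is encoded as $\mathbf{v}_A^T M \mathbf{v}_A$ for an augmented indicator $\mathbf{v}_A$ activating both $A \subseteq \min(P)$ and its admissible covers in~$I$; analogously $N(B, B\!\uparrow) = \mathbf{v}_B^T M \mathbf{v}_B$. Since $A \cap B = \varnothing$ and any cover of $a \in A$ fails to be minimal in $P-b$ for $b \in B$, the cross-term reduces cleanly to $\mathbf{v}_A^T M \mathbf{v}_B = \mathbf{1}_A^T M \mathbf{1}_B = N(A, B)$.

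It remains to establish $(\mathbf{v}_A^T M \mathbf{v}_A) \cdot (\mathbf{v}_B^T M \mathbf{v}_B) \leq (\mathbf{v}_A^T M \mathbf{v}_B)^2$. Setting $\tilde M := (M + M^T)/2$, this is the reverse Cauchy--Schwarz inequality for $\tilde M$ on the nonnegative cone containing $\mathbf{v}_A, \mathbf{v}_B$, and it follows once $\tilde M$ is shown to be hyperbolic on the relevant subspace. The combinatorial atlas supplies this step: build a rooted tree whose nodes correspond to posets $P - x_1 - \cdots - x_k$ obtained by iterative deletion of minimal elements, attach an analogous matrix to each node, and propagate hyperbolicity from the leaves (one-element posets, where it is vacuous) up to the root via a local ``pull-back'' atom inequality, in direct analogy with the arguments of Section~\ref{s:proof-crit-simp} for Stanley's inequality.

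The main obstacle is precisely the asymmetry in~\eqref{eq:poset-disjoint-logconcave-new}: the left-hand side involves the upper closures $A\!\uparrow, B\!\uparrow$, whereas the right-hand side involves the bare antichains $A, B$. Consequently the augmented vectors $\mathbf{v}_A, \mathbf{v}_B$ and the bare indicators $\mathbf{1}_A, \mathbf{1}_B$ occupy overlapping but distinct coordinate subspaces of $\mathbb{R}^I$, and one must design $M$ so that a \emph{single} hyperbolic form captures all three evaluations simultaneously. Bookkeeping the cover coordinates and checking that the atomic atlas inequality survives their presence at every node of the deletion tree is where the bulk of the technical work resides; once this is handled, the Alexandrov--Fenchel reverse Cauchy--Schwarz yields~\eqref{eq:poset-disjoint-logconcave-new} immediately.
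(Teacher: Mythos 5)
Your proposal takes essentially the same route as the paper: the survey attributes Theorem~\ref{t:ineq-cov-arrow} to the combinatorial atlas framework of \cite{CP-atlas,CP-corr} (see $\S$\ref{ss:proof-LA}), and your reduction to a reverse Cauchy--Schwarz / hyperbolicity statement for a matrix of counts $N(S,T)$ with cover coordinates is precisely that strategy. (One small correction: the worked-out arguments in $\S$\ref{s:proof-crit-simp} concern the Ma--Shenfeld criticality definitions and the CPC implications, not the atlas; the atlas machinery itself lives in \cite{CP-atlas,CP-corr}.)
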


See also \cite[Thm~1.10]{CP-corr}, for a three element generalization of this inequality.
\smallskip

\subsection{Unique covers} \label{ss:ineq-four-elements}
Let \ts $P=(X,\prec)$ \ts be a poset, and let \ts $x,y\in X$.
Recall that element $y$ \defn{covers} \ts $x$, if \ts $x\prec y$,
and there is no \ts $v\in X$ \ts s.t.\ $x\prec v \prec y$.
For elements \ts $x\prec y$ \ts in~$X$, we say that $y$ is a
\defn{unique cover} \ts of~$x$, if \ts $y$ \ts covers \ts $x$ \ts
and does not cover any other elements in~$X$.

\begin{thm}[{\rm \cite[Cor.~3.10]{CP-corr}}{}]\label{t:ineq-four-elements}
Let \ts $P=(X,\prec)$ \ts be a finite poset, and let \ts $x,y\in \min(P)$ \ts be distinct
minimal elements.  Suppose element \ts $v\in X$ \ts is a unique cover of~$x$, and \ts $w \in X$ \ts
is a  unique cover of~$y$. Then:
\begin{equation}\label{eq:ineq-four-elements}
	  e(P-x-y)^2 \ \geq \ e(P-x-v) \.\cdot \. e(P-y-w).
\end{equation}
\end{thm}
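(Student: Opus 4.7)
The plan is to derive \eqref{eq:ineq-four-elements} as a direct specialization of Theorem~\ref{t:ineq-cov-arrow}, applied to the singleton subsets $A:=\{x\}$ and $B:=\{y\}$ of $\min(P)$. Since $x\ne y$ are both minimal, these are disjoint nonempty subsets of minimal elements, so the hypothesis is satisfied. For this choice, \eqref{eq:poset-disjoint-logconcave-new} reads
\[
\Pb\bigl[f(x)=1,\ f^{-1}(2)\in x\!\uparrow\bigr]\cdot\Pb\bigl[f(y)=1,\ f^{-1}(2)\in y\!\uparrow\bigr] \ \le \ \Pb\bigl[f(x)=1,\ f(y)=2\bigr]^{2},
\]
with $\Pb$ the uniform measure on $\Ec(P)$. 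Multiplying both sides by $e(P)^{2}$ turns each probability into a count of linear extensions of $P$, and I then identify these counts explicitly.

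The right-hand count is $e(P-x-y)^{2}$: placing $x$ at position~$1$ and $y$ at position~$2$ imposes no new constraints beyond those already recorded by $P-x-y$, since $x,y$ are minimal. For each factor on the left, I use the unique-cover hypothesis. If $f(x)=1$ and $u:=f^{-1}(2)\in x\!\uparrow$, then $u\succ x$ and every element strictly below $u$ must sit at position~$1$, forcing $u\!\downarrow=\{u,x\}$. The hypothesis that $v$ is a unique cover of $x$ means $v$ covers $x$ and no other element; combined with the minimality of $x$, every $a\prec v$ must satisfy $a\preccurlyeq x$ and hence $a=x$, so $v\!\downarrow=\{v,x\}$. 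Thus $v$ is one admissible choice for the position-$2$ element, and the map sending $g\in\Ec(P-x-v)$ to the linear extension of $P$ that places $x,v$ at positions $1,2$ and the remaining elements at positions $3,\ldots,n$ in the order prescribed by $g$ is a bijection onto $\{f\in\Ec(P):f(x)=1,\,f(v)=2\}$. Consequently
\[
e(P)\cdot\Pb\bigl[f(x)=1,\ f^{-1}(2)\in x\!\uparrow\bigr] \ \ge \ e(P-x-v),
\]
and symmetrically $e(P)\cdot\Pb[f(y)=1,\,f^{-1}(2)\in y\!\uparrow]\ge e(P-y-w)$. Substituting both lower bounds into the multiplied inequality yields \eqref{eq:ineq-four-elements}.

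The real content of the argument sits inside Theorem~\ref{t:ineq-cov-arrow}, whose proof in~\cite{CP-corr} uses the combinatorial atlas machinery outlined in $\S$\ref{ss:proof-LA}; everything else is bookkeeping. The only subtle point to watch is that, without the unique-cover hypothesis, one would only obtain the weaker inequality
\[
\sum_{u\in U_{x}}e(P-x-u)\cdot\sum_{u'\in U_{y}}e(P-y-u') \ \le \ e(P-x-y)^{2},\qquad U_{z}:=\{u\in X:u\!\downarrow=\{u,z\}\};
\]
uniqueness of the cover is precisely what guarantees $v\in U_{x}$ and $w\in U_{y}$, isolating a single summand on each side and yielding the clean form~\eqref{eq:ineq-four-elements}.
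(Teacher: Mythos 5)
Your argument is correct and follows exactly the route the paper indicates: specialize Theorem~\ref{t:ineq-cov-arrow} to the singletons $A=\{x\}$, $B=\{y\}$, then use the unique-cover hypothesis to single out $e(P-x-v)$ and $e(P-y-w)$ as summands (of the counts on the left of \eqref{eq:poset-disjoint-logconcave-new} after clearing denominators) and drop the rest. One small wording quibble: the displayed sum inequality in your closing remark is not weaker than the target but \emph{stronger} (a bound on a sum of nonnegative terms implies the bound on any single term); the unique-cover hypothesis is exactly what certifies $v\in U_x$ and $w\in U_y$ so that the desired single-term inequality can be peeled off. The mathematics is otherwise sound.
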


This inequality is derived from \eqref{eq:poset-disjoint-logconcave-new} for
\ts $A=\{x\}$ \ts and \ts $B=\{y\}$.  We conclude with the following four element
inequality.

\begin{thm}[{\rm \cite[Cor.~3.11]{CP-corr}}{}]\label{c:ineq-four-elements-three}
Let \ts $P=(X,\prec)$ \ts be a finite poset, and let \ts $x,y,z\in \min(P)$ \ts
be distinct minimal elements.   Suppose element \ts $u\in X$ \ts is a unique
cover of~$z$.  Then:
\begin{equation}\label{eq:ineq-four-elements-three}
	  e(P-u-z) \.\cdot \. e(P-x-y) \ \leq \ 2 \, e(P-x-z) \.\cdot \. e(P-y-z).
\end{equation}
\end{thm}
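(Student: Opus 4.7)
The plan is to derive the inequality from Theorem~\ref{t:ineq-delete} together with an auxiliary ratio comparison that exploits the unique cover hypothesis.

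First I will reinterpret the term $e(P-u-z)$. Since $z$ is minimal in $P$ and $u$'s only predecessor is $z$, every linear extension of $P-u-z$ extends uniquely to a linear extension of $P$ by placing $z$ at position~$1$ and $u$ at position~$2$. Hence $e(P-u-z) = |\{f \in \Ec(P) : f(z) = 1,\, f(u) = 2\}|$, and after dividing the target inequality by $e(P)^2$ it becomes the probabilistic statement $\Pb[f(z)=1, f(u)=2] \cdot \Pb[f(x)=1, f(y)=2] \leq 2\, \Pb[f(x)=1, f(z)=2] \cdot \Pb[f(y)=1, f(z)=2]$, where $\Pb$ is uniform on $\Ec(P)$.

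Next I will invoke Theorem~\ref{t:ineq-delete} applied to $P$ with distinguished element $z$, position $a=1$, and the distinct minimal elements $x, y \in \min(X-z)$. Because $z$ remains minimal in each of $P$, $P-x$, $P-y$, and $P-x-y$, one has $\aN(Q, z, 1) = e(Q-z)$ for every such subposet, so Theorem~\ref{t:ineq-delete} yields $e(P-z) \cdot e(P-x-y-z) \leq 2\, e(P-x-z) \cdot e(P-y-z)$.

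Finally, the goal will be to establish the auxiliary inequality $e(P-u-z) \cdot e(P-x-y) \leq e(P-z) \cdot e(P-x-y-z)$; combining this with the bound above then gives the theorem. After rearrangement, the auxiliary inequality is equivalent to the comparison of first-position probabilities $\Pb_{P-z}[f(u)=1] \leq \Pb_{P-x-y}[f(z)=1]$ between the two induced subposets (noting that $u$ is minimal in $P-z$ by the unique cover hypothesis, and $z$ is minimal in $P-x-y$). The main obstacle will be proving this probability comparison. My plan is to construct an explicit injection from linear extensions of $P-z$ beginning with $u$ to linear extensions of $P-x-y$ beginning with $z$, by removing $x$ and $y$ and prepending $z$; the unique cover structure makes this map well-defined, and the remaining task is to control its multiplicity by a factor matching the ratio $e(P-x-y)/e(P-x-y-z)$. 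A likely alternative is an Ahlswede--Daykin-style argument on the lattice of linear extensions of $P$, in the spirit of the proofs of the other correlation inequalities in Section~\ref{s:corr}.
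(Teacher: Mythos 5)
Your reduction is organized around two facts: an application of Theorem~\ref{t:ineq-delete} to $P$ with $z$ at position $a=1$, which yields
\[
e(P-z)\cdot e(P-x-y-z) \ \le \ 2\, e(P-x-z)\cdot e(P-y-z),
\]
and a proposed auxiliary inequality
\[
e(P-u-z)\cdot e(P-x-y) \ \le \ e(P-z)\cdot e(P-x-y-z),
\]
whose conjunction would give the theorem. The first step is correct (note $z$ is minimal in each of $P$, $P-x$, $P-y$, $P-x-y$, so $\aN(\cdot,z,1)$ indeed equals the linear extension count of the poset with $z$ further deleted, and $x,y\in\min(X-z)$ as required), and your probabilistic recasting of the target, via the swap symmetry $\Pb[f(x)=1,f(y)=2]=\Pb[f(x)=2,f(y)=1]$ for incomparable minimal $x,y$, is also fine.

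The gap is the auxiliary inequality. You have restated it correctly as $\Pb_{P-z}[g(u)=1]\le\Pb_{P-x-y}[g(z)=1]$, but you neither prove it nor cite a result from which it follows, and it is not a special case of anything established in the survey. The ``injection'' you sketch---remove $x,y$ from $g\in\Ec(P-z)$ with $g(u)=1$ and prepend $z$---is well defined but is not injective: it forgets where $x,y$ sat in $g$, and its image is only $\{h\in\Ec(P-x-y):h(z)=1,\,h(u)=2\}$, a strictly smaller set than $\{h:h(z)=1\}$ in general. So the map is a many-to-one surjection onto the wrong target, and ``controlling the multiplicity by the ratio $e(P-x-y)/e(P-x-y-z)$'' is exactly the content of the inequality you are trying to prove; this is circular, not a proof. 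The appeal to an Ahlswede--Daykin argument is also just a pointer. Concretely, the auxiliary inequality compares first-position probabilities of two different elements in two different posets ($P-z$ and $P-x-y$), which is not the shape of FKG/AD-type results, Fishburn's inequality, or any theorem in Sections~\ref{s:Fish}--\ref{s:corr}; attempts to derive it from Theorem~\ref{t:ineq-cov-arrow} with $B=\{x,y\}$ give only $2\Pb[f(z)=1,f(u)=2]\Pb[f(x)=1,f(y)=2]\le(\Pb[f(z)=1,f(x)=2]+\Pb[f(z)=1,f(y)=2])^2$, which by AM--GM is weaker than, not stronger than, the required bound $pq\le 2rs$.

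The paper's actual argument is structurally different and avoids any intermediate of this kind: it invokes a three-element generalization of Theorem~\ref{t:ineq-cov-arrow} (stated as Thm~1.10 of \cite{CP-corr}) directly with the singletons $A=\{z\}$, $B=\{x\}$, $C=\{y\}$, and then uses the elementary bound $\Pb[f(z)=1,f(u)=2]\le\Pb[1\in f(A),2\in f(A\!\uparrow)]$ coming from $u$ being a unique cover of $z$. In your route, the factor of $2$ is supplied by Theorem~\ref{t:ineq-delete}, but all the genuine content of the cover hypothesis is pushed into the unproven auxiliary inequality. As it stands, that inequality is the missing idea; without it the argument does not close.
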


This is a direct corollary of a three element generalization of Theorem~\ref{t:ineq-cov-arrow}
mentioned above.



\medskip

\section{Stanley type inequalities}\label{s:ineq-Sta}

In this section we present a collection of Stanley type inequalities.
In the next section, we discuss various equality conditions for some of
these inequalities.

\subsection{Stanley inequality}\label{ss:ineq-Sta}
Let \ts $P=(X,\prec)$ \ts be a poset with \ts $|X|=n$ \ts elements, let \. $x\in X$ \. and \. $a\in [n]$.
Recall that \.
$\Ec(P,x,a)$ \. denotes the set of linear extensions \. $f\in \Ec(P)$,
such that \. $f(x)=a$, and that \. $\aN(P, x,a):=\bigl|\Ec_{\bz\bc}(P,x,a)\bigr|$.
\defn{Stanley's inequality} \ts states that
\. $\big\{\aN(P, x,a)\big\}$ \. is log-concave:
%


\begin{thm}[{\rm \defn{Stanley's inequality}~\cite[Thm~3.2]{Sta-AF}}{}]\label{t:ineq-Sta}
We have:
\begin{equation}\label{eq:Sta}\tag{Sta}
\aNr(P, x,a)^2 \, \ge \, \aNr(P, x,a+1) \.\cdot \.  \aNr(P, x,a-1).
\end{equation}
\end{thm}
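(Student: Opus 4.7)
The plan is to follow Stanley's original geometric approach via the Alexandrov--Fenchel inequality for mixed volumes of convex bodies. That inequality states: for convex bodies $K_1, K_2, M_3, \ldots, M_n$ in $\rr^n$, one has
\[
V(K_1, K_2, M_3, \ldots, M_n)^2 \, \geq \, V(K_1, K_1, M_3, \ldots, M_n) \cdot V(K_2, K_2, M_3, \ldots, M_n).
\]
Iterating with appropriate choices yields log-concavity of the sequence $V(K^{j}, L^{n-1-j})$ in $j$, where $K^j$ denotes $K$ repeated $j$ times. The whole strategy is to identify $\aN(P,x,a)$ with such a mixed volume (up to a uniform factorial), so that Alexandrov--Fenchel immediately yields \eqref{eq:Sta}.

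First I would set up the combinatorial skeleton. Partition $X - x$ into $A := \{u : u \prec x\}$, $B := \{u : u \succ x\}$, and $C := \{u : u \parallel x\}$. Then $\aN(P,x,a)$ is nonzero only when $|A|+1 \leq a \leq n-|B|$, and for such~$a$ one has
\[
\aNr(P,x,a) \ = \ \sum_{D} \. e\bigl(P|_{A \cup D}\bigr) \. \cdot \. e\bigl(P|_{B \cup (C\setminus D)}\bigr),
\]
where the sum runs over lower ideals $D$ of $P|_C$ with $|D| = a-1-|A|$. Log-concavity of $\aN(P,x,a)$ in $a$ is not at all visible from this formula directly, since each term depends on $a$ in a complicated way and there are subtle cancellations between terms at different values of $a$.

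Next I would construct two convex polytopes $K, L \subset \rr^{n-1}$ built from the order polytope $\cO_P$ by separating the "below $x$" and "above $x$" halves, with elements of $C$ appearing in both bodies through slack coordinates that record whether they lie below or above $x$. If the construction is done correctly, the polynomial $\vol\bigl(\lambda K + \mu L\bigr)$, expanded in the Bernstein form $\sum_a \binom{n-1}{a-1}\, c_a \, \lambda^{a-1} \mu^{n-a}$, should produce exactly $c_a = \aNr(P,x,a)/(n-1)!$ as coefficients, matching the combinatorial decomposition above via the Minkowski--Steiner expansion. Granting this identity, Alexandrov--Fenchel applied to $K$ and $L$ (with $M_i$ chosen as further copies of $K$ or $L$) immediately yields \eqref{eq:Sta}.

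The hardest step will be verifying the mixed-volume identity. Matching the sum over lower ideals $D$ with the coefficients of a single polynomial volume requires polytopes whose geometry mirrors the order-ideal structure of $P|_C$; the natural candidate is a Cayley-type construction interpolating between $\cO_{P|_{A \cup C}}$ and $\cO_{P|_{B \cup C}}$, in which the $C$-coordinates play the dual role of being "order variables" for the subposet $P|_C$ and "partition variables" deciding the side of~$x$. As an alternative route that bypasses convex geometry altogether, the combinatorial atlas method (see $\S$\ref{ss:proof-LA}) reduces \eqref{eq:Sta} to a hyperbolicity inequality for an explicit sequence of nonnegative matrices associated to~$P$, replacing Alexandrov--Fenchel by an elementary eigenvalue argument and offering finer control over the defect.
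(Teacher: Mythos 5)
Your plan follows Stanley's original argument, which is exactly what the survey cites: apply the Alexandrov--Fenchel inequality to sections of~$\cO_P$ (the survey does not re-derive the proof, it only points to~\cite{Sta-AF}). Your combinatorial skeleton is correct. The gap you flag --- actually constructing $K,L$ and proving the mixed-volume identity --- is real, but it closes cleanly and is worth spelling out since you stop short of it. Take
\[
K \ := \ \cO_P \cap \{\alpha_x = 0\}, \qquad L \ := \ \cO_P \cap \{\alpha_x = 1\},
\]
both viewed as subsets of~$\rr^{X - x}$. Then $\cO_P \cap \{\alpha_x = t\} = (1-t)K + tL$ for $0<t<1$: given $\alpha$ in the slice, set $\beta_y := \max\{0, (\alpha_y - t)/(1-t)\}$ and $\gamma_y := \min\{1, \alpha_y/t\}$; both are monotone on~$X - x$, they satisfy the defining constraints of $K$ and~$L$ respectively (in particular $\beta_y = 0$ for $y \prec x$ and $\gamma_y = 1$ for $y \succ x$), and they average back to~$\alpha$, while the reverse containment is just convexity of~$\cO_P$. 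Slicing the orthoscheme triangulation of $\cO_P$ indexed by linear extensions gives
\[
\vol_{n-1}\bigl(\cO_P \cap \{\alpha_x = t\}\bigr) \ = \ \sum_{a} \aN(P,x,a)\, \frac{t^{a-1}(1-t)^{n-a}}{(a-1)!\,(n-a)!}\,,
\]
and comparing with the Minkowski--Steiner expansion of $\vol_{n-1}\bigl((1-t)K + tL\bigr)$ yields $\aN(P,x,a) = (n-1)!\, V(K^{n-a}, L^{a-1})$; Alexandrov--Fenchel then gives~\eqref{eq:Sta}. The degeneracy of $K$ or~$L$ when $x$ has many predecessors or successors correctly reproduces the vanishing conditions of Theorem~\ref{t:Sta-vanish}. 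Your alternative route via the combinatorial atlas is also valid and is carried out in~\cite{CP-atlas}.
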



The unimodality of \. $\big\{\aN(P, x,a)\big\}$ \. was conjectured by
Kislitsyn \cite[$\S$4.4]{Kis68} and later independently by Rivest.
The log-concavity was conjectured by Chung, Fishburn and Graham \cite{CFG},
who established both conjectures for posets of width two.  The authors of \cite{CFG}
called Rivest's conjecture ``tantalizing'' and add a note characterizing
Stanley's then forthcoming proof using the \defng{Alexandrov--Fenchel inequality}
\ts as ``very ingenious'' (see~$\S$\ref{ss:proof-geom}).

\begin{conj}[{\rm \cite[Conj.~6.3]{Pak-OPAC}}{}]\label{conj:ineq-Sta-SP}
The defect of Stanley's inequality~\eqref{eq:Sta} is not in $\ts \SP$.
\end{conj}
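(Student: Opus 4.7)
The plan is to attack Conjecture~\ref{conj:ineq-Sta-SP} via the complexity of the equality verification problem for Stanley's inequality.  Write
\[
D(P,x,a) \, := \, \aNr(P,x,a)^2 \, - \, \aNr(P,x,a+1)\cdot\aNr(P,x,a-1),
\]
so that $D\in\GapP$ and $D\ge 0$ by Theorem~\ref{t:ineq-Sta}.  The crucial observation is the implication \. $D\in\SP \, \Rightarrow \, \{D(P,x,a) \. =^? \. 0\}\in\coNP$, \. since the vanishing of a $\SP$-function is the nonexistence of a witness.  Thus it suffices to show that the Stanley equality problem is $\CEP$-hard:  that would force $\CEP\subseteq\coNP$, collapsing the polynomial hierarchy, and so under the standard conjecture that $\PH$ does not collapse one obtains $D\notin\SP$.

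The core step is a polynomial-time reduction from $\text{\sc C}_\text{\#3SAT}$ to the Stanley equality problem.  Given two $3SAT$ formulas $\varphi$ and $\psi$, I would construct a poset $P=P(\varphi,\psi)$, an element $x\in X$, and an index $a$, all polynomial in $|\varphi|+|\psi|$, such that
\[
\aNr(P,x,a)^2 \. = \. \aNr(P,x,a+1)\cdot\aNr(P,x,a-1) \quad \Longleftrightarrow \quad \#\ts\varphi \. = \. \#\ts\psi.
\]
A natural template is a poset for which the order polytope $\cO_{P-x}$ carries a product or cone structure forcing Alexandrov--Fenchel equality (see~$\S$\ref{ss:proof-geom}); onto such a template I would graft two $\#3SAT$-counting gadgets $Q(\varphi),Q(\psi)$ arranged so that $\aNr(P,x,a-1)$ and $\aNr(P,x,a+1)$ separately recover $M\cdot \#\varphi$ and $M\cdot \#\psi$, while $\aNr(P,x,a)$ recovers $M\cdot \sqrt{\#\varphi\cdot \#\psi}$, for a common ``scaffolding multiplicity'' $M$.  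The equality characterizations from Section~\ref{s:eq-Sta} should provide a starting list of such templates.

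The main obstacle is this gadget construction itself.  Stanley's inequality is strict in generic configurations, and because $D$ is a \emph{nonlinear} combination of three counts, the defect produced by naive insertion of counting gadgets is polluted by cross-terms which do not vanish even when $\#\varphi=\#\psi$.  Achieving the clean factorization above appears to require symmetric product constructions of the form $P \cong \big(Q(\varphi)\times Q(\psi)\big)\oplus T$ with $x$ placed at a critical rank of a scaffolding poset $T$, and balancing $T$ so that no extraneous linear extensions interfere is the delicate part.  A fallback worth pursuing in parallel is to weaken the target from $\CEP$-hardness to $\coNP$-hardness of Stanley equality, which under the assumption $\SP\not\subseteq\FP^{\NP}$ already precludes $D\in\SP$ and is likely more approachable by reducing directly from $3SAT$.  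In either route, the conclusion of the conjecture follows from the complexity implication chain once the hardness of the vanishing problem is in hand.
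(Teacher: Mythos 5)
Your plan is blocked at the very first step: the vanishing problem you propose to prove hard is already known to be in $\poly$. By the Shenfeld--van Handel equality conditions (Theorem~\ref{t:Sta-equality}, combined with the vanishing criterion Theorem~\ref{t:Sta-vanish}), deciding whether
\[
\aNr(P,x,a)^2 \,=\, \aNr(P,x,a+1)\cdot\aNr(P,x,a-1)
\]
is a polynomial-time problem (Corollary~\ref{c:Sta-equality-P}): in the nonvanishing regime the equality holds if and only if a simple comparability condition on $x\!\uparrow$ and $x\!\downarrow$ is satisfied, and the vanishing cases are decided by the Daykin--Daykin criterion. Thus $\{D(P,x,a)=^?0\}\in\poly$, not merely $\coNP$ under a $\SP$-hypothesis. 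Any polynomial-time reduction of $\text{\sc C}_{\text{\#3SAT}}$ --- or even of plain $3$SAT, as in your fallback --- to this problem would therefore put a $\coNP$-hard problem in $\poly$ and collapse $\PH$ \emph{unconditionally}, independently of any assumption on $D$. So no such reduction exists, and the gadget construction you describe, however cleverly balanced, cannot be built.

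This obstruction is precisely what makes Conjecture~\ref{conj:ineq-Sta-SP} difficult: the implication ``$D\in\SP\Rightarrow\{D=0\}\in\coNP$'' carries no information once $\{D=0\}$ is in $\poly$, so one cannot deduce anything about the membership of $D$ in $\SP$ from the complexity of its zero set alone. Your chain of reasoning does work verbatim for the \emph{generalized} Stanley inequality \eqref{eq:Sta-gen} with $k\ge 2$ fixed values, since there the equality verification problem lies outside $\PH$ unless the hierarchy collapses (Theorem~\ref{c:Sta-gen-equality-not-PH}), and that immediately rules out the corresponding defect $\aNr_{\bz\bc}(P,x,a)^2 - \aNr_{\bz\bc}(P,x,a+1)\.\aNr_{\bz\bc}(P,x,a-1)$ from $\SP$. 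But Corollary~\ref{c:Sta1-equality-P} closes this door for $k=0$ and $k=1$. Establishing Conjecture~\ref{conj:ineq-Sta-SP} will require leveraging structure of the \emph{values} of $D$ away from zero, or some entirely different mechanism, rather than hardness of the equality test.
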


In \cite[$\S$9.12]{CPP-effective}, we wrote ``At this point, it is even hard to guess which
way the answer would go. While some of us believe the answer should be negative,
others disagree.''  We have stronger convictions now.

\smallskip

\subsection{Kahn--Saks inequality}\label{ss:ineq-Sta-KS}
Let \ts $P=(X,\prec)$ \ts be a poset with \ts $|X|=n$ \ts elements, let \. $x,y\in X$ \. and \. $a\in [n]$.
Denote by \.
$\cF(P,x,y,a)$ \. the set of linear extensions \. $f\in \Ec(P)$,
such that \. $f(y)-f(x)=a$.  Let \.$\aF(P, x,y,a) := \bigl|\cF(P,x,y,a)\bigr|$\ts.


\begin{thm}[{\rm \defn{Kahn--Saks inequality}~\cite[Thm~2.5]{KS84}}{}]\label{t:ineq-KS}
We have:
\begin{equation}\label{eq:KS}\tag{KS}
\aFr(P, x,y,a)^2 \, \ge \, \aFr(P, x,y,a+1) \.\cdot \.  \aFr(P, x,y,a-1).
\end{equation}
\end{thm}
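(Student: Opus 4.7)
Inequality \eqref{eq:KS} is the natural two-element generalization of Stanley's inequality \eqref{eq:Sta}, and I would approach it via the same geometric route used by Stanley \cite{Sta-AF}, adapted by Kahn and Saks to the present setting: reduce the log-concavity to the Alexandrov--Fenchel inequality for mixed volumes of suitable convex bodies built from the order polytope~$\cO_P$.

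First I would encode $\aF(P,x,y,a)$ geometrically. Using Stanley's triangulation $\cO_P = \bigsqcup_{f \in \Ec(P)} \Delta_f$ into simplices of volume $1/n!$, the ratio $\aF(P,x,y,a)/e(P)$ equals the probability that a uniform random point $\alpha \in \cO_P$ has the coordinates $\alpha_x$ and $\alpha_y$ occupying ranks differing by exactly $a$ among all $n$ coordinates. Slicing $\cO_P$ by fixing $\alpha_x = s$ and $\alpha_y = t$ gives a two-parameter family of $(n-2)$-dimensional polytopes $\Pi(s,t)$, and integrating $\Vol(\Pi(s,t))$ over the appropriate region of $[0,1]^2$ recovers $\aF(P,x,y,a)$ up to an explicit factorial factor.

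Next I would convert this integral expression into a mixed-volume expression to which Alexandrov--Fenchel applies. After integrating out one of the two parameters, the remaining one-parameter family of slices can be realized as a Minkowski interpolation between two extremal convex bodies $K_1, K_2$ in a common ambient space; the coefficients of the resulting volume polynomial are mixed volumes $V(K_1^{\,a}, K_2^{\,m-a})$ for a suitable $m$, and the normalizations can be arranged so that these mixed volumes differ from $\aF(P,x,y,a)$ by a log-concave prefactor such as a product of binomial coefficients. Alexandrov--Fenchel then yields
\[
V(K_1^{\,a}, K_2^{\,m-a})^2 \ \geq \ V(K_1^{\,a+1}, K_2^{\,m-a-1}) \cdot V(K_1^{\,a-1}, K_2^{\,m-a+1}),
\]
and combining this with the log-concavity of the prefactors gives \eqref{eq:KS}.

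The main obstacle is the mixed-volume identification in the previous paragraph. The slice polytopes $\Pi(s,t)$ depend only piecewise-linearly on $(s,t)$ because the combinatorial type of $\Pi(s,t)$ changes whenever $s$ or $t$ crosses a value attained by another coordinate, so a direct projection does not produce a clean Minkowski interpolation. Producing convex bodies $K_1, K_2$ whose mixed volumes match $\aF(P,x,y,a)$ exactly requires the careful choice of parameterization and of Minkowski summands that lies at the technical heart of the Kahn--Saks argument; this is the step where the proof leaves pure enumerative combinatorics for convex geometry, and where most of the work is spent.
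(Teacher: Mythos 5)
Your plan follows the same route the paper attributes to Kahn and Saks (see~$\S$\ref{ss:proof-geom}): realize $\aFr(P,x,y,\cdot)$ via sections of the order polytope, establish a Minkowski interpolation, and apply the Alexandrov--Fenchel inequality. One correction to the obstacle you name in the final paragraph: the slice $\Pi(s,t)=\{\alpha\in\cO_P:\alpha_x=s,\ \alpha_y=t\}$ is \emph{not} merely piecewise-linear in $(s,t)$; for $0\le s\le t\le 1$ it is an exact Minkowski combination $\Pi(s,t)=s\ts A+(t-s)\ts B+(1-t)\ts C$ of the three fixed bodies $A=\Pi(1,1)$, $B=\Pi(0,1)$, $C=\Pi(0,0)$ --- the two-parameter analogue of the one-parameter linearity behind Stanley's proof of~\eqref{eq:Sta} --- so its combinatorial type does \emph{not} change in the interior. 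The real work is thus (i) verifying this three-body linearity, and (ii) passing to a two-body interpolation in $u=t-s$: interpret $\int_0^{1-u}\Vol(\Pi(s,s+u))\,ds$ as the volume of the $(n-1)$-dimensional body $\widetilde L_u=\{(\alpha,s):\alpha\in\Pi(s,s+u)\}$, which one checks satisfies $\widetilde L_u=(1-u)\ts L_0+u\ts L_1$; comparing the mixed-volume expansion with the elementary identity $\Vol(\widetilde L_u)=\sum_a\aFr(P,x,y,a)\,\tfrac{u^{a-1}(1-u)^{n-a}}{(a-1)!\,(n-a)!}$ gives $\aFr(P,x,y,a)=(n-1)!\,V(L_1,\ldots,L_1,L_0,\ldots,L_0)$ with $a-1$ copies of $L_1$ --- so in fact there is no extraneous binomial prefactor to absorb --- and Alexandrov--Fenchel then yields~\eqref{eq:KS}.
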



It is easy to see that \eqref{eq:KS} implies \eqref{eq:Sta} by taking
\ts $x\gets \wh 0$, \ts $y \gets x$, and \ts $a \gets a+1$.
The theorem is proved using the Alexandrov--Fenchel inequality
again (see~$\S$\ref{ss:proof-geom}).

\smallskip

\subsection{$\bq$-Stanley and $\bq$-KS inequalities}\label{ss:ineq-Sta-multi}
Let \ts $P=(X,\prec)$ \ts be a poset of width two with \ts $|X|=n$ \ts elements.
Fix a partition \. $X=\rC\sqcup \rC'$ \. into two chains, where
\. $\rC = \{u_1\prec \ldots \prec u_\ell\}$, and
\. $\rC'=\{v_1\prec \ldots \prec v_{n-\ell}\}$.

Let \. $\bq:= (q_1,\ldots,q_{\ell})$ \. be formal variables.
Define the \defn{$\bq$-weight} of \ts $\aN(P,x,a)$ \ts and \ts
$\aF(P, x,y,a)$ \ts as follows:
$$
\aN_\bq(a) \, := \, \sum_{f\ts\in\ts \Ec(P,x,a)} \, \bq^{f} \qquad
\text{and} \qquad \aF_\bq(a) \, := \, \sum_{f\ts\in\ts \cF(P,x,y,a)} \, \bq^{f},
$$
where \, $\bq^f \, := \,  q_1^{f(u_1)} \cdots q_\ell^{f(u_\ell)}$.

\begin{thm}[{\rm \defn{$\bq$-Stanley inequality}~\cite[Thm~7.1]{CPP-KS}}]\label{t:Sta-q}
In notation above, let \ts $x\in \rC$ \ts and \ts $a\in [n]$.  Then:
\begin{equation}\label{eq:Sta-ineq-bq}
\aNr_\bq(P,x,a)^2 \ \geqslant_\bq \ \aNr_\bq(P,x,a+1) \. \cdot \. \aNr_\bq(P,x,a-1),
\end{equation}
where the inequality between polynomials is coefficient-wise.
\end{thm}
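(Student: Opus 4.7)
The plan is to convert the statement into a lattice-path problem and then establish a weight-preserving injection. First I would encode each linear extension $f \in \Ec(P)$ as a lattice path $\pi$ from $(0,0)$ to $(\ell, n-\ell)$, where the $i$-th right-step corresponds to placing $u_i$ and the $j$-th up-step corresponds to placing $v_j$. The relations in $P$ of the form $u_i \prec v_j$ and $v_j \prec u_i$ translate into a set of forbidden lattice points; let $R \subseteq [0,\ell]\times[0,n-\ell]$ be the region of admissible vertices. For $x = u_r$, the condition $f(x)=a$ pins $\pi$ through the intermediate vertex $(r, a-r)$, and the weight reads $\bq^f = \prod_{i=1}^\ell q_i^{\ts i + h_i(\pi)}$, where $h_i(\pi)$ is the height at which $\pi$ takes its $i$-th right-step.

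Next I would factor
\[
\aNr_\bq(P, x, a) \, = \, A_\bq(r, a-r) \. \cdot \. B_\bq(r, a-r)\ts,
\]
where
\[
A_\bq(r, j) \. := \. \sum_{\pi\ts:\ts (0,0)\to(r,j),\ \pi\subset R}\, \prod_{i=1}^r q_i^{\ts i + h_i(\pi)}\ts,\qquad
B_\bq(r, j) \. := \. \sum_{\pi\ts:\ts (r,j)\to(\ell, n-\ell),\ \pi\subset R}\, \prod_{i=r+1}^{\ell} q_i^{\ts i + h_i(\pi)}.
\]
Crucially, $A_\bq$ uses the variables $\{q_1,\dots,q_r\}$ and $B_\bq$ uses the disjoint set $\{q_{r+1},\dots,q_\ell\}$, so it suffices to establish the two separate coefficient-wise inequalities $A_\bq(r,j)^2 \geqslant_\bq A_\bq(r, j+1)\ts A_\bq(r, j-1)$ and $B_\bq(r,j)^2 \geqslant_\bq B_\bq(r, j+1)\ts B_\bq(r, j-1)$, since multiplication of inequalities between elements of $\nn[\bq]$ preserves the coefficient-wise order.

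The heart of the proof is a weight-preserving injection for $A_\bq$. Given a pair $(\pi_1,\pi_2)$ with $\pi_1:(0,0)\to(r,j+1)$ and $\pi_2:(0,0)\to(r,j-1)$, shift $\pi_2$ up by $(0,1)$ to obtain $\pi_2':(0,1)\to(r,j)$. Since $\pi_1$ starts strictly below $\pi_2'$ but ends strictly above it, the two paths must share a lattice point; let $P$ be the first such meeting. Swap the tails at $P$, then shift the new second path back down by $(0,1)$, producing a pair $(\widetilde\pi_1, \widetilde\pi_2)$ both going from $(0,0)$ to $(r,j)$. The key weight calculation is that for every index $i$,
\[
h_i(\widetilde\pi_1) \. + \. h_i(\widetilde\pi_2) \, = \, h_i(\pi_1) \. + \. h_i(\pi_2)\ts,
\]
because the up-shift of $\pi_2$ adds $1$ to the height of each of its right-steps and the down-shift cancels this contribution on whichever pieces were exchanged, while the piece of $\pi_1$ contributed to $\widetilde\pi_2$ descends by $1$ exactly to compensate. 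Consequently $\bq^{\widetilde\pi_1}\cdot \bq^{\widetilde\pi_2} \, = \, \bq^{\pi_1}\cdot \bq^{\pi_2}$, giving the desired coefficient-wise domination monomial by monomial.

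The main obstacle will be the region-admissibility check: although uncrossing-and-reshifting preserves endpoints and weights at the level of unconstrained paths, one must verify that $(\widetilde\pi_1,\widetilde\pi_2)$ remains inside $R$. For width-two posets the forbidden set consists of corners of the form $(i-1,j)$ and $(i, j-1)$ arising from the poset relations, so $R$ is staircase-shaped and both heads of the swap (up to the meeting point $P$) stay valid for free; however the shifted tails can in principle step onto a forbidden vertex. The remedy is to show that whenever this happens the original pair $(\pi_1,\pi_2)$ was itself inadmissible, which follows from comparing heights at the meeting point $P$ against the defining inequalities of $R$ — an elementary but technical case analysis. The corresponding inequality for $B_\bq$ is proved by an identical argument applied to the time-reversed poset, and combining the two via coefficient-wise multiplication yields \eqref{eq:Sta-ineq-bq}.
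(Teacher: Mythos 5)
Your high-level plan (lattice-path encoding, splitting the weight into head and tail pieces using disjoint variable sets, and a cross-and-swap injection on each piece) is indeed the strategy behind the proof in \cite{CPP-KS}. But the crucial step of your argument — the choice of swapping point and the claim that admissibility failures can only come from inadmissible preimages — is wrong, and this is where the real work of the theorem lives.

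Two concrete problems. First, a minor one: the factorization $\aN_\bq(P,x,a) = A_\bq(r,a-r)\cdot B_\bq(r,a-r)$ with $A_\bq(r,j)$ ranging over \emph{all} paths $(0,0)\to(r,j)$ overcounts, since $f(u_r)=a$ forces the path to arrive at $(r,a-r)$ via the $r$-th right-step; the head should end at $(r-1,a-r)$, with a forced right-step contributing $q_r^a$ and the tail starting at $(r,a-r)$. This indexing slip is fixable.

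The serious gap is the injection. Swapping at the \emph{first} meeting point of $\pi_1$ and $\pi_2'=\pi_2+(0,1)$ does not preserve admissibility, and your proposed remedy — that a failure forces the original pair to be inadmissible — is false. Take $\rC=\{u_1\prec u_2\prec u_3\}$, $\rC'=\{v_1\prec v_2\prec v_3\}$, and add the single cross relation $u_2\prec v_2$; this forces $y\le 1$ whenever $x\le 1$. With $r=3$, $j=2$, take $\pi_1=(0,0),(0,1),(1,1),(2,1),(2,2),(2,3)$ and $\pi_2=(0,0),(0,1),(1,1),(2,1)$, both admissible. Then $\pi_2'$ passes through $(0,1),(0,2),(1,2),(2,2)$, and the \emph{first} meeting with $\pi_1$ is $(0,1)$. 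Swapping tails there makes $\widetilde\pi_1$ pass through the forbidden vertex $(0,2)$, even though $(\pi_1,\pi_2)$ was admissible. The correct swapping point is the \emph{last} meeting $L$: after $L$ the paths $\pi_1$ and $\pi_2'$ are strictly ordered (else they would have to touch again), which pins the swapped pieces strictly between the two original paths and hence inside $R$ — this is precisely the monotonicity argument that your write-up waves away as a ``technical case analysis.'' The last meeting of $\widetilde\pi_1$ and $\widetilde\pi_2+(0,1)$ is again $L$, so invertibility still holds. Once you make both corrections, the proof goes through and is essentially the one in the source.
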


More generally, we have:

\begin{thm}[{\rm \defn{$\bq$-KS inequality}~\cite[Thm~7.2]{CPP-KS}}]\label{t:Sta-KS-q}
In notation above, let \ts $x,y\in \rC$ \ts be distinct elements, and let \ts $a\in [n]$.  Then:
\begin{equation}\label{eq:KS-ineq-bq}
\aFr_\bq(P,x,y,a)^2 \ \geqslant_\bq \ \aFr_\bq(P,x,y,a+1) \. \cdot \. \aFr_\bq(P,x,y,a-1),
\end{equation}
where the inequality between polynomials is coefficient-wise.
\end{thm}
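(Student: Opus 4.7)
The plan is to adapt the combinatorial atlas machinery (which underlies the proofs of the univariate Stanley and Kahn--Saks inequalities for width-two posets, see~$\S$\ref{ss:proof-LA}) to the multivariate $\bq$-weighted setting. Since $P$ has width two, a linear extension $f\in\Ec(P)$ is determined by the tuple $\bigl(f(u_1),\ldots,f(u_\ell)\bigr)$, which traces a monotone lattice path in the staircase region $\Lambda=[\ell]\times[n-\ell]$ carved out by the cross-relations between $C$ and $C'$. The condition $f(y)-f(x)=a$ (with $x=u_i$, $y=u_j$) becomes a fixed-distance constraint on two marked steps of the path. In these coordinates, $\aF_\bq(P,x,y,a)$ is a generating polynomial for such constrained paths, weighted by $q_1^{f(u_1)}\cdots q_\ell^{f(u_\ell)}$.

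The first step is to encode $\aF_\bq(P,x,y,a)$ as a quadratic form on an explicit vector space indexed by ``local states'' along the path (roughly, where it enters and exits the strip $\{f(u_i)=s,\ f(u_j)=s+a\}$). This produces a matrix $M(a)$ whose entries lie in $\nn[\bq]$, and the desired inequality rewrites as a coefficient-wise hyperbolic inequality of Alexandrov--Fenchel type for $M(a)$, namely
\begin{equation*}
\bigl(\bv^{\top}\mts M(a)\ts\wb\bigr)^{2} \ \geqslant_\bq \ \bigl(\bv^{\top}\mts M(a)\ts\bv\bigr)\cdot\bigl(\wb^{\top}\mts M(a)\ts\wb\bigr)
\end{equation*}
for two naturally associated vectors $\bv,\wb$ whose entries are monomials in $\bq$. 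This is exactly the atlas reformulation used in \cite{CPP-KS} for the $q=1$ case, and the point of the reformulation is that it localises the log-concavity to a finite-dimensional statement.

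Next, I would run the atlas induction: peel off a boundary cell of $\Lambda$ (equivalently, a maximal or minimal element of $P$) and reduce the hyperbolic inequality for $M(a)$ to the corresponding inequality for a smaller atlas attached to the poset with that element removed. The univariate inductive step in \cite{CPP-KS} proceeds by writing $M(a)$ as a $2\times 2$ block and checking a positivity condition for the Schur complement; here the same block decomposition goes through, but each scalar entry becomes a polynomial in $\bq$, and the Schur complement identity must be interpreted coefficient-wise. The base case, when only one element remains to peel, is a direct monomial verification.

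The main obstacle is precisely this last point: preserving coefficient-wise $\bq$-positivity under the Schur complement / hyperbolic reduction. Whereas the classical AF inequality allows arbitrary cancellation between terms, the $\bq$-refined statement forbids it, so one must exhibit, for each $\bq$-monomial $\bq^{\bm{\alpha}}$, an explicit injection from the monomial-$\bq^{\bm{\alpha}}$ contributions on the right-hand side of \eqref{eq:KS-ineq-bq} into those on the left. The natural candidate is a $\bq$-equivariant version of the cycle-lemma/path-swap bijection that proves \eqref{eq:KS} for width-two posets: given a pair of linear extensions $(f_1,f_2)\in\cF(P,x,y,a+1)\times\cF(P,x,y,a-1)$, identify a canonical ``discrepancy interval'' between the two lattice paths and swap segments inside it so that the resulting pair $(g_1,g_2)$ lies in $\cF(P,x,y,a)\times\cF(P,x,y,a)$ and the multiset of positions $\{f_1(u_k),f_2(u_k)\}$ is preserved for every $k\in[\ell]$. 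If this swap can be chosen to preserve the $\bq$-content element-by-element in $C$, the theorem follows; the delicate technical check is that the swap respects the poset relations across $C$ and $C'$ while also preserving which index $k$ occupies each swapped position, which is what the atlas framework encodes automatically through the structure of $M(a)$.
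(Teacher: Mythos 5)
Your opening premise is off. The survey explicitly places Theorems~\ref{t:Sta-q} and~\ref{t:Sta-KS-q} under ``lattice paths'' ($\S$\ref{ss:proof-lattice}) and states in $\S$\ref{ss:ineq-Sta-multi} that they are ``proved by an explicit injection'' --- not via the combinatorial atlas. The atlas ($\S$\ref{ss:proof-LA}) is the engine behind the weighted Stanley inequality (Theorem~\ref{t:Sta-weighted}) and the equality conditions of \eqref{eq:Sta}; it is not the mechanism in \cite{CPP-KS}. So the claim that a matrix $M(a)$ reformulation is ``exactly the atlas reformulation used in \cite{CPP-KS} for the $q=1$ case'' is a misattribution, and the Schur-complement induction you sketch is not what happens. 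The obstacle you yourself flag --- that coefficient-wise $\bq$-positivity does not survive the signed cancellations inherent in an Alexandrov--Fenchel or hyperbolicity argument --- is precisely why the actual proof avoids that machinery altogether.

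Your final paragraph does land on the correct strategy, which is also the paper's: encode a width-two linear extension by the lattice path of values $\bigl(f(u_1),\ldots,f(u_\ell)\bigr)$, and construct a segment-swap injection from $\cF(P,x,y,a+1)\times\cF(P,x,y,a-1)$ into $\cF(P,x,y,a)\times\cF(P,x,y,a)$ that, for each $k\in[\ell]$, preserves the multiset $\{f_1(u_k),f_2(u_k)\}$; since $\bq^{f_1}\ts\bq^{f_2}=\prod_{k}q_k^{\ts f_1(u_k)+f_2(u_k)}$, preserving the sums $f_1(u_k)+f_2(u_k)$ is what is actually needed, and multiset preservation is a clean sufficient condition that a geometric segment swap delivers for free. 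The genuine gap is that the injection is never constructed: you defer to ``what the atlas framework encodes automatically through the structure of $M(a)$,'' which is circular, since no $\bq$-atlas has been established for this setting. To close the gap you would need to (i) identify the discrepancy interval between the two paths and define the swap; (ii) verify that both resulting paths still obey the staircase constraints coming from the cross-relations between $\rC$ and $\rC'$; (iii) check that the two marked steps realizing $f(y)-f(x)$ move from $\{a+1,a-1\}$ to $\{a,a\}$; and (iv) show injectivity. As written, this is a plan with the right target, not a proof.
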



Note that \eqref{eq:KS-ineq-bq} implies \eqref{eq:Sta-ineq-bq} in a similar way that
\eqref{eq:KS} implies \eqref{eq:Sta}.
Taking all \ts $q_i \gets 1$ \ts in these two inequalities gives \eqref{eq:Sta}
and \eqref{eq:KS}, respectively.  Explicit equality conditions for both
inequalities are given in \cite[Thm~1.6]{CPP-KS} and \cite[Thm~1.7]{CPP-KS}.
Theorems~\ref{t:Sta-q} and~\ref{t:Sta-KS-q} are proved by an explicit injection.

\smallskip

\subsection{Weighted Stanley inequality}\label{ss:ineq-Sta-weighted}
Let \ts $\ap: X \to \rrs$ \ts
be a positive weight function on~$X$.  We say that \ts $\ap$ \ts is \defn{order-reversing}
if it satisfies
\begin{equation}\label{eq:Rev}
u \. \preccurlyeq\. v \quad \Rightarrow 	\quad  \ap(u) \,  \geq \, \ap(v)\ts,
\end{equation}
for all \ts $u,v\in X$.
Define
\begin{equation}\label{eq:sta-def-N-weighted}
\aN_\ap(P,x,a) \, := \, \sum_{f \ts \in \ts \Ec(P,x,a)} \. \ap(f,x)\.,  \quad \text{where}
\quad \ap(f,x)  \, := \, \prod_{y\in X \ : \ f(y) < f(x)} \. \ap(y)\ts.
\end{equation}


\begin{thm}[{\rm \defn{weighted Stanley inequality}, \cite[Thm~1.35]{CP-atlas}}{}]\label{t:Sta-weighted}
For every order-reversing weight function \ts $\apr$, we have:
\begin{equation}\label{eq:Sta-weighted}
\aNr_\apr(P,x,a)^2 \,\. \ge \,\. \aNr_\apr(P,x,a) \.\cdot \. \aNr_\apr(P,x,a)\ts,
\end{equation}
where \ts $\aNr_\apr(P,x,a)$ \ts is defined by~\eqref{eq:sta-def-N-weighted}.
\end{thm}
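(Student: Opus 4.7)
My approach would follow the combinatorial atlas framework of \cite{CP-atlas}, which proved the unweighted Stanley inequality \eqref{eq:Sta} and is already flexible enough to accommodate multiplicative weights, provided the weights satisfy a monotonicity property compatible with the recursion on minimal elements. The basic strategy is to encode the three quantities in \eqref{eq:Sta-weighted} as a quadratic form and two linear forms of a single matrix, and to deduce the inequality as a Cauchy--Schwarz-type statement coming from a hyperbolic (Alexandrov--Fenchel-type) property of the matrix.

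\emph{Setting up the atlas.} For each poset $Q$ obtained from $P$ by deleting a lower ideal, I would attach a symmetric matrix $M(Q)$ whose rows and columns are indexed by the minimal elements of $Q$, and whose entries are weighted counts of the form $\alpha(y)\alpha(y')\,\aN_\alpha(Q-y-y',x,\cdot)$ together with appropriate diagonal corrections $\alpha(y)^2\,\aN_\alpha(Q-y,x,\cdot)$. With a suitable distinguished vector $\vb$ (reflecting how many minimal elements of $Q$ can appear first in a linear extension), the expressions $\vb^{T} M(Q)\vb$, $\vb^{T} M(Q)\eb$ and $\eb^{T} M(Q)\eb$ for a carefully chosen auxiliary vector $\eb$ should recover $\aN_\alpha(P,x,a)$ and its two neighbors $\aN_\alpha(P,x,a\pm 1)$ up to a common positive scalar. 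This mirrors the unweighted setup where $\alpha\equiv 1$.

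\emph{Hyperbolic axiom and induction.} The inequality \eqref{eq:Sta-weighted} would then reduce to the hyperbolic axiom that $M(Q)$ has at most one positive eigenvalue on the subspace spanned by $\vb$ and $\eb$, or equivalently that a certain principal $2\times 2$ minor is nonpositive. By the atlas machinery this propagates inductively: the hyperbolic axiom for $M(Q)$ follows from the hyperbolic axiom for the matrices $M(Q-y)$ attached to the children of $Q$, using the inheritance property that entries of $M(Q)$ are themselves quadratic/linear forms in the children's matrices. The base case is $|Q|\le 2$, which is trivial.

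\emph{Where the order-reversing hypothesis enters.} The hypothesis \eqref{eq:Rev} is precisely what is needed so that the inheritance step produces coefficients of the correct sign. Concretely, when one deletes a minimal element $y$ of $Q$, some element $v$ that was previously not minimal may become minimal; the weight $\alpha(v)$ now appears in the new matrix $M(Q-y)$, and the recursion will need $\alpha(v)\le\alpha(y)$ to guarantee that the discriminant computed from $M(Q)$ does not overshoot its unweighted counterpart. Under \eqref{eq:Rev} this monotonicity is automatic, so every local inequality needed for the hyperbolic axiom follows from the unweighted case plus a nonnegative correction.

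\emph{Main obstacle.} The delicate point, and the step I expect to consume most of the work, is the correct choice of diagonal corrections for $M(Q)$. In the unweighted atlas these are rigid quantities determined by combinatorial identities; with weights one must absorb the factors $\alpha(y)$ into both the off-diagonal entries and the diagonal so that the rank-one extraction identifying the Perron direction still closes up. Getting this bookkeeping right—so that \eqref{eq:Rev} is used with equality in the critical terms and with slack in the rest—is where I would expect the proof to be technically most demanding; everything else is a routine adaptation of \cite{CP-atlas}.
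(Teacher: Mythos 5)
The survey itself does not prove this theorem: it states it, cites \cite[Thm~1.35]{CP-atlas}, and in \S\ref{ss:proof-LA} merely records that the proof goes through the combinatorial atlas and the hyperbolic-matrix condition \eqref{eq:Hyp}. Your sketch is consistent with that indicated method — attaching matrices to deletions of minimal elements, reducing \eqref{eq:Sta-weighted} to the hyperbolicity property, propagating hyperbolicity by induction through an inheritance step, and using the order-reversing hypothesis \eqref{eq:Rev} to control the new weights that appear when deleting a minimal element. So at the level of detail available here, you are following the same approach the paper points to; the real verification of your ``diagonal correction'' bookkeeping lives in \cite{CP-atlas}, which this survey does not reproduce. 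One small note unrelated to your argument: the displayed inequality \eqref{eq:Sta-weighted} as printed in the survey has a typo (the right-hand side should read $\aNr_\apr(P,x,a+1)\cdot\aNr_\apr(P,x,a-1)$); you have implicitly and correctly worked with the intended statement.
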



Taking all \ts $\ap(x) \gets 1$ \ts in the inequality \eqref{eq:Sta-weighted}
gives \eqref{eq:Sta}. Explicit equality conditions for \eqref{eq:Sta-weighted} are
given in \cite[Thm~1.40]{CP-atlas}, generalizing Theorem~\ref{t:Sta-equality}.
Theorem~\ref{t:Sta-weighted} is proved by using a combinatorial atlas
(see~$\S$\ref{ss:proof-LA}).

\smallskip

\subsection{Generalized Stanley inequality}\label{ss:ineq-Sta-gen}
Let \. $x,z_1,\ldots,z_k\in X$ \. and \. $a,c_1,\ldots,c_k\in [n]$;
we write \. $\bz =(z_1,\ldots,z_k)$ \. and \. $\bc =(c_1,\ldots,c_k)$,
and assume that \. $c_1<\cdots< c_k$\ts.

Let \. $\Ec_{\bz\bc}(P)$ \. be the set of linear extensions \. $f\in \Ec(P)$,
such that \. $f(z_i)=c_i$ \. for all \. $1\le i \le k$.
Similarly, let \. $\Ec_{\bz\bc}(P,x,a)$ \. be the set of linear
extensions \. $f\in \Ec_{\bz\bc}(P)$, such that \. $f(x)=a$.
Denote by \. $\aN_{\bz\bc}(P):=\bigl|\Ec_{\bz\bc}(P)\bigr|$ \.
and \. $\aN_{\bz\bc}(P, x,a):=\bigl|\Ec_{\bz\bc}(P,x,a)\bigr|$ \.
the number of such linear extensions.
The following result states that the sequence \. $\big\{\aN_{\bz\bc}(P, x,a), a\in [n]\big\}$ \. is log-concave:
%


\begin{thm}[{\rm \defn{generalized Stanley inequality}~\cite[Thm~3.2]{Sta-AF}}{}]\label{t:ineq-Sta-gen}
In notation above, for all \ts $k \ge 0$, we have:
\begin{equation}\label{eq:Sta-gen}
\aNr_{\bz\bc}(P, x,a)^2 \, \ge \, \aNr_{\bz\bc}(P, x,a+1) \.\cdot \.  \aNr_{\bz\ts\bc}(P, x,a-1).
\end{equation}
\end{thm}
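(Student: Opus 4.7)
The plan is to follow Stanley's original route for \eqref{eq:Sta}: convert the count $\aNr_{\bz\bc}(P, x, a)$ into a mixed volume of suitable convex bodies and invoke the Alexandrov--Fenchel inequality. As a first step, I would reduce to the case $k = 0$. Fixing $f(z_i) = c_i$ for $1 \le i \le k$ partitions $X \sm \{z_1, \ldots, z_k\}$ into $k+1$ blocks $B_0, \ldots, B_k$, where $B_j$ is the set of elements forced to lie in the open position-interval $(c_j, c_{j+1})$, under the conventions $c_0 := 0$ and $c_{k+1} := n+1$.  When $\aNr_{\bz\bc} \neq 0$, each block has the mandatory cardinality and the count factors as
\begin{equation*}
\aNr_{\bz\bc}(P, x, a) \, = \, \aNr\bigl(P|_{B_{j^*} \cup \{x\}},\, x,\, a - c_{j^*}\bigr) \. \cdot \. \prod_{j \ne j^*} e\bigl(P|_{B_j}\bigr),
\end{equation*}
where $B_{j^*}$ is the block containing $x$. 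Only the first factor depends on $a$, reducing the proof to log-concavity within a single block, i.e.~to Theorem~\ref{t:ineq-Sta} applied inside the subposet $P|_{B_{j^*} \cup \{x\}}$.

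Now assume $k = 0$. Write $A := \{y \prec x\}$, $C := \{y \succ x\}$, $I := \{y \parallel x\}$, and stratify $\Ec(P, x, a)$ by the lower order ideal $J \subseteq I$ of $P|_I$ consisting of the incomparable elements placed below $x$. This gives
\begin{equation*}
\aNr(P, x, a) \ = \ \sum_{\substack{J \text{ lower ideal of } P|_I \\ |J| \ts = \ts s}} e\bigl(P|_{A \cup J}\bigr) \. \cdot \. e\bigl(P|_{C \cup (I \sm J)}\bigr), \quad \text{where } \, s \, := \, a - 1 - |A|.
\end{equation*}
The goal becomes $\aNr(s)^2 \ge \aNr(s-1)\,\aNr(s+1)$. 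To recast this as a mixed-volume statement I would build convex bodies $K_1, K_2 \subseteq [0,1]^I$, essentially the projections of the order polytopes $\cO_{P|_{A \cup I}}$ and $\cO_{P|_{I \cup C}}$ onto the $I$-coordinates (with the $A$- and $C$-coordinates integrated out), and verify the Minkowski identity
\begin{equation*}
\Vol\bigl( u K_1 + v K_2 \bigr) \ = \ \sum_{s=0}^{|I|} \binom{|I|}{s} \. V\bigl(K_1^{|I|-s}, K_2^s\bigr) \, u^{|I|-s} v^s,
\end{equation*}
arranged so that each $V(K_1^{|I|-s}, K_2^s)$ coincides, up to a universal combinatorial constant, with $\aNr(s)$. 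The identification rests on a Fubini decomposition of $\cO_P$ along the slicing hyperplane $\alpha_x = \mathrm{const}$, together with Stanley's two-polytope formula \eqref{eq:two-poset}.

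With $K_1, K_2$ in place, the Alexandrov--Fenchel inequality
\begin{equation*}
V\bigl(K_1^{|I|-s}, K_2^s\bigr)^2 \ \ge \ V\bigl(K_1^{|I|-s+1}, K_2^{s-1}\bigr) \. \cdot \. V\bigl(K_1^{|I|-s-1}, K_2^{s+1}\bigr)
\end{equation*}
immediately yields log-concavity of $\aNr(s)$ in $s$, hence of $\aNr_{\bz\bc}(P, x, a)$ in $a$. The main obstacle is Step~3: engineering $K_1, K_2$ so that both the Minkowski expansion matches the combinatorial sum term by term and the binomial coefficients cancel correctly against the enumeration of lower ideals $J$. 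The slicing argument interacts delicately with order relations that cross the element $x$, and one must also check that the $K_i$ satisfy the dimensional hypotheses under which AF is sharp whenever the combinatorial count is nonzero. Once this geometric dictionary is in place the inequality falls out as a formal consequence of AF; it is worth noting that no injective proof of \eqref{eq:Sta-gen} is currently known, consistent with Conjecture~\ref{conj:ineq-Sta-SP}.
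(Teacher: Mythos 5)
The reduction to $k=0$ is the fatal step. You assert that fixing $f(z_i)=c_i$ ``partitions $X \sm \{z_1,\ldots,z_k\}$ into $k+1$ blocks $B_0,\ldots,B_k$, where $B_j$ is the set of elements forced to lie in the open position-interval $(c_j,c_{j+1})$,'' and that $\aNr_{\bz\bc}(P,x,a)$ then factors over the blocks. No such partition exists: an element $y$ that is incomparable to $z_i$ is not forced into any single interval, and distinct linear extensions in $\Ec_{\bz\bc}(P)$ will place $y$ on different sides of $z_i$. Concretely, take $P$ the antichain on $\{z_1,x,y_1,y_2\}$ with $n=4$, $\bz=(z_1)$, $\bc=(2)$. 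The constraint $f(z_1)=2$ forces nothing about $y_1,y_2$; they are split $1$-and-$1$, or $0$-and-$2$, or $2$-and-$0$, across the two intervals depending on $f$. So $\aNr_{\bz\bc}(P,x,a)$ is a \emph{sum} of products over admissible distributions of the free elements, not a single product, and one cannot apply the $k=0$ case factor by factor. Stanley's actual argument does not perform this reduction. He handles all $k$ uniformly: slice the order polytope $\cO_P$ by fixing $\alpha_{z_i}=t_i$ for parameters $0<t_1<\cdots<t_k<1$, then slice further by $\alpha_x=t$; for $t$ ranging over a single gap $(t_j,t_{j+1})$ the resulting slices form a linear Minkowski family $\Pi_t = \lambda(t)\.K_0 + \mu(t)\.K_1$. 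Expanding $\Vol_{n-k-1}(\Pi_t)$ two ways --- once via mixed volumes, once via the triangulation of $\cO_P$ into simplices indexed by linear extensions --- matches $\aNr_{\bz\bc}(P,x,a)$ with a mixed volume $V(K_0^{[\cdot]},K_1^{[\cdot]})$ up to explicit constants, and Alexandrov--Fenchel finishes.

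Even the $k=0$ piece of your sketch diverges from Stanley's route in a way you would have to repair. You propose $K_1,K_2\subset[0,1]^I$ obtained by ``projecting'' $\cO_{P|_{A\cup I}}$ and $\cO_{P|_{I\cup C}}$ onto the $I$-coordinates with the $A$- and $C$-coordinates ``integrated out.'' Integrating out coordinates of a polytope produces a density on $[0,1]^I$, not a convex body, and the Minkowski identity you then write down is not automatic; you flag this yourself as ``the main obstacle.'' Stanley's construction sidesteps it: work in $\rr^{n-1}$ (all coordinates but $\alpha_x$), take $K_0 = \cO_P\cap\{\alpha_x=0\}$ and $K_1=\cO_P\cap\{\alpha_x=1\}$ (both possibly lower-dimensional), and check directly that the slice at $\alpha_x=t$ equals $(1-t)K_0+tK_1$; the binomial coefficients and the cancellation against the triangulation then come out automatically, and the degeneracy of $K_0,K_1$ is exactly what enforces the vanishing of $\aNr(P,x,a)$ outside the range $\al(x)\le a\le n-\be(x)+1$. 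Your $J$-stratification formula is a correct combinatorial identity but it is not the one that plugs into Alexandrov--Fenchel; the slicing identity is. Finally, note that as stated the theorem carries the implicit hypothesis $a-1,a,a+1\notin\{c_1,\ldots,c_k\}$ (equivalently, $a$ varies within a single gap $(c_j,c_{j+1})$); without it the inequality is false, e.g.\ $(1,0,1)$ for the $3$-antichain with $c_1=2$. Your reduction tacitly assumes this too, but it is worth making explicit, since it is precisely the gap constraint that makes Stanley's one-interval Minkowski family work.
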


The theorem is proved using the Alexandrov--Fenchel inequality (see~$\S$\ref{ss:proof-geom}).

\begin{op}\label{op:ineq-Sta-gen-weighted}
Find a weighted version of Theorem~\ref{t:ineq-Sta-gen}, i.e.\ a common generalization
of Theorems~\ref{ss:ineq-Sta-weighted} and~\ref{t:ineq-Sta-gen}.
\end{op}

\smallskip

\subsection{Order polynomial version of Stanley inequality}\label{ss:ineq-DDP}
%
%
The following is a natural generalization of Brenti's log-concavity
for the order polynomial \eqref{eq:basic-OP-log-concave} to the setting
of Stanley inequality~\eqref{eq:Sta}.

\begin{thm}[{\em{\rm\defn{Daykin--Daykin--Paterson inequality}}~\cite[Thm~4]{DDP}}{}]\label{t:ineq-DDP}
Let \ts $P=(X,\prec)$ \ts be a finite poset, and let \ts $x\in X$.
Denote by \ts $\Om(P,t \ts ; \ts x,a)$ \ts the number of order preserving
maps \ts $h: X\to [t]$, such that \ts $h(x)=a$.  Then,
for all integer \ts $t>a>1$, we have:
\begin{equation}\label{eq:ineq-DDP}
	\Om(P,t\ts ; \ts x,a)^2 \, \ge \, \Om(P,t \ts ; \ts x,a+1) \. \cdot \. \Om(P,t\ts ; \ts x,a-1).
\end{equation}
Additionally, the defect of this inequality is in~$\SP$.
\end{thm}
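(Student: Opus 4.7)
My plan is to prove~\eqref{eq:ineq-DDP} by exhibiting a polynomial-time computable injection
\begin{equation*}
\phi \colon A_{a+1} \times A_{a-1} \ \hookrightarrow \ A_a \times A_a,
\end{equation*}
where \ts $A_b := \bigl\{h \colon X \to [t] \,\text{order-preserving},\, h(x) = b\bigr\}$ \ts so that $|A_b| = \Omega(P,t;x,b)$. Together with a polynomial-time test for membership in $\mathrm{im}(\phi)$, such an injection implies~\eqref{eq:ineq-DDP} and places the defect in~$\SP$, since the defect equals $\bigl|(A_a \times A_a) \sm \mathrm{im}(\phi)\bigr|$ and a pair $(g_1, g_2) \in A_a \times A_a$ lies outside the image iff the partial inverse of $\phi$ fails to produce a valid preimage --- paralleling the $\SP$-arguments used for~\eqref{eq:basic-OP} and~\eqref{eq:LP-ineq}.

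For the construction, given $(h_1, h_2) \in A_{a+1} \times A_{a-1}$, I would perform a coordinated rebalancing near~$x$. Let $V := \{y \in X : h_1(y) \geq a+1\}$, which is an upper ideal of~$P$ containing~$x$, and let $W := \{y \in X : h_2(y) \leq a-1\}$, a lower ideal containing~$x$. Setting $g_1 := h_1 - \one_V$ and $g_2 := h_2 + \one_W$ produces maps with $g_1(x) = g_2(x) = a$, and the (co)ideal properties of $V$ and~$W$ guarantee that $g_1$ and~$g_2$ remain order-preserving with values in~$[t]$. This defines a natural candidate for~$\phi$ that respects the target cardinalities.

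The main obstacle is injectivity: several pairs $(h_1, h_2)$ can yield the same $(g_1, g_2)$ by differing on elements that sit at the $a$-level set of~$g_1$ or~$g_2$, so the ideals $V, W$ are not directly recoverable from $(g_1, g_2)$. To fix this, I would refine the construction so that $V$ and~$W$ become canonically reconstructible from $(g_1, g_2)$ alone --- for instance, by choosing them to be the unique minimal upper/lower ideals containing~$x$ whose restoration yields order-preserving maps consistent with the cover structure at the boundary of the $a$-level sets of~$g_i$ --- resolving residual ambiguities by a promotion-like greedy sweep along cover chains emanating from~$x$. Once this canonical form is fixed, injectivity follows from an explicit inverse procedure, and the $\SP$ membership of the defect is then immediate. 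The technical heart of the argument lies in verifying that the canonical choice of $V, W$ is both well-defined and polynomial-time computable, which requires a careful analysis of how $\pm 1$ shifts interact with cover relations of~$P$; this is precisely the step where Brenti's proof of~\eqref{eq:basic-OP-log-concave} needs to be strengthened to accommodate the additional constraint $h(x) = a$.
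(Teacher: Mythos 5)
Your framework --- constructing an explicit, polynomial-time computable injection $\phi : A_{a+1} \times A_{a-1} \hookrightarrow A_a \times A_a$ together with a polynomial-time image test --- is indeed the route taken in~\cite{DDP}, which the survey classifies under promotion/demotion (see~$\S$\ref{ss:proof-promo}), so the overall strategy is sound and would in fact yield both the inequality and the $\SP$ statement. The difficulty is that neither your candidate map nor the proposed repair can realize it.

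The map $g_1 := h_1 - \one_V$, $g_2 := h_2 + \one_W$ with $V = \{h_1 \geq a+1\}$, $W = \{h_2 \leq a-1\}$ fails to be injective, as you say, but the failure is \emph{not} a matter of picking a cleverer $V$ and $W$: the ideal-shift form itself is the obstruction. Take $P$ to be the two-element chain $y \prec x$, $t = 3$, $a = 2$, and write $h = (h(x), h(y))$. Then $A_3 = \{(3,1),(3,2),(3,3)\}$, $A_1 = \{(1,1)\}$, $A_2 = \{(2,1),(2,2)\}$. Since $A_1$ is a singleton, $g_2$ is fixed, and injectivity of $\phi$ reduces to injectivity of $h_1 \mapsto g_1$ on $A_3$. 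Now list, for each $h_1 \in A_3$, the admissible upper ideals $V \ni x$ for which $h_1 - \one_V$ lies in $A_2$: for $h_1=(3,1)$ only $V = \{x\}$ is admissible, giving $g_1=(2,1)$; for $h_1=(3,3)$ only $V = \{x,y\}$ is admissible, giving $g_1=(2,2)$; for $h_1=(3,2)$ both $\{x\}$ and $\{x,y\}$ are admissible, giving $g_1=(2,2)$ or $g_1=(2,1)$ --- and either output is already taken. So no rule for selecting $V$ (minimal, maximal, greedy, promotion-guided, ``consistent with cover structure,'' or otherwise) can make the ideal-shift map injective on $A_3$; the $2$-element image $\{(2,1),(2,2)\}$ of $A_2$ simply has no room for three ideal-shifts of elements of $A_3$. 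The actual [DDP] injection must therefore encode the ``which branch was taken'' information \emph{jointly} across the pair $(g_1, g_2)$ rather than by shifting $h_1$ and $h_2$ independently on ideals; identifying that coupling is the real combinatorial content of the proof, and it is precisely the step your proposal leaves unspecified. As written, the proposal does not establish the inequality.
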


The inequality~\eqref{eq:ineq-DDP} was conjectured by Graham \cite[p.~129]{Gra83},
by analogy with Stanley's inequality \eqref{eq:Sta}.
The proof in \cite{DDP} uses an explicit injection.   The authors prove, in fact, a
stronger result, in the style of the generalized Stanley inequality \eqref{eq:Sta-gen}.

\begin{thm}[{\em {\rm \defn{generalized DDP inequality}~\cite[Thm~4]{DDP}}}{}]\label{t:ineq-DDP-gen}
Let \ts $P=(X,\prec)$ \ts be a finite poset, let \ts $x\in X$.  Fix \ts $k \in \nn$ \ts and let \ts $\bz\in X^k$.
Denote by \ts $\Om(P,t \ts ; \ts \bz,\bc  \ts ; \ts x,a)$ \ts the number of order preserving
maps \ts $h: X\to [t]$, such that \ts $h(x)=a$, and \ts $h(z_i)=c_i$ \ts
for all \ts $1\le i \le k$.  Then,
for all integer \ts $t>a>1$, we have:
\begin{equation}\label{eq:ineq-DDP-gen}
	\Om(P,t\ts ; \ts \bz,\bc \ts ; \ts  x,a)^2 \, \ge \, \Om(P,t \ts ; \ts \bz,\bc  \ts ; \ts x,a+1) \. \cdot \. \Om(P,t\ts ; \ts \bz,\bc  \ts ; \ts  x,a-1).
\end{equation}
Additionally, the defect of this inequality is in~$\SP$.
\end{thm}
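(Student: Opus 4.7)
The plan is to extend the explicit injection used in the proof of Theorem~\ref{t:ineq-DDP} (the case $k=0$) to handle the additional fixed-value constraints $h(z_i) = c_i$. Let $\PP_b$ denote the set of order-preserving maps $h \colon X \to [t]$ with $h(x) = b$ and $h(z_i) = c_i$ for all $1 \le i \le k$. I would aim to construct an explicit injection
$$
\Phi \colon \PP_{a+1} \times \PP_{a-1} \. \hookrightarrow \. \PP_{a} \times \PP_{a}\ts,
$$
which would immediately give both \eqref{eq:ineq-DDP-gen} and the $\SP$ claim: the defect equals the number of pairs in $\PP_a \times \PP_a$ outside the image of $\Phi$, and this complement is a polynomial-time recognizable set (assuming the inverse of $\Phi$ is also explicit).

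Given $(g,h) \in \PP_{a+1} \times \PP_{a-1}$, the construction selects a subset $T \subseteq X$ with $x \in T$ and $z_i \notin T$ for all $i$, and defines $g'$ to equal $g-1$ on $T$ and $g$ elsewhere, and $h'$ to equal $h+1$ on $T$ and $h$ elsewhere. This preserves $g+h$ pointwise, sends $(g(x),h(x)) = (a+1,a-1)$ to $(a,a)$, and leaves the values at each $z_i$ untouched. For $(g',h')$ to be order-preserving, the set $T$ must satisfy: whenever $y \prec y'$ with $y \notin T$ and $y' \in T$, one has $g(y) < g(y')$; and whenever $y \prec y'$ with $y \in T$ and $y' \notin T$, one has $h(y) < h(y')$. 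I would take $T$ to be the minimal such set containing $x$, closing under the rules that force an addition whenever either strict inequality fails, and then establish injectivity by exhibiting an explicit inverse which recovers $T$ from $(g',h')$ as the canonical ``phase shift'' region between the two new maps.

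The main obstacle is ensuring that this closure process does not try to absorb any $z_i$ into $T$: if it did, we would violate $h'(z_i) = c_i$ and $\Phi$ would leave $\PP_a \times \PP_a$. The assumption $c_1 < \cdots < c_k$ together with the order-preservation of $g$ and $h$ should rule out the problematic configurations, but the verification requires a careful case analysis of how each $z_i$ sits relative to $x$ in the Hasse diagram and how the constant value $c_i$ interacts with the level sets $\{y : g(y) = c_i\}$ and $\{y : h(y) = c_i\}$ near $z_i$. A backup strategy, should the direct injection prove too delicate, is to induct on $k$: one fixes $z_k$, converts the constraint $h(z_k) = c_k$ into box constraints $l(y) \le h(y) \le u(y)$ on the remaining elements (with $l(y) = c_k$ for $y \succ z_k$ and $u(y) = c_k$ for $y \prec z_k$), and then generalizes the DDP injection of Theorem~\ref{t:ineq-DDP} to respect arbitrary per-element box constraints before invoking the inductive hypothesis with $k - 1$ fixed values. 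In either approach, the $\SP$ conclusion comes for free once a polynomial-time injection is produced.
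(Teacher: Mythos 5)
Your overall direction --- an explicit injection $\Phi \colon \PP_{a+1} \times \PP_{a-1} \hookrightarrow \PP_a \times \PP_a$ obtained by shifting $g$ down and $h$ up on a carefully chosen region $T$, with the $\SP$ conclusion falling out automatically --- is exactly the style of proof the paper attributes to Daykin--Daykin--Paterson via promotion/demotion. The problem is that your argument hinges entirely on the step you yourself flag as ``the main obstacle,'' and you leave it as an assertion: you write that $c_1 < \cdots < c_k$ together with order-preservation ``should rule out the problematic configurations,'' pending ``a careful case analysis'' you never carry out. Worse, the assumption $c_1 < \cdots < c_k$ does not appear in the statement of the theorem: unlike the generalized Stanley inequality, here the $c_i$ are values of an order-preserving map (not of a bijection), so they may coincide and need not be ordered. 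Invoking this hypothesis signals that the mechanism that actually keeps $z_i$ out of $T$ has not been identified.

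The fact you need is cleaner and uses nothing about the $c_i$ beyond $g(z_i) = h(z_i)$: every $y$ in your closure set $T$ satisfies $g(y) - h(y) \geq 2$. This is immediate by induction along the closure. The starting element has $g(x) - h(x) = (a+1)-(a-1) = 2$. An upward closure step adds $y' \succ y$ with $h(y')=h(y)$, and order-preservation of $g$ gives $g(y')\geq g(y)$, so the difference $g-h$ can only grow; a downward step adds $y \prec y'$ with $g(y)=g(y')$ and $h(y)\leq h(y')$, so again $g-h$ can only grow. Since $g(z_i)-h(z_i)=0 < 2$, no $z_i$ can enter $T$, which is exactly what you needed. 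The same invariant also settles a boundary issue you did not address: from $1 \le h(y)$ and $g(y) \le t$ it yields $g(y)\geq 3$ and $h(y)\leq t-2$ for $y \in T$, so $g' = g-1$ and $h'=h+1$ stay inside $[t]$. With these two observations supplied, your closure construction and inverse-map argument do give a valid proof; without them, the proposal is a plan, not a proof.
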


Graham believed that there should exist a proof based on the FKG or AD~inequalities.
He lamented: ``such a proof has up to now successfully eluded all attempts to find it''
\cite[p.~129]{Gra83}.  Such proof was given in \cite{CP-corr}, which also gave a
generalization of the \ts $q$-log-concavity \eqref{eq:basic-OP-q} and
$\bq$-log-concavity \eqref{eq:basic-OP-bq} to this setting:

\begin{thm}[{\rm \defn{$\bq$--DDP inequality}~\cite[Thm~9.3]{CP-corr}}{}]\label{t:DDP-multi}
	Let \ts $P=(X,\prec)$ \ts be a finite poset, let $t\in \nn$, and let \ts $x \in X$.
	Then, for every \ts $t>a>1$, we have:
	\begin{equation}\label{eq:DDP-multi}
        \Omega_\bq(P,t \ts ; \ts x,a)^2 \ \geqslant_\bq \ \Omega_\bq(P,t \ts ;\ts  x,a+1) \. \cdot \. \Omega_\bq(P,t \ts ; x,a-1),
	\end{equation}
where the inequality holds coefficient-wise as a polynomial in \ts $\bq=(q_1,\ldots,q_n)$.
\end{thm}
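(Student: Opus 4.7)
The plan is to prove Theorem~\ref{t:DDP-multi} by extending the Ahlswede--Daykin (AD) strategy used for the $\bq$-log-concavity of the order polynomial (Theorem~\ref{t:basic-OP-bq}) so that it can incorporate the side constraint $h(x)=a$. It suffices to establish the inequality coefficient-wise, so I fix a target monomial $\bq^{\mathbf{m}} = q_1^{m_1}\cdots q_n^{m_n}$ and compare the numbers of pairs of $P$-partitions whose pointwise sum equals $\mathbf{m}$ on both sides. Both sides force $m_x = 2a$, so the quantities to compare are
\[
L(\mathbf{m}) \,:=\, \#\bigl\{(f_1,f_2)\in\PP(P,t)^2 \,:\, f_1(x)=f_2(x)=a,\ f_1+f_2=\mathbf{m}\bigr\}
\]
and the analogous $R(\mathbf{m})$ for pairs $(g,h)$ with $g(x)=a+1$, $h(x)=a-1$.

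First I will set up the distributive lattice $\PP(P,t)$ under pointwise $\wedge,\vee$; the weight $\mu_\bq(h) := \bq^h$ is exactly log-modular (not merely log-supermodular) because $h_1+h_2 = (h_1\wedge h_2) + (h_1 \vee h_2)$ holds pointwise. Next I will invoke AD with four functions supported on the level sets $\{h(x)=a-1\}$, $\{h(x)=a\}$, $\{h(x)=a+1\}$, assigning $\alpha,\beta$ to the outer slices and $\gamma,\delta$ to the middle slice. The inequality~\eqref{eq:DDP-multi} then reduces to verifying the pointwise four-functions condition $\alpha(g)\beta(h) \le \gamma(g \vee' h)\,\delta(g \wedge' h)$ for a suitably \emph{twisted} pair of operations $(\wedge',\vee')$ that corrects the value at $x$: the ordinary $\wedge,\vee$ do not preserve the slices, since they merely swap the values $a-1\leftrightarrow a+1$ at $x$.

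The crux, and the step I expect to be the main obstacle, is constructing $(\wedge',\vee')$. Given $(g,h)$ with $g(x)=a+1$ and $h(x)=a-1$, I plan to canonically choose an upper ideal $S\ni x$ such that $g-\mathbf{1}_S$ and $h+\mathbf{1}_S$ both remain in $\PP(P,t)$, and then define $g\vee' h$ and $g\wedge' h$ to be $g\vee h$ and $g\wedge h$ modified by this one-unit transfer along $S$. The existence of a smallest such $S$ should follow from a forward-propagation argument: start with $S_0=\{x\}$ and adjoin $y\succ x$ whenever lowering $g$ (resp.\ raising $h$) at the current set would violate order-preservation at a cover relation; either the process terminates inside $\{y:g(y)>h(y)\}$, producing a valid $S$, or one extracts a monotone sequence in $g-h$ contradicting $g(x)-h(x)=2$. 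Uniqueness (take the minimal such $S$) makes $\wedge',\vee'$ well defined on the union of the three level sets, and log-modularity of $\mu_\bq$ (preserved by any pointwise-sum-preserving transfer) yields the four-functions condition at once.

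Finally, I will check consistency by specialization: setting $q_i=1$ recovers Theorem~\ref{t:ineq-DDP}; taking $a$ at its extreme value collapses~\eqref{eq:DDP-multi} to Theorem~\ref{t:basic-OP-bq}; and passing $t\to\infty$ with $\mathbf{m}$ supported on distinct values reduces (for posets of width two) to the $\bq$-Stanley inequality of Theorem~\ref{t:Sta-q}. The main novelty over the injective proof of~\cite{DDP} is that the $\bq$-weight $\mu_\bq$ is \emph{preserved} under the canonical one-unit transfer along an upper ideal $S$, which is precisely what makes AD applicable in the weighted setting; this preservation in turn relies crucially on $S$ being an \emph{upper} ideal, so that shifting values by $\pm 1$ on $S$ leaves pointwise sums, and hence the monomial $\bq^{\mathbf{m}}$ being counted, unchanged.
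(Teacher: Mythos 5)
The crux of your plan is to apply the Ahlswede--Daykin inequality with a ``twisted'' pair of operations $(\wedge', \vee')$, and this is where the argument breaks. The AD theorem (Theorem~\ref{t:AD}) is a statement about the meet and join of a distributive lattice; its proof relies essentially on the lattice axioms, in particular distributivity. Your $\wedge', \vee'$ are defined by a pair-dependent correction (a one-unit transfer along an upper ideal $S=S(g,h)$ chosen from both arguments), so there is no reason to expect them to be associative, commutative, or to satisfy absorption or distributivity -- and you do not attempt to verify any of this. Without $(\wedge',\vee')$ being the operations of a genuine distributive lattice, AD simply does not apply, so ``verifying the pointwise four-functions condition $\alpha(g)\beta(h)\le\gamma(g\vee' h)\delta(g\wedge' h)$'' and then invoking AD is not a valid step. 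This is precisely the obstacle Graham identified: the naive choice of level sets fails the AD hypothesis with the honest $\vee,\wedge$, and modifying the operations ad hoc does not repair it.

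What the upper-ideal transfer $(g,h)\mapsto(g-\mathbf{1}_S,\,h+\mathbf{1}_S)$ really gives you, if it can be completed, is the germ of a Daykin--Daykin--Paterson-style injection from the pairs with $g(x)=a+1$, $h(x)=a-1$ into those with $g_1(x)=g_2(x)=a$, which would indeed prove~\eqref{eq:DDP-multi} coefficient-wise because the transfer preserves $g+h$ pointwise. But this is an injective argument, not an AD argument, and as sketched it also has gaps: $S_0=\{x\}$ is not an upper ideal unless $x$ is maximal (you need $x\!\uparrow$ at minimum); the obstructions to decreasing $g$ on $S$ occur at covers $u\prec v$ with $v\in S$, $u\notin S$, and $g(u)=g(v)$, and such $u$ need not satisfy $u\succ x$, so ``adjoin $y\succ x$'' is the wrong propagation direction; the range constraints $g|_S\ge 2$ and $h|_S\le t-1$ have to be controlled along the propagation, and your invariant $S\subseteq\{y:g(y)>h(y)\}$ is neither established for the starting set $x\!\uparrow$ nor shown to be preserved; and, most importantly, injectivity requires a characterization of $S$ that is recoverable from the image $(g-\mathbf{1}_S, h+\mathbf{1}_S)$ alone, which you do not provide. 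The proof in~\cite{CP-corr} stays inside the AD/FKG framework with the honest lattice operations; if you want to pursue your construction, it must be recast as an explicit injection with a proof of invertibility, not wrapped in AD language.
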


\smallskip

\subsection{Cross-product conjecture}\label{ss:ineq-CPC}
Let \ts $P=(X,\prec)$ \ts be a poset with \ts $|X|=n$ \ts elements.
Fix distinct elements \. $x,y,z\in X$.
For \ts $a,b \geq 1$, let \. $\cF_{xyz}(P,a,b) \. := \. \cF(P,x,y,a) \cap \, \cF(P,y,z,b)$.
Equivalently,
\[
\cF_{xyz}(P,a,b) \ : = \ \big\{\ts f \in \Ec(P) \, : \, f(y)-f(x)=a, \.  f(z)-f(y)=b \ts \big\}.
\]
Denote \. $\aF_{xyz}(P,a,b) \. := \. \big|\cF_{xyz}(P,a,b)\big|$.
By Theorem~\ref{t:Sta-gen-vanish}, we have \. $\big\{\aF_{xyz}(P,a,b)=^?0\big\}\in \poly$,
since there are at most $n$ choices for \ts $f(x)$, which then determine \ts $f(y)$ \ts
and \ts $f(z)$.

\smallskip

\begin{conj}[{\rm \defn{Cross--product conjecture}, Felsner--Trotter \cite[Conj.~8.3]{FT93}}{}]\label{conj:CPC}
We have:
\begin{equation}\label{eq:ineq-CPC} \tag{CPC}
\aFr_{xyz}(P,a+1,b) \.\cdot\. \aFr_{xyz}(P,a,b+1) \ \ge \ \aFr_{xyz}(P,a,b) \.\cdot\. \aFr_{xyz}(P,a+1,b+1) \ts.
\end{equation}
\end{conj}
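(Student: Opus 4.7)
The plan is to attack CPC by combining the \emph{combinatorial atlas} technology of Chan--Pak--Panova (see~$\S$\ref{ss:proof-LA}) with a polytopal set-up mirroring the Alexandrov--Fenchel proof of the generalized Stanley inequality~\eqref{eq:Sta-gen}. This conjecture has been stubbornly open since \cite{FT93}, so any serious plan must go beyond a direct AF argument of the type used for \eqref{eq:Sta} and \eqref{eq:KS}, and extend the atlas framework from one to two grading parameters.

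First, I would reinterpret $\aFr_{xyz}(P,a,b)$ polytopally. Writing $n = |X|$, introduce the codimension-two slice of the order polytope
\begin{equation*}
\cO_P^{s,t} \, := \, \big\{ \alpha \in \cO_P \, : \, \alpha_y - \alpha_x = s, \ \alpha_z - \alpha_y = t \big\}.
\end{equation*}
Fubini applied to \eqref{eq:two-poset} expresses the $(n-3)$-dimensional volume of $\cO_P^{s,t}$ as a polynomial in $(s,t)$ whose coefficients are rational multiples of the $\aFr_{xyz}(P,a,b)$. In parallel with the Kahn--Saks proof, one can then aim to write
\begin{equation*}
\aFr_{xyz}(P,a,b) \, = \, (n-3)! \, \cdot \, V\big(L_1^{\,a-1},\, L_2^{\,b-1},\, L_3^{\,n-a-b-1}\big)
\end{equation*}
for order polytopes $L_1, L_2, L_3$ built from the induced subposets of $P$ on the three consecutive ``slabs'' of ranks determined by $x$, $y$, $z$.

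The next step would be to establish the \emph{cross Alexandrov--Fenchel inequality}
\begin{equation*}
V\big(L_1^{\,i+1},\ts L_2^{\,j},\ts L_3^{\,k}\big) \.\cdot\. V\big(L_1^{\,i},\ts L_2^{\,j+1},\ts L_3^{\,k}\big) \, \ge \, V\big(L_1^{\,i+1},\ts L_2^{\,j+1},\ts L_3^{\,k-1}\big) \.\cdot\. V\big(L_1^{\,i},\ts L_2^{\,j},\ts L_3^{\,k+1}\big),
\end{equation*}
where $i = a-1$, $j = b-1$ and $k = n-a-b-1$. Applied one parameter at a time, AF recovers Kahn--Saks for each pair $(x,y)$ or $(y,z)$ separately, but the mixed form above does \emph{not} follow from AF for generic convex bodies: the bivariate function $(i,j) \mapsto \log V(L_1^i, L_2^j, L_3^{n-2-i-j})$ is log-concave along each coordinate axis but need not be log-submodular jointly, and explicit two-body examples show that the cross inequality can fail outside the world of order polytopes. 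Any proof must therefore exploit the fact that the $L_i$ all arise as order polytopes of sub-posets of the same~$P$.

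The main obstacle is precisely this point: no currently available two-parameter extension of AF implies CPC, which is why the conjecture has resisted proof for three decades. My fallback plan would be to construct a two-dimensional combinatorial atlas whose propagation relations encode the joint constraints on the three slabs and force the cross inequality. Concretely, one would build, for each triple $(i,j,k)$ with $i+j+k = n-2$, a bilinear form $Q^{i,j,k}$ on a vector space indexed by chains in the appropriate slab of $P$, such that (i)~suitable entries of $Q^{i,j,k}$ recover $\aFr_{xyz}(P,a,b)$ via marked base vectors, and (ii)~the family $\{Q^{i,j,k}\}$ satisfies a \emph{bivariate} hyperbolic propagation identity. Verifying this setup already for posets of width two, in parallel with the proof of Theorem~\ref{t:Sta-KS-q}, would be a meaningful test case; extending it to general posets is the hard part, and would likely require a genuinely new tool beyond the current combinatorial atlas machinery.
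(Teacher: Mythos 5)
The statement you are attempting to prove is an open conjecture: the survey does not prove CPC in general, only records the special cases where it is known (Theorem~\ref{t:CPC-summary}: $a=b=1$, width two, and a vanishing-forced case) and a weak quantitative version (Theorem~\ref{t:ineq-CPC-quant}). Your plan is honest that the essential step is missing --- the ``cross Alexandrov--Fenchel inequality'' you write down does not hold for generic triples of convex bodies, and you explicitly defer the key idea (``would likely require a genuinely new tool''). As written, the proposal reformulates CPC in mixed-volume and combinatorial-atlas language but does not supply that tool; the gap sits exactly where it has been since~\cite{FT93}, and no step you describe closes it.

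More importantly, before investing further in a proof attempt you should know that the survey's authors now suspect CPC is actually \emph{false} for posets of width three (see~\S\ref{ss:finrem-next}); they report trying, so far unsuccessfully, to disprove it by finding a counterexample to Conjecture~\ref{conj:ineq-CPC-cons}, which follows from CPC by Theorem~\ref{t:ineq-CPC-cons}. That should redirect your strategy. Either (a)~restrict to width-two posets, where the lattice-path argument of~\cite{CPP-CP} already proves CPC and the stronger monotonicity~\eqref{eq:ineq-GCPC} (the latter is known to fail at width three,~\cite{CPP-quant}); or (b)~treat your polytopal reduction as a diagnostic rather than a proof engine: if the three ``slab'' bodies $L_1,L_2,L_3$ built from a width-three poset violate your proposed cross mixed-volume inequality, you have refuted CPC. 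The most productive use of your framework is therefore as a guided search for the failure mode, not as an attempt to force a bivariate AF or a two-parameter atlas through a claim that may simply be false.
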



The following result give a summary of known special cases

\begin{thm} \label{t:CPC-summary}
Conjecture~\ref{conj:CPC} holds in the following cases:
\begin{enumerate}
			[{label=\textnormal{({\alph*})},
		ref=\textnormal{\alph*}}]
\item[$(1)$] \ $a=b=1$, \ts see \. \cite[Thm~3.2]{BFT95},
\item[$(2)$] \ $\width(P)=2$, \ts see \. \cite[Thm~1.4]{CPP-CP},
\item[$(3)$] \ $\aFr_{xyz}(P,a,b+2) = \aFr_{xyz}(P,a+2,b) = 0$, \. $\aFr_{xyz}(P,a,b)>0$ \. and \. $\aFr_{xyz}(P,a+1,b+1)>0$, \ts see \.
\cite[Thm~1.2]{CPP-quant}.
\end{enumerate}
\end{thm}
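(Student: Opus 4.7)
The plan is to handle the three cases of Theorem~\ref{t:CPC-summary} separately, since they differ substantially in nature.

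For case $(1)$, when $a=b=1$ the inequality reduces to the single log-concave-type relation
\[
\aFr_{xyz}(P,2,1)\cdot \aFr_{xyz}(P,1,2) \ \ge \ \aFr_{xyz}(P,1,1)\cdot \aFr_{xyz}(P,2,2).
\]
Here $\aFr_{xyz}(P,1,1)$ counts extensions placing $x,y,z$ in three consecutive positions, while $\aFr_{xyz}(P,2,2)$ counts extensions where each of $y,z$ is displaced by one extra step. I would look for a direct injection $\cF_{xyz}(P,1,1) \times \cF_{xyz}(P,2,2) \hookrightarrow \cF_{xyz}(P,2,1) \times \cF_{xyz}(P,1,2)$, built from a local promotion-style operator that shifts a single element across the $(x,y)$ or $(y,z)$ gap in a consistent manner. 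The main obstacle is verifying that all required poset relations are preserved after the shift; this is the essence of the argument in~\cite[Thm~3.2]{BFT95}.

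For case $(2)$, I would apply the combinatorial atlas machinery of~$\S$\ref{ss:proof-LA}, which has already been used to prove the weighted Stanley inequality (Theorem~\ref{t:Sta-weighted}). A poset of width two decomposes as $X=\rC\sqcup \rC'$ into two chains, so its linear extensions are in bijection with lattice paths in a rectangle, and each $\aFr_{xyz}(P,a,b)$ becomes a count of paths through prescribed lattice points determined by $a$ and $b$. I would construct an atlas whose matrices encode transitions between these constrained path families and verify the hyperbolic and positivity hypotheses at each level, so that \eqref{eq:ineq-CPC} emerges as the bottom-level consequence of the atlas principle. The hard part is arranging the matrices so that the specific "cross" pattern of~\eqref{eq:ineq-CPC} arises, since the atlas framework natively produces ordinary log-concavity rather than cross-products, and some additional structural input from the width-two setting is required to force the cross-product shape.

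For case $(3)$, the vanishing hypothesis $\aFr_{xyz}(P,a,b+2)=\aFr_{xyz}(P,a+2,b)=0$ constrains the support of $\aFr_{xyz}(P,\cdot,\cdot)$ to a narrow window around $(a,b)$, and the positivity hypotheses on $\aFr_{xyz}(P,a,b)$ and $\aFr_{xyz}(P,a+1,b+1)$ prevent the support from collapsing trivially. My plan is to exploit this rigidity: show that extensions contributing to $\cF_{xyz}(P,a+1,b+1)$ are forced, by the vanishing of the neighbouring cells, to admit a canonical adjacent transposition producing an extension in either $\cF_{xyz}(P,a,b+1)$ or $\cF_{xyz}(P,a+1,b)$, and dually from $\cF_{xyz}(P,a,b)$. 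Composing these transpositions should yield an explicit injection realizing~\eqref{eq:ineq-CPC}. The main difficulty, as in~\cite[Thm~1.2]{CPP-quant}, is pinpointing exactly which transposition is available in each case and showing it lands in the claimed target; this is precisely where the vanishing hypothesis enters decisively, since without it the transposed elements could escape the window.
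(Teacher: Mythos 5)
Your proposal diverges from the paper's proof in all three cases, and one of the divergences is a genuine gap.

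For part $(1)$, you sketch a direct injection, but this is not the argument in \cite{BFT95}: immediately after the theorem the paper states that the proof of $(1)$ is based on the AD~inequality (see~$\S$\ref{ss:proof-FKG}), and it even quotes the authors ``lamenting'' that a more powerful (combinatorial) argument is lacking. An explicit injection for the $a=b=1$ case of \eqref{eq:ineq-CPC} would itself be a new result, not a reconstruction of the cited proof; you should not present it as ``the essence of the argument'' there.

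For part $(2)$, you propose the combinatorial atlas, and you yourself flag the problem: the atlas framework, as developed for Theorem~\ref{t:Sta-weighted} and Theorem~\ref{t:Sta-equality}, extracts one-parameter log-concavity from the hyperbolicity condition \eqref{eq:Hyp}, not the two-parameter cross-product pattern of \eqref{eq:ineq-CPC}. The paper attributes $(2)$ to the lattice path method ($\S$\ref{ss:proof-lattice}): for a width-two poset one fixes a partition $X=\rC\sqcup\rC'$, identifies each constrained family $\cF_{xyz}(P,a,b)$ with a family of monotone lattice paths through prescribed points, and proves the inequality by an explicit injection between pairs of such paths. That is the approach in~\cite{CPP-CP}, it gives the stronger two-parameter monotonicity \eqref{eq:ineq-GCPC}, and it is why the defect is in~$\SP$; the atlas route would require substantial new machinery to reach a cross-product.

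For part $(3)$, there is a real gap: the hypotheses you are trying to work inside are mutually contradictory, so no injection is needed or even available to construct. By the last clause of Theorem~\ref{t:ineq-CPC-quant}, equation~\eqref{eq:main-thm-3}, the vanishing $\aFr_{xyz}(P,a,b+2)\cdot\aFr_{xyz}(P,a+2,b)=0$ already forces $\aFr_{xyz}(P,a,b)\cdot\aFr_{xyz}(P,a+1,b+1)=0$, which is incompatible with the two positivity assumptions in $(3)$. Hence $(3)$ holds vacuously, which is exactly why the paper writes that \eqref{eq:main-thm-3} implies part $(3)$. The content of the proof is not the ``canonical adjacent transposition'' you describe but establishing \eqref{eq:main-thm-3} itself, which in \cite{CPP-quant} is done by combining promotion/demotion operators with a geometric inequality (Favard's inequality), as noted in~$\S$\ref{ss:proof-promo} and~$\S$\ref{ss:proof-geom}. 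Your plan is trying to produce extensions that cannot exist.
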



The proof of {\small $(1)$} is based on the AD~inequality
(see~$\S$\ref{ss:proof-FKG}).  The authors lamented:  ``something
more powerful seems to be needed'' to prove the general form
of~\eqref{eq:ineq-CPC}.

Note that~\eqref{eq:ineq-CPC} easily implies~\eqref{eq:KS}, by taking \ts
$y\gets z$ \ts and \ts $P\gets P+y$, see e.g.\  \cite[$\S$3.1]{CPP-CP}.
In fact, \eqref{eq:GYY} also follows from~\eqref{eq:ineq-CPC}, by a more
involved argument (ibid., $\S$3.4). Of course, the value of these implications
is low given that \eqref{eq:ineq-CPC} remains an open problem.

For posets of width two, the $q$-analogue of \eqref{eq:ineq-CPC} and the
equality conditions are given in \cite[Thm~1.7 and~1.8]{CPP-CP}.
In fact, a stronger inequality holds in this case:
\begin{equation}\label{eq:ineq-GCPC} 
\aF_{xyz}(P,a,b) \cdot \aF_{xyz}(P,c,d) \. \le \, \aF_{xyz}(P,c,b) \cdot \aF_{xyz}(P,a,d),
\ \ \text{for all} \ \ a\le c \ \text{and} \ b \le d,
\end{equation}
see \cite[Thm~1.6]{CPP-CP}.
For \. $c=a+1$ \. and \. $d=b+1$, where \. $a,b\geq 1$, this gives \eqref{eq:ineq-CPC}.
The inequality~\eqref{eq:ineq-GCPC} fails already for posets of width three
\cite[Thm~1.6]{CPP-quant}.

\smallskip

When \. $\aF_{xyz}(P,a,b)\cdot \aF_{xyz}(P,a+1,b+1)=0$, the inequality \eqref{eq:ineq-CPC}
holds trivially.  Note that this assumption as well as the assumptions
in~{\small $(3)$} can be verified in polynomial time.  For the remaining
possible cases, we have the following weak version of Conjecture~\ref{conj:CPC}.

\smallskip

\begin{thm}[{\rm \cite[Thm~1.2]{CPP-quant}}{}] \label{t:ineq-CPC-quant}
Let \ts $P=(X,\prec)$ \ts be a poset on \ts $|X|=n$ \ts elements.
Fix distinct elements \ts $x,y,z\in X$.
Suppose that \. $\aFr_{xyz}(P,a,b+2)\.\aFr_{xyz}(P,a+2,b) > 0$. Then:
\begin{equation}\label{eq:main-thm-1}
{\aFr_{xyz}(P,a+1,b) \, \aFr_{xyz}(P,a,b+1)} \, \geq \,  \Bigl( \tfrac12 \, + \,
\tfrac{1}{4\ts n \ts \sqrt{a\ts b}}\Bigr) \. \aFr_{xyz}(P,a,b) \, \aFr_{xyz}(P,a+1,b+1).
\end{equation}
Alternatively, suppose that \. $\aFr_{xyz}(P,a,b+2)=0$ \. and \. $\aFr_{xyz}(P,a+2,b) > 0$.  Then:
\begin{equation}\label{eq:main-thm-2}
{\aFr_{xyz}(P,a+1,b)\, \aFr_{xyz}(P,a,b+1)} \, \geq \, \, \Bigl( \tfrac12  \, + \,
\tfrac{1}{16 \ts n \ts a \ts b^2} \Bigr) \. \aFr_{xyz}(P,a,b) \, \aFr_{xyz}(P,a+1,b+1).
\end{equation}
Finally, suppose that \. $\aFr_{xyz}(P,a,b+2)\.\aFr_{xyz}(P,a+2,b) = 0$.  Then:
\begin{equation}\label{eq:main-thm-3}
\aFr_{xyz}(P,a,b) \, \aFr_{xyz}(P,a+1,b+1) \, = \, 0.
\end{equation}
\end{thm}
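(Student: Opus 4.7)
The plan is to deploy the combinatorial atlas framework of~$\S$\ref{ss:proof-LA} in a quantitative form, refining the Alexandrov--Fenchel-type matrix inequalities that underlie Stanley's inequality~\eqref{eq:Sta} and the Kahn--Saks inequality~\eqref{eq:KS}. For fixed distinct elements \ts $x,y,z$ \ts and varying offsets \ts $(a+i,b+j)$, I would encode the counts \ts $\aFr_{xyz}(P,a+i,b+j)$ \ts as entries of a \ts $3\times 3$ \ts (or larger) Gram-like matrix \ts $M$, constructed by induction on~\ts $|X|$. The cross-product quantity then corresponds to a $2\times 2$ minor of \ts $M$, and the quantitative slack in Cases~1 and~2 will come from showing that the full matrix \ts $M$ \ts possesses a spectral gap whenever the boundary hypotheses hold.

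First I would establish the unconditional bound
\[
\aFr_{xyz}(P,a+1,b) \. \aFr_{xyz}(P,a,b+1) \ \ge \ \tfrac12 \. \aFr_{xyz}(P,a,b) \. \aFr_{xyz}(P,a+1,b+1),
\]
which encapsulates the \emph{pure} content of the atlas inequality. One natural route is a two-to-one injection: given \. $(f,g)\in \cF_{xyz}(P,a,b)\times \cF_{xyz}(P,a+1,b+1)$, exchange an appropriate pair of positions in \ts $f$ \ts and \ts $g$ \ts to produce an element of \. $\cF_{xyz}(P,a+1,b)\times \cF_{xyz}(P,a,b+1)$; the factor \ts $1/2$ \ts tracks the two-to-one redundancy in this construction, corresponding to the two symmetric choices of which side of the pair receives the increment.

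The quantitative refinements in Cases~1 and~2 would come from enlarging the matrix. In Case~1, the hypothesis \ts $\aFr_{xyz}(P,a,b+2) \. \aFr_{xyz}(P,a+2,b) > 0$ \ts lets one include the ``diagonal neighbors'' \ts $(a,b+2)$ \ts and \ts $(a+2,b)$ \ts as additional entries of \ts $M$, producing a strictly hyperbolic \ts $3\times 3$ \ts submatrix. Lower-bounding the hyperbolicity defect by (roughly) the geometric mean of the two witness entries yields slack of order \ts $1/(n\sqrt{ab})$: the factor \ts $1/n$ \ts reflects the number of atlas induction steps, while \ts $1/\sqrt{ab}$ \ts is the natural Cauchy--Schwarz scale of the witness entries. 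In Case~2 only one of the two diagonal witnesses is available, so the matrix extension is one-sided; the spectral-gap estimate then weakens to \ts $1/(nab^2)$, the additional factor of~\ts $b$ \ts reflecting the cost of matching the one-sided witness against both directions.

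The main obstacle, I expect, is Case~3: this is a structural \emph{vanishing} statement, not an Alexandrov--Fenchel-type inequality, and must be handled separately. My plan would be a propagation argument on the support \. $S \. := \. \{(c,d) \, : \, \aFr_{xyz}(P,c,d) > 0\}$. By the log-concavity of the matrix \ts $\bigl(\aFr_{xyz}(P,c,d)\bigr)$ \ts along rows and columns (a consequence of Theorem~\ref{t:ineq-Sta-gen} applied to \ts $y$ \ts and \ts $z$ \ts separately), the set \ts $S$ \ts has no interior ``holes'' along horizontal or vertical lines. One would then argue that if both \ts $(a,b)$ \ts and \ts $(a+1,b+1)$ \ts lie in \ts $S$, the combined horizontal and vertical log-concavity force \emph{both} of \ts $(a,b+2)$ \ts and \ts $(a+2,b)$ \ts to lie in \ts $S$ \ts as well, contradicting the hypothesis. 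The delicate step is to rule out ``thin'' configurations in which \ts $S$ \ts is concentrated along the diagonal direction; here I would expect the argument to require an exchange argument on linear extensions, perhaps tracing a sequence of adjacent transpositions from a witness in \ts $\cF_{xyz}(P,a,b)$ \ts to one in \ts $\cF_{xyz}(P,a+1,b+1)$ \ts and extracting an intermediate extension with the required gaps.
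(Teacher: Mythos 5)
Your proposal takes a genuinely different route from the paper, and I do not believe it goes through as sketched. The survey states explicitly (both in the sentence immediately following the theorem and in~$\S$\ref{ss:proof-geom} and~$\S$\ref{ss:proof-promo}) that Theorem~\ref{t:ineq-CPC-quant} is proved by combining \emph{promotion/demotion operators} with \emph{Favard's inequality} for mixed volumes: the counts $\aFr_{xyz}(P,a,b)$ are realized as mixed volumes of sections of the order polytope $\cO_P$, the factor $\tfrac12$ arises because Favard's inequality is a weaker-constant analogue of Alexandrov--Fenchel, and the $1/(n\sqrt{ab})$ and $1/(nab^2)$ corrections come from quantitative refinements of that geometric inequality under the positivity hypotheses on the diagonal witnesses. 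The vanishing statement \eqref{eq:main-thm-3} is handled via the promotion machinery underlying the vanishing conditions of Theorem~\ref{t:Sta-gen-vanish}. None of this uses the combinatorial atlas: that framework (§\ref{ss:proof-LA}) is applied elsewhere in the survey, and crucially it proves hyperbolicity-type inequalities whose defects are \emph{not} quantitatively controlled in the way you would need here.

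There are two concrete gaps in what you propose. First, the ``unconditional'' two-to-one injection from $\cF_{xyz}(P,a,b)\times\cF_{xyz}(P,a+1,b+1)$ into $\cF_{xyz}(P,a+1,b)\times\cF_{xyz}(P,a,b+1)$: such a pairing argument is not known to exist, and if it did it would give a new $\SP$-style proof of the $\tfrac12$-CPC, a result that in the paper demands geometric machinery precisely because no direct injection has been found. You cannot assert the redundancy is ``two-to-one'' without constructing the exchange map and checking it; this is the kind of step where injections routinely fail for width $\geq 3$ (cf.\ the failure of \eqref{eq:ineq-GCPC} beyond width two). Second, your Case~3 propagation argument does not close: log-concavity of $\aFr_{xyz}(P,\cdot,\cdot)$ along rows and along columns gives that the support is an interval in each row and each column, but this falls short of showing that $(a,b),(a+1,b+1)\in S$ forces \emph{both} $(a,b+2)\in S$ and $(a+2,b)\in S$. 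The support is determined by a system of inequalities of the form given in Theorem~\ref{t:Sta-gen-vanish} (sums over admissible choices of $f(x)=c$), and while $(a,b+2)$ and $(a+1,b+1)$ share the same value of $a+b$, they impose different constraints on $c$ through $\al(y)-a$ and $n-\be(y)-a+1$; your heuristic reasoning does not rule out a support in which the $(a+1,b+1)$ interval for $c$ is nonempty but the $(a,b+2)$ interval is empty. The paper's route through the explicit promotion-based vanishing criterion is doing real work here that a soft ``no holes in rows/columns'' argument cannot replace.
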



Note that \eqref{eq:main-thm-3} implies part \ts $(3)$ \ts in Theorem~\ref{t:CPC-summary}.
The theorem is proved using geometric inequalities (see~$\S$\ref{ss:proof-geom}).

\smallskip

\subsection{Order polynomial version of CPC}\label{ss:ineq-CPC-OP}
The following is a natural generalization of the DDP inequality \eqref{eq:ineq-DDP}
to the setting of CPC~\eqref{eq:Sta}.

Let \ts $P=(X,\prec)$ \ts be a poset \ts on \ts
$|X|=n$ \ts elements, and let \ts $X=\{x_1,\ldots,x_n\}$.
Fix \ts $t\ge 0$ \ts and distinct elements \ts $x,y,z \in X$.
For  integers \ts $a,b \geq 0$\ts, let
$$\cP(P,t \ts ;\ts x,y,z \ts ; \ts a,b) \ := \ \big\{h\in \cP(P,t) \ : \ h(y)-h(x)=a \ \ \text{and} \ \ h(z)-h(y)=b\big\}.
$$
Denote
$$
\aligned
\Lambda(P,t \ts ;\ts x,y,z \ts ; \ts a,b) \ & := \, \big|\cP(P,t \ts ;\ts x,y,z \ts ; \ts a,b)\big|\ts,  \quad \text{and}\\
\Lambda_{\bq}(P,t \ts ;\ts x,y,z \ts ; \ts a,b) \, & := \, \sum_{f\ts \in \ts \cP(P, \ts t \. ;\. x, \ts y, \ts z \. ; \. a, \ts b)} \. q_1^{f(x_1)}  \ts \cdots \, q_n^{f(x_n)}\ts.
\endaligned
$$
\smallskip

\begin{thm}[{\rm \defn{Cross-product inequality for $P$-partitions}~\cite[Thm~9.3]{CP-multi}}{}]
\label{thm:CPC-OP}
	Let \ts $P=(X,\prec)$ \ts be a finite poset, let $x,y,z \in \cP$, and let \ts $t\ge 1$ \ts be a positive integer.
Then, for every \ts $a,b \geq 0$, we have:
\begin{equation}\label{eq:CPC-OP-1}
\aligned  &  \Lambda(P,t \ts ;\ts x,y,z \ts ; \ts a+1,b) \cdot \Lambda(P,t \ts ;\ts x,y,z \ts ; \ts a,b+1) \\
& \hskip1.cm \geq \ \Lambda(P,t \ts ;\ts x,y,z \ts ; \ts a,b) \cdot \Lambda(P,t \ts ;\ts x,y,z \ts ; \ts a+1,b+1).
\endaligned
\end{equation}
More generally:
\begin{equation}\label{eq:CPC-OP}
\aligned  &   \Lambda_{\bq}(P,t \ts ;\ts x,y,z \ts ; \ts a+1,b) \cdot  \Lambda_{\bq}(P,t \ts ;\ts x,y,z \ts ; \ts a,b+1) \\
& \hskip1.cm  \geqslant_\bq \ \Lambda_{\bq}(P,t \ts ;\ts x,y,z \ts ; \ts a,b) \cdot \Lambda_{\bq}(P,t \ts ;\ts x,y,z \ts ; \ts a+1,b+1).
\endaligned
\end{equation}
\end{thm}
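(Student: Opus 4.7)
The multivariate form \eqref{eq:CPC-OP} specializes to \eqref{eq:CPC-OP-1} at $\bq = (1,\ldots,1)$, so my plan is to prove \eqref{eq:CPC-OP} directly, following the strategy that worked for the $\bq$-DDP inequality (Theorem~\ref{t:DDP-multi}) and for the self-dual order-polynomial Fishburn extension (Theorem~\ref{t:main-OP-multi}): apply the Ahlswede--Daykin four-function inequality on the distributive lattice $L = \{0, 1, \ldots, t\}^X$ with coordinatewise meet and join, inside which $\cP(P, t)$ is a sublattice. Equip $L$ with the multiplicative $\bq$-measure $\mu(h) := q_1^{h(x_1)} \cdots q_n^{h(x_n)}$, which satisfies $\mu(h_1)\mu(h_2) = \mu(h_1 \vee h_2)\,\mu(h_1 \wedge h_2)$ pointwise. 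For each $(\alpha,\beta) \in \{(a,b),(a,b+1),(a+1,b),(a+1,b+1)\}$, define
$$F_{\alpha,\beta}(h) \, := \, \mu(h) \cdot \one\big[\ts h \in \cP(P,t),\; h(y)-h(x)=\alpha,\; h(z)-h(y)=\beta \ts\big];$$
then \eqref{eq:CPC-OP} reads $\sum_L F_{a+1,b} \cdot \sum_L F_{a,b+1} \,\geqslant_\bq\, \sum_L F_{a,b} \cdot \sum_L F_{a+1,b+1}$.

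The Ahlswede--Daykin hypothesis to verify is that for all $h_1, h_2 \in L$,
$$F_{a,b}(h_1) \cdot F_{a+1,b+1}(h_2) \ \leq \ F_{a+1,b}(h_1 \vee h_2) \cdot F_{a,b+1}(h_1 \wedge h_2).$$
By multiplicativity of $\mu$ and sublattice closure of $\cP(P,t)$, the weight factor cancels, and this reduces to a purely combinatorial type claim: whenever $h_1$ has type $(a,b)$ and $h_2$ has type $(a+1,b+1)$, the pair $(h_1 \vee h_2,\, h_1 \wedge h_2)$ should have types $\big((a+1,b),\, (a,b+1)\big)$. A short case analysis on the triples $\big(h_i(x), h_i(y), h_i(z)\big)$ shows, however, that this claim fails: when $h_1, h_2$ are comparable at $\{x,y,z\}$ the join and meet simply return the original types $(a,b)$ and $(a+1,b+1)$, and in intermediate cases the types can land in several other configurations.

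To repair this, I will partition $L \times L$ into orbits of a canonical weight-preserving involution $\tau : L \times L \to L \times L$ that fixes $(h_1 \vee h_2,\, h_1 \wedge h_2)$ but reshuffles values at $y$ and $z$ along each fiber so that, after $\tau$, each orbit is of one of the two types for which the AD hypothesis holds trivially. Weight preservation of $\tau$ is automatic from multiplicativity of $\mu$, while validity of the shifted $P$-partitions will be ensured by performing the swap only on canonically chosen upper order ideals containing $y$ or $z$, in the spirit of the shuffling operations used in the Lam--Pylyavskyy proofs \cite{LP07} and in the $\bq$-DDP argument of \cite{CP-multi}. Once $\tau$ is constructed, summing AD over orbits yields the $\bq$-inequality coefficient-wise. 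The main obstacle, and the genuinely technical core of the argument, is the construction of $\tau$: one must perform case analysis on the ``plateaus'' $\{v \in X : h_i(v) = h_i(y)\}$ and $\{v \in X : h_i(v) = h_i(z)\}$ to identify which covering relations would be violated by a naive shift of value at $y$ or $z$, and enlarge the ideal minimally so as to avoid every such violation while preserving the type-swap effect.
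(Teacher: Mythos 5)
Your AD setup is sensible, and you correctly locate the obstacle: the Ahlswede--Daykin hypothesis $F_{a,b}(h_1)\ts F_{a+1,b+1}(h_2) \le F_{a+1,b}(h_1\vee h_2)\ts F_{a,b+1}(h_1\wedge h_2)$ fails pointwise --- for instance, whenever $h_1$ and $h_2$ are coordinatewise comparable at $\{x,y,z\}$, the join and meet simply return the original types, making the right-hand side vanish while the left-hand side does not. This is precisely why a one-line FKG/AD application does not prove the theorem, and recognizing it is the right first step.

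The repair you propose, however, is not carried out, and as phrased it does not cohere. If $\tau$ is a weight-preserving \emph{involution} that on each orbit replaces a pair of types $\bigl((a,b),(a+1,b+1)\bigr)$ by a pair of types $\bigl((a+1,b),(a,b+1)\bigr)$, then $\tau$ is a weight-preserving bijection between the two sides of the inequality and would prove an \emph{equality}, which is false already for $P$ a three-element chain (take $t=5$, $a=b=1$: the sides are $9$ and $8$). If instead $\tau$ is a weight-preserving \emph{injection} from the $(a,b)\times(a+1,b+1)$ pairs into the $(a+1,b)\times(a,b+1)$ pairs, you have a direct combinatorial proof and Ahlswede--Daykin plays no role; but then ``summing AD over orbits'' has no formal meaning, since Ahlswede--Daykin requires the pointwise hypothesis for \emph{every} pair $(h_1,h_2)$, not merely orbit-by-orbit. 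The only coherent reading of your plan is that you intend to redistribute the weight of the $F_{\alpha,\beta}$ along $\tau$-orbits to obtain modified functions with the same totals that do satisfy the pointwise AD hypothesis; that is a legitimate strategy, but its execution --- the actual construction of $\tau$ via ``plateaus'' and ``minimally enlarged ideals,'' the check that the modified $P$-partitions remain order-preserving, and the verification of the pointwise inequality for the redistributed functions --- is the entire content of the theorem, and you have only gestured at it. In short, you have accurately diagnosed why a vanilla four-function argument cannot work, but you have left the proof with a gap exactly where the argument in~\cite{CP-multi} (which the paper describes as a genuine generalization of the AD machinery) does its real work.
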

The proof uses a generalization of the AD~inequality (see~$\S$\ref{ss:proof-FKG}).

\smallskip

\subsection{Conjectural generalization}\label{ss:ineq-Sta-conj}
Recall \eqref{eq:ineq-f-min-def}, that \. $f_\mn(A):=\min\{f(x)\. : \. x\in A\}$.  The
following is the natural generalization of Stanley's inequality \eqref{eq:Sta}.

\begin{conj}[{\rm\defn{extended Stanley inequality}~\cite[Conj.~1.5]{CP-corr}}{}]\label{conj:poset-Stanley-subset}
Let \ts $P=(X,\prec)$ \ts be a poset with \ts $|X|=n$ \ts elements. Fix a nonempty
subset \ts
$A \subseteq  X$, and let \. $2 \leq k \leq n-1$. Then:
\begin{equation}\label{eq:Stanley-conj}
    \Pb[f_{\min}(A)=k]^2 \  \geq \  \Pb[f_{\min}(A)=k+1]  \. \cdot \. \Pb[f_{\min}(A)=k-1]\ts.
\end{equation}
\end{conj}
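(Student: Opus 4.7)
My plan has two stages: a reduction to a Stanley-type question on an augmented poset, followed by an attempt via the combinatorial atlas technique. For the reduction, let $P^+ = (X \cup \{v\}, \prec^+)$ be obtained from $P$ by adjoining a new element $v$ with $v \prec^+ a$ for every $a \in A$ and $v$ incomparable to every $b \in B := X \setminus A$. Inserting $v$ at position $k$ gives a bijection between $\{f^+ \in \Ec(P^+) : f^+(v) = k\}$ and $\{f \in \Ec(P) : f_{\min}(A) \geq k\}$, so with $\aN(P,A,k) := n! \, \Pb[f_{\min}(A) = k]$ one has the telescoping identity $\aN(P,A,k) = \aN(P^+, v, k) - \aN(P^+, v, k+1)$. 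Setting $u_k := \aN(P^+,v,k)$, Conjecture~\ref{conj:poset-Stanley-subset} is equivalent to log-concavity of the first differences $d_k := u_k - u_{k+1}$.

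Stanley's inequality~\eqref{eq:Sta} for $(P^+,v)$ yields log-concavity of $u_k$, but this does not suffice. A direct expansion rewrites $d_k^2 \geq d_{k-1} d_{k+1}$ as
\begin{equation*}
(u_k^2 - u_{k-1} u_{k+1}) \, + \, (u_{k+1}^2 - u_k u_{k+2}) \ \geq \ u_k u_{k+1} - u_{k-1} u_{k+2},
\end{equation*}
and although log-concavity of $u_k$ makes each of the three parenthesized quantities nonnegative, the case $u_k = r^k$ shows equality for geometric sequences. Thus one needs a \emph{quantitative} strengthening of Stanley for $(P^+,v)$ whose defect dominates $u_k u_{k+1} - u_{k-1} u_{k+2}$. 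The natural source of such a quantitative defect is the combinatorial atlas machinery (see~$\S$\ref{ss:proof-LA}) that underlies the weighted Stanley inequality (Theorem~\ref{t:Sta-weighted}) and the cross-product bound (Theorem~\ref{t:ineq-CPC-quant}). Writing $e_A(Q)$ for the number of linear extensions of a poset $Q$ that start with an element of $A$, one has the identity $\aN(P,A,k) = \sum_I e(P|_I) \, e_A(P|_{X \setminus I})$, where the sum is over lower ideals $I \subseteq B$ of $P$ with $|I| = k-1$. This realizes the sequence as a bilinear pairing indexed by such ideals, and the goal is to use a suitable two-coloring of the Stanley atlas---one color for ideals whose complement starts only with $B$-elements, the other for ideals whose complement admits an $A$-start---to extract the extra defect.

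The main obstacle, as I see it, is verifying hyperbolicity for this refined atlas. The local-swap arguments that drive hyperbolicity for Stanley's inequality exchange incomparable cover relations, but the $A$-starting condition in $e_A(P|_{X \setminus I})$ is not preserved under such swaps: exchanging two incomparable covers can change whether the extension of $P|_{X \setminus I}$ begins in $A$ or in $B$, moving mass between the two color strata. Taming this coupling will likely require either a two-color atlas with a joint Cauchy--Schwarz-type estimate across the strata, or encoding the $A$-condition as an order-reversing weight so that Theorem~\ref{t:Sta-weighted} applies after a limit argument (for example, weighting each $a \in A$ by $\varepsilon$ and letting $\varepsilon \to 0^+$). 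Whether either route produces the exact defect required, or whether a genuinely new ingredient is needed, is the crux.
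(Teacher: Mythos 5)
The statement you are attacking is presented in the paper as an open conjecture (Conjecture~\ref{conj:poset-Stanley-subset}, citing \cite[Conj.~1.5]{CP-corr}); neither this survey nor the cited source proves it. There is therefore no paper proof to compare against, and the relevant comparison is with the special cases and consequences that \emph{are} proved via the combinatorial atlas, notably Theorem~\ref{t:ineq-deletion-Sta} and the covariance bounds in Section~\ref{ss:ineq-cov}. You are candid that your proposal stops short of a proof, so I will treat it as a reduction plus a research plan rather than a claimed theorem.

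The reduction to an augmented poset is a useful observation, but as written the construction does not define a poset: declaring $v \prec^+ a$ for all $a \in A$ while keeping $v$ incomparable to every $b \in B = X \setminus A$ violates transitivity whenever some $b \in B$ dominates some $a \in A$. You should instead set $v \prec^+ z$ for all $z$ in the upper closure $A\!\uparrow$; the bijection between $\{f^+ \in \Ec(P^+) : f^+(v)=k\}$ and $\{f \in \Ec(P) : f_{\mn}(A) \geq k\}$ goes through unchanged because $f_{\mn}(A) \geq k$ is equivalent to $f(z) \geq k$ for all $z \in A\!\uparrow$. (Separately, your normalization $\aN(P,A,k):= n!\,\Pb[\cdot]$ should read $e(P)\,\Pb[\cdot]$ if you want the telescoping identity against $\aN(P^+,v,k)$ to hold on the nose; this is cosmetic since a constant multiple does not affect log-concavity.) Your expansion correctly isolates the obstruction: Stanley's inequality for $(P^+,v)$ gives log-concavity of $u_k$, which is tight on geometric sequences and hence leaves no room for the difference sequence $d_k$. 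That diagnosis is sound, and the subsequent identity expressing $\aN(P,A,k)$ as a bilinear sum over lower ideals of $B$ is also correct. But the two-color atlas and the order-reversing-weight limit are at present speculation; neither is shown to preserve hyperbolicity, and this is exactly the gap you yourself flag. As an exploration of a genuinely open problem the proposal is reasonable, but it should not be presented as a proof, and the construction of $P^+$ needs the fix above.
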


This conjecture also implies Theorem~\ref{t:ineq-deletion-Sta},  
see \cite[$\S$7.1]{CP-corr}.\footnote{Maxwell Aires recently informed us that 
Conjecture~\ref{conj:poset-Stanley-subset} fails for a poset $P=C_3+C_1$, where $A$ 
consists of two maximal elements of~$P$ and $k=2$ (personal communication, February~6, 2025).  
Independently, Jonathan Leake and Shayan Oveis Gharan informed us that this is essentially
the only counterexample to the conjecture (personal communication, February~20, 2025). 
}

\smallskip

\subsection{Second moment conjecture}\label{ss:ineq-second-moment-conj}
In Theorem~\ref{t:poset-cov}, letting \ts $y= x$ \ts
gives the following curious bound on the second moment:

\begin{cor}[{\rm\defn{second moment inequality}~\cite[Cor.~3.5]{CP-corr}}{}]\label{c:ineq-second-moment}
Let \ts $P=(X,\prec)$ \ts be a finite poset, and let \ts $x \in X$ \ts be a fixed element.
Then:
\begin{equation}\label{eq:ineq-second-moment}
1 \, \le \	\frac{\Eb[f(x)^2 ]}{\Eb[f(x)]^2} \ < \, 2 \ts.
\end{equation}
\end{cor}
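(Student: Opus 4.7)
The plan is to deduce both bounds directly as a one-step specialization of Theorem~\ref{t:poset-cov}, together with a standard second-moment observation. Since the excerpt explicitly notes that the corollary is obtained by setting $y = x$ in Theorem~\ref{t:poset-cov}, the main task is to verify that this substitution produces the claimed two-sided bound and that the inequality becomes strict on the upper side.

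For the upper bound, I would apply Theorem~\ref{t:poset-cov} to the pair $(x,x)$. With $y = x$ we have $f(x)\ts f(y) = f(x)^2$ and $\min\{f(x),f(y)\} = f(x)$, so the theorem yields
\begin{equation*}
\frac{\Eb[f(x)^2] \. + \. \Eb[f(x)]}{\Eb[f(x)]^2} \ \le \ 2.
\end{equation*}
Rearranging gives $\Eb[f(x)^2]/\Eb[f(x)]^2 \le 2 - 1/\Eb[f(x)]$. Since $f$ is a bijection $X \to [n]$ we have $f(x) \ge 1$ pointwise, hence $\Eb[f(x)] \ge 1$ and $1/\Eb[f(x)] > 0$, which yields the strict upper bound claimed in~\eqref{eq:ineq-second-moment}.

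The lower bound is the standard non-negativity of variance, or equivalently Jensen's inequality applied to the convex map $z \mapsto z^2$: one has $\Eb[f(x)^2] \ge \Eb[f(x)]^2$ with equality only if $f(x)$ is a.s.\ constant, which for a nontrivial poset does not occur (so in fact the lower bound is typically strict as well, though $1$ suffices here). There is no substantive obstacle; the only point that merits a quick check is that the hypothesis of Theorem~\ref{t:poset-cov} permits $y = x$, which it does, since $x,y \in X$ are there only required to be fixed poset elements with no distinctness or incomparability assumption.
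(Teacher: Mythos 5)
Your proof is correct and is precisely the route the paper intends: the text immediately preceding the corollary states that it is obtained by setting $y=x$ in Theorem~\ref{t:poset-cov}, and your rearrangement, giving $\Eb[f(x)^2]/\Eb[f(x)]^2 \le 2 - 1/\Eb[f(x)] < 2$ via $\Eb[f(x)]\ge 1$, together with the trivial variance bound for the lower inequality, is exactly the intended argument. The paper also remarks that the non-strict upper bound can alternatively be derived from Stanley's inequality via a general fact about log-concave random variables, but that is presented as a secondary observation, not as the proof.
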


The lower bound is trivial and holds for every random variable.
The (non-strict) upper bound also follows from Stanley's inequality,
since for every log-concave random variable \ts $Z$, we have: \. $\Eb[Z^2] \le 2 \ts \Eb[Z]^2$,
see \cite[Prop.~3.7]{CP-corr}.  The following conjecture improves upon the upper bound
in~\eqref{eq:ineq-second-moment}.\footnote{Maxwell Aires and Jeff Kahn recently
pointed out that Conjecture~\ref{conj:ineq-second-moment} is false and that the lower bound~$2$
is asymptotically tight (personal communication, July~15, 2024). In their example,
\ts $P=C_m + C_n$ \ts s.t.\ \ts $m, n/m \to \infty$ \ts and $x=\min(C_m)$.
}

\begin{conj}[{\rm\defn{second moment conjecture}~\cite[Conj.~3.8]{CP-corr}}{}]\label{conj:ineq-second-moment}
Let \ts $P=(X,\prec)$ \ts be a finite poset, and let \ts $x \in X$ \ts be a fixed element.
Then:
\begin{equation}\label{eq:ineq-second-moment-conj}
\frac{\Eb[f(x)^2 ]}{\Eb[f(x)]^2} \ \le \, \frac43 \..
\end{equation}
\end{conj}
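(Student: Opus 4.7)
The conjecture aims to sharpen Corollary~\ref{c:ineq-second-moment} (which gives the bound $<2$ via pure log-concavity of $\{\aNr(P,x,a)\}_a$ from Stanley's inequality and the general fact that $\Eb[Z^2]\le 2\Eb[Z]^2$ for log-concave $Z$) to the sharp constant $4/3$, attained in the limit by antichains $P=A_n$ as $n\to\infty$. Because log-concavity alone admits ratios arbitrarily close to~$2$---for example via truncated geometric sequences $a_k\propto q^{k-1}$ with $q\to 1^-$---any successful proof must extract information beyond the one-dimensional log-concavity of the marginal $\{\aNr(P,x,a)\}_a$ and exploit genuinely poset-theoretic structure.

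My plan is first to reformulate the conjecture as the variance bound $\mathrm{Var}(f(x))\le\tfrac{1}{3}\Eb[f(x)]^2$, in which the extremality of antichains becomes transparent: for $A_n$ one computes $\mathrm{Var}(f(x))/\Eb[f(x)]^2=(n-1)/(3(n+1))\nearrow 1/3$. Then I would try to enrich the single log-concave marginal by invoking the generalized Stanley inequality (Theorem~\ref{t:ineq-Sta-gen}) together with the Kahn--Saks inequality (Theorem~\ref{t:ineq-KS}): for every $y\ne x$ the sequence $\aFr(P,x,y,a)$ is log-concave in~$a$, and summing or convolving these refinements over~$y$ should yield a constraint strictly stronger than one-dimensional log-concavity of $\aNr(P,x,\cdot)$. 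The goal is then to convert such a joint constraint into a variance-vs.-mean inequality via Cauchy--Schwarz.

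As a parallel route, I would attempt an induction on $|X|$ using Proposition~\ref{p:basic-recursion}. Conditioning on $z=f^{-1}(1)\in\min(P)$, for $z\ne x$ one has $\Eb_{P\mid z}[f(x)^k]=\Eb_{P-z}[(f(x)+1)^k]$, while $z=x$ contributes $f(x)=1$. A short calculation shows that the shift $f(x)\mapsto f(x)+1$ strictly decreases $\Eb[f(x)^2]/\Eb[f(x)]^2$ whenever the ratio exceeds~$1$, so the non-trivial part of the induction concerns the convex-combination step with weights $e(P-z)/e(P)$. Concretely one wants
\[
\Eb_P[f(x)^2]\,=\,\frac{1}{e(P)}\!\sum_{z\in\min(P)}\!e(P-z)\cdot\Eb_{P-z}\bigl[(f(x)+1)^k\bigr]
\]
(with the $z=x$ term interpreted as just $1$) to satisfy the $4/3$ bound assuming each subposet does; this reduces to a Jensen-type inequality in the weights~$e(P-z)/e(P)$ and the subposet first moments.

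The main obstacle is precisely the gap between the log-concavity bound~$2$ and the conjectured~$4/3$: no argument passing only through one-dimensional log-concavity of $\{\aNr(P,x,a)\}$ can succeed, since abstract log-concave distributions saturate the weaker bound. The proof must therefore invoke a poset-specific mechanism excluding geometric-type extremizers of the marginal. I expect the deepest input to be the mixed-discriminant content of the Alexandrov--Fenchel inequality underlying Stanley's proof (see~\S\ref{ss:proof-geom}), whose multivariate log-concavity encodes information invisible to any single marginal. A natural initial target is the width-two case, where the $\bq$-Stanley inequality (Theorem~\ref{t:Sta-q}) supplies the strongest multivariate refinement currently available and should already decide whether this approach is viable before committing to the general argument.
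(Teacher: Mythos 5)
The statement under review is a \emph{conjecture}, not a theorem: the paper presents Conjecture~\ref{conj:ineq-second-moment} as an open problem (attributed to~\cite{CP-corr}) with no proof, and merely supplies the extremal example $P=C_{n-1}+\{x\}$ to show the constant $4/3$ cannot be improved. Your submission, as you yourself make fairly clear, is a research plan rather than a proof; it catalogues plausible attacks and obstacles without closing any of them, which is consistent with the conjecture being open. Your preliminary observations are sound: the variance reformulation $\mathrm{Var}(f(x))\le \tfrac13\Eb[f(x)]^2$; the computation $\mathrm{Var}(f(x))/\Eb[f(x)]^2=(n-1)/\bigl(3(n+1)\bigr)$ for $A_n$ (which matches the paper's example, since $f(x)$ is uniform on $[n]$ in both cases); the shift identity $\Eb_{P\mid z}[f(x)^k]=\Eb_{P-z}[(f(x)+1)^k]$ after conditioning on $f^{-1}(1)=z$; and the diagnosis that one-dimensional log-concavity of $\{\aNr(P,x,a)\}$ alone cannot get below the bound $2$ of Corollary~\ref{c:ineq-second-moment}, since geometric-type log-concave distributions come arbitrarily close to saturating $\Eb[Z^2]=2\Eb[Z]^2$.

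Be cautious, however, about the ``parallel route'' via recursion over $\min(P)$, because the natural Jensen step runs in the \emph{wrong} direction. Writing $p_z:=e(P-z)/e(P)$ and decomposing $\Eb_P[f(x)^k]$ as a mixture over $z\in\min(P)$, convexity of $t\mapsto t^2$ gives
$\bigl(\sum_z p_z\,\Eb_{P-z}[f(x)+1]\bigr)^2 \le \sum_z p_z\,\Eb_{P-z}[f(x)+1]^2$,
so the mixture ratio $\Eb_P[f(x)^2]/\Eb_P[f(x)]^2$ can \emph{exceed} the maximum of the conditional ratios: mixing pushes the quantity up, while the $+1$ shift pushes it down. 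Any induction along these lines must prove that the downward shift always dominates the upward mixing effect, a quantitative trade-off the proposal neither formulates nor attacks, and for which there is no obvious mechanism (the weights $p_z$ and the conditional means $\Eb_{P-z}[f(x)]$ are both poset-dependent and interact). The alternative route through the joint log-concavity constraints of Theorems~\ref{t:ineq-Sta-gen} and~\ref{t:ineq-KS} is a reasonable source of extra structure, but you do not specify which functional of the family $\aFr(P,x,y,\cdot)$ would translate into a second-moment bound, so this too remains at the level of intent. In short, your obstacle analysis is accurate, but the proposal does not advance beyond what Corollary~\ref{c:ineq-second-moment} already establishes; the conjecture remains open both in the paper and in your proposal.
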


In fact, the inequality is probably always strict as the following example
suggests.

\begin{ex}\label{ex:ineq-second-moment-43}
Let \ts $P:=C_{n-1}+\{x\}$ \ts be a disjoint sum of two chains.  We have:
\[\frac{\Eb[f(x)^2 ]}{\Eb[f(x)]^2}
 \ = \ \frac{\frac{1}{n} \sum_{k=1}^n \. k^2}{\big(\frac{1}{n} \sum_{k=1}^n \. k \big)^2} \ = \    \frac{4n+2}{3n+3} \
\to \ \frac{4}{3} \ \ \quad \text{as \ \, $n \to \infty$\ts.}
\]
Thus, the constant in the upper bound \eqref{eq:ineq-second-moment-conj} must be at least~$4/3$.
\end{ex}



\medskip

\section{Equality conditions of Stanley type inequalities}\label{s:eq-Sta}

\subsection{Stanley inequality}\label{ss:ineq-Sta-equality}
Recall that \. $\al(x) := \left|x\!\downarrow\right|$ \. and \.
$\be(x) := \left|x\!\uparrow\right|$ \. denote the sizes of
the lower and upper order ideals, respectively.


\begin{thm}[{\rm \defn{vanishing conditions}, Daykin and Daykin~\cite[Thm~8.2]{DD85}}{}] \label{t:Sta-vanish}
Let \ts $P=(X,\prec)$ \ts be a poset with \ts $|X|=n$ \ts elements, let \ts
$x \in X$ \ts and \ts $a \in [n]$.
Then \. $\aNr(P, x,a)>0$ \. \underline{if and only if} \.
$$\al(x)\le a \quad \text{and} \quad \be(x)\le n-a+1.$$
Moreover, if \. $\aNr(P, x,a)>0$, then
a linear extension \. $f \in \Ec(P,x,a)$ \. can be found in polynomial time.
\end{thm}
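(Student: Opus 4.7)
The plan splits along the two directions. For necessity, I would simply count: any $f \in \Ec(P,x,a)$ sends the $\al(x) - 1$ elements of $x\!\downarrow - x$ into $\{1,\ldots,a-1\}$ and the $\be(x) - 1$ elements of $x\!\uparrow - x$ into $\{a+1,\ldots,n\}$, forcing $\al(x) \le a$ and $\be(x) \le n - a + 1$.

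For sufficiency, together with the polynomial-time construction, I would build $f$ by a greedy topological sort. Write $Y := X - x$, $D := x\!\downarrow - x$, $U := x\!\uparrow - x$, and $V := Y - U$. The plan is first to produce a down-set $L$ of $Y$ with $D \subseteq L \subseteq V$ and $|L| = a - 1$, then to place $x$ at position $a$, topologically sort $P|_L$ into positions $1,\ldots,a-1$, and topologically sort $P|_{Y - L}$ into positions $a+1,\ldots,n$. Each poset relation $p \prec q$ is then satisfied automatically: if $p,q$ lie on the same side of $x$ the topological sort handles it; the boundary cases involving $x$ are handled by $D \subseteq L$ and $U \cap L = \emp$; the case $p \in Y - L$, $q \in L$ is ruled out because $L$ is a down-set of $Y$.

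I would construct $L$ starting from $L := D$ (a down-set of $Y$ contained in $V$, of size $\al(x) - 1 \le a - 1$), and iteratively adjoin a minimal element of $V - L$ until $|L| = a - 1$. Two checks drive correctness: (a) $V - L \neq \emp$ throughout, since $|V| = n - \be(x) \ge a - 1$; and (b) adjoining a minimal $y \in V - L$ preserves the down-set property of $L$ in $Y$. The main obstacle, such as it is, lies in (b): one must rule out an ``invisible'' predecessor $z \prec y$ sitting in $U$ rather than $V$. This is resolved by the one-line implication that $z \in U$ and $z \prec y$ force $y \in U$, contradicting $y \in V$; hence every predecessor of $y$ lies in $V$, and then $z \in L$ by minimality of $y$. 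Each iteration is polynomial in $n$, as is each of the two final topological sorts, so the whole procedure runs in time polynomial in $n$.
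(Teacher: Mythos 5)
Your argument is correct: the counting step gives necessity, and the greedy down-set construction gives sufficiency together with a polynomial-time algorithm. The only subtle point, preserving the down-set property under adjoining a minimal element of $V-L$, is handled properly by the observation that a predecessor of $y$ in $U$ would force $y\in U$, so all predecessors lie in $V$ and hence, by minimality, already in $L$. The bookkeeping $|V|=n-\be(x)\ge a-1$ also checks out.

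That said, this is a genuinely different route from the one the paper credits to Daykin--Daykin and reproves via promotion/demotion operators (see $\S$\ref{ss:proof-promo}). In the promotion approach one starts from an arbitrary linear extension $g\in\Ec(P)$, looks at $b:=g(x)$, and then repeatedly applies a demotion (if $b>a$) or promotion (if $b<a$) restricted to the relevant chain to shift the $f$-value of $x$ by one; the conditions $\al(x)\le a$ and $\be(x)\le n-a+1$ are exactly what guarantee that each such shift is available. Your approach builds the fiber from scratch: one commits to the level set $L=f^{-1}\{1,\ldots,a-1\}$ as a down-set and fills both sides by topological sort. What you gain is a self-contained, elementary argument with no machinery beyond topological sorting. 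What the promotion route buys is uniformity with the rest of the paper's toolkit: the same operator-based proof extends with little change to the pinned version $\aNr_{\bz\bc}(P)>0$ in Theorem~\ref{t:Sta-gen-vanish} and to the Kahn--Saks fibers in Theorem~\ref{t:KS-vanish}, where a direct level-set construction would have to be redone with more care because several positions are pinned at once. Your proof can be adapted to the generalized setting by building nested down-sets $L_1\subset L_2\subset\cdots$, but that is an extra step; the promotion proof generalizes essentially verbatim.
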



The original proof uses promotion/demotion operators (under a different name,
cf.~$\S$\ref{ss:proof-promo}),
This results was rediscovered in \cite[Lem.~15.2]{SvH20}.
By Theorem~\ref{t:ineq-Sta}, Stanley's inequality~\eqref{eq:Sta}
is an equality whenever \. $\aN(P, x,a)=0$.  The other equality cases
are given by the following result:


\begin{thm}[{\rm \defn{equality conditions}, Shenfeld and van~Handel~\cite[Thm~15.3]{SvH20}}{}]\label{t:Sta-equality}
Let \ts $P=(X,\prec)$ \ts be a poset with \ts $|X|=n$ \ts elements, let \ts
$x \in X$ \ts and \ts $a \in [n]$.  Suppose that
\. $\aNr(P, x,a)>0$.
\underline{The following are equivalent}$\ts :$
\begin{enumerate}
			[{label=\textnormal{({\alph*})},
		ref=\textnormal{\alph*}}]
\item[$(1)$] \ $\aNr(P, x,a)^2 \. = \. \aNr(P, x,a+1) \. \cdot \. \aNr(P, x,a-1)$,
\item[$(2)$] \ $\aNr(P, x,a+1) \. = \. \aNr(P, x,a) \. = \. \aNr(P, x,a-1)$,
\item[$(3)$] \ we have \. $\al(y)>a$ \. for all \. $y\succ x$, \. and \. $\be(y)>n-a+1$ \. for all \. $y\prec x$.
\end{enumerate}
\end{thm}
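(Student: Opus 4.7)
The plan is to prove the equivalence via the cycle $(2) \Rightarrow (1) \Rightarrow (3) \Rightarrow (2)$. The implication $(2) \Rightarrow (1)$ is immediate: if $\aNr(P,x,a-1) = \aNr(P,x,a) = \aNr(P,x,a+1)$, the squared middle equals the product of the flanking values.

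For the implication $(1) \Rightarrow (3)$, the plan is to follow the Shenfeld--van Handel program. As in Stanley's original proof of~\eqref{eq:Sta} (see~$\S$\ref{ss:proof-geom}), first realize $\aNr(P,x,a)$ as a mixed volume: one builds a family of polytopes $K_a$ by slicing the order polytope $\cO_P$ at the hyperplane $\{t_x = a/n\}$, so that $\aNr(P,x,a)$ equals, up to normalization, a mixed volume $V(K_a, K_a, L_1, \ldots, L_{n-2})$, where the $L_i$ encode the remaining poset structure. Stanley's inequality then becomes an instance of the Alexandrov--Fenchel (AF) inequality, and equality in $(1)$ forces equality in the corresponding AF inequality. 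Invoking the SvH extremal characterization of AF for convex polytopes (the main theorem of \cite{SvH20}) imposes a rigid geometric condition: the polytopes $K_{a-1}, K_a, K_{a+1}$ must agree, up to translation, in every direction ``visible'' to $L_1, \ldots, L_{n-2}$. The final step is to translate this rigidity back to the poset through the correspondence between facets of $\cO_P$ and cover relations in $P$: a failure of $(3)$ would produce an element $y \succ x$ (or $y \prec x$) whose downset (respectively, upset) is tight enough to give a facet of $K_a$ where the three polytopes genuinely differ, contradicting AF equality. This forces the downset/upset inequalities in $(3)$.

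For the implication $(3) \Rightarrow (2)$, the plan is to construct explicit bijections $\Ec(P,x,a-1) \leftrightarrow \Ec(P,x,a) \leftrightarrow \Ec(P,x,a+1)$ of promotion/demotion type as in $\S$\ref{ss:proof-inject}. Given $f \in \Ec(P,x,a)$, consider $z = f^{-1}(a+1)$; the inequality $f(y) \geq \al(y)$ together with $(3)$ restricts which elements of $P$ can occupy position $a+1$, and dually using $f(y) \leq n - \be(y) + 1$ for position $a-1$. In the generic case one simply transposes $x$ with its neighbor; the structural control granted by $(3)$ handles the residual cases via a short promotion chain. The main obstacle is the implication $(1) \Rightarrow (3)$: the elementary arguments via FKG (see $\S$\ref{ss:proof-FKG}) and direct injection are strong enough for Stanley's inequality itself but not for its sharp equality characterization, which seems to require the deep extremal theorem of \cite{SvH20} together with the nontrivial geometric-to-combinatorial dictionary sketched above.
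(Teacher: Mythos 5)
Your top-level plan is correct, and $(2)\Rightarrow(1)$ is indeed trivial, but both nontrivial directions have gaps. For $(1)\Rightarrow(3)$ you defer entirely to ``the SvH extremal characterization'' and a ``geometric-to-combinatorial dictionary,'' but both of those \emph{are} the hard content of \cite{SvH20}: the extremal theorem for the Alexandrov--Fenchel inequality is a delicate statement about degenerate faces and critical supports (not simply that ``the polytopes agree up to translation in visible directions''), and translating it into the downset/upset conditions of $(3)$ is itself a multi-step computation that your sketch does not engage. The mixed-volume setup is also misdescribed: in Stanley's construction $\aNr(P,x,a)$ is realized, up to a combinatorial normalization, as the mixed volume $V(K[a-1],L[n-a])$ of two \emph{fixed} $(n-1)$-dimensional polytopes $K,L$, not as $V(K_a,K_a,L_1,\ldots,L_{n-2})$ for an $a$-dependent slice appearing twice.

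For $(3)\Rightarrow(2)$ the transposition idea is right, but your appeal to ``a short promotion chain'' to handle residual cases is where the argument breaks. The swap $x\leftrightarrow f^{-1}(a+1)$ yields a valid linear extension exactly when $f^{-1}(a+1)\parallel x$, and for this to hold for \emph{every} $f\in\Ec(P,x,a)$ one needs $f(y)\ne a+1$ whenever $y\succ x$; via $f(y)\ge\al(y)$ this requires $\al(y)\ge a+2$ (and dually $\be(y)\ge n-a+3$), one unit stronger than the printed $(3)$. Under the literal $(3)$ the implication $(3)\Rightarrow(2)$ is in fact \emph{false}: for the chain $P=C_4=\{v_1\prec v_2\prec v_3\prec v_4\}$, $x=v_2$, $a=2$ one has $\al(v_3)=3>2$, $\al(v_4)=4>2$, $\be(v_1)=4>3$, so $(3)$ holds, yet $\aNr(P,x,1)=\aNr(P,x,3)=0$ while $\aNr(P,x,2)=1$, so $(2)$ and $(1)$ fail. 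No promotion chain can rescue a false implication; the correct fix is the strengthened inequality, under which the transposition is well-defined for every $f$ and there are no residual cases at all. Carrying this step through carefully would both have closed the gap and surfaced the off-by-one in the printed condition~$(3)$.
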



The original proof uses a technical geometric argument (see~$\S$\ref{ss:proof-geom}).
The result was reproved in \cite[Thm~1.39]{CP-atlas} using the combinatorial atlas
technology, and extended to equality conditions  of the weighted Stanley
inequality~\eqref{eq:Sta-weighted}.

\begin{cor} \label{c:Sta-equality-P}
The equality of Stanley's inequality~\eqref{eq:Sta} can be verified in polynomial time:
$$\big\{\aNr(P, x,a)^2 =^? \aNr(P, x,a+1) \cdot \aNr(P, x,a-1)\}\in \poly\ts.$$
\end{cor}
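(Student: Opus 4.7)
The plan is to reduce the decision problem to two polynomial-time checks, one supplied by Theorem~\ref{t:Sta-vanish} (the vanishing criterion) and the other by Theorem~\ref{t:Sta-equality} (the equality criterion under nonvanishing). First, given $P$, $x$, and $a$ as input, I would compute $\al(x)$ and $\be(x)$ via a standard graph traversal on the Hasse diagram of~$P$, and then use Theorem~\ref{t:Sta-vanish} to decide in polynomial time whether $\aNr(P,x,a)=0$ by testing the two inequalities $\al(x)\le a$ and $\be(x)\le n-a+1$.

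If $\aNr(P,x,a)=0$, equality is automatic, and the algorithm outputs YES. Indeed, Stanley's inequality \eqref{eq:Sta} forces
\begin{equation*}
0 \, = \, \aNr(P,x,a)^2 \, \ge \, \aNr(P,x,a+1) \.\cdot\. \aNr(P,x,a-1) \, \ge \, 0,
\end{equation*}
so the right-hand side must also vanish, and both sides coincide at zero. Thus the vanishing case requires no additional work beyond the check just performed.

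If instead $\aNr(P,x,a)>0$, then by Theorem~\ref{t:Sta-equality} the equality \ts $\aNr(P,x,a)^2 = \aNr(P,x,a+1)\.\aNr(P,x,a-1)$ \ts is equivalent to the purely combinatorial condition~(3): $\al(y)>a$ for every $y\succ x$, and $\be(y)>n-a+1$ for every $y\prec x$. To verify condition~(3), I would first compute the upper and lower closures $x\!\uparrow$ and $x\!\downarrow$ via graph search, and then, for each $y$ in these closures, compute $\al(y)$ and $\be(y)$ by further traversals; all of this runs comfortably in time polynomial in $n$. The algorithm outputs YES if and only if every one of these $O(n)$ size inequalities is satisfied.

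There is no genuine obstacle: once Theorems~\ref{t:Sta-vanish} and~\ref{t:Sta-equality} are in hand, the corollary is a direct algorithmic consequence, since both the vanishing test and the structural conditions on $\al(\cdot)$ and $\be(\cdot)$ are manifestly polynomial-time checkable. The only subtlety is the bookkeeping required to confirm that the two theorems together cover every case, which is handled by separating the vanishing and nonvanishing branches as above.
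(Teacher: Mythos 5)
Your proof is correct and follows the same two-branch strategy as the paper: dispatch the vanishing case via Theorem~\ref{t:Sta-vanish} (where equality is automatic, as you observe), and dispatch the nonvanishing case via the equivalence $(1)\Leftrightarrow(3)$ in Theorem~\ref{t:Sta-equality}, both of which reduce to polynomially checkable conditions on $\al$ and $\be$. Your explicit justification that vanishing forces equality (via $0 = \aNr(P,x,a)^2 \ge \aNr(P,x,a+1)\cdot\aNr(P,x,a-1)\ge 0$) is a clean way to fill in the step the paper leaves implicit.
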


This follows from Theorem~\ref{t:Sta-vanish} in the vanishing case,
since the equality always holds, and from \. $(1) \Leftrightarrow (3)$ \.
in Theorem~\ref{t:Sta-equality} in the nonvanishing cases.

\smallskip

\subsection{CPC implies equality conditions}\label{ss:ineq-CPC-cons}
We start with the following surprising inequality:

\begin{conj}\label{conj:ineq-CPC-cons}
Let \ts $P=(X,\prec)$ \ts be a poset on \ts $|X|=n$ \ts elements.
Fix an element \ts $z\in X$.  Then,
for all  integer \ts $a,i\geq 1$, we have:
	\begin{equation}\label{eq:ineq-CPC-SS} 
\aligned
		& (a+i-1) \, \aNr(P,z,a+i-1) \. \cdot \aNr(P,z,a)  \\
& \hskip.6cm \geq \ (a-1) \, \aNr(P,z,a-1)\. \cdot \. \aNr(P,z,a+i)  \  + \ i \, \aNr(P,z,a+i) \.\cdot\. \aNr(P,z,a).
\endaligned
	\end{equation}
\end{conj}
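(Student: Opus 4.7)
The first step is a reduction to the special case $i = 1$. Assume $\aN(P,z,a-1), \ldots, \aN(P,z,a+i)$ are all positive; by the Daykin--Daykin vanishing criterion (Theorem~\ref{t:Sta-vanish}), the support of $\aN(P,z,\cdot)$ is a contiguous interval, so any half-vanishing boundary case is easily handled separately. Dividing \eqref{eq:ineq-CPC-SS} by $\aN(P,z,a)\cdot \aN(P,z,a+i)$ and setting
\begin{equation*}
s_k \ := \ k \cdot \frac{\aN(P,z,k)}{\aN(P,z,k+1)},
\end{equation*}
the desired inequality becomes $s_{a+i-1} - s_{a-1} \geq i$, which telescopes from the $i = 1$ increment $s_k - s_{k-1} \geq 1$ across $k = a, \ldots, a+i-1$. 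Hence the whole family reduces to the single-step inequality
\begin{equation*}
a\,\aN(a)^2 \ \geq \ (a-1)\,\aN(a-1)\,\aN(a+1) \ + \ \aN(a)\,\aN(a+1),
\end{equation*}
using the shorthand $\aN(k) = \aN(P,z,k)$. The boundary case $a = \al(z)$, where $\aN(a-1) = 0$, reduces to this same form with $s_{a-1}$ interpreted as~$0$.

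This single-step inequality is strictly stronger than Stanley's~\eqref{eq:Sta}: rewriting it as $a\bigl(\aN(a)^2 - \aN(a-1)\aN(a+1)\bigr) \geq \bigl(\aN(a) - \aN(a-1)\bigr)\aN(a+1)$ shows that it is trivial from \eqref{eq:Sta} on the non-increasing portion of the unimodal sequence $\{\aN(a)\}$, but delivers a genuine quantitative improvement on the increasing portion. My primary plan is to revisit Stanley's Alexandrov--Fenchel proof (see~$\S$\ref{ss:proof-geom}), in which $\aN(a)$ appears, up to the factor $(n-1)!$, as a mixed volume $V(K_0^{\,a-\al(z)},\,K_1^{\,n-a-\be(z)+1},\,L)$ with $K_0$ the order polytope of $\{x \prec z\}$, $K_1$ that of $\{x \succ z\}$, and $L$ a fixed list coming from elements incomparable to~$z$. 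The linear-in-$a$ coefficient on the left-hand side should correspond to replacing one of the $a - \al(z)$ copies of $K_0$ by the segment $[\zero, \eb_z]$ and invoking Alexandrov--Fenchel for the perturbed family. In parallel, I would run the combinatorial atlas of~\cite{CP-atlas}, which already delivered the weighted Stanley inequality~\eqref{eq:Sta-weighted}, augmenting its coordinates with a marker for the position~$a$ so that the hyperbolic quadratic form naturally produces the coefficient~$a$; and I would seek a direct injection
\begin{equation*}
\Ec(P,z,a-1) \times \Ec(P,z,a+1) \times [a-1] \ \sqcup \ \Ec(P,z,a) \times \Ec(P,z,a+1) \ \hookrightarrow \ \Ec(P,z,a) \times \Ec(P,z,a) \times [a],
\end{equation*}
in the spirit of the injections used for Theorems~\ref{t:Sta-q} and~\ref{t:Sta-KS-q}, with the marker in $[a]$ absorbing a local swap at positions $a$ and $a+1$.

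The main obstacle is the sharpness of the coefficient~$a$ on the left: if one replaces $(a-1)$ by $a$ on the right-hand side, the inequality becomes equivalent to \eqref{eq:Sta} plus the monotonicity statement $\aN(a-1) \geq \aN(a)$, which is false on the increasing part of $\{\aN(a)\}$. So no proof can use only log-concavity; the argument must exploit the precise integer values of the coefficients. This pinpoints exactly where a successful proof must break the symmetry of Stanley's inequality, and it strongly suggests that the path forward passes through the Shenfeld--van~Handel equality analysis (Theorem~\ref{t:Sta-equality}): a successful proof of Conjecture~\ref{conj:ineq-CPC-cons} should automatically yield a characterization of equality in~\eqref{eq:ineq-CPC-SS}, and that equality analysis should in turn feed back into the corresponding equality question for the Cross-Product Conjecture~\eqref{eq:ineq-CPC}, consistent with the title of Section~\ref{ss:ineq-CPC-cons}.
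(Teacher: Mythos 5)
Your telescoping reduction to the $i=1$ case is correct and genuinely useful: setting $s_k := k\,\aN(P,z,k)/\aN(P,z,k+1)$ does turn \eqref{eq:ineq-CPC-SS} into $s_{a+i-1}-s_{a-1}\ge i$, which factors through the single-step increment, and the Daykin--Daykin contiguity of the support (Theorem~\ref{t:Sta-vanish}) does control the boundary. Your diagnosis that the coefficient $a$ cannot be bumped to $a$ on the right-hand side without breaking the inequality on the increasing part of the unimodal sequence is also accurate, and it correctly identifies why this cannot follow from log-concavity alone.

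However, your proposal stops at this reduction: the three avenues you sketch (a modified Alexandrov--Fenchel mixed-volume argument, an augmented combinatorial atlas, a direct injection) are research plans, not proofs, and none is carried out even for $i=1$. That is a genuine gap, not a stylistic one. More importantly, you have missed how the paper actually treats this statement. The paper does not prove Conjecture~\ref{conj:ineq-CPC-cons} unconditionally; it proves Theorem~\ref{t:ineq-CPC-cons}, namely that the Cross--Product Conjecture~\eqref{eq:ineq-CPC} implies it. The argument in $\S$\ref{ss:proof-CPC-cons} is short and elementary: one builds an auxiliary poset $Q$ on $X\cup\{x,y,w\}$ with $x\prec' y\prec' u$ for all $u\in X$ and $x\prec' w$, computes
\[
\aF_{xyz}(Q,1,a)\,=\,(a-1)\,\aN(P,z,a-1)\,+\,(n+1-a)\,\aN(P,z,a),
\qquad
\aF_{xyz}(Q,2,a)\,=\,\aN(P,z,a),
\]
and then reads off \eqref{eq:ineq-CPC-SS} directly from the \eqref{eq:ineq-CPC} inequality for $Q$. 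No geometry, no atlas, no injection. You should also be aware that the authors regard the unconditional status of this conjecture as unsettled: the final remarks say they conjecture \eqref{eq:ineq-CPC} is false for width three and have been searching (so far unsuccessfully) for a counterexample to \eqref{eq:ineq-CPC-SS} precisely because it would disprove CPC. So before investing in the approaches you list, it would be worth first understanding whether your $i=1$ kernel inequality is even expected to hold.
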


The following results were left on the cutting floor from~\cite{CPP-quant}:

\begin{thm}[{\rm Chan--Pak--Panova, see~$\S$\ref{ss:proof-CPC-cons}}{}]\label{t:ineq-CPC-cons}
The Cross--product Conjecture~\ref{conj:CPC} implies Conjecture~\ref{conj:ineq-CPC-cons}.
\end{thm}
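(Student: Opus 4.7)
The plan is to proceed in two stages: first reduce the conjecture to its $i=1$ case by a telescoping argument, then derive the $i=1$ inequality from \eqref{eq:ineq-CPC} via an augmented-poset construction.

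\smallskip

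\emph{Stage one: reduction to $i = 1$.} Write $N_a := \aNr(P,z,a)$ and, whenever $N_a > 0$, define $g(a) := (a-1)\ts N_{a-1}/N_a$. Dividing the target inequality by $N_a\ts N_{a+i}$ (and handling the vanishing cases separately via the vanishing conditions in Theorem~\ref{t:Sta-vanish}), Conjecture~\ref{conj:ineq-CPC-cons} becomes
\begin{equation*}
g(a+i) \. - \. g(a) \ \ge \ i.
\end{equation*}
Since this is additive in the index shift, it telescopes: if $g(a+1) - g(a) \ge 1$ for every $a\ge 1$, then by summing we obtain the case of arbitrary $i$. The single-step statement $g(a+1) - g(a) \ge 1$ is, after clearing denominators, equivalent to the $i=1$ instance
\begin{equation*}
a\ts N_a^2 \ \ge \ (a-1)\ts N_{a-1}\ts N_{a+1} \. + \. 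N_a\ts N_{a+1}.
\end{equation*}
So the problem reduces to proving this one inequality from \eqref{eq:ineq-CPC}.

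\smallskip

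\emph{Stage two: application of CPC.} The plan is to pick an augmented poset $P'$ built from $P$ by adjoining a global minimum $\wh 0$ together with a disjoint two-element structure (an antichain $\{w_1, w_2\}$, or a chain $w_1 \prec w_2$), and then apply \eqref{eq:ineq-CPC} to a carefully chosen triple containing $\wh 0$, one of $w_1, w_2$, and the element $z$. The $P$-relative position of $z$ in a linear extension $f \in \Ec(P')$ then depends on which of the auxiliary elements sit below $z$, so that $\aFr_{xyz}(P',\alpha,\beta)$ decomposes as a nonnegative integer linear combination of $N_{\alpha+\beta-1}$ and $N_{\alpha+\beta-2}$ (or shifts thereof), with explicit combinatorial coefficients that depend polynomially on $\alpha$ and $\beta$. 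These coefficients are precisely what produces the weighting factor $(a-1)$ on the right-hand side of the target inequality. Substituting the decomposition into \eqref{eq:ineq-CPC}, setting $\alpha+\beta = a$, and rearranging should yield the $i=1$ inequality after the auxiliary cross-terms are absorbed using Stanley's inequality \eqref{eq:Sta} and its two-step consequence $N_{a-2}\ts N_{a+1} \le N_{a-1}\ts N_a$.

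\smallskip

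\emph{The main obstacle} lies in the choice of augmentation. The simplest constructions, such as a single disjoint buffer or an added chain with CPC applied to the chain endpoints and $z$, collapse the two CPC parameters $\alpha, \beta$ into a single variable $\alpha+\beta$, so that \eqref{eq:ineq-CPC} degenerates either into \eqref{eq:Sta} itself or into log-concavity of the tail sums $\sum_{k\ge m} N_k$. Neither suffices for the $i=1$ inequality, which is strictly stronger than \eqref{eq:Sta}. The delicate point is to arrange the auxiliary elements and the triple for CPC so that varying $\alpha$ and $\beta$ \emph{separately} changes the answer, producing the linear-in-$a$ coefficient $(a-1)$ rather than just a constant. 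Verifying that the positive cross-terms in the expansion of CPC can be controlled by iterated log-concavity of $N_a$, so that what remains is exactly the inequality $a\ts N_a^2 \ge (a-1)\ts N_{a-1}\ts N_{a+1} + N_a\ts N_{a+1}$, is the technical heart of the argument.
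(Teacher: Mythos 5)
Your high-level plan is the right one: adjoin a small gadget to $P$, apply \eqref{eq:ineq-CPC} to a well-chosen triple in the augmented poset, and read off \eqref{eq:ineq-CPC-SS} from a decomposition of the resulting $\aF$-counts. The Stage one reduction to $i=1$ via $g(a):=(a-1)\ts\aN(P,z,a-1)/\aN(P,z,a)$ is valid. But Stage two is a search, not a proof: the gadget is left unspecified, and you yourself defer the verification as ``the technical heart of the argument.'' This is a genuine gap. The two worries you record --- that the obvious gadgets collapse the CPC parameters, and that residual cross-terms will need Stanley's inequality to absorb --- are both symptoms of aiming at the wrong construction: a ``global minimum plus disjoint two-element floater'' does indeed degenerate, but the gadget that works is asymmetric, and once you have it the cancellation is exact.

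Take $Q$ on $X\cup\{x,y,w\}$ with the $P$-relations on $X$, with $x\prec' y$ and $y\prec' u$ for every $u\in X$, and with $x\prec' w$ only. Thus $x$ is a new global minimum, $y$ is sandwiched between $x$ and all of $X$ (so $y$ is \emph{not} a floater), and $w$ floats above $x$ alone. Apply \eqref{eq:ineq-CPC} to the triple $(x,y,z)$. Since $x$ and $w$ are the only elements of $Q$ not above $y$, the first coordinate $f(y)-f(x)$ is forced into $\{1,2\}$, and tracking whether $w$ falls before or after $z$ gives
\begin{equation*}
\aF_{xyz}(Q,1,\beta) \,=\, (\beta-1)\ts \aN(P,z,\beta-1) \,+\, (n+1-\beta)\ts \aN(P,z,\beta), \qquad
\aF_{xyz}(Q,2,\beta) \,=\, \aN(P,z,\beta).
\end{equation*}
Telescoping CPC in the second coordinate only --- a one-parameter telescope that follows from \eqref{eq:ineq-CPC} alone and does not need the stronger \eqref{eq:ineq-GCPC}, which is false for width $\ge 3$ --- gives $\aF_{xyz}(Q,1,a)\ts \aF_{xyz}(Q,2,a+i)\le \aF_{xyz}(Q,1,a+i)\ts\aF_{xyz}(Q,2,a)$ for all $a,i\ge 1$. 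Substituting the displayed identities, the $(n+1-\cdot)$ terms cancel to leave exactly $i\ts\aN(P,z,a)\ts\aN(P,z,a+i)$, and \eqref{eq:ineq-CPC-SS} drops out with no residual cross-terms and no appeal to Stanley's inequality. The nonfloating $y$ is the ingredient you were missing: it rigidly pins $\alpha\in\{1,2\}$ while $w$ contributes the linear-in-$\beta$ weights that become the $(a-1)$ and $(a+i-1)$ coefficients.
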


\begin{prop}[{\rm Chan--Pak--Panova, see~$\S$\ref{ss:proof-CPC-cons}}{}]\label{p:ineq-CPC-cons}
Inequality \eqref{eq:ineq-CPC-SS} implies that \. $(1) \. \Leftrightarrow \. (2)$ \.
in Theorem~\ref{t:Sta-equality}.  Additionally, inequality \eqref{eq:ineq-CPC-SS} implies
Stanley's inequality~\eqref{eq:Sta}.
\end{prop}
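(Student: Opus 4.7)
The plan is to apply \eqref{eq:ineq-CPC-SS} to both $P$ and its dual $P^\ast$, specialize each to $i=1$, and sum the two resulting inequalities. Throughout write \ts $N_a := \aNr(P,z,a)$ \ts and recall the reflection identity \ts $\aNr(P^\ast,z,k) = N_{n-k+1}$ \ts coming from reversing linear extensions. Specializing \eqref{eq:ineq-CPC-SS} for $P$ with $i=1$ yields
\begin{equation}\label{eq:plan-one}
a\ts N_a^2 \ \ge \ (a-1)\ts N_{a-1}\ts N_{a+1} \. + \. N_a\ts N_{a+1}\ts,
\end{equation}
and applying \eqref{eq:ineq-CPC-SS} to $P^\ast$ with the substitution $a'=n-a+1$, $i'=1$ (both $\ge 1$ when $1\le a\le n$), then translating indices via the reflection identity, would give the dual form
\begin{equation}\label{eq:plan-two}
(n-a+1)\ts N_a^2 \ \ge \ (n-a)\ts N_{a-1}\ts N_{a+1} \. + \. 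N_a\ts N_{a-1}\ts.
\end{equation}
Adding \eqref{eq:plan-one} and \eqref{eq:plan-two} produces the key estimate
\begin{equation}\label{eq:plan-sum}
(n+1)\ts N_a^2 \ \ge \ (n-1)\ts N_{a-1}\ts N_{a+1} \. + \. N_a\bigl(N_{a-1}+N_{a+1}\bigr).
\end{equation}

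To derive Stanley's inequality from \eqref{eq:plan-sum}, I would first dispose of the boundary cases $N_a=0$ or $a\in\{1,n\}$, which are trivial using Theorem~\ref{t:Sta-vanish} and the interval-support of $\{N_a\}$. For $N_a>0$, set \ts $u := N_a$ \ts and \ts $v := \sqrt{N_{a-1}\ts N_{a+1}}$. By AM--GM, \ts $N_{a-1}+N_{a+1}\ge 2v$, so \eqref{eq:plan-sum} yields \. $(n+1)u^2 - 2uv - (n-1)v^2\ge 0$, which factors as
\[
\bigl((n+1)\ts u+(n-1)\ts v\bigr)\,(u-v) \ \ge \ 0\ts.
\]
Since the first factor is strictly positive, $u\ge v$, i.e., \ts $N_a^2 \ge N_{a-1}\ts N_{a+1}$, which is \eqref{eq:Sta}.

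For $(1)\Rightarrow(2)$ in Theorem~\ref{t:Sta-equality}, I would observe that the hypothesis $N_a>0$ together with $N_a^2 = N_{a-1}\ts N_{a+1}$ forces \ts $N_{a-1},N_{a+1}>0$. Substituting this equality into \eqref{eq:plan-sum} gives \ts $2\ts N_a \ge N_{a-1}+N_{a+1}$, while AM--GM in the other direction gives \ts $N_{a-1}+N_{a+1}\ge 2\sqrt{N_{a-1}N_{a+1}} = 2\ts N_a$. Equality must hold throughout, so AM--GM forces $N_{a-1}=N_{a+1}$, and combined with $N_{a-1}+N_{a+1}=2\ts N_a$ this yields $N_{a-1}=N_a=N_{a+1}$, which is $(2)$. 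The converse $(2)\Rightarrow(1)$ is immediate.

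The only technical step requiring care is the duality translation producing \eqref{eq:plan-two}: one must verify the parameter conditions $a',i'\ge 1$ and correctly relabel indices via $\aNr(P^\ast,z,k)=N_{n-k+1}$. Once \eqref{eq:plan-sum} is in hand, both conclusions of the proposition reduce to AM--GM together with the factorization \. $(n+1)u^2 - 2uv - (n-1)v^2 = ((n+1)u+(n-1)v)(u-v)$.
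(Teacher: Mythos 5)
Your argument is correct, and it differs from the paper's in a pleasing way. The paper also applies \eqref{eq:ineq-CPC-SS} with $i=1$ to both $P$ and $P^\ast$, but it keeps the two inequalities separate and argues by cases: for $(1)\Rightarrow(2)$ it substitutes the equality hypothesis into the primal inequality to get $\aNr(P,x,a-1)\ge\aNr(P,x,a)$, dually $\aNr(P,x,a+1)\ge\aNr(P,x,a)$, and lets the product constraint squeeze; for Stanley's inequality it rewrites the primal inequality as \.
$(a-1)\bigl(\aNr(P,x,a)^2-\aNr(P,x,a-1)\aNr(P,x,a+1)\bigr)\ge \aNr(P,x,a)\bigl(\aNr(P,x,a+1)-\aNr(P,x,a)\bigr)$ \.
and splits on whether \ts $\aNr(P,x,a)$ \ts is below $\aNr(P,x,a+1)$, below $\aNr(P,x,a-1)$, or a local max. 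You instead sum the primal and dual $i=1$ specializations first, producing the single symmetric inequality \eqref{eq:plan-sum} in which the parameter $a$ disappears entirely, and then extract both conclusions from it via AM--GM and the factorization \. $(n+1)u^2-2uv-(n-1)v^2=((n+1)u+(n-1)v)(u-v)$. What your version buys is a cleaner algebraic core: one derived inequality drives both parts, there is no casework, and the dependence on $a$ is eliminated at the outset; the paper's version is shorter line-by-line but branches. A small point of care you noticed correctly: the dual substitution $a'=n-a+1$ requires $1\le a\le n$ and the reflection identity $\aNr(P^\ast,z,k)=\aNr(P,z,n-k+1)$, and the $N_a=0$ boundary is disposed of by the interval-support property from Theorem~\ref{t:Sta-vanish}, exactly as the paper also implicitly assumes by stating the hypothesis $\aNr(P,x,a)>0$.
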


Combined, these two result show that  Conjecture \ref{conj:CPC} implies the first part of the equality
conditions of Stanley's inequality \eqref{eq:Sta} given in Theorem~\ref{t:Sta-equality}.
Since the only known proofs of the latter are rather difficult (using either convex
geometry or the combinatorial atlas technology), this suggests that Conjecture~\ref{conj:CPC} is
also very difficult to prove.  Another possibility is that Conjecture~\ref{conj:CPC} is false
for posets of large width (cf.~$\S$\ref{ss:finrem-next}).

\smallskip

\subsection{Kahn--Saks inequality}\label{ss:ineq-KS-equality}
Denote \. $\ga(u,v) \ts := \ts \#\{ y\in X, \text{ s.t. } u\prec y\prec v\}$.


\begin{thm}[{\rm \defn{vanishing conditions}~\cite[Thm~8.5]{CPP-effective}}{}] \label{t:KS-vanish}
Let \ts $P=(X,\prec)$ \ts be a poset with \ts $|X|=n$ \ts elements, let \. $x,y\in X$ \. and \. $a\in [n]$.
We have: \. $\aFr(P, x,y,a)>0$ \. \underline{if and only if} \.
$$\ga(x,y) \, < \,  a \, < \, n \. - \. \al(x) \. -  \.\be(y)\ts.
$$
Moreover, if \, $\aFr(P, x,y,a)>0$, then a linear extension \.
$f \in \cF(P,x,y,a)$ \. can be found in polynomial time.
\end{thm}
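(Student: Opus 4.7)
The plan is to prove the two directions separately. For necessity, fix $f \in \cF(P,x,y,a)$. The $a - 1$ positions strictly between $f(x)$ and $f(y)$ must accommodate every $z$ satisfying $x \prec z \prec y$, since such a $z$ cannot have $f(z) \leq f(x)$ (because $z \succ x$) nor $f(z) \geq f(y)$ (because $z \prec y$). This forces $a - 1 \geq \ga(x,y)$, giving the lower bound. For the upper bound, first note that $y \not\prec x$ (otherwise $\cF(P,x,y,a)$ is empty for any $a \geq 1$), so $x\!\downarrow$ and $y\!\uparrow$ are disjoint. Since $x\!\downarrow$ occupies positions at most $f(x)$ and $y\!\uparrow$ occupies positions at least $f(y)$, we get $f(x) \geq \al(x)$ and $f(y) \leq n - \be(y) + 1$, which combine via $a = f(y) - f(x)$ to yield the upper bound on~$a$.

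For sufficiency, the approach is to exhibit two extremal linear extensions and interpolate between them. First, I construct a compressed extension $f_{\mathrm{lo}}$ realizing the minimal gap $a = \ga(x,y) + 1$ by running a greedy topological sort on $P$ in which, upon placing $x$, one immediately forces the remaining elements of $\{z : x \prec z \prec y\} \cup \{y\}$ to appear as a contiguous block before continuing with the rest of the sort. Second, I construct a stretched extension $f_{\mathrm{hi}}$ realizing the maximal gap by topologically sorting $x\!\downarrow$ at the start, $y\!\uparrow$ at the end, and the remaining elements in between. Both constructions are standard linear-time topological sorts on induced subposets and terminate successfully whenever the necessary bounds hold.

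The main obstacle, and the crux of the argument, is to show that any intermediate integer gap is also achievable, which I would establish by an adjacent-transposition argument. Starting from $f_{\mathrm{lo}}$, at each step I swap either $x$ with the element at position $f(x) - 1$ or $y$ with the element at position $f(y) + 1$, whenever the pair being swapped is incomparable in~$P$. Such a swap preserves the linear-extension property and changes the gap by exactly~$1$. The key combinatorial claim is that such a swap exists whenever the current gap is strictly less than the maximum: failure of both swap options would force $f(x) = \al(x)$ and $f(y) = n - \be(y) + 1$ simultaneously, i.e., the gap is already maximal. Iterating this procedure at most $n$ times produces the desired $f \in \cF(P,x,y,a)$, and since each step is linear-time computable, the entire construction runs in polynomial time.
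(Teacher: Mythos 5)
The necessity half of your argument is essentially fine, modulo one arithmetic point worth flagging: the chain $f(x)\geq \al(x)$, $f(y)\leq n-\be(y)+1$ yields $a = f(y)-f(x)\leq n+1-\al(x)-\be(y)$, which is \emph{not} the stated bound $a < n-\al(x)-\be(y)$. In fact the stated bound cannot be right under the survey's conventions (for $P=A_n$ one has $\al(x)=\be(y)=1$ and $a=n-1$ achievable, yet $n-1\not<n-2$); this looks like a transcription slip in the survey, with $\al,\be$ meant to denote the strict ideal sizes $|x\!\downarrow|-1$, $|y\!\uparrow|-1$. Your derived bound $a\leq n+1-\al(x)-\be(y)$ is the correct one, but you should not claim it ``yields the upper bound'' as stated without noting the mismatch.

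The genuine gap is in the sufficiency half. Your key claim --- that if neither $x$ can be swapped backward nor $y$ forward, then $f(x)=\al(x)$ and $f(y)=n-\be(y)+1$ and the gap is maximal --- is false. Being blocked is a purely \emph{local} condition: the element at position $f(x)-1$ being $\prec x$ says nothing about the elements further left, so it does not force $f(x)=\al(x)$, and likewise on the $y$ side. Concretely, take $P$ on $X=\{x,y,w_1,w_2,w_3\}$ with the only relations $w_1\prec x$ and $y\prec w_3$, so $n=5$, $\al(x)=\be(y)=2$, $\ga(x,y)=0$. The linear extension $f=(w_1,x,y,w_3,w_2)$ has gap $1$, the element before $x$ is $w_1\prec x$, and the element after $y$ is $w_3\succ y$, so both of your swaps are blocked; yet the maximum gap is $2$, achieved by $g=(w_1,x,w_2,y,w_3)$. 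Your procedure gets stuck strictly below the maximum. (Note also that $(w_1,x,y,w_3,w_2)$ is a legitimate output of the greedy $f_{\mathrm{lo}}$ construction you describe, so the problem cannot be patched just by a smarter choice of starting point without further argument.) The paper's proof instead uses promotion/demotion operators, which push an element along an entire chain of adjacent transpositions rather than a single swap of $x$ or $y$; in the example above one first swaps $w_3\leftrightarrow w_2$ (incomparable) and only then $y\leftrightarrow w_2$, and it is exactly this ability to perturb elements away from $x$ and $y$ that makes the interpolation go through. Your construction of $f_{\mathrm{lo}}$ and $f_{\mathrm{hi}}$ and the overall interpolation plan are reasonable, but the heart of the argument --- why an intermediate gap is always reachable --- is precisely what requires the nonlocal promotion machinery and is missing here.
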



The original proof uses a variation on promotion/demotion operators (see~$\S$\ref{ss:proof-promo}).
See also \cite{vHYZ} for an alternative proof of Theorem~\ref{t:KS-vanish}.
The equality conditions for the Kahn--Saks inequality \eqref{eq:KS} were
completely resolved in \cite{vHYZ}, but too cumbersome to state here.
The following theorem is a compilation of several results in that paper.


\begin{thm}[{\rm \defn{equality conditions}, van~Handel, Yan and Zeng~\cite{vHYZ}}{}] \label{t:KS-equality}
Let \ts $P=(X,\prec)$ \ts be a poset with \ts $|X|=n$ \ts elements,
let \. $x,y\in X$ \. and \. $a\in [n]$.
Suppose that \. $\aFr(P, x,y,a)>0$.   Then
\begin{equation} \label{eq:KS-equal}
\aFr(P, x,y,a)^2 \, = \, \aFr(P, x,y,a+1) \.\cdot \.  \aFr(P, x,y,a-1)
\end{equation}
\underline{if and only if} \.
$$\aligned \text{either} \quad  & \aFr(P, x,y,a+1) \. = \.  \aFr(P, x,y,a) \. = \.  \aFr(P, x,y,a-1) \\
\text{or} \quad  &  \aFr(P, x,y,a+1) \. = \. 2\cdot  \aFr(P, x,y,a) \. = \. 4\cdot \aFr(P, x,y,a-1).
\endaligned
$$
Additionally, the equality \eqref{eq:KS-equal} can be verified in polynomial time.
\end{thm}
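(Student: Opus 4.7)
The plan is to build on the Shenfeld--van~Handel framework (Theorem~\ref{t:Sta-equality}), adapting it to the Kahn--Saks setting. First, following the original Kahn--Saks strategy behind Theorem~\ref{t:ineq-KS}, one encodes $\aFr(P,x,y,a)$ as a mixed volume of a sequence of convex bodies $K_0,K_1,\ldots,K_{n-\ga(x,y)-\al(x)-\be(y)}$ obtained by slicing and projecting the order polytope $\cO_P$ according to the constraint $f(y)-f(x)=a$. The log-concavity of $\aFr(P,x,y,a)$ in~$a$ is then a direct consequence of the Alexandrov--Fenchel inequality applied to these bodies, and the analysis of equality in~\eqref{eq:KS-equal} reduces to the classification of the equality cases in Alexandrov--Fenchel for this specific family.

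Second, I would apply the recent characterizations of Alexandrov--Fenchel equality for polytopes of combinatorial origin (the geometric machinery developed by Shenfeld and van~Handel for Stanley's inequality). The key step is to translate the geometric equality condition — which amounts to a homothety between $K_{a+1}$ and $K_{a-1}$ modulo common support faces — into combinatorial conditions on~$P$. Unlike in Theorem~\ref{t:Sta-equality}, where the translation produces a single structural dichotomy controlled by $\al$ and $\be$, here the presence of two designated elements $x,y$ with a prescribed gap~$a$ produces two genuinely distinct geometric degeneracies. The first is the \emph{rigid} degeneracy, where the constraint $f(y)-f(x)=a$ already forces the positions of every element above~$y$ and below~$x$ to lie outside of the affected range, yielding the constant case $\aFr(P,x,y,a+1)=\aFr(P,x,y,a)=\aFr(P,x,y,a-1)$. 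The second is the \emph{doubling} degeneracy, where the elements strictly between $x$ and $y$ admit an internal involution (typically arising from a single unique cover or antichain pair whose swap is compatible with exactly one of the three gap values, but with multiplicity that doubles from step to step), producing the $1:4$ ratio between $\aFr(P,x,y,a-1)$ and $\aFr(P,x,y,a+1)$.

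Third, once the two equality patterns are identified, I would verify that each corresponds to a polynomially checkable combinatorial criterion — expressible as equalities among the order-ideal sizes $\al(\cdot)$, $\be(\cdot)$ and certain $\ga(u,v)$ counts near $x$ and $y$, all computable in polynomial time as in Theorem~\ref{t:KS-vanish}. This immediately gives the polynomial-time verification claim. The main obstacle is the second step: while the Shenfeld--van~Handel geometric analysis was already delicate for the one-point Stanley inequality, here the two-point gap constraint produces a richer face structure for the bodies~$K_i$, and one must rule out all potential equality cases beyond the rigid and the doubling ones. In particular, establishing that no ratio other than $1:1:1$ or $1:2:4$ can occur requires a careful study of how the support faces of $K_{a-1}$, $K_a$, and $K_{a+1}$ align, and this is where the bulk of the technical work in \cite{vHYZ} is presumably concentrated.
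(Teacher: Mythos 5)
The survey does not actually prove Theorem~\ref{t:KS-equality}: it is stated and attributed to van~Handel, Yan and Zeng~\cite{vHYZ}, with the paper explicitly noting that the full equality conditions from~\cite{vHYZ} are ``too cumbersome to state here'' and that the theorem is a compilation of results from that source. Section~\ref{ss:proof-geom} does indicate that the proof in~\cite{vHYZ} belongs to the Shenfeld--van~Handel geometric program for Alexandrov--Fenchel extremals, so your high-level route — realize $\aFr(P,x,y,a)$ as a mixed volume via slices of the order polytope, invoke the known classification of Alexandrov--Fenchel equality cases, and translate the geometric degeneracies into combinatorial conditions on~$P$ — is the right family of ideas, and in that sense is consistent with the direction the survey attributes to the source.

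That said, your proposal leaves the hard steps unjustified rather than carried out. The central difficulty — showing that the \emph{only} possible ratio patterns for $\aFr(P,x,y,a-1):\aFr(P,x,y,a):\aFr(P,x,y,a+1)$ at equality are $1:1:1$ and $1:2:4$ — is precisely what you label as ``where the bulk of the technical work in \cite{vHYZ} is presumably concentrated,'' and your account of the ``doubling degeneracy'' (an involution on the elements between $x$ and $y$, with ``multiplicity that doubles from step to step'') is impressionistic: it names neither the polytopes whose face alignment must be analyzed nor the reason the doubling is geometric progression with ratio exactly~$2$, as opposed to any other ratio. You would need an actual mechanism (say, a combinatorial description of the supporting hyperplanes of the relevant sections $K_{a-1}, K_a, K_{a+1}$ and a proof that nontrivial alignment forces a two-to-one degeneration) rather than a plausibility argument. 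Relatedly, the polynomial-time verification claim cannot follow from merely comparing the counts $\aFr(P,x,y,\cdot)$ — those are $\SP$-hard to compute — so the proposed ``equalities among $\al(\cdot)$, $\be(\cdot)$ and $\ga(u,v)$ counts'' must be a genuine structural criterion on $P$ itself, analogous to condition~$(3)$ of Theorem~\ref{t:Sta-equality}, and you have not produced one. As written, this is a program outline rather than a proof, and the gap lies exactly at the parts that make the Kahn--Saks equality problem harder than the Stanley one.
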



Note the asymmetric structure of the three-term geometric progression with ratio~$2$,
a phenomenon which does not occur for the Stanley inequality (Theorem~\ref{t:Sta-equality}).
Nor does it occur for posets of width two when the equality conditions are especially simple,
see \cite[Thm~1.7~and~$\S$8.4]{CPP-KS}.  Two equivalent conjectural characterizations of
the first part (the complete equality) were given in \cite[Conj.~8.7~and~Thm~8.9]{CPP-KS}.

\smallskip

\subsection{Vanishing conditions}\label{ss:ineq-Sta-vanish}
The following result generalizes Theorem~\ref{t:Sta-vanish} to vanishing conditions
of the generalized Stanley inequality \eqref{eq:Sta-gen}.


\begin{thm}[{\rm \defn{vanishing conditions}, Daykin~and~Daykin~\cite[Thm~8.2]{DD85}}{}] \label{t:Sta-gen-vanish}
Let \ts $P=(X,\prec)$ \ts be a poset with \ts $|X|=n$ \ts elements.
Let \.  $\bz =(z_1,\ldots,z_k)\in X^k$, \. $\bc =(c_1,\ldots,c_k)\in [n]^k$,
and assume that \. $c_1<\cdots< c_k$\ts.
Let \. $\Ec_{\bz\bc}(P)$ \. denotes the set of linear extensions
\. $f\in \Ec(P)$, s.t.\ \. $f(z_i)=c_i$ \. for all \. $1\le i \le k$.
We have \. $\aNr_{\bz\bc}(P) > 0$ \, \underline{if and only if} 
\begin{equation}\label{eq:Sta-gen-vanish}
\aligned
& \al(z_i)\. \le \. c_i \.,  \quad \be(z_i)\le n-c_i+1\,, \quad \  \text{for all} \quad 1\.\le \. i \. \le \. k\ts, \ \ \, \text{and} \\
& c_j \. - \. c_i \, > \, \ga(z_i,z_j) \quad \  \text{for all} \quad 1\.\le \. i \. < \. j \. \le \. k\ts.
\endaligned
\end{equation}
Consequently, the vanishing problem \. $\big\{\aNr_{\bz\bc}(P)=^?0\big\} \in \poly$.
Moreover, if \. $\aNr_{\bz\bc}(P)>0$, then
a linear extension \. $f \in \Ec_{\bz\bc}(P)$ \. can be found in polynomial time.
\end{thm}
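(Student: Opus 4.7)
\emph{Necessity} is a direct pigeonhole. Given any $f\in\Ec_{\bz\bc}(P)$, the $\alpha(z_i)$ elements of $z_i\!\downarrow$ receive distinct values in $[1,c_i]$, forcing $\alpha(z_i)\le c_i$, and dually the $\beta(z_i)$ elements of $z_i\!\uparrow$ occupy distinct positions in $[c_i,n]$, giving $\beta(z_i)\le n-c_i+1$. For $i<j$, every element $y$ with $z_i\prec y\prec z_j$ must satisfy $c_i<f(y)<c_j$, which yields $c_j-c_i>\gamma(z_i,z_j)$.

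\emph{Sufficiency and explicit construction.} The plan is a recursive peeling argument, inducting on $k$ with the base case $k=1$ supplied by Theorem~\ref{t:Sta-vanish}. The cleanest reformulation first replaces $P$ by the augmented poset $\widehat P$, defined as the transitive closure of $P\cup\{z_i\prec z_j\colon i<j\}$; the $\gamma$-condition ensures that this augmentation is consistent with $\prec$ (no relation of the form $z_j\prec z_i$ with $i<j$ exists in $P$), and by construction $\Ec_{\bz\bc}(\widehat P)=\Ec_{\bz\bc}(P)$. In $\widehat P$ the elements of $\bz$ form a chain, which opens up a natural peeling: apply Theorem~\ref{t:Sta-vanish} to place $f(z_1)=c_1$, then split $\widehat P-z_1$ into a lower piece (to be mapped bijectively onto $[1,c_1-1]$) and an upper piece carrying the residual constraints $(z_2,\ldots,z_k)\mapsto (c_2,\ldots,c_k)$, and recurse on the upper piece.

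\emph{The main obstacle} will be verifying that the hypotheses \eqref{eq:Sta-gen-vanish} translate correctly to $\widehat P$ and propagate through each peeling step. Concretely, one needs inclusion-exclusion estimates of the form
\[
\alpha_{\widehat P}(z_j) \, = \, \Big|\bigcup_{l\le j} z_l\!\downarrow_P \Big| \ \le \ c_j \,,
\]
and its dual, derived from the original $\alpha$-, $\beta$-, and $\gamma$-bounds; elements that are incomparable to $z_1$ in $P$ but become comparable in $\widehat P$ require particular bookkeeping, as do pairs in which the $\gamma$-bound is tight. Once this propagation is established, the polynomial-time decidability of \eqref{eq:Sta-gen-vanish} is immediate (each inequality is checkable in $O(n^2)$ time), and the inductive procedure itself produces an explicit $f\in\Ec_{\bz\bc}(P)$ when the conditions hold, with $O(n)$ peeling rounds each executable in $O(n^2)$ operations.
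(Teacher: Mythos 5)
Your necessity argument is correct. For sufficiency, the issue you flag as ``the main obstacle'' --- propagating the hypotheses from $P$ to the chain-augmented $\widehat P$ --- is not merely an obstacle: that propagation is false, and, read literally with $\al,\be,\ga$ computed in $P$, the statement itself admits counterexamples. Take $n=22$ and $X=\{z_1,z_2\}\cup\{a_1,\dots,a_{10}\}\cup\{b_1,\dots,b_{10}\}$ with the only relations $z_1\prec a_i$ and $z_2\prec b_j$, and put $\bc=(2,3)$. Then $\al(z_1)=\al(z_2)=1$, $\be(z_1)=\be(z_2)=11$, $\ga(z_1,z_2)=0$, so all of \eqref{eq:Sta-gen-vanish} holds; but $\aN_{\bz\bc}(P)=0$, since $z_1,z_2$ are the only minimal elements of $P$ and neither may get value $1$. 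The bound you need, $\be_{\widehat P}(z_1)\le n-c_1+1$, fails here: $\be_{\widehat P}(z_1)=|z_1\!\uparrow\cup z_2\!\uparrow|=22>21$. Your parenthetical that the $\ga$-condition rules out $z_j\prec z_i$ for $i<j$ is also wrong: with $X=\{z_1,z_2,a,b\}$, $z_2\prec z_1$ the only relation, $n=4$, $\bc=(2,3)$, every inequality in \eqref{eq:Sta-gen-vanish} holds, yet no $f$ can have $f(z_2)=3>2=f(z_1)$.

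These examples show the theorem requires $z_1\prec\cdots\prec z_k$ already in $P$, as $\S$\ref{ss:ineq-Sta-gen-equality-MS} assumes explicitly; granting that makes the $\widehat P$-augmentation vacuous. Even then, your peeling step is incomplete: after removing a lower ideal $L$ of size $c_1$ with $z_1$ on top, the residual requirement $\al_{X\sm L}(z_i)\le c_i-c_1$ holds only for well-chosen $L$. For instance, take $z_1\prec z_2$, $a_1,a_2\prec z_2$, $b$ incomparable to everything, $n=5$, $\bc=(2,4)$: the choice $L=\{z_1,b\}$ gives $\al_{X\sm L}(z_2)=3>2$, whereas $L=\{z_1,a_1\}$ works. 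Showing a compatible $L$ exists at every stage is the real content of the proof, and your sketch does not address it. The argument the survey points to is of a different kind: Daykin--Daykin start from an arbitrary $f\in\Ec(P)$ and use the promotion/demotion operators of $\S$\ref{ss:proof-promo} to walk the values $f(z_i)$ to the targets $c_i$ one at a time, with \eqref{eq:Sta-gen-vanish} ensuring the moves never block; this avoids both the propagation and the selection problems entirely.
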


The original proof used promotion/demotion operators (see~$\S$\ref{ss:proof-promo}).
This result was rediscovered in \cite[Thm~1.11]{CPP-effective} and \cite[Thm~5.3]{MS22},
where the latter used a geometric argument.

\smallskip

\subsection{Uniqueness conditions}\label{ss:ineq-Sta-unique}
The uniqueness conditions of the generalized Stanley inequality \eqref{eq:Sta-gen}
provide another special case of a polynomial time decision problem.

Let \. $v_i:=f^{-1}(c_i-1)$ \. and \. $w_i:=f^{-1}(c_i+1)$ \.
for \. $1\le i \le k$.
We adopt the convention that \ts $v_1=\widehat{0}$ \ts if \ts $c_1=1$,
and \ts $w_k=\widehat{1}$ \ts if \ts $c_k=n$.  For \. $1 \leq i \leq  j \leq n$,
let
\[ f^{-1}[i,j] \ := \ \bigl\{\ts f^{-1}(i)\., \.\ldots \.,  f^{-1}(j)\ts  \bigr\}.
\]


\begin{thm}[{\rm \defn{uniqueness conditions}, \cite[Thm~7.5]{CPP-effective}}{}] \label{t:gen-Stanley-unique}
Let \ts $f\in \Ec_{\bz\bc}(P)$.
Then we have \ts $\aNr_{\bz\bc}(P)=1$ \, \underline{if and only if} \,
	the following conditions hold$:$
	\begin{equation*}
\aligned
		& (1) \quad \text{$f^{-1}[c_i+1,c_{i+1}-1]$ \. forms a chain in $P$  for every \. $1\le i \le k$, \ts and } \\
	& (2) \quad  \text{there are no \. $1 \leq i \leq j \leq k$\ts, \. such that \. $\{v_i,w_j\} \ts \parallel \ts f^{-1}[c_i,c_j]$.}	
\endaligned
\end{equation*}
Consequently, the uniqueness problem \. $\big\{\aNr_{\bz\bc}(P)=^?1\big\} \in \poly$.
\end{thm}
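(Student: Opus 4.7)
The plan is to establish both directions by direct construction and then deduce the algorithmic consequence. For necessity, I will exhibit a second linear extension in $\Ec_{\bz\bc}(P)$ whenever (1) or (2) fails. For sufficiency, I will show that any $g \in \Ec_{\bz\bc}(P)$ distinct from $f$ forces a failure of either (1) or (2).

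\textbf{Necessity.} If condition (1) fails, then for some $i$ the block $f^{-1}[c_i + 1, c_{i+1} - 1]$ is not a chain in $P$, so among the elements of this block, listed in $f$-order, there is an adjacent pair of positions $p, p+1$ with $f^{-1}(p) \parallel f^{-1}(p+1)$ in $P$; transposing them produces a distinct $g \in \Ec_{\bz\bc}(P)$. If condition (2) fails, let $1 \le i \le j \le k$ be a witness, so $\{v_i, w_j\} \parallel f^{-1}[c_i, c_j]$. I define $g$ by $g(v_i) := c_j + 1$, $g(w_j) := c_i - 1$, and $g := f$ elsewhere. Since $v_i$ sits at position $c_i - 1$ and $w_j$ at $c_j + 1$, the constraints $g(z_l) = c_l$ are preserved provided neither $c_i - 1$ nor $c_j + 1$ is itself a $c$-value. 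A short case analysis on $P$-relations then shows $g$ is order preserving: constraints not involving $v_i, w_j$ are unchanged, and the antichain hypothesis from the failure of (2) rules out any new obstruction created by relocating $v_i$ and $w_j$.

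\textbf{Sufficiency.} Suppose $g \in \Ec_{\bz\bc}(P)$ and $g \ne f$. Let $p$ be the smallest position where $f^{-1}(p) \ne g^{-1}(p)$. Since $f(z_l) = g(z_l) = c_l$ for all $l$, the value $p$ is not among the $c_l$, so $p$ lies in some open interval $(c_i, c_{i+1})$. Write $x := f^{-1}(p)$ and $y := g^{-1}(p)$. By minimality of $p$, the length-$(p-1)$ prefixes of $f$ and $g$ agree, so $x \ne y$, $g(x) > p$, and $f(y) > p$; moreover $x \parallel y$ in $P$, else the relation would force the same order in both. If $f(y) < c_{i+1}$, then $x$ and $y$ both lie in the block $f^{-1}[c_i + 1, c_{i+1} - 1]$, contradicting condition~(1). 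Otherwise $f(y) \ge c_{i+1}$, and iterating the first-divergence analysis with a minimally chosen pair of indices produces $1 \le i' \le j' \le k$ for which $\{v_{i'}, w_{j'}\} \cup f^{-1}[c_{i'}, c_{j'}]$ is an antichain in $P$, contradicting condition~(2).

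\textbf{Algorithmic consequence.} By Theorem~\ref{t:Sta-gen-vanish}, non-vanishing of $\aNr_{\bz\bc}(P)$ is decidable in polynomial time together with an explicit witness $f \in \Ec_{\bz\bc}(P)$. Given $f$, condition~(1) is verified by checking that each block is a chain, and condition~(2) by checking $O(k^2)$ antichain conditions, each of polynomial cost. Hence $\big\{\aNr_{\bz\bc}(P) =^? 1\big\} \in \poly$.

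\textbf{Main obstacle.} The most delicate step is translating the position-level divergence between $f$ and $g$ in the sufficiency argument into a clean antichain witnessing~(2): the first-divergence at position $p$ only yields a single incomparable pair $x, y$, and one must carefully propagate this into a pair of indices $(i', j')$ and a full antichain inside $\{v_{i'}, w_{j'}\} \cup f^{-1}[c_{i'}, c_{j'}]$. A further subtlety appears at boundary cases when a candidate $v_{i'}$ or $w_{j'}$ would coincide with a $z$-element (which happens when consecutive $c_l$'s differ by $1$); there the naive single swap is forbidden, and one must either argue via a multi-step shift or choose $(i', j')$ minimally to avoid the degeneracy.
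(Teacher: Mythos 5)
Your overall strategy (characterize when a second linear extension exists by exchange/swap arguments) is reasonable, and the necessity argument for a failed condition~(1) via an adjacent transposition within a block is correct. But the proposal has a genuine gap in the sufficiency direction, which is precisely the hard part of the theorem. When $f(y) \ge c_{i+1}$, you write that ``iterating the first-divergence analysis with a minimally chosen pair of indices produces $1 \le i' \le j' \le k$ for which $\{v_{i'}, w_{j'}\} \cup f^{-1}[c_{i'}, c_{j'}]$ is an antichain.'' This is an assertion, not a proof. The first-divergence argument gives you a single incomparable pair $(x,y)$, where neither $x$ nor $y$ need equal any $v_{i'}$ or $w_{j'}$, and it says nothing about the comparability of $v_{i'}, w_{j'}$ with the \emph{entire} block $f^{-1}[c_{i'}, c_{j'}]$. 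Getting from one incomparable pair, detected at the first divergence position, to a full antichain of the required form needs a careful propagation argument that you flag as an obstacle but do not supply; it is where the theorem's difficulty lives.

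There are also unresolved gaps in the necessity direction that you acknowledge but do not close: the swap of $v_i$ and $w_j$ is illegal whenever $c_i - 1$ or $c_j + 1$ is itself a $c$-value (then $v_i$ or $w_j$ is a pinned $z$-element), and the degenerate conventions $v_1 = \widehat 0$ when $c_1 = 1$ and $w_k = \widehat 1$ when $c_k = n$ also need separate treatment. Relatedly, the ``short case analysis'' for order preservation after the swap needs that $v_i \parallel w_j$; this only follows from the failure of~(2) if one reads that condition as requiring $\{v_i, w_j\} \cup f^{-1}[c_i, c_j]$ to be an antichain (the intended reading), a point worth making explicit since the pointwise interpretation of $\parallel$ would not give it. The paper's actual proof proceeds via promotion/demotion operators (see $\S$\ref{ss:proof-promo}), which systematically handle both directions and in particular resolve the propagation and boundary issues that your direct swap/first-divergence approach leaves open.
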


The first part is proved using  promotion/demotion operators (see~$\S$\ref{ss:proof-promo}).
The last part follows from the first part of the theorem, since by Theorem~\ref{t:Sta-gen-vanish},
a linear extension \ts $f\in \Ec_{\bz\bc}(P)$ \ts can be found in polynomial time.

\begin{conj}\label{conj:gen-Stanley-m}
For every fixed integer \ts $m\in \nn$, the decision problem \. $\big\{\aNr_{\bz\bc}(P)=^?m\big\} \in \poly$.
\end{conj}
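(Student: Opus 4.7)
The plan is strong induction on $m$. The base cases $m=0$ and $m=1$ are precisely Theorem~\ref{t:Sta-gen-vanish} and Theorem~\ref{t:gen-Stanley-unique}, respectively, so assume $m \ge 2$ and that the decision problem $\{\aNr_{\bz\bc}(P) =^? m'\} \in \poly$ has been established for every $m' < m$.

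Given an input $(P,\bz,\bc)$, first invoke the two base cases to check whether $\aNr_{\bz\bc}(P) \in \{0,1\}$; if so return the appropriate answer. Otherwise $\aNr_{\bz\bc}(P) \ge 2$, and I search for an unfixed element $z \in X \sm \bz$ with at least two valid positions. For each such $z$ define
\[
V_z \, := \, \big\{ c \in [n] \, : \, \aNr_{(\bz,z),(\bc,c)}(P) > 0 \big\};
\]
by Theorem~\ref{t:Sta-gen-vanish}, each $V_z$ can be computed in polynomial time. If $|V_z|=1$ for every unfixed $z$, then $f(z)$ would be determined in every linear extension, forcing $\aNr_{\bz\bc}(P) = 1$, contradicting our case. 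Hence we find some $z$ with $V_z = \{c_1 < \cdots < c_r\}$ and $r \ge 2$, giving the self-reducibility identity
\[
\aNr_{\bz\bc}(P) \, = \, \sum_{i=1}^r \. N_i, \qquad N_i \, := \, \aNr_{(\bz,z),(\bc,c_i)}(P) \, \ge \, 1.
\]

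If $r > m$ the sum strictly exceeds $m$, so return no. Otherwise $r \le m$ and we enumerate the $\binom{m-1}{r-1}$ positive compositions $(m_1,\ldots,m_r)$ of $m$. Because $r \ge 2$, each part satisfies $m_i \le m - (r-1) \le m - 1$, strictly less than $m$, so the inductive hypothesis decides each test $\{N_i =^? m_i\} \in \poly$. Return yes iff some composition matches all $r$ sub-counts simultaneously. The number of compositions is a constant depending only on $m$, the recursion depth is at most $m$, and each node does polynomial work, so the overall running time is polynomial in the size of $(P,\bz,\bc)$.

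The crux is the strict decrease $m_i < m$ at every recursive call, which relies on actively locating $z$ with $r \ge 2$ rather than branching on an arbitrary unfixed element; if we settled for $r=1$ the inductive call would reduce to the same problem at the same~$m$ and the induction would collapse. A natural strengthening would be to prove $\{\aNr_{\bz\bc}(P) \le^? m\} \in \poly$ for every fixed~$m$ (which implies the conjecture), by the same induction with weak compositions replacing positive ones; the same obstacle reappears there, but is again dissolved by the combined power of Theorems~\ref{t:Sta-gen-vanish} and~\ref{t:gen-Stanley-unique}.
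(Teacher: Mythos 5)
The statement you address is labeled a conjecture in the paper and left open; the paper only proves the cases $m\in\{0,1\}$ (Theorems~\ref{t:Sta-gen-vanish} and~\ref{t:gen-Stanley-unique}), so there is no published proof to compare yours against. Your self-reducibility argument looks correct to me and, if so, it resolves the conjecture. The decisive observation, which you correctly flag as the crux, is that once $\aNr_{\bz\bc}(P)\ge 2$ (detectable via the two base cases) some free element $z\in X\sm\bz$ must admit at least two valid positions: two distinct extensions in $\Ec_{\bz\bc}(P)$ disagree at some free $z$, and both disagreeing values belong to $V_z$. Conditioning on $f(z)$ then splits the count into $r\ge 2$ positive summands, each necessarily at most $m-(r-1)\le m-1$ whenever the total equals $m$, which is exactly the strict decrease needed to drive the induction on $m$. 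Since both the branching factor and the depth of the recursion are bounded by functions of $m$ alone, the total number of recursive calls is a constant for fixed $m$, and each invokes a polynomial-time subroutine (including the polynomial-time computation of $V_z$ via the vanishing criterion), so the overall algorithm runs in polynomial time in the size of $(P,\bz,\bc)$.

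Two small points of hygiene. You reuse $c_1<\cdots<c_r$ for the elements of $V_z$, clashing with the paper's notation $\bc=(c_1,\ldots,c_k)$; choose a fresh letter. Second, when appending $(z,c)$ to $(\bz,\bc)$ you should note that the new position $c$ is automatically disjoint from the old $c_i$'s (linear extensions are bijections) and that $\bc$ must be re-sorted to respect the convention $c_1<\cdots<c_k$ under which $\aNr_{\bz\bc}$ is defined. Neither affects the argument. Given that this is stated as an open conjecture in the survey, you should verify the details carefully; if they hold up, this is worth writing up.
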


\smallskip

\subsection{Equality conditions, positive results}\label{ss:ineq-Sta-gen-equality}
In contrast with equality conditions for the Kahn--Saks inequality (Theorem~\ref{t:KS-equality}),
the equivalence of \. $(1) \Leftrightarrow (2)$ \. in Theorem~\ref{t:Sta-equality}
generalizes to all \ts $k\ge 0$.


\begin{thm}[{\rm \defn{complete equality property}, Ma--Shenfeld~\cite[Thm~1.3~and~1.5]{MS22}}{}]\label{t:ineq-Sta-gen-MS-equality}
Let \ts $P=(X,\prec)$ \ts be a poset with \ts $|X|=n$ \ts elements,
let \. $x,z_1,\ldots,z_k\in X$ \. and \. $a,c_1,\ldots,c_k\in [n]$.
We have
\begin{equation}\label{eq:Sta-gen-eq-MS}
\aNr_{\bz\bc}(P, x,a)^2 \, = \, \aNr_{\bz\bc}(P, x,a+1) \.\cdot \.  \aNr_{\bz\ts\bc}(P, x,a-1)
\end{equation}
\underline{if and only if} \,
\begin{equation}\label{eq:Sta-gen-equal}
\aNr_{\bz\bc}(P, x,a+1) \, = \, \aNr_{\bz\bc}(P, x,a) \, = \,  \aNr_{\bz\ts\bc}(P, x,a-1).
\end{equation}
\end{thm}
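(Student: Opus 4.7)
The plan is to prove the theorem using the geometric framework of Alexandrov--Fenchel mixed volumes, following and extending the approach of Shenfeld and van~Handel that was used for the $k=0$ case (Theorem~\ref{t:Sta-equality}). The direction $(2) \Rightarrow (1)$ is immediate, so the entire content of the theorem is the implication $(1) \Rightarrow (2)$.

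First, I would encode the counts $\aNr_{\bz\bc}(P,x,a)$ as mixed volumes, following Stanley's original construction (see~$\S$\ref{ss:proof-geom}). The constraints $f(z_i) = c_i$ slice the order polytope $\cO_P$ along the hyperplanes $\{\alpha_{z_i} = c_i/(n+1)\}$; after rescaling, one obtains a lower-dimensional polytope $\cO_P^{\bz,\bc}$ whose integer points correspond to linear extensions in $\Ec_{\bz\bc}(P)$. The additional constraint $f(x)=a$ singles out an appropriate cross-section, and a standard computation gives a representation
$$
\aNr_{\bz\bc}(P,x,a) \, = \, C \cdot V\bigl(L_a,\ts K,\ts \ldots,\ts K\bigr),
$$
where $V$ is the mixed volume in $\mathbb{R}^{n-k-1}$, $K$ is a fixed convex body built from the slice, $L_a$ is a family of convex bodies parametrized by $a$, and $C$ is a combinatorial normalization. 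The generalized Stanley inequality~\eqref{eq:Sta-gen} is then the Alexandrov--Fenchel inequality applied to $L_{a-1}, L_a, L_{a+1}$.

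Second, I would invoke the Shenfeld--van Handel equality theorem for the Alexandrov--Fenchel inequality, in the form refined by Ma--Shenfeld for polytopes arising from combinatorial data. Equality in
$V(L_a,K,\ldots,K)^2 = V(L_{a-1},K,\ldots,K)\cdot V(L_{a+1},K,\ldots,K)$
forces $L_{a-1}$, $L_a$, and $L_{a+1}$ to differ by translation on the support of $K$. Translating this back through the mixed-volume encoding, the three consecutive cross-sections of $\cO_P^{\bz,\bc}$ at the $x$-levels $a-1, a, a+1$ have the same volume, which is precisely the equality of all three counts in~\eqref{eq:Sta-gen-equal}.

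The main obstacle, and the technical heart of the argument, is verifying the non-degeneracy hypotheses of the Shenfeld--van Handel equality characterization for the specific polytopes $K$ and $L_a$ that arise from the sliced order polytope. Degenerate situations, where some section is of strictly smaller dimension, correspond exactly to vanishing cases controlled by Theorem~\ref{t:Sta-gen-vanish}; these must be handled separately by showing that $(1)$ forces simultaneous vanishing of all three terms. In the nondegenerate range, one must show that the combinatorial structure of $P$ near $x$ and the $z_i$ prevents the type of reducibility that would make the AF extremal characterization inapplicable. Importantly, unlike the $k=0$ case, no clean explicit combinatorial criterion analogous to condition~$(3)$ of Theorem~\ref{t:Sta-equality} appears tractable for general $k$; the argument therefore bypasses such a characterization and extracts only the translation-invariance conclusion needed for~$(2)$.
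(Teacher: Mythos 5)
Your high-level strategy --- encoding the counts as mixed volumes of slices of the order polytope and invoking Shenfeld--van~Handel extremal theory for the Alexandrov--Fenchel inequality --- is indeed the route taken by Ma--Shenfeld in~\cite{MS22}. However, there is a concrete and important error in your handling of degeneracies, which happens to be exactly where the real work of that paper lies.

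You assert that ``degenerate situations, where some section is of strictly smaller dimension, correspond exactly to vanishing cases controlled by Theorem~\ref{t:Sta-gen-vanish},'' and propose to dispatch them by showing that equality forces simultaneous vanishing of all three counts. This is false. The subcritical/critical/supercritical trichotomy of Definition/Lemma~\ref{dl:ineq-MS-crit-simp} is predicated on the positivity assumption~\eqref{eq:ineq-MS-Pos} --- all three counts \(\aNr_{\bz\bc}(P,x,a-1),\ \aNr_{\bz\bc}(P,x,a),\ \aNr_{\bz\bc}(P,x,a+1)\) are strictly positive --- and it is precisely in the critical (but not supercritical) and subcritical regimes that the geometric bodies become degenerate even though no count vanishes. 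Example~\ref{ex:ineq-Sta-gen-crit-k=1} gives a critical quintuple with all three counts equal to \(2(n-3)!\,>\,0\); there the AF equality case is not a simple ``translate'' situation, and one cannot reduce to Theorem~\ref{t:Sta-gen-vanish}. In the Shenfeld--van~Handel framework, equality in Alexandrov--Fenchel for polytopes is characterized not merely by translation but by a more refined condition involving degenerate pairs and face structure; when the bodies arising from the sliced order polytope are not in general position, the extremal characterization admits additional solutions. The key contribution of Ma--Shenfeld is precisely to show that, even in those additional extremal configurations, the combinatorics forces~\eqref{eq:Sta-gen-equal}. By conflating geometric degeneracy with combinatorial vanishing, your plan skips the part of the argument that is actually nontrivial, and what remains --- ``invoke the Ma--Shenfeld refinement'' --- is circular for the theorem you are trying to prove.

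A secondary gap: even in the clean supercritical regime, ``\(L_{a-1},L_a,L_{a+1}\) differ by translation on the support of \(K\), hence the three counts agree'' requires a nontrivial identification showing that the mixed volume \(V(L_a,K,\ldots,K)\) is insensitive to the residual translation freedom. This is standard but should be made explicit, since in the presence of low-dimensional pieces (which is exactly the nonsupercritical regime) translation invariance of mixed volumes alone does not immediately give the conclusion.
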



For \ts $k=1$, the equality cases of \eqref{eq:Sta-gen-eq-MS}
are given by the following result:


\begin{thm}[{\rm \cite[$\S$9.1]{CP-AFequality}}{}] \label{t:Sta1-equality}
Let \ts $P=(X,\prec)$ \ts be a poset on \ts $n=|X|$ \ts elements, let
\ts $x,z\in X$,  and \ts $a,c \in [n]$.  Then
\begin{equation*}
\aNr_{z\ts c}(P, x,a)^2 \, = \, \aNr_{z\ts c}(P, x,a+1)\cdot  \aNr_{z\ts c}(P, x,a-1).
\end{equation*}
\underline{if and only if} \.
\begin{equation}\label{eq:sapporo-1}
		 \aNr_{zy,\ts cb'}(P,x,a') \. = \. 0  \quad \text{for all} \quad y \in \comp(x) \ \ \ \text{and} \ \ \  a',b' \in \{a-1,a,a+1\}.
\end{equation}
\end{thm}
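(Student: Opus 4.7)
My plan is to reduce the problem to Ma--Shenfeld's complete equality characterization and then translate the resulting triple equality into the vanishing condition via an exchange argument. First, Theorem~\ref{t:ineq-Sta-gen-MS-equality} applied with $k=1$ shows that the log-concavity equality
\[
\aNr_{zc}(P,x,a)^2 \, = \, \aNr_{zc}(P,x,a+1) \cdot \aNr_{zc}(P,x,a-1)
\]
is equivalent to the simultaneous equality
\[
\aNr_{zc}(P,x,a-1) \, = \, \aNr_{zc}(P,x,a) \, = \, \aNr_{zc}(P,x,a+1).
\]
The problem thus reduces to showing that this triple equality is equivalent to~\eqref{eq:sapporo-1}.

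For the backward direction, assume~\eqref{eq:sapporo-1}. I construct swap bijections $\Ec_{zc}(P,x,a') \leftrightarrow \Ec_{zc}(P,x,a'+1)$ for $a' \in \{a-1,a\}$ by exchanging $x$ with the element at the adjacent position. Since~\eqref{eq:sapporo-1} forbids any $y \in \comp(x)$ from occupying positions in $\{a-1,a,a+1\}$ when $f(x)$ lies there, the adjacent element must be incomparable to~$x$ (or equal to~$z$, a case handled via the vanishing conditions of Theorem~\ref{t:Sta-gen-vanish}). This makes the swap map well-defined and involutive, producing the triple equality.

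For the forward direction, I partition $\Ec_{zc}(P)$ into promotion/demotion orbits: each orbit~$O$ slides $x$ through an interval $[K^-(O), K^+(O)]$ of positions, with obstructions at the endpoints coming from elements in $\comp(x)$ or from~$z$. The triple equality forces pairwise identities among the orbit-boundary counts $p^\pm(a') := \#\{O : K^\pm(O) = a'\}$, namely $p^+(a-1) = p^-(a)$ and $p^+(a) = p^-(a+1)$. To strengthen ``paired'' to ``individually zero'', I apply Ma--Shenfeld's equality characterization to refined counts $\aNr_{zy,cb'}(P,x,\cdot)$ for each $y \in \comp(x)$ and each admissible $b'$, or invoke the combinatorial atlas technology of~\cite{CP-atlas} to control the contribution of comparable elements directly. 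This forces $p^\pm(a') = 0$ for the relevant $a'$, yielding the $|a'-b'|=1$ cases of~\eqref{eq:sapporo-1}. The $|a'-b'|=2$ cases then follow by a one-step cascade: any extension with $f(x)=a-1$ and $f(y)=a+1$ for some $y \succ x$ would, via swapping $x$ with the element~$w$ at position~$a$, produce either a $|a'-b'|=1$ violation when $w \parallel x$ or when $w \succ x$, both already excluded.

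The main obstacle is precisely this strengthening step: a naive swap argument only pairs the ``unmatched'' extensions on the two sides of the bijection, while \eqref{eq:sapporo-1} demands individual vanishing of each $\aNr_{zy,cb'}(P,x,a')$. Overcoming this requires combining the triple equality with the extra structural constraints derived either from iteratively applying the complete equality property (Theorem~\ref{t:ineq-Sta-gen-MS-equality}) on refined counts or from the convex-geometric equality characterizations underlying the combinatorial atlas construction in~\cite{CP-atlas}. This step is the technical heart of the argument, and it is also what restricts the analysis to the $k=1$ case rather than extending routinely to larger~$k$.
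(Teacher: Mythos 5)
The theorem you are proving is cited from \cite{CP-AFequality} and the survey does not reproduce the proof, so I cannot compare you to a parallel argument in this text; the tools sections indicate the actual proof uses both promotion/demotion operators and geometric (Alexandrov--Fenchel) equality analysis.

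Your first reduction --- applying Theorem~\ref{t:ineq-Sta-gen-MS-equality} at $k=1$ to replace the log-concavity equality by the triple equality $\aNr_{zc}(P,x,a-1)=\aNr_{zc}(P,x,a)=\aNr_{zc}(P,x,a+1)$ --- is correct and is the right starting point. The backward direction (assuming \eqref{eq:sapporo-1}, swap $x$ with the element at an adjacent position) is structurally sound, though you gloss over the degenerate regime where $c\in\{a-1,a,a+1\}$ and $z\parallel x$; there the swap target can be $z$, the bijection is not defined, and the triple equality can in fact fail even when \eqref{eq:sapporo-1} is vacuous (e.g.\ $P$ an antichain with $c=a+1$). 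The statement as quoted in this survey must be read with an implicit non-vanishing hypothesis in the style of Theorem~\ref{t:Sta-equality}, and a complete argument should make that hypothesis and casework explicit.

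The real gap is the forward direction, and you name it yourself but do not close it. The orbit argument yields only the balance identities $p^+(a')=p^-(a'+1)$; this says the number of linear extensions in $\Ec_{zc}(P,x,a')$ that are blocked from sliding $x$ up equals the number in $\Ec_{zc}(P,x,a'+1)$ that are blocked from sliding down. Nothing in the triple equality alone forces each of these to be zero, and indeed that is precisely the content of the theorem. Your proposed remedy --- ``apply Ma--Shenfeld's equality characterization to the refined counts $\aNr_{zy,cb'}(P,x,\cdot)$'' --- presupposes an equality statement for those refined counts that you have not derived from the hypothesis; Theorem~\ref{t:ineq-Sta-gen-MS-equality} applies to a fixed conditioning tuple $(\bz,\bc)$, and the triple equality you are given is for $(z,c)$ only, not for the enlarged tuples $(z,y;c,b')$. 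Similarly, invoking ``the combinatorial atlas technology'' is a pointer, not an argument. What is missing is a concrete mechanism --- either an additional inequality relating the refined counts across different $y$ and $b'$ that, combined with the balance identities, pins them all to zero, or a direct extremality/rigidity statement at the level of the order-polytope sections that the triple equality triggers. Until that step is supplied, the cascade to the $|a'-b'|=2$ cases has nothing to stand on, since it presupposes the $|a'-b'|=1$ vanishings that are exactly what remains unproved.
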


\smallskip

Since checking \eqref{eq:sapporo-1} can be done in polynomial time by Theorem~\ref{t:Sta-gen-vanish},
we easily have:


\begin{cor}[{\rm \cite[Thm~1.4]{CP-AFequality}}{}] \label{c:Sta1-equality-P}
For $k=1$, the equality verification of the generalized Stanley inequality~\eqref{eq:Sta-gen-eq-MS}
can be done in polynomial time:
$$\big\{\aNr_{z\ts c}(P, x,a)^2 =^? \aNr_{z\ts c}(P, x,a+1) \cdot \aNr_{z\ts c}(P, x,a-1)\}\in \poly\ts.
$$
\end{cor}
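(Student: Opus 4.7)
The plan is to derive Corollary~\ref{c:Sta1-equality-P} essentially for free by combining the characterization in Theorem~\ref{t:Sta1-equality} with the polynomial-time vanishing test in Theorem~\ref{t:Sta-gen-vanish}. First I would invoke Theorem~\ref{t:Sta1-equality} to replace the equality verification problem
\[
\big\{\aNr_{z\ts c}(P, x,a)^2 \. =^? \. \aNr_{z\ts c}(P, x,a+1) \cdot \aNr_{z\ts c}(P, x,a-1)\big\}
\]
by the equivalent finite conjunction of vanishing statements~\eqref{eq:sapporo-1}, namely
\[
\aNr_{zy,\ts cb'}(P,x,a') \. = \. 0 \quad \text{for all } y \in \comp(x), \  a',b' \in \{a-1,a,a+1\}.
\]

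Next I would estimate the size of this conjunction. There are at most $|X|=n$ possible choices of $y \in \comp(x)$ and exactly nine choices of pairs $(a',b')\in\{a-1,a,a+1\}^2$, so \eqref{eq:sapporo-1} consists of at most $9n$ individual vanishing queries. Each such query is of the form $\{\aNr_{\bz\bc}(P, x,a') =^? 0\}$ with $\bz=(z,y)$, $\bc=(c,b')$, augmented by the single additional constraint $x\mapsto a'$; equivalently it is an instance of the vanishing problem for the generalized Stanley inequality with at most three pinned positions.

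At this step I would apply Theorem~\ref{t:Sta-gen-vanish}, which asserts precisely that the vanishing problem $\{\aNr_{\bz\bc}(P)=^? 0\}$ belongs to $\poly$, with the decision given by the explicit inequalities on $\al(z_i)$, $\be(z_i)$ and $\ga(z_i,z_j)$ in~\eqref{eq:Sta-gen-vanish}. Since these quantities can be computed from the Hasse diagram in polynomial time, each of the $O(n)$ vanishing queries is decidable in polynomial time, and thus so is their conjunction.

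The only subtlety worth flagging is degenerate indexing: if $y=z$ the notation $\aNr_{zy,\ts cb'}$ collapses, and if $a'\in\{0,n+1\}$ or $b'\in\{0,n+1\}$ the corresponding count is vacuously zero; these boundary cases are handled uniformly by the vanishing criterion~\eqref{eq:Sta-gen-vanish}. The main obstacle I would expect in writing up the full argument is therefore not complexity-theoretic but notational, namely verifying that Theorem~\ref{t:Sta1-equality} and Theorem~\ref{t:Sta-gen-vanish} are being applied with matching conventions on the allowable values of the indices; once this bookkeeping is done, the polynomial-time conclusion is immediate.
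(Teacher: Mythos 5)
Your proof is correct and takes essentially the same approach as the paper: the paper notes, immediately after Theorem~\ref{t:Sta1-equality}, that checking the vanishing conditions~\eqref{eq:sapporo-1} reduces to $O(n)$ invocations of Theorem~\ref{t:Sta-gen-vanish}, each decidable in polynomial time. Your write-up merely spells out the $9n$ bound on the number of queries and the degenerate-index bookkeeping that the paper leaves implicit.
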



Corollary~\ref{c:Sta1-equality-P} trivially implies Corollary~\ref{c:Sta-equality-P}.

\smallskip

\subsection{Equality conditions, Ma--Shenfeld theory}\label{ss:ineq-Sta-gen-equality-MS}
Let \. $x,z_1,\ldots,z_k\in X$ \. and \. $a,c_1,\ldots,c_k\in [n]$;
we write \. $\bz =(z_1,\ldots,z_k)$ \. and \. $\bc =(c_1,\ldots,c_k)$.
Throughout the section, we assume that \. $x\prec z_1 \prec \ldots \prec z_k$ \. and
$c_1<\cdots< c_k$\ts.
Suppose poset \ts $P$ \ts has
elements \ts $z_0= \wh 0$ \ts and \ts $z_{k+1}=\wh 1$. Denote
$$\La \ := \ \big\{\ts (r,s) \, : \, 0 \le r < s \le k+1, \, c_r<a<c_s \,, \, (r,s) \ne (0,k+1)\ts\big\}.
$$
Pairs \ts $(r,s)\in \La$ \ts are called \defnb{splitting pairs}.\footnote{In~\cite[Def.~5.2]{MS22},
these are called \emph{$\ell$-splitting pairs}, which are instead written as \ts $(r+1,s)$.}

\smallskip

\begin{dl}\label{dl:ineq-MS-crit-simp}
Suppose that
\begin{align}\label{eq:ineq-MS-Pos} 
	\aNr_{\bz\bc}(P, x,a-1), \ \aNr_{\bz\bc}(P, x,a), \ \aNr_{\bz\bc}(P, x,a+1) \ > \ 0 \quad \text{ and } \quad n >k+3.
\end{align}
We say that \. $(P,x,a,\bz,\bc)$ \. is \. \defnb{subcritical},
\. \defnb{critical}, and \. \defnb{supercritical} \. \underline{if and only if} \.
for every  splitting pair \ts $(r,s)\in \La$, we have, respectively:
\begin{equation}\label{eq:ineq-MS-subcrit}\tag{subcrit}
    \ga(z_r,z_s) \, \leq \,  c_s\. - \. c_r \. - \. 1\ts,
\end{equation}
\begin{equation}\label{eq:ineq-MS-crit}\tag{crit}
	\ga(z_r,z_s)   \, \leq \,  c_s\. - \. c_r \. - \. 2\ts,
\end{equation}
\begin{equation}\label{eq:ineq-MS-supercrit}\tag{supercrit}
	\ga(z_r,z_s)   \, \leq \,  c_s\. - \. c_r \. - \. 3\ts.
\end{equation}
In particular, each membership problem is in~$\ts \poly$.
\end{dl}

The definition in \cite{MS22} is cumbersome, so we use this definition which
is more transparent.  We prove that it is equivalent to the original definition
in~$\S$\ref{ss:proof-crit-simp}.  By Definition~\ref{dl:ineq-MS-crit-simp}, we have:
\[  \{\text{subcritical}\}  \quad \supseteq \quad   \{\text{critical}\}
\quad \supseteq \quad \{\text{supercritical}\}.
\]
Note that \eqref{eq:ineq-MS-Pos} implies \eqref{eq:ineq-MS-subcrit},
since for every \ts $f \in \aN_{\bz\bc}(P,x,a)$ \ts and \ts $(r,s)\in \La$, we have:
\begin{equation}\label{eq:ineq-MS-basic}
 \ga(z_r,z_s) \, \leq \, \big|\big\{f^{-1}(i) \, : \, c_r< i <c_s  \big\}\big| \, = \,  c_s \. - \. c_r \. - \. 1\ts.
\end{equation}
Note also that for \ts $k=0$, every quintuple \. $(P,x,a,\bz,\bc)$ \.  is supercritical,
since \ts $\La=\emp$.

\begin{thm}[{\cite[Thm~1.3, \ts Rem.~1.7]{MS22}}]\label{t:ineq-MS-supercrit}
Suppose the positivity conditions \eqref{eq:ineq-MS-Pos} hold and that
quintuple \ts $(P,x,a,\zb,\cb)$ \ts is supercritical.
Then the equality \eqref{eq:Sta-gen-eq-MS} holds \. \underline{if and only if} \.
\begin{equation}\label{eq:ineq-MS-supercrit-conditions}
\left\{\.\aligned
& 	\forall  \.  y  \in x\!\uparrow \ \exists \. r \in \{0,\ldots,k+1\}
		\quad \text{s.t.} \quad  y \. \succ \. z_{r} \ \text{ and } \ \ga(z_r,y) \. > \. a \ts - \ts c_{r}\\
&\forall  \,  y  \in x\!\downarrow \ \exists \. s \in \{0,\ldots,k+1\}
		\quad \text{s.t.} \quad  y \. \prec \. z_{s} \ \text{ and } \ \ga(y, z_{s}) \. > \. c_{s} \ts - \ts a
\endaligned\right.
\end{equation}
Additionally, verifying \eqref{eq:ineq-MS-supercrit-conditions} is in~$\ts\poly$.
\end{thm}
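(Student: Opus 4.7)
The plan is to extend the Shenfeld--van Handel proof of Theorem~\ref{t:Sta-equality} (the $k=0$ case) to general $k\ge 0$ via the Alexandrov--Fenchel inequality, following Ma--Shenfeld~\cite{MS22}. The starting point is Stanley's geometric interpretation: $\aNr_{\bz\bc}(P,x,a)$ equals, up to a combinatorial constant, the mixed volume of a family of chain-type polytopes obtained by slicing the order polytope $\cO_P$ along the hyperplanes $\{\al_{z_i}=c_i/(n+1)\}$. In this family, three distinguished bodies $K_{a-1}, K_a, K_{a+1}$ encode the position of $f(x)\in\{a-1,a,a+1\}$, while the remaining supporting bodies $L_j$ encode the free slots in each subinterval $[c_r+1,c_s-1]$ determined by the chain $\wh 0 = z_0 \prec z_1 \prec \ldots \prec z_k \prec z_{k+1} = \wh 1$. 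The generalized Stanley inequality \eqref{eq:Sta-gen} is then an instance of Alexandrov--Fenchel, and \eqref{eq:Sta-gen-eq-MS} translates into its equality case.

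The central tool is the Shenfeld--van Handel characterization of AF-equality for order-polytope-type mixed volumes: equality holds \emph{iff} the distinguished bodies $K_{a-1}$ and $K_{a+1}$ are translates of each other modulo the lineality of the supporting bodies, \emph{provided} the $L_j$ are sufficiently non-degenerate. The role of supercriticality \eqref{eq:ineq-MS-supercrit}, i.e.\ $\ga(z_r,z_s) \le c_s-c_r-3$ for every splitting pair $(r,s)\in\La$, is precisely to supply this non-degeneracy: the extra slack of at least $2$ beyond the bound in \eqref{eq:ineq-MS-basic} guarantees that each $L_j$ has genuine freedom in the corresponding subinterval and is therefore full-dimensional in the required direction. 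Computing the support functions of $K_{a\pm 1}$ along the coordinate axes indexed by $y\in\comp(x)$, the translate condition unfolds into the combinatorial conditions \eqref{eq:ineq-MS-supercrit-conditions}: each $y\succ x$ must already be ``pinned above $a$'' by some $z_r \prec y$ with $\ga(z_r,y)>a-c_r$, and dually for $y\prec x$.

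The forward direction (equality $\Rightarrow$ \eqref{eq:ineq-MS-supercrit-conditions}) is an extremal ray argument: if some $y \succ x$ violates the condition for every $r$, then one constructs a polytopal deformation showing that $K_{a-1}$ and $K_{a+1}$ differ non-trivially modulo lineality, contradicting AF-equality. The reverse direction is a direct computation: assuming \eqref{eq:ineq-MS-supercrit-conditions}, one exhibits an explicit translate identifying $K_{a-1}$ with $K_{a+1}$ in the quotient, which by AF-equality forces \eqref{eq:Sta-gen-eq-MS}. The polynomial-time verification of \eqref{eq:ineq-MS-supercrit-conditions} is then immediate from Theorem~\ref{t:Sta-gen-vanish}: after precomputing the transitive closure of $P$ in $O(n^3)$, each of the $O(nk)$ candidate pairs $(y,r)$ is checked in constant time.

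The main obstacle is the extremal ray analysis underpinning the AF-equality characterization under supercriticality. Identifying \eqref{eq:ineq-MS-supercrit} (rather than the weaker criticality \eqref{eq:ineq-MS-crit}) as the sharp non-degeneracy threshold is the technical core of~\cite{MS22}, and is precisely the reason that the critical and subcritical regimes require separate, considerably more intricate treatments that fall outside the present theorem.
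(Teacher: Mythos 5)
The paper you're proving from is a survey: Theorem~\ref{t:ineq-MS-supercrit} is stated as a citation to Ma--Shenfeld~\cite{MS22} and no proof is given in the text, so there is no in-paper argument to compare against. Your sketch correctly identifies the strategy of the cited work --- mixed volumes of order-polytope slices, the Shenfeld--van Handel characterization of Alexandrov--Fenchel equality cases, supercriticality as the non-degeneracy hypothesis enabling that characterization --- and the polynomial-time claim for \eqref{eq:ineq-MS-supercrit-conditions} is correct (compute the transitive closure in $O(n^3)$, then check each pair $(y,r)$ in constant time; the appeal to Theorem~\ref{t:Sta-gen-vanish} is unnecessary for this point and somewhat of a non-sequitur). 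That said, as you yourself concede in the final paragraph, the entire technical content --- setting up the mixed-volume identity precisely, verifying that supercriticality gives the required non-degeneracy of the supporting bodies, and carrying out the extremal-ray analysis that translates AF-equality into \eqref{eq:ineq-MS-supercrit-conditions} --- is deferred. What you have written is a roadmap that matches the known approach, not a proof, and a grader would have to treat the ``if and only if'' and the specific combinatorial form of the conditions as unverified.
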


Theorem~\ref{t:ineq-MS-supercrit} shows that deciding whether a supercritical quintuple
gives the equality case for the generalized Stanley inequality \eqref{eq:Sta-gen}
can be done in polynomial time.
Note that this generalizes Theorem~\ref{t:Sta-equality}, but not
Theorem~\ref{t:Sta1-equality} since there are critical cases for \ts $k=1$ \ts
as the following example shows.

\begin{ex}\label{ex:ineq-Sta-gen-crit-k=1}
Let  \ts $k=1$. Consider a poset  \. $P=(X,\prec)$, where $X := \{x,y_1,\ldots, y_{n-2},z\}$,
and \. $z \prec x$, $z \prec y_{n-2}$ \. are the only relations.  Let \ts $a:=n-1$,
\ts $b :=n-3$.  We have:
\[
\aN_{z \ts b}(P,x,a) \, = \, \aN_{z \ts b}(P,x,a+1) \, = \, \aN_{z \ts b}(P,x,a-1) \ = \  2 \. (n-3)!\ts,
\]
so \eqref{eq:Sta-gen} is an equality.
Note that \. $(P,z,a,z,b)$ \. is critical but not supercritical.  Indeed,  in this
case we have \. $\La = \{(1,2)\}$, \. $c_1=b$, \. $c_2=n+1$, \. and
\[ \ga(z,\wh 1)  \, = \,  \big|\{x, y_{n-2}\}\big| \, = \, 2 \, = \, (n+1) \. - \. (n-3) \. - \. 2 \,  = \,  c_{2} \. - \. c_1 \. - \. 2.   \]
\end{ex}

\smallskip

\subsection{Equality conditions, negative results}\label{ss:ineq-Sta-gen-equality-not-PH}
Corollary~\ref{c:Sta1-equality-P} is in sharp contrast with the following result:

\smallskip

\begin{thm}[{\rm \cite[Thm~1.3]{CP-AFequality}}{}] \label{c:Sta-gen-equality-not-PH}
Fix \. $k\ge 2$.  Then the equality verification of the generalized Stanley inequality~\eqref{eq:Sta-gen}
is not in the polynomial hierarchy unless the polynomial hierarchy collapses
to a finite level:
$$\big\{\aNr_{\bz \bc}(P, x,a)^2 =^? \aNr_{\bz \bc}(P, x,a+1) \cdot \aNr_{\bz \bc}(P, x,a-1)\}\in \PH \ \ \Longrightarrow \ \ \PH=\Sigmap_m  \quad \ \text{for some} \ \, m\ts.
$$
\end{thm}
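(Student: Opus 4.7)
The approach is to exhibit a polynomial-time reduction from a coincidence problem whose containment in $\PH$ forces the hierarchy to collapse. The natural candidate is
\[
\text{\sc C}_\LE \ := \ \big\{ \, e(Q_1) \, =^? \, e(Q_2) \, \big\}.
\]
Since $\LE$ is $\SP$-complete under parsimonious reductions, $\text{\sc C}_\LE$ is $\CEP$-complete. By a Toda--Ogihara style containment $\PH \subseteq \ComCla{P}^{\CEP}$, any hypothetical inclusion $\CEP \subseteq \Sigmap_m$ propagates to $\PH \subseteq \Sigmap_{m+O(1)}$; hence a polynomial-time reduction from $\text{\sc C}_\LE$ to the equality verification problem of~\eqref{eq:Sta-gen} will suffice.

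Given an instance $(Q_1,Q_2)$, the plan is to construct a poset $P^\ast$ with distinguished elements $x,z_1,z_2$ (so $k=2$; larger $k$ is handled by padding $\bz$ with inert covers of $x$) and integers $a$, $c_1 < c_2$ such that
\[
\aNr_{\bz\bc}(P^\ast,x,a-1) \, = \, \aNr_{\bz\bc}(P^\ast,x,a+1) \, = \, M\cdot e(Q_1), \qquad \aNr_{\bz\bc}(P^\ast,x,a) \, = \, M \cdot e(Q_2),
\]
for some fixed integer $M$ depending only on the padding. By the complete equality property of Ma--Shenfeld (Theorem~\ref{t:ineq-Sta-gen-MS-equality}), the Stanley equality $\aNr_{\bz\bc}(P^\ast,x,a)^2 = \aNr_{\bz\bc}(P^\ast,x,a-1)\cdot\aNr_{\bz\bc}(P^\ast,x,a+1)$ is equivalent to the triple coincidence of these three values, which under our construction reduces to $e(Q_1) = e(Q_2)$. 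By first padding $Q_2$ with dummy elements so that $e(Q_2) \ge e(Q_1)$ is guaranteed, the log-concavity-imposed direction $\aNr(a)\ge \aNr(a\pm 1)$ is compatible with the target equality.

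The construction uses the forced chain $z_1 \prec x \prec z_2$ at ranks $(c_1,a,c_2)$ to partition every linear extension into three consecutive intervals of prescribed lengths. The middle interval will house disjoint copies of $Q_1$ and $Q_2$, with carefully chosen comparabilities between $x$ and these embedded posets so that $x$ at position $a$ ``sees'' a copy of $Q_2$, whereas $x$ at positions $a\pm 1$ ``sees'' copies of $Q_1$. The symmetry $\aNr_{\bz\bc}(P^\ast,x,a-1)=\aNr_{\bz\bc}(P^\ast,x,a+1)$ will be enforced by building $P^\ast$ invariant under the order-reversing involution that swaps the roles of $a-1$ and $a+1$ within the middle block.

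The main obstacle is to realize the prescribed counts \emph{simultaneously} with the requirement that $(P^\ast,x,a,\bz,\bc)$ land in the \emph{critical but not supercritical} regime of Definition/Lemma~\ref{dl:ineq-MS-crit-simp}; otherwise Theorem~\ref{t:ineq-MS-supercrit} decides the equality in~$\poly$, precluding any hardness. Concretely, this forces at least one splitting pair $(r,s) \in \La$ to satisfy $\ga(z_r,z_s) = c_s - c_r - 2$, a knife-edge constraint on the number of elements strictly between $z_r$ and $z_s$ in~$P^\ast$. This is precisely where $k\ge 2$ becomes indispensable: for $k=1$, one can show (and this is implicit in Theorem~\ref{t:Sta1-equality}) that every critical quintuple is already supercritical, so the encoding collapses to a polynomial-time decidable condition; for $k\ge 2$, the pair $(1,2)\in\La$ opens up a genuinely new critical regime whose interior can host the coding gadgets for $Q_1$ and $Q_2$. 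A secondary technical challenge is verifying that the comparability relations attaching $Q_1,Q_2$ to $x$ and to $z_1,z_2$ remain consistent with both the rank constraints and the symmetry, which is expected to require a tensor-product-style construction with several ``buffer'' chains.
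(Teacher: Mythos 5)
Your overall plan — reduce from a coincidence problem that is hard for $\PH$, then invoke Ma--Shenfeld's complete equality property (Theorem~\ref{t:ineq-Sta-gen-MS-equality}) to convert the Stanley equality into a triple coincidence — is the right one, and it matches the shape of the argument in \cite{CP-AFequality}. However, the paper routes through the intermediate Theorem~\ref{t:Sta-gen-equality-FlatLE}, namely that the \emph{pairwise} coincidence $\{\aN(P,x,a)=^?\aN(P,x,a+1)\}$ for $k=0$ is already not in $\PH$ unless it collapses, and then reduces this pairwise problem to the $k\ge 2$ equality verification. Starting instead from $\text{\sc C}_\LE$ directly is not unreasonable, but it runs into the log-concavity obstruction more severely: your target counts $\aN(a\pm 1)=M\,e(Q_1)$, $\aN(a)=M\,e(Q_2)$ can only be realized by an honest poset when $e(Q_2)\ge e(Q_1)$ (since Stanley's inequality always holds), and your proposed fix --- ``pad $Q_2$ so that $e(Q_2)\ge e(Q_1)$'' --- is circular. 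To guarantee the inequality you would need to know $e(Q_1)$, which is $\SP$-hard; and if you overshoot, the coincidence becomes unconditionally false and the reduction is vacuous. The paper's intermediate problem sidesteps this by working with a sequence $\{\aN(P,x,a)\}$ that is itself log-concave, so the relevant ordering is intrinsic rather than something to be imposed by hand.

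There is also a concrete factual error in your discussion of why $k\ge 2$ is needed. You assert that for $k=1$ ``every critical quintuple is already supercritical, so the encoding collapses to a polynomial-time decidable condition,'' and that this is implicit in Theorem~\ref{t:Sta1-equality}. This is false: Example~\ref{ex:ineq-Sta-gen-crit-k=1} exhibits a critical but not supercritical quintuple with $k=1$, and the paper even emphasizes the point (Theorem~\ref{t:ineq-MS-supercrit} ``generalizes Theorem~\ref{t:Sta-equality}, but not Theorem~\ref{t:Sta1-equality} since there are critical cases for $k=1$''). The reason $k=1$ remains in $\poly$ (Corollary~\ref{c:Sta1-equality-P}) is the separate vanishing characterization of Theorem~\ref{t:Sta1-equality}, not a collapse of the critical regime. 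Consequently, the distinction between $k=1$ and $k\ge 2$ cannot be explained by the subcritical/critical/supercritical trichotomy alone, and your explanation of ``why $k\ge 2$ is indispensable'' would mislead a reader. Finally, the gadget construction itself (buffer chains, a symmetry forcing $\aN(a-1)=\aN(a+1)$, the placement of $Q_1,Q_2$ relative to $x,z_1,z_2$) is left at the level of wishful description; without verifying that the prescribed counts and the non-supercritical regime can be realized simultaneously, the proof is incomplete.
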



The proof is based on the following result of independent interest.

\begin{thm}\label{t:Sta-gen-equality-FlatLE}  \
$\big\{\aNr(P,x,a) =^? \aNr(P,x,a+1)\big\}\in \PH \ \ \Longrightarrow \ \ \PH=\Sigmap_m  \quad \ \text{for some} \ \, m\ts.$
\end{thm}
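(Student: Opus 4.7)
The plan is to show that the decision problem $\bigl\{\aNr(P,x,a) =^? \aNr(P,x,a+1)\bigr\}$ is $\CEP$-hard under polynomial-time many-one reductions. Once this is established, the conclusion follows from the standard consequence that a $\CEP$-hard problem lying in $\PH$ forces $\CEP \subseteq \PH$, which in turn collapses $\PH$ to a finite level. The reduction is from the $\CEP$-hard problem $\bigl\{e(P_1) =^? e(P_2)\bigr\}$; its $\CEP$-hardness follows from the $\SP$-completeness of \LE{} under parsimonious reductions (Brightwell--Winkler).

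Concretely, given posets $P_1, P_2$ with $|P_1| = |P_2| = n$ (padded by disjoint chains if necessary), I would construct in polynomial time a poset $R$, an element $x \in R$, and an integer $a$ such that
\begin{equation*}
\aNr(R,x,a) \. - \. \aNr(R,x,a+1) \ = \ c \. \cdot \. \bigl(e(P_1) \. - \. e(P_2)\bigr)
\end{equation*}
for an explicitly computable nonzero constant $c = c(n)$. Granted this identity, we have $\aNr(R,x,a) = \aNr(R,x,a+1)$ if and only if $e(P_1) = e(P_2)$, yielding the desired reduction. The poset $R$ would be built as a ``double-arm'' gadget: a chain $L$ of length $a-1$ is placed below $x$ and a chain $U$ above $x$ to confine $f(x)$ to a narrow band around $\{a, a+1\}$; then $P_1$ and $P_2$ are attached asymmetrically to the hub around $x$ via cover relations designed so that extensions with $f(x) = a$ decompose through a linear extension of $P_1$ against a common scaffolding involving $P_2$, while extensions with $f(x) = a+1$ decompose symmetrically through $P_2$, the scaffolding contributions canceling in the difference.

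The main obstacle is breaking the Vandermonde-type symmetry that causes naive gadgets to collapse. For instance, the disjoint union $R = P_1 + P_2 + \{x\}$ satisfies $\aNr(R,x,a) = e(P_1)\ts e(P_2) \binom{2n}{n}$ independent of $a$ by a Vandermonde identity; many close variants, such as $(P_1 \oplus \{x\}) + P_2$ or symmetric double-sink constructions, exhibit analogous collapse, so the difference $\aNr(R,x,a) - \aNr(R,x,a+1)$ vanishes regardless of whether $e(P_1) = e(P_2)$. Breaking this symmetry while preserving a clean factorization of $\aNr(R,x,a)$ is the crux of the argument. Once the correct gadget is fixed, verification of the identity is a direct computation using the standard decomposition of linear extensions of $R$ with $f(x)$ fixed into interleavings of linear extensions of $x\!\downarrow$, of $x\!\uparrow$, and of the elements of $R$ incomparable to $x$, together with the appropriate multinomial interleaving coefficients.
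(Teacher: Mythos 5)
Your high-level strategy — reduce the coincidence problem $\{e(P_1) =^? e(P_2)\}$ to $\{\aN(P,x,a) =^? \aN(P,x,a+1)\}$ and then invoke a collapse consequence — is the right shape, and you have correctly diagnosed why naive gadgets fail: the interleaving binomials together with the recursion $\sum_{w\in\min(Q)} e(Q-w) = e(Q)$ conspire to make $\aN(R,x,a)$ independent of $a$ for many natural constructions (your $R = P_1 + P_2 + \{x\}$ and its variants all collapse this way). But that diagnosis is where the actual work begins, not where it ends. You state the target identity $\aN(R,x,a) - \aN(R,x,a+1) = c\cdot\bigl(e(P_1) - e(P_2)\bigr)$ and sketch a ``double-arm'' gadget in prose, but you give no concrete poset, no concrete cover relations, and no computation verifying that any such construction achieves the identity. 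Your own examples show the symmetry is stubborn — even asymmetric-looking variants (adding unique minimals, hanging one of the $P_i$ above $x$) collapse after a Pascal identity — so the existence of a gadget breaking it is precisely what needs proof. A proposal that names the obstruction and then says ``once the correct gadget is fixed, verification is direct'' has not done the hard part of the theorem.

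There is also a looser thread on the hardness side. You assert that $\CEP$-hardness of $\{e(P_1) =^? e(P_2)\}$ under many-one reductions ``follows from the $\SP$-completeness of \LE{} under parsimonious reductions (Brightwell--Winkler).'' But the Brightwell--Winkler argument is a Turing reduction, and the survey itself only \emph{conjectures} that $C_e$ is $\CEP$-complete (see $\S$\ref{ss:CS-coincidence}) — a strong hint that the parsimoniousness you invoke is not freely available. What the paper actually supplies is Theorem~\ref{t:CS-coincidence} from \cite{CP-domino}: $C_e\in\PH$ already collapses $\PH$, and that result is proved via the Kravitz--Sah density theorem (Theorem~\ref{t:CS-KS}), not via a parsimonious reduction. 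Your collapse should be routed through Theorem~\ref{t:CS-coincidence}, reducing $C_e$ itself (rather than an abstract $\CEP$ instance) to the flat-equality problem. That substitution is easy once the gadget exists, but it is the citation your argument actually needs to rest on, and it changes what must be shown: a polynomial-time Turing reduction from $C_e$ suffices, so you need not insist on the linear-in-$e(P_1)-e(P_2)$ form of the defect.
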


Compare this with the following consequence of \. $(2) \Leftrightarrow (3)$ \.
in Theorem~\ref{t:Sta-equality}\.:

\begin{cor} \label{c:Sta-equality-FlatLE}
Deciding whether the following holds is in~$\ts \poly${}\ts{}$:$
$$\aNr(P, x,a-1) =\aNr(P, x,a) = \aNr(P, x,a+1).$$
\end{cor}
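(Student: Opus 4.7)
The plan is to reduce the three-way equality test to the combinatorial criteria provided by Theorems~\ref{t:Sta-vanish} and~\ref{t:Sta-equality}, both of which are easily seen to be polynomial-time decidable. The algorithm branches on whether $\aNr(P,x,a)$ vanishes.

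First, I would compute in polynomial time the quantities $\alpha(y)$ and $\beta(y)$ for every $y\in X$; these are sizes of lower and upper order ideals in~$P$ and can be obtained by a depth-first traversal of the Hasse diagram. Using these, I can apply Theorem~\ref{t:Sta-vanish} to decide each of $\aNr(P,x,a-1)>0$, $\aNr(P,x,a)>0$, $\aNr(P,x,a+1)>0$ in polynomial time, since each reduces to the two inequalities $\alpha(x)\le a'$ and $\beta(x)\le n-a'+1$ for the relevant $a'\in\{a-1,a,a+1\}$.

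If $\aNr(P,x,a)=0$, then by Stanley's inequality~\eqref{eq:Sta} (or just by the above criterion) the three-way equality $\aNr(P,x,a-1)=\aNr(P,x,a)=\aNr(P,x,a+1)$ holds if and only if $\aNr(P,x,a-1)=\aNr(P,x,a+1)=0$, which we have already checked. If instead $\aNr(P,x,a)>0$, then the hypothesis of Theorem~\ref{t:Sta-equality} is satisfied, and the equivalence $(2)\Leftrightarrow(3)$ in that theorem tells us that the three-way equality is equivalent to
\[
\alpha(y)>a \ \ \text{for all }y\succ x, \qquad \beta(y)>n-a+1 \ \ \text{for all }y\prec x.
\]
Using the precomputed table of $\alpha$'s and $\beta$'s, this is verified by iterating over the (at most $n$) elements $y\in\comp(x)$ and checking one inequality for each, which is clearly polynomial in $|X|$.

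Both branches of the algorithm run in polynomial time, so the decision problem lies in $\poly$. The main conceptual ingredient is the non-trivial structural equivalence $(2)\Leftrightarrow(3)$ of Theorem~\ref{t:Sta-equality}, which is what converts an \emph{a priori} $\CEP$-style coincidence problem into a transparent combinatorial condition on upper and lower order ideals; everything else is bookkeeping and a single case split on the vanishing of $\aNr(P,x,a)$.
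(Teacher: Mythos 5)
Your proof is correct and follows the route the paper intends: the corollary is stated as a consequence of the equivalence $(2)\Leftrightarrow(3)$ in Theorem~\ref{t:Sta-equality}, and you correctly recognize that condition~$(3)$ is a polynomial-time checkable combinatorial criterion. You are in fact slightly more careful than the paper's terse remark, since you explicitly handle the case $\aNr(P,x,a)=0$, where the hypothesis of Theorem~\ref{t:Sta-equality} fails and one must fall back on the vanishing criterion of Theorem~\ref{t:Sta-vanish} to decide whether $\aNr(P,x,a-1)$ and $\aNr(P,x,a+1)$ also vanish; the paper leaves this case-split implicit.
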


In particular, deciding if the distribution \. $\big\{\aN(P,x,a)\big\}$ \.
has a unique mode is not in~$\PH$, unless $\PH$ collapses.



\medskip

\section{Examples and applications}\label{s:ex}

In this section, we present a selection of poset classes for which the number of
linear extensions is interesting either because it is easy to compute, or
 because it is provably hard to compute.

\subsection{Bounded width} \label{ss:ex-width-two}
Posets \ts $P=(X,\prec)$ \ts of bounded width are especially elegant and have many
properties that general posets do not have.  In this case computing the number
of linear extensions \ts $\LE\in \FP$ \ts via dynamic programming.  \defng{Young diagrams} \ts
of bounded height represent especially nice examples of posets of bounded width,
see below.

For posets of width two with a fixed partition of $P$ into two chains,
the number $e(P)$ of linear extensions can be viewed as the number of
certain grid walks (lattice paths) in~$\zz^2$, see~$\S$\ref{ss:proof-lattice}.
For posets of width three, beside Young diagrams (see below), there
is an interesting \defng{Kreweras--Niederhausen poset} \ts $(A_2\oplus C_1)\times C_n$
\ts \cite{KN81,HR22}.  This poset
has a \defng{Kreweras number} \ts of linear extensions, see also
\cite[\href{http://oeis.org/A006335}{A006335}]{OEIS}.

%
%

\smallskip

\subsection{Series-parallel and $N$-free posets} \label{ss:ex-SP}
The class of \defn{SP~posets} \ts generalizes forests, and has an
extremely easy structure.  Given the \defng{SP~decomposition},
one can the following formulas to compute the number of linear
extensions:
$$e\big(P\oplus Q\big) \, = \, e(P) \, e(Q) \quad \text{and} \quad
e\big(P + Q\big) \, = \, \tbinom{m+n}{m} \, e(P) \, e(Q),
$$
where $P$ and~$Q$ have $m$ and~$n$ elements, respectively.
Given a SP~poset,  the SP~decomposition can be found in polynomial
time \cite{VTL82}.

Recall that SP posets can be characterized by not having poset~$N$
as an induced subposet (see~$\S$\ref{ss:def-posets}).  There is a
closely related class of \defn{$N$-free posets} \ts whose cover
graph does not contain~$N$.  There is a more general notion of
decomposition in this case.  We refer to \cite{HJ85,Moh89} for the
introduction to this class of posets.  See \cite{FM14} for bounds
on the numbers of linear extensions and a simple dynamic
programming algorithm.

\smallskip

\subsection{Young diagrams} \label{ss:ex-yd}
For a subset \ts $S\ssu \nn^2$, denote by \ts $P_S:=(S, \preccurlyeq)$, where \ts
$(i,j) \preccurlyeq (i',j')$ \ts if and only if \ts $i\le i'$ \ts and
\ts $j\le j'$.
Let \ts $\la=(\la_1,\ldots,\la_\ell)$ \ts be a \defn{partition of~$n$}, i.e.\ \ts
$\la_1 \ge \ldots \ge \la_\ell >0$, $|\la|:=\la_1+\ldots + \la_\ell=n$.
\defn{Young diagram} \ts is a subset \.
$S_\la:=\{\ts (i,j)\in \nn^2 \.: \. 1\le i \le \ell, \. 1\le j \le \la_i\ts\}$.
Let \ts $P_\la:=(S_\la,\preccurlyeq)$.
For example, \defn{Catalan poset} \ts $\Cat_m$ \ts corresponds to the partition
\ts $\la=(m,m)$, with $n=2m$.  Famously, \ts $e(\Cat_m)=\frac{1}{m+1}\binom{2m}{m}$ \ts
in the \ts \defn{Catalan number}, see e.g.\ \cite[$\S$6.2]{Sta-EC} and \cite{Sta-catalan}.

Linear extensions of \ts $P_{\la/\mu}$ \ts are called \defn{standard Young tableaux}
\ts of shape \ts $\la/\mu$.  We use \ts $e(\la/\mu) := e(P_{\la/\mu})$ \ts to
simplify the notation.
The \defn{hook-length formula} \ts
by Frame, Robinson and Thrall \cite{FRT54}, states:
\begin{equation}\label{eq:HLF} 
e(\la) \ = \ n!\, \prod_{(i,j)\in \lambda}\, \frac{1}{h_\la(i,j)} \,,
\end{equation}
where \ts $h_\la(i,j)=\la_i+\la'_j-i-j+1$ \ts is the \defn{hook-length}
in~$\la$.  This implies that \ts $\LE \in \FP$ \ts for Young diagram shapes.
See \cite[$\S$11.2]{Pak-OPAC} for a list of different proofs.

More generally, \defn{skew Young diagram} \ts $\la/\mu$ \ts is a pair of
partitions with \ts $S_\mu \subseteq S_\la$\ts. We use \ts $S_{\la/\mu}:=S_\la\sm S_\mu$.
Denote by \ts $P_{\la/\mu}:=(S_{\la/\mu},\preccurlyeq)$ \ts the corresponding
subposet of \ts $P_\la$\ts, and let \ts $|\la/\mu|:=|S_{\la/\mu}|$.
The \defn{Aitken--Feit determinant formula} \ts \cite{Ait,Feit}, states:
\begin{equation}\label{eq:AFDF} 
e(\la/\mu) \ = \ n!\.
\det \left( \frac{1}{(\la_i-\mu_j -i+j)!}\right)_{i,j=1}^{\ell}\,.
\end{equation}
This implies that \ts $\LE \in \FP$ \ts for skew Young diagram shapes as well.

There are several notable positive summation formulas for \ts $e(P_{\la/\mu})$,
called the \defng{Naruse hook-length formula} (NHLF), see \cite{Kon20,MPP17,MPP18a},
\defng{Okounkov--Olshanski formula}  \cite{MZ22,OO98}, and the
\defng{flipped hook-length formula} \cite[$\S$9.1]{Pak21}.  Among the
implications let us single out two inequalities:
\begin{equation}\label{eq:ex-NHLF-ineq}
e(\la/\mu) \ \ge \ n! \, \prod_{(i,j)\in \lambda}\, \frac{1}{h_\la(i,j)}
\ \quad \text{and} \ \quad \prod_{(i,j)\in \lambda}\, h_\la(i,j) \ \le \
\prod_{(i,j)\in \lambda}\, h_{\la^\ast}(i,j)\ts,
\end{equation}
where \ts $h_{\la^\ast}(i,j) := i+j-1$ \ts denotes the \defn{dual hook-length}
(hook-length in a skew shape rotated \ts $180^\circ$, corresponding to the
dual poset).  The first inequality in \eqref{eq:ex-NHLF-ineq}
is a strong improvement over \eqref{eq:BW}
in this case \cite{MPP18b}.  The second follows from the first, and
is a variation on \eqref{eq:BW-anti-hooks}
in the case of Young diagrams, see \cite[$\S$12.1]{MPP18b} and \cite{PPS20}.

Many inequalities for the numbers of linear extensions become surprising
in the language of standard Young tableaux.  For example,
Fishburn's inequality \eqref{eq:Fish-ineq} in this case states:
\begin{equation} \label{eq:ex-Bjo}
e(\la) \cdot e(\mu) \, \le \, e(\la\cup \mu) \cdot e(\la \cap\mu).
\end{equation}
See \cite{Bjo11} for a direct proof based on the HLF~\eqref{eq:HLF}.
Similarly, the generalized Fishburn
inequality \eqref{eq:Fish-ineq-CP} in this case is due to
Lam--Pylyavskyy \cite{LP07}, and states:
\begin{equation} \label{eq:ex-LP}
e(\la/\al) \cdot e(\mu/\be) \, \le \, e\big((\la\cup \mu) \ts / \ts (\al\cup \be)\big)
\cdot e\big((\la \cap\mu)\ts / \ts (\al\cap \be)\big).
\end{equation}

Let us single out the following
immediate corollary from Theorem~\ref{t:ineq-four-elements}.
For a partition~$\la$, a \defn{conjugate partition} \ts $\la'$ \ts is
obtained by reflection of \ts $S_\la$ \ts across the \ts $i=j$ \ts line.
We say that \ts $\la$ \ts is a \defn{self-conjugate partition}, if \ts $\la=\la'$.

\begin{cor}[{\rm \cite[Cor.~4.1]{CP-corr}}{}] \label{c:ex-YT}
Let \ts $\la/\mu$ \ts be a skew shape, let \ts $x,y\in S_{\la/\mu}$ \ts be
corners, and let \ts $v,w\in S_{\la/\mu}$ \ts be a boundary square adjacent
to~$x$ and~$y$, respectively. Then we have:
\begin{equation}\label{eq:ex-YT}
e\big(\la/\mu-x-y\big)^2 \ \ge \ e\big(\la/\mu-x-v\big) \.\cdot \. e\big(\la/\mu-y-w\big).
\end{equation}
In particular, when \ts $\la$ \ts and \ts $\mu$ \ts are self-conjugate,
\ts $x=(i,j)$ \ts and \ts $y=(j,i)$, we have:
\begin{equation}\label{eq:ex-YT-sc}
e\big(\la/\mu-x-y\big) \, \ge \, e\big(\la/\mu-x-v\big)\ts.
\end{equation}
\end{cor}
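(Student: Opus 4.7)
The plan is to apply Theorem~\ref{t:ineq-four-elements} to the skew-shape poset $P = P_{\la/\mu}$, and then exploit the diagonal symmetry of self-conjugate shapes for the second inequality.

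First, I translate the geometric hypotheses into poset language. The corners $x,y\in S_{\la/\mu}$ are inner corners of the skew shape --- cells with no cell of $S_{\la/\mu}$ immediately to the left or directly below --- and under the componentwise partial order these are precisely the minimal elements of $P_{\la/\mu}$. A ``boundary square $v$ adjacent to $x$'' means a cell $v\in S_{\la/\mu}$ sitting immediately above or to the right of $x$, which covers $x$ in $P$; the additional requirement that $v$ be a unique cover of $x$ in the sense of Theorem~\ref{t:ineq-four-elements} translates to the geometric condition that the other candidate cell of $S_\la$ below $v$ lies outside $S_{\la/\mu}$, i.e., that $v$ is attached to the boundary only through $x$. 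Analogous statements hold for $w$ and $y$. With this dictionary, Theorem~\ref{t:ineq-four-elements} applied to $(P,x,y,v,w)$ gives
$$ e(P - x - y)^2 \ \ge \ e(P - x - v) \.\cdot\. e(P - y - w), $$
which is exactly \eqref{eq:ex-YT} after identifying $e(\la/\mu - \cdot)$ with $e(P_{\la/\mu} - \cdot)$.

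For the inequality \eqref{eq:ex-YT-sc}, I use the transposition map $\phi\colon (i,j)\mapsto (j,i)$. Since $\la=\la'$ and $\mu=\mu'$, the map $\phi$ restricts to a poset automorphism of $P_{\la/\mu}$ --- it preserves the componentwise order because $i\le i',\ j\le j'$ is equivalent to $j\le j',\ i\le i'$ --- and it sends $x=(i,j)$ to $y=(j,i)$. Choosing $w:=\phi(v)$, which is a unique-cover boundary square adjacent to $y$ by transport of structure, the automorphism induces a bijection $\Ec(P_{\la/\mu}-x-v)\to\Ec(P_{\la/\mu}-y-w)$. Hence $e(\la/\mu-x-v)=e(\la/\mu-y-w)$, and substituting into \eqref{eq:ex-YT} followed by taking a square root yields \eqref{eq:ex-YT-sc}.

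The main ``obstacle'' is purely notational: one needs to verify that the standard geometric meaning of \emph{corner} and \emph{boundary square adjacent to a corner} in the skew-shape convention does match the unique-cover hypothesis of Theorem~\ref{t:ineq-four-elements}, and that the involution $\phi$ indeed preserves all relevant structure (which it does under self-conjugacy, including in the degenerate case where $v$ lies on the diagonal so that $v=\phi(v)=w$, where the symmetry argument still gives $e(\la/\mu-x-v)=e(\la/\mu-y-v)$). Beyond this dictionary check, there is no combinatorial or analytic step of real substance --- the inequalities follow at once, which is consistent with the text labelling them a direct corollary.
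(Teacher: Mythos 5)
Your proof is correct and follows exactly the route the survey intends: it is a direct application of Theorem~\ref{t:ineq-four-elements} to the poset $P_{\la/\mu}$, after identifying corners with minimal elements and boundary squares with unique covers, followed by the transposition automorphism $\phi(i,j)=(j,i)$ of $P_{\la/\mu}$ (valid since $\la=\la'$, $\mu=\mu'$) to deduce $e(\la/\mu-x-v)=e(\la/\mu-y-\phi(v))$ and obtain \eqref{eq:ex-YT-sc} by taking the square root of \eqref{eq:ex-YT}. The only small caveat is that your geometric description of ``corner'' and ``adjacent'' cells reads most naturally in French notation; in English notation one would write ``above'' and ``to the left'' for the vacancy conditions and ``below or to the right'' for the covering squares, but since you correctly identify corners with minimal elements of $P_{\la/\mu}$ the substance of the argument is unaffected.
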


Special cases of \eqref{eq:ex-YT} when dominos \ts $(x,v)$ \ts and \ts $(y,w)$
are in the same directions also follow from the Schur function inequalities
in~\cite{LP07}, but \eqref{eq:ex-YT-sc} does not extend to a Schur functions.
We refer to \cite[$\S$4]{CP-corr} for further applications of poset
inequalities to the number of standard Young tableaux.

\begin{rem}\label{r:ex-HCF}
We also have product formulas for the order polynomial and for the GF for $P$-partitions:
\begin{equation}\label{eq:ex-SCF}
\Om(P_\la,t) \, = \, \prod_{(i,j)\in \lambda}\, \frac{t+i-j}{h_\la(i,j)} \qquad
\text{and} \qquad
 \Om_q(P_\la) \, = \, \prod_{(i,j)\in \lambda}\, \frac{1}{1-q^{h_\la(i,j)}} \,.
\end{equation}
Both formulas are special cases of \defng{Stanley's hook-content formula} \ts for \ts $\Om_q(P_\la,t)$,
see \cite[$\S$7.21]{Sta-EC}.  We refer to \cite{HG76,Kra99,Pak01} for bijective proofs of these
formulas, and to \cite{Hop20} for other product formulas for the order polynomial.
Note that the NHLF was extended to \ts $\Om_q(P_{\la/\mu})$ \ts in \cite{MPP18a}.
Finally, both \eqref{eq:ex-Bjo} and \eqref{eq:ex-LP} extend to
inequalities for Schur functions, see \cite{CP-multi,LP07,LPP07}.
\end{rem}

\begin{rem}\label{r:ex-shifted}
Define \defn{shifted Young diagrams} \ts as posets given by intersections of the
usual Young diagrams and cone \ts $i\le j$.  Much of the work on the (usual) Young
diagrams and standard Young tableaux directly translates to this case.  We refer
to \cite[$\S$7.20]{Sta-EC} as the starting point and further references.
\end{rem}

\smallskip

\subsection{Ribbon posets} \label{ss:ex-Euler-numbers}
Let \ts $Z_m:=P_{\la/\mu}$ \ts be a height two poset with \ts $n=2m-1$ \ts elements,
corresponding to the skew Young diagram \ts $\la/\mu:=\de_m/\de_{m-2}$\ts,
where \ts $\de_m:=(m,\ldots,2,1)$.  These are called \defn{zigzag posets}.
Linear extensions of \ts $Z_m$ \ts are in bijection with
\defn{alternating permutations} \ts $\si\in S_{2m-1}$ \ts s.t.\ \ts
$\si(1)>\si(2)<\si(3)>\si(4)<\ldots$ \ts  Then \ts $e(Z_m)=E_{2m-1}$
\ts are the \defn{Euler numbers}, see e.g.\
\cite[\href{http://oeis.org/A000111}{A000111}]{OEIS} and \cite{Sta-alt}.

In the context of Sidorenko's inequality~\eqref{eq:sid-BT},
it is easy to see that there is a width two poset \ts $Q_m$ \ts such that \.
$\Ga(Q_m) = \ov{\Ga(Z_m)}$ \. and \. $e(Q_m)=F_n$ \. is the
\defng{Fibonacci number}. Now \eqref{eq:sid-BT} gives \.
$E_n \cdot F_n \ge n!$ \. in this case \cite{MPP-phi}.
Moreover, this inequality was proved in \cite[Lem~4]{MPP-phi} by
an explicit surjection.

Fix \ts $x:=(m,1)$.  It is easy to see that triangle of numbers \ts
$a(m,k):=\aN(Z_m,x,k)$ \ts are \defn{Entringer numbers}
\cite[\href{http://oeis.org/A008282}{A008282}]{OEIS}.
Stanley's inequality~\eqref{eq:Sta} proves their log-concavity:
\begin{equation}\label{eq:ex-Entringer}
a(m,k)^2 \. \ge \. a(m,k+1) \. \cdot \. a(m,k-1) \quad \text{for} \ \ 1\le k \le 2m-2\ts.
\end{equation}
See \cite[Ex.~1.38]{CP-atlas} for a $q$-analogue of this example
in the style of Theorem~\ref{t:Sta-weighted}; see also \cite{B+,G+}
for other generalizations.

We note that zigzag posets are special cases of \defng{ribbon posets} \ts $P_{\la/\mu}$,
which correspond to skew shapes \ts $\la/\mu$ \ts with at most one square in every diagonal.
Linear extensions of such \ts $P_{\la/\mu}$ \ts are in bijection with permutations which have
a given set of descents, so \ts $e(\la/\mu)$ \ts satisfies \defng{MacMahon's determinant formula},
see e.g.\ \cite[$\S$2.2.4]{Sta-EC}.  This formula was further generalized to
\defng{mobile posets} \ts defined in \cite{GGMM}.  Note also that among all ribbon
posets, zigzag posets maximize the number of linear extensions \cite[Cor.~1.6.5]{Sta-EC}.
This result is due to Niven (1968) and  de~Bruijn (1970), further generalized
in \cite{Sta88,Iri17}.

%

\smallskip

\subsection{Permutation posets} \label{ss:ex-perm}
Fix \ts $\si\in S_n$.  The \defn{permutation poset} \ts $P_\si = ([n],\prec)$
\ts is defined as:
$$i \precc j \quad \Leftrightarrow \quad i\le j \ \ \text{and} \ \ \si(i)\le \si(j) \ts.
$$
Permutation posets are also called \defn{two-dimensional posets} \ts in the literature,
see e.g.\ \cite{Tro95,West21}. This class includes all posets \ts $P_{\la/\mu}$ \ts and
all posets of width two.  Note that the height and the width in \ts $P_\si$ \ts are
longest increasing and the longest decreasing subsequence in~$\si$, which are extremely
well studied, see e.g.\ \cite{Rom15}.  It is known that \ts $\LE$ \ts is $\SP$-complete
in this case \cite{DP18}.

The (weak) \defn{Bruhat order} \ts $\cB_n=(S_n,\lhd)$ \ts is defined as follows:
\ts $\tau \unlhd \pi$ \ts if and only if \. $\tau \cdot \ups = \pi$ \. for some
\ts $\ups \in S_n$ \ts such that \.
$\inv(\tau) + \inv(\ups) = \inv(\pi)$.  It is known that as a subset of \ts $S_n\ts$,
the set of linear extensions \ts $\Ec(P_\si)$ \ts is the lower ideal \ts $\si\!\downarrow$
\ts in \ts $\cB_n\ts$, see \cite[Lem.~5]{FW97} (see also \cite{BW91a,Reu96}).

Denote by \ts $\ov \si := \big(\si(n),\ldots,\si(1)\big)$ \ts the \defn{reverse permutation}.
Note that \ts $P_\si$ \ts and \ts $P_{\, {\ov \si}}$ \ts satisfy condition \eqref{eq:sid-chain-condition}
of the Sidorenko inequality \eqref{eq:sid}, which gives:
\begin{equation}\label{eq:ex-Sid-perm}
e\big(P_{\si}\big) \. \cdot \. e\big(P_{\, {\ov \si}}\big) \, \ge \, n!.
\end{equation}
To generalize this, denote by \.
$$\INV(\si) \ :=  \ \big\{(i,j) \. : \. \si(i) > \si(j), \. 1\le i< j \le n\big\}
$$
the set of inversions of~$\si$, so \. $\inv(\si) = |\INV(\si)|$. Suppose that
$$
\INV(\tau) \. \cap \. \INV(\ups) \, = \, \emp \quad \text{and} \quad
\INV(\tau) \. \cup \. \INV(\ups) \, = \, \INV(\pi).
$$
Taking \ts $i\gets 2$, \. $P_1\gets P_\tau$\ts, \. $P_2\gets P_\ups$ \. and \. $Q\gets P_\pi$ \.
gives conditions \eqref{eq:sid-int-condition} of Theorem~\ref{t:sid-BT}.  Then
\eqref{eq:sid-gen} gives
\begin{equation}\label{eq:sid-perm-BT}
e\big(P_{\tau}\big) \. \cdot \. e\big(P_{\ups}\big) \, \ge \,  e\big(P_{\pi}\big).
\end{equation}
Inequality \eqref{eq:sid-perm-BT} can be
interpreted as defining a metric on the set of permutation \cite[$\S$9.3]{Ilo08},
see also \cite[Cor.~8.7.2]{BT06}.

\smallskip

\subsection{Interval orders and semiorders} \label{ss:ex-int}
Let \ts $X$ \ts be a collection of $n$ intervals \ts $I_1,\ldots,I_n\ssu \rr$.
Define \ts $P = (X,\prec)$, where \ts $I_i \prec I_j$ \ts if \ts $x<y$ \ts
for all \ts $x\in I_i$ \ts and \ts $y \in I_j\ts$.  Such posets are called
\defn{interval orders}.  They are characterized by not having
\ts $(C_2 + C_2)$ \ts as an induced subposet which implies that the
recognition problem of interval orders is in~$\poly$, see e.g.\
\cite[$\S$6]{Moh89}.

Additionally, if all intervals have unit lengths, such posets are called
\defn{semiorders} \ts and \defn{unit interval orders}, see
e.g.\ \cite{Sta-hyper}.  They characterized by not having \ts $(C_2 + C_2)$ \ts
and \ts $(C_3 + C_1)$ \ts as induced subposets (Scott and Suppes, 1958).
We refer to \cite{Fis85} for a thorough treatment and further references.

Let \ts $P=(X,\prec)$ \ts be a poset on \ts $|X|=n$ \ts elements.
Let \ts $\ga(P) := |\{(x,y)\. : \. x\prec y, \. x,y\in X\}|$ \ts denote
the number of comparable pairs in~$P$.  Finally, let \ts $e(n,k)$ \ts be the
maximal number of linear extensions among all posets $P$ on $n$ elements
with \ts $\ga(P)=k$.

\begin{prop}[{\rm Fishburn--Trotter~\cite{FT92}}{}] \label{p:ex-semi}
Let \ts $P=(X,\prec)$ \ts be a poset on \ts $|X|=n$ \ts elements such that \ts
$\ga(P)=k$ \ts and \ts $e(P)=e(n,k)$.  Then \ts $P$ \ts is a semiorder.
\end{prop}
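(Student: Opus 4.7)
The plan is to establish the contrapositive: if $P=(X,\prec)$ with $\gamma(P)=k$ is not a semiorder, then there exists another poset $P'$ on the same ground set $X$ with $\gamma(P')=k$ and $e(P')>e(P)$, contradicting the maximality assumption $e(P)=e(n,k)$. By the Scott--Suppes characterization recalled in~$\S$\ref{ss:ex-int}, a non-semiorder must contain either $(C_2+C_2)$ or $(C_3+C_1)$ as an induced subposet on some four-element set $S=\{a,b,c,d\}\subseteq X$, and the goal is to modify $P$ locally on~$S$ to obtain the desired~$P'$.

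The surgeries are designed to preserve the number of comparable pairs within~$S$ while strictly increasing the number of linear extensions of the induced subposet. In the $(C_2+C_2)$-case with $a\prec b$ and $c\prec d$, one replaces $c\prec d$ by $a\prec d$; the induced four-element subposet becomes a $V$-shape with apex~$a$, still with two comparable pairs, but with $8$ linear extensions instead of~$6$. In the $(C_3+C_1)$-case with $a\prec b\prec c$ and $d$ incomparable, one removes the cover $b\prec c$ (keeping the explicit relation $a\prec c$) and adds $a\prec d$; the induced subposet becomes a $\Lambda$-shape $\{a\prec b,\ a\prec c,\ a\prec d\}$, still with three comparable pairs, but with $6$ linear extensions instead of~$4$. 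In both cases, the local count of linear extensions strictly increases while the local $\gamma$ is preserved.

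Three things must then be verified for the globally modified~$P'$: that it is a valid poset, that $\gamma(P')=\gamma(P)$ including any effects of transitive closure with $X\setminus S$, and that the local gain in linear extensions lifts to a strict inequality $e(P')>e(P)$. The first two points are handled by choosing~$S$ with minimal entanglement to $X\setminus S$; concretely, one picks the forbidden induced pattern so that no outside element is sandwiched between the elements of~$S$ in a way that would create or destroy relations upon surgery, which can be arranged by a minimality argument on the induced copies of $(C_2+C_2)$ or $(C_3+C_1)$. The main obstacle, and what requires the most work, is the last point. The natural route is to partition $\Ec(P)$ and $\Ec(P')$ by the relative order of $a,b,c,d$ in each linear extension: blocks indexed by orderings admissible under both~$P$ and~$P'$ are in bijection via the identity, while blocks indexed by orderings admissible under~$P'$ but not~$P$ produce a strict surplus. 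Correlation tools such as the XYZ inequality (Theorem~\ref{t:ineq-XYZ}) and the Sidorenko-style injective arguments (Theorem~\ref{t:sid-BT}) provide the framework to ensure that this blockwise comparison yields a genuinely positive global surplus, rather than being offset by losses in other blocks.
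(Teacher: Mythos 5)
Your local surgeries are arithmetically correct (the modified four-element posets have $8>6$ and $6>4$ linear extensions respectively at the same local comparable-pair count), but the two hard parts of the argument are not actually there. First, toggling individual relations inside the four-element set $S$ does not in general preserve $\ga(P)$: adding $a\prec d$ creates a comparable pair $(u,v)$ for every $u\preccurlyeq a$ and $v\succcurlyeq d$ that were not already comparable, while ``removing'' a cover such as $b\prec c$ may delete no comparabilities at all (if another chain from $b$ to $c$ survives) or may delete several pairs $(u,c)$ with $u\preccurlyeq b$; the net change in $\ga$ is entirely ambient-dependent. The assertion that some ``minimally entangled'' copy of the forbidden induced subposet always exists is a wish, not an argument, and one can easily construct non-semiorders where every induced $(C_2+C_2)$ or $(C_3+C_1)$ has elements whose up-sets and down-sets touch the rest of the poset nontrivially.

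Second, even granting a clean copy, the global inequality $e(P')>e(P)$ does not reduce to the local count, because $P'$ is obtained from $P$ by both deleting and adding relations, so the two linear-extension sets are not nested; you would need $\bigl|\{f\in\Ec(P'): f(b)>f(c)\}\bigr| > \bigl|\{f\in\Ec(P): f(a)>f(d)\}\bigr|$, and these quantities live inside different posets. None of the correlation tools you invoke applies here: the XYZ inequality compares $\Ec(P)$ against $\Ec(P\cap\{x\prec y\})$, a restriction of the same poset; Stachowiak's Theorem~\ref{t:Stach} requires one comparability graph to contain the other; and the Sidorenko inequalities require them to be nested or complementary---but $\Ga(P)$ and $\Ga(P')$ are neither, since your surgery both removes and adds edges. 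This across-surgery comparison is precisely where the Fishburn--Trotter argument has to do its work, and it is the step your proposal skips.
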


See also \cite[Thm~8.7]{Tro95} for a short proof, and \cite{MPI18}
for the asymptotic analysis of numbers \ts $e(n,k)$.  This is a rare
extremal result on linear extensions of finite posets.  Finally, a larger
class of \ts $(C_3 + C_1)$-free posets is of interest in both Enumerative
and Algebraic Combinatorics, see e.g.\ \cite{GMR14}.

\smallskip

\subsection{Random posets} \label{ss:ex-random}
There are several interesting models of random finite posets studied in the literature,
neither of which is especially satisfactory, at least when compared to random graph models.
We refer to \cite{Bri93} for a survey with many helpful references.

First, one can consider uniform (unlabeled) posets
of $n$ elements.  Kleitman and Rothschild \cite{KR75} gave a sharp
asymptotic estimate on the number \ts $p(n)$ \ts of such posets
\begin{equation}\label{eq:ex-KR}
\log_2 p(n) \, = \, \tfrac{n^2}{4} \. + \. \tfrac{3n}{2} \. + \. O(\log n),
\end{equation}
see  also \cite[\href{http://oeis.org/A000112}{A000112}]{OEIS}.
It follows from the proof that uniform random posets have height three and
can be partitioned into three antichains of sizes roughly \ts  $\frac{n}{4}$\ts , \ts
$\frac{n}{2}$ \ts and \ts $\frac{n}{4}$\ts, respectively.  The number of linear
extensions is very large in this case and concentrated around \. $(\frac{n}{2})! (\frac{n}{4})!^2$,
cf.~\eqref{eq:basic-antichains}.  The number of pairs $(x,y)\in X^2$ such that
\ts $\bP\big(f(x)<f(y)\big)\sim \frac12$ \ts is asymptotically \. $\frac{3\ts n^2}{16}$ \.
\cite{Kor94}.  A major disadvantage of this model is the
difficulty of sampling uniform posets (either labeled or unlabeled).

Next, one can consider random posets from families where the sampling is easy.
These include random \defng{bipartite posets}, defined as posets of height two
with relations given by a random bipartite graph.  Similarly, one can consider
random permutation posets \ts $P_\si\ts,$ or random semiorders
(there are Catalan number of them, see e.g.\ \cite[$\S$3.180]{Sta-catalan}
and references therein).
A curious model is given by the
transitive closure of a random subset of relations $i\prec j$ on $[n]$,
where $1\le i < j\le n$, see an extensive discussion in \cite{Bri93}.

Finally, one can consider random posets \ts $P_\la$ \ts corresponding to
Young diagrams of size~$n$.  The problem of determining \ts $e(P_\la)$ \ts
is well-studied and is especially important in combinatorial representation
theory and again closely related to the study of longest increasing subsequences,
see e.g.\ \cite{Rom15}.  Finally, see \cite{MPI18} for the asymptotics of
random interval orders.



\medskip

\section{Computational aspects}\label{s:CS}

\subsection{Counting complexity}\label{ss:CS-SP}
In \cite{BW91b}, Brightwell and Winkler showed that computing the number of linear
extensions is $\SP$-complete.  This was refined to posets of height two,
posets of dimension two, and to \defng{incidence posets} in~\cite{DP18}.
In \cite{Sta97}, Stachowiak proved that computing \defng{sign-imbalance} \ts
of posets of height two is $\SP$-hard, by giving a simple parsimonious
reduction to computing \ts $e(P)$.

Additionally, Dittmer showed that the parity of the number of linear
extensions is \ts $\oplus P$-complete for posets of dimension two \cite[Thm~1.1.2]{Dit19}.
Combined with Stachowiak's proof above implies that the parity of \ts $e(P)$ \ts is \ts
$\oplus P$-complete for height two posets.  This is in contrast with Soukup's theorem
that deciding whether poset \ts $P$ \ts is \defng{sign-balanced} \ts is in~$\ts \poly$,
see \cite{Sou23+}.

%

\smallskip

In the opposite direction, the are several classes of posets where the number
of linear extensions can be computed in polynomial time.

\begin{thm}  \label{t:CS-LE}
$\LE$ \ts is in \ts $\FP$ \ts for:
\begin{enumerate}
\item[$(1)$] \ bounded width posets,
\item[$(2)$] \ skew Young diagrams, \ts see \eqref{eq:AFDF},
\item[$(3)$] \ series-parallel posets, \ts see~$\S\ref{ss:ex-SP}$ \ts and \ts \cite{VTL82},
\item[$(4)$] \ posets with bounded decomposition diameter \. \cite{HM87},
\item[$(5)$] \ posets whose covering graphs have disjoint cycles \. \cite{Atk89} $($e.g.\ trees \cite{Atk90}$)$,
\item[$(6)$] \ $N$-free posets with bounded activity \. \cite{FM14},
\item[$(7)$] \ posets with bounded treewidth \. \cite{EGKO},
\item[$(8)$] \ mobile posets \. \cite{GGMM}.
\end{enumerate}
\end{thm}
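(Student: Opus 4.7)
The plan is to handle the eight cases separately, since the unifying theme --- that each class admits either a polynomial-sized state space for dynamic programming or a closed-form/recursive evaluation formula --- is realized by quite different arguments in each item. The goal is to exhibit, for each class, a polynomial-time algorithm for computing $e(P)$.

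First I would dispatch the three cases that admit a short, self-contained proof. For (1), I would iterate the recursion $e(P)=\sum_{x\in \min(P)} e(P-x)$ from Proposition~\ref{p:basic-recursion} as a bottom-up dynamic program over the lattice $J(P)$ of lower ideals, computing $e(P|_I)$ for each $I\in J(P)$ in order of increasing $|I|$. By Dilworth's theorem, a width-$w$ poset decomposes into $w$ chains, and every lower ideal is determined by its tuple of chain-prefix lengths, so $|J(P)|\le (n+1)^w$, which is polynomial for fixed~$w$; each update is done in time $O(n)$. For (2), I would simply invoke the Aitken--Feit determinant formula \eqref{eq:AFDF}, since an $\ell\times \ell$ integer determinant is computable in polynomial time by standard Gaussian elimination over $\qqq$. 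For (3), I would first run the linear-time series-parallel recognition and decomposition algorithm of \cite{VTL82} to produce the SP-tree of $P$, then evaluate $e$ recursively along this tree using the identities $e(P\oplus Q)=e(P)\,e(Q)$ and $e(P+Q)=\binom{m+n}{m}\,e(P)\,e(Q)$ from $\S$\ref{ss:ex-SP}; the arithmetic cost is polynomial in~$n$.

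For (4)--(8), I would rely on the cited references, presenting only a brief structural sketch in each case. Specifically, (4) would iterate a bounded-depth decomposition and compose the counts across the pieces as in \cite{HM87}; (5) would use the factorization of $e(P)$ across blocks of the cover graph --- extending the tree case of \cite{Atk90} via the hook-length style formula --- when those blocks are vertex-disjoint cycles, as in \cite{Atk89}; (6) would bound the dynamic-programming state space on an $N$-free decomposition by the activity parameter, following \cite{FM14}; (7) would run a standard tree-decomposition DP, with each bag storing a compatible partial linear order on its vertices, following \cite{EGKO}; and (8) would reduce via the mobile-block decomposition of \cite{GGMM} to ribbon factors whose counts admit a MacMahon-type determinantal formula (cf.~$\S$\ref{ss:ex-Euler-numbers}).

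The hardest parts will be (6) and (7), where the correct DP state is subtle and the polynomial bound depends delicately on the structural parameter being fixed. For (7) in particular, one must argue that the number of distinct partial-order states compatible with a bag is bounded by a function of the treewidth alone (not of $n$), so that the tree-decomposition DP runs in time $n^{O(1)}$. Since complete proofs of (4)--(8) are given in the cited papers, the proposal is to present only the structural outlines above and defer the technical analyses to those references, while giving full details for (1)--(3).
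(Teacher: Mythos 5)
The paper gives no proof of this theorem; it is a survey statement whose content is exactly the list of citations, plus the formulas and discussion in $\S$\ref{ss:ex-SP} and equation \eqref{eq:AFDF} that are referenced inline. Your plan --- deferring $(4)$--$(8)$ to the cited papers while giving short self-contained arguments for $(1)$--$(3)$ --- matches the paper's intent, and your arguments for $(2)$ and $(3)$ are correct and standard.

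There is one small but genuine slip in your sketch for $(1)$. You propose to iterate $e(P)=\sum_{x\in\min(P)}e(P-x)$ as a DP over the lattice $J(P)$ of \emph{lower} ideals, in order of increasing $|I|$. But if $I$ is a lower ideal and $x\in\min(P|_I)$, then $I-x$ is generally \emph{not} a lower ideal of~$P$ (e.g.\ $P=\{a\prec b\}$, $I=\{a,b\}$, $x=a$ gives $I-x=\{b\}$, not a lower ideal), so the recursion as written leaves the state space. The fix is either to use the dual recursion $e(P|_I)=\sum_{x\in\max(P|_I)}e(P|_{I-x})$ --- removing a maximal element of a lower ideal does stay in $J(P)$ --- or to keep the $\min$-recursion but index by \emph{upper} ideals. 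The rest of your analysis (chain decomposition by Dilworth, each lower ideal determined by its $w$-tuple of chain prefixes, $|J(P)|\le(n+1)^w$, $O(n)$ per transition) is correct once that orientation is fixed.

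Your sketch for $(5)$ is a little speculative ("hook-length style formula" for cycle blocks is not quite the mechanism in the cited papers), but since you defer to \cite{Atk89,Atk90} this does not affect the correctness of the proof plan. The observation you flag about $(7)$ --- that the number of DP states per bag must be bounded by a function of treewidth only --- is indeed the crux of \cite{EGKO} and is the right thing to worry about.
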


\nin
Some of these results have known $q$-analogues.  Namely, computing polynomial \ts
$e_q(P)\in \nn[q]$ \ts is in \ts $\poly$ \ts for width two posets (see e.g.\ \cite{CPP-CP}),
skew Young diagrams (see e.g.\ \cite{CP-multi}), series-parallel posets (folklore),
and posets whose covering graph is a tree \cite[$\S$6.3]{GGMM}.

In contrast with~$(6)$, for general $N$-free posets, the problem \ts $\LE$ \ts
is conjectured to be \ts $\SP$-complete.  We are not aware if the number of
linear extension of interval order has been studied or conjectured to be
$\SP$-complete.

For the order polynomial, the coefficient \ts $[t^2] \, \Om(P,t)$ \ts is the
number of (lower) order ideals in~$P$, which is $\SP$-complete \cite{PB83}.
We refer to \cite{FS86} for more on complexity of computing \ts $\Om(P,t)$.
Finally, we note that the
probability \ts $\bP\big(f(x)<f(y)\big)$ \ts and the average height
\ts $h(P,x)$ \ts are \ts $\SP$-hard \cite[$\S$5]{BW91b}.

\smallskip

\subsection{Random generation and approximate counting}\label{ss:CS-random}
The problem of \defng{random generation} \ts of linear extensions \ts $f\in \Ec(P)$ \ts
is closely related to \defng{approximate counting}, i.e.\ computing the \ts $(1\pm \ep)$ \ts
approximation of \ts $e(P)$.  The key is the \defng{self-reducibility} \ts property:
if one can obtain a strong approximation of the ratio \ts $e(P)/e(P\cap \{x\prec y\})$,
taking the product of these ratios gives an approximation for~$e(P)$.
We refer to \cite[$\S$28]{Vaz01} for the introduction to this technology.

The  first FPRAS for \ts $\LE$ \ts was obtained by Matthews \cite{Mat91} using a
geometric random walk on the order polytope \ts $\cO_P$\ts, based on a
Markov chain (MC) with the mixing time upper bound \ts $\mix = O\big(n^8(\log n)^3\big)$.
There are now several rapidly mixing Markov chains on \ts $\Ec(P)$ \ts worth discussing.
All Markov chains start at \ts $f\in \Ec(P)$, but have different steps described as follows.

\smallskip

\nin
$(1)$ \ts  \emph{
Choose uniform \ts $k\in [n-1]$, and switch the values \ts $k \leftrightarrow (k+1)$ \ts in~$f$ if possible.} \.
This MC was introduced by Karzanov and Khachiyan \cite{KK91} who proved \ts $\mix=O(n^6\log n)$
bound  using conductance estimates.
This bound was steadily improved down to \ts $O(n^3\log n)$ \cite{Wil04}.  Furthermore,
Wilson showed (ibid.), that for some posets this bound cannot be improved.

\smallskip

\nin
$(1')$ \ts\emph{
Choose  \ts $k\in [n-1]$ \ts with probability proportional to $k(n-k)$.  Proceed as in \ts $(1)$.} \.
This modification is due to Bubley and Dyer \cite{BD99}, who proved the \ts $\mix = O(n^3\log n)$ \ts bound
using a coupling argument.

\smallskip

\nin
$(2)$ \ts \emph{
Choose uniform \ts $x\in X$, and take a partial promotion to~$x$ in~$f$.} \.
This MC was introduced by Ayyer, Klee and Schilling \cite{AKS14a}, who gave
a bound \ts $\mix=O(n^3)$ \ts in \cite{AKS14b}.  A better bound
a \ts $\mix = O(n^2)$ \ts was obtained in \cite{PS18}.

\smallskip

\nin
$(3)$ \ts \emph{
Choose uniform \ts $i<j$.  For all \ts $k\in \{i,\ldots,j-1\}$ \ts in this order,
switch the values \ts $k \leftrightarrow (k+1)$ \ts in~$f$ if possible.} \.
This MC was introduced by Ayyer, Schilling and Thi\'ery \cite{AST17}, who proved
the \ts $\mix =O(n^2\log n)$ \ts upper bound and conjecture that \ts $\mix =O(n\log n)$.

\smallskip

For posets of height two, a conjectured rapidly mixing MC was given in~\cite{CRS09}.
A closely related MC was analyzed in~\cite{Hub14}.  Let us also mention
Huber's algorithm \cite{Hub06} for \defng{perfect sampling} \ts of linear
extensions of general posets.

Finally, the problem of (exact) \defng{uniform generation} \ts is also of interest
in both Combinatorics \cite{NW78} and Theoretical Computer Science \cite{JVV86}.
For classes of posets where the counting problem is polynomial, i.e.\ \ts $\LE\in \FP$,
the self-reducibility gives a polynomial time algorithms for the uniform generation.
Faster algorithms exists for Young diagrams \cite{GNW79,NPS97} (see also~\cite{SS17}),
special skew Young diagrams \cite[$\S$5.6]{H+}, and for series-parallel posets \cite{BDGP}.

\smallskip

\subsection{Graph of linear extensions}\label{ss:CS-graph}
A common consequence of the Markov chains discussed above, is the following basic result:

\begin{prop}[{\rm folklore}{}] \label{p:CS-random-connect}
Let \ts $P=(X,\prec)$ \ts be finite poset, and let \ts $f,g\in \Ec(P)$ \ts
be linear extensions.  Then \ts $f$ \ts and \ts $g$ \ts are connected by
a sequence of \ts \ts $k \leftrightarrow (k+1)$ \ts switches, $1\le k <n$.
\end{prop}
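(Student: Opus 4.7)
The plan is to prove this by induction on $n = |X|$, reducing to the case where both linear extensions assign the same value to some maximal element. The base case $n = 1$ is vacuous, so assume $n \ge 2$.

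For the inductive step, let $x \in X$ be the (necessarily maximal) element with $f(x) = n$. The main step is to transform $g$ into some $g' \in \mathcal{E}(P)$ with $g'(x) = n$ using only adjacent switches. Suppose $g(x) = k < n$, and let $y = g^{-1}(k+1)$. I claim $x \parallel y$, so the switch $k \leftrightarrow (k+1)$ applied to $g$ yields a valid linear extension. Indeed, $x \not\prec y$ because $x$ is maximal in $P$, and $y \not\prec x$ because otherwise $g(y) < g(x)$, contradicting $g(y) = k+1 > k = g(x)$. After performing this switch we obtain $g_1 \in \mathcal{E}(P)$ with $g_1(x) = k+1$. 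Iterating at most $n - k$ times produces $g' \in \mathcal{E}(P)$ with $g'(x) = n$, connected to $g$ by a sequence of adjacent switches.

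Now both $f$ and $g'$ send $x$ to $n$, so their restrictions $\tilde f, \tilde g'$ to $X - x$ are bijections onto $[n-1]$ and are linear extensions of the induced subposet $P - x$ (which has $n-1$ elements). By induction, $\tilde f$ and $\tilde g'$ are connected by a sequence of adjacent switches $k \leftrightarrow (k+1)$ with $1 \le k < n-1$ in $\mathcal{E}(P - x)$. Each such switch lifts to a valid adjacent switch in $\mathcal{E}(P)$: validity of switching values $k, k+1$ only depends on the incomparability of the two elements carrying those values, which is unaffected by fixing $x$ at position $n$. Concatenating the path from $g$ to $g'$ with the lifted path from $g'$ to $f$ completes the proof.

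There is no serious obstacle here; the argument is a clean ``bubble the top element into place'' induction. The only point requiring any care is verifying that $x \parallel y$ at each step (which uses both the maximality of $x$ and the fact that $g$ is a linear extension), and noting that adjacent switches on the lower $n-1$ positions remain valid after fixing $x$ at the top.
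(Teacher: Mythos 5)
Your proof is correct. One small remark on context: the paper does not actually supply a proof of this proposition --- it is stated as folklore with a pointer to the literature (the discussion following \cite[Prop.~1.2]{DK21}) --- so there is no ``paper proof'' to compare against. That said, your argument is the standard one and it is complete. The key verifications all check out: since $f(x)=n$ the element $x$ is maximal; when $g(x)=k<n$ and $y=g^{-1}(k+1)$, maximality gives $x\not\prec y$ and $g(y)>g(x)$ gives $y\not\prec x$, so the switch is legal; bubbling $x$ up to position $n$ takes at most $n-k$ switches; and since $P-x$ is an induced subposet, the legality of a switch at positions $k,k+1$ with $k<n-1$ (which depends only on the incomparability of the two elements occupying those positions) is identical in $P$ and in $P-x$, so the inductive path lifts verbatim. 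This is exactly the ``bubble the top element into place'' induction, and it is clean.
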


The proposition is a folklore result repeatedly rediscovered in
different contexts.  
For a brief overview of generalizations and further references,
we refer to the discussion which follows \ts \cite[Prop.~1.3]{DK21}.

Consider a graph \ts $\gg(P)$ \ts with vertices \ts $\Ec(P)$, and with
edges corresponding to switches, see e.g.~\cite{Mas09}.
The proposition above proves connectivity of  \ts $\gg(P)$.
In fact, the distance between any two linear extensions can be computed in
polynomial time, see \cite[$\S$6]{BW91a} and \cite[Prop.~2.2]{Naa00}.
On the other hand, the diameter of
\ts $\gg(P)$ \ts is $\NP$-hard to compute \cite[Thm~5]{BM13}.

Suppose \ts $e(P)$ \ts is even.
Ruskey noted \cite{Rus92}, that if \ts $\gg(P)$ \ts
has a \defng{Hamiltonian path}, then \ts $P$ \ts is sign-balanced (see~$\S$\ref{ss:def-LE}).
Motivated by the problem of \defng{listing} \ts all linear extensions,
he stated:

\begin{conj}[{\rm Ruskey~\cite[$\S$5]{Rus92}}{}]\label{conj:Ruskey}
Let \ts $P$ \ts be a finite sign-balanced poset.  Then \ts $\gg(P)$ \ts has a
Hamiltonian path.
\end{conj}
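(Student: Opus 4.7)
The plan is to attempt a constructive induction on $n = |X|$, leveraging sign-balance as the key numerical invariant. Base cases with small $n$ are verifiable by direct enumeration, and sign-balance already excludes trivial obstructions: any poset with $e(P) = 1$ has $|\Sigma(P)| = 1 \neq 0$, and more generally the bipartition of $\gg(P)$ by parity of linear extensions ensures that any Hamiltonian path must traverse equal numbers of even and odd vertices, so sign-balance is precisely the necessary counting condition being hypothesized to be sufficient.

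For the inductive step, fix a minimal element $x \in \min(P)$ and decompose $\Ec(P) = \bigsqcup_k \Ec_k$, where $\Ec_k := \{f \in \Ec(P) : f(x) = k\}$ ranges over the admissible values given by Theorem~\ref{t:Sta-vanish}. Each $\Ec_k$ is in bijection with $\Ec(P_k)$ for a smaller poset $P_k$ obtained from $P - x$ by a suitable relabeling, and the subgraph of $\gg(P)$ induced on $\Ec_k$ is isomorphic to $\gg(P_k)$. Adjacent blocks $\Ec_k$ and $\Ec_{k+1}$ are joined by exactly those edges which swap $x$ with the element occupying position $k+1$. A Hamiltonian path of $\gg(P)$ thus decomposes into Hamiltonian paths inside each $\gg(P_k)$ glued by bridge edges between consecutive blocks.

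To make this induction work, the statement must be strengthened to control endpoints: one should ask for a Hamiltonian path in $\gg(P)$ from $f_-$ to $f_+$ for specified linear extensions of opposite parity, where admissible pairs $(f_-, f_+)$ satisfy a boundary condition dictated by the recursion. The hope is that the identity $\Sigma(P) = \sum_k \pm \Sigma(P_k) = 0$ implied by sign-balance of $P$ matches exactly the parity constraints needed to glue block-paths across bridge edges, even though individual $\Sigma(P_k)$ are typically nonzero.

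The main obstacle will be the bookkeeping of signs across blocks. The subposets $P_k$ are generally not sign-balanced, so a naive induction on ``sign-balance implies Hamiltonicity'' cannot close, and one must track much finer data --- presumably a family of parity-indexed Hamilton-connectedness statements whose joint behavior is governed by $\Sigma(P) = 0$. This difficulty is likely why the conjecture, despite being natural and verified for numerous classes (series-parallel posets, shapes $P_\lambda$, ribbons), remains open. A potential alternative is to abandon element-by-element induction in favor of the polytopal geometry of the order polytope $\cO_P$ (see~$\S$\ref{ss:def-poset-polytopes}): realize $\gg(P)$ as a subgraph of the permutohedron's $1$-skeleton clipped by $\cO_P$, and appeal to Hamiltonicity criteria for balanced bipartite polytope graphs. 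A third strategy is to derandomize one of the rapidly mixing Markov chains from~$\S$\ref{ss:CS-random} into an explicit Gray-code traversal; sign-balance is exactly what removes the parity obstruction that blocks such constructions in the general case, which offers circumstantial evidence that the conjecture is true and some guidance on where the sign-balance hypothesis should enter any eventual proof.
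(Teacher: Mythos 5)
This statement is an open conjecture, not a theorem: the survey records it as Conjecture~\ref{conj:Ruskey} precisely because no proof is known, so there is no proof of the paper's to compare against, and your proposal should be read as a strategy sketch rather than a verification. You are candid about this, and your diagnosis of the central obstruction --- that the induction does not close because the pieces $P_k$ of the block decomposition are generally not sign-balanced --- is exactly right. This is the known bottleneck.

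Two technical points in the sketch deserve tightening. First, the claim that the subgraph of $\gg(P)$ induced on $\Ec_k = \{f : f(x)=k\}$ is isomorphic to $\gg(P-x)$ (after relabeling) is not quite correct. The allowed switches inside $\Ec_k$ are $j \leftrightarrow (j+1)$ for $j \notin \{k-1,k\}$, which under the relabeling $\{1,\ldots,n\}\setminus\{k\} \to \{1,\ldots,n-1\}$ hit every index in $[n-2]$ \emph{except} $k-1$. So the block is a spanning \emph{subgraph} of $\gg(P-x)$ with the entire ``$(k-1)\leftrightarrow k$'' matching removed, not an isomorphic copy; a Hamiltonian path in $\gg(P-x)$ does not automatically restrict to one in the block. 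Second, the identity $\Sigma(P) = \sum_k \pm \Sigma(P_k) = 0$ needs care: all the $P_k$ are the same poset $P-x$, and what changes with $k$ is the parity of the relabeling, so the sum telescopes to $\Sigma(P-x)$ times an alternating-sign coefficient. This makes it even clearer that sign-balance of $P$ gives essentially no leverage on the blocks individually, which is where a naive gluing argument stalls. Any working version of your approach would need to replace ``sign-balanced implies Hamiltonian path'' with a much finer, endpoint-controlled Hamilton-laceability statement for the non-sign-balanced blocks, and no such strengthening is currently known to hold.
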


We refer to \cite[$\S$5.10]{Rus03} for the introduction,
to~\cite{CW95} for algorithmic aspects of this problem,
and to a recent survey \cite[$\S$5.5]{Mut23} for further references.

\smallskip

\subsection{Coincidence problem and concise functions}\label{ss:CS-coincidence}
Following \cite{CP-domino}, consider the \defn{coincidence problem}:
$$
\text{\sc C}_e \ := \ \big\{\ts e(P) \. = ^? \ts e(Q) \ts \big\}.
$$
We conjecture that this problem is $\CEP$-complete under Turing reductions.

\begin{thm}[{\rm \cite[Thm~1.4]{CP-domino}}{}] \label{t:CS-coincidence} \
$\text{\sc C}_e \ts \in \ts \PH \ \Longrightarrow \ \PH \ts = \ts \Sigmap_m$  \
for some \ $m \ge 1$.

\nin
Moreover, this holds when \ts $e$ \ts is restricted to permutation posets.
\end{thm}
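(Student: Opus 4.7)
The plan is to reduce the coincidence problem $\text{\sc C}_{\#\text{3SAT}}$ to $\text{\sc C}_e$ restricted to permutation posets, and then invoke the classical collapse theorem for $\CEP$-hard problems lying in~$\PH$.

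For the reduction itself, I would start from the construction of Dittmer and Pak \cite{DP18}, which gives a polynomial-time map $\phi \mapsto P_\phi$ from 3-CNF formulas to permutation (i.e.\ dimension-two) posets encoding $\#\phi$ in the linear extension count. To promote this to a reduction of coincidence problems I would arrange that the map is \emph{shape-parsimonious}: for any two formulas $\phi,\psi$ padded to share the same shape parameters (number of variables $n$, number of clauses $m$, occurrence pattern of each literal), one has
\begin{equation*}
e(P_\phi) \, = \, \alpha_{n,m} \. \#\phi \. + \. \beta_{n,m}
\quad\text{and}\quad
e(P_\psi) \, = \, \alpha_{n,m} \. \#\psi \. + \. \beta_{n,m}\ts,
\end{equation*}
with $\alpha_{n,m}>0$ depending only on the shape. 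Padding is performed by adjoining dummy variables and trivially satisfiable clauses, each doubling the count in a controlled shape-dependent way, so that given an instance $(\phi,\psi)$ of $\text{\sc C}_{\#\text{3SAT}}$ one can first align the scales of $\#\phi$ and $\#\psi$ by extra doublings and then pad both to a common shape. After this massaging, the oracle query $\{e(P_\phi) =^? e(P_\psi)\}$ correctly answers $\{\#\phi =^? \#\psi\}$, yielding a polynomial-time (Turing, in fact many-one) reduction $\text{\sc C}_{\#\text{3SAT}} \leq_T^p \text{\sc C}_e$ on permutation posets.

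For the collapse step: since $\text{\sc C}_{\#\text{3SAT}}$ is $\CEP$-complete, the reduction shows $\text{\sc C}_e$ is $\CEP$-hard under polynomial-time Turing reductions. If $\text{\sc C}_e \in \Sigmap_k$ for some $k$, then $\CEP \subseteq \Sigmap_k$. By a classical Toda--Wagner-type theorem, $\PH$ is contained in the boolean hierarchy over $\CEP$ (equivalently, in $\poly^{\CEP[O(\log n)]}$), and a standard calculation then gives $\PH \subseteq \Sigmap_{k+c}$ for an absolute constant~$c$. This is precisely the desired collapse $\PH = \Sigmap_m$ for $m=k+c$.

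The main obstacle is the construction of the shape-parsimonious reduction within the \emph{permutation} poset class. The Dittmer--Pak gadgets must be analysed carefully to extract an exact affine dependence on $\#\phi$, and the padding of dummy variables/clauses must be realised as an operation on permutation posets that both preserves two-dimensionality and uniformly modifies $\alpha_{n,m},\beta_{n,m}$ across $\phi$ and $\psi$. A secondary nuisance is the additive offset $\beta_{n,m}$: the reduction is presumably only affine, not strictly proportional, in $\#\phi$, and the padding must therefore be applied symmetrically to both sides so that $e(P_\phi)=e(P_\psi) \iff \#\phi=\#\psi$ remains equivalent after all the alignment steps.
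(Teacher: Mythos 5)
Your reduction strategy is genuinely different from the paper's, and as written it has a real gap. The paper explicitly flags the Kravitz--Sah theorem (stated here as Theorem~\ref{t:CS-KS}) as the key lemma: width-two (hence permutation) posets on $n$ elements realize \emph{every} linear extension count in $\{1,\dots,c^{n/\log n}\}$, and the underlying construction is explicit, so one can produce in polynomial time a width-two poset with any prescribed count from a near-exponential range. This is what dissolves the affine-offset problem you wrestle with: instead of forcing the constants $\alpha,\beta$ in $e(P_\phi)=\alpha\,\#\phi+\beta$ to match across two different formulas, one simply \emph{synthesizes} a comparison poset with the exact target value. Your plan never uses this interval-density fact and tries to get by on uniformity of the Dittmer--Pak gadgets alone, which is a harder and more fragile route.

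The concrete gap is the claim that the Dittmer--Pak reduction can be made \emph{shape-parsimonious} with $e(P_\phi)=\alpha_{n,m}\#\phi+\beta_{n,m}$ where $\alpha,\beta$ depend only on $(n,m)$ and the literal-occurrence pattern. This is asserted but not argued, and it is not a property one should expect of a generic parsimonious reduction to $\LE$: the poset $P_\phi$ built from clause and variable gadgets has a linear extension count that, a priori, depends on the detailed incidence structure of $\phi$ (which clauses share which variables), not just on coarse shape parameters. Moreover, once you start padding $\phi$ and $\psi$ with dummy variables/clauses to force a common shape, you must simultaneously (a)~track exactly how the count transforms (your ``doubling'' claim is plausible but again unverified for the specific gadgets), and (b)~ensure the \emph{resulting posets} have the same $(\alpha,\beta)$. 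These two constraints pull in opposite directions, and you acknowledge them as obstacles without resolving them. So the reduction step, as proposed, is incomplete; the paper's route via explicit realizability of arbitrary target values is what makes the argument go through cleanly. (Your closing collapse step is essentially fine, though Turing $\CEP$-hardness of $\text{\sc C}_e$ together with $\text{\sc C}_e\in\Sigma^{\text{p}}_k$ gives $\CEP\subseteq\Delta^{\text{p}}_{k+1}$, not $\CEP\subseteq\Sigma^{\text{p}}_k$; the standard collapse still follows.)
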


A key lemma in the proof is the following result of independent interest.

\begin{thm}[{\rm \cite[Thm~1.1]{KS21}}{}] \label{t:CS-KS}
Let
\begin{equation*} 
\cT_e(n) \ := \ \big\{\. e(P) \, \. : \, \. P=(X,\prec), \, \, |X|=n, \, \. \width(P)=2 \.\big\}.
\end{equation*}
Then:
\begin{equation}\label{eq:KS-poset}
\cT_e(n)  \, \supseteq \, \big \{1,\ldots, c^{n/(\log n)} \big\} \quad
\text{for some} \ \ c>1.
\end{equation}
\end{thm}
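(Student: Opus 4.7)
The plan is to prove this constructively: show that every positive integer $m$ equals $e(P)$ for some width-two poset $P$ on at most $n = O(\log m \cdot \log\log m)$ elements, which is equivalent to $\cT_e(n) \supseteq \{1,\ldots,c^{n/\log n}\}$. I would work throughout with the lattice-path model for width-two posets: partition the ground set into two chains $u_1 \prec \ldots \prec u_k$ and $v_1 \prec \ldots \prec v_{n-k}$, encode the cross-relations as a staircase region $R \subseteq [0,k]\times[0,n-k]$, and identify $e(P)$ with the number of monotone lattice paths from $(0,0)$ to $(k,n-k)$ passing through $R$. The task then becomes sculpting staircases whose path counts hit each prescribed $m$.

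The first step is to stockpile a rich family of atomic values realizable on few elements. Chains and antichains already give $1$ and $n!$; the disjoint sum $C_a + C_b$ gives $\binom{a+b}{a}$; zigzags give Euler numbers (see $\S$\ref{ss:ex-Euler-numbers}); and the operations $e(P\oplus Q) = e(P)e(Q)$ and $e(P+Q) = \binom{|P|+|Q|}{|P|}e(P)e(Q)$ from $\S$\ref{ss:ex-SP} combine these into a large multiplicatively generated alphabet of values realized on $O(\log m)$ elements. Separately, I would develop an additive combining gadget from the basic identity $e(P) = \sum_{x \in A} e(P-x)$ along an antichain $A$ intersecting every maximal chain (Theorem~\ref{t:basic-EHS}); in the lattice-path picture, this is the decomposition of paths by where they cross a fixed anti-diagonal, giving a sum-of-products formula $e(P) = \sum_i e(P^{\mathrm{up}}_i) \cdot e(P^{\mathrm{down}}_i)$.

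The second step is to assemble a target $m$ by combining atomic values multiplicatively (via $\oplus$) and additively (via antichain cuts). I would write $m$ in a mixed-radix expansion $m = \sum_{i=0}^r d_i B_i$, where each $d_i$ and each $B_i$ is itself a short product of atomic values, and where the number of terms is $r = O(\log m / \log n)$; this is possible because there are polynomially many (in $n$) atomic values of size up to $n$, so the alphabet is rich enough for efficient mixed-radix encoding. Each summand $d_i B_i$ is realized on $O(\log n)$ elements, and the $r$ summands are joined via the additive gadget to produce $e(P) = m$. The total element count then comes to $r \cdot O(\log n) = O(\log m)$ for the digit gadgets, with $O(\log m \cdot \log \log m)$ additional overhead for the cascade of antichain cuts.

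The hard part will be making the addition gadget frugal enough. Each invocation of $e(P) = \sum_i e(P^{\mathrm{up}}_i) \cdot e(P^{\mathrm{down}}_i)$ costs additional poset elements and forces the up- and down-halves into specific shapes, so the summands $d_i B_i$ one can actually combine are constrained rather than arbitrary: one must choose the bases $B_i$ so that their paths decompose compatibly with the digit gadgets stacked underneath. Controlling this overhead so that it is $O(\log n)$ per digit, rather than a constant (which would be impossible, since the alphabet is only polynomially large) or linear in $\log m$ (which would destroy the bound entirely), is precisely what produces the $\log n$ factor in the denominator of $c^{n/\log n}$, and is where I expect the main technical difficulty to lie.
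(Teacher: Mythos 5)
Your reformulation — realize every $m$ as $e(P)$ for a width-two $P$ on $n = O(\log m\cdot\log\log m)$ elements — is the correct target, and the lattice-path framework with a multiplicative operation from $\oplus$ and an additive operation from a cut is the right type of plan. But the argument has a genuine gap at exactly the place you flag yourself: the additive gadget is never constructed, only postulated, and it is the whole difficulty.

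The sum-of-products decomposition $e(P) = \sum_i N_i M_i$ along an anti-diagonal has severely correlated summands: all the $N_i$ are path counts from the origin to consecutive points on one anti-diagonal of a single staircase region, so they obey the Pascal-type transfer recurrence and are far from independently choosable, and the same holds for the $M_i$. Saying that "one must choose the bases $B_i$ so that their paths decompose compatibly with the digit gadgets stacked underneath" restates the problem rather than solving it; that compatibility is precisely what has to be engineered, and nothing in the sketch shows it can be arranged for an arbitrary target $m$, let alone with $O(\log n)$ overhead per digit. Without this you do not even have a proof that every $m$ is realized by some width-two poset, much less the quantitative bound. Two secondary issues: (i) you identify the Edelman--Hibi--Stanley recursion (Theorem~\ref{t:basic-EHS}) with the anti-diagonal sum-of-products, but for a width-two poset a maximal antichain has at most two elements, so EHS gives only a two-term sum $e(P)=e(P-x)+e(P-y)$; the multi-term $\sum_i N_i M_i$ you actually need is the path-decomposition (transfer-matrix) identity, which coincides with EHS only at the first anti-diagonal; and (ii) the accounting drifts between base-$n$ digits (there are $r=O(\log m/\log n)$ of them) and binary bits (for which an $O(\log n)$ overhead each yields $O(\log m\log\log m)$), and should be fixed to a single convention. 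The survey cites \cite{KS21} without reproducing its proof, but whatever that proof does, the stated bound does not follow from the argument as sketched.
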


The authors believe that the upper end in \eqref{eq:KS-poset} can be
improved to \ts $c^n$.  

\begin{conj}[{\rm \cite[Conj.~7.4]{KS21}}{}]\label{conj:KS-exp}
We have:
\begin{equation}\label{eq:KS-poset-conj}
\cT_e(n)  \, \supseteq \, \big \{1,\ldots, c^{n} \big\} \quad
\text{for some} \ \ c>1.
\end{equation}
\end{conj}

In \cite{CP-CF}, we prove that this conjecture follows from 
\defng{Zaremba's conjecture} and an array of somewhat weaker conjectures.  
We also show that a density~1 version of the conjecture follows from 
the \defng{Bourgain--Kontorovich theorem} (2014). 

Note that since $e(P) \le n!$ is the only natural upper 
bound for posets on $n$ elements, one can ask if the  bound 
\eqref{eq:KS-poset-conj} can be further extended to \ts $e^{n\log n}$.  
The following conjecture implies this bound, and thus also Conjecture~\ref{conj:KS-exp}.  
It also implies
that Theorem~\ref{t:CS-coincidence} holds for posets of height two, see
\cite[Prop.~5.18]{CP-domino}.

\begin{conj}[{\rm \cite[Conj~5.17]{CP-domino}}{}] \label{conj:CS-coincidence-two}
For all sufficiently large \ts $m$, there is a poset $P$ of height two,
such that \ts $e(P)=m$.
\end{conj}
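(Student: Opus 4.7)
The plan is to try to realize every sufficiently large integer $m$ as $e(P)$ for some height-two poset by producing a rich parameterized family whose $e$-values have bounded gaps, and then closing the remaining holes by disjoint-union arithmetic. The overall template mimics the Keith--Stanley density Theorem~\ref{t:CS-KS} for width two, the difference being that height-two posets enjoy far less flexibility than width-two posets, since every comparability must cross between the two levels.

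My first step would be to build a base family of bipartite posets $P_G$, where $G=(A,B,E)$ is a bipartite graph with $A$ the bottom antichain and $B$ the top antichain. For several tractable sub-families, $e(P_G)$ can be written down explicitly: for the ``caterpillar'' poset in which a single top element covers the first $k$ of $a$ bottom elements (with $b$ further isolated top elements and $\ell$ further isolated bottom elements), one gets the closed form
\[
e(P) \ = \ \frac{(a+b+\ell+1)!}{k+1}\,,
\]
and more general ``staircase'' bipartite graphs give multivariate analogues that can be computed by inclusion--exclusion. The hope is to show that by varying $(a,b,k,\ell,\dots)$ these values, together with the multiplicative operation $e(P + Q) = \binom{|P|+|Q|}{|P|}\,e(P)\,e(Q)$, already fill a set of positive density in $\mathbb{Z}_{>0}$.

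Next, I would attempt two local modifications to fine-tune $e$: (i) adding an isolated element, which changes $e(P)$ to $(n+1)\,e(P)$; and (ii) adding (or removing) a single covering relation $x \prec y$, which alters $e(P)$ by $\pm\aN(P,\{x\succ y\})$, an integer that can itself be estimated via Theorems~\ref{t:ineq-Sta} and~\ref{t:Sta-vanish}. The goal is to produce pairs of height-two posets $P,P'$ with $e(P')-e(P)=1$ for infinitely many $m=e(P)$; combined with a second pair realizing a coprime shift, a standard Frobenius/Chicken McNugget argument would then yield every sufficiently large~$m$.

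The hardest part will be controlling the gaps. Even establishing that the set $\cT_e^{\le 2}(n):=\{e(P):\text{$P$ of height two, $|X|=n$}\}$ contains a long arithmetic progression seems out of reach by current techniques: the analogue of Theorem~\ref{t:CS-KS} for height two is not known, and all natural ``dials'' (adding an element, adding a relation) multiply $e$ by fairly large factors, making $\pm 1$ shifts delicate to arrange. A more optimistic route would be to find a reduction turning a width-two poset $W$ with $e(W)=m$ into a height-two poset with the same $e$-value; such a transformation, if it existed, would immediately import Theorem~\ref{t:CS-KS} and finish Conjecture~\ref{conj:CS-coincidence-two}, but given Stachowiak's parsimonious reduction from $e$ to the sign-imbalance of height-two posets (see~$\S$\ref{ss:CS-SP}), I expect any genuine proof to pass through a substantially new combinatorial construction rather than a black-box reduction.
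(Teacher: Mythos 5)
This statement is explicitly labeled a \emph{conjecture} in the paper (it is Conjecture~5.17 of \cite{CP-domino}, restated here as Conjecture~\ref{conj:CS-coincidence-two}); the paper gives no proof, and neither does your proposal. To your credit, you recognize this clearly: your final paragraph concedes that controlling the gaps ``seems out of reach by current techniques'' and that ``any genuine proof'' would require ``a substantially new combinatorial construction.'' So there is no discrepancy with the paper --- both of you agree this is open --- but what you have written is a research plan, not a proof, and should be assessed as such.

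As a research plan, the observations are reasonable but the central step is exactly where the hole is. Your caterpillar formula $e(P)=(a+b+\ell+1)!/(k+1)$ is correct (it is $n!$ times the probability that $t$ comes last among $\{x_1,\dots,x_k,t\}$), and the disjoint-sum rule $e(P+Q)=\binom{|P|+|Q|}{|P|}e(P)\,e(Q)$ is also correct; but both operations only produce numbers that are highly divisible --- factorials divided by small integers, or products of binomials with $e$-values --- so the resulting set has zero density and enormous gaps, not bounded gaps. Adding an isolated element multiplies by $n+1$; adding a cover $x\prec y$ changes $e$ by roughly $\tfrac12 e(P)$, nowhere near $\pm 1$. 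The Frobenius/Chicken~McNugget closing move requires two additive shifts with a bounded difference (indeed coprime difference), but none of the ``dials'' you list produces additive perturbations of controlled small size. The width-two density Theorem~\ref{t:CS-KS} is possible precisely because width-two posets are encoded by lattice paths, and tiny local path moves change $e$ by small amounts; no analogue of this fine control is known at height two, and you correctly note that the Stachowiak reduction goes from $e(P)$ to sign-imbalance of height-two posets, not the other way. In short, the gap you flag at the end is the whole problem, and nothing earlier in the proposal narrows it.
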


\smallskip

\subsection{Combinatorial interpretation}\label{ss:CS-comb}
For every inequality \ts $f\geqslant g$ \ts where \ts $f,g\in \SP$ \ts
are counting functions, one can ask if the defect \ts $(f-g)$ \ts
is in~$\SP$.  Informally, this is a question whether \ts $(f-g)$ \ts has
a combinatorial interpretation.  For example, the proof of
Theorem~\ref{t:CS-coincidence} implies that
$$
\big(e(P) \. - \. e(Q)\big)^2 \, \notin \, \SP \quad \text{unless \quad $\PH \ts = \ts \Sigmap_2$\ts.}
$$
We refer to \cite{Pak19} for the introduction to the problem of combinatorial
interpretation, to \cite{Sta-positivity} for a review of open problems on
combinatorial interpretation in Algebraic Combinatorics,
to \cite{IP22} for a careful treatment of polynomial inequalities,
and to \cite{Pak-OPAC} for a detailed survey.



\medskip

\section{Sorting probability}\label{s:sort}

In this section we summarize partial results and several variations on the
\ts $\frac13-\frac23$ \ts Conjecture.

\subsection{The \ts $\frac13-\frac23$ \ts Conjecture} \label{ss:sort-1323}
The following conjecture remains a major challenge in the area.
It was originally stated by Kislitsyn \cite{Kis68}, and independently
by Fredman \cite{Fre75}.

\begin{conj}[{\rm \ts \defn{$\frac13 - \frac23$ \ts conjecture}}{}] \label{conj:1323}
In every finite poset \ts $P=(X,\prec)$ \ts that is not a chain, there exist
two elements \ts $x,y\in X$, such that
\begin{equation}\label{eq:sort-1323}
\frac13 \ \le \ \bP\big[\ts f(x) \. < f(y)\ts\big] \ \le \ \frac23\,,
\end{equation}
where the probability is over uniform random \ts $f\in \Ec(P)$.
\end{conj}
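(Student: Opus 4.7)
The plan is to attack Conjecture~\ref{conj:1323} using the log-concavity arsenal developed earlier: the Kahn--Saks inequality (Theorem~\ref{t:ineq-KS}) giving log-concavity of $\aFr(P,x,y,a)$ in~$a$, Stanley's inequality (Theorem~\ref{t:ineq-Sta}) giving log-concavity of $\aNr(P,x,a)$, and the XYZ inequality (Theorem~\ref{t:ineq-XYZ}) providing pairwise positive correlations. This is essentially the route Kahn and Saks took in their original paper, which reaches the constant $\tfrac{3}{11}$ rather than the conjectured $\tfrac13$, so I would aim first to reproduce that bound cleanly and then push further.

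First I would reduce to connected posets via disjoint sums: if $P=Q+R$ with neither summand a singleton, a pair $(x,y)$ with $x$ minimal in~$Q$ and $y$ minimal in~$R$ satisfies $\bP[f(x)<f(y)]\in[\tfrac13,\tfrac23]$ after a short hypergeometric calculation; the remaining degenerate case with a singleton summand is handled by induction on $|X|$. For connected $P$, I would select a non-maximal element~$x$ whose Stanley sequence $a\mapsto \aNr(P,x,a)$ has its mode closest to $(n+1)/2$, so that the random height $f(x)$ is neither too small nor too large on average. Then, ranging $y$ over the incomparability class $X\sm\comp(x)$, I would use Kahn--Saks log-concavity to control jumps in $y\mapsto \bP[f(x)<f(y)]$: concretely, if $y\prec y'$ are both incomparable to~$x$, log-concavity of $\aFr(P,x,y,a)$ and of $\aFr(P,x,y',a)$ bounds the difference between the two probabilities, so as $y$ varies along a maximal chain in the induced subposet on $X\sm\comp(x)$ the probability cannot skip over $[\tfrac13,\tfrac23]$ in a single step. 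The XYZ inequality (Theorem~\ref{t:ineq-XYZ}) applied to triples $(x,y,y')$ would supply the extra correlation slack needed to close the argument.

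The main obstacle is precisely the gap between $\tfrac{3}{11}$ and the sharp $\tfrac13$. The extremal cases are essentially the poset $\{z \succ x,\, z \succ y\}$ and a handful of close relatives; a proof must respect their tightness while extracting enough room everywhere else. A promising strategic direction is to use the equality conditions of Theorems~\ref{t:Sta-equality} and~\ref{t:KS-equality} to isolate near-extremal posets (those where the Stanley or Kahn--Saks sequences are nearly flat) and verify the conjecture on them by direct analysis, then bootstrap the strict log-concavity defect on non-extremal posets into the needed quantitative improvement. This last bootstrapping step is the decisive one, and I do not see a clean way to execute it with presently available tools; any genuine progress past $\tfrac{3}{11}$ appears to require a new correlation inequality specifically adapted to producing balanced pairs, rather than merely balanced distributions.
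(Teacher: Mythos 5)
This statement is an \emph{open conjecture} (Conjecture~\ref{conj:1323}); the paper does not prove it, and no proof is known. What the paper provides instead is a list of special cases where the conjecture is established (Theorem~\ref{t:sort-1323-summary}) and the best general bound, namely $\frac12 - \frac1{2\sqrt{5}} \approx 0.2764$ from Brightwell--Felsner--Trotter (Theorem~\ref{t:sort-BFT}), which improves on the Kahn--Saks bound of $\frac{3}{11}$. Your proposal essentially retraces that existing line of attack --- Kahn--Saks log-concavity to locate a pair with $|h(P,x)-h(P,y)|<1$, then correlation inequalities to convert a height gap into a probability bound --- and you candidly acknowledge that you cannot close the gap from $\approx 0.276$ to $\frac13$. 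That honest assessment is correct: the bootstrapping step you describe (using strict log-concavity defects on non-extremal posets) is precisely what nobody has managed to make quantitative, and the paper's discussion in $\S$\ref{ss:sort-gen} and $\S$\ref{ss:sort-gen-KS} confirms that no known correlation inequality suffices.

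Separately, there is a concrete error in your reduction step. For a disjoint sum $P = Q + R$ with $x\in\min(Q)$ and $y\in\min(R)$, the pair $(x,y)$ is \emph{not} always balanced. Take $Q = C_k$ and $R = C_2$: then $f(x)<f(y)$ exactly when position $1$ is assigned to the $Q$-chain, so $\bP[f(x)<f(y)] = \frac{k}{k+2}$, which tends to $1$ as $k\to\infty$ and leaves $[\frac13,\frac23]$ already for $k\ge 4$. A balanced pair does exist in such posets (they have width two, covered by Linial's result in Theorem~\ref{t:sort-1323-summary}(1)), but it is not the pair of minima; one must compare an interior element of the long chain against an element of the short one. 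So even the easy disjoint-sum case needs a more careful choice of $(x,y)$ than your sketch allows, and cannot be dispatched by a single hypergeometric calculation on the minima.
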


Note that the constant \ts $\ve = \frac13$ \ts is optimal for \ts $P=C_2+ C_1$\ts.
A poset \ts $P=(X,\prec)$ \ts on $n$ elements is called \defn{$k$-thin} \ts
if \ts $\al(x)+\be(x)> n-k$, for all \ts $x\in X$.  In the opposite direction, poset
\ts $P$ \ts is called \defn{$(\epsilon,\de)$-dense} \ts if there is \ts $Y \subset X$,
$|Y|\ge \epsilon \ts n$, s.t.\ \. $\al(x)+\be(x) < \de \ts n$, for all \ts $x\in Y$.

\begin{thm} \label{t:sort-1323-summary}
Conjecture~\ref{conj:1323} holds for the following posets with \ts $n$ \ts elements:
\begin{enumerate}
\item[$(1)$] \ width two posets \. \cite{Lin84},
\item[$(2)$] \ posets with a symmetry \. \cite{GHP87},
\item[$(3)$] \ semiorders \. \cite{Bri89} \ts $($a concise proof was given in \ts \cite[Thm.~2.3]{Bri99}$)$,
\item[$(4)$] \ height two posets \. \cite{TGF92},
\item[$(5)$] \ posets with \. $|\min(P)| > C \sqrt{n}$ \. for some $C>0$ \. \cite{Fri93},
\item[$(6)$] \ posets with no chains of length \. $> 2 \log_2 \log n - C$, ibid.
\item[$(7)$] \ $(\epsilon,\de)$-dense posets, for all \ts $\epsilon>0$ \ts and some \ts $\de=\de(\epsilon)>0$, ibid.
\item[$(8)$] \ $6$-thin posets \. \cite{Pec08} \ts $($a weaker $5$-thin version was given in \cite{BW92}$)$,
\item[$(9)$] \ series-parallel and $N$-free posets \. \cite{Zag12},
\item[$(10)$] \ skew Young diagram posets \ts $P_{\la/\mu}$  \. \cite{OS18},
\item[$(11)$] \ posets whose cover graph is a forest  \. \cite{Zag19}.
\end{enumerate}
\end{thm}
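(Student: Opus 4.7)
The plan is to split the eleven cases into three groups by technique, since (as stated in the excerpt) the theorem compiles results from eleven different papers and there is no known uniform proof. I would handle the easy structural cases first, then the recursive classes, then the probabilistic/extremal classes.

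First I would dispose of cases $(2)$ and $(1)$. For $(2)$: if $\sigma \in \Aut(P)$ has a non-fixed orbit, pick $x \ne y$ in the same orbit with $y = \sigma(x)$; then $\sigma$ acts on $\Ec(P)$ by $f \mapsto f \circ \sigma^{-1}$, mapping $\{f(x)<f(y)\}$ bijectively onto $\{f(y)<f(x)\}$, so $\bP[f(x)<f(y)]=1/2$. For width-two posets~$(1)$, fix a partition $X = \rC \sqcup \rC'$ into two chains. Then $\Ec(P)$ is in bijection with a set of monotone lattice paths in $\zz^2$ (see~$\S\ref{ss:ex-width-two}$), and for any candidate pair $(x,y)$ with $x \in \rC$, $y \in \rC'$ the probability $\bP[f(x)<f(y)]$ is a simple ratio of path counts; Linial's argument then picks, among elements that are comparable to a fixed minimal element, the one whose position most nearly splits the path set, and shows by an exchange argument on crossing paths that a balanced pair must exist.

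Next I would treat the recursive classes $(9)$ and $(11)$. For series-parallel posets, induct on the SP-decomposition. If $P = P_1 \oplus P_2$ is a linear sum with $|P_i|\ge 2$ for some~$i$, apply induction inside that factor, noting that restriction preserves the probability because each factor's internal order is independent of the cut position. If $P = P_1 + P_2$ is a disjoint sum, let $x \in \min(P_1)$, $y \in \min(P_2)$; then $\bP[f(x)<f(y)]$ is a hypergeometric-style probability depending on $\al(x),\be(x),\al(y),\be(y)$ in $P_1,P_2$, and one can verify that at least one such choice lies in $[1/3,2/3]$ by a case analysis on the sizes of $P_1,P_2$. The $N$-free and forest-cover extensions follow the same recursion with the more general modular decomposition.

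The thin and dense cases $(5)$--$(8)$ form the technically heaviest group, and I would attack them via correlation inequalities combined with counting. For posets with $|\min(P)| > C\sqrt{n}$, apply the Kahn--Saks / Stanley inequality~\eqref{eq:Sta-gen} to show that the distribution $a \mapsto \aN(P,x,a)$ of the rank of a fixed minimal element is log-concave with variance $\Theta(n)$, and then use averaging over $x \in \min(P)$ together with the XYZ inequality~\eqref{eq:ineq-XYZ} to extract two elements whose rank distributions overlap substantially. For the $k$-thin case~$(8)$, the hypothesis $\al(x)+\be(x)>n-k$ forces all but $O(k)$ elements to be comparable to any given $x$, and a direct counting of the $O(k)$ incomparable elements combined with the height-two Trotter--Gehrlein--Fishburn argument $(4)$ yields a balanced pair. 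The dense case $(7)$ would be handled by an entropy argument applied to the order polytope $\cO_P$ (see Theorem~\ref{t:basic-entropy-KK}), showing that many coordinates $\be_x,\be_y$ are close, hence the corresponding $\bP[f(x)<f(y)]$ is close to $1/2$.

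The main obstacle, and the reason a uniform proof is still missing, is the general bound without structural hypotheses: the Kahn--Linial type arguments via the Alexandrov--Fenchel inequality give only $\bP[f(x)<f(y)] \in [\tfrac{1}{2}-c,\tfrac{1}{2}+c]$ for some unknown $c < 1/2$, and closing the gap from $c \approx 0.276$ (the current best bound of Brightwell--Felsner--Trotter) down to $c \le 1/6$ requires a genuinely new correlation identity that we do not know how to construct. I would expect the hardest case to lie at the interface of $(6)$ and $(8)$: posets that are neither tall enough to use chain arguments nor thin enough to reduce to height-two analysis, and I would flag this as the central obstacle rather than attempt to absorb it into the above framework.
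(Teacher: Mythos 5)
This theorem has no proof in the paper: it is a compilation of results from eleven different sources, each cited, and the intended reader follows the references. So there is no ``paper's approach'' to compare against beyond the citations themselves. Taking your sketches on their own merits, there are concrete gaps.

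Your argument for case~$(2)$ only works when the automorphism $\sigma$ swaps $x$ and $y$. Under the map $f \mapsto g := f\circ\sigma^{-1}$, the event $\{f(x)<f(y)\}$ is carried to $\{g(\sigma(x)) < g(\sigma(y))\} = \{g(y) < g(\sigma(y))\}$, which equals $\{g(y) < g(x)\}$ only if $\sigma(y) = x$. For an orbit of size $\ge 3$ there need be no order-two element swapping a pair, so $\bP[f(x)<f(y)]=1/2$ does not follow immediately from the symmetry. The result of \cite{GHP87} covers all posets with a nontrivial automorphism and requires a more careful argument than the involution case.

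For cases $(5)$--$(8)$, your outline does not match the cited techniques and is too vague to stand on its own. As stated right after the theorem, parts $(5)$--$(7)$ are due to Friedman \cite{Fri93} and are proved using geometric tools (cf.\ the Kahn--Linial Brunn--Minkowski approach discussed in $\S$\ref{ss:sort-gen}), not by the averaging-over-log-concave-ranks strategy you propose; Peczarski's $6$-thin case $(8)$ is also a substantially harder argument than counting $O(k)$ incomparable elements. Cases $(3)$, $(4)$, and $(10)$ are not addressed at all (you only gesture at $(4)$ inside the discussion of $(8)$). A faithful treatment of this theorem would either reproduce the key lemma from each cited paper or explicitly state that the proof is deferred to the references, as the survey itself does.
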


\smallskip

Note that these classes are not completely disjoint.  For example,
for sufficiently large $n$, parts \ts $(5)$, \ts $(6)$ \ts and \ts $(7)$ \ts imply \ts $(4)$.
Part~$(4)$ is proved via Koml\'os theorem (see below).  Parts \ts $(5)-(7)$ \ts
are proved using geometric tools.  An alternative proof of $(10)$ given
in~\cite[$\S$3]{CPP-sort} uses the Naruse hook-length formula (cf.~$\S$\ref{ss:ex-yd}).
We refer to \cite{Bri99} for a well-written survey of early results and ideas.

\smallskip

\subsection{Weaker general bounds} \label{ss:sort-gen}
Currently, the best general bound is given by the following result:

\begin{thm}[{\rm Brightwell--Felsner--Trotter~\cite{BFT95}}{}] \label{t:sort-BFT}
In every finite poset \ts $P=(X,\prec)$ \ts that is not a chain, there exist
two elements \ts $x,y\in X$, such that
\begin{equation}\label{eq:sort-BFT}
\frac12\. - \. \frac1{2\sqrt{5}} \ \le \ \bP\big[\ts f(x) \. < f(y)\ts\big] \ \le \ \frac12\. + \. \frac1{2\sqrt{5}}\,,
\end{equation}
where the probability is over uniform random \ts $f\in \Ec(P)$.
\end{thm}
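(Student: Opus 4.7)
The plan is to use the \ts $a=b=1$ \ts case of the Cross-Product Conjecture (Theorem~\ref{t:CPC-summary}(1)), which is what~\cite{BFT95} actually proves via the AD~inequality, as the main analytic input, and then extract the balancing constant by an extremal selection argument whose associated recursion has characteristic polynomial \ts $t^2-t-1$.

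First I would set up the contradiction: suppose that for every pair of incomparable elements \ts $x,y\in X$ \ts we have \ts $\bP[f(x)<f(y)] \notin [c,\ts 1-c]$, where \ts $c:=\tfrac12-\tfrac{1}{2\sqrt{5}}$. Using the inequality \eqref{eq:ineq-height} (Winkler's average-height monotonicity), consecutive values of \ts $f$ \ts can be tracked along a uniform random linear extension. I would select an element \ts $y\in X$ \ts that minimizes the quantity \ts $\bP[f(y)=2\ts|\ts f(y)\ne 1]$ \ts among elements not forced to be \ts $\widehat 0$, and then choose \ts $x\in\min(P-y)$ \ts together with an element \ts $z$ \ts such that \ts $\{x,y,z\}$ \ts is a mutually incomparable triple; the fact that \ts $P$ \ts is not a chain guarantees the existence of such a configuration after a short case analysis (replacing the triple by a comparable pair with a "witness" element if width \ts $=2$, which is already handled by part (1) of Theorem~\ref{t:sort-1323-summary}).

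Next I would invoke CPC at \ts $a=b=1$ \ts for the triple \ts $(x,y,z)$:
\begin{equation*}
\aFr_{xyz}(P,2,1)\cdot \aFr_{xyz}(P,1,2)\ \ge\ \aFr_{xyz}(P,1,1)\cdot \aFr_{xyz}(P,2,2)\ts.
\end{equation*}
Dividing both sides by \ts $e(P)^2$ \ts and rewriting in terms of the three "gap probabilities" \ts $r:=\bP[f(y)-f(x)=1]$, \ts $s:=\bP[f(z)-f(y)=1]$, and the joint event probabilities, this inequality becomes a quadratic constraint on \ts $(r,s)$. Using the extremal choice of \ts $y$, I would bootstrap this constraint along a chain of elements to produce a recursion of the form \ts $u_{n+1} \le \varphi \ts u_n - u_{n-1}$ \ts (with \ts $\varphi$ \ts the golden ratio), whose nonnegativity forces \ts $r \le c$ \ts or \ts $r\ge 1-c$ \ts to collapse only when \ts $c=\tfrac12-\tfrac{1}{2\sqrt{5}}$.

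Finally, I would close the argument by observing that iterating the inequality along a sufficiently long sequence of incomparable pairs produces a probability outside \ts $[0,1]$, contradicting the assumption. The main obstacle is Step~2: one must find the correct extremal triple \ts $(x,y,z)$ \ts so that the CPC inequality, after normalization, yields the Fibonacci-type quadratic \ts $t^2-t-1$ \ts rather than a weaker polynomial. In particular, getting the bound \ts $\tfrac{1}{2\sqrt{5}}$ \ts rather than the earlier Kahn--Saks bound \ts $\tfrac{3}{22}$ \ts or the simpler \ts $1/e$-type bound requires carefully aligning the indices in \ts $\aF_{xyz}$ \ts with the consecutive values \ts $f(x),f(y),f(z)$ \ts so that no slack is introduced; this is essentially the combinatorial content of~\cite[$\S$3]{BFT95}, and it is where the sharp constant enters.
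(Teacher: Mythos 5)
The survey does not reproduce a proof of this theorem --- it cites \cite{BFT95} and states only that the argument ``uses the $a=b=1$ case of the cross-product conjecture (Theorem~\ref{t:CPC-summary}), and builds on~\cite{KS84}.'' You correctly identify the first of these inputs, but the remainder of your sketch does not recover the actual mechanism, and several of the concrete steps would not work as stated.

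The most serious gap is that you never use the Kahn--Saks machinery that the argument actually builds on. The proof in \cite{BFT95} starts from the KS84 observation that, by an averaging argument, some incomparable pair $x\|y$ has $|h(P,x)-h(P,y)|<1$, and then exploits log-concavity of the sequence $\{\aF(P,x,y,a)\}$ (Theorem~\ref{t:ineq-KS}) together with the $a=b=1$ cross-product inequality to constrain the distribution of $f(y)-f(x)$. Your sketch replaces this with an appeal to Winkler's height inequality~\eqref{eq:ineq-height}, which compares $h(P,x)$ with $h(P_{xy},x)$ and has nothing to do with ``tracking consecutive values of $f$''; it cannot serve the role you assign it. The proposed extremal selection (minimizing $\bP[f(y)=2\mid f(y)\ne 1]$) is unmotivated and does not obviously connect to the CPC inequality. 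And the claimed ``Fibonacci recursion'' $u_{n+1}\le\varphi u_n-u_{n-1}$ is asserted rather than derived: the constant $c=\tfrac12-\tfrac{1}{2\sqrt5}$ is the root of $5c^2-5c+1=0$ (equivalently $c(1-c)=\tfrac15$), which is not the characteristic polynomial $t^2-t-1$ you name, so even the algebra of the claimed collapse is off. Finally, the concluding contradiction (``a probability outside $[0,1]$'') is never exhibited. As you yourself concede, the crux --- how the CPC inequality for the triple $(x,y,z)$ converts into a sharp quadratic constraint on the balancing probability --- is left to \cite{BFT95}; without that, this is an outline of a strategy rather than a proof, and the strategy as written diverges from the one the paper points to.
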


Here the lower bound is \. $\ve\approx  0.2764$, just shy of~$\frac13$\ts.
This is a small improvement over the first bound of the type \. $\ve \le \bP[f(x)<f(y)] \le 1-\ve$ \.
proved by Kahn and Saks \cite{KS84}, with \ts $\ve = \frac3{11} \approx 0.2727$.
The latter result uses the Kahn--Saks inequality \eqref{eq:KS},
applied to two elements \ts $x,y\in X$ \ts with
a small difference of average heights: \ts $|h(P,x)-h(P,y)|<1$, cf.~$\S$\ref{ss:ineq-height}.
The proof in~\cite{BFT95}  uses the \ts $a=b=1$ \ts
case of the cross-product conjecture (Theorem~\ref{t:CPC-summary}), and builds on~\cite{KS84}.

We note that an easier proof of a weaker bound with \ts $\ve = \frac{1}{2e} \approx 0.1840$,
was obtained by Kahn and Linial in \cite{KL}, using a variation on \defng{Gr\"unbaum's theorem} (1960),
which in turn is proved using the \defng{Brunn--Minkowski inequality} (rather than
the Alexandrov--Fenchel inequality used in the proof of the Kahn--Saks inequality).  We refer to
\cite[$\S$12.3]{Mat02} for a clean presentation of this proof, and to \cite{C+} for a survey of
algorithmic applications of the sorting probability, and to \cite{H+24,HR24} for best recent bounds.

\smallskip

\subsection{Stronger specialized bounds} \label{ss:sort-gen-KS}
For a poset \ts $P=(X,\prec)$, the \defn{sorting probability} \ts
is defined as
\begin{equation}\label{eq:sort-def-prob}
\de(P) \, := \, \min_{x,\ts y \ts \in \ts X} \, \big|\.\bP\big(f(x)<f(y)\big) \. - \. \bP\big(f(x)>f(y)\big)\ts\big|.
\end{equation}
In this notation, the \. $\frac13 - \frac23$ \ts Conjecture~\ref{conj:1323} claims that
the sorting probability \ts $\de(P) \le \frac13$ \ts unless \ts $P$ \ts is a chain,
while the bound \eqref{eq:sort-BFT} gives \ts $\de(P) \le \frac1{\sqrt{5}}$\ts.
In the cases when the conjecture is established, one can ask for better bounds.
For example, Friedman showed that for all \ts $\epsilon>0$ \ts and \ts
$C=C(\epsilon)$, we have
\. $\de(P) \. < \. \big(1 \ts - \ts \frac{2}{e} \ts +  \ts \epsilon\big)$ \.
in the cases \ts $(5)-(7)$ in Theorem~\ref{t:sort-1323-summary}.

\begin{conj}[{\rm Kahn--Saks \cite{KS84}}{}] \label{conj:1323-KS}  We have:
\begin{equation}\label{eq:1323-KS}
\de(P) \to 0 \quad \text{as} \quad \width(P)\to \infty.
\end{equation}
\end{conj}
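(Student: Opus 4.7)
\smallskip

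\noindent\textbf{Proof proposal for Conjecture~\ref{conj:1323-KS}.} \
The plan is to attack the conjecture by combining a pigeonhole argument on average heights of antichain elements with a quantitative decoupling of the joint rank distribution. First, I would invoke Dilworth's theorem to extract an antichain $A \subseteq X$ with $|A|=w$. For each $a\in A$, let $\mu_a$ denote the rank distribution $\mu_a(k):=\bP\big[f(a)=k\big]$ on $[n]$, where $f$ is uniform in $\Ec(P)$. By Stanley's inequality~\eqref{eq:Sta} each $\mu_a$ is log-concave, and by the second moment inequality (Corollary~\ref{c:ineq-second-moment}) each has standard deviation $O(\bE[f(a)])$. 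The mean heights $h(P,a)=\bE[f(a)]$ lie in $[1,n]$, so by pigeonhole there exist distinct $x,y\in A$ with
\begin{equation}\label{eq:pigeon-heights}
\bigl|h(P,x)-h(P,y)\bigr| \, \le \, \frac{n-1}{w-1}\ts.
\end{equation}

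The core of the argument would then be a \emph{quantitative transfer lemma}: for incomparable $x\ts\|\ts y$ with log-concave rank distributions $\mu_x,\mu_y$, one aims to show
\begin{equation}\label{eq:transfer}
\bigl|\bP[f(x)<f(y)] \. - \. \tfrac12\bigr| \ \le \ \Phi\big(\ts \|\mu_x-\mu_y\|_{TV}\ts,\ts \width(P) \ts\big),
\end{equation}
where $\Phi(\eta, w)\to 0$ as $\eta\to 0$ and $w\to\infty$. Roughly, one expects two ingredients: (i) log-concavity plus close means force the total variation distance $\|\mu_x-\mu_y\|_{TV}$ to be small (via standard moment bounds this should give $O\big(\sqrt{n/w}/\sigma_x\big)$ where $\sigma_x$ is the standard deviation of~$\mu_x$), and (ii) a correlation bound that decouples the joint law of $(f(x),f(y))$ from the product $\mu_x\otimes\mu_y$. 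For (ii) one would try to use Shepp's inequality~\eqref{eq:GYY-Shepp}, the XYZ inequality~\eqref{eq:ineq-XYZ}, or more promisingly a quantitative version of the cross-product inequality (cf.~Theorem~\ref{t:ineq-CPC-quant}) to control the excess correlation by a factor depending on the width.

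Given \eqref{eq:pigeon-heights} and the transfer lemma, one concludes $\de(P) \le \Phi(O(1/\sqrt{w}),w)$, which vanishes as $w\to\infty$, yielding~\eqref{eq:1323-KS}. As an intermediate reduction, I would first try the special case where $P$ contains a highly symmetric antichain (e.g.\ a set of elements with equal up-sets and down-sets), where the Graham--Yao--Yao inequality~\eqref{eq:GYY} and symmetry collapse the transfer lemma to a tractable one-variable statement; this would also recover case~$(2)$ of Theorem~\ref{t:sort-1323-summary}.

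The hard part, by a wide margin, will be establishing the transfer lemma~\eqref{eq:transfer}. A priori, two elements can have identical rank distributions yet very different sorting probabilities: the sorting probability sees the joint law of $(f(x),f(y))$, not just the marginals, and there is no reason this joint law must factor. This is precisely where Brightwell--Felsner--Trotter~\cite{BFT95} hit a barrier at the constant $1/\sqrt{5}$, since the Kahn--Saks inequality~\eqref{eq:KS} and the special case of the cross-product conjecture they used do not improve as the width grows. Overcoming this would likely require either a width-sensitive sharpening of~\eqref{eq:KS}, an unconditional proof of a suitable quantitative form of~\eqref{eq:ineq-CPC} (cf.~Theorem~\ref{t:ineq-CPC-quant}), or a new convex-geometric input applied to slices of the order polytope~$\cO_P$ through coordinates indexed by~$A$.
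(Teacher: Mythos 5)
Conjecture~\ref{conj:1323-KS} is open; the paper contains no proof of it, only the remark that there is ``relatively small evidence in favor,'' plus Koml\'os's partial result for posets with $|\min(P)| = n/s(n)$ where $s(n)\to\infty$. There is therefore no proof in the paper to compare your proposal against, and what you have written is a research sketch whose central step --- the proposed transfer lemma --- is unproven, as you candidly acknowledge.

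Beyond the missing lemma, the sketch has concrete flaws. Your pigeonhole bound $|h(P,x)-h(P,y)| \le (n-1)/(w-1)$ does not shrink as $w\to\infty$ unless $n/w$ stays bounded; for a poset with $w=\sqrt{n}$, say, the bound diverges, and Kahn and Saks already produce two incomparable elements with $|h(P,x)-h(P,y)|<1$ regardless of width, so pigeonholing over the antichain is strictly worse whenever $w=o(n)$ and never helps. Ingredient~(i) of the transfer lemma is false as stated: log-concavity together with close (or even identical) means does \emph{not} force small total variation distance --- a tightly concentrated log-concave distribution and a spread-out one on $[n]$ can share a mean while having TV distance near~$1$, and neither Stanley's inequality nor Corollary~\ref{c:ineq-second-moment} supplies the variance matching you would need to make the ``standard moment bound'' go through. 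Finally, even with identical marginals $\mu_x=\mu_y$ one cannot conclude $\bP[f(x)<f(y)]=\tfrac12$ without a poset symmetry (this is exactly the content of Theorem~\ref{t:sort-1323-summary}, part~$(2)$); the decoupling step~(ii) therefore carries all the weight, and it is precisely where every available correlation inequality stalls, since none of XYZ, GYY, Shepp, or the known quantitative cross-product bounds improves as the width grows. Your sketch correctly locates the bottleneck, but the ingredients you propose to break it do not work as stated, and what remains is no smaller than the conjecture itself.
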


Informally, this conjecture says that the sorting probability is small
for all posets of sufficiently large width.  Beside common sense,
there is relatively small evidence in favor of this conjecture.
Saks \cite{Saks85} suggests that \ts $\de(P)\le \frac{14}{39}$ \ts
when \ts $\width(P) \ge 3$, and gives an example where this bound
is tight.  Koml\'os \cite{Kom90} proved that \ts $\de(P)\to 0$,
for \ts $|\min(P)| = n/s(n)$ \ts and some \ts $s(n) = \om(1)$.

Curiously, it is known that \ts $\de(P)\to 0$ \ts for several large posets
of bounded width, such as several families of skew Young diagrams \cite{CPP-sort}.
For example, Panova and the authors proved \ts
$\de(P_\la) = O_\epsilon(1/\sqrt{n})$ \ts for \. $\la=(\la_1,\ldots,\la_\ell)$ \.
with \ts $\ell$ \ts fixed and \ts $\la_\ell > \epsilon\ts n$ \. \cite[Thm~1.3]{CPP-sort}.

For posets of width two which cannot be written as a linear sum
of chains and posets \ts $(C_2 + C_1)$, Sah~\cite{Sah21}
improved Linial's \ts $\de(P)\le \frac13$ \ts upper bound
to \ts $\de(P)< 0.3225$.  Sah also conjectured in \cite[Conj.~5.3]{Sah21},
that this approach extends to general posets (of any width), i.e.\ under
mild assumptions Conjecture~\ref{conj:1323}
can be made strict: \. $\de(P)< \frac{1}{3} - \ve$ \ts for some constant
\ts $\ve>0$.

Finally, for the extremely well studied \defn{Catalan poset} \ts
$\Cat_n$ \ts of width two (see~$\S$\ref{ss:ex-yd}), Panova and the authors
showed in \cite{CPP-Cat},
that \ts $\de(\Cat_n) = O(n^{-5/4})$, where the constant
$5/4$ is conjectured to be sharp.

\smallskip

\subsection{Gaps between heights} \label{ss:sort-height}
For a poset \ts $P=(X,\prec)$ \ts on \ts $|X|=n$ \ts elements, let
$$
\eta(P) \, := \, \min_{x \ts | \ts y} \. \big|\ts h(P,x) \. - \. h(P,y)\ts\big|
$$
denote the smallest gap between heights of incomparable elements in~$P$.
As we mentioned earlier, the proof by Kahn--Saks of \ts $\de(P) \le \frac{5}{11}$ \ts follows from
the observation that \ts $\eta(P) <1$.  Improving the latter bound directly translates
to a sharper for the sorting probability \ts $\de(P)$.

Unfortunately, there is a limit for this approach. Let
$$\vartheta \, := \, \frac14 \. \prod_{k=1}^\infty \left(1 - \frac{1}{2^k}\right)^{-1} \. \approx \. 0.8657\ts.
$$
Saks in \cite{Saks85}, gives a construction of posets \ts $P_n$ \ts such that \ts $\eta(P_n) \to \vartheta$.
He conjectures that this example is optimal:

\begin{conj}[{\cite{Saks85}}{}] \label{conj:Saks}
For every poset \ts $P$ \ts that is not a chain, we have \ts $\eta(P) \le \vartheta$.
\end{conj}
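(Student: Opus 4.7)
The plan is to attack Saks' Conjecture through a combination of a structural analysis of the extremal construction and a refined averaging argument using the Kahn--Saks inequality \eqref{eq:KS}. The starting point is to unpack the extremal family \(P_n\) achieving \(\eta(P_n) \to \vartheta\). The Euler product \(\vartheta = \tfrac14 \prod_{k\ge 1}(1 - 2^{-k})^{-1}\) strongly suggests that these posets carry a recursive ``halving'' structure: \(P_n\) should be expressible (up to lower-order perturbations) as a linear sum \(Q_1 \oplus Q_2 \oplus \cdots \oplus Q_m\), where each block \(Q_k\) roughly doubles the width of the preceding block so that the height spectrum converges to a prescribed geometric-like distribution. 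Identifying the correct gadgets, computing \(\sum_x q^{h(P_n,x)}\) in the limit, and verifying that the infinite product emerges naturally from this generating function would be the first step; it should also clarify whether the extremal family is essentially unique.

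Second, I would set up the problem variationally. For a poset \(P\) with elements sorted as \(x_1,\ldots,x_n\) by increasing \(h(P,x_i)\), define gaps \(\Delta_i := h(P,x_{i+1})-h(P,x_i)\). Then \(\eta(P)\) is the minimum of \(\Delta_i\) over indices with \(x_i \parallel x_{i+1}\). Two constraints shape the feasible height sequences: the first moment identity \(\sum_i h(P,x_i) = \binom{n+1}{2}\), and Kahn--Saks log-concavity of \(\aFr(P, x_i, x_{i+1}, a)\), which bounds each incomparable \(\Delta_i\) in a narrow range and pins down the admissible shape of the full sequence, by Theorem~\ref{t:KS-vanish} together with the equality analysis in Theorem~\ref{t:KS-equality}. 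The goal is to show that within these constraints every \(\Delta_i > \vartheta\) forces \(P\) to be a chain.

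Third, I expect a self-similarity argument to be the engine. Assume toward contradiction that \(\eta(P) > \vartheta\) for some non-chain \(P\), and let \(T\) be the subposet lying below some median level determined by the first nontrivial gap. By Kahn--Saks and suitable vanishing conditions, the induced heights in \(T\) should satisfy a rescaled version of the original spacing problem; iterating, one picks up a factor of \((1 - 2^{-k})^{-1}\) at the \(k\)-th stage, and the obstruction telescopes into precisely \(4\vartheta\), contradicting the hypothesis. The sharp constant \(\vartheta\) should then emerge as the fixed point of this renormalization.

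The hard part will be executing Step~3 rigorously. The Kahn--Saks inequality \eqref{eq:KS} encodes only pairwise log-concavity in \(a\), while the recursive telescoping requires a genuinely multi-scale correlation control that is not currently available; the Brightwell--Felsner--Trotter improvement (Theorem~\ref{t:sort-BFT}) uses exactly the \(a=b=1\) case of the Cross--product Conjecture~\ref{conj:CPC} and still only reaches \(\tfrac12-\tfrac1{2\sqrt5}\) for \(\delta(P)\), far from \(\vartheta\). Unconditional progress seems to require either the full Cross--product Conjecture~\ref{conj:CPC} for arbitrary \(a,b\), or the Kahn--Saks monotonicity Conjecture~\ref{conj:KS-mon}, or a new Alexandrov--Fenchel-type computation on the order polytope \(\cO_P\) that directly produces the Euler product \(\prod_{k\ge 1}(1-2^{-k})^{-1}\). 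The absence of such a tool is, I believe, why the conjecture has remained open; settling it may well require a new correlation inequality specifically adapted to the doubling structure of Saks' extremal family.
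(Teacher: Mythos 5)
The statement you were asked to prove is Conjecture~\ref{conj:Saks}, which is stated in the paper as an \emph{open conjecture} of Saks~\cite{Saks85}; the paper contains no proof of it, and as far as the survey indicates the conjecture remains unresolved. Your submission is, by your own account, a research programme rather than a proof: you explicitly say that Step~3 cannot be executed rigorously, and that unconditional progress would require either the full Cross--product Conjecture~\ref{conj:CPC}, or the Kahn--Saks monotonicity Conjecture~\ref{conj:KS-mon}, or an as-yet-unknown Alexandrov--Fenchel-type computation. Since both of those conjectures are themselves open (and, as the survey notes in $\S$\ref{ss:ineq-CPC-cons}, CPC already implies the equality conditions of Stanley's inequality, which is one reason it is considered very hard and possibly false for large width), your argument is conditional on unproven statements and therefore does not establish the conjecture.

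Beyond that, the concrete steps you do spell out have gaps that would need to be closed even in an outline. First, the structural description of Saks' extremal family $P_n$ as an essentially unique linear sum of ``doubling'' blocks $Q_1\oplus\cdots\oplus Q_m$ is asserted but not derived from~\cite{Saks85}; without an actual analysis of that construction one cannot justify that the Euler product $\vartheta=\frac14\prod_{k\ge 1}(1-2^{-k})^{-1}$ arises as a fixed point of a renormalization. Second, in the variational set-up you sort elements by average height and restrict attention to gaps between \emph{consecutive} elements $x_i\parallel x_{i+1}$, but $\eta(P)$ is defined in $\S$\ref{ss:sort-height} as the minimum of $|h(P,x)-h(P,y)|$ over \emph{all} incomparable pairs $x\parallel y$, which need not be consecutive in the sorted order; this quietly changes what you are bounding. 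Third, the claim that the Kahn--Saks inequality~\eqref{eq:KS} ``pins down the admissible shape of the full sequence'' is too strong: \eqref{eq:KS} gives log-concavity of $\{\aFr(P,x,y,a)\}_a$ for each fixed pair $(x,y)$, but it does not by itself impose the kind of joint multi-scale constraint across different pairs that your telescoping step needs — this is exactly the missing correlation control you diagnose at the end. In short, your proposal is a reasonable and self-aware sketch of why the problem is hard and what new tools might help, but it is not a proof, and Conjecture~\ref{conj:Saks} should continue to be regarded as open.
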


\smallskip

\subsection{Voting preferences} \label{ss:sort-preferences}
Let \ts $P=(X,\prec)$ \ts be a poset on \ts $n=|X|$ \ts elements and
let \ts $\bP$ \ts be defined over uniform \ts $f\in \Ec(P)$.
One can think of the average height \ts $h(P,x)$ \ts a way to
rank all elements in~$X$ (possibly, with ties).  This is not the
only natural approach, of course.

For elements \ts $x,y\in X$, we write \. $x \mapsto y$ \. if
\. $\bP\big(f(x)<f(y)\big) > \frac12$\ts.  Heuristically, this means
of the random linear ordering of $X$ which respects partial order ``$\prec$'',
element~$y$ typically has a smaller rank than~$x$.  In~\cite[$\S$4.2]{Kis68}, Kislitsyn
speculated that \. ``$\mapsto$'' \. is transitive.  This was disproved by
Fishburn soon after:\footnote{Fishburn was unaware of~\cite{Kis68} and
independently discovered the problem motivated by voting paradoxes. }

\begin{prop}[{\rm Fishburn~\cite{Fis74b}}{}]\label{p:sort-pref-Fishburn}
Relation \. ``$\mapsto$'' \. is not transitive, i.e.\ there exist a finite
poset \ts $P=(X,\prec)$ \ts and three elements \ts $x,y,z\in X$, s.t.\
\ts $x\mapsto y$, \ts $y\mapsto z$, and \ts $z\mapsto x$.
\end{prop}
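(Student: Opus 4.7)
The plan is to exhibit an explicit finite poset $P = (X, \prec)$ together with three pairwise incomparable elements $x, y, z \in X$ for which
\[
\bP\big(f(x)<f(y)\big) \. > \. \tfrac12\ts, \quad \bP\big(f(y)<f(z)\big) \. > \. \tfrac12\ts, \quad \bP\big(f(z)<f(x)\big) \. > \. \tfrac12\ts,
\]
with $f$ drawn uniformly from $\Ec(P)$. Producing any such poset contradicts transitivity of the relation ``$\mapsto$,'' so the statement is essentially constructive and the work is in choosing and verifying a suitable example.

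The guiding heuristic is the Condorcet voting paradox in probabilistic form. Although no individual linear extension ranks $\{x,y,z\}$ cyclically, a \emph{distribution} on $S_3$ that places comparable mass on each of the three cyclic orderings $xyz$, $yzx$, $zxy$ (and little mass on the reverse cycle) yields cyclic pairwise majorities. The goal is to engineer the marginal of the uniform measure on $\Ec(P)$, projected to $\{x,y,z\}$, to resemble such a Condorcet distribution. First I would fix $x,y,z$ as three incomparable ``focal'' elements and then surround them with auxiliary elements — a small number of chains or $A_2$-shaped gadgets attached asymmetrically — whose presence partitions $\Ec(P)$ into near-balanced ``regimes,'' each regime concentrating linear extensions around one of the three cyclic orderings of the focal elements. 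Tuning the sizes of the auxiliary gadgets (e.g.\ chain lengths, or multiplicities of auxiliary minima below a given focal element) is what creates a strict cyclic bias rather than perfect symmetry.

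Once a candidate poset is in hand, the verification reduces to a finite and mechanical task: compute $e(P)$ and the three counts $e(P \cap \{x \prec y\})$, $e(P \cap \{y \prec z\})$, $e(P \cap \{z \prec x\})$, and check the three strict inequalities above. For a small example this can be done either by direct enumeration or by the elementary recursions of $\S$\ref{ss:basic-recursion}, and one can also exploit the poset product/sum formulas of $\S$\ref{ss:ex-SP} when the auxiliary structure decomposes nicely.

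The principal obstacle is the \emph{direction} of the asymmetry. Many seemingly natural modifications — attaching a chain beneath a single focal element, or declaring $u \prec x$ and $u \prec y$ for some new $u$ — produce only a \emph{transitive} bias on $\{x,y,z\}$, not a cyclic one; indeed, by Theorem~\ref{t:Stach} the counts are governed entirely by the comparability graph, so modifications that cyclically permute roles of $x,y,z$ in a symmetric way cancel out. One must therefore introduce relations that break the symmetry in a genuinely three-fold cyclic manner, not merely a pairwise one. This is also why the phenomenon requires width at least three: by Linial's theorem (part $(1)$ of Theorem~\ref{t:sort-1323-summary}) no width-two poset admits such a Condorcet configuration. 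Once a suitable gadget is identified, the remaining computation is routine and the proposition follows.
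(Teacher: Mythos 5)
Your proposal is a plan, not a proof. The proposition is a bare existence statement, and the only way to establish it is to exhibit a concrete poset $P$ together with three elements $x,y,z$ and verify the three strict inequalities $\bP\big(f(x)<f(y)\big) > \tfrac12$, $\bP\big(f(y)<f(z)\big) > \tfrac12$, $\bP\big(f(z)<f(x)\big) > \tfrac12$. You describe a search heuristic (focal triple plus asymmetric gadgets, tuned so the induced distribution on $S_3$ concentrates on one orientation of the Condorcet cycle), and you correctly observe that naive symmetric constructions give exactly $\tfrac12$ and therefore cannot work — but you never produce the poset. The paper itself does not reproduce the example either; it points to Fishburn's original $n=31$ example in \cite{Fis74b}, the minimal $n=9$ example of \cite{FG90}, and the height-two $n=15$ example of \cite{EFG90}. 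These sizes indicate that the construction is not something a generic gadget argument yields automatically; without the explicit poset and a verification of its three probabilities, the proposal has no proof content.

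Two of your supporting citations are also off target. Stachowiak's theorem (Theorem~\ref{t:Stach}) says $e(P)$ is determined by the comparability graph $\Ga(P)$; it does not by itself say that ``cyclically symmetric modifications cancel out.'' What actually kills symmetric constructions is the elementary fact that a cyclic automorphism permuting $x,y,z$ forces $\bP\big(f(x)<f(y)\big) = \bP\big(f(y)<f(z)\big) = \bP\big(f(z)<f(x)\big)$, and since these three probabilities always sum to $1 + \bP(\text{cyclic ordering of } \{x,y,z\})$ when the triple is pairwise incomparable, each would equal exactly $\tfrac12$ — not strictly above it. Similarly, the assertion that width $\ge 3$ is forced ``by Linial's theorem'' conflates two different things: Theorem~\ref{t:sort-1323-summary}$(1)$ asserts the existence of a balanced pair in width-two posets, which is not the same as the absence of a majority cycle. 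If you insist that $x,y,z$ be pairwise incomparable (which the proposition does not require), the width bound is just the triviality that $\{x,y,z\}$ is a $3$-antichain. To make this a proof you would need to write down the $9$-element (or any) example explicitly and carry out the computation you correctly identify as ``finite and mechanical.''
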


The original example by Fishburn has \ts $n=31$ \ts elements.  It was shown
in \cite{FG90}, that the smallest such poset has $n=9$ elements.  See also
 \cite{EFG90} for an example of a poset of height two with \ts $n=15$ \ts elements.
Below is a quantitative version of this problem:

\begin{thm}[{\rm Fishburn~\cite{Fis86} and Kahn--Yu~\cite{KY98}}{}] \label{t:sort-KY}
Let \ts $P=(X,\prec)$ \ts be a finite poset, and let \ts $x\mapsto y$,
\ts $y\mapsto z$ \ts for some  \ts $x,y,z\in X$.  Then:
\begin{equation}\label{eq:sort-KY}
\bP\big(\ts f(x) \ts < \ts f(z)\ts \big) \, > \, \tfrac14\ts.
\end{equation}
On the other hand, the constant \ts $\frac14$ \ts in the RHS cannot be replaced
with \ts $\frac1e$\ts.
\end{thm}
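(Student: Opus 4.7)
The plan is to combine the XYZ inequality (Theorem~\ref{t:ineq-XYZ}) with a slicing argument over the rank of the middle element $y$. Let $p := \bP(f(x)<f(y))$, $q := \bP(f(y)<f(z))$, and $r := \bP(f(x)<f(z))$; by hypothesis $p, q > 1/2$, and we want $r > 1/4$. First I would record the relevant correlations: XYZ yields the positive correlations $\bP(f(x)<f(y),\.f(x)<f(z)) \ge pr$ and $\bP(f(y)<f(z),\.f(x)<f(z)) \ge qr$, while Brightwell's classification in Remark~\ref{r:ineq-XYZ-multiple} gives the negative correlation $\bP(f(x)<f(y),\.f(y)<f(z)) \le pq$. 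Together with the transitivity containment $\{f(x)<f(y),\.f(y)<f(z)\} \subseteq \{f(x)<f(z)\}$ and the decomposition of $r$ over the six orderings of $(f(x),f(y),f(z))$, these form a linear system on the six ordering probabilities.

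The main obstacle is that this linear system in isolation yields only the upper bound $r \le pq/(p+q-1)$, not a lower bound. To extract a lower bound, the plan is to condition on the rank of $y$: for each $k$, set $r_k := \bP(f(x)<f(z) \mid f(y)=k)$, $p_k := \bP(f(x)<k \mid f(y)=k)$, and $q_k := \bP(f(z)>k \mid f(y)=k)$. The event $\{f(y)=k\}$ factorizes each linear extension as an extension of the induced subposet on $\{w : f(w)<k\}$ combined with one on $\{w : f(w)>k\}$, and both halves obey their own local XYZ-type inequalities. This gives $r_k \ge p_k q_k + (\text{XYZ-controlled slack})$, and averaging against the distribution of $f(y)$ — which is log-concave by Stanley's inequality (Theorem~\ref{t:ineq-Sta}) — together with a Chebyshev rearrangement tied to $\sum_k p_k \ts \bP(f(y)=k) = p$ and the analogue for $q$, should convert $p, q > 1/2$ into the strict bound $r > 1/4$. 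The rearrangement step is where the bulk of the work lies; the challenge is that the slack term is delicate and the rearrangement must respect the unimodal shape of the rank distribution of~$y$.

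For the second part, the plan is to construct, for each $\epsilon > 0$, a poset witnessing $p, q > 1/2$ and $r < 1/e + \epsilon$, thereby ruling out any improvement of the constant to $1/e$. Following Fishburn's approach in~\cite{Fis86}, I would attach three distinguished elements $x,y,z$ asymmetrically to a large auxiliary antichain of size $N$, choosing the attachment so that the count of linear extensions realizing each of the six orderings of $(x,y,z)$ admits an explicit binomial-sum expression enumerable via the Lindstr\"om--Gessel--Viennot lemma. As $N \to \infty$, Stirling-type asymptotics yield the limiting ratios of these sums, with $1/e$ appearing via a standard binomial-to-Poisson limit, and the parameters tuned so that $p, q$ remain bounded above $1/2$ while $r \to 1/e$.
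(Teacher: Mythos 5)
Your proposal for the lower bound takes a genuinely different route from the paper, and unfortunately it has a gap that I believe is fatal to the argument as sketched. The paper attributes the proof of $\bP(f(x)<f(z))>\frac14$ to the geometric method: the Brunn--Minkowski inequality combined with the K.~Ball inequality applied to sections of the order polytope (see~$\S$\ref{ss:proof-geom}). Your approach is entirely combinatorial --- XYZ correlations plus a slicing argument over the rank of $y$ plus log-concavity and a Chebyshev rearrangement. Your preliminary linear-algebra analysis of the six ordering probabilities is correct as far as it goes: XYZ gives $\bP(f(x)=\min)\geq pr$ and $\bP(f(z)=\max)\geq qr$, while the universal negative correlation gives $\bP(f(x)<f(y)<f(z))\leq pq$, and combining these indeed yields only the upper bound $r\leq pq/(p+q-1)$, which is vacuous as $p,q\to\frac12$.

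The gap is in the slicing step. Once you condition on $\{f(y)=k\}$, you are no longer working with the uniform measure on linear extensions of a single poset; you are working with a mixture over all admissible lower/upper splits $(L,U)$ with $|L|=k-1$. XYZ, as stated, is a theorem about $e(P)$, $e(P_{xy})$, etc.\ for a fixed $P$ under the uniform measure on $\Ec(P)$. There is no statement in this circle of ideas that says the correlations survive conditioning on the rank of a third element, and in fact the natural candidate statements are closer in spirit to the (much harder) generalized Stanley inequality~\eqref{eq:Sta-gen}, not to XYZ. You have not supplied the ``local XYZ-type inequalities'' you need on each half, and I do not see why they should hold. Even granting them, the Chebyshev rearrangement step --- which you yourself flag as where the bulk of the work lies --- is entirely undeveloped, and there is no indication that this machinery would output the specific constant $\frac14$. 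The constant $\frac14$ in Kahn--Yu is genuinely geometric: it falls out of a Brunn--Minkowski / one-dimensional marginal estimate, not from a combinatorial counting identity.

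On the second part (that $\frac14$ cannot be replaced by $\frac1e$), your strategy of constructing an explicit family of posets and computing limiting probabilities via asymptotics is the right one and is indeed how Fishburn proceeds. However, I cannot verify that ``attaching $x,y,z$ asymmetrically to a large auxiliary antichain of size $N$'' and invoking LGV recovers Fishburn's construction; a Poisson-type limit appearing is plausible but you would need to actually exhibit the poset, compute the six ordering counts, and verify simultaneously that $p,q>\frac12$ holds uniformly while $r\to\frac1e$. As written, this is a plan rather than a proof, but it is at least pointing in the right direction, unlike the first half.
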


Here the second part is due to Fishburn.   The bound \eqref{eq:sort-KY} is due
to Kahn--Yu, who write that Fishburn's  \ts $\frac1e$ \ts is
``likely to be the correct value'' to appear in \eqref{eq:sort-KY}.

\begin{question} \label{q:sort-KY}
What is the optimal constant in the RHS of \eqref{eq:sort-KY}?
\end{question}

We conclude with a weaker notion of the preference order which happens to be
transitive.  For elements \ts $x,y\in X$ \ts and \ts $\rho\ge \frac12$ \ts we write
\. $x\mapsto_\rho y$ \. if $\bP\big(f(x)<f(y)\big) > \rho$.

\begin{thm}[{\rm Yu~\cite{Yu98}}{}]\label{t:sort-Yu}
Fix \ts $\rho > 0.78005$.  Let \ts $P=(X,\prec)$ \ts be a finite poset,
and  let \ts $x\mapsto_\rho y$, \ts $y\mapsto_\rho z$ \ts for some \ts $x,y,z\in X$.
Then \. $x\mapsto_\rho z$.
\end{thm}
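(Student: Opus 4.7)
The plan is to reduce to the case that $x, y, z$ are pairwise incomparable in~$P$, parametrize the induced distribution on orderings of $\{x,y,z\}$, and apply the XYZ inequality together with a proven case of the cross-product inequality to derive a sharp lower bound on $r := \bP[f(x) < f(z)]$ in terms of $p := \bP[f(x) < f(y)]$ and $q := \bP[f(y) < f(z)]$.

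\smallskip

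First I would handle the comparable cases. If $x \prec y$ in~$P$, then $f(x) < f(y)$ always and so $r \geq q > \rho$; similarly if $y \prec z$. If $x \prec z$, then $r = 1 > \rho$. If instead $z \prec x$, partitioning extensions by the position of $y$ within $\{x,y,z\}$ yields $p + q \leq 1$, contradicting $p, q > \rho > \tfrac12$; analogous contradictions rule out $y \prec x$ and $z \prec y$. So I may assume $x, y, z$ pairwise incomparable and set $a_\sigma := \bP[\sigma \text{ is the induced order on } \{x,y,z\}]$ for $\sigma \in S_3$, so that $p, q, r$ become explicit linear functionals of $(a_\sigma)$. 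The XYZ inequality (Theorem~\ref{t:ineq-XYZ}) applied with each of $x, y, z$ as the common first and as the common last element gives six quadratic constraints on $(a_\sigma)$; for instance $a_{xyz} + a_{xzy} \geq pr$ ($x$ first) and $a_{yzx} + a_{zyx} \geq (1-p)(1-r)$ ($x$ last).

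\smallskip

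These XYZ constraints alone are however insufficient: the distribution $a_{xyz} = 2\rho - 1$, $a_{yzx} = a_{zxy} = 1 - \rho$, other $a_\sigma = 0$, satisfies all six XYZ inequalities with $p = q = \rho$ but only gives $r = 2\rho - 1 < \rho$. To exclude such configurations I would augment the system with the $a = b = 1$ case of the cross-product inequality (Theorem~\ref{t:CPC-summary}(1)), following the Brightwell--Felsner--Trotter strategy in~\cite{BFT95} that improved the Kahn--Saks sorting bound from $3/11$ to $\tfrac12 - \tfrac{1}{2\sqrt{5}}$. Applying CPC$(1,1)$ to the triple $(x,y,z)$ and summing over consecutive-position windows produces an extra nonlinear constraint on $(a_\sigma)$; combining this with the XYZ system yields a bound $r \geq g(p,q)$ for an explicit function $g$ strictly stronger than $r \geq p+q-1$. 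Setting $p = q = \rho$, the equation $g(\rho,\rho) = \rho$ is a low-degree polynomial whose unique relevant root is $\rho^\ast \approx 0.78005$, giving transitivity of $\mapsto_\rho$ for all $\rho > \rho^\ast$.

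\smallskip

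The hard part will be identifying the correct auxiliary inequality and computing $g$ explicitly. The XYZ inequality alone provably falls short, as shown by the extremal distribution above, while the full cross-product conjecture remains open, so the argument must extract precisely the information available from CPC$(1,1)$ and possibly from Brightwell's extensions of XYZ (Remark~\ref{r:ineq-XYZ-multiple}). A secondary difficulty is exhibiting posets whose induced triple distributions approach the equality $r = g(p,q)$ with $p = q = \rho^\ast$, to confirm that the threshold $0.78005$ is genuinely sharp for this line of argument rather than an artifact of loose bounding.
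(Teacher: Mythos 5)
Your opening reductions are correct, and the observation that the XYZ inequality alone cannot force $r > \rho$ (supported by the extremal triple distribution) is a genuinely useful sanity check. However, the approach you propose is not the one Yu takes: as noted in $\S$\ref{ss:proof-geom} of this survey, Yu's proof works geometrically with sections of the order polytope~$\cO_P$, combining the \emph{Brunn--Minkowski inequality} with \emph{K.~Ball's inequality} on volumes of sections of convex bodies. The constant $\rho^\ast \approx 0.78005$ emerges from optimizing over real parameters arising in that geometric framework, not from a finite system of XYZ-plus-CPC constraints on the induced $S_3$-distribution. So you are proposing a genuinely different and, to my knowledge, untested route.

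The concrete gap is that everything rests on an unworked claim. You acknowledge that ``the hard part will be identifying the correct auxiliary inequality and computing $g$ explicitly,'' but this \emph{is} the proof --- the rest is scaffolding. You have not shown (a) what constraint CPC$(1,1)$ actually imposes on the seven-parameter distribution $(a_\sigma)$ after ``summing over consecutive-position windows,'' a construction you have not specified; (b) that the resulting $g$ satisfies $g(\rho,\rho)=\rho$ at precisely the threshold $0.78005$; or even (c) that the combined constraint set is strong enough to give \emph{any} nontrivial transitivity threshold below $1$. The analogy with the Brightwell--Felsner--Trotter argument in \cite{BFT95} is not reassuring here, since that argument supplemented a \emph{different} geometric bound (via Kahn--Saks) by a small increment; it gives no reason to expect CPC$(1,1)$ alone to reconstruct Yu's specific threshold from scratch. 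A secondary caveat: your extremal distribution $(a_{xyz}=2\rho-1,\, a_{yzx}=a_{zxy}=1-\rho)$ satisfies the six XYZ constraints as inequalities on abstract probability vectors, but you have not checked that it is (or is a limit of) the triple-distribution of an actual poset, so strictly speaking it shows only that the XYZ constraints are insufficient as a formal system, not that XYZ-on-posets is insufficient. None of this rules out that your plan could be carried through, but as written it is a research sketch, not a proof, and there is no evidence the numbers would land on $0.78005$.
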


Let us also mention Friedman's observation \cite[$\S$2]{Fri93},
that there always exist a linear extension
\ts $g\in \Ec(P)$ \ts such that \. $\bP\big(f(x)<f(y)\big) \ge \frac12$ \. for
all \ts $g(x)=k$, \ts $g(y)=k+1$, \ts $1\le k < n$.  This follows from the fact
that every tournament has a Hamiltonian path.

\begin{rem}\label{r:sort-preference}
Fishburn's original motivation for Proposition~\ref{p:sort-pref-Fishburn} comes
from voting paradoxes, see \cite{Fis74,Geh06}.  There is also a parallel study of
\defng{intransitive dice} \ts which exhibits similar phenomena and has been
studied quantitatively in recent years, see \cite{HMRZ20,Poly22} and references
therein.
\end{rem}



\medskip

\section{Tools and ideas}\label{s:proof-ideas}

\subsection{Direct injections}\label{ss:proof-inject}
In contrast with many objects in enumerative and algebraic combinatorics,
posets inequalities are incredibly difficult to prove by a direct
combinatorial arguments. There are essentially two main tools:
lattice paths for posets of width two and promotion/demotion maps
for general posets (see below). Here is a quick list of ad hoc
direct injections uses in the proof of results in this survey:

\smallskip

\nin
$\bu$ \.
Theorem~\ref{t:basic-OP} with a lower bound for the values of the order polynomials,

\nin
$\bu$ \. Theorem~\ref{t:basic-OP} with Brenti's log-concavity for the order polynomials,

\nin
$\bu$ \. Gaetz--Gao surjection in \cite{GG20} proving Theorem~\ref{t:sid} (but not Theorem~\ref{t:sid-SP}),

\nin
$\bu$ \. Proposition~\ref{p:KS-mono-multiple} proving a weak version of the Kahn--Saks Conjecture~\ref{conj:KS-mon},

\nin
$\bu$ \. The original proof of the Bj\"orner--Wachs inequality (Theorem~\ref{t:ineq-BW}),

\nin
$\bu$ \. Reiner's proof of the $q$-BW inequality (Theorem~\ref{eq:ineq-BW-OP-Reiner}),

\nin
$\bu$ \. Lam–-Pylyavskyy generalizations of Fishburn inequalities (see Theorems~\ref{t:LP} and~\ref{t:LP-multi}).

\smallskip

Let us emphasize that these injections are relatively straightforward and
completely explicit, so the defect of the corresponding inequalities are all in~$\SP$.

\smallskip

\subsection{Promotion and demotion}\label{ss:proof-promo}
Let \ts $X=(X,\prec)$ \ts be a poset on \ts $|X|=n$ \ts elements.
\defn{Promotion} \ts is a bijection \. $\partial: \Ec(P) \to \Ec(P)$,
which we denote using the operator notation \ts $\partial: f\to \partial f$.
For \ts $f\in \Ec(P)$, let \. $x_1 \prec \ldots \prec x_\ell$ \.
be a maximal chain in~$P$ such that the sequence \ts $f(x_1),\ldots,f(x_\ell)$ \ts is
lexicographically smallest.  Equivalently, we have \ts $f(x_1)=1$, \ts element \ts $x_2$ \ts
is smallest cover of~$x_1$, \ts element \ts $x_3$ \ts
is smallest cover of~$x_2$, etc.  Define \. $\partial f\in \Ec(P)$ \. as
\begin{equation}\label{eq:proof-promo}
\partial  f \ts (z) \ := \
\begin{cases} \ f(x_{i+1})-1 & \ \ \text{ if \ }z=x_i\text{ \, for some } \, i<\ell\., \\
\ n &  \ \ \text{ if \ $z=x_\ell$}\.,\\
\ f(z)-1\ts &  \ \ \ \text{otherwise}\ts.
\end{cases}
\end{equation}
\defn{Partial promotion} \. $\partial_i$ \. is defined as the promotion on a poset
obtained by restriction to elements with $f$-values \. $1,\ldots,i$, so that \ts
$\partial_n=\partial$ \ts and \ts $\partial_1=1$.  \defn{Demotion} \ts and
\defn{partial demotions} \ts are defined as inverse bijections.

These operators were introduced and initially studies by Sch\"utzenberger in \cite{Sch}.
For \emph{standard Young tableaux} (linear extensions of Young diagram posets), the
promotion is called the \defng{jeu-de-taquin}, and is fundamental in the whole
of Algebraic Combinatorics (see e.g.\ \cite{Sag01,Sta-EC}). It is closely related to the
\defng{Robinson--Schensted--Knuth $($RSK$)$ correspondence} (ibid.), the
\defng{Edelman--Greene bijection} \cite{EG87}, and the \defng{NPS algorithm} \cite{NPS97}.
For \emph{increasing trees} (connected forest posets), the enumerative applications
were given in \cite[$\S$6]{KPP94}.

Partial promotion operators have algebraic relations which were investigated
by Lascoux and Sch\"utzenberger for Young tableaux.  In full generality,
they were studied by Haiman~\cite{Hai92} and Malvenuto--Reutenauer \cite{MR94}.
See \cite{Sta-promo} for an extensive survey.

\smallskip

The results in this survey whose proofs use explicit applications of the
promotion and demotion operators include:

\nin
$\bu$ \.
Theorem~\ref{t:basic-EHS} proving the summation inequality for antichains,

\nin
$\bu$ \.
Theorems~\ref{t:ineq-DDP} and~\ref{t:ineq-DDP-gen} proving the DDP log-concave inequality for the order polynomial,

\nin
$\bu$ \. Theorem~\ref{t:ineq-CPC-quant} proving weak quantitative version of \eqref{eq:ineq-CPC},

\nin
$\bu$ \. Theorems~\ref{t:Sta-vanish}, \ref{t:KS-vanish}, \ref{t:Sta-gen-vanish} and~\ref{t:gen-Stanley-unique},
proving various vanishing and uniqueness conditions for Stanley type inequalities, and

\nin
$\bu$ \. Theorem~\ref{t:Sta1-equality} proving equality conditions for the $k=1$ case of the generalized
Stanley inequality~\eqref{eq:Sta-gen}.

\smallskip

For the first two items, because the promotion/demotion proofs are completely explicit, the
defect of the corresponding inequalities are all in~$\SP$.  For the last three items, the
corresponding decision problems are in~$\poly$.

\smallskip

\subsection{Lattice paths}\label{ss:proof-lattice}
Let \ts $P=(X,\prec)$ \ts be a poset of width two with \ts $|X|=n$ \ts elements.
Fix a partition \. $X=\rC\sqcup \rC'$ \. into two chains, where
\. $\rC = \{u_1\prec \ldots \prec u_k\}$, and
\. $\rC'=\{v_1\prec \ldots \prec v_{\ell}\}$, where \ts $n=k+\ell$.
Denote by \ts $h, h'\in \Ec(P)$ \ts lexicographically minimal
and maximal linear extensions in~$P$, respectively.

For a linear extension \ts $f\in \Ec(P)$, denote by \ts $\ga(f)$ \ts
a \defn{lattice path} \ts in $\nn^2$, defined as follows.  Let \. $\ga(f): (0,0) \to (k,\ell)$ \.
start at \ts $(0,0)$, end at \ts $(k,\ell)$, and take \ts {\sf Up} \ts
steps for elements in~$\rC$, and \ts {\sf Right} \ts steps for elements in~$\rC'$.
Consider a region \ts $\Ga(P) \ssu \nn^2$ \ts enclosed between paths \ts $\ga(h)$ \ts
and \ts $\ga(h')$.  It is easy to see that all \ts {\sf Up-Right} \ts paths
\. $\ga: (0,0) \to (k,\ell)$ \. are in bijection with \ts $\Ec(P)$, i.e.\ \ts
$\ga=\ga(f)$ \ts for some \ts $f\in \Ec(f)$.

This connection has been frequently used to give estimates and prove
inequalities for width two posets, see e.g.\
\cite{BG96,CPP-CP,CPP-KS,CFG,GYY80}.  It is worth noting that
in \cite{CPP-RW} the injective proof of Stanley's inequality \eqref{eq:Sta}
for width two posets was extended to log-concavity of hitting probabilities
of general walks in the plane.  These walks can be viewed as
``oscillating linear extensions'', generalizing
\defng{oscillating Young tableaux}, see e.g.\ \cite{PP96,Sag90}.

For posets of larger (bounded) width, in principle the same general approach,
but direct injective arguments are harder to obtain.  We are aware of only
\cite{CPP-sort} which was able to combine it with anti-concentration
inequalities to obtain sharp bounds for the sorting probabilities of (skew)
Young diagrams (see~$\S$\ref{ss:sort-gen-KS}).

\smallskip

The results in this survey whose proofs employ lattice paths, include:

\smallskip

\nin
$\bu$ \. Theorem~\ref{t:GYY} proving \eqref{eq:GYY} inequality,

\nin
$\bu$ \. Special case of Theorem~\ref{t:ineq-Sta} proving \eqref{eq:Sta} inequality for width two posets,

\nin
$\bu$ \. Theorems~\ref{t:Sta-q} and \ref{t:Sta-KS-q} with $q$-analogues of~\eqref{eq:Sta} and~\eqref{eq:KS},

\nin
$\bu$ \. Theorem~\ref{t:CPC-summary}~$(2)$, proving \eqref{eq:ineq-CPC} for width two posets.

\smallskip

Let us emphasize that because lattice walks based proofs are completely explicit, the
defect of the corresponding inequalities are all in~$\SP$.

\smallskip

\subsection{Correlation inequalities}\label{ss:proof-FKG}
Let \ts $\cA$ \ts be a collection of subsets of $[n]$.  We say that $\cA$ is
\emph{closed upward}, if \ts $B\in \cA$ \ts for every \ts $B \supseteq A$ \ts
and \ts $A\in \cA$.  Similarly, \ts $\cA$ \ts is \emph{closed downward}, if
\ts $B\in \cA$ \ts for every \ts $B \subseteq A$ \ts and \ts $A\in \cA$.
The notions are easiest to understand when \ts $\cA$ \ts is a \emph{graph property},
i.e. a collection of (spanning) subgraphs of a complete graph.  Then, for example,
connectivity, Hamiltonicity, having all degrees at least $\De$, non-planarity,
non-$k$-colorability, or containing an $r$-clique \ts $K_r$ \ts are upward closed
properties.  The negations, such as $k$-colorability, planarity or disconnectivity
are downward closed properties.

The \defn{Harris--Kleitman inequality} \cite{Har60,Kle66}, states that for the
upward closed
collections \ts $\cA,\cB \subseteq 2^{[n]}$, we have positive correlation:
\begin{equation}\label{eq:HK}
|\cA\cap \cB| \. \cdot \. 2^n \ \ge \ |\cA| \. \cdot \. |\cB|\ts.
\end{equation}
Similarly, for \ts $\cA \subseteq 2^{[n]}$ \ts upward closed and \ts
$\cB \subseteq 2^{[n]}$ \ts downward closed, we have negative correlation:
$$
|\cA\cap \cB| \. \cdot \. 2^n \ \le \ |\cA| \. \cdot \. |\cB|\ts.
$$
These inequalities have a clear probabilistic meaning, e.g.\ \eqref{eq:HK}
can be rewritten as
\begin{equation}\label{eq:HK-prob}
\bP(A\in \cA\cap \cB) \ \ge \ \bP(A\in \cA) \. \cdot \. \bP(A\in \cB)\ts,
\end{equation}
where the probability is over uniform \ts $A\subseteq [n]$.

The Harris--Kleitman inequality had a series of generalizations, including
the celebrated \defng{FKG inequality} \ts by Fortuin, Kasteleyn and Ginibre \cite{FKG71}.
Here we only present the AD~inequality that is aptly called the \defng{four functions theorem} \ts
in~\cite{AS16}, and which implies the FKG inequality.
For every \. $\rho: Z\to \Rb_{+}$ \. and every \. $X\subseteq Z$, denote
\begin{equation}\label{eq:def-rho-sum}
 \rho(X) \ := \ \sum_{x \ts\in \ts X} \. \rho(x)\ts.
\end{equation}


\begin{thm}[{\rm \defn{Ahlswede--Daykin inequality}~\cite{AD78}}{}]\label{t:AD}
	Let \ts $\LL=(L,\vee,\wedge)$ \ts be a  finite distributive lattice on the ground set~$L$,
and let \. $\alpha,\beta,\gamma,\delta: L \to \Rb_{+}$ \. be nonnegative functions on $L$.
	Suppose we have:
\begin{equation}\label{eq:AD-def}
 \alpha(x) \.\cdot \. \beta(y) \ \leq \ \gamma(x \vee y) \.\cdot \. \delta(x \wedge y) \quad \text{ for every } \ \ x, \ts y \in L\ts.
 \end{equation}
	Then:
\begin{equation}\label{eq:AD}
 \alpha(X) \.\cdot \. \beta(Y) \ \leq \ \gamma(X \vee Y) \.\cdot \. \delta(X \wedge Y) \quad \text{ for every } \ \ X, \ts Y \subseteq L\ts.
\end{equation}
\end{thm}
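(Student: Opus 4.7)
The plan is to prove the inequality in two reduction steps followed by a two-part induction on the dimension of a Boolean lattice. First, to reduce \eqref{eq:AD} for arbitrary $X, Y$ to the case $X = Y = L$, I would replace the four functions by $\tilde\alpha := \alpha \cdot \mathbf{1}_X$, $\tilde\beta := \beta \cdot \mathbf{1}_Y$, $\tilde\gamma := \gamma \cdot \mathbf{1}_{X \vee Y}$, $\tilde\delta := \delta \cdot \mathbf{1}_{X \wedge Y}$. The hypothesis \eqref{eq:AD-def} is preserved since when $x \in X$ and $y \in Y$ one has $x \vee y \in X \vee Y$ and $x \wedge y \in X \wedge Y$, while otherwise the LHS vanishes. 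Next, by Birkhoff's representation theorem, $\cL$ embeds as a sublattice of some Boolean lattice $\{0,1\}^n$; extending the four functions by zero outside $L$ preserves the hypothesis on $\{0,1\}^n$ (trivially off $L$, since $L$ is closed under $\vee$ and $\wedge$) and preserves the conclusion. This reduces the problem to $\cL = \{0,1\}^n$ with $X = Y = L$.

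For the base case $n = 1$, write $a = \alpha(0), b = \alpha(1)$, and similarly $c, d, p, q, r, s$ for the values of $\beta, \gamma, \delta$. The hypothesis gives $ac \leq pr$, $bd \leq qs$, $ad \leq qr$, $bc \leq qr$, and the target $(a+b)(c+d) \leq (p+q)(r+s)$ reduces, after canceling $ac \leq pr$ and $bd \leq qs$, to
\[ ad + bc \, \leq \, qr + ps. \]
Assuming $qr > 0$ (the case $qr = 0$ forces $ad = bc = 0$ and is trivial), set $u := ad/(qr)$ and $v := bc/(qr)$, so that $u, v \in [0,1]$. Multiplying $ac \leq pr$ and $bd \leq qs$ yields $abcd \leq pqrs$, hence $ps/(qr) \geq uv$, and the one-line identity $(1-u)(1-v) \geq 0$ delivers $u + v \leq 1 + uv \leq 1 + ps/(qr)$, as required.

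For the inductive step, decompose $\{0,1\}^{n+1} = \{0,1\}^n \times \{0,1\}$ and define pooled functions $\hat\alpha(x) := \alpha(x,0) + \alpha(x,1)$ on $\{0,1\}^n$, and similarly $\hat\beta, \hat\gamma, \hat\delta$. The key verification is that these satisfy the AD hypothesis on $\{0,1\}^n$: for fixed $x, y \in \{0,1\}^n$, the pointwise inequality $\hat\alpha(x)\ts\hat\beta(y) \leq \hat\gamma(x \vee y)\ts\hat\delta(x \wedge y)$ is exactly the $n=1$ base case applied to the four functions $i \mapsto \alpha(x,i), \beta(y,i), \gamma(x \vee y, i), \delta(x \wedge y, i)$ on $\{0,1\}$, whose hypothesis is inherited from the original via $(x,i) \vee (y,j) = (x \vee y, i \vee j)$ and similarly for $\wedge$. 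The inductive hypothesis at level $n$ then finishes.

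The sole substantive obstacle is the base case, where the slick $(1-u)(1-v) \geq 0$ argument uses all four hypothesis inequalities in an essential way: the matching pair $ac \leq pr, \ts bd \leq qs$ supplies $uv \leq ps/(qr)$, while the cross pair $ad, \ts bc \leq qr$ supplies $u, v \in [0,1]$. Once this is in hand, the dimension-reduction step is essentially formal, since pooling over the last coordinate transports the Boolean-cube hypothesis downward via another application of the same base case.
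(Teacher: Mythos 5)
Your proof is correct, and the paper itself does not supply a proof — it cites \cite{AD78} and \cite{AS16} for this classical result, where your argument is essentially the standard one (the ``four functions theorem'' proof in Alon--Spencer). All three steps check out: the restriction to $X=Y=L$ preserves the hypothesis since $x\in X,\,y\in Y$ forces $x\vee y\in X\vee Y$ and $x\wedge y\in X\wedge Y$; the Birkhoff embedding into $\{0,1\}^n$ preserves both hypothesis and conclusion because $L$ is a sublattice and the zero-extended functions annihilate the complement; and the $(1-u)(1-v)\ge 0$ calculation in the base case, fed the cross inequalities $ad,bc\le qr$ for boundedness and the product $abcd\le pqrs$ for the $uv\le ps/(qr)$ bound, correctly yields $ad+bc\le ps+qr$, after which the pooling step transports the hypothesis down one dimension by reapplying exactly the $n=1$ case.
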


\smallskip

The \defng{$q$-FKG inequality} \ts was introduced by Bj\"orner in \cite[Thm~2.1]{Bjo11}, who used it
to prove a $q$-analogue of \eqref{eq:ex-Bjo}.  Christofides \cite[Thm~1.5]{Chr09} gave the \defng{$q$-AD inequality},
while the multivariate   \defng{$\bq$-AD inequality} \ts we given in \cite[Thm~6.1]{CP-multi}.

\smallskip

These correlation inequalities play a key role in many poset inequalities throughout
the survey.  Notably, they are used in proofs of the following results:

\nin
$\bu$ \. Theorems~\ref{t:basic-OP-log-concave-strict}, \ref{t:basic-OP-q},
\ref{t:basic-OP-bq} and~\ref{t:KS-mono-reverse-width} on general inequalities for
the order polynomial,

\nin
$\bu$ \. Theorems~\ref{t:ineq-BW-OP-lower-bound} and \ref{t:ineq-BW-OP-lower-bound-conj} on the order polynomial
version of the BW inequality~\eqref{eq:BW},

\nin
$\bu$ \. Fishburn's inequality (Theorem~\ref{t:Fish}) and its generalizations in~$\S$\ref{s:Fish},
including the most general Theorem~\ref{t:main-OP-multi},

\nin
$\bu$ \. GYY inequality (Theorem~\ref{t:GYY}) and its generalization Shepp's inequality (Theorem~\ref{t:ineq-Shepp}),
obtained as a consequence of the order polynomial versions Theorems~\ref{t:ineq-Shepp-OP} and~\ref{t:ineq-Shepp-OP-q},

\nin
$\bu$ \. XYZ inequality (Theorem~\ref{t:ineq-XYZ}), and its applications Theorems~\ref{t:ineq-height} and~\ref{t:ineq-height-product}.

\smallskip

\subsection{Combinatorial optimization}\label{ss:proof-comb-opt}
Both the order polytope \ts $\cO_P$ \ts and the chain polytope~$\cS_P$ were defined
by Stanley, see \cite{Sta-two}.  The volume equality \ts $\vol \cO_p = e(P)/n!$ \ts in
\eqref{eq:order-def} is straightforward
via a simple triangulation of \ts $\cO_P$ \ts into congruent \defng{orthoschemes}
(also called \emph{path-simplices}) whose volume is \ts $1/n!$ \ts and which are
in bijection with linear extensions \ts $f\in \Ec(P)$.  Equality between volumes and
between Ehrhart polynomials of \ts $\cO_P$ \ts and \ts $\cS_P$ \ts in \eqref{eq:two-poset-OP},
follows from an explicit continuous piecewise-linear volume-preserving map
\ts $\xi: \cO_P \to \cS_P$\ts.

Chain polytope \ts $\cS_P$ \ts is especially useful in applications.

\smallskip

\nin
$\bu$ \. Proposition~\ref{p:LYM-antichains} and Theorem~\ref{t:LYM-BT} are proved using
the description of the dual polytope~$\cS_P^\circ\ts,$

\nin
$\bu$ \. Theorem~\ref{t:basic-entropy-KK} and Theorem~\ref{t:basic-height-two-BT}
are proved using the entropy functional on~$\cS_P\ts.$

\smallskip

An interesting approach was introduced by Sidorenko in \cite{Sid}, which combines
combinatorial duality and network flows.  Think of the comparability graph \ts $\Ga(P)$ \ts
as a directed network and enlarge it by adding bidirected edges for incomparable pairs.
Now define the \defn{Sidorenko flow} \ts by sending a unit along every linear extension.
Observe that the flow along these bidirected edges is equal in both directions,
so these edges can be deleted.  In fact, the flow can be described using promotions,
see e.g.\ \cite[$\S$8]{CPP-effective}.  The detailed analysis of this flow implies
several closely related inequalities:

\smallskip

\nin
$\bu$ \.
Theorem~\ref{t:basic-EHS} proving the summation inequality for antichains,

\nin
$\bu$ \. Sidorenko inequality (Theorem~\ref{t:sid}), and its generalizations
(Theorems~\ref{t:sid-gen} and~\ref{t:sid-BT}).

\smallskip

We refer to \cite{Schr03} for a very extensive discussion of combinatorial
optimization and applications to chain polytopes.

\smallskip

\subsection{Geometric inequalities}\label{ss:proof-geom}
Applying known geometric inequalities to order and chain polytopes~$\cO_P$ and~$\cS_P$
gives surprisingly strong implications.  These include:

\smallskip

\nin
$\bu$ \. The reverse Sidorenko inequality (Theorem~\ref{t:sid}), proved by applying
the \defng{Saint-Raymond inequality} \cite{StR81}.  This is a special case of
\defng{Mahler's conjecture}, see e.g.\ \cite[$\S$1.3]{Tao}.

\nin
$\bu$ \. The reverse Sidorenko inequality (Theorem~\ref{t:sid-BBS}), proved by applying
the \defng{Blaschke--Santal\'o inequality} \ts to polytope~$\cS_P$ and its dual~$\cS_P^\circ$,
see e.g.\ \cite[$\S$24.5]{BZ-book}.

\nin
$\bu$ \. The mixed Sidorenko inequality (see Remark~\ref{r:sid-mixed}),
proved by applying \defng{Godberson's conjecture} \ts established in \cite{AASS}
for anti-blocking polytopes, including \ts $\cS_P\ts.$

\nin
$\bu$ \. Stanley inequality (Theorems~\ref{t:ineq-Sta} and~\ref{t:ineq-Sta-gen}) and
Kahn--Saks inequality (Theorem~\ref{t:ineq-KS}), proved by applying the
\defng{Alexandrov--Fenchel inequality} \ts to sections of~$\cO_P\ts.$

\nin
$\bu$ \. Special cases and weak versions of Conjecture~\ref{conj:CPC} given in
Theorem~\ref{t:CPC-summary}~$(3)$ and Theorem~\ref{t:ineq-CPC-quant}, are proved
by using \defng{Favard's inequality}.

\nin
$\bu$ \. Kahn--Linial's proof of a weak version of Conjecture~\ref{conj:1323}
and  parts \ts $(5)-(7)$ \ts of Theorem~\ref{t:sort-1323-summary}, are proved
using \defng{Mityagin's inequality}, which is a generalization of
\defng{Gr\"unbaum's inequality}, see~$\S$\ref{ss:sort-gen}.

\nin
$\bu$ \. Kahn--Yu and Yu inequalities (Theorems~\ref{t:sort-KY} and~\ref{t:sort-Yu})
are proved using the \defng{Brunn--Minkowski inequality} \ts combined with the
\defng{K.~Ball inequality}.

\smallskip

Additionally, the geometric analysis of equality conditions of geometric
inequalities can be translated to equality conditions of poset inequalities.
This is a very recent direction of research pioneered by Shenfeld and van Handel
\cite{SvH20}, in their study of equality cases of the Alexandrov--Fenchel inequality.

\smallskip

\nin
$\bu$ \. Equality cases of Stanley inequality in Theorem~\ref{t:Sta-equality},

\nin
$\bu$ \. Equality cases of the Kahn--Saks inequality in Theorem~\ref{t:KS-equality},

\nin
$\bu$ \. Ma--Shenfeld's proof of vanishing of the generalized Stanley inequality in Theorem~\ref{t:Sta-gen-vanish},

\nin
$\bu$ \. Equality cases of the generalized Stanley inequality in Theorem~\ref{t:Sta1-equality}, $k=1$ case,

\nin
$\bu$ \. Equality conditions of the generalized Stanley inequality in Theorems~\ref{t:ineq-Sta-gen-MS-equality}
and~\ref{t:ineq-MS-supercrit}.

\smallskip

In contrast with the previous list of applications, all of these results are very
technical and difficult to obtain.  Even the definitions can be difficult as
a direct translation from the geometry can pose challenges.  Streamlining one
such definition is the point of our Definition/Lemma~\ref{dl:ineq-MS-crit-simp}
proved in~$\S$\ref{ss:proof-crit-simp}.

\smallskip

\subsection{Combinatorial atlas}\label{ss:proof-LA}
In~\cite{CP-atlas}, we presented a new linear algebraic approach to
log-concave inequalities, based on a structure we call \defng{combinatorial
atlas}.  Roughly, this is a combinatorial setup of vectors and matrices
related to each other according to certain directed graphs.  These matrices
are associated with posets and contain counting of numbers of linear
extensions.  The setup allows one to prove by induction that these matrices
are hyperbolic, starting with two element posets as the base of induction.

Formally, a \ts $d\times d$ \ts real matrix \ts $\bM$ \ts is called \defn{hyperbolic},
if
\begin{equation}\label{eq:Hyp}\tag{Hyp}
\langle \vb, \bM \wb \rangle^2 \  \geq \  \langle \vb, \bM \vb \rangle  \langle \wb, \bM \wb \rangle \quad \text{for all \ \, $\vb, \wb \in \Rb^{d}$ \ \  s.t.\ \ \ $\langle \wb, \bM \wb \rangle > 0$}.
\end{equation}
It is not hard to see that \ts $\bM$ \ts is hyperbolic if and only if it has
at most one positive eigenvalue (including multiplicities).  While the eigenvalue
conditions allows to obtain the step of induction, the inequality \eqref{eq:Hyp}
implies a number of correlation and Stanley type inequalities:

\smallskip

\nin
$\bu$ \. Deletion correlations (Theorem~\ref{t:ineq-delete}),

\nin
$\bu$ \. Subset correlations (Theorems~\ref{t:ineq-deletion-Sta} and~\ref{t:ineq-deletion-Sta-arrow}),

\nin
$\bu$ \. Covariance inequalities (Theorems~\ref{t:poset-cov}, \ref{t:poset-cov-multiple} and~\ref{t:ineq-cov-arrow}),

\nin
$\bu$ \. Unique covers special cases (Theorem~\ref{t:ineq-four-elements} and~\ref{c:ineq-four-elements-three}),

\nin
$\bu$ \. Weighted Stanley inequality (Theorem~\ref{t:Sta-weighted}),

\nin
$\bu$ \. Equality conditions for the Stanley inequality (Theorem~\ref{t:Sta-equality}).

\smallskip

We refer to \cite{CP-intro} for the introduction to the combinatorial atlas
technology and to \cite{CP-corr} for applications to correlation inequalities.



\medskip

\section{Proofs of technical results}\label{s:proof-crit-simp}

\subsection{Proof of Theorem~\ref{t:ineq-CPC-cons} and Proposition~\ref{p:ineq-CPC-cons}}\label{ss:proof-CPC-cons}
\begin{proof}[Proof of Theorem~\ref{t:ineq-CPC-cons}]
In notation of Conjecture~\ref{conj:CPC}, let \ts $P:=(X,\prec)$ \ts be a poset with \ts $|X|=n$ \ts elements and
fixed  element \ts $z\in X$.
Let \ts $Q:=(Y,\prec')$ \ts given by \ts $Y:=X\cup \{x,y,w\}$,
with relations \. $u\prec' v \. \Leftrightarrow \. u\prec v$ \. for all \ts $u,v\in X$, \. $
x \ \prec'  y  \prec' u$ \. for all \. $u \in X$, and \. $x \ \prec' \ w$.

For all \ts $a\geq 1$, we have:
	\begin{align}\label{eq:F1k}
		\aF_{xyz}(Q,1,a) \ = \ (a-1) \. \aN(P,z,a-1) \ + \. (n+1-a) \. \aN(P,z,a).
	\end{align}
This follows from the observation that \ts $\aF_{xyz}(Q,1,a)$ \ts in the number of
linear extensions \ts $f \in \Ec(P)$ \ts for which \. $f(x)=1$, \ts $f(y)=2$, \ts $f(z)=a+2$,
while \ts $f(w)\in \{3,\ldots, n+3\}$.

Also note that, for all $k\geq 2$,	
\begin{align}\label{eq:F2k}
	\aF_{xyz}(Q,2,a) \ = \  \aN(P,z,a).
\end{align}
Indeed, this follows from the observation that \ts $\aF_{xyz}(Q,2,a)$ \ts counts those
linear extensions for which \. $f(x)=1$, \ts $f(w)=2$, \ts $f(y)=3$ \ts and \ts $f(z)=a+3$.

Now, \eqref{eq:ineq-CPC} implies that for all \ts $a,i\geq 1$, we have:
\begin{equation}\label{eq:appB1}
	\aF_{xyz}(Q,1,a) \cdot  \aF_{xyz}(Q,2,a+i) \ \leq \  \aF_{xyz}(Q,1,a+i) \cdot \aF_{xyz}(Q,2,a).
\end{equation}
By \eqref{eq:F1k} and \eqref{eq:F2k}, the LHS of \eqref{eq:appB1} is equal to
\[ 	(a-1) \, \aN(P,z,a-1)  \cdot  \aN(P,z,a+i) \,
+ \, (n+1-a) \, \aN(P,z,a)  \cdot  \aN(P,z,a+i),
\]
while the RHS of \eqref{eq:appB1} is equal to
\[ (a+i-1) \. \aN(P,z,a+i-1)  \cdot  \aN(P,z,a) \, + \, (n+1-a-i) \, \aN(P,z,a+i)  \cdot \aN(P,z,a).
\]
Thus, \eqref{eq:ineq-CPC-SS} follows from \eqref{eq:appB1} and the two equations above.
\end{proof}

\smallskip

We restate Proposition~\ref{p:ineq-CPC-cons} for clarity, writing out all the inequalities:

\smallskip

\begin{prop}[{\rm = Proposition~\ref{p:ineq-CPC-cons}}{}] 
Let \ts $P=(X,\prec)$ \ts be a poset with \ts $|X|=n$ \ts elements, let \ts
$x \in X$ \ts and \ts $a \in [n]$.  Suppose that
\. $\aNr(P, x,a)>0$. Then \eqref{eq:ineq-CPC-SS} implies:
$$
\aNr(P, x,a)^2 \. = \. \aNr(P, x,a+1) \cdot \aNr(P, x,a-1) \
\Leftrightarrow \
\aNr(P, x,a+1)  =  \aNr(P, x,a) = \aNr(P, x,a-1).
$$
Additionally, we have Stanley's inequality:
$$
\aNr(P, x,a)^2 \. \ge \. \aNr(P, x,a+1) \cdot \aNr(P, x,a-1).
$$
\end{prop}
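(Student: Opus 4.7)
The plan is to apply the hypothesis \eqref{eq:ineq-CPC-SS} twice: once to $P$ and once to its dual $P^*$. Write $N_k := \aNr(P,x,k)$ and $n := |X|$, and use the identity $\aNr(P^*, x, k) = N_{n+1-k}$. Applying \eqref{eq:ineq-CPC-SS} to $P^*$ with $i = 1$ and performing the substitution $k \mapsto n+1-k$ yields the ``dual'' bound
\[
(n-a+1)\, N_a^2 \ \geq \ (n-a)\, N_{a-1} N_{a+1} \, + \, N_{a-1} N_a,
\]
which pairs naturally with the original \eqref{eq:ineq-CPC-SS} at $i = 1$,
\[
a\, N_a^2 \ \geq \ (a-1)\, N_{a-1} N_{a+1} \, + \, N_a N_{a+1}.
\]
Together these form a symmetric pair of bounds controlling $N_a$ against both of its neighbors.

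For Stanley's inequality, I argue by contradiction. The edge cases where $N_{a-1} = 0$ or $N_{a+1} = 0$ are trivial, so assume both are positive, and suppose that $N_a^2 < N_{a-1} N_{a+1}$. Substituting this strict inequality into the original bound and cancelling yields $N_a > N_{a+1}$, and substituting into the dual bound yields $N_a > N_{a-1}$. But then $N_a^2 > N_{a-1} N_{a+1}$, contradicting the assumption. Hence $N_a^2 \geq N_{a-1} N_{a+1}$.

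For the equivalence $(1) \Leftrightarrow (2)$, the direction $(2) \Rightarrow (1)$ is immediate. For the forward direction, assume $N_a^2 = N_{a-1} N_{a+1}$ with $N_a > 0$, which forces $N_{a-1}, N_{a+1} > 0$. Substituting the equation into the original bound gives $N_a^2 \geq N_a N_{a+1}$, hence $N_a \geq N_{a+1}$; substituting into the dual bound gives $N_a \geq N_{a-1}$. Since $N_{a-1} N_{a+1} = N_a^2$ together with $N_{a-1}, N_{a+1} \leq N_a$ forces equality in both estimates, we obtain $N_{a-1} = N_a = N_{a+1}$, which is condition $(2)$.

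The main conceptual step is the justification of the dual inequality: since \eqref{eq:ineq-CPC-SS} is formulated for all finite posets (following Theorem~\ref{t:ineq-CPC-cons}), it applies verbatim to $P^*$, and the index substitution $k \mapsto n+1-k$ converts it into the form needed above. The remaining algebraic manipulations in both parts are elementary.
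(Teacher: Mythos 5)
Your proof is correct and follows essentially the same approach as the paper's: both apply \eqref{eq:ineq-CPC-SS} with $i=1$ to $P$ and to the dual poset $P^\ast$ (which is exactly your substitution $k \mapsto n+1-k$), then extract the sign of $N_a - N_{a\pm 1}$ to conclude. The only cosmetic difference is that you argue Stanley's inequality by contradiction while the paper does a direct case split on whether $N_a < N_{a+1}$ or $N_a < N_{a-1}$; the algebra is identical.
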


\begin{proof}
For the first part, the \. $\Leftarrow$ \. direction is trivial.  For the \. $\Rightarrow$ \. direction,
combining the inequality \ts \eqref{eq:ineq-CPC-SS} (with $z \gets x$ and $i\gets 1$) and the assumption
\. $\aN(P, x,a)^2 = \aN(P, x,a+1) \cdot \aN(P, x,a-1)$ \. gives \.
$$\aN(P, x,a+1) \cdot \aN(P,x,a-1)  \, \ge \, \aN(P, x,a) \. \cdot \. \aN(P, x,a+1).
$$
With the assumption
\. $\aN(P, x,a)>0$ (and thus $\aN(P, x,a+1)>0$), this gives \. $\aN(P, x,a-1) \ge \aN(P, x,a)$.
The same argument for the dual poset \ts $P^\ast$ \ts gives \.
$\aN(P, x,a+1) \ge \aN(P, x,a)$. This implies the first part.

For the second part, note that \eqref{eq:ineq-CPC-SS}  (with $z \gets x$ and $i\gets 1$) can rewritten as:
$$
(a-1) \big(\aN(P, x,a)^2 \ts - \ts \aN(P, x,a-1) \. \aN(P, x,a+1)\big)
\, \geq \, \aN(P, x,a) \big(\aN(P, x,a+1) \ts - \ts \aN(P, x,a)\big).
$$
Suppose \. $\aN(P, x,a) < \aN(P, x,a+1)$.  Then the RHS of the above
inequality is $\ge 0$, and the LHS implies Stanley's inequality.
Similarly, for \. $\aN(P, x,a) < \aN(P, x,a-1)$,  the same
argument for the dual poset \ts $P^\ast$ \ts also implies Stanley's
inequality.  Therefore, we have \. $\aN(P, x,a) \ge \aN(P, x,a+1)$ \. and \.
$\aN(P, x,a) \ge \aN(P, x,a-1)$, which immediately implies the result.
\end{proof}

\smallskip

\subsection{Proof of Lemma~\ref{dl:ineq-MS-crit-simp}}\label{ss:proof-crit-simp}
We now present the definitions as stated in \cite{MS22}, and show that
they are in fact equivalent to our Definition~\ref{dl:ineq-MS-crit-simp}.
What follows is a technical argument which ordinarily would not fit a
general survey.  We include it here in order to show that the
subcritical/critical/supercritical poset characterization is in~$\poly$.

\smallskip

Assume that \eqref{eq:ineq-MS-Pos} holds, and that \. $c_{\ell-1}<a < c_\ell$.
It follows from \. $\aN_{\bz\bc}(P, x,a-1)>0$ \. and \. $\aN_{\bz\bc}(P, x,a+1) > 0$, that
\. $c_{\ell-1}+1 < a <c_{\ell}-1$.  Let us slightly simplify the notation:
		\begin{align*}
			(y_0,y_1,\ldots, y_{k},y_{k+1},y_{k+2}) \ & \longleftarrow \  \bigl(z_0=\wh 0, z_1,\ldots, z_{\ell-1}, x, z_\ell, \ldots, z_{k}, z_{k+1}=\wh 1\big), \\
(d_0,d_1,\ldots, d_{k+1},d_{k+2})  \ & \longleftarrow \   (0,c_1,\ldots, c_{\ell-1}, a, c_\ell,\ldots, c_k,n+1).
		\end{align*}
%
%
For \ts $0\le r < s \le k+1$,  let \ts $\aD(r,s)  := \ga(y_r,y_s)$.
Denote by \ts $\rR_p$ \ts a collection of disjoint nonempty intervals
\. $[r_1,r_2],\ldots,[r_{2p-1},r_{2p}]$, such that $[r_1,r_2] \neq [0,k+2]$.

\smallskip

\begin{definition}[{\rm cf.\ \cite[Def.~2.11]{MS22}}{}]
We say that a quintuple \. $(P,x,a,\zb,\cb)$ \. is\footnote{This definition is more combinatorial in flavor that the one in \cite[Def~2.11]{MS22}, which has a more geometric flavor. It is easy to show that these definitions are equivalent, we omit the details.}

\nin
$\bu$ \ \defnb{subcritical}  \ if \.
		for every $p\geq 1$ and every \. $\rR_p$ \. as above, we have:
			\begin{align}\label{eq:subcrit-MS}\tag{subcrit-MS}
			\aD(r_{1}, r_{2}) \. + \. \ldots \. + \. \aD(r_{2p-1}, r_{2p}) \, \leq \, \de  \. + \.  \De \.,
		\end{align}
where \. $\de := 2-\xi-\ze$,
$$\De \, : = \, (d_{r_{2}}\ts - \ts d_{r_{1}}\ts - \ts 1) \. + \. \ldots  \. + \.  (d_{r_{2p}}\ts - \ts d_{r_{2p-1}}\ts - \ts 1),
$$
and \. $a:=a(r_1,\ldots,r_{2p})$, \. $b:=b(r_1,\ldots,r_{2p})$ \. are given by
	\begin{equation}\label{eq:ab-def}
\aligned		& \xi := \left\{
\aligned
& \. 1 \quad \text{if} \quad [\ell-1,\ell]  \subseteq  [r_{2i-1},r_{2i}] \ \ \text{for some} \ \ i \in [p],\\
& \. 0 \quad \text{otherwise}, \endaligned \right.
\\	& \ze := \left\{
\aligned
& \. 1 \quad \text{if} \quad [\ell,\ell+1]  \subseteq  [r_{2i-1},r_{2i}] \ \ \text{for some} \ \ i \in [p],\\
& \. 0 \quad \text{otherwise}. \endaligned \right.
\endaligned
	\end{equation}

		%
\nin
$\bu$ \ \defnb{critical} \ if \.
		for every $p\geq 1$ and every \. $\rR_p$ \. as above, we have:
			\begin{align}\label{eq:crit-MS}\tag{crit-MS}
			\aD(r_{1}, r_{2}) \ts + \ts \ldots \ts + \ts \aD(r_{2p-1}, r_{2p}) \, \leq \,
\de \ts -\ts 1  \. + \.  \De.
		\end{align}
\nin
$\bu$ \ \defnb{supercritical} \ if \.
		for every $p\geq 1$ and every \. $\rR_p$ \. as above, we have:
			\begin{align}\label{eq:supercrit-MS}\tag{supercrit-MS}
			\aD(r_{1}, r_{2})  \. + \.  \ldots  \. + \. \aD(r_{2p-1}, r_{2p}) \, \leq \, \de \ts -\ts 2  \. + \.  \De.
		\end{align}
\end{definition}

\smallskip

We now proceed to the proof which is separated into parts:

\smallskip

\begin{proof}[\eqref{eq:ineq-MS-subcrit} \ $\Leftrightarrow$ \ \eqref{eq:subcrit-MS}] \.
By the argument in \eqref{eq:ineq-MS-basic}, the inequality \eqref{eq:ineq-MS-subcrit}  always holds.
It suffices to show the same for \eqref{eq:subcrit-MS}.
By \eqref{eq:ab-def}, we have \. $\xi,\ze \leq 1$, so \. $2-\xi-\ze\geq 0$.
Since \. $\aN_{\zb \cb}(P,x,a)\ge 1$, for every \. $f \in \aN_{\zb \cb}(P,x,a)$ \. and \. $i \in [p]$, we have:
	\begin{equation}\label{eq:D-basic}
	\begin{split}
	 \aD(r_{2i-1}, r_{2i}) \ 
	 & \leq  \  \big|\{f^{-1}(i) \, : \, d_{r_{2i-1}}< i <d_{r_{2i}}  \}\big| \ = \ d_{r_{2i}}\. - \. d_{r_{2i-1}} \. - \. 1.
	\end{split}
	\end{equation}
Summing these inequalities proves the claim.
\end{proof}

\begin{proof}[\eqref{eq:ineq-MS-crit} \ $\Leftarrow$ \ \eqref{eq:crit-MS}] \.
Let \ts $(r,s)\in \La$ \ts be a splitting pair such that \ts $c_r< a <c_s$\ts.
Let \. $r_1:=r$ \. and \. $r_2:=s+1$\..
Note that $(z_r,z_{s})=(y_{r_1},y_{r_2})$, and it follows that $\xi=\ze=1$.
We have:
	\[ \ga(z_r,z_s) \, = \,  \aD(r_1,r_2) \, \leq_{\eqref{eq:crit-MS}}  \,
(1-\xi-\ze) \. + \. (d_{r_2}-d_{r_1}-1)  \, = \,  c_s \. - \. c_r \. - \. 2,
\]
as desired.
\end{proof}
		
\begin{proof}[\eqref{eq:ineq-MS-crit} \ $\Rightarrow$ \ \eqref{eq:crit-MS}] \.
Let \ts $\rR_p$ \ts be as above.
	 If $\xi+\ze \leq 1$, then \eqref{eq:crit-MS} follows from the fact that \. $ \aD(r_{2i-1}, r_{2i})  \leq d_{r_{2i}}-d_{r_{2i-1}}-1$ \. by \eqref{eq:D-basic}.
	 Thus, we can assume that \ts $\xi+\ze=2$.
	 This implies that there exists $j \in [p]$ such that $[\ell-1, \ell+1] \subseteq [r_{2j-1},r_{2j}]$, which in turn implies that
	\[ c_{r_{2j-1}} \ = \  d_{r_{2j-1}} \ \leq \  d_{\ell-1} \  < \ d_\ell= a  \  < \ d_{\ell+1} \ \leq  \  d_{r_{2j}} \ = \ c_{r_{2j}-1}\..  \]
	 It then follows that
\[  \aD(r_{2j-1},r_{2j}) \, = \, \ga\big(z_{r_{2j-1}},z_{r_{2j}-1}\big) \,  \leq_{\eqref{eq:ineq-MS-crit}} \,  c_{r_{2j}-1} \. - \. c_{r_{2j-1}} -2 \, = \, d_{r_{2j}} \. - \. d_{r_{2j-1}} \. - \. 2.
\]
	On the other hand, for every \. $i \in [p]$, \. $i \ne j$, we have:
	 \[  \aD(r_{2i-1},r_{2i})  \, \leq \, d_{r_{2i}} \. - \. d_{r_{2i-1}} \. - \. 1. \]
	 Combining these two inequalities with the assumption that \ts $\xi+\ze=2$,
	 we get \ts \eqref{eq:crit-MS}.
\end{proof}

\begin{proof}[\eqref{eq:ineq-MS-supercrit} \ $\Leftarrow$ \ \eqref{eq:supercrit-MS}] \.
Let \ts $(r,s)\in \La$, so that \ts $c_r< a <c_s$.
Let \. $r_1:=r$ \. and \. $r_2:=s+1$\..
Note that $(z_r,z_{s})=(y_{r_1},y_{r_2})$, and it follows that $\xi=\ze=1$.
It then follows  that
\[ \ga(z_r,z_s) \, = \,   \aD(r_1,r_2) \, \leq_{\eqref{eq:supercrit-MS}}  \,  (-\xi-\ze) \. + \. (d_{r_2}-d_{r_1}-1)  \, = \.  c_s \. - \. c_r \. - \. 3,
\]
as desired.
\end{proof}

\begin{proof}[\eqref{eq:ineq-MS-supercrit} \ $\Rightarrow$ \ \eqref{eq:supercrit-MS}] \.
Let \ts $\rR_p$ \ts be as above.
We split the proof into three cases, depending on the values of \ts $\xi+\ze$.  First,
		for \ts $\xi+\ze =0$, the inequality \eqref{eq:supercrit-MS} follows from  \.
$ \aD(r_{2i-1}, r_{2i})  \leq d_{r_{2i}}-d_{r_{2i-1}}-1$ \. by \eqref{eq:D-basic}.
		
		Second, for \ts $\xi+\ze =1$, there exists \. $j \in [p]$ \. such that \. $\ell=r_{2j-1}$ \. or \.  $\ell=r_{2j}$\..
		Without loss of generality, we can assume that   \.  $\ell=r_{2j-1}$\..
		For every \. $f \in \aN_{\zb \cb}(P,x,a-1)$, we have:
		\begin{align*}
		   \aD(r_{2j-1},r_{2j}) \ & = \   \ga\big(y_{r_{2j-1}}, x\big)    \ \leq \
\big|\{f^{-1}(i) \. : \. d_{r_{2j-1}} \. < \. i  \. < \. a-1  \}\big|\\
		  & = \ a-1-d_{r_{2j-1}}-1 \ = \ d_{r_{2j}} - d_{r_{2j-1}} -2.
		\end{align*}
	On the other hand, for every \. $i \in [p]$, \. $i \ne j$, we have:
\[  \aD(r_{2i-1},r_{2i})  \ \leq \ d_{r_{2i}} -  d_{r_{2i-1}} -1. \]
Combining these two inequalities with the assumption that \ts $\xi+\ze=1$,
we get \eqref{eq:supercrit-MS}, as desired.

Third, for \ts $\xi+\ze =2$, there exists \ts $j \in [p]$, such that \.
$[\ell-1, \ell+1] \subseteq [r_{2j-1},r_{2j}]$. This implies that
\[ c_{r_{2j-1}} \ = \  d_{r_{2j-1}} \ \leq \  d_{\ell-1} \  < \ d_\ell= a  \  < \ d_{\ell+1} \ \leq  \  d_{r_{2j}} \ = \ c_{r_{2j}-1}\..  \]
It then follows that
\[  \aD(r_{2j-1},r_{2j}) \ = \ \ga\big(z_{r_{2j-1}},z_{r_{2j}-1}\big) \  \leq_{\eqref{eq:ineq-MS-supercrit}} \  c_{r_{2j}-1} - c_{r_{2j-1}} -3 \ = \ d_{r_{2j}} -  d_{r_{2j-1}} -3. \]
On the other hand,  for every \. $i \in [p]$, \. $i \ne j$, we have:
\[  \aD(r_{2i-1},r_{2i})  \ \leq \ d_{r_{2i}} -  d_{r_{2i-1}} -1. \]
Combining the two inequalities above with the assumption that $a+b=0$,
we get \eqref{eq:supercrit-MS}, as desired.
\end{proof}

\medskip

\section{Final remarks and open problems}\label{s:finrem}

\subsection{The nature of linear extensions}\label{ss:finrem-nature}
How is the number of linear extensions different from all other combinatorial
counting functions?  This question is worth addressing since we devoted so
much space to the subject.  It is also a very difficult question which has
more than one answer.

First, we clarify that counting linear extensions is as
much a distinct area of Poset Theory as counting colorings, spanning
trees, or counting perfect matchings are distinct areas of Graph Theory.
These three areas are covered by a large number of surveys and monographs,
see e.g.\ \cite{LP86,MR02,Moon70} for our favorites.  By contrast,
this is probably the first survey dedicated to linear extensions of
general posets.  Of course, special cases such as standard Young tableaux
and increasing trees are extensively covered in the literature, see e.g.\
\cite{AR15,KPP94,Sta-EC}.

Second, from the complexity point of view, linear extensions are
in the middle of the spectrum: counting is $\SP$-complete, while the
existence is trivially in~$\poly$.  Of course, for colorings these
problems are $\SP$-complete and $\NP$-complete, respectively.  For
spanning trees, these problems are in $\FP$ and~$\poly$, respectively.
For perfect matchings, the counting is $\SP$-complete and while the
existence is in $\poly$, for a nontrivial reason.

Third, the $(1\pm \ve)$ approximation counting is $\NP$-hard for the
number of $3$-colorings and sufficiently small $\ve>0$ \cite{Vaz01},
while for the number of perfect matchings there is a Markov chain based algorithm
with a difficult analysis \cite{JSV04}.   This puts linear extensions
in the middle of the spectrum again --- it is a hard counting problem
with an easy Markov chains algorithm, see the discussion in~$\S$\ref{ss:CS-random}.


It is not surprising that for counting functions which are harder to
compute it is easier to prove that they are hard to compute, and it
is harder if not impossible to obtain good bounds.  On the other hand,
even when counting is easy like for the spanning trees, perfect
matchings in planar graphs, or standard Young tableaux of skew shapes,
the inequalities can be quite interesting, see e.g.~$\S$\ref{ss:ex-yd}
and \cite{Gor21,Gri76}.

In summary, there is no real pattern among these counting problems.
Each of them is its own mini-universe with its unique advances and
challenges.  Unfortunately, counting linear extensions is the least
explored of these counting problems.  Hopefully, this survey will
pave a way for further studies in the area.


\subsection{Early history of linear extensions}\label{ss:finrem-hist}
The area of linear extensions of posets was started in 1930, with
Szpilrajn's \defng{extension theorem} \cite{Szp30}, which is nontrivial
for infinite posets and gives \ts $e(P)\ge 1$ \ts for finite
posets.  Birkhoff's
\defng{fundamental theorem for finite distributive lattices} \cite{Bir33},
states that every finite distributive lattice \ts $\cL$ \ts is isomorphic to
a lattice \ts $J(P)$ \ts of lower order ideas of a finite poset~$P$,
see e.g.\ \cite[$\S$3.4]{Sta-EC}. Stanley noticed that this implies
that the number of maximal chains in \ts $\cL$ \ts is~$\ts e(P)$,
see \cite[Prop.~4.1]{Sta-thesis}.
For example, the number of maximal chains in the Boolean algebra \ts $B_n$ \ts
is \ts $e(A_n)=n!$, cf.\ Example~\ref{ex:LYM-Boolean}.

In Enumerative Combinatorics, enumeration of linear extensions arose
naturally in the context of standard Young tableaux in MacMahon's
classical work \cite{Mac15}, which led to the hook-length formula~\eqref{eq:HLF}
and its generalizations, see~$\S$\ref{ss:ex-yd}.
Building in part on Knuth's paper \cite{Knu70}, Stanley's thesis
on \defng{$P$-partition theory} \cite{Sta-thesis} generalized this
work both relating and unifying it with the study of \defng{symmetric functions}.
 Soon after, Sch\"utzenberger's work \cite{Sch} on \defng{promotion operators} \ts
led to development of \defng{RSK} \ts and other combinatorial tools used to this day.
For further references, see Stanley's historical notes in \cite[pp.~383--386]{Sta-EC}.

In Computer Science, linear extensions of posets arose naturally in connection
with \defng{sorting problems}.  Notably, Kislitsyn~\cite{Kis68} which stated the \ts
$\frac{1}{3}-\frac23$ \ts Conjecture~\ref{conj:1323} and the unimodality of \ts
$\{\aN(P,x,a)\}$, which was later rediscovered by Rivest and eventually proved by
Stanley \cite{Sta-AF}.
Soon after, Fredman~\cite{Fre75} and Sch\"onhage~\cite{Scho76} introduced
(different, but related) sorting problem in the West, and gave the information
theoretic lower bounds.


\subsection{What's next?}\label{ss:finrem-next}
This survey may seem extensive, but to resolve long open conjectures
we would need new tools and ideas beyond those in Section~\ref{s:proof-ideas}.
The latest entrants --- the combinatorial atlas and the geometric approach
to Alexandrov--Fenchel inequalities, led to a great deal of progress and
there is hope for further advances.

Let us single out a few open problems mentioned in the survey.  First,
we personally find the Kahn--Saks conjecture~\eqref{conj:1323-KS} more
important than the \. $\frac{1}{3}-\frac23$ \. Conjecture~\ref{conj:1323},
although both remain out of reach with existing technology.
Next, we believe that the Cross Product Conjecture~\ref{eq:CPC-OP} is
false already for width three posets. We tried to disprove it by
finding a counterexample to Conjecture~\ref{conj:ineq-CPC-cons} (which follows from CPC),
but the extensive computer experiments have yet to succeed.\footnote{We
are grateful to Greta Panova and Andrew Sack for their help in designing
and running these experiments.}

In a different direction, it would be interesting to make even a small
improvement towards Conjecture~\ref{conj:ineq-second-moment}.  In fact,
any constant \ts $(2-\epsilon)$ \ts in the RHS of
\eqref{eq:ineq-second-moment-conj} would already be a major progress.
Finally, Conjectures~\ref{conj:XYZ-SP} and~\ref{conj:ineq-Sta-SP}
remain a major challenge both in Combinatorics and Computational
Complexity. 

\vskip.6cm

{\small

\subsection*{Acknowledgements}
We are grateful to Karim Adiprasito, Max Aires, Luis Ferroni, Nikita Gladkov, Jeffry Kahn,
Alejandro Morales,
Greta Panova, F\"edor Petrov, Yair Shenfeld, David Soukup and
Ramon van Handel for many helpful discussions and remarks on the
subject. Special thanks to Richard Stanley for comments on the
draft of this survey.  Both authors were partially supported
by the~NSF.
}

\newpage





{\footnotesize

}

\end{document}